\newtheorem{prop}{Proposition}[section]
\newtheorem{thm}{Theorem}[section]
\newtheorem{cor}{Corollary}[section]
\newtheorem{lem}[thm]{Lemma}
\theoremstyle{definition}
\newtheorem{dfn}{Definition}[section]
\newtheorem{rmk}{Remark}[section]
\newtheorem*{notat}{Notation}
\newtheorem{ex}{Example}[section]
\theoremstyle{remark}
\theoremstyle{remark}
\theoremstyle{remark}
\def\Ha{\mathscr{H}\!}
\def\HA{\mathbb{H}}
\def\Ei{\mathbb{E}_\i}
\def\Affg{\mathbf{Aff}_\g}
\def\Aff{\mathbf{Aff}}
\def\Specg{\mathbf{Spec}_\g}
\def\Pro{\mathbf{Pro}}
\def\gsch{\mathbf{Sch}\left(\g\right)}
\newcommand{\sch}[1]{\mathbf{Sch}\left(#1\right)}
\newcommand{\Dm}[1]{\mathfrak{D\mbox{-}MSt}\left(#1\right)}
\def\Dmg{\mathfrak{D\mbox{-}MSt}\left(\g\right)}
\def\Spec{\mathbf{Spec}}
\def\dm{\mathfrak{D\mbox{-}MSt}}
\def\et{\acute{e}t}
\def\Symp{\mathcal{S}ymp}
\def\sA{\mathscr A}
\def\sB{\mathscr B}
\def\sE{\mathscr E}
\def\bE{\mathbf E}
\def\bF{\mathbf F}
\def\bA{\mathbf A}
\def\cB{\mathcal B}
\def\sD{\mathscr D}
\def\cD{\mathcal D}
\def\G{\mathcal G}
\def\g{\mathscr G}
\def\Uni{\mathbb{U}}
\def\Mfd{\mathit{Mfd}}
\def\sL{\mathscr L}
\def\cL{\mathcal L}
\def\C{\mathscr C}
\def\X{\mathscr X}
\def\Y{\mathscr Y}
\def\g{\mathscr G}
\def\K{\mathscr K}
\def\lbar{\overline{\sL}}
\def\dbar{\overline{\sD}}
\def\lbare{\overline{\sL}^{\et}}
\def\dbare{\overline{\sD}^{\et}}
\def\Kk{\mathcal{K}}
\def\M{\mathcal{M}}
\def\Fun{\mathbf{Fun}}
\def\cE{\mathcal E}
\def\cF{\mathcal F}
\def\G{\mathcal G}
\def\U{\mathcal U}
\def\sU{\mathscr U}
\def\ubar{\overline{\sU}}
\def\ubare{\ubar^{\et}}
\def\V{\mathcal V}
\def\sl{\mathcal{S}l}
\def\cosl{\mathcal{C}o\mathcal{S}l}
\def\i{\infty}
\def\ni{\left(n,1\right)}
\def\n1i{\left(n+1,1\right)}
\def\nGpd{\mathbf{Gpd}_n}
\def\iGpd{\mathbf{Gpd}_\i}
\def\iCat{\widehat{\mathbf{Cat}}_\i}
\def\LiGpd{\widehat{\mathbf{Gpd}}_\i}
\def\longlongrightarrow{-\!\!\!-\!\!\!-\!\!\!-\!\!\!-\!\!\!-\!\!\!\longrightarrow}
\def\longlonglongrightarrow{-\!\!\!-\!\!\!-\!\!\!-\!\!\!-\!\!\!-\!\!\!\longlongrightarrow}
\newcommand*{\longhookrightarrow}{\ensuremath{\lhook\joinrel\relbar\joinrel\rightarrow}}
\newcommand*{\longlonghookrightarrow}{\ensuremath{\lhook\joinrel\relbar\joinrel\relbar\joinrel\rightarrow}}
\def\fdot{\mspace{3mu} \cdot \mspace{3mu}}
\def\T{\mathfrak{Top}_\i}
\def\Tn{\mathfrak{Top}_n}
\def\Tm{\mathfrak{Top}_m}
\def\Geo{\mathfrak{Geom}}
\def\Str{\mathfrak{Str}}
\def\Sset{\Set^{\Delta^{op}}}
\def\Pshi{\Psh_\i}
\def\LPshi{\widehat{\Psh}_\i}
\def\icat{\left(\i,1\right)\mbox{-category}}
\def\icat{\mbox{$\i$-category}}
\def\qcats{\mbox{$\i$-categories}}
\def\cinf{{\mathbf{C}^\i}}
\def\lbarcinf{\overline{\sL_{\cinf}}}
\def\tp{\underset{\widetilde{}} \varphi}
\def\wt{\widetilde{\varphi}}
\def\colim{\underrightarrow{\mbox{colim}\vspace{0.5pt}}\mspace{4mu}}
\renewcommand{\lim}{\varprojlim}
\newcommand{\Hom}{\operatorname{Hom}}
\DeclareMathOperator{\St}{St}
\DeclareMathOperator{\Set}{Set}
\DeclareMathOperator{\Sh}{Sh}
\DeclareMathOperator{\Shi}{Sh_\i}
\def\LShi{\widehat{\Sh}_\i}
\DeclareMathOperator{\Psh}{Psh}
\DeclareMathOperator{\Lan}{Lan}
\newcommand{\Etd}{\cinf\mbox{-}\mathscr{\acute{E}}\!\mathit{tendues}_\i}
\newcommand{\Etdd}{\cinf\mbox{-}\mathscr{\acute{E}}\!\mathit{tendues}}
\newcommand{\Etds}{\mathfrak{EtDiffSt}_\i}
\newcommand{\Etdsd}{\mathfrak{EtDiffSt}}
\newcommand{\Etdsn}{n\mbox{-}\mathfrak{EtDiffSt}_\i}
\def\RR{\mathbb{R}}
\def\lllarrows{\hspace{.05cm}\mbox{\,\put(0,-3){$\leftarrow$}\put(0,1){$\leftarrow$}\put(0,5){$\leftarrow$}\hspace{.45cm}}}
\renewcommand{\O}{\mathcal{O}}
\newcommand{\rt} {\mspace{4mu} \rotatebox[origin=C]{90}{$\perp$} \mspace{4mu}}
\newcommand{\Adj}[4]{\xymatrix@1{#2 \ar@<-0.5ex>[r]_-{#4} & #3 \ar@<-0.5ex>[l]_-{#1}}}
\def\cleardoublepage{\clearpage\if@twoside \ifodd\c@page\else% 
    \hbox{}% 
    \thispagestyle{empty}%              % Empty header styles 
    \newpage% 
    \if@twocolumn\hbox{}\newpage\fi\fi\fi} 
\begin{document}
\title{Higher Orbifolds and Deligne-Mumford Stacks as Structured Infinity-Topoi}
\author{David Carchedi}

\maketitle

\begin{center}
\section*{Abstract}
\end{center}
We develop a universal framework to study smooth higher orbifolds on the one hand and higher Deligne-Mumford stacks (as well as their derived and spectral variants) on the other, and use this framework to obtain a completely categorical description of which stacks arise as the functor of points of such objects. We choose to model higher orbifolds and Deligne-Mumford stacks as infinity-topoi equipped with a structure sheaf, thus naturally generalizing the work of Lurie in \cite{dag}, but our approach applies not only to different settings of algebraic geometry such as classical algebraic geometry, derived algebraic geometry, and the algebraic geometry of commutative ring spectra as in \cite{dag}, but also to differential topology, complex geometry, the theory of supermanifolds, derived manifolds etc., where it produces a theory of higher generalized orbifolds appropriate for these settings. This universal framework yields new insights into the general theory of Deligne-Mumford stacks and orbifolds, including a representability criterion which gives a categorical characterization of such generalized Deligne-Mumford stacks. This specializes to a new categorical description of classical Deligne-Mumford stacks, a result sketched in \cite{prol}, which extends to derived and spectral Deligne-Mumford stacks as well. 
\tableofcontents

\chapter{Introduction}

%We develop a framework for modeling higher orbifolds and Deligne-Mumford stacks as infinity-topoi equipped with a structure sheaf. This framework is a natural generalization of the framework developed by Lurie in \cite{dag}, but extends to a much larger class of geometric objects. When applied to the setting of algebraic geometry and its derived and spectral variants, it reproduces the same theory as in as in \cite{dag}. It also naturally extends to differential topology, complex geometry, the theory of supermanifolds, derived manifolds etc., where it produces a theory of higher generalized orbifolds appropriate for these settings. We give categorical characterizations of the resulting geometric objects, as well as their functors of points, which are broad generalizations of the results in \cite{prol}.

\emph{Deligne-Mumford stacks} were introduced in 1969 in the seminal paper \cite{DM}. They are locally modeled by quotients of schemes by finite group actions, and are used to model moduli spaces of objects which have interesting automorphisms, such as elliptic curves. These automorphisms can be interpreted as automorphisms of the points of the Deligne-Mumford stack in question. The theory of Deligne-Mumford stacks has lead to substantial developments, in particular in algebraic geometry \cite{laff} and Gromov-Witten theory \cite{kontt}. Higher categorical generalizations of Deligne-Mumford stacks have been defined by To\"en and Vezzosi in \cite{Hagy2} and also by Lurie in \cite{dag}, and they apply to far reaching generalizations of algebraic geometry, e.g. the algebraic geometry of simplicial commutative rings, and the algebraic geometry of $\Ei$-ring spectra, both of which are versions of \emph{derived algebraic geometry.} %In \cite{survel}, Lurie explains deep connections between the generalized Deligne-Mumford stack of spectral elliptic curves, and the ring spectrum $\operatorname{TMF}$ of topological modular forms.

Predating Deligne-Mumford stacks by over a decade \cite{satake}, the notion of a \emph{smooth orbifold} grew out of differential topology and the study of automorphic forms, and also appeared in the study of 3-manifolds, and in foliation theory. Orbifolds are locally modeled on quotients of $\mathbb{R}^n$ by a linear action of a finite group, and are analogues of Deligne-Mumford stacks. Although initially defined as topological spaces with an orbifold atlas, this point of view leads to problems with the notion of morphism. This can be remedied by encoding orbifolds as differentiable stacks \cite{Dorette}, which one may think of geometrically as generalized manifolds whose points are allowed to have automorphisms. \emph{\'Etale differentiable stacks} are differentiable stacks which are mild generalizations of smooth orbifolds whose examples include not only orbifolds, but more generally, leaf spaces of foliated manifolds and quotients of manifolds by almost-free Lie group actions. They have been studied by various authors \cite{Ie,Dorette,stacklie,hepworth,morsifold,Wockel,Giorgio,etalspme,prol}. In this manuscript, we lay down the basic framework to study \emph{higher \'etale differentiable stacks}. Morally, such a higher stack can be thought of as a generalization of a smooth manifold which allows points to have automorphisms, and also allows automorphisms to have automorphisms themselves, and for these automorphisms between automorphisms to have automorphisms ad infinitum.

In this manuscript, we develop a universal framework to study higher \'etale differentiable stacks and orbifolds on the one hand, and (higher) algebraic Deligne-Mumford stacks, as well as their derived and spectral analogues, on the other. This framework was not developed just to provide a useful language that puts orbifolds and Deligne-Mumford stacks on the same footing, although this is a certainly a nice consequence, but instead was developed as a tool to be used to reveal new insights about these objects and their generalizations. In particular, the framework developed facilitates the proof of a very general representability theorem; we give categorical characterizations of higher generalized orbifolds and Deligne-Mumford stacks, as well as their functors of points, which are broad generalizations of the results in \cite{prol}. In particular, this yields a new characterization of classical Deligne-Mumford stacks which extends to the derived and spectral setting as well. In the differentiable setting, this characterization shows that there is a natural correspondence between $n$-dimensional higher \'etale differentiable stacks, and classical fields for $n$-dimensional field theories, in the sense of \cite{field}.

\section{Overview of our approach}
An \emph{$\infty$-topos} is an $\left(\infty,1\right)$-category of $\infty$-sheaves, where the latter are higher categorical versions of sheaves and stacks taking values in the $\left(\infty,1\right)$-category $\iGpd$ of $\infty$-groupoids rather than sets or groupoids. An \emph{$\infty$-groupoid} is a model for the homotopy type of a topological space, so $\infty$-sheaves are essentially homotopy sheaves of spaces. The homeomorphism type of a (sober) topological space $X$ can be recovered from its topos of sheaves of sets, $\Sh\left(X\right),$ and a general topos can be thought of as a generalized space, whose points can have automorphisms. The homeomorphism type of $X$ is also encoded by the $\i$-topos $\Shi\left(X\right)$ of $\i$-sheaves of $\i$-groupoids over $X,$ and a general $\i$-topos, in a similar manner as for an ordinary topos, may be regarded as a generalized space whose points can posses automorphisms, but with the additional possibility of these automorphisms having automorphisms themselves, and the automorphisms between automorphisms having automorphisms, and so on. As this is the same geometric intuition behind how the underlying ``space'' of a higher orbifold or higher Deligne-Mumford stack should behave, $\i$-topoi become natural candidates for modeling the topology of such higher stacks.

All the geometric objects in our framework are modeled as $\infty$-topoi with an appropriate structure sheaf. We start by specifying a collection of such geometric objects, our \emph{local models}, and then define generalized orbifolds and Deligne-Mumford stacks as those objects which are locally equivalent to our local models, in much the same way one builds manifolds out of Euclidean spaces, or schemes out of affine schemes. This is a natural simultaneous generalization of the approach of Lurie in \cite{dag} for modeling Deligne-Mumford stacks and their derived and spectral analogues, and the approach of Pronk in \cite{Dorette} for modeling \'etale differentiable stacks. For example, if we take the local models to be all open submanifolds of $\RR^n,$ $n\ge 0,$ regarded as locally ringed spaces (which can in turn be viewed as locally ringed $\infty$-topoi), the resulting theory is the theory of higher \'etale differentiable stacks, and if instead the local models are all those of the form $\mbox{Spec}_{\et}\left(\mathbf{E}\right),$ the small \'etale spectrum of a connected $\Ei$-ring spectrum $\mathbf{E}$, the resulting theory is the theory of spectral Deligne-Mumford stacks in the sense of \cite{spectral}.

\section{Organization and Main Results}
Chapter \ref{chap:prelim} offers a brief review of higher topos theory, which is used extensively in the rest of the manuscript.

In Chapter \ref{chap:etale} we develop the role of $n$-topoi (for $0 \le n \le \i$) as generalized spaces, paying particular attention to the notion of a local homeomorphism between $n$-topoi. We introduce a Grothendieck topology, called the \'etale topology, on higher topoi, which provides an elegant framework for formalizing what one means by gluing higher topoi together along local homeomorphisms.

Chapter \ref{chap:structured} introduces the concept of \emph{structured $\i$-topoi} and reviews the notions of \emph{geometry} and \emph{geometric structure} introduced in Lurie in \cite{dag}. The role of these notions is to provide a natural generalization of the concept of locally ringed space suitable for $\i$-topoi which carry structure sheaves with interesting algebraic structure, e.g., a structure sheaf of commutative rings, simplicial commutative rings, $\Ei$-ring spectra, etc. We also review the notion of an \emph{\'etale morphism} of such structured $\i$-topoi. These are an appropriate generalization of the notion of a local homeomorphism of $\i$-topoi, which take into account the structure sheaf. For example, for smooth manifolds, it agrees with the concept of a local diffeomorphism, for schemes with the Zariski topology, it agrees the concept of an open immersion, and for schemes with the \'etale topology, it agrees with the standard definition of an \'etale map of schemes used in algebraic geometry. Finally, we introduce a variant of the \'etale Grothendieck topology on $\i$-topoi suitable for these structured $\i$-topoi.

The cornerstone of this manuscript is Chapter \ref{chap:etendues}. We develop a natural framework to model generalized higher orbifolds and Deligne-Mumford stacks as structured $\i$-topoi. The general set up is as follows: We start with an $\i$-category $\sL$ of structured $\i$-topoi, whose objects are to be considered our local models. We then define an $\i$-category $\lbar$ of \textbf{$\sL$-\'etendues}; an $\sL$-\'etendue, in a precise sense, is a structured $\i$-topos which is locally modeled on objects in $\sL.$ We reserve the terminology \textbf{Deligne-Mumford stack} for the $\i$-sheaf on $\sL$ one gets by considering the functor of points of such an $\sL$-\'etendue. \footnote{In order to have a good notion of functor of points, one needs to impose a certain local smallness condition on the $\i$-category of local models $\sL,$ and also one may need to slightly enlarge $\sL$ in such a way that does not change the resulting $\i$-category $\lbar$, but we will ignore these technical details in this introduction.} For example, if $\sL$ is all affine derived schemes (realized by their small \'etale spectra), then $\sL$-\'etendues are what Lurie calls \emph{derived Deligne-Mumford stacks} in \cite{dag}.
%In order to speak about functors of points, it becomes convenient, for technical reasons, to introduce the concept of a (locally small) \textbf{strong \'etale blossom}. These are subcategories of structured $\i$-topoi rich enough to have a good theory of sheaves. For the purpose of this survey, it suffices to say that not every collection $\sL$ of structured $\i$-topoi has a good theory of sheaves, however, given any starting $\i$-category $\sL$ of local models (with an appropriate local smallness condition), one can find a (locally small) strong \'etale blossom $\sD$ for which $\sD$-\'etendue and $\sL$-\'etendue are one in the same.

We then construct the $\i$-topos of $\i$-sheaves over the local models $\sL,$ $\Shi\left(\sL\right),$ and prove the following, which essentially states that an $\sL$-\'etendue is determined by its functor of points on the local models:

\begin{thm}
There is a full and faithful functor $$\lbar \hookrightarrow \Shi\left(\sL\right)$$ between the $\left(\infty,1\right)$-category of $\sL$-\'etendues and the $\left(\infty,1\right)$-category of $\i$-sheaves over $\sL.$
\end{thm}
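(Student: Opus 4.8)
The plan is to realize the functor as the restricted Yoneda embedding and to exploit the fact that every $\sL$-\'etendue is glued out of local models along the \'etale topology. Concretely, I would send an $\sL$-\'etendue $\X$ to the presheaf
$$ y(\X) := \operatorname{Map}_{\lbar}(-,\X)\big|_{\sL} \colon \sL^{op} \to \iGpd. $$
First I would check that $y(\X)$ is genuinely an $\i$-sheaf, not merely a presheaf: since the \'etale topology on $\sL$ is inherited from that on $\lbar$ (covers in $\sL$ remain covers in $\lbar$), the representable $\operatorname{Map}_{\lbar}(-,\X)$ satisfies descent along $\sL$-covers, i.e.\ the \'etale topology is subcanonical, so $y$ factors through $\Shi(\sL)$. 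Because the Yoneda embedding $\lbar \hookrightarrow \Pshi(\lbar)$ preserves limits and restriction along $\sL \hookrightarrow \lbar$ is a right adjoint, $y$ preserves finite limits; composing with sheafification (which also preserves finite limits) keeps this true, a fact I will use below.

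The geometric heart of the argument is the following descent statement, which I would extract from the construction of $\lbar$ in Chapter \ref{chap:etendues}: every $\sL$-\'etendue $\X$ admits an \'etale cover $U_0 \to \X$ with $U_0$ a coproduct of objects of $\sL$, and $\X$ is recovered as the geometric realization $\X \simeq |U_\bullet|$ of the \v{C}ech nerve $U_\bullet$ of this cover, the colimit being computed in $\lbar$. The crucial point is that each $U_n = U_0 \times_\X \cdots \times_\X U_0$ is again a coproduct of local models: this uses that \'etale morphisms are stable under pullback and that a fiber product of local models over an \'etendue along \'etale maps is again a local model (possibly after the mild enlargement of $\sL$ alluded to in the introduction). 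Applying $y$ and using that $y(U_0) \to y(\X)$ is an effective epimorphism in $\Shi(\sL)$ --- which holds precisely because \'etale covers are sent to covers of sheaves --- \v{C}ech descent in the $\i$-topos $\Shi(\sL)$ together with the finite-limit-preservation of $y$ gives $y(\X) \simeq |y(U_\bullet)|$.

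With these two presentations in hand, full faithfulness becomes a formal computation. For any $\Y \in \lbar$ we have, using $\X \simeq |U_\bullet|$ in $\lbar$,
$$ \operatorname{Map}_{\lbar}(\X,\Y) \simeq \lim_{\Delta} \operatorname{Map}_{\lbar}(U_\bullet, \Y), $$
while using $y(\X) \simeq |y(U_\bullet)|$ in $\Shi(\sL)$ together with ordinary Yoneda on $\sL$ (valid since each $U_n$ is a coproduct of local models and $y(\Y)$ is a sheaf) gives
$$ \operatorname{Map}_{\Shi(\sL)}(y(\X), y(\Y)) \simeq \lim_{\Delta} \operatorname{Map}_{\Shi(\sL)}(y(U_\bullet), y(\Y)) \simeq \lim_{\Delta} y(\Y)(U_\bullet) \simeq \lim_{\Delta} \operatorname{Map}_{\lbar}(U_\bullet, \Y). $$
The natural map induced by $y$ is compatible with these identifications, so it is an equivalence, proving that $y$ is fully faithful.

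I expect the main obstacle to be the geometric descent input of the second paragraph rather than the formal manipulation of the third: namely, establishing that the \v{C}ech nerve of an \'etale cover by local models stays levelwise within coproducts of $\sL$, and that $\X$ is the effective colimit of this nerve simultaneously in $\lbar$ and, after applying $y$, in $\Shi(\sL)$. The closure of $\sL$ under \'etale fiber products is exactly what forces the technical enlargement of $\sL$ mentioned in the footnote, and verifying that this enlargement neither alters $\lbar$ nor disturbs the effective colimit presentations is where the real care will be needed.
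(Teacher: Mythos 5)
Your skeleton---restricted Yoneda, subcanonicality of the \'etale topology, a \v{C}ech presentation of $\X$ by local models, and a formal limit comparison---has the same shape as the paper's argument, which also rests on (a) representables being sheaves (Proposition \ref{prop:subcanonical}), (b) \'etendues being colimits of local data (Proposition \ref{prop:gens}), and (c) the comparison functor preserving the relevant colimits (Proposition \ref{prop:bothfuncolims}). But the step you yourself flag is a genuine gap, and it is considerably less ``mild'' than you suggest. The \v{C}ech nerve of a cover $U_0 \to \X$ by a coproduct of objects of $\sL$ is levelwise only a coproduct of \emph{\'etale slices} of local models, i.e.\ of objects of $\sl\left(\sL\right)$ of the form $\cL/E$ with $E$ an arbitrary object of the $\i$-topos $\cL$ (Remark \ref{rmk:slclsd}); such slices are \'etal\'e spaces of arbitrary $\i$-sheaves and in general are neither equivalent to, nor even covered by, coproducts of objects of $\sL$ (the paper's heuristic: $\sL$ ``may fail to contain enough of the open sets'' of its objects). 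Forcing levelwise membership means replacing $\sL$ by its full $\sl$-closure, which is essentially \emph{large} (the equivalence classes of objects $E$ of $\cL$ form a proper class), so you lose exactly the smallness your argument needs for $\Shi\left(\sL\right)$ to be an $\i$-topos in which ``effective epimorphism implies \v{C}ech colimit'' is available; the same enlargement is already needed for your local-lifting claim that $y\left(U_0\right) \to y\left(\X\right)$ is an effective epimorphism, since the pulled-back cover $\cL \times_\X U_0 \to \cL$ must be refined by site objects. Moreover, the assertion that the enlargement ``does not disturb'' the sheaf category is essentially Lemma \ref{lem:sheavesthesame}, which the paper proves \emph{after}, and using, Theorem \ref{thm:fullyfaithful}. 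Finally, you never address why $\Hom_{\lbar}\left(-,\X\right)$ is small-valued at all: without the strong local smallness hypothesis, $\lbar$ need not be locally small, and establishing this (Proposition \ref{prop:locallsmall}) is itself a consequence of the key colimit lemma below.

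The paper threads this needle differently. It works over a small \emph{strong \'etale blossom} $\sD$ (Definition \ref{dfn:strongblossom}): a small site that is not $\sl$-closed but contains, for each $\cD$, a small subcategory of $\sD^{\et}/\cD$ presenting $\cD$ as an accessible left exact localization; the sheaf condition on $\Shi\left(\sD\right)$ is then defined by slice-representability rather than by \'etale descent alone (the localization may have a cotopological part, cf.\ Remark \ref{rmk:nosheav}). The technical heart is Lemma \ref{lem:taucolim}, which shows the functors $\tau^\cE_\cD$ preserve \emph{all} small colimits---proved via universality of colimits and pullback-stability of \'etale morphisms, following DAG 2.3.12, and genuinely stronger than your ``subcanonicality plus finite-limit preservation''---together with Corollary \ref{cor:colimps}, the slice-wise detection of colimits of sheaves, which requires the fibrational construction of Proposition \ref{prop:sheafification}. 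Full faithfulness then follows not from a single \v{C}ech presentation but from a closure argument: the full subcategory of objects $\cF$ for which the comparison map $\Hom\left(\cF,\cE\right) \to \Hom\left(i_\sD^* y\left(\cF\right), i_\sD^* y\left(\cE\right)\right)$ is an equivalence contains $\sD^{\et}$ (by Yoneda) and is closed under small colimits, while $\sD^{\et}$ generates $\dbare$ under colimits (Corollary \ref{cor:stetgen}). Crucially this generation is \emph{iterated}---slices are colimits of blossom objects by Proposition \ref{prop:stretex}, and \'etendues are simplicial colimits of coproducts of slices by Proposition \ref{prop:gens}---which is precisely what a one-step \v{C}ech nerve over $\sL$ cannot capture. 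If you upgrade your colimit-preservation claim from \v{C}ech covers to all small colimits, your outline converges to the paper's proof; as written, the levelwise \v{C}ech claim and the smallness bookkeeping are the missing substance, not refinements.
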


We then define an $\i$-sheaf $\X$ on $\sL$ to be \textbf{Deligne-Mumford} if it is in the essential image of the above full and faithful functor. In particular, the $\left(\infty,1\right)$-categories of $\sL$-\'etendues and of Deligne-Mumford stacks on $\sL$ are canonically equivalent.

Recall that an \emph{\'etale morphism} of $\sL$-\'etendues is essentially a local homeomorphism of underlying $\i$-topoi which respects their structure sheaves. Our first major result is the following:

\begin{thm}\label{thm:int2}
If the $\left(\infty,1\right)$-category-category of local models $\sL$ is essentially small, then there is a canonical equivalence of $\left(\infty,1\right)$-categories
$$\lbare\simeq \Shi\left(\sL^{\et}\right)$$
between the $\left(\infty,1\right)$-category of $\sL$-\'etendues and only their \'etale morphisms, and the $\left(\infty,1\right)$-category of $\i$-sheaves over the $\left(\infty,1\right)$-category consisting of the objects of $\sL$ and only their \'etale morphisms. In particular, $\lbare$ is an $\i$-topos. Moreover, the $\i$-topos $\Uni:=\Shi\left(\sL^{\et}\right)$ carries a canonical structure sheaf $\O_\Uni,$ making the pair $\left(\Uni,\O_\Uni\right)$ an $\sL$-\'etendue. Furthermore, $\left(\Uni,\O_\Uni\right)$ is the terminal object in $\lbare.$ (In particular, such a terminal object exists).
\end{thm}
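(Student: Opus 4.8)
The plan is to build the equivalence directly, taking $\Uni:=\Shi(\sL^{\et})$ as the starting point and producing an explicit functor into $\lbare$ that is an equivalence and that sends the terminal $\i$-sheaf to $(\Uni,\O_\Uni)$. The entire argument rests on the classification of \'etale morphisms of structured $\i$-topoi reviewed in Chapter \ref{chap:structured} (following \cite{dag}): for a structured $\i$-topos $(\X,\O_\X)$, \'etale morphisms into it are classified by the objects of $\X$, an object $U\in\X$ corresponding to the slice $\big(\X_{/U},\,\O_\X|_U\big)$ together with its canonical \'etale projection to $(\X,\O_\X)$. I will use this together with the elementary facts that the slice projection $\cE_{/U}\to\cE$ is an \'etale geometric morphism and that $\cE_{/\ast}\simeq\cE$.

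First I would construct the structure sheaf $\O_\Uni$. Each local model is a structured $\i$-topos $(L,\O_L)$, and the Yoneda embedding sends it to a representable $\i$-sheaf $h_L\in\Uni$. I would characterize $\O_\Uni$ by the requirement that, for every $L$, the restriction of $\O_\Uni$ along the canonical \'etale map $\Uni_{/h_L}\to\Uni$ be $\O_L$; existence and uniqueness of such an $\O_\Uni$ follow because the representables generate $\Uni$ under colimits while the assignment $L\mapsto\O_L$ is functorial for \'etale maps and satisfies \'etale descent, so the structure sheaves of the local models glue over $\Uni$. Granting $\O_\Uni$, the pair $(\Uni,\O_\Uni)$ is itself an $\sL$-\'etendue: the representables cover $\Uni$, and by construction the induced maps $(L,\O_L)\to(\Uni,\O_\Uni)$ are \'etale, which is exactly the condition of being locally modeled on $\sL$.

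With $\O_\Uni$ in hand I would set up two functors and show they are mutually inverse. The realization functor $\Psi\colon\Shi(\sL^{\et})\to\lbare$ sends an $\i$-sheaf $F$ to $\Psi(F):=\big(\Uni_{/F},\,\O_\Uni|_F\big)$, the slice $\i$-topos equipped with the structure sheaf restricted along the \'etale projection $\Uni_{/F}\to\Uni$; a morphism $F\to G$ induces an \'etale morphism of slices, so $\Psi$ indeed lands in $\lbare$, and writing $F$ as a colimit of representables shows $\Psi(F)$ is glued \'etale-locally from local models, hence is an $\sL$-\'etendue. In the other direction, the \'etale functor of points $\Phi\colon\lbare\to\Shi(\sL^{\et})$ sends $\X$ to the presheaf $L\mapsto\operatorname{Map}^{\et}_{\lbare}\big((L,\O_L),\X\big)$, which is an $\i$-sheaf by \'etale descent. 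The composite $\Phi\circ\Psi$ is equivalent to the identity by a direct application of the classification: $\operatorname{Map}^{\et}_{\lbare}\big(L,\Uni_{/F}\big)$ classifies objects $W\to F$ of $\Uni_{/F}$ whose structured slice is $(L,\O_L)$, i.e.\ with $W\simeq h_L$, and by the Yoneda lemma this space is $\operatorname{Map}_{\Uni}(h_L,F)\simeq F(L)$.

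The crux is the reverse composite $\Psi\circ\Phi\simeq\mathrm{id}$, i.e.\ that every $\sL$-\'etendue $\X$ is recovered from its \'etale functor of points as $\X\simeq\big(\Uni_{/\Phi(\X)},\,\O_\Uni|_{\Phi(\X)}\big)$. This is the main obstacle: it amounts to reconstructing a structured $\i$-topos from the \'etale functor of points, and I would establish it by choosing an \'etale cover of $\X$ by local models, observing that $\Phi(\X)$ records exactly the descent data of this cover, and checking that the resulting canonical comparison $\Uni_{/\Phi(\X)}\to\X$ is an equivalence of structured $\i$-topoi. I expect this to follow from the full and faithful embedding $\lbar\hookrightarrow\Shi(\sL)$ of the preceding theorem, restricted to \'etale morphisms, together with the \'etale gluing machinery of Chapters \ref{chap:etale}--\ref{chap:structured}. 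Once $\Phi$ and $\Psi$ are seen to be mutually inverse we obtain $\lbare\simeq\Shi(\sL^{\et})$, so $\lbare$ is an $\i$-topos; and since $\Psi$ carries the terminal $\i$-sheaf $\ast$ to $\big(\Uni_{/\ast},\,\O_\Uni|_\ast\big)=(\Uni,\O_\Uni)$, while equivalences preserve terminal objects, $(\Uni,\O_\Uni)$ is the terminal object of $\lbare$, as claimed.
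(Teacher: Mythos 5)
Your overall architecture---an inverse pair consisting of the \'etale functor of points $\Phi$ and the slicing functor $\Psi\colon F \mapsto \left(\Uni/F,\O_\Uni|_F\right)$, with terminality of $\left(\Uni,\O_\Uni\right)$ read off from $\Psi(\ast)$---does match the shape of the paper's Proposition \ref{prop:theta}, but at both places where you defer (``existence and uniqueness of $\O_\Uni$ follow because\dots'' and ``I expect this to follow from\dots''), the actual content of the paper's proof lives, and your proposal does not fill either gap. The crux $\Psi\circ\Phi\simeq\mathrm{id}$ is \emph{not} obtained in the paper by a direct descent/gluing argument; it rests on the key Lemma \ref{lem:taucolim} (the \'etale functor of points $\tau^{\cE,\et}_\cD$ preserves small colimits, adapted from Proposition 2.3.12 of \cite{dag}), which yields full faithfulness of $\dbare \to \Shi\left(\sD^{\et}\right)$ (Theorem \ref{thm:fullyfaithful}, via Proposition \ref{prop:bothfuncolims}) and, combined with strong generation (Corollary \ref{cor:stetgen}) and cocompleteness of $\lbare$ (Lemma \ref{lem:holmol}, Proposition \ref{prop:etcolims}), produces the terminal object \emph{abstractly}: the colimit of the identity functor is terminal (Corollary \ref{cor:colimernal}, Lemma \ref{lem:termexists}). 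Once $\Uni$ is terminal, the identification $\lbare \simeq \lbare/\Uni \simeq \Uni$ of Remark \ref{rmk:2352} hands you $\O_\Uni$ for free---it is the structure map of the terminal object, so no gluing construction is needed---and makes essential surjectivity nearly formal (Theorem \ref{thm:yonequil} via Proposition \ref{prop:loclimsheaf}). Note moreover that your proposed direct gluing of $\O_\Uni$ over the representables would itself require exactly this colimit-preservation input, to know that the cocone of local models in $\Str^{\et}\left(\G\right)$ is colimiting with underlying $\i$-topos $\Shi\left(\sL^{\et}\right)$; so the lemma cannot be circumvented.

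Second, several steps you treat as elementary are false for a bare essentially small $\sL$: the identification $\Uni/h_L \simeq \left(L,\O_L\right)$ (on which your Yoneda computation $\Hom^{\et}\left(L,\Uni/F\right)\simeq F(L)$ depends), the claim that representables generate $\Uni$ under colimits, and even the meaningfulness of the sheaf condition together with the assertion that $\Shi\left(\sL^{\et}\right)$ is an $\i$-topos---the latter is a genuinely nontrivial result (Proposition \ref{prop:sheafification} and Corollary \ref{cor:colimpset}, proved via a Cartesian-fibration argument, since the \'etale topology here is not a priori a small-generated site). These all can fail when $\sL$ ``does not contain enough open subsets'' of its objects, which is precisely why the paper introduces \emph{strong \'etale blossoms} (Definition \ref{dfn:strongblossom}) and first replaces $\sL$ by a small strong \'etale blossom $\sD$ with $\overline{\sD}=\lbar$ (Proposition \ref{prop:existbloss})---the ``slight enlargement'' flagged in the introduction's footnote. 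Without that replacement, the slice $\Shi\left(\sL^{\et}\right)/h_L$ need not recover $L$, and your verification of $\Phi\circ\Psi\simeq\mathrm{id}$ breaks at its first step. So the proposal sketches the right statement and the right inverse functors, but it omits the two load-bearing ingredients: the blossom replacement that makes the sheaf theory on $\sL^{\et}$ behave, and the colimit-preservation lemma that drives full faithfulness, the terminal object, and reconstruction.
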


Put another way: the $\left(\infty,1\right)$-category of $\i$-sheaves over the site of local models and their \'etale maps (local homeomorphisms which respect the structure sheaves) is equivalent to the $\left(\infty,1\right)$-category consisting of the \'etendues (schemes relative to these local models) and their \'etale maps, whenever the local models form a set. This general principle was first noticed in \cite{prol}, in the $2$-categorical context, with the local models constrained to be structured topological spaces. Here is what this means concretely:

An $\sL$-\'etendue $\cE$ is determined by its functor of points:
$$\Hom\left(\mspace{3mu}\cdot\mspace{3mu},\cE\right):\sL^{op} \to \iGpd.$$
The above functor is an $\i$-sheaf on $\sL$, and is in fact the Deligne-Mumford stack associated to $\cE$. However, $\cE$ is also determined by its 
\'etale points, which form an $\i$-sheaf on the $\left(\infty,1\right)$-category $\sL^{\et}$ of local models \emph{and only their \'etale morphisms}. The \'etale points of $\cE$ are the $\i$-sheaf denoted by $\tilde y^{\et}\left(\cE\right),$ which assigns to an object $\cL$ of $\sL$ the space
$$\tilde y^{\et}\left(\cE\right)\left(\cL\right)=\left\{f:\cL \to \cE\mspace{3mu}|\mspace{3mu} f\mbox{ \'etale}\right\}$$
of \'etale morphisms from $\cE$ to $\cL.$

The above theorem, in particular, states the converse, namely that if $F$ is an \emph{arbitrary} $\i$-sheaf on $\sL^{\et},$ then it is the $\i$-sheaf of \'etale points of some $\sL$-\'etendue (equivalently the \'etale points of a Deligne-Mumford stack): there exists an $\sL$-\'etendue $\Theta\left(F\right)$ unique with the property that for any object $\cL$ in $\sL,$ space
$F\left(\cL\right)$ is homotopy equivalent to the space of \'etale maps from  $\cL$ to $\Theta\left(F\right)$
$$F\left(\cL\right)\simeq \left\{f:\cL \to \Theta\left(F\right)\mspace{3mu}|\mspace{3mu} f\mbox{ \'etale}\right\}.$$
These two constructions $\tilde y^{\et}$ and $\Theta$ are mutually inverse to each other.

Even more surprisingly, the collection of all $\sL$-\'etendues (or equivalently Deligne-Mumford stacks)  together with all their \'etale morphisms forms an $\i$-topos, and this $\i$-topos is actually the underlying ``space'' of a certain universal \'etendue: the terminal object in $\lbare.$ At the same time, this $\i$-topos is also canonically equivalent to $\Shi\left(\sL^{\et}\right).$ These are the main new insights of this manuscript about the theory of Deligne-Mumford stacks.

Given an essentially small $\left(\infty,1\right)$-category of local models $\sL,$ we denote by $j$ the canonical functor $$j:\sL^{\et} \to \sL$$ between the $\left(\infty,1\right)$-category consisting of the local models and only their \'etale morphisms and the $\left(\infty,1\right)$-category of local models and \emph{all} their morphisms. There is an obvious restriction functor $$j^*:\Shi\left(\sL\right) \to \Shi\left(\sL^{\et}\right)$$ between their associated $\i$-topoi of $\i$-sheaves which, when $\sL$ is small, we show admits a left adjoint $j_!$ called the \textbf{\'etale prolongation functor}. This functor gives a natural way of \emph{prolonging} an $\i$-sheaf that is a priori only functorial with respect to \'etale morphisms to produce a new $\i$-sheaf which is functorial with respect to all morphisms.

Our next major result is the following:

\begin{thm}\label{thm:int3}
An $\i$-sheaf $\X$ on $\sL$ is Deligne-Mumford if and only if it is in the essential image of the \'etale prolongation functor $$j_!:\Shi\left(\sL^{\et}\right) \to \Shi\left(\sL\right).$$
\end{thm}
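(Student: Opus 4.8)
The plan is to identify the \'etale prolongation functor $j_!$ with the composite of the equivalences and embeddings already established, and thereby match its essential image with the Deligne--Mumford $\i$-sheaves. Write $y\colon \lbar \hookrightarrow \Shi\left(\sL\right)$ for the fully faithful functor of points of the first theorem, so that $y\left(\cE\right)=\Hom\left(\fdot,\cE\right)$ and, by definition, the Deligne--Mumford $\i$-sheaves are its essential image; by Theorem~\ref{thm:int2} the \'etale-points functor $\tilde y^{\et}\colon \lbare \xrightarrow{\ \sim\ } \Shi\left(\sL^{\et}\right)$ is an equivalence, with inverse $\Theta$. Let $\iota\colon \lbare \to \lbar$ be the identity-on-objects inclusion of the \'etendues with their \'etale morphisms into the \'etendues with all morphisms. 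As $\iota$ is essentially surjective and $\Theta$ an equivalence, the essential image of $y\circ\iota\circ\Theta$ is exactly the Deligne--Mumford $\i$-sheaves. Hence the theorem follows once I prove the natural equivalence of functors
$$j_!\ \simeq\ y\circ\iota\circ\Theta\colon\ \Shi\left(\sL^{\et}\right)\longrightarrow \Shi\left(\sL\right),$$
or, equivalently, since every object of $\Shi\left(\sL^{\et}\right)$ is an $\tilde y^{\et}\left(\cE\right)$, the key identity $j_!\left(\tilde y^{\et}\left(\cE\right)\right)\simeq y\left(\cE\right)$, natural in the $\sL$-\'etendue $\cE$.

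I would first dispatch the case of a local model $\cL$, which anchors the whole argument. Here $\tilde y^{\et}\left(\cL\right)$ is the $\i$-sheaf on $\sL^{\et}$ represented by $\cL$ and $y\left(\cL\right)$ is the $\i$-sheaf on $\sL$ represented by $\cL$. For every $G\in\Shi\left(\sL\right)$ the adjunction $j_!\dashv j^*$ and the Yoneda lemma on each site give
$$\Hom\left(j_!\tilde y^{\et}\left(\cL\right),G\right)\simeq \Hom\left(\tilde y^{\et}\left(\cL\right),j^*G\right)\simeq \left(j^*G\right)\left(\cL\right)\simeq G\left(\cL\right)\simeq \Hom\left(y\left(\cL\right),G\right),$$
where the middle equivalence uses that $j$ is the identity on objects, so $j^*G$ and $G$ agree on $\cL$. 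By the Yoneda lemma this yields $j_!\left(\tilde y^{\et}\left(\cL\right)\right)\simeq y\left(\cL\right)$, naturally in $\cL$.

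To pass to a general \'etendue I would use that $j_!$, being a left adjoint, preserves all colimits. By the density theorem in the $\i$-topos $\Shi\left(\sL^{\et}\right)$, the $\i$-sheaf $\tilde y^{\et}\left(\cE\right)$ is the colimit of the representables indexed by its $\i$-category of elements, which under $\tilde y^{\et}$ is the slice $(\sL^{\et})_{/\cE}$ of local models \'etale over $\cE$. Applying $j_!$ and the previous step,
$$j_!\left(\tilde y^{\et}\left(\cE\right)\right)\ \simeq\ \colim_{(\cL\to\cE)\in(\sL^{\et})_{/\cE}} y\left(\cL\right)\qquad\text{in }\Shi\left(\sL\right),$$
the colimit running over the \emph{\'etale} charts of $\cE$. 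It thus remains to prove that the canonical comparison map
$$c\colon\ \colim_{(\cL\to\cE)\in(\sL^{\et})_{/\cE}} y\left(\cL\right)\ \longrightarrow\ y\left(\cE\right),$$
induced by the \'etale charts $\cL\to\cE$, is an equivalence; this is a descent statement to the effect that the functor of points of an \'etendue is reconstructed from its \'etale charts.

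This final step is the heart of the argument and the part I expect to be the main obstacle. Because $\cE$ is an $\sL$-\'etendue it admits an \'etale atlas: a family of \'etale charts for which $\coprod_\alpha y\left(\cL_\alpha\right)\to y\left(\cE\right)$ is an effective epimorphism in $\Shi\left(\sL\right)$. Since equivalences in an $\i$-topos may be detected after base change along an effective epimorphism, it suffices to show that $c$ becomes an equivalence after pulling back along each $y\left(\cL_\alpha\right)\to y\left(\cE\right)$. By universality of colimits in $\Shi\left(\sL\right)$ --- and because $y$ preserves fiber products, each evaluation $\Hom\left(\cM,\fdot\right)$ being limit-preserving --- the pullback of the source of $c$ is $\colim_{(\cL\to\cE)} y\left(\cL_\alpha\times_\cE\cL\right)$ and the pullback of the target is $y\left(\cL_\alpha\right)$. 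One is thereby reduced to the assertion that the \'etale charts of $\cE$, restricted to the local model $\cL_\alpha$, reconstruct $y\left(\cL_\alpha\right)$ --- a statement of the same shape as $c$ but now over a local model, which the base case settles once the indexing is matched up. The real difficulty is precisely this matching: the fiber products $\cL_\alpha\times_\cE\cL$ are \'etale over $\cL_\alpha$ but need not be local models, so one must compare the chart-indexed colimit with the \v{C}ech nerve of the atlas and carry out the ensuing descent. This is exactly the descent property of the \'etale topology underlying the first theorem and Theorem~\ref{thm:int2}, from whose atlas presentation of Deligne--Mumford $\i$-sheaves I would extract that $c$ is an equivalence; granting this, the key identity, and with it the theorem, follows.
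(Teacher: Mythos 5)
Your overall architecture is the same as the paper's proof of Theorem \ref{thm:classificationsmall}: exhibit the square identifying $j_!\circ \tilde y^{\et}$ with $y\circ\iota$, check agreement on local models via the adjunction $j_!\dashv j^*$ and the Yoneda lemma (your base case is exactly the paper's observation that the two composites agree on $\sD^{\et}$ ``by definition''), and conclude that the essential images coincide. But there is a genuine gap at precisely the step you flag as the heart of the argument. The assertion that the comparison map $c:\colim_{(\cL\to\cE)} y\left(\cL\right) \to y\left(\cE\right)$ is an equivalence is \emph{not} extractable from Theorem \ref{thm:int2} or from full faithfulness of the functor of points: Theorem \ref{thm:int2} governs only the \'etale site and the sheaf of \'etale points, and full faithfulness controls $\Hom\left(y\left(\cE\right),y\left(\cF\right)\right)$ for representable targets, whereas proving $c$ an equivalence requires computing $\Hom\left(y\left(\cE\right),G\right)$ for \emph{arbitrary} sheaves $G$, i.e.\ it requires knowing that the full (non-\'etale) functor of points $i^*_\sD\circ y$ carries colimits in $\dbare$ to colimits in $\Shi\left(\sD\right)$. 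In the paper this is Proposition \ref{prop:bothfuncolims}, whose engine is Lemma \ref{lem:taucolim} (adapted from Proposition 2.3.12 of \cite{dag}): one pulls back along an arbitrary representable map $\varphi:y\left(D\right)\to\tau\left(E\right)$ --- which corresponds to a possibly non-\'etale morphism $\cD/D\to\cE/E$ of structured $\i$-topoi --- then identifies the pullbacks as representables using Corollary \ref{cor:2.3.20} together with the left-cancellation property of \'etale morphisms (Proposition \ref{prop:6.3.5.9}). That is the genuinely nontrivial descent input, and your proposal asserts it rather than proves it. Note also that this is exactly where the strong \'etale blossom hypothesis (Definition \ref{dfn:strongblossom}) earns its keep: over a local model $\cL_\alpha$ the fiber products $\cL_\alpha\times_\cE\cL$ leave $\sL$, as you observe, and the blossom condition is what guarantees that the \'etale charts lying in $\sD$ are still dense enough in each slice.

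Two further unproven premises in your sketch deserve mention. First, you assume $\coprod_\alpha y\left(\cL_\alpha\right)\to y\left(\cE\right)$ is an effective epimorphism in $\Shi\left(\sL\right)$; but the sheaf condition on $\sL$ is defined slice-wise (Definitions \ref{dfn:sheaveDet} and \ref{dfn:sheaveD}), not by decreeing atlases to be covers, so this effectivity is itself a consequence of the colimit-preservation you are trying to establish, not an available input. Second, even granting your pullback reduction, the statement over $\cL_\alpha$ is not your Yoneda base case but a local density claim of the same difficulty as $c$ itself, so the induction does not close. By contrast, the paper avoids the explicit \v{C}ech analysis entirely: having Proposition \ref{prop:bothfuncolims} in hand, it invokes the accessibility of the localization $\dbare\hookrightarrow\Pshi\left(\sD^{\et}\right)$ together with Proposition 5.5.4.20 and Theorem 5.1.5.6 of \cite{htt} to conclude that two colimit-preserving functors out of $\dbare$ agreeing on $\sD^{\et}$ agree everywhere. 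Your proposal would become a correct (and structurally parallel) proof once the missing descent lemma --- in the paper's notation, that $\tau^{\cE}_{\cD}$ preserves small colimits --- is supplied.
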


The above may be regarded as a representability theorem for Deligne-Mumford stacks. Notice that its only input is the canonical functor $$j:\sL^{\et} \to \sL$$ between the $\left(\infty,1\right)$-category of local models and \'etale morphisms and the $\left(\infty,1\right)$-category of local models and all morphisms, and the Grothendieck topology on $\sL.$ The above theorem states that the $\left(\infty,1\right)$-category of Deligne-Mumford stacks, which a priori was defined by geometric means by gluing together local models along \'etale morphisms, is actually determined by the \emph{the functor $j$ and this Grothendieck topology alone} ($j_!$ can be constructed directly from $j$ as a certain left Kan extension). Not only this, but the functor $j$ gives a precise characterization of how this $\left(\infty,1\right)$-category of Deligne-Mumford stacks sits inside of the $\left(\infty,1\right)$-category of $\i$-sheaves $\Shi\left(\sL\right),$ namely, it is precisely the image of $j_!$.

We then turn to the case where the local models in question need not form a set, or more precisely, the case where the $\left(\infty,1\right)$-category $\sL$ of local models need not be essentially small. Nonetheless, each object $\cL$ of $\sL$ is contained in some small subcategory of $\sL,$ and for each such subcategory $\sU,$ one has the canonical functor $$\sU^{\et} \to \sU \to \sL,$$ which we denote by $j_\sU,$ where $\sU^{\et}$ consists of the objects of $\sU$ and their \'etale morphisms, and one has a restriction functor $$j_\sU^*:\Shi\left(\sL\right) \to \Shi\left(\sU^{\et}\right)$$ between their associated $\left(\infty,1\right)$-categories of $\i$-sheaves, which we show admits a left adjoint $j^\sU_!$ called the \textbf{relative \'etale prolongation functor with respect to $\sU$}.

We generalize Theorem \ref{thm:int3} to the following:

\begin{thm}\label{thm:int4}
An $\i$-sheaf $\X$ on $\sL$ is Deligne-Mumford if and only if it is in the essential image of a relative prolongation functor $$j^\sU_!:\Shi\left(\sU^{\et}\right) \to \Shi\left(\sL\right),$$ for some small subcategory $\sU$ of $\sL.$
\end{thm}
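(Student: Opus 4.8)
The plan is to reduce Theorem~\ref{thm:int4} to the essentially small case already settled by Theorem~\ref{thm:int3}. First I would factor the structure functor as $j_\sU = i\circ k$, where $i\colon \sU\hookrightarrow\sL$ is the inclusion of the small full subcategory and $k\colon\sU^{\et}\to\sU$ is the canonical functor internal to $\sU$. Since restriction along a composite is the composite of restrictions, $j_\sU^* = k^*\circ i^*$, and passing to left adjoints yields a canonical equivalence $j^\sU_!\simeq i_!\circ k_!$, where $k_!\colon\Shi(\sU^{\et})\to\Shi(\sU)$ is precisely the absolute \'etale prolongation functor of the small category $\sU$. By Theorem~\ref{thm:int3} applied to $\sU$, the essential image of $k_!$ is exactly the $\i$-category of Deligne--Mumford stacks on $\sU$. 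Hence the essential image of $j^\sU_!$ is $i_!$ applied to the Deligne--Mumford stacks on $\sU$, and the theorem reduces to the assertion that an $\i$-sheaf $\X$ on $\sL$ is Deligne--Mumford if and only if $\X\simeq i_!\left(\Y\right)$ for some small $\sU$ and some Deligne--Mumford stack $\Y$ on $\sU$.

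The heart of the argument is a compatibility lemma describing how $i_!$ interacts with functors of points. Since $i$ is fully faithful, for $\cL\in\sU$ one computes $i^* y_\sL\left(\cL\right)\simeq y_\sU\left(\cL\right)$, so that for every sheaf $G$ on $\sL$ the adjunction gives $\Hom\left(i_! y_\sU\left(\cL\right),G\right)\simeq G\left(\cL\right)\simeq \Hom\left(y_\sL\left(\cL\right),G\right)$; by Yoneda, $i_!$ carries representables to representables, $i_! y_\sU\left(\cL\right)\simeq y_\sL\left(\cL\right)$. Now a $\sU$-\'etendue $\cE$ admits an \'etale atlas $\coprod_\alpha \cL_\alpha\to\cE$ by local models $\cL_\alpha\in\sU$, and its functor of points is the geometric realization $\Hom_\sU\left(-,\cE\right)\simeq\colim y_\sU\left(\cL_\bullet\right)$ of the associated \v Cech nerve. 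As a left adjoint $i_!$ preserves this colimit, so $i_!\left(\Hom_\sU\left(-,\cE\right)\right)\simeq\colim y_\sL\left(\cL_\bullet\right)\simeq\Hom_\sL\left(-,\cE\right)$. Because $\sU\subseteq\sL$, the \'etendue $\cE$ is \emph{a fortiori} an $\sL$-\'etendue, so the right-hand side is Deligne--Mumford on $\sL$; this establishes the ``if'' direction.

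For the converse, let $\X$ be Deligne--Mumford, corresponding under the embedding $\lbar\hookrightarrow\Shi\left(\sL\right)$ to an $\sL$-\'etendue $\cE$. Its underlying $\i$-topos is presentable and hence generated by a small set of objects; since an \'etale cover of $\cE$ by local models corresponds to a family of objects of this topos whose coproduct covers the terminal object, I can extract a \emph{small} covering family $\left\{\cL_\alpha\to\cE\right\}_{\alpha}$. Let $\sU$ be the full subcategory of $\sL$ spanned by a small set of objects containing all the $\cL_\alpha$, equipped with the induced \'etale topology. Then $\cE$ is a $\sU$-\'etendue, $i^*\X\simeq\Hom_\sU\left(-,\cE\right)$ is Deligne--Mumford on $\sU$, and the lemma of the previous paragraph gives $\X\simeq i_!\left(i^*\X\right)$. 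By the first paragraph, $\X$ therefore lies in the essential image of $j^\sU_!$, as desired.

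The main obstacle I anticipate is the compatibility lemma itself. A priori $i_!$ is a left Kan extension followed by sheafification, so the delicate point is to verify that it genuinely commutes with the \v Cech/descent presentation of the functor of points---that is, that $i_!$ preserves the effective epimorphism of $\i$-sheaves witnessing an \'etale cover, so that $i_!$ of the $\sU$-functor of points is literally the $\sL$-functor of points rather than differing by a sheafification. Verifying this amounts to checking that the inclusion $i$ is a morphism of $\i$-sites for which the induced topology on $\sU$ is subcanonical and cover-preserving in both directions, which in particular guarantees that the hypotheses of Theorem~\ref{thm:int3} are met for $\sU$. The remaining points---the existence of a small \'etale atlas and the factorization of the adjoints---are routine once this site-theoretic bookkeeping is in place.
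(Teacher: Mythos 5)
Your overall architecture mirrors the paper's: reduce to the small case of Theorem \ref{thm:int3}, and show that prolongation along the inclusion $\sU \hookrightarrow \sL$ is compatible with the functor-of-points embedding (the paper does this via the homotopy-commutative square (\ref{eq:relprol}) in the proof of Theorem \ref{thm:classificationlarge}). But there is a genuine gap at the single most delicate point, which you partially sense but misdiagnose at the end: you take $\sU$ to be \emph{any} small full subcategory of $\sL$ spanned by a set of objects containing an \'etale atlas $\left\{\cL_\alpha \to \cE\right\}$. That is not enough. The paper requires $\sU$ to be a small \textbf{strong \'etale blossom} (Definition \ref{dfn:strongblossom}), and the content of Proposition \ref{prop:locblossom} is precisely that such a $\sU$ containing $\cE$ in $\overline{\sU}$ exists: its construction adjoins, for each atlas member $\cD_\alpha$, all slices $\cD_\alpha/L_{\cD_\alpha}\left(y\left(C\right)\right)$ over a small generating site $\C_{\cD_\alpha}$, i.e. ``enough open subsets'' of the atlas. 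With an arbitrary small $\sU$, three things you rely on are unavailable. First, the sheaf categories $\Shi\left(\sU\right)$ and $\Shi\left(\sU^{\et}\right)$ are only defined via the canonical localizations of Proposition \ref{prop:largesite} and Definitions \ref{dfn:sheaveDet}--\ref{dfn:sheaveD}, which presuppose the strong blossom condition; so your functors $k_!$ and $i_!$ do not yet exist in the paper's framework. Second, Theorem \ref{thm:int3} (i.e.\ Theorem \ref{thm:classificationsmall}) is proved only for small strong \'etale blossoms: its proof needs $\sU^{\et}$ to strongly generate $\ubare$ (Corollary \ref{cor:stetgen}), full faithfulness of $\ubar \to \Shi\left(\sU\right)$ (Theorem \ref{thm:fullyfaithful}), and the equivalence $\ubare \simeq \Shi\left(\sU^{\et}\right)$ (Theorem \ref{thm:yonequil}) --- none of which hold for a bare small subcategory; the paper explicitly warns that a general $\sL$ may ``fail to contain enough of the open sets'' of its objects, so that \'etendues are not faithfully represented by their restricted functor of points.

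Third, your \v{C}ech compatibility lemma breaks for the same reason: the terms of the \v{C}ech nerve of the atlas are iterated fiber products $\cL_{\alpha_0}\times_\cE\cdots\times_\cE\cL_{\alpha_n}$, which lie in $\sl\left(\sU\right)$ but typically \emph{not} in $\sU$, so $\colim y_\sU\left(\cL_\bullet\right)$ is not a colimit of representables on $\sU$, and the step $i_!\left(\Hom_\sU\left(-,\cE\right)\right)\simeq\Hom_\sL\left(-,\cE\right)$ does not follow from $i_!$ preserving colimits plus $i_!y_\sU\simeq y_\sL$ alone. It is exactly the strong blossom condition (Proposition \ref{prop:stretex} and Proposition \ref{prop:gens}) that exhibits every object of $\sl\left(\sU\right)^{\et}$, and hence every \v{C}ech term, as a colimit of a diagram in $\sU^{\et}$, which is what makes the comparison go through (in the paper this is packaged as Proposition \ref{prop:bothfuncolims} and the colimit-preservation argument for the square (\ref{eq:relprol})). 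Your closing worry about ``subcanonical and cover-preserving'' site morphisms points at the wrong condition: subcanonicity is fine (Proposition \ref{prop:subcanonical}), and the real obstruction is the absence of the strong-blossom hypothesis on $\sU$. Repairing your proof means replacing ``any small set of objects containing the $\cL_\alpha$'' with the construction of Proposition \ref{prop:locblossom}; once that substitution is made, your factorization $j^\sU_!\simeq i_!\circ k_!$ and the rest of the argument align with the paper's proof of Theorem \ref{thm:classificationlarge}.
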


In Chapter \ref{chap:examples}, we spell out the general machinery developed in Chapter \ref{chap:etendues} in the setting of differential topology, and in the cases of classical, derived, and spectral algebraic geometry.

Specifically, in Section \ref{sec:orbifolds} we define the concept of a differentiable $\i$-orbifold and an \'etale differentiable $\i$-stack. If $\sL$ is the category of smooth manifolds, regarded as locally ringed spaces spaces, we call an $\sL$-\'etendue a \textbf{smooth $\i$-\'etendue} and we define an \textbf{\'etale differentiable $\i$-stack} to be a Deligne-Mumford stack with respect to $\sL$. We then reconcile these definitions with the existing definitions of differentiable orbifolds and \'etale differentiable stack in the literature:

\begin{thm}
An $\i$-sheaf $\X$ on the category of smooth manifolds $\Mfd$ (with respect to the open cover topology) is an \'etale differentiable stack if and only if it is an \'etale differentiable $\i$-stack, and is $1$-truncated (i.e. is a stack of groupoids). Similarly, $\X$ is a differentiable orbifold if and only if it is a differentiable $\i$-orbifold and is $1$-truncated.
\end{thm}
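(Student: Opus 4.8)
The plan is to route both equivalences through \'etale Lie groupoid presentations, using the hypothesis of $1$-truncation precisely to ensure that the atlases involved are honest manifolds rather than genuinely higher \'etendues. Recall that, for $\sL = \Mfd$ regarded as locally ringed spaces, an \'etale differentiable $\i$-stack is by definition an $\i$-sheaf lying in the essential image of the embedding $\lbar \hookrightarrow \Shi\left(\sL\right)$, i.e.\ the functor of points of a smooth $\i$-\'etendue. A classical \'etale differentiable stack, on the other hand, is a $1$-truncated $\i$-sheaf on $\Mfd$ (with the open cover topology) which is equivalent to the stackification of the nerve of an \'etale Lie groupoid $G_1 \rightrightarrows G_0$, that is, a Lie groupoid whose source and target maps are local diffeomorphisms. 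So the task is to show that these two descriptions agree exactly under the extra assumption of $1$-truncation.

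For the implication sending a classical \'etale differentiable stack $\X$ to a smooth $\i$-\'etendue, I would start from an \'etale Lie groupoid presentation $G_\bullet$ of $\X$ and build the associated structured $\i$-topos by taking the $\i$-topos of $G_\bullet$-equivariant $\i$-sheaves on $G_0$ (equivalently, $\i$-sheaves on the \'etale site of $G_0$ equipped with descent data along $G_\bullet$), together with the structure sheaf of smooth real-valued functions inherited from $G_0$. The atlas $G_0 \to \X$, whose legs are local diffeomorphisms, exhibits this pair as locally equivalent to objects of $\sL$, so it defines an object of $\lbar$; and by the full faithfulness of the embedding $\lbar \hookrightarrow \Shi\left(\sL\right)$ its functor of points is canonically identified with $\X$. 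Since $\X$ is a stack of groupoids it is automatically $1$-truncated, which completes this direction.

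For the converse, suppose $\X$ is an \'etale differentiable $\i$-stack that is $1$-truncated. By definition $\X$ is the functor of points of a smooth $\i$-\'etendue, and unwinding the \'etale local modeling condition produces a manifold $G_0$ (a coproduct of charts) together with a representable \'etale effective epimorphism $y\left(G_0\right) \to \X$. Forming its \v{C}ech nerve in $\Shi\left(\Mfd\right)$ yields a groupoid object whose object of arrows is $G_1 := G_0 \times_\X G_0$. The crucial point is that, since the atlas is representable and \'etale and $\X$ is $1$-truncated, $G_1$ is again representable by a smooth manifold and the two projections $s,t : G_1 \to G_0$ are local diffeomorphisms, so $G_1 \rightrightarrows G_0$ is an \'etale Lie groupoid. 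Because $y\left(G_0\right) \to \X$ is an effective epimorphism and $\X$ is $1$-truncated, $\X$ is the colimit of this \v{C}ech nerve, hence the stackification of $G_\bullet$, and is therefore a classical \'etale differentiable stack.

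The main obstacle is this representability step in the converse: establishing that $G_0 \times_\X G_0$ is a manifold and that its structure maps are local diffeomorphisms. This requires carefully unwinding the definition of an \'etale morphism of structured $\i$-topoi from Chapter \ref{chap:structured}, in order to see that on functors of points it restricts to a representable local diffeomorphism, and then invoking $1$-truncation to conclude that the relevant fibered product is an honest manifold rather than a genuinely higher \'etendue; without $1$-truncation one would only obtain a smooth $\i$-\'etendue in each simplicial degree. Finally, the orbifold statement follows by the same argument once one observes that a differentiable $\i$-orbifold is carved out among \'etale differentiable $\i$-stacks by a properness condition on the diagonal $\X \to \X \times \X$, which translates precisely into the requirement that the anchor map $\left(s,t\right) : G_1 \to G_0 \times G_0$ of the presenting groupoid be proper, a condition manifestly preserved and reflected by the passage between the two kinds of presentation.
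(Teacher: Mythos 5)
Your two-directional strategy via \'etale Lie groupoid presentations is reasonable in outline, and is close in spirit to what the paper does in Proposition \ref{prop:etenduesthesame}; but the step you yourself flag as ``the main obstacle'' --- that $G_1 = G_0 \times_\X G_0$ is a manifold with \'etale structure maps --- is a genuine gap, and ``invoking $1$-truncation'' of $\X$ does not close it. Two separate nontrivial inputs are required. First, you need to know that the smooth $\i$-\'etendue $\cE$ representing $\X$ is $1$-\emph{localic}: $1$-truncatedness of the functor of points does not yield this formally. In the paper it is deduced in the unlabeled lemma preceding Corollary \ref{cor:1trunete} from the identification $\Uni \simeq \Shi\left(\Mfd^{\et}\right)$ of Theorem \ref{thm:yonequil}, together with Theorem \ref{thm:smalltruncatedversion}, Proposition \ref{prop:theta}, and Proposition \ref{prop:2.3.16} --- i.e., from the universal-\'etendue machinery of Chapter \ref{chap:etendues}, which your sketch nowhere invokes. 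Second, even granting that $\cE$ is $1$-localic, the chart objects $E_\alpha$ with $\cE/E_\alpha \simeq \Shi\left(\RR^{n_\alpha}\right)$ are, by Proposition \ref{prop:2.3.16}, only guaranteed to be $1$-truncated; consequently the overlaps $E_\alpha \times E_\beta$ are a priori only $1$-truncated, and $\cE/\left(E_\alpha \times E_\beta\right)$ is a priori an \'etale \emph{stack} over $\RR^{n_\alpha}$ rather than a manifold. This is exactly why the proof of Proposition \ref{prop:etenduesthesame} does not take the \v{C}ech nerve of the chart cover directly, but first refines each $E_\alpha$ by an epimorphism $H_\alpha \to E_\alpha$ from a $0$-truncated object (the torsor decomposition), so that each $\cE/H_\alpha$ becomes the \'etal\'e space of a sheaf of sets on $\RR^{n_\alpha}$, hence a (possibly non-Hausdorff) manifold; only the refined cover $H = \coprod_\alpha H_\alpha$ produces an \'etale Lie groupoid.

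There are two further soft spots. In your forward direction, full faithfulness of $\lbar \hookrightarrow \Shi\left(\sL\right)$ says only that an \'etendue is determined by its functor of points; it does not identify that functor of points with $\X = \left[\G\right]$. To get that identification one needs either Pronk's Theorem 41 of \cite{Dorette} --- which is in fact the paper's entire proof of the theorem: it identifies Pronk's equivalence $S:\Etdd \to \Etdsd$ with the restriction of $i^* \circ \overline{y}$ along $\nu^\i$ and then applies Corollary \ref{cor:1trunete} --- or else the colimit argument: realize $\cE$ as the colimit of the nerve of $\G$ in $\Etd^{\et}$ (Lemma \ref{lem:holmol}) and use that the functor of points preserves these colimits (Proposition \ref{prop:bothfuncolims}), as in the paper's proposition on \'etale simplicial manifolds. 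Finally, for the orbifold clause, the translation between properness of the diagonal geometric morphism of $\i$-topoi (Definition 7.3.1.4 of \cite{htt}) and properness of the anchor map $\left(s,t\right):G_1 \to G_0 \times G_0$ is not ``manifest''; it is a substantive comparison, which the paper outsources to Theorem 4.1 of \cite{sheavesonorbifolds}.
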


We also prove:

\begin{thm}
An $\i$-sheaf $\X$ on $\Mfd$ is an \'etale differentiable $\i$-stack if and only if it arises from an \'etale simplicial manifold.
\end{thm}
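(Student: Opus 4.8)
The plan is to derive both directions from the representability theorem (Theorem~\ref{thm:int3}), which identifies the \'etale differentiable $\i$-stacks, i.e. the Deligne-Mumford stacks for $\sL=\Mfd$, with the essential image of the \'etale prolongation functor $j_!\colon\Shi\left(\sL^{\et}\right)\to\Shi\left(\sL\right)$. Two structural facts about $j_!$ do the heavy lifting: being a left adjoint it preserves all colimits, and being a left Kan extension along $j$ (followed by sheafification) it carries the representable $\i$-sheaf $\tilde y^{\et}\left(M\right)$ of a manifold $M$ on $\sL^{\et}$ to the representable $\i$-sheaf $y\left(M\right)$ of $M$ on $\sL=\Mfd$. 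Reading ``$\X$ arises from an \'etale simplicial manifold $M_\bullet$'' as $\X\simeq\varinjlim_{\Delta^{op}}y\left(M_\bullet\right)$ (the geometric realization of representables in $\Shi\left(\Mfd\right)$), where $M_\bullet$ is a simplicial object in $\Mfd$ whose faces and degeneracies are local diffeomorphisms, these facts reduce the theorem to matching up such realizations with objects of the image of $j_!$.

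For the ``if'' direction, suppose $\X\simeq\varinjlim_{\Delta^{op}}y\left(M_\bullet\right)$ for an \'etale simplicial manifold $M_\bullet$. Since its structure maps are local diffeomorphisms, $M_\bullet$ lifts to a simplicial object in $\sL^{\et}$, and applying the two identities from the first paragraph I compute
\[
 j_!\big(\varinjlim_{\Delta^{op}}\tilde y^{\et}\left(M_\bullet\right)\big)\simeq \varinjlim_{\Delta^{op}}j_!\big(\tilde y^{\et}\left(M_\bullet\right)\big)\simeq \varinjlim_{\Delta^{op}}y\left(M_\bullet\right)\simeq\X,
\]
so $\X$ lies in the essential image of $j_!$ and is Deligne-Mumford by Theorem~\ref{thm:int3}. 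Note this direction requires no Segal or groupoid hypothesis on $M_\bullet$: the \'etale prolongation functor absorbs it.

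Conversely, suppose $\X$ is an \'etale differentiable $\i$-stack. By Theorem~\ref{thm:int3} we may write $\X\simeq j_!\left(F\right)$ for some $F\in\Shi\left(\sL^{\et}\right)$, and by Theorem~\ref{thm:int2} this $F$ is the sheaf of \'etale points $\tilde y^{\et}\left(\cE\right)$ of a smooth $\i$-\'etendue $\cE\in\lbare\simeq\Uni$. Because $\cE$ is locally modeled on manifolds it admits an \'etale atlas, i.e. an effective epimorphism $p\colon\tilde y^{\et}\left(U\right)\twoheadrightarrow\cE$ in the $\i$-topos $\Uni$ with $U$ a (possibly disconnected) manifold and $p$ \'etale. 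I would then form the \v{C}ech nerve $C_\bullet$ of $p$ in $\Uni$. The crucial point is that $C_\bullet$ is an \'etale simplicial manifold: each $C_n$ is an iterated fiber product $U\times_\cE\cdots\times_\cE U$, and in the differentiable setting a fiber product of manifolds over an \'etendue along \'etale maps is again a manifold—base-changing the \'etale map $U\to\cE$ exhibits $C_n$ as \'etale over the manifold $U$, hence as a local diffeomorphism with manifold domain—while all simplicial operators, being base changes and projections of \'etale maps, are local diffeomorphisms. Since $p$ is an effective epimorphism, descent in the $\i$-topos $\Uni$ gives $\cE\simeq\varinjlim_{\Delta^{op}}\tilde y^{\et}\left(C_\bullet\right)$, and applying $j_!$ exactly as above yields $\X\simeq\varinjlim_{\Delta^{op}}y\left(C_\bullet\right)$; that is, $\X$ arises from the \'etale simplicial manifold $C_\bullet$ (which is moreover a groupoid object).

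The principal obstacle is the representability of the \v{C}ech nerve in this last direction: one must know both that every smooth $\i$-\'etendue admits an \'etale atlas by an honest manifold—rather than by the more flexible local models the general theory permits—and that the fiber products $U\times_\cE U$, computed in $\Uni$, are again manifolds. The first is the concrete meaning of $\cE$ being locally equivalent to objects of $\sL=\Mfd$ in the \'etale topology; the second rests on the setting-specific facts that \'etale morphisms are stable under base change and that an object \'etale over a manifold is a manifold, combined with limit-preservation of the equivalence $\lbare\simeq\Uni$ so that fiber products of representable \'etendues are computed via the underlying manifolds. The remaining subtlety—identifying the colimit formed in $\Uni=\Shi\left(\sL^{\et}\right)$ with the geometric realization in $\Shi\left(\Mfd\right)$ that defines ``arises from''—is handled uniformly by the first paragraph, being precisely the compatibility of $j_!$ with colimits and with the passage from $\tilde y^{\et}$ to $y$.
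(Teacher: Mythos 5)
Your ``if'' direction is correct and matches the paper in substance (the paper forms the colimit of the simplicial diagram in $\Etd^{\et}$, which is cocomplete by Lemma \ref{lem:holmol}, and applies Proposition \ref{prop:bothfuncolims}; your route through the colimit-preservation of $j_!$ and $j_!\tilde y^{\et}\left(M\right)\simeq y\left(M\right)$ is the same computation). The converse, however, contains a genuine gap: the step asserting that each \v{C}ech nerve level $C_n = U\times_\cE\cdots\times_\cE U$ is a manifold is false for genuinely higher \'etendues. Being \'etale over the manifold $U$ does \emph{not} give a manifold domain: by Proposition \ref{lem:htt6.3.5.10} such a domain is of the form $\Shi\left(U\right)/F$ for $F$ an $\i$-sheaf on $U$, and this is a (possibly non-Hausdorff) manifold precisely when $F$ is $0$-truncated, i.e.\ an honest \'etal\'e space (cf.\ Remark \ref{rmk:0truncov}). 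Concretely, take $\cE=\Shi\left(\RR^n\right)/B^2A$ for a nontrivial abelian group $A$: this is a smooth $\i$-\'etendue since $\sl\left(\Etd\right)=\Etd$, and the point $1\to B^2A$ gives an \'etale effective epimorphism $p:\Shi\left(\RR^n\right)\to\cE$ exactly of the kind your argument uses, yet $C_1\simeq\Shi\left(\RR^n\right)/BA$ is the \'etal\'e space of a $1$-truncated, non-$0$-truncated sheaf --- an \'etale stack, not a manifold. In general the \v{C}ech nerve of a manifold atlas has manifold levels only when the atlas map is $0$-truncated, which by Proposition \ref{prop:2.3.16} forces $\cE$ to be $1$-localic; so your argument reproves the classical $1$-truncated case (Theorem \ref{thm:etalstthesame}) but cannot reach the higher ones.

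The paper repairs exactly this point by replacing the \v{C}ech nerve with a \emph{hypercover}: since $\cosl\left(\Mfd\right)=\Etd$, one covers $\cE$ by manifolds, then covers the (generally non-manifold) fiber products again by manifolds, and so on level by level, producing $F_\bullet:\Delta^{op}\to\Etd^{\et}/\cE\simeq\cE$ with all levels in $\widetilde{\Mfd}$. The price is that hypercovers are not effective in an arbitrary $\i$-topos; the paper pays it with Remark \ref{rmk:hypermanifold} --- manifolds have finite covering dimension, hypercompleteness is local, hence $\cE$ is hypercomplete and the hypercover converges --- an ingredient entirely absent from your proposal. Two smaller divergences follow from this fix: the resulting \'etale simplicial manifold is not in general a groupoid object, so your parenthetical claim that the witness ``is moreover a groupoid object'' cannot be salvaged; and its levels are typically non-Hausdorff, which is why the paper defines \'etale simplicial manifolds as functors $\Delta^{op}\to\widetilde{\Mfd}^{\et}$ rather than as simplicial objects of $\Mfd$, as your reading of the statement implicitly assumes.
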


The following theorem is then a special case of Theorem \ref{thm:int2}:

\begin{thm}
There is a canonical equivalence of $\left(\infty,1\right)$-categories
$$\Etds^{\et} \simeq \Shi\left(\Mfd^{\et}\right)$$
between the $\left(\infty,1\right)$-category of \'etale differentiable $\i$-stacks and local diffeomorphisms, and the $\left(\infty,1\right)$-category of $\i$-sheaves on the site of smooth manifolds and local diffeomorphisms. Similarly, there is a canonical equivalence of $\left(\infty,1\right)$-categories $$\Etdsn^{\et} \simeq \Shi\left(n\mbox{-}\Mfd^{\et}\right)$$ between the $\left(\infty,1\right)$-category of $n$-dimensional \'etale differentiable $\i$-stacks and local diffeomorphisms, and the $\left(\infty,1\right)$-category of $\i$-sheaves on the site of smooth $n$-manifolds and local diffeomorphisms.
\end{thm}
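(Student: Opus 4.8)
The plan is to recognize this theorem as the specialization of Theorem~\ref{thm:int2} to the category of smooth manifolds, so that the bulk of the work is lining up definitions rather than proving anything genuinely new. First I would fix $\sL := \Mfd$, the category of smooth manifolds regarded as locally ringed (equivalently $\cinf$-ringed) $\i$-topoi, as set up in Chapter~\ref{chap:structured}. By the definitions of Section~\ref{sec:orbifolds}, an \'etale differentiable $\i$-stack is precisely a Deligne-Mumford stack with respect to this $\sL$, and by the canonical equivalence between Deligne-Mumford stacks on $\sL$ and $\sL$-\'etendues (recorded immediately after the full-faithfulness theorem), we obtain $\Etds \simeq \lbar$. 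It then remains only to observe that this equivalence is compatible with the distinguished classes of \'etale morphisms on the two sides: by the treatment of \'etale morphisms of structured $\i$-topoi in Chapter~\ref{chap:structured}, an \'etale morphism between two manifolds is exactly a local diffeomorphism, so restricting to \'etale morphisms yields $\Etds^{\et} \simeq \lbare$ and identifies the site $\sL^{\et}$ with $\Mfd^{\et}$, the category of smooth manifolds and local diffeomorphisms equipped with the \'etale Grothendieck topology of Chapters~\ref{chap:etale} and~\ref{chap:structured}, which on manifolds is generated by jointly surjective families of local diffeomorphisms.

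The one genuine hypothesis of Theorem~\ref{thm:int2} that must be verified is that $\sL = \Mfd$ is essentially small, and this is the step I expect to require the most care. Adopting the standard convention that manifolds are Hausdorff and second countable, each connected manifold embeds by the Whitney embedding theorem as a closed submanifold of some $\RR^N$, and second countability forces each manifold to have at most countably many components; hence there is only a set of diffeomorphism classes, and the relevant mapping spaces in the $\i$-category of structured $\i$-topoi are essentially small. Therefore $\sL$ is essentially small, the local smallness condition needed for a well-behaved functor of points is met, and Theorem~\ref{thm:int2} applies directly, giving
$$\lbare \simeq \Shi\left(\sL^{\et}\right) = \Shi\left(\Mfd^{\et}\right).$$
Combining this with the identifications of the previous paragraph yields the first claimed equivalence $\Etds^{\et} \simeq \Shi\left(\Mfd^{\et}\right)$.

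Finally, the $n$-dimensional statement is proved by exactly the same argument with $\sL := n\mbox{-}\Mfd$, the full subcategory of smooth $n$-manifolds, which is again essentially small by the same embedding argument; by definition its $\sL$-\'etendues are the $n$-dimensional \'etale differentiable $\i$-stacks, and a second application of Theorem~\ref{thm:int2} produces $\Etdsn^{\et} \simeq \Shi\left(n\mbox{-}\Mfd^{\et}\right)$. The main obstacle throughout is thus not any deep new construction but rather the careful bookkeeping needed to confirm, on the one hand, the essential smallness of $\Mfd$ and $n\mbox{-}\Mfd$, so that Theorem~\ref{thm:int2} is applicable, and, on the other hand, that the abstract notion of \'etale morphism of structured $\i$-topoi restricts on the local models to the classical notion of local diffeomorphism and that the abstract \'etale topology restricts to the expected topology generated by local diffeomorphisms; both of these facts were established in Chapters~\ref{chap:etale} and~\ref{chap:structured} and here need only be invoked.
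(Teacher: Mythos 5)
Your proposal matches the paper's own route: in the body this theorem is proved precisely as a specialization of the general equivalence (Theorem \ref{thm:yonequil} together with Proposition \ref{prop:theta}, the rigorous form of Theorem \ref{thm:int2}) to the local models $\Mfd$ and $n\mbox{-}\Mfd$, with essential smallness secured exactly as you do, by taking manifolds Hausdorff and second countable and invoking Whitney's embedding theorem. The one hypothesis you elide is that the body's theorem requires the local models to form a small \emph{strong \'etale blossom}, not merely an essentially small subcategory; this holds here because $\Mfd$ (resp.\ $n\mbox{-}\Mfd$) contains the strong \'etale blossom of open submanifolds of the spaces $\RR^n$ furnished by Proposition \ref{prop:existbloss}, whence by Remark \ref{rmk:supblossom} it is itself one, so your argument goes through unchanged.
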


\begin{rmk}
The $\left(\infty,1\right)$-category $\Shi\left(n\mbox{-}\Mfd^{\et}\right)$ is the same as the $\left(\infty,1\right)$-category of \emph{classical fields for an $n$-dimensional field theory} as defined by Freed and Teleman in \cite{field}. Therefore, one may interpret the above theorem as saying that $n$-dimensional \'etale differentiable $\i$-stacks (generalized higher orbifolds) are geometric incarnations of classical fields for an $n$-dimensional field theory. However we will not develop this idea further in this manuscript.
\end{rmk}

Denote by $j:\Mfd^{\et} \to \Mfd$ the canonical functor from the category of smooth manifolds and local diffeomorphisms into the category of smooth manifolds and all smooth maps. The induced restriction functor $$j^*:\Shi\left(\Mfd\right)\to \Shi\left(\Mfd^{\et}\right)$$ has a left adjoint $j_!$ called the \'etale prolongation functor. The following theorem is a special case of Theorem \ref{thm:int3}:

\begin{thm}
An $\i$-sheaf $\X$ on $\Mfd$ is an \'etale differentiable $\i$-stack if and only if it is in the essential image of the \'etale prolongation functor $j_!$.
\end{thm}

The $2$-categorical analogue of the above theorem was proven in \cite{prol}.

It is then show how the language of smooth $\i$-\'etendues leads naturally to an extension of the \textbf{\'etal\'e space construction} for \'etale differentiable stacks, as developed in \cite{etalspme} to the setting of \'etale differentiable $\i$-stacks. That is, to any $\i$-sheaf $F$ on an \'etale differentiable $\i$-stack $\X,$ there exists a local diffeomorphism $$LF \to \X$$ from another \'etale differentiable $\i$-stack whose $\i$-sheaf of sections recovers $F.$  We then recall the notion of an \textbf{effective \'etale differentiable stack}, which is basically an \'etale differentiable stack whose isotropy groups act effectively.

Finally, we prove the following:

\begin{thm}
Any \'etale differentiable $\i$-stack $\X$ can be realized as the \'etal\'e space of an $\i$-gerbe over an effective \'etale differentiable stack.
\end{thm}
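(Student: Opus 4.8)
The plan is to produce the effective base $\Y$ as an effective \'etale differentiable stack, exhibit a canonical local diffeomorphism $\pi\colon \X \to \Y$, recognize it as an $\i$-gerbe, and then invoke the \'etal\'e space construction to recover $\X$. First I would fix a presentation of $\X$ by an \'etale simplicial manifold $X_\bullet$ (available by the theorem above characterizing \'etale differentiable $\i$-stacks as those arising from \'etale simplicial manifolds), so that $X_0 =: M$ furnishes an \'etale atlas $M \to \X$ and $X_1 \rightrightarrows X_0$ is an \'etale groupoid of germs. The \emph{effect homomorphism} sends each arrow --- a germ of local data relating two points of $M$ --- to the genuine germ of local diffeomorphism it induces, landing in the restriction over $M$ of Haefliger's groupoid $\Gamma$ of germs of local diffeomorphisms. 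Since germs of diffeomorphisms form a discrete (i.e. $0$-truncated) set, this homomorphism necessarily collapses precisely the arrows, and all higher simplices, that act trivially on germs. I would define $\Y := \Ef\left(\X\right)$ to be the \'etale differentiable stack presented by the image \'etale groupoid; by construction $\Y$ is effective and $1$-truncated, and its canonical atlas is again $M$.

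Because $\X$ and $\Y$ share the atlas $M$, the effect homomorphism is the identity on objects, so it induces a morphism $\pi\colon \X \to \Y$ fitting into a commuting triangle with the two \'etale atlases $M \to \X$ and $M \to \Y$. I would check that $\pi$ is a local diffeomorphism (an \'etale morphism of \'etale differentiable $\i$-stacks) by pulling back along $M \to \Y$: chart-locally the resulting map is a quotient of the form $\left[\,\cdot\,/K\right] \to \left[\,\cdot\,\right]$ by the kernel $\i$-groups of the effect homomorphism, which is \'etale. The relevant local model is that over a point whose automorphism $\i$-group is $A$, mapping to its germ group $\bar A$, the homotopy fiber of $\pi$ is the classifying stack $BK$ of the kernel $\i$-group $K = \mathrm{fib}\left(A \to \bar A\right)$.

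The crux is to show that $\pi$ is an $\i$-gerbe. Since the germ group $\bar A$ is $0$-truncated, the map $A \to \bar A$ has $\bar A$ equal to the image of $\pi_0 A$, so the kernel $K$ captures \emph{all} of the higher homotopy $\pi_{\geq 1}A$ together with the ineffective part $\ker\left(\pi_0 A \to \bar A\right)$; hence each fiber $BK$ is connected. I would verify the gerbe condition locally over charts of $\Y$: using $M \to \Y$ and the pullback computation above, $\pi$ becomes the projection from a $BK$-bundle, whence $\pi$ is an effective epimorphism with connected fibers, that is, an $\i$-gerbe. The main obstacle is precisely this local analysis --- one must control how the effect homomorphism interacts with the full simplicial structure of $X_\bullet$, ensure that no higher homotopy survives in $\Y$, and identify $K$ as a well-defined $\i$-group object assembling into a genuine connected gerbe fiber rather than a merely fiberwise-connected family; making this uniform and functorial across overlapping charts is where the real work lies.

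Finally, since $\pi\colon \X \to \Y$ is a local diffeomorphism between \'etale differentiable $\i$-stacks, the \'etal\'e space construction (extended above to the $\i$-setting) applies to the $\i$-sheaf $\G$ on $\Y$ of local sections of $\pi$, namely the $\i$-sheaf sending an \'etale morphism $V \to \Y$ to the space of \'etale lifts $V \to \X$: one has $L\G \simeq \X$, and the $\i$-sheaf of sections of $L\G \to \Y$ recovers $\G$. By the fiber computation above, the stalk of $\G$ at a point of $\Y$ is the connected $\i$-groupoid $BK$, so $\G$ is locally nonempty and locally connected, i.e. an $\i$-gerbe in the $\i$-topos of $\i$-sheaves over the effective \'etale differentiable stack $\Y$. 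This exhibits $\X$ as the \'etal\'e space of an $\i$-gerbe over $\Y = \Ef\left(\X\right)$, as required.
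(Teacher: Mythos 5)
Your overall strategy has the same skeleton as the paper's proof --- the effective base is the ``image in the Haefliger stack,'' the map down to it is $1$-connective, and $\X$ is recovered as the \'etal\'e space of the resulting $\i$-gerbe --- but your implementation at the level of an \'etale simplicial manifold $X_\bullet$ has a genuine gap, and it sits exactly where you flag ``the real work.'' A bare simplicial object in $\widetilde{\Mfd}^{\et}$ carries no composition law: without a Kan-type condition on $X_\bullet$ (which the paper explicitly declines to develop, remarking only that such a rectification is \emph{expected}), the set-theoretic image of the effect map $X_1 \to \Ha\left(M\right)_1$ need not be closed under composition of germs, there is no well-defined effect homomorphism on higher simplices, and your claim that it ``collapses precisely the arrows, and all higher simplices, that act trivially on germs'' has no meaning yet. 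You would further have to prove that the resulting $\Y$ and $\pi$ are independent of the chosen presentation and that $\pi$ is \'etale, none of which is routine at this level of generality.

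The paper closes this hole by never choosing an atlas. Under the equivalence $\Etds^{\et}\simeq \Shi\left(\Mfd^{\et}\right)$ of Theorem \ref{thm:fields}, $\X$ corresponds to an object $G=\left(i^{\et}\right)^*\overline{y}^{\et}\left(\cE\right)$ of the universal \'etendue $\Uni$, and the effective base is simply $\cF:=\Theta\left(\pi_0 G\right)$: since $\pi_0 G$ is $0$-truncated, $\cF$ is effective immediately by Proposition \ref{prop:eff}, with no image-groupoid construction needed. This is the invariant form of your construction: the terminal object of $\Uni$ is (the \'etale points of) the Haefliger stack, so $G \to \pi_0 G \to 1$ is the first Postnikov factorization of the map $\X \to \HA$, i.e.\ the intrinsic ``image of the effect homomorphism.'' The truncation map $G \to \pi_0 G$ is $1$-connective --- this is literally your ``effective epimorphism with connected fibers $BK$,'' with $K$ absorbing $\ker\left(\pi_0 A \to \bar{A}\right)$ together with all higher homotopy --- so $G$, regarded as an object of the slice topos $\Shi\left(\Mfd^{\et}\right)/\pi_0 G$, which is the underlying $\i$-topos of $\cF$, has terminal $\pi_0$ and hence is an $\i$-gerbe in the sense of Definition \ref{dfn:gerbe}; its \'etal\'e space is $\Uni/G\simeq \cE$ tautologically, with no separate appeal to your section-sheaf $\G$. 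If you replace your image-groupoid step by this intrinsic $\pi_0$-truncation, the remainder of your argument (the local diffeomorphism, the fiber analysis, and the \'etal\'e space step) goes through essentially as you wrote it.
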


In Section \ref{sec:dmgeom}, we apply the results of Chapter \ref{chap:etendues} to the setting of $\g$-schemes for a \textbf{geometry} in the sense of \cite{dag}. In particular, we work out consequences of these results to the theory of higher Deligne-Mumford stacks, derived Deligne-Mumford stacks, and spectral Deligne-Mumford stacks.

Our first result is the following:

\begin{thm}
Let $k$ be a commutative ring and let $\sL$ be the category of affine $k$-schemes, viewed as a category of structured $\i$-topoi, with each commutative $k$-algebra $A$ being associated to its small \'etale $\i$-topos $\Shi\left(A_{\et}\right)$ (with an appropriate structure sheaf). An $\i$-sheaf on affine $k$-schemes with respect to the \'etale topology is a Deligne-Mumford stack, in the classical sense, if and only if it is a Deligne-Mumford stack for $\sL$ and is $1$-truncated, i.e. is a stack of groupoids.
\end{thm}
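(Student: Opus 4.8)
The plan is to translate the hypothesis ``Deligne-Mumford for $\sL$'' into the structured-$\i$-topos picture and then match it against the classical atlas-theoretic definition. By the full and faithful embedding $\lbar\hookrightarrow\Shi\left(\sL\right)$ and the equivalence $\lbare\simeq\Shi\left(\sL^{\et}\right)$ established earlier, an $\i$-sheaf $\X$ on affine $k$-schemes is Deligne-Mumford for $\sL$ exactly when it is the functor of points of an $\sL$-\'etendue $\left(\Y,\O_\Y\right)$, i.e.\ of a structured $\i$-topos locally modeled on the small \'etale spectra $\Shi\left(A_{\et}\right)$. One half of the truncation condition is then immediate: a classical Deligne-Mumford stack is by definition a stack valued in groupoids, so its functor of points is $1$-truncated in $\Shi\left(\sL\right)$.

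For the forward implication I would take a classical Deligne-Mumford stack $\X$ together with a surjective \'etale atlas $p:U\to\X$, writing $U=\coprod_i\Spec A_i$ as a disjoint union of affine $k$-schemes. The small \'etale site of $\X$ produces an $\i$-topos $\Shi\left(\X_{\et}\right)$, and the structure sheaf $\O_\X$ of the stack makes it into a structured $\i$-topos. The atlas supplies, \'etale-locally on $\Shi\left(\X_{\et}\right)$, equivalences with the small \'etale spectra $\Shi\left(\left(A_i\right)_{\et}\right)$ carrying their structure sheaves, which is precisely the condition that this pair be an $\sL$-\'etendue. Computing its functor of points returns $\X$, so $\X$ is Deligne-Mumford for $\sL$, and it is $1$-truncated as already observed.

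For the converse I would begin with an $\sL$-\'etendue $\left(\Y,\O_\Y\right)$ whose functor of points $\X$ is $1$-truncated and recover a classical atlas. Because $\left(\Y,\O_\Y\right)$ is locally modeled on objects of $\sL$, there is in $\lbare\simeq\Shi\left(\sL^{\et}\right)$ an effective epimorphism $\coprod_i\Shi\left(\left(A_i\right)_{\et}\right)\to\left(\Y,\O_\Y\right)$ built from \'etale morphisms. Passing to \'etale points via the description $\tilde y^{\et}$, these assemble into a surjective family of \'etale morphisms $\Spec A_i\to\X$, that is, an \'etale atlas for $\X$. The hypothesis that $\X$ be $1$-truncated forces $\X$ to be a stack of groupoids rather than a genuinely higher stack, so the atlas exhibits it as a classical algebraic stack; its diagonal is representable and unramified, as is automatic for an object obtained by \'etale gluing of affine schemes, whence $\X$ is a classical Deligne-Mumford stack.

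The crux, and the step demanding the most care, is the dictionary between being ``locally modeled on $\Shi\left(A_{\et}\right)$ as a structured $\i$-topos'' and admitting an ``\'etale atlas by affine schemes in the classical sense.'' It breaks into three parts: (i) identifying the underlying $\i$-topos of a $1$-truncated $\sL$-\'etendue with $\i$-sheaves on the small \'etale site of a classical stack, which rests on the small \'etale site being an ordinary category, so that $\Shi\left(A_{\et}\right)$ is $1$-localic, its $1$-truncation recovering the classical \'etale topos; (ii) matching $\O_\Y$ with the classical structure sheaf $\O_\X$ so that \'etale morphisms of structured $\i$-topoi correspond to \'etale morphisms of schemes; and (iii) reconciling whatever finiteness or separatedness hypotheses one packages into the classical definition of a Deligne-Mumford stack with the \'etale-local structure of the \'etendue. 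Parts (i) and (ii) are where Lurie's comparison results for structured spaces over the \'etale geometry of discrete rings carry the weight, and the reduction to the affine case---where $\Shi\left(A_{\et}\right)$ is manifestly the small \'etale spectrum of $\Spec A$---is what renders the identification tractable.
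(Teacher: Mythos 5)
Your translation of the problem is the right one, and your forward direction is sound in outline, but note that the paper disposes of it in one line by citing Theorem 2.6.18 of \cite{dag}: classical Deligne-Mumford stacks are exactly the functors of points of the \emph{$1$-localic} $\g_{\acute{e}t}\left(k\right)$-schemes. Your construction of a structured $\i$-topos from the small \'etale site of $\X$ and its atlas is essentially a sketch of that cited theorem rather than an independent argument, which is fine provided you lean on the citation (as you acknowledge in your closing paragraph) rather than on the sketch.

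The genuine gap is in your converse. Lurie's comparison applies only to $1$-localic $\g_{\acute{e}t}\left(k\right)$-schemes, whereas your hypothesis is merely that the functor of points $\X$ is $1$-truncated, and nothing in your argument rules out that the underlying $\i$-topos $\Y$ fails to be $1$-localic: the local models in $\sl\left(\sL\right)$ include slices $\Shi\left(A_{\et}\right)/E$ over \emph{arbitrary} objects $E$, so an $\sL$-\'etendue can be ``locally modeled on small \'etale spectra'' without being $1$-localic. Your claim that the diagonal is ``representable and unramified, as is automatic for an object obtained by \'etale gluing of affine schemes'' is precisely the assertion that needs proof: representability of $\Spec\left(A_i\right) \times_{\X} \Spec\left(A_j\right)$ by an algebraic space amounts, in this framework, to the corresponding \'etale objects of $\Shi\left(\left(A_i\right)_{\et}\right)$ being $0$-truncated, i.e.\ to $\Y$ being $1$-localic, and even identifying the sheaf-theoretic pullback with the \'etendue pullback requires an argument (in the paper this kind of compatibility rests on the left exactness established in Proposition \ref{prop:connected}). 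The paper closes exactly this gap via the universal \'etendue: given $\cE$ with $\X \simeq i^*\overline{y}\left(\cE\right)$, it places $\cE$ in the small strong \'etale blossom $\Aff^{\cE}_{\g_{\acute{e}t}\left(k\right)}$ of Example \ref{ex:affineblossom}, identifies the associated universal \'etendue $\Uni^{\cE}$ with $\Shi\left(\left(\Aff^{\cE}_{\g_{\acute{e}t}\left(k\right)}\right)^{\et}\right)$ by Theorem \ref{thm:yonequil}, which is $1$-localic since that \'etale site is a $1$-category, then uses the $1$-truncatedness of $\X$ to see that the essentially unique \'etale map $\cE \to \Uni^{\cE}$ corresponds to a $1$-truncated object $V$ with $\Uni^{\cE}/V \simeq \cE$, whence $\cE$ is $1$-localic by Proposition \ref{prop:2.3.16}; only at that point can Theorem 2.6.18 of \cite{dag} be invoked. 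Without this reduction (or a direct truncation analysis of the pullbacks $\Spec\left(A_i\right) \times_{\X} \Spec\left(A_j\right)$ replacing it), your converse does not go through.
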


We then give a long list of theorems, which are all special cases of Theorems \ref{thm:int2}, \ref{thm:int3}, and \ref{thm:int4}. For brevity, we will only list a sample of these in this introduction. All of these theorems have variants which apply to (higher) Deligne-Mumford stacks, derived Deligne-Mumford stacks, and spectral Deligne-Mumford stacks. Most have variants for the Zariski topology as well. Let $k$ be a commutative ring and $\bE$ a connective $\Ei$-ring spectrum, and let $\Aff_k$, $\mathbf{DAff}_k$ and $\mathbf{SpAff}_\bE$ denote the $\left(\infty,1\right)$-categories of affine $k$-schemes, derived affine $k$-schemes, and connective spectral $\bE$-schemes respectively.

We prove the following:

\begin{thm}
There is a canonical equivalence $$\dm^{der}\left(\mathbf{LFP}_k\right)^{\et} \simeq \Shi\left(\mathbf{DAff}^{\mathit{lfp},\mspace{2mu}{\et}}_k\right)$$
between the $\left(\infty,1\right)$-category of derived Deligne-Mumford stacks locally of finite type over $k$ and their \'etale morphisms, and the $\left(\infty,1\right)$-category of $\i$-sheaves on the site of derived affine $k$-schemes locally of finite type and their \'etale morphisms (with respect to the \'etale topology).
\end{thm}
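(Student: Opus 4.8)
The plan is to obtain this as a direct instance of Theorem \ref{thm:int2}. First I would fix the geometry $\g$ in the sense of \cite{dag} whose associated geometric theory is derived algebraic geometry over $k$ equipped with the \'etale topology: concretely, the underlying $\i$-category of $\g$ is the opposite of the $\i$-category of simplicial commutative $k$-algebras of finite presentation, with admissible morphisms the \'etale morphisms and Grothendieck topology the \'etale topology. I would then take as local models the $\i$-category $\sL$ of derived affine $k$-schemes locally of finite presentation, each $\Spec A$ being realized as a structured $\i$-topos $\Specg\left(A\right) = \left(\Shi\left(A_{\et}\right),\O_A\right)$ via its small \'etale spectrum. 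With this choice, $\sL^{\et}$ is exactly $\mathbf{DAff}^{\mathit{lfp},\mspace{2mu}{\et}}_k$, so that the right-hand side $\Shi\left(\sL^{\et}\right)$ matches the statement verbatim.

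The essential hypothesis of Theorem \ref{thm:int2} is that $\sL$ be essentially small, and this is precisely where the finiteness condition enters. Simplicial commutative $k$-algebras of finite presentation form an essentially small $\i$-category, since each is presented by a bounded amount of data, so there is only a set of equivalence classes; passing to small \'etale spectra is fully faithful, hence $\sL$ is essentially small. (Without the finite-presentation restriction one would leave the hypotheses of Theorem \ref{thm:int2}, which is exactly the situation handled instead by the relative prolongation of Theorem \ref{thm:int4}.)

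The remaining, and genuinely substantive, step is to identify the $\i$-category $\lbar$ of $\sL$-\'etendues with the $\i$-category $\dm^{der}\left(\mathbf{LFP}_k\right)$ of derived Deligne-Mumford stacks locally of finite type over $k$, compatibly with \'etale morphisms so that $\lbare \simeq \dm^{der}\left(\mathbf{LFP}_k\right)^{\et}$. By definition an $\sL$-\'etendue is a structured $\i$-topos admitting an \'etale cover by objects of $\sL$, i.e. locally equivalent, along local homeomorphisms respecting the structure sheaves, to small \'etale spectra of derived affine $k$-schemes of finite presentation; this is exactly the local structure defining a derived Deligne-Mumford stack in \cite{dag}. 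I would make this precise by comparing the notion of \'etale morphism of structured $\i$-topoi from Chapter \ref{chap:structured} with Lurie's \'etale morphisms of $\g$-schemes, checking that the two local-triviality conditions agree, after which the functor-of-points descriptions match.

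With these identifications in place, Theorem \ref{thm:int2} yields the asserted equivalence $\dm^{der}\left(\mathbf{LFP}_k\right)^{\et} \simeq \Shi\left(\mathbf{DAff}^{\mathit{lfp},\mspace{2mu}{\et}}_k\right)$, and simultaneously shows the left-hand side is an $\i$-topos carrying a universal terminal \'etendue. I expect the main obstacle to be the reconciliation of the previous paragraph: verifying that the abstract, topos-theoretic notion of $\sL$-\'etendue reproduces Lurie's geometry-theoretic definition of a derived Deligne-Mumford stack, and in particular that the two ambient notions of \'etale morphism coincide on the nose, so that the restriction to \'etale morphisms is compatible on both sides. Everything else—choosing $\g$, confirming essential smallness, and invoking Theorem \ref{thm:int2}—is bookkeeping.
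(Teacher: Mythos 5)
Your proposal is correct and follows essentially the paper's own route: the paper obtains this statement (Corollary \ref{cor:yoneqDMd}) by instantiating Theorem \ref{thm:yonequil} and Proposition \ref{prop:theta} with the geometry $\g^{der}_{\acute{e}t}\left(k\right)$ and local models $\mathbf{LFP}_\g$, the spectra of finitely presented (i.e.\ compact) simplicial commutative $k$-algebras, which are essentially small exactly as you argue. The reconciliation you single out as the main obstacle is definitional in the paper's framework (derived Deligne-Mumford stacks \emph{are} $\g^{der}_{\acute{e}t}\left(k\right)$-schemes, the \'etale morphisms of structured $\i$-topoi are taken verbatim from \cite{dag}, and $\lbar$ coincides with Lurie's $\g$-schemes by the corollary to Proposition \ref{prop:2.3.10}); the one verification you leave implicit is the hypothesis the body of the paper actually needs beyond smallness, namely that $\mathbf{LFP}_\g$ is a \emph{strong} \'etale blossom, which holds because \'etale morphisms are locally of finite presentation, so $\mathbf{LFP}_\g$ is admissibly closed and Proposition \ref{prop:admicls} applies.
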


Given $\sU$ a subcategory of $\mathbf{SpAff}_\bE,$ we say that $\sU$ is \textbf{\'etale closed} if for all \'etale morphisms of $\Ei$-rings $$f:\bF \to \bA,$$ with $\Spec\left(\bF\right)$ in $\sU,$ $\Spec\left(\bA\right)$ is also in $\sU.$ We make similar conventions for $\Aff_k$ and $\mathbf{DAff}_k.$ For such a subcategory, denote by $j^\sU$ the canonical functor $$\sU^{\et} \to \sU \to \mathbf{SpAff}_\bE,$$ where $\sU^{\et}$ denotes the subcategory of $\sU$ spanned by only \'etale morphisms. There is a restriction functor $$j^*_\sU:\Shi\left(\mathbf{SpAff}_\bE,{\et}\right) \to \Shi\left(\sU^{\et},{\et}\right)$$ between $\left(\infty,1\right)$-categories of $\i$-sheaves (with respect to the \'etale topology) which has a left adjoint $j^\sU_!,$ again called the relative \'etale prolongation functor with respect to $\sU.$

\begin{thm}
An $\i$-sheaf in $\Shi\left(\mathbf{SpAff}_\bE,\acute{e}t\right)$ on (connective) affine spectral $\bE$-schemes with respect to the \'etale topology is a spectral Deligne-Mumford stack if and only if $\X$ is in the essential image of a relative prolongation functor $j^\sU_!,$ for $\sU$ a small \'etale closed subcategory of $\mathbf{SpAff}_\bE.$
\end{thm}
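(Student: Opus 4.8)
The plan is to deduce the statement as a direct specialization of the general representability result, Theorem \ref{thm:int4}, taking the abstract category of local models $\sL$ to be $\mathbf{SpAff}_\bE$. First I would fix this instantiation precisely: let $\sL$ be the $\i$-category of connective affine spectral $\bE$-schemes, realized as structured $\i$-topoi by sending a connective $\Ei$-algebra $\bA$ over $\bE$ to its small \'etale spectrum $\Shi(\bA_{\et})$ together with its canonical structure sheaf, and equip $\sL$ with the structured \'etale topology of Chapter \ref{chap:structured}. Appealing to the comparison results of that chapter, I would check that the abstract notion of \'etale morphism between these structured $\i$-topoi agrees with the standard notion of \'etale morphism of spectral $\bE$-schemes, and that the structured \'etale topology restricts on affines to the usual \'etale topology. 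With these identifications in hand, an $\sL$-\'etendue is precisely a spectral Deligne-Mumford stack in the sense of \cite{spectral}, so that ``Deligne-Mumford for $\sL$'' and ``spectral Deligne-Mumford stack'' name the same $\i$-sheaves; this is the identification underlying the setup of Section \ref{sec:dmgeom}.

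Granting this dictionary, the backward implication is immediate: an \'etale closed subcategory is in particular a small subcategory of $\sL$, so if $\X$ lies in the essential image of $j^\sU_!$ for some small \'etale closed $\sU$, then Theorem \ref{thm:int4} already exhibits $\X$ as Deligne-Mumford, hence as a spectral Deligne-Mumford stack. For the forward implication, Theorem \ref{thm:int4} places any spectral Deligne-Mumford stack $\X$ in the essential image of $j^{\sU_0}_!$ for some small subcategory $\sU_0 \subseteq \sL$; the one remaining task is to enlarge $\sU_0$ to an \'etale closed subcategory without leaving the essential image.

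For this I would establish two lemmas. The first is that the \'etale closure of a small subcategory is again small: set $\sU$ to be the full subcategory of $\sL$ spanned by those $\Spec(\bA)$ admitting an \'etale morphism $\bF \to \bA$ with $\Spec(\bF) \in \sU_0$. Transitivity of \'etale morphisms shows that $\sU$ is \'etale closed in a single step, and $\sU$ is small because the $\i$-category of \'etale algebras over a fixed connective $\Ei$-ring is essentially small, so $\sU$ is a small union of essentially small categories. The second lemma is monotonicity of the essential image under enlargement. Writing $i \colon \sU_0^{\et} \hookrightarrow \sU^{\et}$ for the inclusion, we have $j_{\sU_0} = j_\sU \circ i$, and by transitivity of restriction this gives $j^*_{\sU_0} \simeq i^* \circ j^*_\sU$ at the level of $\i$-sheaves for the \'etale topology; passing to left adjoints yields $j^{\sU_0}_! \simeq j^\sU_! \circ i_!$, where $i_!$ is the relative prolongation internal to $\sU$. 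Hence the essential image of $j^{\sU_0}_!$ is contained in that of $j^\sU_!$, and taking $\sU$ to be the \'etale closure of $\sU_0$ completes the forward implication.

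The main obstacle I anticipate is not the formal manipulation of adjoints but the instantiation step. One must verify carefully that the structured \'etale topology and the abstract notion of \'etale morphism produced by the machinery of Chapter \ref{chap:structured} reproduce, up to equivalence, the \'etale topology and \'etale morphisms of connective spectral $\bE$-schemes as used in spectral algebraic geometry; only with this dictionary secured does Theorem \ref{thm:int4} apply directly, and only then does $\sL$-\'etendue mean spectral Deligne-Mumford stack. A secondary technical point is the essential smallness of the \'etale site of a connective $\Ei$-ring, which is what guarantees that the \'etale closure $\sU$ remains small and so licenses the passage from ``some small subcategory'' to ``some small \'etale closed subcategory.''
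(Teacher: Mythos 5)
Your overall strategy---specialize the general relative-prolongation theorem to $\sL=\mathbf{SpAff}_\bE$ and then trade the resulting small subcategory $\sU_0$ for its \'etale closure---is workable, and both of your lemmas are true: smallness of the closure is exactly the paper's Remark \ref{rmk:esssmallad}, resting (via Remark 2.2.7 of \cite{dag}) on the essential smallness of the $\icat$ of \'etale algebras over a fixed connective $\Ei$-ring. But there is a genuine gap in how you invoke the general theorem. Theorem \ref{thm:int4} is the introduction's loose formulation; the precise result, Theorem \ref{thm:classificationlarge}, requires $\sU$ to be a small \emph{strong \'etale blossom}, and this is not cosmetic: the sheaf $\icat$ $\Shi\left(\sU^{\et}\right)$ (Definition \ref{dfn:sheaveDet}) and hence the prolongation $j^\sU_!$ (Proposition \ref{prop:relprolexist}) are only defined for strong \'etale blossoms. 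Consequently your backward implication is not ``immediate'': you must show that a small \'etale closed subcategory of $\mathbf{SpAff}_\bE$ is a strong \'etale blossom. This is the paper's Proposition \ref{prop:admicls}, proved by observing that the spectrum functor identifies $\Pro\left(\g\right)^{ad}/\bA$ (the \'etale $\bA$-algebras) with a full subcategory of $\sU^{\et}/\Spec\left(\bA\right)$ which is a site for the small \'etale $\i$-topos, which is precisely the condition of Definition \ref{dfn:strongblossom}. The ``main obstacle'' you anticipate (the dictionary between abstract \'etale morphisms of structured $\i$-topoi and \'etale maps of spectral schemes) is real but is supplied by \cite{dag} and \cite{spectral}; the blossom verification is the step your write-up actually omits. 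Relatedly, your monotonicity lemma $j^{\sU_0}_!\simeq j^\sU_!\circ i_!$ is correct but not purely formal: since the sheaf condition here is blossom-relative rather than topological, before passing to left adjoints you need that $i^*$ carries sheaves on $\sU^{\et}$ to sheaves on $\sU_0^{\et}$ (the paper's Proposition \ref{prop:Ufirst}), and the existence of $i_!$ again uses that both categories are blossoms.

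Once Proposition \ref{prop:admicls} is in hand, note that the paper's own proof avoids your closure-and-monotonicity detour entirely: by Proposition \ref{prop:locblossom} and Example \ref{ex:affineblossom}, given a spectral Deligne-Mumford stack with underlying \'etendue $\cE$, one chooses an \'etale cover of $\cE$ by affines $\Spec\left(\bA_\alpha\right)$ and takes $\sU$ to be the affines admitting an \'etale map from some $\bA_\alpha$; this $\sU$ is already small and \'etale closed, and the proof of Theorem \ref{thm:classificationlarge} places $\X$ directly in the essential image of $j^\sU_!$. So the paper obtains an \'etale closed $\sU$ at the outset, where you close up afterwards at the cost of an extra lemma; with the blossom verification added, your route does yield a correct proof.
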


For an \'etale closed subcategory $\sU$ of $\Aff_k,$ denote again by $j^\sU_!$ the left adjoint to the restriction functor $$j^*_\sU:\St\left(\Aff_k,\acute{e}t\right) \to \St\left(\sU^{\et},\acute{e}t\right)$$ between stacks of groupoids (with respect to the \'etale topology), and call it again the relative \'etale prolongation functor with respect to $\sU.$

\begin{thm}
Let $k$ be a commutative ring. A stack of groupoids on affine $k$-schemes with respect to the \'etale topology is a (classical) Deligne-Mumford stack\footnote{We are using the definition of Deligne-Mumford stack introduced by Lurie in \cite{dag}, which is slightly unconventional in that it does not impose any separation conditions on the diagonal of such a stack.}, if and only if $\X$ is in the essential image of a relative prolongation functor $j^\sU_!,$ for $\sU$ a small \'etale closed subcategory of $\Aff_k.$
\end{thm}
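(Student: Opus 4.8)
The plan is to obtain this statement by specializing Theorem~\ref{thm:int4} to the case $\sL = \Aff_k$, realized as in the theorem above by sending a commutative $k$-algebra $A$ to the small \'etale $\i$-topos $\Shi(A_\et)$ equipped with its canonical structure sheaf, and then feeding the result into the theorem above identifying the $1$-truncated Deligne-Mumford stacks for $\sL$ with the classical Deligne-Mumford stacks. Beyond this formal combination, two points require attention: first, the prolongation functor $j^\sU_!$ appearing here is defined on stacks of groupoids, whereas Theorem~\ref{thm:int4} concerns the prolongation on all $\i$-sheaves, so I must compare the two; and second, Theorem~\ref{thm:int4} produces merely ``some small subcategory'' $\sU$, while here $\sU$ is required to be small and \'etale closed.

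For the bookkeeping I would record three facts. Since the restriction functor $j^*_\sU$ preserves $1$-truncation, its left adjoint on stacks of groupoids is computed as $\tau_{\leq 1}$ applied to the $\i$-sheaf level prolongation $j^\sU_!$; thus the two prolongations agree up to truncation. Next, if $\sU \subseteq \sU'$ then $j^\sU = j^{\sU'} \circ i$ for the inclusion $i\colon \sU^\et \to \sU'^\et$, so $j^\sU_! = j^{\sU'}_! \circ i_!$ and the essential image can only grow when $\sU$ is enlarged. Finally, any small $\sU$ may be replaced by its one-step \'etale closure $\bar\sU$, consisting of all $\Spec A$ that receive an \'etale map $\Spec F \to \Spec A$ from some $\Spec F \in \sU$; because \'etale morphisms compose $\bar\sU$ is \'etale closed, and because the \'etale algebras over a fixed ring form an essentially small category $\bar\sU$ remains small.

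For the forward implication, if $\X$ is a classical Deligne-Mumford stack then by the theorem above it is a Deligne-Mumford stack for $\sL$ and is $1$-truncated, so by Theorem~\ref{thm:int4} it lies in the essential image of $j^\sU_!$ for some small $\sU$, which after \'etale closure and enlargement I take to be small, \'etale closed, and to contain a set of local models for $\X$. The crucial observation is that the \'etale points sheaf $\tilde y^\et(\X)$ is $1$-truncated: for each affine scheme $\cL$ the space of \'etale maps $\cL \to \X$ is a full subspace of the $1$-truncated mapping space $\Hom(\cL, \X)$, hence itself $1$-truncated. Since $\X$ is recovered as $j^\sU_!$ applied to the restriction of this $1$-truncated sheaf to $\sU^\et$, it lies in the essential image of the stack-of-groupoids prolongation. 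For the reverse implication, if $\X$ is a stack of groupoids in the essential image of $j^\sU_!$ for a small \'etale closed $\sU$, then regarding $\sU$ as a small subcategory and using the truncation compatibility above, Theorem~\ref{thm:int4} shows $\X$ is a Deligne-Mumford stack for $\sL$; being $1$-truncated by hypothesis, the theorem above identifies it with a classical Deligne-Mumford stack.

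I expect the genuine difficulty to lie not in the \'etale-closure bookkeeping but in the truncation comparison: one must know that passing between the prolongation on stacks of groupoids and the prolongation on all $\i$-sheaves loses no information on the relevant objects. Concretely, this amounts to checking that the functor of points of a $1$-truncated $\sL$-\'etendue is again $1$-truncated, equivalently that the equivalence $\lbare \simeq \Shi(\sU^\et)$ of Theorem~\ref{thm:int2} restricts to an equivalence between $1$-truncated \'etendues and stacks of groupoids. Verifying this truncation-preservation, so that the essential image of the groupoid-level $j^\sU_!$ is exactly the collection of $1$-truncated objects arising from the $\i$-level $j^\sU_!$, is the step I would expect to require the most care.
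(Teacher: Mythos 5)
Your overall route is the paper's route: the paper derives this theorem in one line from Theorem \ref{thm:largetruncatedversion} (the truncated form of the relative prolongation classification, Theorem \ref{thm:classificationlarge}) together with Theorem \ref{thm:DM1trun}, and your plan of specializing the $\i$-sheaf-level classification and then descending to stacks of groupoids is the same decomposition. Your bookkeeping largely matches the paper's as well: the enlargement of $\sU$ to an \'etale closed small subcategory is Proposition \ref{prop:admicls} plus Remark \ref{rmk:esssmallad} (admissibly closed subcategories are strong \'etale blossoms, and the admissible closure of a small subcategory is small because the \'etale algebras over a fixed ring form an essentially small category) --- though note a direction slip in your closure: it should consist of those $\Spec\left(B\right)$ admitting an \'etale ring map $A \to B$ with $\Spec\left(A\right)$ in $\sU$, i.e.\ \'etale maps of schemes \emph{out of} the new objects into $\sU$, not maps received from $\sU$; your own smallness justification is the one appropriate to the correct direction. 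Your forward-direction observation that the \'etale-points sheaf is $1$-truncated is also sound: since $\dbare \to \dbar$ is a (faithful, non-full) subcategory inclusion, its mapping spaces include into those of $\dbar$ as unions of connected components, so truncatedness descends --- this is the same argument the paper uses in the first paragraph of Theorem \ref{thm:smalltruncatedversion}.

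The genuine gap is exactly where you flag it and then stop: the reverse implication requires that $j^\sU_!$ carry $1$-truncated sheaves on $\sU^{\et}$ to $1$-truncated sheaves on $\Aff_k$ --- equivalently, that the equivalence $\overline{\sU}^{\et} \simeq \Shi\left(\sU^{\et}\right)$ exchange $1$-truncated \'etendues and stacks of groupoids --- and you name this statement without proving it. This is precisely the substance of the paper's Theorem \ref{thm:smalltruncatedversion}, and it is filled by a localic/truncated dictionary you do not invoke: since $\sU^{\et}$ is a $1$-category and the classifying $\i$-topos for strictly Henselian $k$-algebras is $1$-localic, the universal \'etendue $\Uni \simeq \Shi\left(\sU^{\et}\right)$ is $1$-localic; if $G$ is $1$-truncated, Proposition \ref{prop:2.3.16} makes the slice $\Uni/G$ $1$-localic, and Lemma \ref{lem:ntruncatedlocalic} then makes it a $1$-truncated object of $\Str_\G$, so its functor of points $j^\sU_!\left(G\right)$ is $1$-truncated. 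Without this, the $\tau_{\le 1}$ in your comparison $\tau_{\le 1} \circ j^\sU_!$ could a priori alter the essential image, and the equivalence of the two prolongations on the relevant objects would fail to follow. One genuine economy in your version is worth noting: because the theorem's groupoid-level $j^\sU_!$ is \emph{defined} as a left adjoint to restriction (not as a Kan extension), your formal identification of it with $\tau_{\le 1} \circ j^\sU_! \circ \iota$ sidesteps the colimit-preservation argument via the connected geometric morphism of Proposition \ref{prop:connected} that the paper needs in Theorems \ref{thm:smalltruncatedversion} and \ref{thm:largetruncatedversion}; but the truncation-preservation step itself cannot be finessed, and it is missing.
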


\section{Conventions and Notations}
We will closely follow the notations and conventions of \cite{htt}. In particular, by an $\icat,$ we will mean a quasicategory or inner-Kan complex (hence an $\left(\infty,1\right)$-category). We will freely use the definitions and notational conventions of \cite{htt} and refer the reader to its index and notational index. There are a few instances where we will deviate from the notation of op. cit., most of which are explained in the body of this manuscript as we use them. The rest we will explain here:\\

\begin{itemize}
\item What Lurie writes as $\mathcal{S}$ and calls the $\icat$ of spaces, we will write as $\iGpd$ and call the $\icat$ of $\i$-groupoids.
\item For $C$ an object of an $\icat$ $\C,$ we will write the slice $\icat$ as $\C/C$ as opposed to $\C_{/C}.$
\item If $C$ and $D$ are objects of an $\icat$ $\C,$ we will write $\Hom_\C\left(C,D\right)$ as their mapping space as opposed to $\operatorname{Map}_\C\left(C,D\right).$
\end{itemize}

\section*{Acknowledgments}
I would like to thank to Rune Haugseng, Gijs Heuts, Jacob Lurie, and Urs Schreiber for useful conversation. In particular, I am indebted to Gijs for the proof of an essential lemma, and to Jacob for providing counterexamples and suggestions without which this manuscript may not have been realized. I would also like to thank Anton Fetisov and Zhen Lin Low for providing answers to questions of mine on Mathoverflow. I should also thank Ezra Getzler for catching some minor misprints in a previous draft of this manuscript, and Kai Behrend and Damien Calaque for useful discussions about some of the examples. I am grateful to the Max Planck Institute for Mathematics for providing the pleasant and mathematically stimulating environment in which most of this research was conducted, and I wish to thank Chris Schommer-Pries and Peter Teichner for organizing a year long seminar on higher category theory from which I benefited greatly. Finally, I would also like to thank the topology group of MIT for their hospitality, where this research was conducted in part during the summers of 2012 and 2013.

\chapter{Preliminaries on higher topos theory}\label{chap:prelim}
\begin{dfn}
Let $\nGpd$ denote the $\icat$ of $n$-groupoids, that is the full subcategory of the $\icat$ of $\i$-groupoids consisting of $n$-truncated objects. (This is an $\n1i$-category). For a given   $\icat$ $\C$, denote by $\Psh_n\left(\C\right)$ the $\icat$ of functors $$\C^{op} \to \nGpd.$$ Such functors will be called \textbf{$n$-presheaves}.
\end{dfn}

\begin{ex}
$\textbf{Gpd}_0$ is the category of sets, whereas $\textbf{Gpd}_1$ is the $\left(2,1\right)$-category of essentially small groupoids.
\end{ex}

\begin{dfn}
If $\C$ is an $\left(n,1\right)$-category, then each object $C$ of $\C$ gives rise to an $\left(n-1\right)$-presheaf $y\left(C\right)$ which assigns to each object $D$ of $\C$ the $\left(n-1\right)$-groupoid $\Hom_\C\left(D,C\right).$ (See Section 5.1.3 of \cite{htt} to see how one can make this precise.) This produces a functor $$y:\C \to \Psh_{n-1}\left(\C\right)$$ which is full and faithful by Proposition 5.3.1.1 of op. cit. The functor $y$ is called the \textbf{Yoneda embedding} of $\C.$
\end{dfn}

\begin{lem}\textbf{The $\i$-Yoneda Lemma} (Lemma 5.5.2.1 of \cite{htt})
Let $0 \le n \le \i$. If $\C$ is an $\left(n,1\right)$-category with $C$ an object of $\C,$ and $$F:\C^{op} \to \textbf{Gpd}_{\left(n-1\right)}$$ is an $\left(n-1\right)$-presheaf on $\C,$ then there is a canonical equivalence of $\left(n-1\right)$-groupoids $$\Hom\left(y\left(C\right),F\right) \simeq F\left(C\right).$$
\end{lem}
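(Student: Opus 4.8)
The plan is to prove this $\icat$ Yoneda lemma by first reducing to the universal case and then translating everything through the straightening--unstraightening equivalence, where the computation becomes essentially formal. (This is Lemma 5.5.2.1 of \cite{htt}; the following reproduces its strategy.)

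First I would reduce to $n=\i$. Since $\C$ is an $(n,1)$-category, every mapping space $\Hom_\C\left(D,C\right)$ is $(n-1)$-truncated, so $y(C)$ really is an $(n-1)$-presheaf and $F$ is an $(n-1)$-presheaf. The inclusion $\mathbf{Gpd}_{n-1}\hookrightarrow\iGpd$ is fully faithful, hence so is the induced inclusion $\Psh_{n-1}\left(\C\right)\hookrightarrow\Pshi\left(\C\right)$, and a fully faithful functor preserves mapping spaces. Thus it suffices to establish $\Hom_{\Pshi\left(\C\right)}\left(y(C),F\right)\simeq F(C)$ for an arbitrary $\icat$ $\C$ and arbitrary $F:\C^{op}\to\iGpd$; restricting to truncated values then recovers the stated equivalence of $(n-1)$-groupoids.

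Next I would pass to right fibrations via the straightening equivalence $\Pshi\left(\C\right)\simeq\operatorname{RFib}\left(\C\right)$. Under it, $F$ corresponds to a right fibration $q:\widetilde F\to\C$ with fiber $\widetilde F_C\simeq F(C)$, while the representable $y(C)$ corresponds to the slice $\C/C\to\C$, whose terminal object $\mathrm{id}_C$ lies over $C$. Since an equivalence preserves mapping spaces,
$$\Hom_{\Pshi\left(\C\right)}\left(y(C),F\right)\simeq \operatorname{Map}_{/\C}\left(\C/C,\widetilde F\right),$$
the space of maps of right fibrations over $\C$, which is the space of sections over $\C/C$ of the pulled-back right fibration $\left(\C/C\right)\times_\C\widetilde F$. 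Now the space of sections of a right fibration over an $\icat$ $X$ is the limit over $X^{op}$ of its classifying presheaf; when $X=\C/C$ has terminal object $\mathrm{id}_C$, the category $X^{op}$ has an initial object and this limit is simply the value at $\mathrm{id}_C$, i.e. the fiber. Hence
$$\operatorname{Map}_{/\C}\left(\C/C,\widetilde F\right)\simeq \left(\left(\C/C\right)\times_\C\widetilde F\right)_{\mathrm{id}_C}\simeq \widetilde F_C\simeq F(C).$$
Tracing through, this equivalence is restriction along $\left\{\mathrm{id}_C\right\}\hookrightarrow\C/C$, which corresponds to the evaluation $\alpha\mapsto\alpha_C\left(\mathrm{id}_C\right)$; this exhibits the equivalence as canonical and natural in both $C$ and $F$.

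The main obstacle is the single nonformal input: the identification $\operatorname{Un}\left(y(C)\right)\simeq\C/C$ of right fibrations, i.e. that the presheaf $D\mapsto\Hom_\C\left(D,C\right)$ is classified by the overcategory $\C/C\to\C$. This is where the real content of Yoneda resides, everything downstream being bookkeeping. I would establish it by recalling that $\C/C\to\C$ is a right fibration (\cite{htt}, \S 2.1.2) whose fiber over $D$ is the mapping space $\Hom_\C\left(D,C\right)$, together with the fact that a right fibration is determined up to equivalence by its classifying presheaf, so two right fibrations with the same classifying functor must agree.
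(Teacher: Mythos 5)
The paper offers no proof of this lemma at all --- it is imported verbatim as Lemma 5.5.2.1 of \cite{htt} --- so the relevant comparison is with Lurie's argument, which your proposal follows in outline: reduce to $n=\i$, pass through straightening to right fibrations, and evaluate at the terminal object $\mathrm{id}_C$ of $\C/C$. The reduction step is fine (full faithfulness of $\mathbf{Gpd}_{n-1}\hookrightarrow\iGpd$ does let you compute the mapping space in $\Pshi\left(\C\right)$, and the result is automatically $\left(n-1\right)$-truncated), and the section-space computation is correct: the limit of the classifying presheaf over $\left(\C/C\right)^{op}$ collapses to the value at $\mathrm{id}_C$ because $\mathrm{id}_C$ is terminal.

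However, your justification of the one step you yourself flagged as the real content --- the identification $\operatorname{Un}\left(y\left(C\right)\right)\simeq\C/C$ --- does not work as written. You argue that $\C/C\to\C$ is a right fibration whose fibers are $\Hom_\C\left(D,C\right)$, and that right fibrations with the same classifying functor agree. But fiberwise agreement does not give you that the classifying functors agree: a right fibration is not determined by its fibers. Over $\Delta^1$, a right fibration with fibers $A$ over $0$ and $B$ over $1$ is the datum of an \emph{arbitrary} map $B\to A$, and all such choices have identical fibers. What the argument requires is an equivalence of functors $\C^{op}\to\iGpd$ between the straightening of $\C/C$ and $D\mapsto\Hom_\C\left(D,C\right)$, functorially in $D$ --- and that is precisely the content of Yoneda that cannot be recovered from fibers alone. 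Two standard repairs: either unwind the actual construction of the Yoneda embedding (in \cite{htt}, Section 5.1.3, it is built from the hom-space functor on the simplicial category $\mathfrak{C}\left[\C\right]$, and the comparison with the slice fibration is carried out there), or invoke the representability criterion for right fibrations --- a right fibration $Y\to\C$ is equivalent over $\C$ to $\C/C$ if and only if $Y$ admits a final object lying over $C$ (\cite{htt}, Proposition 4.4.4.5) --- and then exhibit a final object of $\operatorname{Un}\left(y\left(C\right)\right)$ from the point $\mathrm{id}_C\in y\left(C\right)\left(C\right)$. With either repair in place, the rest of your argument goes through and recovers the canonical equivalence, natural in $C$ and $F$, as claimed.
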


\begin{dfn}
An $\icat$ $\cE$ is an $n$-topos if and only if it is an accessible left exact localization of the form
$$\xymatrix@1{\cE\mspace{4mu} \ar@{^{(}->}[r]<-0.9ex>_-{i} & \Psh_{n-1}\left(\C\right) \ar@<-0.5ex>[l]_-{a}}$$ for some $\icat$ $\C$, that is to say, $a$ is accessible, left exact (i.e. preserves finite limits), and left adjoint to $i,$ and $i$ is full and faithful.
\end{dfn}

\begin{rmk}
An $n$-topos is automatically an $\ni$-category. Furthermore, in the above definition, there is no harm in requiring that $\C$ be an $\ni$-category, since for any $\icat$ $\sD,$
\begin{eqnarray*}
\Psh_{n-1}\left(\sD\right)&=&\Fun\left(\sD,\textbf{Gpd}_{\left(n-1\right)}\right)\\
&\simeq & \Fun\left(\tau_n\left(\sD\right),\textbf{Gpd}_{\left(n-1\right)}\right)\\
&=&\Psh_{n-1}\left(\tau_n\left(\sD\right)\right),
\end{eqnarray*}
where $\tau_n\left(\sD\right)$ is the truncation of $\sD$ to an $\left(n,1\right)$-category, since $\textbf{Gpd}_{\left(n-1\right)}$ is a an $\left(n,1\right)$-category.
\end{rmk}

\begin{dfn}
A geometric morphism $f:\cE \to \cF$ between two $n$-topoi $\cE$ and $\cF$, with $0\le n \le \i,$ is a pair of adjoint functors  $$\Adj{f^{\ast}}{\cE}{\cF}{f_{\ast}},$$
with $f^* \rt f_*,$ such that $f^*$ is left exact. The functor $f_*$ is called the \textbf{direct image functor} of $f$ whereas $f^*$ is called the \textbf{inverse image functor}.
\end{dfn}

Since either adjoint $f_*$ or $f^*$ determines the other up to a contractible space of choices, one may use only one adjoint to define an $\icat$ of $n$-topoi:

\begin{dfn}
Denote by $\mathfrak{Top}_n$ the $\icat$ of $n$-topoi. It is the subcategory of $\widehat{\mathcal{C}at}_\i$, the $\icat$ of not necessarily small\footnote{This can be made precise by using Grothendieck universes.} $\i$-categories, whose objects are $n$-topoi and whose morphisms $f:\cE \to \cF$ consist of only those functors which admit a left exact left adjoint. It is easy to see that when $n < \i,$ $\mathfrak{Top}_n$ is in fact an $\n1i$-category.
\end{dfn}

\begin{rmk}
When $n=\i$, this is what Lurie denotes by $\mathcal{R}{\mathcal T}op$ in \cite{htt}. It is canonically equivalent to the opposite category of $\mathcal{\cL}{\mathcal T}op$, which is the subcategory of $\widehat{\mathcal{C}at}_\i$ whose objects are $\i$-topoi and whose morphisms $f:\cE \to \cF$ are left exact functors $\cF \to \cE$ which admit a \emph{right} adjoint.
\end{rmk}

\begin{dfn}\label{dfn:geofun}
Let $\cE$ and $\cF$ be two $\i$-topoi. Denote by $\Geo\left(\cE,\cF\right)$ the $\icat$ of geometric morphisms $\cE \to \cF$ with \emph{not necessarily invertible} natural transformations as morphisms. It is the full subcategory of the functor $\icat$, $\Fun\left(\cF,\cE\right),$ consisting of those functors $f^*:\cF \to \cE$ which are left exact and admit a right adjoint.
\end{dfn}

\begin{ex}
Let $f:\C \to \sD$ be a functor between small $\i$-categories. There is a canonically induced restriction functor $$f^*:\Pshi\left(\sD\right) \to \Pshi\left(\C\right).$$ This functor has a left adjoint $f_!$ which assigns to each functor $F:\C^{op} \to \iGpd$ its left Kan extension along $f^{op}.$ The functor $f_!$ itself is a left Kan extension, namely $$f_!=\Lan_{y_\C} \left(f \circ y_\sD\right),$$ where $y_\C$ and $y_\sD$ are the Yoneda embeddings of $\C$ and $\sD$ respectively. The functor $f^*$ also has a right adjoint $f_*$, which sends a functor $F:\C^{op} \to \iGpd$ to its right Kan extension along $f^{op}.$ By the Yoneda Lemma, it follows that for each $D$ in $\sD,$ $$f_*\left(F\right)\left(D\right) \simeq \Hom\left(f^*y_\sD\left(D\right),F\right).$$ In particular, since $f^*$ has both a left and a right adjoint, it is left exact, so the pair $$\left(f_*,f^*\right)$$ is a geometric morphism.
\end{ex}

\begin{dfn}\label{dfn:nloc}
Let $n \le m \le \i.$ If $\cE$ is an $m$-topos, the full subcategory $\tau_{\le n-1}\left(\cE\right)$ of $\cE$ consisting of the $\left(n-1\right)$-truncated objects of $\cE$ is an $n$-topos, and this defines a functor $$\tau^m_{\le n-1}:\mathfrak{Top}_m \to \mathfrak{Top}_n.$$ This functor is left adjoint to a full and faithful embedding $$\nu^m_n:\mathfrak{Top}_n \to \mathfrak{Top}_m,$$ hence determines a localization. An $m$-topos equivalent to one of the form $\nu^{m}_n\left(\cF\right)$ for an $n$-topos $\cF,$ is called \textbf{$n$-localic}. If $\cE$ is an $m$-topos, the $n$-topos $\tau^m_{\le n-1}\left(\cE\right)$ is called the \textbf{$n$-localic reflection} of $\cE.$  When no confusion will arise, we will often denote $\nu^m_n$ by simply $\nu_n,$ and similarly will denote $\tau^m_{n-1}$ by $\tau_{n-1}$. See Section 6.4.5 of \cite{htt} for more details.
\end{dfn}

\begin{rmk}\label{rmk:loclocsmall}
On one hand, the $\icat$ $\T$ is not locally small. E.g., there exists a topos $\mathcal{A}$ called the \emph{classifying topos for abelian groups}, and it has the property that the category of geometric morphisms $\Geo\left(\Set,\mathcal{A}\right)$ is equivalent to the category of abelian groups, which is not small. Since the $\left(2,1\right)$-category of topoi is equivalent to the full subcategory of $\T$ on the $1$-localic objects, this implies that $\T$ is not locally small.

On the other hand, if $\cE$ and $\cF$ are two $\i$-topoi, then the $\i$-category $\Geo\left(\cE,\cF\right)$ is locally small. To see this, first notice that it is a full subcategory of $\Fun^{L}\left(\cF,\cE\right),$ the $\i$-category of colimit-preserving functors. Suppose that $\sD$ is a small $\i$-category for which $\cF$ is a left exact localization of $\Pshi\left(\sD\right).$ Then by Proposition 5.5.4.20 and Theorem 5.1.5.6 of \cite{htt}, $\Fun^{L}\left(\cF,\cE\right)$ is equivalent to a full subcategory of $$\Fun\left(\sD,\cE\right),$$ which is an $\i$-topos, hence locally small. Finally, observe that the $\i$-groupoid $$\Hom_{\T}\left(\cE,\cF\right)$$ is equivalent to the maximal sub Kan complex of $\Geo\left(\cE,\cF\right)$, so that $\T$ has locally small mapping spaces.
\end{rmk}

\section{The epi-mono factorization system}
\begin{dfn}
Let $f:C \to D$ be a morphism in an $\icat$ $\C.$ Then $f$ is a \textbf{monomorphism} if it is $\left(-1\right)$-truncated.
\end{dfn}

We now recall a basic construction we will use throughout the paper. Let $\C$ be an $\icat$ with pullbacks. A morphism $f:C \to D$ induces a functor $$C\left(f\right)_\bullet:\Delta^{op} \to \C$$ called its \textbf{\v{C}ech nerve} (which is unique up to a contractible space of choices). The simplicial object $C\left(f\right)_\bullet$ may be described concretely in terms of iterative pullbacks by the formula $$C\left(f\right)_n = C \times_{D} \cdots \times_{D} C,$$ where the pullback above is the $n^{\mbox{th}}$ iterated fibered product over $D.$ In fact, there is always canonical lift
$$\xymatrix{\Delta^{op} \ar@{-->}[rd]_-{\tilde C\left(f\right)_\bullet} \ar[r]^{C\left(f\right)_\bullet} & \C\\
& \C/D \ar[u],}$$
and $\tilde C\left(f\right)_\bullet$ may be identified with a cocone $$\widehat{C}\left(f\right)_\bullet:\left(\Delta^{op}\right)^{\triangleright}\cong \Delta_{+}^{op} \longlongrightarrow \C$$ with vertex $D.$

\begin{dfn}
A morphism $f:C \to D$ in an $\icat$ $\C$ with pullbacks is an \textbf{effective epimorphism} if the cocone $\widehat{C}\left(f\right)_\bullet$ is colimiting.
\end{dfn}

\begin{rmk}
The construction of the \v{C}ech nerve of a morphism may be carried out functorially as follows: a morphism $f:C \to D$ is an object of $$\C/D=\Fun\left(\Delta^{op}_{\le 0},\C/D\right),$$ where $\Delta_{\le 0}$ is the full subcategory of the simplex category $\Delta$ on $\left[0\right].$ The canonical inclusion $$i_0:\Delta_{\le 0} \hookrightarrow \Delta$$ induces a restriction functor $$i^*_0:\Fun\left(\Delta^{op},\C/D\right) \to \Fun\left(\Delta^{op}_{\le 0},\C/D\right)$$ which has a right adjoint $i^0_*$. If the composite is denote by $$\operatorname{cosk_0}:=i^0_* \circ i^*_0,$$ one has $$\tilde C\left(f\right)_\bullet=\operatorname{cosk_0}\left(f\right).$$
\end{rmk}

\begin{prop}\label{prop:factorization}
(\cite{htt}, Example 5.2.8.16) Let $\cE$ be an $\infty$-topos. Then effective epimorphisms and monomorphisms together form a factorization system on $\cE$, in the sense defined in \cite{htt}, Section 5.2.8.
\end{prop}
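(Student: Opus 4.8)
The plan is to verify the three defining conditions of a factorization system as formulated in \cite{htt}, Definition 5.2.8.8: that the two classes are each stable under retracts, that every effective epimorphism is left orthogonal to every monomorphism, and that every morphism factors as an effective epimorphism followed by a monomorphism. I would obtain the factorization directly from the \v{C}ech nerve. Given $f:C \to D$, set $U:=\operatorname{colim}_{\Delta^{op}} C\left(f\right)_\bullet$; the augmentation $\widehat{C}\left(f\right)_\bullet$ then factors $f$ as $C \xrightarrow{p} U \xrightarrow{q} D$. That $p$ is an effective epimorphism is nearly formal: the \v{C}ech nerve $C\left(f\right)_\bullet$ is a groupoid object of $\cE$, and since groupoid objects in an $\i$-topos are effective (the relevant Giraud axiom, \cite{htt} Theorem 6.1.0.6), $C\left(f\right)_\bullet$ is canonically the \v{C}ech nerve of $p$; hence $U$ is the colimit of the \v{C}ech nerve of $p$, and $p$ is an effective epimorphism by definition.

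The substantive point, and the one I expect to be the main obstacle, is that $q:U \to D$ is a monomorphism, i.e. that the diagonal $U \to U \times_D U$ is an equivalence. Here I would exploit that colimits in an $\i$-topos are universal. Universality first gives $U \times_D U \simeq \operatorname{colim}_{[n]} \left( C\left(f\right)_n \times_D U \right)$, and, applied again in the remaining variable together with the identification $C\left(f\right)_n \times_D C\left(f\right)_m \simeq C\left(f\right)_{n+m+1}$, identifies $C\left(f\right)_n \times_D U$ with the realization of the shifted simplicial object $[m] \mapsto C\left(f\right)_{n+1+m}$. This shifted object is a décalage and carries an extra degeneracy, so its realization collapses to $C\left(f\right)_n$; feeding this back yields $U \times_D U \simeq \operatorname{colim}_{[n]} C\left(f\right)_n \simeq U$, and one checks the resulting equivalence is the diagonal. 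The essential use of the $\i$-topos hypothesis is precisely this universality of colimits (descent); the analogous factorization fails in a general $\i$-category admitting these colimits, which is why this step is the crux.

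It remains to check orthogonality and retract-stability. For orthogonality, let $p:A \to B$ be an effective epimorphism and $m:X \to Y$ a monomorphism. Since $m$ is $\left(-1\right)$-truncated, postcomposition with $m$ induces a $\left(-1\right)$-truncated map on every mapping space; writing $B$ as the colimit of the \v{C}ech nerve of $p$ and again invoking universality, the space of diagonal fillers of a commuting square is computed as a limit over $\Delta$ of contractible spaces, hence is contractible. For retract-stability, monomorphisms are the $\left(-1\right)$-truncated morphisms and truncatedness is detected fiberwise on mapping spaces, so it is preserved by retracts; effective epimorphisms coincide with the $\left(-1\right)$-connective morphisms in an $\i$-topos (see \cite{htt}, \S 6.2.3), and connectivity is likewise retract-stable. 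With these three conditions in hand, the requirements of \cite{htt}, Definition 5.2.8.8 are met, completing the verification.
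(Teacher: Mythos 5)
Your proof is correct and takes essentially the same route as the paper: the paper itself offers no argument, deferring to \cite{htt}, Example 5.2.8.16, and the remark immediately following the proposition records exactly your factorization $E \to \colim C\left(f\right)_\bullet \to F$ through the colimit of the \v{C}ech nerve, with the epimorphism part coming from effectivity of groupoid objects and the monomorphism part from universality of colimits together with the d\'ecalage collapse, just as in the cited source. The only blemish is an indexing slip: in the conventions of \cite{htt}, effective epimorphisms are the $0$-connective morphisms (classically the $\left(-1\right)$-connected maps), not the $\left(-1\right)$-connective ones, though this does not affect your retract-stability argument.
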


\begin{rmk}
This factorization system is a natural generalization of the classical epi/mono factorization system of ordinary $1$-topoi. Explicitly, if $f:E \to F$ is a morphism in an $\i$-topos $\cE,$ one can form its \v{C}ech nerve, $$C\left(f\right)_\bullet:\Delta^{op} \to \C.$$ The factorization of $f$ into an effective epimorphism followed by a monomorphism is given by $$E \to \colim C\left(f\right)_\bullet \to F.$$
\end{rmk}

\section{Grothendieck topologies}
\begin{dfn}
A \textbf{sieve} on an object $C$ in an $\icat$ $\C$ is a subobject in $\Psh_\i\left(\C\right)$ of $y\left(C\right),$ where $$y:\C \hookrightarrow \Psh_\i\left(\C\right)$$ is the Yoneda embedding.
\end{dfn}

\begin{dfn}
A Grothendieck topology \index{Grothendieck topology} $J$ on an $\icat$ $\C$ is an assignment to every object $C$ of $\C$, a set $Cov(C)$ of \textbf{covering sieves} \index{sieve} on $C$, such that

\begin{itemize}
\item[i)] The maximal subobject $y\left(C\right)$ is a covering sieve on $C.$
\item[ii)] If $R$ is a covering sieve on $C$ and $f:D \to C$, then $f^{*}\left(R\right)$ is a covering sieve on $D$.
\item[iii)] If $R$ is a sieve on $C$ and $S$ is a covering sieve on $C$ such that for each object $D$ and every arrow $$f \in S\left(D\right) \subseteq \Hom_{\C}\left(D,C\right)_0$$
$f^{*}\left(R\right)$ is a covering sieve on $D$, then $R$ is a covering sieve on $C$.
\end{itemize}
An $\icat$ together with a Grothendieck topology is called a \textbf{Grothendieck site}.
\end{dfn}

\begin{dfn}
A \textbf{Grothendieck pretopology} on an $\icat$ $\C$ is an assignment to each object $C$ of $\C$ a collection $\mathcal{B}\left(C\right)$ of families of arrows $\left(U_i \to C\right)_{i\in I}$, called \textbf{covering families}, such that

\begin{itemize}
\item[i)] If $D \to C$ is an equivalence, then $\left(D \to C\right)$ is in $\mathcal{B}\left(C\right).$
\item[ii)] If $\left(U_i \to C\right)_i$ is in $\mathcal{B}\left(C\right)$ and $f:D \to C$, then the fibered products $U_i \times_{C} D$ exist and the set of the induced maps $U_i \times_{C} D \to D$ is in $\mathcal{B}\left(D\right)$.
\item[iii)] If $\left(U_i \to C\right)_i$ is in $\mathcal{B}\left(C\right)$ and for each $i,$ $\left(V_{ij} \to U_i\right)_j$ is in $\mathcal{B}\left(U_i\right)$, then $\left(V_{ij} \to C\right)_{ij}$ is in $\mathcal{B}\left(C\right)$.
\end{itemize}
\end{dfn}

\begin{rmk}
Given a Grothendieck pretopology $J,$ and $\mathcal{U}=\left(U_i \to C\right)_{i\in I}$ a $J$-covering family, one can factor the induced map $$\coprod\limits_i y\left(U_i\right) \to y\left(C\right)$$ uniquely (up to a contractible space of choices) as an effective epimorphism followed by a monomorphism as in Proposition \ref{prop:factorization}, $$\coprod\limits_i y\left(U_i\right) \to S_{\mathcal{U}} \to y\left(C\right).$$ The monomorphism $S_{\mathcal{U}} \to y\left(C\right)$ corresponds to a sieve on $C.$ This assignment generates a Grothendieck topology by declaring a sieve $R$ on $C$ is in $Cov\left(C\right)$ if there exists a covering family $\mathcal{U}$ such that $S_{\mathcal{U}} \subseteq R$.
\end{rmk}

\begin{dfn}
If $\left(\C,J\right)$ is a Grothendieck site, then a functor $$F:\C^{op} \to \iGpd$$ is an \textbf{$\i$-sheaf} if for every object $C$ of $\C,$ and every covering sieve $S$ on $C,$ the induced morphism
$$F\left(C\right) \simeq \Hom\left(y\left(C\right),F\right) \to \Hom\left(S,F\right)$$ is an equivalence of $\i$-groupoids. An $\i$-sheaf is called an \textbf{$n$-sheaf} if in addition, it is $n$-truncated.
\end{dfn}

\begin{rmk}
A $0$-sheaf is a sheaf, in the classical sense, whereas a $1$-sheaf is a stack of groupoids.
\end{rmk}

For any site $\left(\C,J\right),$ the $\icat$ of $\i$-sheaves is a left exact localization
$$\xymatrix@1{\Shi\left(\C,J\right)\mspace{4mu} \ar@{^{(}->}[r]<-0.9ex>_-{i} & \Pshi\left(\C\right) \ar@<-0.5ex>[l]_-{a}}.$$ (See Lemma 6.2.2.7 of \cite{htt}.)
A localization of this form is called \textbf{topological.} Furthermore, by Proposition 6.4.3.6 of op. cit., the $\left(n+1,1\right)$-category of $n$-sheaves is a left exact localization
\begin{equation}\label{eq:grothp}
\xymatrix@1{\Sh_n\left(\C,J\right)\mspace{4mu} \ar@{^{(}->}[r]<-0.9ex>_-{i} & \Psh_{n}\left(\C\right) \ar@<-0.5ex>[l]_-{a}}.
\end{equation}

\begin{thm}\label{thm:nlocalicsheaf}
Let $n < \i$ and suppose that $$\xymatrix@1{\cE\mspace{4mu} \ar@{^{(}->}[r]<-0.9ex>_-{i} & \Psh_{n-1}\left(\C\right) \ar@<-0.5ex>[l]_-{a}}$$ is an expression of an $n$-topos as a left exact localization. Then there is a unique Grothendieck topology on $\C$ for which the localization is equivalent to one of the form (\ref{eq:grothp}).
\end{thm}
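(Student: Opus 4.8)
The plan is to read the topology directly off the localization and then prove that the localization is necessarily \emph{topological} --- the latter being the place where the hypothesis $n<\i$ does all the work. First I would package the localization as its class of local equivalences. Let $W$ denote the class of morphisms of $\Psh_{n-1}\left(\C\right)$ inverted by $a$. Since $a$ is an accessible left-exact localization, $W$ is a strongly saturated class of small generation, and --- because $a$ preserves finite limits --- $W$ is stable under base change (\cite{htt}, Proposition 6.2.1.1). I then define a candidate topology $J$ by declaring a sieve $R\hookrightarrow y\left(C\right)$ to be covering precisely when the monomorphism $R\hookrightarrow y\left(C\right)$ lies in $W$, equivalently when $a$ sends it to an equivalence.

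Next I would verify the three axioms of a Grothendieck topology for $J$. Axiom (i) is immediate since $\mathrm{id}_{y\left(C\right)}\in W$. Axiom (ii) is exactly base-change stability of $W$: for $f\colon D\to C$ the pulled-back sieve is $f^{*}R=R\times_{y\left(C\right)}y\left(D\right)\hookrightarrow y\left(D\right)$, which lies in $W$ whenever $R\hookrightarrow y\left(C\right)$ does. For the local-character axiom (iii), given a covering sieve $R_{0}$ on $C$ and a sieve $R$ with $f^{*}R$ covering for every $f\in R_{0}$, I would argue that $R\cap R_{0}\to R$ is a pullback of $R_{0}\hookrightarrow y\left(C\right)\in W$, hence lies in $W$; and that pulling $R\cap R_{0}\to R_{0}$ back along the canonical effective epimorphism $\coprod_{f\in R_{0}}y\left(D_{f}\right)\twoheadrightarrow R_{0}$ from the representables it contains (Proposition \ref{prop:factorization}) yields the coproduct $\coprod_{f} f^{*}R\to\coprod_{f} y\left(D_{f}\right)$, a coproduct of maps in $W$. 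Since membership in $W$ descends along effective epimorphisms in a left-exact localization, $R\cap R_{0}\to R_{0}\in W$; composing with $R_{0}\hookrightarrow y\left(C\right)\in W$ and applying two-out-of-three to $R\cap R_{0}\to R\to y\left(C\right)$ then gives $R\hookrightarrow y\left(C\right)\in W$. Thus $J$ is a genuine topology, and by construction a presheaf is a $J$-sheaf exactly when it is local with respect to the monomorphisms contained in $W$. In particular every object of $\cE$ is a $J$-sheaf, so $\cE\subseteq\Sh_{n-1}\left(\C,J\right)$.

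The heart of the matter, and the step I expect to be hardest, is the reverse inclusion: that every $J$-sheaf is $a$-local, equivalently that $W$ is generated as a strongly saturated, base-change-stable class by the monomorphisms it contains (i.e. that the localization is topological). This fails for general $\i$-topoi --- cotopological localizations such as hypercompletion are precisely the obstruction --- and this is exactly where $n<\i$ enters. I would invoke the topological--cotopological factorization of left-exact localizations (the analogue for $n$-topoi of the theory of Section 6.5.2 of \cite{htt}): $a$ factors as the $J$-sheafification (a topological localization inverting precisely the saturated base-change-stable class generated by the covering monomorphisms) followed by a cotopological localization, and a cotopological localization inverts only $\i$-connective morphisms. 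But every object of $\Psh_{n-1}\left(\C\right)$ is $\left(n-1\right)$-truncated, and an $\i$-connective morphism between $\left(n-1\right)$-truncated objects is an equivalence; hence the cotopological factor is trivial and $a$ is topological, realizing $\cE\simeq\Sh_{n-1}\left(\C,J\right)$ as a localization of the form (\ref{eq:grothp}). The delicate point to nail down is that the factorization machinery of \cite{htt}, developed for $\i$-topoi, survives passage to $\left(n-1\right)$-truncated objects, and that no $\i$-connective phenomena persist after truncation.

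Finally, uniqueness is formal. For any topology $J'$ with $\Sh_{n-1}\left(\C,J'\right)=\cE$, a sieve $R\hookrightarrow y\left(C\right)$ is $J'$-covering if and only if it is inverted by the sheafification onto $\cE$, i.e. if and only if it lies in $W$. Since this characterization refers only to $\cE$ and not to $J'$, the covering sieves of $J'$ coincide with those of $J$, whence $J'=J$.
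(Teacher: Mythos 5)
The paper never proves Theorem \ref{thm:nlocalicsheaf}: it is stated in the preliminaries chapter as a recollection from Section 6.4.3 of \cite{htt} (with Remark \ref{rmk:nosheav} recording, via \cite{htt} Proposition 6.5.2.19, exactly why it fails at $n=\i$), so there is no in-paper argument to measure you against, and your proposal must stand on its own. Its skeleton is sound and is the standard route. Defining $J$ as the sieves inverted by $a$ loses nothing, since subobjects of $y\left(C\right)$ in $\Psh_{n-1}\left(\C\right)$ are precisely sieves; your verification of axioms (i)--(iii), including the two-out-of-three finish for local character, is correct modulo the asserted lemma that membership in $W$ descends along effective epimorphisms. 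That lemma is true and routine: pull the map back along the \v{C}ech nerve of the epimorphism, note each level lies in $W$ by pullback-stability, recover the original map as the colimit of the levels using universality of colimits in the presheaf $n$-topos, and apply closure of strongly saturated classes under colimits in the arrow category. (One should also quietly check that the relevant coproducts and \v{C}ech data of $(n-1)$-truncated presheaves agree with those computed in $\Pshi\left(\C\right)$, which they do since subobjects of truncated objects are truncated.)

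The genuine gap is the one you flag: you cannot literally invoke the topological--cotopological factorization of \cite{htt}, Section 6.5.2, inside $\Psh_{n-1}\left(\C\right)$, and ``survives passage to truncated objects'' is the whole theorem, not a footnote. Two ways to close it. First, pass to $n$-localic $\i$-topoi via $\nu_n$ (Definition \ref{dfn:nloc}): one may take $\C$ to be an $n$-category, so $\Pshi\left(\C\right)$ is $n$-localic; the embedding $\cE \hookrightarrow \Psh_{n-1}\left(\C\right)$ corresponds to a geometric embedding $\nu_n\cE \hookrightarrow \Pshi\left(\C\right)$, which you factor there as topological followed by cotopological; since $\i$-sheaves on an $n$-site form an $n$-localic $\i$-topos and cotopological localizations induce equivalences on truncated objects, the cotopological factor is a cotopological localization between $n$-localic $\i$-topoi inducing an equivalence on $(n-1)$-truncated objects, hence trivial, and restricting to $(n-1)$-truncated objects yields (\ref{eq:grothp}). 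Alternatively, avoid the factorization machinery entirely by a finite downward induction: every morphism of $\Psh_{n-1}\left(\C\right)$ is $(n-1)$-truncated; factor epi--mono (Proposition \ref{prop:factorization} applies here); a monomorphism in $W$, pulled back along the canonical effective epimorphism from representables onto its codomain, becomes a coproduct of covering sieves, so by descent and saturation it lies in the topological class $W_J$; and a $k$-truncated effective epimorphism $p \in W$ has $(k-1)$-truncated diagonal $\Delta_p \in W$ (apply the left exact $a$), so once $\Delta_p \in W_J$ by induction, $J$-sheafification sends $p$ to a map that is simultaneously a monomorphism and an effective epimorphism, hence an equivalence. The induction terminates precisely because $n<\i$, which is where the hypothesis enters --- confirming your diagnosis, and making precise your slogan that no $\i$-connective phenomena survive truncation. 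Finally, your uniqueness argument is not quite formal: the ``only if'' direction (covering sieves are inverted by sheafification) is definitional, but the ``if'' direction --- that any sieve inverted by $J'$-sheafification is already $J'$-covering --- is exactly the injectivity of $J \mapsto \Sh_{n-1}\left(\C,J\right)$ and requires an argument, e.g.\ unwinding sheafification to produce a $J'$-covering sieve through which the given sieve locally factors and then applying the local character axiom.
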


\begin{rmk}\label{rmk:nosheav}
The above theorem is \emph{not} true for $n=\i.$ For an $\icat$ $\C,$ there can exist left exact localizations of the form $$\xymatrix@1{\cE\mspace{4mu} \ar@{^{(}->}[r]<-0.9ex>_-{i} & \Psh_\i\left(\C\right) \ar@<-0.5ex>[l]_-{a}}$$
which do not arise from Grothendieck topologies on $\C.$ When this happens, there is always a unique Grothendieck topology $J$ on $\C$ and a factorization
$$\xymatrix{\cE\mspace{4mu} \ar@{^{(}->}[r]<-0.9ex>_-{i} & \Sh_\i\left(\C,J\right) \ar@{^{(}->}[r]<-0.9ex>_-{i} \ar@<-0.5ex>[l]_-{l} & \Psh_\i\left(\C\right) \ar@<-0.5ex>[l]_-{a}},$$
for which the localization $$\xymatrix@1{\cE\mspace{4mu} \ar@{^{(}->}[r]<-0.9ex>_-{i} & \Sh_\i\left(\C,J\right)  \ar@<-0.5ex>[l]_-{l}}$$ is \emph{cotopological}. (See Proposition 6.5.2.19 of \cite{htt}.)
\end{rmk}

\begin{dfn}\label{dfn:canonical}
Every $n$-topos $\cE$ carries a distinguished Grothendieck topology, called the \textbf{epimorphism topology}. It is generated by covering families $\left(E_\alpha \to E\right)$ such that $$\coprod\limits_\alpha E_\alpha \to E$$ is an effective epimorphism. We denote the associated Grothendieck site by $\left(\cE,can\right).$
\end{dfn}

Finally, we end with a few useful observations:

\begin{rmk}\label{rmk:5.3.5.4}
By the proof of Corollary 5.3.5.4 of \cite{htt}, one has a canonical identification
$$\Psh_\i\left(\C\right)/F \simeq \Psh_\i\left(\C/F\right).$$
\end{rmk}

\begin{prop}\label{prop:locslice}
Suppose that $$\xymatrix@1{\cE\mspace{4mu} \ar@{^{(}->}[r]<-0.9ex>_-{i} & \Psh_\i\left(\C\right) \ar@<-0.5ex>[l]_-{a}}$$ is a left exact localization. Then, for each $E$ in $\cE,$ $\cE/E$ is a left exact localization of $\Psh_\i\left(\C/E\right)$. A similar statement holds for $n$-topoi for any $n$.
\end{prop}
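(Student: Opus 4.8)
The plan is to reduce everything to a statement about slicing the given localization, the crucial input being Remark \ref{rmk:5.3.5.4}. Write the hypothesized localization as $a \rt i$, with $i:\cE \to \Pshi\left(\C\right)$ the fully faithful inclusion and $a$ its left exact left adjoint. The object $E$ of $\cE$ is, via $i$, an object $i\left(E\right)$ of $\Pshi\left(\C\right)$, and by Remark \ref{rmk:5.3.5.4} there is a canonical equivalence $\Pshi\left(\C/E\right) \simeq \Pshi\left(\C\right)/i\left(E\right)$, where $\C/E$ denotes the slice of $\C$ over the presheaf $i\left(E\right)$, i.e. $\C \times_{\Pshi\left(\C\right)} \Pshi\left(\C\right)/i\left(E\right)$. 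It therefore suffices to exhibit $\cE/E$ as a left exact localization of $\Pshi\left(\C\right)/i\left(E\right)$, and the whole argument then transports across this equivalence.

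First I would produce the induced adjunction on slices. The inclusion $i$ carries a morphism $Y \to E$ of $\cE$ to $i\left(Y\right) \to i\left(E\right)$, defining $i_E:\cE/E \to \Pshi\left(\C\right)/i\left(E\right)$; dually, $a$ carries $X \to i\left(E\right)$ to the composite $a\left(X\right) \to a\left(i\left(E\right)\right) \xrightarrow{\sim} E$, the last map being the counit, which is invertible because $i$ is fully faithful. This defines $a_E:\Pshi\left(\C\right)/i\left(E\right) \to \cE/E$, and $a_E \rt i_E$ is inherited from $a \rt i$: mapping spaces in an overcategory are computed as fibers of the ambient mapping spaces over the spaces of maps to the base object, and the adjunction equivalence $\Hom\left(a\left(X\right),Y\right)\simeq\Hom\left(X,i\left(Y\right)\right)$ is compatible with the two fibrations over $\Hom\left(a\left(X\right),E\right)\simeq\Hom\left(X,i\left(E\right)\right)$, so it descends to the fibers. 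The same fiberwise description shows $i_E$ is fully faithful: on hom-spaces it is a map of fibers whose total map $\Hom_\cE\left(Y,Y'\right)\to\Hom_{\Pshi\left(\C\right)}\left(i\left(Y\right),i\left(Y'\right)\right)$ and whose base map to the respective spaces of maps to $E$ are both equivalences, since $i$ is fully faithful. Hence $i_E$ realizes $\cE/E$ as a reflective subcategory of $\Pshi\left(\C\right)/i\left(E\right)$.

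It remains to check that the reflector $a_E$ is left exact, and here the point is that finite limits in an overcategory are controlled by the ambient category. The terminal object of $\Pshi\left(\C\right)/i\left(E\right)$ is $\mathrm{id}_{i\left(E\right)}$, which $a_E$ sends to $a\left(i\left(E\right)\right)\xrightarrow{\sim}E$, the terminal object of $\cE/E$; and pullbacks, being connected limits, are created by the forgetful functors to $\Pshi\left(\C\right)$ and to $\cE$, so on underlying objects $a_E$ acts as $a$, which preserves them by left exactness. Since finite limits are generated by the terminal object together with pullbacks, $a_E$ is left exact (accessibility, if one also wants it, is automatic as both slices are presentable). The claim for $n$-topoi is formally identical, replacing $\Pshi$ throughout by $\Psh_{n-1}$ and invoking the $\left(n-1\right)$-truncated analogue of Remark \ref{rmk:5.3.5.4}. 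I expect the only genuine subtlety to be the coherent construction of the slice adjunction $a_E \rt i_E$ in the quasicategorical setting: the fiberwise computation above is the conceptual content, but making it precise is cleanest by appealing to the general machinery for inducing adjunctions on overcategories rather than building the unit and counit by hand.
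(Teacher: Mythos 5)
Your proof is correct and follows essentially the same route as the paper's: identify $\Psh_\i\left(\C/E\right)$ with $\Psh_\i\left(\C\right)/i\left(E\right)$ via Remark \ref{rmk:5.3.5.4}, slice the adjunction $a \dashv i$ to get $a_E \dashv i_E$ with $i_E$ fully faithful, and deduce left exactness of $a_E$ from that of $a$. Your additional details (the fiberwise verification of the sliced adjunction and the reduction of left exactness to terminal objects and pullbacks) simply make explicit what the paper leaves to the reader.
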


\begin{proof}
By Remark \ref{rmk:5.3.5.4} one has a canonical identification
\begin{equation}\label{eq:iddd}
\Psh_\i\left(\C\right)/E \simeq \Psh_\i\left(\C/E\right).
\end{equation}
It therefore suffices to show that $\cE/E$ is a left exact localization of $\Psh_\i\left(\C\right)/E$. The functor $i$ induces a fully faithful inclusion $$i/E:\cE/E \hookrightarrow \Psh_\i\left(\C\right)/iE$$ coming from $i,$ which is right adjoint to the functor $$a/E:\Psh_\i\left(\C\right)/iE \to \cE/E$$ induced by $a$. Since $a$ is left exact, it readily follows that $a/E$ is.
\end{proof}

\begin{rmk}
Let $0 \le n \le \i$ and let $\cE$ be an $n$-topos. It follows from Proposition \ref{prop:locslice} that for any object $E$ of $\cE,$ the slice category $\cE/E$ is an $n$-topos, called the \textbf{slice $n$-topos} of $E.$
\end{rmk}

\section{Sheaves on $\i$-categories of $\i$-topoi.}
Let $\U$ be the ambient Grothendieck universe of small sets. Fix $\V$ a larger Grothendieck universe such that $\U \in \V,$ whose elements are to be thought of as \emph{large sets}.

\begin{dfn}
Denote by $\LiGpd$ the $\icat$ of \textbf{large $\i$-groupoids} (i.e. $\i$-groupoids in the universe $\V$). For $\C$ an $\icat$, denote by $\LPshi\left(\C\right)$ the $\icat$ $\Fun\left(\C^{op},\LiGpd\right)$ of \textbf{large presheaves}.
\end{dfn}

\begin{dfn}\label{dfn:sheavesoverE}
Let $\cE$ be an $\i$-topos. Denote by $\LShi\left(\cE\right)$ the full subcategory of $\LPshi\left(\cE\right)$ consisting of those $$F:\cE^{op} \to \LiGpd$$ which preserve $\U$-small limits. Denote by $\Shi\left(\cE\right)$ the full subcategory thereof on those $F$ which factor through the inclusion of $\U$-small $\i$-groupoids $$\iGpd \hookrightarrow \LiGpd.$$
\end{dfn}

\begin{rmk}\label{rmk:limitpresrep}
By Remark 6.3.5.17 of \cite{htt}, $\LShi\left(\cE\right)$ may be regarded as an $\i$-topos in $\V.$ Moreover, the Yoneda embedding $$y:\cE \hookrightarrow \Shi\left(\cE\right)$$ is an equivalence, so that in particular, $\Shi\left(\cE\right)$ is an $\i$-topos in $\U.$ If $\cE$ were instead a $1$-topos, then limit preserving functors $$\cE^{op} \to \widehat{\Set}$$ could be identified with precisely those functors which are sheaves with respect to the epimorphism topology as in Definition \ref{dfn:canonical} (which hence must also be the canonical topology). The proof readily generalizes for functors $$\cE^{op} \to \widehat{\mathbf{Gpd}}_n$$ when $\cE$ is instead an $n$-topos if $n < \i,$ but not for $n=\i.$ When $n=\i,$ we do not know if this result still holds.
\end{rmk}

%\begin{dfn}
%Let $\LShi\left(\T\right)$ denote the full subcategory of $\LPshi\left(\T\right)$ consisting of those $$F:\T^{op} \to \LiGpd$$ such that for each infinity-topos $\cE,$ the composite $$\cE^{op} \stackrel{\left(\chi_\cE\right)^{op}}{\longlongrightarrow} \left(\T^{\et}/\cE\right)^{op} \to \T^{op} \stackrel{F}{\longrightarrow} \LiGpd$$ preserves $\U$-small limits.
%\end{dfn}

%\begin{rmk}
%By Lemma 6.3.5.21 of \cite{htt}, $\LShi\left(\T\right)$ is an $\i$-topos in $\V.$ The for $n<\i,$ the analogously defined $\widehat{\Sh}_n\left(\Tn\right)$ is equivalent to the $\i$-category of $n$-sheaves on $\Tn$ with respect to the \'etale topology. We do not know if this result holds true for $n=\i.$
%\end{rmk}

\section{The $\left(\i,2\right)$-category of $\i$-topoi.}\label{sec:lax}
It should be noted that $\i$-topoi more naturally form an $\left(\i,2\right)$-category than an $\left(\i,1\right)$-category, and in many situations it is useful to speak of \emph{non-invertible} natural transformations between geometric morphisms. We will not delve into the technical details of how to make this precise as it would distract too much from the theme of this manuscript. Instead, we will exploit $\left(\i,1\right)$-categorical techniques to get our hands on the non-invertible natural transformations we will need.

In the $\left(\i,2\right)$-category of $\i$-topoi, the $\icat$ of morphisms from $\cE$ to $\cF$ is the $\icat$ $\Geo\left(\cE,\cF\right)$ of Definition \ref{dfn:geofun}. Fix an $\i$-topos $\cB.$ Notice that the composite
$$\T^{op} \stackrel{\Geo\left(\fdot, \cB\right)}{-\!\!\!\longlongrightarrow} \mathcal{C}at_\i \stackrel{\left(\fdot \right)^{op}}{\longlongrightarrow}\mathcal{C}at_\i $$ classifies a Cartesian fibration of $\i$-categories $$p_\cB:\T^{\mathit{lax}}/\cB \to \T.$$ The $\icat$ $\T^{\mathit{lax}}/\cB,$ as the notation suggest, plays the role of the lax slice $\icat$ over $\cB$ in the $\left(\i,2\right)$-category $\i$-topoi. The objects are $\i$-topoi with a distinguished geometric morphism to $\cB$, i.e. the objects have the form $\cE \to \cB$ with $\cE$ an $\i$-topos. The morphisms from $e:\cE \to \cB$ to $f:\cF \to \cB$ consist of geometric morphisms $\varphi:\cE \to \cF$ together with a \emph{not necessarily invertible} natural transformation $$\alpha:\varphi^*f^* \Rightarrow e^*.$$

\chapter{Local Homeomorphisms and \'Etale Maps of $\i$-Topoi}\label{chap:etale}

Topoi and their higher categorical cousins $\i$-topoi are, in a precise sense, categorifications of topological spaces, which allow points to posses intrinsic symmetries, and in the case of $\i$-topoi, allow these symmetries themselves to have symmetries, and so on. In this section, we develop the theory of local homeomorphisms between such generalized spaces. In Chapter \ref{chap:etendues}, we will describe how to glue along such morphisms. This will lead naturally to a theory of generalized orbifolds and Deligne-Mumford stacks. We start with a brief essay on the role of topoi and higher topoi as generalized spaces, which the pragmatic reader is welcome to skip without harm.

\section{Topoi as Generalized Spaces}
The concept of a topos arose out of Grothendieck's work in algebraic geometry in the 1960s. The word ``topos'' itself is derived from the French word ``topologie.'' To quote Grothendieck and Verdier themselves:
\begin{quote}
``Hence, one can say that the notion of a topos arose naturally from the perspective of sheaves in topology, and constitutes a substantial broadening of the notion of a topological space, encompassing many concepts that were once not seen as part of topological intuition... As the term ``topos'' itself is
specifically intended to suggest, it seems reasonable and legitimate to the authors of this seminar to consider the aim of topology to be the study of topoi (not only topological spaces).''(Grothendieck and Verdier \cite{sga4}, p. 302).
\end{quote}
In a precise sense, topoi may be seen as a categorification of the concept of a topological space. For every $0 \le n \le \i$ there is a concept of an $n$-topos (see e.g. \cite{htt}), and each level is a categorification of the previous. In particular, for each pair of integers $m > n,$ the $\left(n+1,1\right)$-category of $n$-topoi embeds full and faithfully into the $\left(m+1,1\right)$-category of $m$-topoi by a functor $$\nu_n:\Tn \hookrightarrow \Tm.$$ Moreover, this functor has a left adjoint $\tau_{n-1},$ which should be thought of as ``decategorification.'' To understand $n$-topoi properly for small values of $n$, it helps to do a bit of ``negative thinking.,'' in the sense of \cite{negative}. In particular, it turns out that $0$-topoi are essentially the same thing as topological spaces.

More precisely, $0$-topoi are the same thing as \emph{locales}. Locales are the objects of study in the playfully named field of ``pointless topology,'' a phrase first coined by John Von Neumann \cite{von}. It was realized that all reasonable topological spaces, specifically \emph{sober} ones (which is quite a large class of spaces, including virtually all spaces considered in practice), are completely determined by their lattice of open sets; the underlying set of points can be recovered up to isomorphism as the set of prime elements of this lattice. The concept of a locale is the essence of space one arrives at after stripping away from the definition of a topological space the requirement for it to have an underlying set of points. Liberating a space from its need for points has many categorical benefits, as the category of locales is much better behaved that the category of topological spaces, particularly in the way subspaces and limits behave. Moreover, locales are merely a mild generalization of topological spaces, as every locale is a sublocale of a topological space \cite{atomless}.

At the first level of categorification, namely that of $1$-topoi, the situation becomes an even more non-trivial generalization of topological spaces than locales provide. Indeed, expanding the of the notion of ``space'' to include topoi, was indispensable to the inception of the infamous \emph{\'etale cohomology} of arithmetic schemes, used to prove the Weil conjectures. Given a scheme $X,$ the ``space'' whose cohomology is the \'etale cohomology of $X$ is actually a topos. From the mouth of Grothendieck:
\begin{quote}
``The crucial thing here, from the viewpoint of the Weil conjectures, is that the new notion [of space] is vast enough, that we can associate to each scheme a `generalized space' or `topos' (called the `\'etale topos' of the scheme in question). Certain `cohomology invariants' of this topos (`childish' in their simplicity!) seemed to have a good chance of offering `what it takes' to give the conjectures their full meaning, and (who knows!) perhaps to give the means of proving them.'' \cite{rec} p. 41
\end{quote}
Indeed, for \'etale cohomology, one needs to compute cohomology with respect to \'etale covers, and \'etale covers of a scheme $X$ may viewed, in a precise sense, as local homeomorphisms into the small \'etale topos $\Sh\left(X_{\et}\right).$ This would be impossible to achieve if the role of $\Sh\left(X_{\et}\right)$ was played by a topological space, or locale. %From the view point of \'etale cohomology, the topos $\Sh\left(X_{\et}\right)$ is the ``true underling space'' of $X$.

Topoi however contain much more information than merely their cohomology, just like spaces. Indeed, the functor $$\Sh=\nu_0:\mathfrak{Top}_0 \hookrightarrow \mathfrak{Top}_1$$ from locales to topoi, restricts to a full and faithful embedding of the category of (sober) topological spaces into the $\left(2,1\right)$-category of topoi. Moreover, it was shown in \cite{ext} that every topos arises as the quotient of a locale by certain local symmetries (encoded by a groupoid), where the quotient is taken ``up to homotopy.'' Hence, one may think of a topos $\cE$ as a generalized locale, such that generalized points, i.e. morphisms $$\cL \to \cE$$ from a locale $\cL,$ can have automorphisms. Similarly, an $\i$-topos should be regarded as a categorification of a topological space (or locale) in which not only generalized points can have automorphisms, but also these automorphisms themselves can have automorphisms, as well as the automorphisms between these automorphisms and so on.

\begin{rmk}
The $\left(2,1\right)$-category of topoi is equivalent to ``\'etale complete localic stacks'' as shown in \cite{cont} and \cite{etalspme}.
\end{rmk}

\section{Local homeomorphisms, sheaves, and \'etale  maps}
Recall that for a given topological space $X,$ the \emph{\'etal\'e space construction} yields an equivalence
\begin{equation}\label{eq:etsp}
\xymatrix{\Sh\left(X\right) \ar@<-0.5ex>[r]_-{L} & \operatorname{Top}^{\mathit{loc. h.}}/X \ar@<-0.5ex>[l]_-{\Gamma}},
\end{equation}
between the category of sheaves over $X$ and the category of local homeomorphisms over $X$. If we choose to identify the space $X$ with the same space viewed as a topos, namely $\Sh\left(X\right)$, the equivalence (\ref{eq:etsp}) in particular tells us that local homeomorphisms into $\Sh\left(X\right)$ are in bijection with objects of $\Sh\left(X\right).$ Moreover, ones has that for a sheaf $$F \in \Sh\left(X\right),$$ there is a canonical equivalence $$\Sh\left(L\left(F\right)\right) \simeq \Sh\left(X\right)/F,$$
where  $\Sh\left(L\left(F\right)\right)$ is the topos of sheaves over the \'etal\'e space of $F$ and $\Sh\left(X\right)/F$ is the slice topos over $F.$ Furthermore, for any topos $\cE$ (or any higher topos), for each object $E \in \cE,$ $\cE/E$ is again a (higher) topos and there is a canonically induced geometric morphism $\pi:\cE/E \to \cE$,
\begin{equation*}\label{eq:ettt}
\xymatrix{\cE/E \ar@<-0.5ex>[r]_-{\pi_*} & \cE \ar@<-0.5ex>[l]_-{\pi^*}},
\end{equation*}
such that $\pi^*$ has a left adjoint
\begin{eqnarray*}
\pi_!:\cE/E &\to& \cE\\
f:F \to E &\mapsto& F
\end{eqnarray*} (see \cite{htt}, Proposition 6.3.5.1). In the case above, this geometric morphism is the one induced by the local homeomorphism $L\left(F\right) \to X.$

This motivates the following definitions:

\begin{dfn}\label{dfn:metale}
A geometric morphism $\cF \to \cE$ between $n$-topoi, with $0 \le n \le \i$ is \textbf{\'etale}, if it is equivalent to one of the form $$\cE/F \to \cE$$ for some object $F \in \cE.$ We denote the subcategory of $\Tn$ of all $n$-topoi whose morphisms are \'etale geometric morphisms by $\Tn^{\et}.$ \emph{If} $m \le n-1$ we will call such a morphism \textbf{$m$-\'etale} if, it is equivalent to an \'etale morphism of the form  $$\cE/E \to \cE,$$ with $E$ an $m$-truncated object.
We denote the subcategory of $\Tn$ of all $n$-topoi whose morphisms are $m$-\'etale geometric morphisms by $\Tn^{m-\et}.$
\end{dfn}

It is clear that there are intimate connections between the concepts of sheaves, \'etale maps, and local homeomorphisms.

\begin{rmk}\label{rmk:0truncov}
An \'etale morphism $$\cE \to \Sh_\i\left(X\right),$$ with $X$ a topological space, represents a sheaf of sets if and only if $\cE$ is equivalent to a space, if and only if the morphism is $0$-\'etale. A completely analogous statement holds for locales.
\end{rmk}

\begin{rmk}\label{rmk:noetale}
Every object $E$ of an $n$-topos is $\left(n-1\right)$-truncated. It follows that if $\cF \to \cE$ is an \'etale map of $n$-topoi, then it must be $\left(n-1\right)$-\'etale.
\end{rmk}

\begin{prop}(\cite{dag} Proposition 2.3.16)\label{prop:2.3.16}\\
Let $\cE$ be an $n$-localic $\i$-topos and let $E$ be an object of $\cE.$ Then $\cE/E$ is $n$-localic if and only if $E$ is $n$-truncated.
\end{prop}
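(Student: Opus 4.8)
The plan is to reduce both implications to a single statement about the $n$-truncation of the object $E$. Writing $t\colon E \to \tau_{\le n}(E)$ for the $n$-truncation of $E$ as an \emph{object} of $\cE$, I will show that the geometric morphism $\cE/E \to \cE/\tau_{\le n}(E)$ induced by $t$ exhibits $\cE/\tau_{\le n}(E)$ as the $n$-localic reflection of $\cE/E$. Granting this, $\cE/E$ is $n$-localic precisely when this reflection map is an equivalence, and since its inverse image functor is pullback along $t$, this happens exactly when $t$ is an equivalence in $\cE$, i.e. exactly when $E$ is $n$-truncated.

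For the implication that $n$-truncatedness of $E$ forces $\cE/E$ to be $n$-localic, I would argue via sites. Because $\cE$ is $n$-localic, I may present it as $\cE \simeq \Shi(\C)$ for an (essentially small) $(n,1)$-category $\C$ with a Grothendieck topology, using that $\Psh_{n-1}(\C) = \Psh_{n-1}(\tau_n\C)$. By Remark \ref{rmk:5.3.5.4} and Proposition \ref{prop:locslice} the slice $\cE/E$ is a (topological) left exact localization of $\Psh_\i(\C/E) \simeq \Psh_\i(\C)/E$, so $\cE/E \simeq \Shi(\C/E)$ for the induced topology. The objects of $\C/E$ are pairs $(c,\eta)$ with $c$ in $\C$ and $\eta$ a point of $E(c)$, and a mapping space in $\C/E$ is the homotopy fiber of $\Hom_\C(c,c') \to E(c)$. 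Since $\C$ is an $(n,1)$-category this source is $(n-1)$-truncated, and since $E$ is $n$-truncated the base $E(c)$ is $n$-truncated; a short long-exact-sequence computation then shows the fiber is $(n-1)$-truncated. Hence $\C/E$ is again an $(n,1)$-category, and $\i$-sheaves on an $(n,1)$-site are $n$-localic (Section 6.4.5 of \cite{htt}: $\Psh_\i$ of an $n$-category is $n$-localic, and topological localizations preserve this), which gives the claim.

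For the converse I establish the reflection statement. The truncation map $t\colon E \to \tau_{\le n}(E)$ is $(n+1)$-connective, and pullback along an $(n+1)$-connective morphism induces an equivalence on $(n-1)$-truncated slice objects: concretely, $t^*$ restricts to an equivalence $\tau_{\le n-1}(\cE/\tau_{\le n}(E)) \xrightarrow{\ \sim\ } \tau_{\le n-1}(\cE/E)$ of $n$-topoi, since $\tau_{\le n}(t)$ is an equivalence and $(n-1)$-truncated objects of a slice are detected through the $n$-truncation of the base. Combined with the previous paragraph, which shows $\cE/\tau_{\le n}(E)$ is itself $n$-localic, this identifies $\cE/\tau_{\le n}(E) \simeq \nu_n\tau_{\le n-1}(\cE/E)$ with the $n$-localic reflection of $\cE/E$, the localization unit being the map induced by $t$. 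Therefore if $\cE/E$ is $n$-localic this unit is an equivalence; being a morphism of \'etale topoi over $\cE$ with inverse image $t^*$, it is an equivalence if and only if $t$ is, so $E \simeq \tau_{\le n}(E)$ is $n$-truncated.

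I expect the main obstacle to be the connectivity input of the third paragraph: verifying that pullback along the $(n+1)$-connective map $t$ induces an equivalence on $(n-1)$-truncated objects of the slices, and checking that this equivalence is compatible with the localization unit so that $\cE/\tau_{\le n}(E)$ is genuinely the $n$-localic reflection (not merely abstractly equivalent to it). This rests on the interplay between truncation and connectivity in an $\i$-topos (Sections 5.5.6 and 6.5.1 of \cite{htt}) together with the characterization of $n$-localic reflections in Section 6.4.5 of \cite{htt}; by comparison, the fiber-truncation bookkeeping of the second paragraph and the final passage from ``$t^*$ is an equivalence'' to ``$t$ is an equivalence'' are routine.
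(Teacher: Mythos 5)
Your proposal is correct, and since the paper offers no proof of this proposition at all --- it is imported verbatim from \cite{dag} --- the right benchmark is Lurie's argument, which your sketch essentially reconstructs: a slice-site argument for the direction ``$E$ $n$-truncated implies $\cE/E$ $n$-localic,'' and the truncation map $t\colon E \to \tau_{\le n}(E)$ for the converse. Two of your steps deserve to be made explicit, and both go through. First, the parenthetical ``(topological)'' in your second paragraph is load-bearing: by Remark \ref{rmk:nosheav}, a left exact accessible localization of $\Pshi\left(\C/E\right)$ can have a cotopological factor, and the mere left exactness supplied by Proposition \ref{prop:locslice} would not suffice to invoke the fact that $\i$-sheaves on an $(n,1)$-site are $n$-localic. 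Topologicality does hold: since $iE$ is itself a sheaf, one has $\Hom\left(y\left(c\right),iE\right)\simeq\Hom\left(R,iE\right)$ for every covering sieve $R\hookrightarrow y\left(c\right)$, and comparing fibers over this equivalence shows that $F \to iE$ lies in $\cE/E$ exactly when it is local with respect to the sieve inclusions pulled back into the slice; these are monomorphisms into representables of $\C/E$, hence generate a Grothendieck topology there. Second, your connectivity input is true with exactly the stated indices and admits a short proof via the extra left adjoint $t_!$: for $W$ an $(n-1)$-truncated object of $\cE/\tau_{\le n}\left(E\right)$, the counit $t_!t^*W = W\times_{\tau_{\le n}\left(E\right)}E \to W$ is $(n+1)$-connective, being a base change of $t$, hence becomes an equivalence after applying $\tau_{\le n-1}$; while for $Z$ an $(n-1)$-truncated object of $\cE/E$, the unit $Z \to t^*\tau_{\le n-1}\left(t_!Z\right)$ is an $n$-connective morphism between $(n-1)$-truncated objects (the truncation unit $Z \to \tau_{\le n-1}\left(t_!Z\right)$ is $n$-connective and the projection $t^*\tau_{\le n-1}\left(t_!Z\right) \to \tau_{\le n-1}\left(t_!Z\right)$ is $(n+1)$-connective, so the connectivity two-out-of-three of \S 6.5.1 of \cite{htt} applies), hence an equivalence; thus $\tau_{\le n-1}\circ t_!$ and $t^*$ are mutually inverse on the truncated slices. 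Your worry about compatibility with the localization unit is then formal: writing $g\colon \cE/E \to \cE/\tau_{\le n}\left(E\right)$ for the geometric morphism induced by $t$, naturality of the unit $u$ of the adjunction $\tau_{\le n-1}\dashv\nu_n$ of Definition \ref{dfn:nloc} gives $u_{\cE/\tau_{\le n}\left(E\right)} \circ g \simeq \nu_n\left(\tau_{\le n-1}\mspace{2mu}g\right)\circ u_{\cE/E}$, the left-hand unit is an equivalence because the target is $n$-localic, and $\tau_{\le n-1}\mspace{2mu}g$ (whose inverse image is the restriction of $t^*$ to $(n-1)$-truncated objects) is an equivalence by the previous step; so $g$ is the unit up to equivalence and $\cE/E$ is $n$-localic if and only if $g$ is an equivalence. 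Finally, the passage from ``$g$ is an equivalence'' to ``$t$ is an equivalence'' is cleanest via Proposition \ref{lem:htt6.3.5.10}: the triangle over $\cE$ commutes (slicing composes), so $g$ is a morphism of $\T^{\et}/\cE$ corresponding to $t$ under the equivalence $\chi_\cE$, and equivalences in a slice are detected in the base.
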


The word ``\'etale'' has enjoyed interchangeable use with the phrase ``local homeomorphism'' for quite some time, motivated mathematically by the \'etal\'e-space construction. Johnstone points out in \cite{elephant2} that the same construction etymologically gave rise to the phrasing ``\'etale,'' due to a mistranslation of the French word ``\'etal\'e,'' and he hence prefers to obliterate its usage and refer only to local homeomorphisms. Despite this, the definition (and terminology) of an \'etale geometric morphism of $n$-topoi in Definition \ref{dfn:metale} is already standard for $n=1$ (notwithstanding Johnstone's protest) and $n=\infty$ \cite{htt}. Although, we do agree with Johnstone that it is unfortunate that the exact phrasing ``\'etale'' arose as a linguistic error, we do believe the that the concept of an \'etale morphism of topoi should stand on its own as an important concept (particularly due to Proposition \ref{lem:htt6.3.5.10}), independently from the concept of a local homeomorphism, and hence deserves its own name. This is largely due to the fact that we \emph{not} believe that an \'etale morphism is the correct embodiment of the concept of a local homeomorphism of $n$-topoi (or even $1$-topoi!) unless $n=\infty.$

For example, if $L$ is a $0$-topos (i.e. a locale), as a category, $L$ is a poset of ``open sets.'' An \'etale map of $0$-topoi is a map induced by slicing, so it can only correspond to an inclusion of an open sublocale. Although such maps are local homeomorphisms, there are many more local homeomorphisms than open inclusions! The problem here is that a local homeomorphism of locales is one induced by slicing their associated \emph{1}-topoi. This problem persists in higher categorical dimensions. Let $\cE$ be an $n$-topos and let $E$ be an $n$-truncated object of the associated $n$-localic $\left(n+1\right)$-topos $\nu_n\left(\cE\right),$ which is not $\left(n-1\right)$-truncated. (So in particular, $E$ cannot be identified with an element of $\cE$). The \'etale map
\begin{equation}\label{eq:emmm}
\nu_n\left(\cE\right)/E \to \nu_n\left(\cE\right),
\end{equation}
by virtue of Proposition \ref{prop:2.3.16}, is a geometric morphism between $n$-localic $\left(n+1\right)$-topoi. Since the functor
\begin{equation}\label{eq:inc}
\nu_n:\Tn \hookrightarrow \mathfrak{Top}_{n+1}
\end{equation}
is full and faithful (with essential image the $n$-localic $\left(n+1\right)$-topoi), it follows that the morphism (\ref{eq:emmm}) corresponds to a geometric morphism
$$\cF \to \cE$$
of $n$-topoi, which is not \'etale as a map of $n$-topoi (since it is not induced by slicing). This implies that the term ``local homeomorphism'' is unfitting to describe \'etale maps, since the point of the functor (\ref{eq:inc}) is to view $n$-topoi as $\left(n+1\right)$-topoi, and if we are to view higher topoi as generalized spaces, the concept of a local homeomorphism of higher topoi should not depend on how we choose to ``view'' a higher topos. We hence make the following non-standard definition:

\begin{dfn}\label{dfn:lochomeo}
A geometric morphism $$f:\cF \to \cE$$ between $n$-topoi is a \textbf{local homeomorphism} if $$\nu_n\left(f\right):\nu_n\left(\cF\right) \to \nu_n\left(\cE\right)$$ is \'etale.
\end{dfn}
We are of the opinion that such a map is the correct categorification of a local homeomorphism, since in particular this reproduces the correct notion when $n=0.$ Moreover, this definition is invariant under the embeddings $\nu_m.$ For $n=1,$ every \'etale geometric morphism is a local homeomorphism of $1$-topoi, but the converse is not true. However, we think that the broader class of geometric morphisms introduced do deserved to be called local homeomorphisms. We also propose the terminology \textbf{$n$-\'etale} map. There is no danger here, since in Definition \ref{dfn:metale} we only defined $m$-\'etale maps of $n$-topoi for $m \le n-1$. Moreover, with this convention, there is a bijection between equivalence classes of $n$-\'etale morphisms into $\cE$ and $n$-\'etale morphisms into the $\i$-topos $\nu^\i_n\left(\cE\right)$ in the sense of Definition \ref{dfn:metale}. Finally, notice that \'etale morphisms and local homeomorphisms of \emph{infinity} topoi are one and the same.

\begin{rmk}
When $\cE$ is a topological \'etendue (a special type of $1$-topos), the concept of a local homeomorphism in Definition \ref{dfn:lochomeo} agrees with that of \cite{etalspme}.
\end{rmk}

\begin{rmk}\label{rmk:open}
A (sober) topological space is determined by its lattice of open subsets. A similar statement holds for $\i$-topoi, once ``open subset'' is correctly interpreted. One can already see from the notion of Grothendieck topology that for topoi, more general notions of cover exist than ones generated by the inclusion of open subsets. For $X$ a (sober) topological space, open subsets are in bijection with $\left(-1\right)$-\'etale morphisms into $\Shi\left(X\right),$ so it follows that one can reconstruct $X$ from knowing all such $\left(-1\right)$-\'etale morphisms. However, if $\cE$ is an $\i$-topos, then one can only reconstruct the localic reflection of $\cE$ from its collection of $\left(-1\right)$-\'etale morphisms- that is, one can only find the best approximation of $\cE$ by a locale. One needs more general notions of ``open subset'' in order to reconstruct $\cE$ from its open subsets. If $\cE$ happens to be $1$-localic, then one can reconstruct $\cE$ out of all the $0$-\'etale maps into $\cE,$ but if not, this would only allow one to reconstruct its $1$-localic reflection. This pattern continues. For a general $\i$-topos $\cE$, one would need to consider all \'etale morphisms into $\cE$ as ``open subsets.''
\end{rmk}

\begin{dfn}\label{def:chi}
Given an $\i$-topos $\cE,$ there is a canonical map $$\chi_\cE:\cE \to \T^{\et}/\cE$$ which sends an object $E \in \cE$ to the canonical \'etale morphism $\cE/E \to \cE.$
\end{dfn}

The following lemma generalizes the equivalence (\ref{eq:etsp}):

\begin{prop}\label{lem:htt6.3.5.10} (\cite{htt} Remark 6.3.5.10)
For every $\i$-topos $\cE,$ the functor $\chi_\cE$ is an equivalence.
\end{prop}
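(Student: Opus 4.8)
The plan is to show that $\chi_\cE$ is both essentially surjective and fully faithful. Essential surjectivity is immediate from the definitions: by Definition \ref{dfn:metale}, every object of $\T^{\et}/\cE$ is an \'etale geometric morphism $\cF \to \cE$, and such a morphism is \emph{by definition} equivalent to one of the form $\cE/F \to \cE$ for some $F \in \cE$; hence it lies in the essential image of $\chi_\cE$, which sends $F \mapsto \left(\cE/F \to \cE\right)$. The real content is full faithfulness, i.e. identifying, for objects $E, E' \in \cE$, the mapping space $\Hom_{\T^{\et}/\cE}\left(\chi_\cE\left(E\right), \chi_\cE\left(E'\right)\right)$ with $\Hom_\cE\left(E, E'\right)$, naturally in $E$ and $E'$.

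First I would compute the space of geometric morphisms $\cE/E \to \cE/E'$ lying over $\cE$ (commuting with the projections $\pi_E$ and $\pi_{E'}$ up to coherent equivalence). By the universal property of the \'etale projection (see Section 6.3.5 of \cite{htt}), for any geometric morphism $f : \cD \to \cE$ the space of lifts of $f$ to a geometric morphism $\cD \to \cE/E'$ over $\cE$ is naturally equivalent to the space $\Hom_\cD\left(1_\cD, f^* E'\right)$ of global sections of $f^* E'$. Applying this with $\cD = \cE/E$ and $f = \pi_E$, and using that the inverse image of an \'etale projection is $\pi_E^* E' \simeq \left(E \times E' \xrightarrow{\mathrm{pr}_1} E\right)$ while $1_{\cE/E} \simeq \left(E \xrightarrow{\mathrm{id}} E\right)$, a map $1_{\cE/E} \to \pi_E^* E'$ over $E$ is exactly a map $E \to E\times E'$ over $E$, i.e. (projecting to the second factor) a map $E \to E'$ in $\cE$. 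Thus the space of geometric morphisms $\cE/E \to \cE/E'$ over $\cE$ is naturally equivalent to $\Hom_\cE\left(E, E'\right)$.

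It remains to see that this space of lifts actually computes the mapping space in $\T^{\et}/\cE$, i.e. that every geometric morphism $\cE/E \to \cE/E'$ over $\cE$ is automatically \'etale, so nothing is lost by working in $\T^{\et}$ rather than $\T$. This is the key structural point: given $\beta : E \to E'$, let $\hat\beta = \left(E \xrightarrow{\beta} E'\right)$ be the corresponding object of $\cE/E'$. Iterated slicing yields a canonical equivalence $\left(\cE/E'\right)/\hat\beta \simeq \cE/E$ under which the \'etale projection $\left(\cE/E'\right)/\hat\beta \to \cE/E'$ is identified with the geometric morphism associated to $\beta$; hence that morphism is \'etale by Definition \ref{dfn:metale}. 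Consequently, over the point $\pi_E$ the homotopy fiber of $\left(\pi_{E'}\right)_* : \Hom_{\T^{\et}}\left(\cE/E, \cE/E'\right) \to \Hom_{\T^{\et}}\left(\cE/E, \cE\right)$ consists of all morphisms over $\cE$ and so agrees with the corresponding fiber computed in $\T$, namely the space of lifts above. Unwinding the definition of the slice $\icat$, this fiber \emph{is} $\Hom_{\T^{\et}/\cE}\left(\chi_\cE\left(E\right), \chi_\cE\left(E'\right)\right)$, which combined with the previous paragraph gives the equivalence with $\Hom_\cE\left(E, E'\right)$. Finally one checks this identification is the one induced by $\chi_\cE$: the functor sends $\beta$ to post-composition, which is precisely the morphism of slices cut out above, establishing full faithfulness.

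The main obstacle I anticipate is the second and third steps carried out with full homotopy coherence rather than on homotopy categories: pinning down the universal property of the \'etale projection in families (the natural equivalence between lifts over $\cE$ and global sections of the inverse image), and verifying automatic \'etaleness of over-$\cE$ morphisms by tracking the coherence data of the slice equivalences $\left(\cE/E'\right)/\hat\beta \simeq \cE/E$ naturally in $E$ and $E'$. These are genuinely $\left(\infty,1\right)$-categorical facts about the class of \'etale morphisms; once the naturality and coherence are in place, assembling them into the equivalence $\chi_\cE$ is formal.
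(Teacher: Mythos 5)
The paper offers no proof of this proposition, deferring entirely to the citation (\cite{htt}, Remark 6.3.5.10), and your argument is a correct reconstruction of exactly the proof that citation points to: essential surjectivity holds by the definition of an \'etale morphism, and full faithfulness follows from the universal property of the \'etale projection (geometric morphisms $\cE/E \to \cE/E'$ over $\cE$ correspond to global sections of $\pi_E^* E'$, i.e.\ to points of $\Hom_\cE\left(E,E'\right)$) combined with the left-cancellation property of \'etale morphisms (Proposition \ref{prop:6.3.5.9}), which guarantees that every geometric morphism over $\cE$ between \'etale objects is automatically \'etale, so the slice mapping spaces in $\T^{\et}/\cE$ and $\T/\cE$ agree. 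The homotopy-coherence points you flag at the end --- constructing $\chi_\cE$ as a functor and checking that the fiberwise identification is the one it induces --- are precisely where Lurie's treatment in \cite{htt}, Section 6.3.5 invests its technical effort, so your plan is sound as stated.
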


For an $n$-topos $\cE,$ one may analogously define a functor $$\chi^n_\cE:\cE \to \Tn^{\et}/\cE.$$

\begin{cor}\label{cor:xin}
For every $n$-topos $\cE,$ the functor $\chi^n_\cE$ is an equivalence.
\end{cor}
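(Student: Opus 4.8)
Corollary \ref{cor:xin} asserts that for every $n$-topos $\cE$, the functor $\chi^n_\cE:\cE \to \Tn^{\et}/\cE$ is an equivalence, where $\chi^n_\cE$ sends an object $E$ to the \'etale morphism $\cE/E \to \cE$. This is the $n$-topos analogue of Proposition \ref{lem:htt6.3.5.10}, which is exactly the statement for $\i$-topoi ($n=\i$). The plan is to reduce the $n$-topos statement to the $\i$-topos statement by means of the full and faithful embedding $\nu_n:\Tn \hookrightarrow \T$ and the characterization of \'etale maps of $n$-topoi via their $\i$-localic incarnations.

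**The approach.** First I would use the definition of an \'etale map of $n$-topoi together with Remark \ref{rmk:noetale}: every object $E$ of an $n$-topos $\cE$ is $(n-1)$-truncated, so the slice $\cE/E$ is again an $n$-topos, and the canonical morphism $\cE/E \to \cE$ is automatically $(n-1)$-\'etale. This ensures $\chi^n_\cE$ is well-defined with target $\Tn^{\et}/\cE$. Next I would pass to the $\i$-localic picture: apply $\nu_n := \nu^\i_n$ to get $\nu_n(\cE)$, an $n$-localic $\i$-topos. By Proposition \ref{prop:2.3.16}, for an object $E$ of $\cE$ (which is $(n-1)$-truncated, hence $n$-truncated), the slice $\nu_n(\cE)/E$ is again $n$-localic, and by the compatibility of slicing with $\nu_n$ one has a natural equivalence $\nu_n(\cE/E) \simeq \nu_n(\cE)/E$. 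The key point, already recorded in the remarks preceding Definition \ref{dfn:lochomeo}, is that because $\nu_n$ is full and faithful with essential image the $n$-localic $\i$-topoi, and because an \'etale map of $n$-topoi corresponds under $\nu_n$ to an \'etale map of $\i$-topoi, we get an identification
$$
\Tn^{\et}/\cE \;\simeq\; \left(\T^{\et}/\nu_n(\cE)\right)^{\le n\text{-}loc},
$$
the full subcategory of $\T^{\et}/\nu_n(\cE)$ on those \'etale morphisms $\cF \to \nu_n(\cE)$ whose source $\cF$ is $n$-localic.

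**Completing the reduction.** I would then invoke Proposition \ref{lem:htt6.3.5.10}, which gives an equivalence $\chi_{\nu_n(\cE)}:\nu_n(\cE) \xrightarrow{\sim} \T^{\et}/\nu_n(\cE)$. Under this equivalence, the objects of $\T^{\et}/\nu_n(\cE)$ are precisely the \'etale maps $\nu_n(\cE)/F \to \nu_n(\cE)$ indexed by objects $F \in \nu_n(\cE)$, and by Proposition \ref{prop:2.3.16} such a map has $n$-localic source if and only if $F$ is $n$-truncated. Thus the equivalence $\chi_{\nu_n(\cE)}$ restricts to an equivalence between the full subcategory $\tau_{\le n-1}(\nu_n(\cE)) \simeq \cE$ of $n$-truncated objects and the full subcategory of $\T^{\et}/\nu_n(\cE)$ on $n$-localic \'etale morphisms, i.e.\ $\Tn^{\et}/\cE$. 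A diagram chase identifies this restricted equivalence with $\chi^n_\cE$ (both send $E$ to $\cE/E \to \cE$), which yields the result. Concretely, I would check that the square relating $\chi^n_\cE$ and $\chi_{\nu_n(\cE)}$ through $\nu_n$ and the slicing-compatibility equivalence commutes up to natural equivalence.

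**The main obstacle.** The routine parts are well-supported by the cited results; the step requiring genuine care is establishing the identification $\Tn^{\et}/\cE \simeq \left(\T^{\et}/\nu_n(\cE)\right)^{\le n\text{-}loc}$ as $\i$-categories and verifying that the equivalence of Proposition \ref{lem:htt6.3.5.10} restricts correctly to the $n$-truncated/$n$-localic objects on both sides. This amounts to showing that $\nu_n$ induces a fully faithful functor on the slice level $\Tn^{\et}/\cE \to \T^{\et}/\nu_n(\cE)$ whose essential image is exactly the $n$-localic \'etale morphisms, and that truncation $\tau_{\le n-1}$ on $\nu_n(\cE)$ matches the equivalence $\nu_n(\cE)^{n\text{-}loc} \simeq \cE$. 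Since both sides are built from the same slicing construction and $\nu_n$ is fully faithful, this is more bookkeeping than difficulty, but it is the heart of the argument and the place where Proposition \ref{prop:2.3.16} is essential.
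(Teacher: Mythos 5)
Your overall strategy---pass to $\nu^\i_n\left(\cE\right)$ and restrict the equivalence of Proposition \ref{lem:htt6.3.5.10} to a suitable full subcategory---is exactly the paper's, but your key identification is false, and false in precisely the way the discussion preceding Definition \ref{dfn:lochomeo} warns against (a passage you cite as support, though it asserts the opposite). You claim $\Tn^{\et}/\cE$ is the full subcategory of $\T^{\et}/\nu_n\left(\cE\right)$ on \'etale morphisms with $n$-localic source. By Proposition \ref{prop:2.3.16}, an \'etale map $\nu_n\left(\cE\right)/F \to \nu_n\left(\cE\right)$ has $n$-localic source if and only if $F$ is $n$-truncated; but \'etale maps of $n$-topoi into $\cE$ are those induced by slicing $\cE$ over its \emph{own} objects, which correspond to the $\left(n-1\right)$-truncated objects of $\nu_n\left(\cE\right)$ (Remark \ref{rmk:noetale}). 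If $E$ is $n$-truncated but not $\left(n-1\right)$-truncated, then $\nu_n\left(\cE\right)/E \to \nu_n\left(\cE\right)$ is \'etale in $\T$ with $n$-localic source, yet the corresponding geometric morphism of $n$-topoi is a local homeomorphism in the sense of Definition \ref{dfn:lochomeo} and is \emph{not} \'etale---this is the paper's own counterexample. So your subcategory is strictly larger than $\Tn^{\et}/\cE$: it computes $\Tn^{\mathit{loc\mspace{3mu} h.}}/\cE \simeq \nu^{n+1}_n\left(\cE\right)$ (cf.\ (\ref{eq:genetsp})), not $\cE$. The same off-by-one error infects your restriction step, where you describe $\tau_{\le n-1}\left(\nu_n\left(\cE\right)\right) \simeq \cE$ as "the full subcategory of $n$-truncated objects": $\cE$ sits inside $\nu_n\left(\cE\right)$ as the $\left(n-1\right)$-truncated objects, and these do not match the $n$-truncated objects you singled out on the slice side, so the claimed restricted equivalence does not exist as stated.

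The repair is small and is what the paper does: restrict $\chi_{\nu_n\left(\cE\right)}$ to the $\left(n-1\right)$-truncated objects on \emph{both} sides. On the source side, $\tau_{\le n-1}\left(\nu^\i_n\left(\cE\right)\right) \simeq \cE$; on the target side, the $\left(n-1\right)$-truncated objects of $\T^{\et}/\nu_n\left(\cE\right)$ are exactly the \'etale morphisms $\nu_n\left(\cE\right)/F \to \nu_n\left(\cE\right)$ with $F$ $\left(n-1\right)$-truncated, i.e.\ the $\left(n-1\right)$-\'etale morphisms, and this full subcategory is equivalent to $\Tn^{\et}/\cE$. With that replacement your concluding diagram chase identifying the restricted equivalence with $\chi^n_\cE$ goes through; the criterion "source is $n$-localic" must be discarded, as it characterizes local homeomorphisms rather than \'etale maps of $n$-topoi.
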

\begin{proof}
$\cE$ is equivalent to the full subcategory of its associated $\i$-topos $\nu^\i_n\left(\cE\right)$ spanned by the $\left(n-1\right)$-truncated objects, and the full subcategory of $\T^{\et}/\cE$ spanned by the $\left(n-1\right)$-truncated objects is equivalent to $\Tn^{\et}/\cE,$ so the result follows from Proposition \ref{lem:htt6.3.5.10}.
\end{proof}

\begin{cor}\label{cor:etalspn}
For any $\i$-topos $\cE,$ the slice category $\T^{\left(n-1\right)-\et}/\cE$ is canonically equivalent to the $n$-localic reflection of $\cE$.
\end{cor}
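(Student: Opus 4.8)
The plan is to restrict the equivalence $\chi_\cE\colon \cE \xrightarrow{\sim} \T^{\et}/\cE$ furnished by Proposition \ref{lem:htt6.3.5.10} to the $(n-1)$-truncated objects. Recall from Definition \ref{dfn:nloc} that the $n$-localic reflection $\tau_{\le n-1}\left(\cE\right)$ of $\cE$ is, as an $n$-topos, realized by the full subcategory of $\cE$ spanned by its $(n-1)$-truncated objects. Hence it suffices to produce a canonical equivalence between $\T^{\left(n-1\right)-\et}/\cE$ and this full subcategory, and I would obtain it by simply cutting the equivalence $\chi_\cE$ down along the truncation condition.

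First I would record the behaviour of $\chi_\cE$ on objects. By Definition \ref{def:chi} it sends $E \in \cE$ to the canonical étale morphism $\cE/E \to \cE$, and by Definition \ref{dfn:metale} this morphism is $(n-1)$-\'etale precisely when $E$ is $(n-1)$-truncated. Thus $\chi_\cE$ carries the $(n-1)$-truncated objects of $\cE$ onto the objects of $\T^{\left(n-1\right)-\et}/\cE$, and conversely every $(n-1)$-\'etale morphism into $\cE$ is, up to equivalence, of this form; this gives essential surjectivity.

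The heart of the argument is matching morphisms, i.e. showing that the restriction of $\chi_\cE$ remains \emph{fully faithful} after passing to $\T^{\left(n-1\right)-\et}/\cE$. The subtlety is that $\T^{\left(n-1\right)-\et}$ is a \emph{non-full} subcategory of $\T$, so I must verify that a geometric morphism over $\cE$ between two $(n-1)$-\'etale morphisms is automatically $(n-1)$-\'etale. Given $\phi\colon E \to E'$ with $E,E'$ both $(n-1)$-truncated, the induced map $\cE/E \to \cE/E'$ is étale and is classified by $\phi$ viewed as an object of $\cE/E'$; it is therefore $(n-1)$-\'etale exactly when $\phi$ is an $(n-1)$-truncated object of $\cE/E'$, i.e. when $\phi$ is an $(n-1)$-truncated \emph{morphism} of $\cE$. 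But over an $(n-1)$-truncated base an object is $(n-1)$-truncated if and only if its structure morphism is (equivalently: $(n-1)$-truncated objects are stable under finite limits, so the fibers of $\phi$, being pullbacks of the cospan with all three terms $(n-1)$-truncated, are again $(n-1)$-truncated). Hence $\phi$ is $(n-1)$-truncated and $\cE/E \to \cE/E'$ is $(n-1)$-\'etale. Consequently every morphism of $\T^{\et}/\cE$ between objects coming from $\tau_{\le n-1}\left(\cE\right)$ already lies in $\T^{\left(n-1\right)-\et}/\cE$, so the two mapping spaces agree and the restriction of $\chi_\cE$ is fully faithful. Combined with essential surjectivity, this yields $\T^{\left(n-1\right)-\et}/\cE \simeq \tau_{\le n-1}\left(\cE\right)$.

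I expect the main obstacle to be precisely this morphism-matching step, since the subcategory $\T^{\left(n-1\right)-\et}$ is defined by a condition on morphisms rather than objects; everything hinges on identifying relative $(n-1)$-truncatedness in the slice $\cE/E'$ with the truncation level of the classifying object, which in turn relies on the stability of truncated objects under maps between them. Once that is secured, the corollary is a formal restriction of the equivalence of Proposition \ref{lem:htt6.3.5.10}, and no further computation is needed.
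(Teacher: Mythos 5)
Your proof is correct and follows essentially the same route as the paper, which obtains the corollary by restricting the equivalence $\chi_\cE$ of Proposition \ref{lem:htt6.3.5.10} to the $\left(n-1\right)$-truncated objects (cf.\ the proof of Corollary \ref{cor:xin} and the remark following it, which notes that the $\left(n-1\right)$-truncated objects of $\T^{\et}/\cE$ are exactly the $\left(n-1\right)$-\'etale ones). Your morphism-matching step---that a map over $\cE$ between $\left(n-1\right)$-\'etale morphisms is automatically $\left(n-1\right)$-\'etale, via Proposition \ref{prop:6.3.5.9}, Lemma \ref{lem:slicek}, and the fact that a morphism between $\left(n-1\right)$-truncated objects is $\left(n-1\right)$-truncated (Lemma 5.5.6.14 of \cite{htt})---correctly fills in a detail the paper leaves implicit.
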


\begin{rmk}
By an analogous proof, one has that for any $n$-topos $\cE,$
\begin{equation}\label{eq:genetsp}
\nu^{n+1}_n\left(\cE\right) \simeq \Tn^{\mathit{loc\mspace{3mu} h.}}/\cE,
\end{equation}
where $\Tn^{\mathit{loc\mspace{3mu} h.}}$ is the subcategory of $\Tn$ spanned by local homeomorphisms, and $\nu^{n+1}_n$ is as in Definition \ref{dfn:nloc}. Note that for $n=0$ we have $$\nu_0=\Sh:\mathfrak{Top}_0=\mathfrak{Loc} \to \mathfrak{Top}_1$$ is the inclusion of locales into topoi by taking sheaves, so (\ref{eq:genetsp}) collapses to the classical ``\'etal\'e-space'' equivalence. Therefore, Corollary \ref{cor:etalspn} may be viewed as a categorification of the \'etal\'e-space equivalence for topoi which are not spatial; that is to say: the underlying $\left(n+1,1\right)$-category of the embodiment of an $n$-topos $\cE$ as an $\left(n+1\right)$ topos is equivalent to the $\left(n+1,1\right)$-category of local homeomorphisms over $\cE$.

The equivalence (\ref{eq:genetsp}) also implies the equivalence of Corollary \ref{cor:xin}. For $n=\infty$, these are both the same statement. When $n \ne \infty,$ then  we have that $\cE \simeq \tau_{n-1}\left(\nu^{n+1}_n\cE\right)$ which in turn is equivalent to the full subcategory of $\nu^{n+1}_n\left(\cE\right) \simeq  \mathfrak{Top}^{n-\et}_n/\cE$ spanned by $\left(n-1\right)$-truncated objects, and the $\left(n-1\right)$-truncated objects in $\mathfrak{Top}^{n-\et}_n/\cE$ are precisely the \'etale ones, so this recovers Corollary \ref{cor:xin}. This also explains the ``miracle'' that $\Tn^{\et}/\cE$ (which a priori should be an $\left(n+1,1\right)$-category) is actually an $\left(n,1\right)$-category, since it consists of $\left(n-1\right)$-truncated objects.
\end{rmk}

\begin{ex}
Let $X$ be topological space or locale (i.e. a $0$-topos). The embodiment of $X$ as an $\i$-topos is $\Sh_\i\left(X\right).$ Let $$f:\cF \to\Sh_\i\left(X\right)$$ be an \'etale map. In light of Proposition \ref{lem:htt6.3.5.10}, we may identify $f$ with a sheaf $F$ of $\i$-groupoids on $X.$ The \'etale map $f$ is $n$-\'etale if and only if $F$ is actually a sheaf of $n$-groupoids. When $n=0,$ this is explained by Remark \ref{rmk:0truncov}.
\end{ex}

\begin{rmk}
Let $\cE$ be an $n$-topos for $n <\i,$ and let $F$ be a sheaf of $\i$-groupoids on the canonical site $\left(\cE,can\right).$ Then $F$ is representable by a local homeomorphism $$f:\cF \to \cE$$ with $\cF$ an $n$-topos if and only if $F$ is a sheaf of $n$-groupoids. When $n=0,$ this is explained by Remark \ref{rmk:0truncov}.
\end{rmk}

\begin{prop} (\cite{htt} Proposition 6.3.5.8) \label{prop:6358}
Let $E \in \cE$ be an object in an $\i$-topos and let $$g:\cF \to \cE$$ be a geometric morphism. Then the following is a pullback diagram in $\T$:
$$\xymatrix{\cF/g^*\left(E\right) \ar[r] \ar[d] & \cE/E \ar[d] \\
\cF \ar[r]^-{g} & \cE.}$$
\end{prop}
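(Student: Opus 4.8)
The plan is to verify the universal property of the fibre product by testing against an arbitrary $\i$-topos and reducing everything to the universal property of étale geometric morphisms. Recall (Remark \ref{rmk:loclocsmall}) that for $\i$-topoi $\mathcal{G}, \cB$ the mapping space $\Hom_\T(\mathcal{G}, \cB)$ is the maximal sub-$\i$-groupoid of $\Geo(\mathcal{G}, \cB)$. Since a cone in an $\i$-category is limiting precisely when each corepresentable $\Hom_\T(\mathcal{G}, -)$ carries it to a limiting cone of spaces, the displayed square is a pullback in $\T$ if and only if, for every $\i$-topos $\mathcal{G}$, the induced square of spaces
$$
\xymatrix{
\Hom_\T(\mathcal{G}, \cF/g^*E) \ar[r]\ar[d] & \Hom_\T(\mathcal{G}, \cE/E)\ar[d]\\
\Hom_\T(\mathcal{G}, \cF) \ar[r] & \Hom_\T(\mathcal{G}, \cE)
}
$$
is a homotopy pullback. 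So it suffices to produce, naturally in $\mathcal{G}$ and compatibly with the comparison map coming from the square itself, an equivalence between $\Hom_\T(\mathcal{G}, \cF/g^* E)$ and the homotopy fibre product of the other three corners.

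The key input is the universal property of an étale morphism: for an object $A$ of an $\i$-topos $\cB$ and a geometric morphism $h : \mathcal{G} \to \cB$, the homotopy fibre over $h$ of the map $\Hom_\T(\mathcal{G}, \cB/A) \to \Hom_\T(\mathcal{G}, \cB)$ induced by $\pi_A : \cB/A \to \cB$ is naturally equivalent to the space $\Hom_{\mathcal{G}}(1_{\mathcal{G}}, h^* A)$ of global sections of $h^* A$. This is the defining universal property exhibiting $\cB/A \to \cB$ as the classifier of a global section of the pulled-back $A$, which I would invoke from \cite{htt}. As a consistency check it recovers Proposition \ref{lem:htt6.3.5.10} in the étale case $\mathcal{G} = \cB/G$, $h = \pi_G$: since $\pi_G^* A \simeq (G \times A \to G)$ one has $\Hom_{\cB/G}(1, \pi_G^* A) \simeq \Hom_\cB(G, A)$, and under $\chi_\cB$ the latter is $\Hom_{\T^{\et}/\cB}(\pi_G, \pi_A)$, i.e. the space of étale lifts of $\pi_G$.

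Granting this, I would finish by exhibiting both sides as the total space of the same family over $\Hom_\T(\mathcal{G}, \cF)$. The left corner $\Hom_\T(\mathcal{G}, \cF/g^* E)$ fibres over $\Hom_\T(\mathcal{G}, \cF)$ via $\pi_{g^* E}$, and by the universal property applied to the étale morphism $\pi_{g^* E}$ and the object $g^* E$ of $\cF$, the fibre over $a : \mathcal{G} \to \cF$ is $\Hom_{\mathcal{G}}(1_{\mathcal{G}}, a^* g^* E)$. On the other side, a point of the homotopy fibre product lying over the same $a$ is a geometric morphism $b : \mathcal{G} \to \cE/E$ together with an equivalence $\pi_E \circ b \simeq g \circ a$; this is exactly a lift of $g \circ a$ through $\pi_E$, which by the universal property is the datum of a global section of $(g \circ a)^* E \simeq a^* g^* E$. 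Thus both spaces are the total space of the family $a \mapsto \Hom_{\mathcal{G}}(1_{\mathcal{G}}, a^* g^* E)$, and the canonical comparison map induced by the square — whose top arrow $\cF/g^* E \to \cE/E$ has inverse image the functor $\cE/E \to \cF/g^* E$, $(X \to E) \mapsto (g^* X \to g^* E)$ — realises this identification fibrewise.

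The main obstacle I anticipate is homotopy-coherent bookkeeping rather than any new idea. One must check that the two identifications of the fibre with $\Hom_{\mathcal{G}}(1_{\mathcal{G}}, a^* g^* E)$ agree under the comparison map, and that all equivalences are natural in $\mathcal{G}$; this hinges on the naturality of the universal property of étale morphisms — in particular that the equivalence $(g\circ a)^* E \simeq a^* g^* E$ used above is the canonical one attached to the composite $g \circ a$ — together with compatibility with the counit data of $g$. Establishing the universal property for a non-étale test topos $\mathcal{G}$ (as opposed to the étale case supplied by Proposition \ref{lem:htt6.3.5.10}) is the only place where one genuinely leaves the results assembled above, and is where I would lean on \cite{htt}.
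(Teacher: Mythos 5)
The paper offers no proof of this proposition---it is imported verbatim from \cite{htt} (Proposition 6.3.5.8)---and your argument reconstructs essentially the proof given there: reduce to a pullback of mapping spaces via corepresentables and invoke the universal property of \'etale morphisms (HTT Proposition 6.3.5.5), which identifies both homotopy fibres over a point $a:\mathcal{G}\to\cF$ with the space of global sections $\Hom_{\mathcal{G}}\left(1_{\mathcal{G}},a^*g^*E\right)$. Your proposal is therefore correct and follows the same route as the cited source, modulo the homotopy-coherence bookkeeping (naturality in $\mathcal{G}$ and the canonical identification $\left(g\circ a\right)^*E\simeq a^*g^*E$) that you rightly flag.
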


\begin{cor}\label{cor:pullstabet}
\'Etale geometric morphisms are stable under pullback in $\T.$
\end{cor}

\begin{lem}\label{lem:slicek}
Suppose that $\C$ is an $\infty$-category and $f:C \to D$ is in $\C_1$. Then the canonical map $$\pi:\left(\C/D\right)/f \to \C/C$$ is a trivial Kan fibration.
\end{lem}
\begin{proof}
Note that $\left(\C/C\right)_n$ is by definition the set of $\left(n+1\right)$-simplices $$\tau:\Delta^{n+1} \to \C$$ such that $\tau\left(n+1\right)=C.$ We may identify $\left(\left(\C/D\right)/f\right)_n$ with the set of $\left(n+2\right)$-simplices $$\sigma:\Delta^{n+2} \to \C$$ such that $d_{n+2}\left(\sigma\right) \in \left(\C/C\right)_n$ and $\sigma|_{\Delta^{\{n+1,n+2\}}}=f.$ The canonical map $\pi$ at level $n$ sends $\sigma$ to $d_{n+2}\left(\sigma\right).$ Suppose that we have a diagram
$$\xymatrix{\partial \Delta^n \ar[d] \ar[r]^-{\beta} & \left(\C/D\right)/f \ar[d]
^-{\pi}\\ \Delta^n \ar[r]_-{\alpha} &\C/C.}$$
We we want to show there exists a diagonal filler. Notice that $\alpha$ corresponds to an $\left(n+1\right)$-simplex $$\tilde \alpha: \Delta^{n+1} \to \C.$$ Furthermore, each face of $\alpha,$ $$d_i\left(\alpha\right):\Delta^{n-1} \to \C/C$$ has a lift induced by $\beta$, $$\beta_i:\Delta^{n-1} \to \left(\C/D\right)/f,$$ and each of these lifts are compatible on their overlap, by construction. Each $\beta_i$ corresponds to an $\left(n+1\right)$-simplex $$\tilde \beta_i: \Delta^{n+1} \to \C.$$ Observe that the collection of $\left(n+1\right)$-simplices $\left(\tilde \alpha,\tilde \beta_0,\ldots,\tilde \beta_n\right)$ assembles to a $\Lambda^{n+2}_{n+1}$ horn in $\C$. Let $H$ be any filler to a $\left(n+2\right)$-simplex. Then $H$ can be identified with an $n$-simplex of $\left(\C/D\right)/f$ which is a lift of $\alpha.$
\end{proof}

\begin{cor}\label{cor:compet}
The composition of two \'etale geometric morphisms is again \'etale.
\end{cor}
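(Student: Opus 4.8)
The plan is to collapse the two successive slicings into a single one using Lemma \ref{lem:slicek}. Suppose $q:\mathcal{G} \to \cF$ and $p:\cF \to \cE$ are \'etale geometric morphisms. By Definition \ref{dfn:metale} I may assume without loss of generality that $p$ is the canonical projection $\cF = \cE/E \to \cE$ for some object $E \in \cE$, and that $q$ is the canonical projection $\mathcal{G} = \cF/F \to \cF$ for some object $F \in \cF$. Since $\cF = \cE/E$, the object $F$ is precisely the datum of a morphism $f:C \to E$ in $\cE$, so that $\mathcal{G} = \left(\cE/E\right)/f$.

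First I would apply Lemma \ref{lem:slicek} with $\C = \cE$ and the chosen morphism $f:C \to E$, producing a trivial Kan fibration, hence an equivalence of $\i$-categories $\left(\cE/E\right)/f \simeq \cE/C$. Concretely this sends an object of $\left(\cE/E\right)/f$ — that is, a morphism in $\cE/E$ with target $f$, which amounts to a single map $h:A \to C$ in $\cE$ — to the object $\left(h:A \to C\right)$ of $\cE/C$. This already identifies the underlying $\i$-topos of $\mathcal{G}$ with $\cE/C$.

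The substance of the argument is to upgrade this to an equivalence \emph{of geometric morphisms over $\cE$}: I must check that under the above equivalence the composite geometric morphism $p \circ q:\mathcal{G} \to \cE$ corresponds to the canonical \'etale morphism $\cE/C \to \cE$. I would verify this by comparing the left adjoints $\pi_!$ of the inverse image functors, which for an \'etale morphism are the ``forget the structure map'' (domain) functors recalled just before Definition \ref{dfn:metale}. Since inverse images compose as $\left(p\circ q\right)^* \simeq q^* p^*$, their left adjoints compose in the order $\left(p \circ q\right)_! \simeq p_! \circ q_!$; tracing an object gives $\left[h:A \to C\right] \mapsto \left(A \to E\right) \mapsto A$, i.e. the domain functor $\left(\cE/E\right)/f \to \cE$. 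Under the equivalence $\left(\cE/E\right)/f \simeq \cE/C$ this is exactly the domain functor $\cE/C \to \cE$, which is the $\pi_!$ of the \'etale morphism $\cE/C \to \cE$. As $\pi_!$ determines $\pi^*$ as its right adjoint and hence the whole geometric morphism up to a contractible space of choices, I conclude that $p \circ q$ is equivalent to $\cE/C \to \cE$, which is \'etale by Definition \ref{dfn:metale}.

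The main obstacle I anticipate is precisely this last compatibility step: Lemma \ref{lem:slicek} yields only an equivalence of the underlying $\i$-categories, and one must take care to promote it to an equivalence of geometric morphisms, i.e. to see that the structural projections to $\cE$ (and not merely the abstract slice categories) are intertwined. Working with the forgetful left adjoints $\pi_!$ rather than with the inverse images $\pi^*$ keeps this bookkeeping transparent, since the composite of two ``forget the structure map'' functors is manifestly again such a functor, whereas comparing the $\pi^*$'s would require unwinding iterated fibre products.
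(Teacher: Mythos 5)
Your argument is correct and is essentially the paper's: the corollary is stated there without proof as an immediate consequence of Lemma \ref{lem:slicek}, i.e.\ the identification $\left(\cE/E\right)/f \simeq \cE/C$ collapsing the two slicings. Your additional verification that the composite projection to $\cE$ matches the canonical one over this equivalence (via the ``forget the structure map'' left adjoints $\pi_!$, which determine the geometric morphisms) is a correct and careful fleshing-out of the step the paper leaves implicit.
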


\begin{prop}(\cite{htt}, Corollary 6.3.5.9)\label{prop:6.3.5.9}
Suppose that a composite of geometric morphisms $$\cE \stackrel{f}{\to} \cF \stackrel{g}{\to} \G$$ is \'etale, and that $g$ \'etale. Then $f$ is \'etale.
\end{prop}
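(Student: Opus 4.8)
The plan is to factor $f$ as a section of an \'etale morphism followed by an \'etale projection, and thereby reduce the whole statement to the single assertion that a section of an \'etale geometric morphism is again \'etale. Since $g$ is \'etale, write $\cF\simeq\G/Y$ for some object $Y\in\G$, and set $p:=g\circ f$, which is \'etale by hypothesis. First I would form the pullback $P:=\cE\times_\G\cF$ in $\T$, which exists because \'etale morphisms are stable under pullback (Corollary \ref{cor:pullstabet}). By Proposition \ref{prop:6358} the projection $\mathrm{pr}_\cE:P\to\cE$ is identified with the \'etale morphism $\cE/p^*Y\to\cE$, while $\mathrm{pr}_\cF:P\to\cF$, being the pullback of the \'etale morphism $p$ along $g$, is \'etale by Corollary \ref{cor:pullstabet}. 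The datum witnessing $g\circ f\simeq p$ is precisely a cone on the cospan $\cE\xrightarrow{p}\G\xleftarrow{g}\cF$ with legs $\mathrm{id}_\cE$ and $f$, so the universal property of $P$ produces a geometric morphism $\sigma:\cE\to P$ with $\mathrm{pr}_\cE\circ\sigma\simeq\mathrm{id}_\cE$ and $\mathrm{pr}_\cF\circ\sigma\simeq f$. Thus $\sigma$ is a section of the \'etale morphism $\mathrm{pr}_\cE$, and $f\simeq\mathrm{pr}_\cF\circ\sigma$. By Corollary \ref{cor:compet} it now suffices to prove that $\sigma$ is \'etale, i.e. to prove the lemma that a section of an \'etale morphism is \'etale.

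To prove that lemma, consider an \'etale projection $\pi:\cD/U\to\cD$ and a section $s:\cD\to\cD/U$ (so $\pi\circ s\simeq\mathrm{id}_\cD$, whence $s^*\pi^*\simeq\mathrm{id}_\cD$ and $s^*(1_{\cD/U})\simeq 1_\cD$). The idea is to show that $s$ is completely determined by a single global point of $U$, which will then exhibit it as an \'etale section via the slice-of-slice identification. Every object $(V\xrightarrow{\alpha}U)$ of $\cD/U$ can be written as the pullback $\pi^*V\times_{\pi^*U}1_{\cD/U}$, where the two maps are $\pi^*\alpha$ and the diagonal point $\delta:1_{\cD/U}\to\pi^*U$; this is the standard presentation of an object of a slice topos and is valid in any $\i$-topos. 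Applying the left exact inverse image functor $s^*$ gives $s^*(V\xrightarrow{\alpha}U)\simeq V\times_U 1_\cD$, the pullback of $\alpha$ along the global point $u_s:=s^*(\delta):1_\cD\to U$. Hence $s^*$, and therefore $s$, depends only on $u_s$. On the other hand, by Proposition \ref{lem:htt6.3.5.10} the point $u_s$ is an object $(1_\cD\xrightarrow{u_s}U)$ of $\cD/U$, and by Lemma \ref{lem:slicek} the associated \'etale morphism $(\cD/U)/(1_\cD\xrightarrow{u_s}U)\simeq\cD/1_\cD\simeq\cD\to\cD/U$ is a section $\bar u_s$ of $\pi$ whose inverse image is likewise pullback along $u_s$. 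Since a geometric morphism is determined up to a contractible space of choices by its inverse image functor, $s\simeq\bar u_s$, which is \'etale.

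Granting the lemma, $\sigma$ is \'etale, so $f\simeq\mathrm{pr}_\cF\circ\sigma$ is a composite of two \'etale morphisms and hence \'etale by Corollary \ref{cor:compet}, which completes the proof. The main obstacle is the lemma, and within it the step identifying $s^*$ with base change along the single point $u_s$: one must verify that the pullback presentation of objects of $\cD/U$ is natural enough that $s^*$ carries it to the asserted fibre compatibly in $(V\xrightarrow{\alpha}U)$, and that the resulting comparison $s\simeq\bar u_s$ is an equivalence of geometric morphisms \emph{over} $\cD$ rather than merely a pointwise identification of inverse image functors. Everything surrounding this crux is formal manipulation of the pullback- and composition-stability of \'etale morphisms recorded in Corollaries \ref{cor:pullstabet} and \ref{cor:compet}, Proposition \ref{prop:6358}, the slice-of-slice identification of Lemma \ref{lem:slicek}, and the equivalence $\chi_\cD$ of Proposition \ref{lem:htt6.3.5.10}.
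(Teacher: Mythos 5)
The paper does not actually prove this proposition: it is quoted verbatim from \cite{htt} as Corollary 6.3.5.9, so there is no internal argument to compare against, and your self-contained derivation from results the paper does establish is welcome. Your strategy is the standard one for this statement: factor $f$ through the graph section $\sigma:\cE \to \cE\times_\G\cF$, note that both projections of the pullback are \'etale (Proposition \ref{prop:6358} and Corollary \ref{cor:pullstabet}, applied to each leg of the cospan), and reduce via Corollary \ref{cor:compet} to the lemma that a section of an \'etale geometric morphism is \'etale. All of the formal steps check out: Proposition \ref{prop:6358} does assert a genuine pullback square in $\T$, so the cone $(\mathrm{id}_\cE, f)$ with the homotopy $g\circ f\simeq p$ produces $\sigma$, and no circularity arises since Corollaries \ref{cor:pullstabet} and \ref{cor:compet} are independent of the present proposition.

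The crux you flag does close, and your worry about needing an equivalence ``over $\cD$'' is unfounded. The presentation $(V\stackrel{\alpha}{\to}U)\simeq \pi^*V\times_{\pi^*U}1_{\cD/U}$ is natural in $\alpha$ because its two legs are the unit $\alpha \to \pi^*\pi_!\alpha$ of the adjunction $\pi_!\dashv \pi^*$ (Proposition 6.3.5.1 of \cite{htt}) and the terminal map $\alpha \to 1_{\cD/U}$, with the relevant square commuting by naturality of the unit (its component at $1_{\cD/U}$ is exactly $\delta$); an objectwise equivalence between natural transformations is an equivalence, and choosing base change along $u_s$ functorially, one gets $s^*\simeq u_s^*\simeq \bar u_s^{\,*}$ as objects of $\Fun\left(\cD/U,\cD\right),$ hence of $\Geo\left(\cD,\cD/U\right).$ Since a morphism of $\T$ is determined by its inverse image up to a contractible space of choices, $s$ and $\bar u_s$ are homotopic as morphisms $\cD \to \cD/U$ with codomain fixed, and Definition \ref{dfn:metale} asks for nothing more than such an equivalence to a slice projection. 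Two minor remarks: your appeal to Proposition \ref{lem:htt6.3.5.10} to regard $u_s$ as an object of $\cD/U$ is unnecessary (this is immediate), and the identification of the inverse image of $\left(\cD/U\right)/u_s \to \cD/U$ with $\alpha \mapsto \alpha\times u_s$ is exactly the description of $\pi^*$ for slice projections already used in Chapter \ref{chap:etale}, so the lemma rests only on material the paper has in hand.
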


\section{The \'etale topology on $\i$-topoi.}
We will now show that the (necessarily large) $\icat$ $\T$ of $\i$-topoi caries a natural subcanonical Grothendieck topology. The role of this Grothendieck topology will be to have an elegant framework in which to speak about gluing together $\i$-topoi along local homeomorphisms.

\begin{dfn}
Let $\cE$ be an $\i$-topos. Suppose that $$\left(f_\alpha:\cE_\alpha \to \cE\right)$$ is a collection of \'etale geometric morphisms, which under the equivalence $$\T^{\et}/\cE \simeq \cE,$$ correspond to objects $\left(E_\alpha\right)$ of $\cE.$ Declare the collection of maps to be an \textbf{\'etale covering family of $\cE$} if the essentially unique morphism $$\coprod\limits_\alpha E_\alpha \to 1_\cE$$ is an effective epimorphism. We will call it an \textbf{$n$-\'etale covering family of $\cE$} if each of the morphisms $f_\alpha$ are $n$-\'etale.
\end{dfn}

%\begin{dfn}
%Let $\sL$ be a subcategory of $\Str\left(\G\right),$ for some geometric structure $\G.$ Suppose that $$\left(f_\alpha:L_\alpha \to L\right)$$ is a collection of \'etale geometric morphisms, which under the equivalence $$\Str\left(\G\right)^{\et}/\cL \simeq \cL,$$ correspond to objects $\left(F_\alpha\right)$ of $\cL.$ Declare the collection of maps to be an \textbf{\'etale covering family of $\cL$} if $\coprod\limits_\alpha F_\alpha \to 1_\cL$ is an effective epimorphism.
%\end{dfn}

%Need to show that pullbacks can be computed at the level of underlying infinity-topoi.

%\begin{cor}
%$$p_\cB:\T^{\mathit{lax}}/\cB \to \T$$
%preserves pullbacks.
%\end{cor}

\begin{prop}\label{prop:etexists}
The assignment to each $\i$-topos $\cE$ its \'etale covering families defines a Grothendieck pretopology on $\T.$ Similarly for $n$-\'etale covering families.
\end{prop}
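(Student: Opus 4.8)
The plan is to verify directly the three axioms of a Grothendieck pretopology from the earlier definition, using throughout the dictionary between \'etale morphisms into $\cE$ and objects of $\cE$ furnished by the equivalence $\chi_\cE:\cE \simeq \T^{\et}/\cE$ of Proposition \ref{lem:htt6.3.5.10}. For axiom (i), an equivalence $\cF \to \cE$ is \'etale, being equivalent to $\cE/1_\cE \to \cE$, and corresponds under $\chi_\cE$ to the terminal object $1_\cE$; since the identity $1_\cE \to 1_\cE$ is an effective epimorphism, the one-element family is an \'etale covering family.

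For axiom (ii), let $\left(f_\alpha:\cE_\alpha \to \cE\right)$ be an \'etale covering family, corresponding to objects $E_\alpha$ with $\coprod_\alpha E_\alpha \to 1_\cE$ an effective epimorphism, and let $g:\cF \to \cE$ be a geometric morphism. By Proposition \ref{prop:6358} the pullback $\cE_\alpha \times_\cE \cF$ exists and is canonically the slice $\cF/g^*\left(E_\alpha\right)$, so by Corollary \ref{cor:pullstabet} the base-changed maps $\cF/g^*\left(E_\alpha\right) \to \cF$ are \'etale and are classified by the objects $g^*\left(E_\alpha\right)$. It then remains to see that $\coprod_\alpha g^*\left(E_\alpha\right) \to 1_\cF$ is an effective epimorphism. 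Since $g^*$ is an inverse image functor it preserves colimits and finite limits, hence preserves coproducts, the terminal object, the formation of the \v{C}ech nerve, and its colimit; therefore it carries effective epimorphisms to effective epimorphisms, and applying it to $\coprod_\alpha E_\alpha \to 1_\cE$ gives the desired effective epimorphism $\coprod_\alpha g^*\left(E_\alpha\right) \simeq g^*\!\left(\coprod_\alpha E_\alpha\right) \to g^*\left(1_\cE\right) \simeq 1_\cF$.

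For axiom (iii), suppose $\left(f_\alpha:\cE_\alpha \to \cE\right)_\alpha$ is an \'etale covering family classified by $\left(E_\alpha\right)$, and for each $\alpha$ that $\left(g_{\alpha\beta}:\cE_{\alpha\beta} \to \cE_\alpha\right)_\beta$ is an \'etale covering family. Under the identification $\cE_\alpha \simeq \cE/E_\alpha$, each $\cE_{\alpha\beta}$ is classified by an object $p_{\alpha\beta}:X_{\alpha\beta} \to E_\alpha$ of $\cE/E_\alpha$, and by Lemma \ref{lem:slicek} the composite $\cE_{\alpha\beta} \simeq \left(\cE/E_\alpha\right)/p_{\alpha\beta} \simeq \cE/X_{\alpha\beta} \to \cE$ is the \'etale morphism classified by $X_{\alpha\beta}\in\cE$, and it is \'etale by Corollary \ref{cor:compet}. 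The covering condition for $\left(g_{\alpha\beta}\right)_\beta$ says that $\coprod_\beta p_{\alpha\beta} \to 1_{\cE/E_\alpha}$ is an effective epimorphism in $\cE/E_\alpha$; since the forgetful functor $\cE/E_\alpha \to \cE$ is conservative and preserves colimits and pullbacks (hence both preserves and reflects effective epimorphisms), this means $\coprod_\beta X_{\alpha\beta} \to E_\alpha$ is an effective epimorphism in $\cE$. Taking the coproduct over $\alpha$, and using that effective epimorphisms in an $\i$-topos are stable under coproducts, produces an effective epimorphism $\coprod_{\alpha,\beta} X_{\alpha\beta} \to \coprod_\alpha E_\alpha$; composing with $\coprod_\alpha E_\alpha \to 1_\cE$ and invoking closure of effective epimorphisms under composition (they form the left class of the factorization system of Proposition \ref{prop:factorization}) shows $\coprod_{\alpha,\beta} X_{\alpha\beta} \to 1_\cE$ is an effective epimorphism, so the composites form an \'etale covering family.

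The $n$-\'etale variant follows by the same argument once one tracks $n$-truncatedness of the classifying objects: the terminal object is $n$-truncated, $g^*$ preserves $n$-truncated objects since it is left exact, and in axiom (iii) the hypothesis that $p_{\alpha\beta}$ is an $n$-truncated object of $\cE/E_\alpha$ means $X_{\alpha\beta}\to E_\alpha$ is an $n$-truncated morphism, which combined with $n$-truncatedness of $E_\alpha$ and stability of $n$-truncated morphisms under composition forces $X_{\alpha\beta}$ to be $n$-truncated in $\cE$. The step demanding the most care is axiom (iii): one must correctly identify the object classifying an iterated slice via Lemma \ref{lem:slicek} and transport the effective-epimorphism condition faithfully between the slice topos $\cE/E_\alpha$ and $\cE$ itself.
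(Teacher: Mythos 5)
Your proof is correct and follows essentially the same route as the paper's: classify \'etale maps by objects via $\T^{\et}/\cE \simeq \cE$, handle base change through Proposition \ref{prop:6358}, transport the covering condition through slices via Lemma \ref{lem:slicek}, and close with stability of effective epimorphisms under coproducts and composition, tracking truncatedness by left exactness of $g^*$ and composition-stability of $n$-truncated morphisms. The only differences are cosmetic: where the paper cites \cite{htt} (6.5.1.16, 6.2.3.12--13, 5.5.6.14), you inline the same arguments directly (e.g.\ deducing that $g^*$ preserves effective epimorphisms from its preservation of \v{C}ech nerves and colimits).
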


\begin{proof}
$\mbox{ }$

\begin{itemize}
\item[i)] Every equivalence is clearly a singleton covering family.

\item[ii)] Suppose now that $g:\cF \to \cE$ and that $$\left(f_\alpha:\cE_\alpha \to \cE\right)$$ is an \'etale covering family, which each $f_\alpha$ corresponding to objects $E_\alpha$ of $\cE$. Then by Proposition \ref{prop:6358}, each induced \'etale morphism $$g^*\left(f_\alpha\right):\cF \times_{\cE} \cE_\alpha \to \cF$$ corresponds to the object $g^*\left(E_\alpha\right)$ of $\cF.$ By \cite{htt} 6.5.1.16, $g^*$ preserves effective epimorphisms. Hence
\begin{eqnarray*}
g^*\left(\coprod_\alpha E_\alpha \to 1_\cE\right) &\simeq& g^*\left(\coprod_\alpha E_\alpha\right) \to g^*\left(1_\cE\right)\\
&\simeq& \coprod_\alpha g^*\left(E_\alpha\right) \to 1_\cF
\end{eqnarray*}
is an effective epimorphism. It follows that the collection $$\left(g^*\left(f_\alpha\right):\cF \times_{\cE} \cE_\alpha \to \cF\right)$$ is an \'etale covering family. For the case of $n$-\'etale covering families, one only needs to observe that $g^*$ preserves $n$-truncated objects, since it is left exact (\cite{htt} Proposition 5.5.6.16).

\item[iii)] Suppose that $$\left(f_\alpha:\cE_\alpha \to \cE\right)$$ is an \'etale covering family and that for each $\alpha,$ we have an \'etale covering family $$\left(f^\beta_\alpha:\cE^\beta_{\alpha} \to \cE_\alpha\right).$$ We want to show that the composite $$\left(\cE^\beta_{\alpha} \to \cE\right)$$ is an \'etale covering family. Now, each $f_\alpha$ corresponds to an object $E_\alpha \in \cE$ such that $\cE/E_\alpha \simeq \cE_\alpha,$ and each $f^\beta_\alpha$ corresponds to an object of $\cE/E_\alpha,$ i.e. a map $g^\beta_\alpha:E^\beta_\alpha \to \cE_\alpha$ in $\cE.$ Since by Lemma \ref{lem:slicek}, $$\left(\cE/E_\alpha\right)/g^\beta_\alpha \simeq \cE/E^\beta_\alpha,$$ it suffices to show that the canonical map $$\coprod\limits_{\beta,\alpha} E^\beta_\alpha \to 1_\cE$$ is an effective epimorphism. Notice that the canonical map $$\coprod\limits_\beta g^\beta_\alpha \to id_{E_\alpha}$$ in $\cE/E_\alpha$ is an effective epimorphism, and since the forgetful functor $$\cE/E_\alpha \to \cE$$ preserves colimits, by Lemma 6.2.3.13 of \cite{htt}, it follows that, for all $\alpha,$ $$\coprod\limits_\beta g^\beta_\alpha :\coprod\limits_\beta E^\beta_\alpha \to E_\alpha$$ is an effective epimorphism. By Lemma 6.2.3.13 of \cite{htt}, it follows that $$\coprod\limits_{\alpha,\beta} g^\beta_\alpha :\coprod\limits_{\alpha,\beta} E^\beta_\alpha \to \coprod_\alpha E_\alpha$$ is an effective epimorphism. Since composites of effective epimorphisms are effective epimorphisms (Corollary 6.2.3.12 in \cite{htt}) we are done. For the $n$-\'etale topology, observe that if each $E_\alpha$ are $n$-truncated, and each $$g^\beta_\alpha:E^\beta_\alpha \to E_\alpha$$ are as well, then by \cite{htt}, Lemma 5.5.6.14, it follows that each $E^\beta_\alpha$ are $n$-truncated.
\end{itemize}
\end{proof}

\begin{dfn}
The Grothendieck topology on $\T$ generated by \'etale covering families is called the \textbf{\'etale topology}. We will denote the Grothendieck site by $\left(\T,\et\right).$ Similarly, the Grothendieck topology generated by $n$-\'etale covering families will be called the \textbf{$n$-\'etale topology}, and will be denoted by $\left(\T,n\mbox{-}\et\right).$
\end{dfn}

\begin{prop}\label{prop:topsubcan}
The \'etale topology is subcanonical.
\end{prop}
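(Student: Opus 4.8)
The plan is to show that every representable presheaf $y(\cB)$, for $\cB$ an $\i$-topos, is an $\i$-sheaf for the étale topology; since that topology is generated by étale covering families, it suffices to verify the sheaf condition against such families. Fix an étale covering family $(f_\alpha:\cE_\alpha \to \cE)$, corresponding under $\chi_\cE$ (Proposition \ref{lem:htt6.3.5.10}) to objects $E_\alpha \in \cE$ with $\coprod_\alpha E_\alpha \to 1_\cE$ an effective epimorphism, and let $S \hookrightarrow y(\cE)$ be the sieve it generates. By Proposition \ref{prop:factorization} and the remark following it, $S$ is the image of $q:\coprod_\alpha y(\cE_\alpha) \to y(\cE)$, so $S \simeq \varinjlim_{[n]\in\Delta^{op}} \check C(q)_n$. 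Using that $y$ preserves the pullbacks defining the \v{C}ech nerve, together with the stability of étale maps under pullback (Corollary \ref{cor:pullstabet}) and composition (Corollary \ref{cor:compet}), the sheaf condition for $y(\cB)$ at this cover becomes the descent equivalence
$$\Hom_\T(\cE,\cB) \simeq \lim_{[n]\in\Delta}\Hom\left(\check C(q)_n, y(\cB)\right).$$

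Next I would evaluate the right-hand side explicitly. Coproducts of $\i$-topoi in $\T$ are products of the underlying $\i$-categories, whence $\Hom_\T(\coprod_\gamma \cD_\gamma,\cB)\simeq \prod_\gamma \Hom_\T(\cD_\gamma,\cB)$; moreover disjointness and universality of coproducts in $\cE$ give $\cE/\left(\coprod_\gamma W_\gamma\right)\simeq \prod_\gamma \cE/W_\gamma$, which is precisely $\coprod_\gamma \cE/W_\gamma$ computed in $\T$. Writing $F_\cB:=\Hom_\T(\cE/(-),\cB):\cE^{op}\to \iGpd$, this says $F_\cB$ sends coproducts in $\cE$ to products. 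Combining this with the identification $\cE/V \times_\cE \cE/V' \simeq \cE/(V\times V')$ supplied by Proposition \ref{prop:6358}, the iterated fiber powers of $\coprod_\alpha \cE_\alpha \to \cE$ are $\cE/\check U_n$, where $\check U_\bullet$ is the \v{C}ech nerve of $\coprod_\alpha E_\alpha \to 1_\cE$ taken inside $\cE$. The displayed descent equivalence therefore collapses to
$$F_\cB(1_\cE)\simeq \lim_{[n]\in\Delta} F_\cB(\check U_n).$$
Since $\coprod_\alpha E_\alpha \to 1_\cE$ is an effective epimorphism, $1_\cE \simeq \varinjlim_{\Delta^{op}}\check U_\bullet$ in $\cE$, so this identity follows at once if $F_\cB$ carries this colimit to a limit.

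Thus everything reduces to a single \textbf{Key Lemma}: the functor $\Phi:\cE \to \T$, $W\mapsto \cE/W$, preserves colimits (it is enough that it preserve the geometric realization $1_\cE \simeq \varinjlim \check U_\bullet$, after which $\Hom_\T(-,\cB)$ converts it into the required limit). I would prove this by factoring $\Phi$ as the equivalence $\chi_\cE:\cE \xrightarrow{\sim} \T^{\et}/\cE$ followed by the forgetful functor to $\T$. By Proposition \ref{prop:6.3.5.9}, any geometric morphism over $\cE$ between étale objects is itself étale, so $\T^{\et}/\cE$ is a \emph{full} subcategory of the slice $\T/\cE$, whose projection to $\T$ preserves colimits. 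Hence the Key Lemma amounts to the closure property: a colimit in $\T/\cE$ of a diagram of étale morphisms into $\cE$ is again étale.

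This closure property is the main obstacle, being the genuinely topos-theoretic input of the argument. I would derive it from descent in $\i$-topoi (\cite{htt}, Theorem 6.1.3.9) together with Lurie's theory of étale geometric morphisms (\cite{htt}, \S6.3.5), which guarantees that the colimit of a diagram of étale morphisms over $\cE$ is classified by the corresponding colimit of objects in $\cE$ under $\chi_\cE$, and in particular remains étale. Granting this, $\Phi$ preserves $1_\cE \simeq \varinjlim \check U_\bullet$, both displayed equivalences hold, and $y(\cB)$ satisfies the sheaf condition at every étale cover. As $\cB$ was arbitrary, every representable is a sheaf, so the étale topology is subcanonical.
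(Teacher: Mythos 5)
Your proposal is correct and follows essentially the same route as the paper's proof: both reduce subcanonicity to the statement that $\coprod_\alpha \cE_\alpha \to \cE$ is an effective epimorphism in $\T$, lift the \v{C}ech nerve through the equivalence $\chi_\cE:\cE \simeq \T^{\et}/\cE$ where it becomes the \v{C}ech nerve of $\coprod_\alpha E_\alpha \to 1_\cE$, and push the resulting colimit forward along the colimit-preserving forgetful functors $\T^{\et}/\cE \to \T^{\et} \to \T$. Your ``Key Lemma'' (that $W \mapsto \cE/W$ preserves the relevant colimit) is exactly this mechanism, with the closure property you isolate being precisely the content of Theorem 6.3.5.13 of \cite{htt}, which the paper cites directly where you cite \S 6.3.5 at large; the only cosmetic difference is that you make explicit the unwinding of the sheaf condition against the generated sieve, which the paper leaves implicit.
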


\begin{proof}
Let $\left(\cE_\alpha \to \cE\right)$ be an \'etale covering family of an $\i$-topos $\cE.$ It suffices to show that $\cE$ is the colimit of the \v{C}ech nerve $$C_\bullet:\Delta^{op} \to \T$$ of the induced map
\begin{equation}\label{eq:cov}
\coprod\limits_{\alpha} \cE_\alpha \to \cE,
\end{equation}
i.e. it suffices to show that (\ref{eq:cov}) is an effective epimorphism. Notice that by Corollary \ref{cor:pullstabet}  there is a canonical lift
$$\xymatrix@R=0.1in{\Delta^{op} \ar[r]^-{C_\bullet} \ar@{-->}[rdd]_-{\tilde C_\bullet}& \T\\
& \T^{\et} \ar[u]\\
& \T^{\et}/\cE. \ar[u]}$$
Under the equivalence $\T^{\et}/\cE \simeq \cE,$ $\tilde C_\bullet$ corresponds to the \v{C}ech nerve of
\begin{equation}\label{eq:episs}
\coprod\limits_{\alpha} E_\alpha \to 1_\cE,
\end{equation}
where each $E_\alpha$ is the element of $\cE$ corresponding to $$\cE_\alpha \to \cE.$$ By assumption, (\ref{eq:episs}) is an effective epimorphism, hence $$\colim \tilde C_\bullet \simeq id_\cE.$$
The forgetful functor $\T^{\et}/\cE \to \T^{\et}$ is colimit preserving, and so is the canonical functor $$\T^{\et} \to \T,$$ by \cite{htt} Theorem 6.3.5.13. It follows that $$\colim C_\bullet \simeq\cE$$ as desired.
\end{proof}

\begin{cor}
The $n$-\'etale topology on $\T$ is subcanonical for any $n$.
\end{cor}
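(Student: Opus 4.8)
The plan is to deduce this from Proposition \ref{prop:topsubcan} by a comparison-of-topologies argument, thereby avoiding any need to redo the \v{C}ech nerve computation. The key observation is that the $n$-\'etale topology is \emph{coarser} than the \'etale topology: by definition every $n$-\'etale covering family is in particular an \'etale covering family (one simply forgets that the maps $f_\alpha$ are $n$-\'etale and remembers only that they are \'etale with $\coprod_\alpha E_\alpha \to 1_\cE$ an effective epimorphism, since an $n$-\'etale morphism is a special case of an \'etale one). Consequently the Grothendieck topology generated by the $n$-\'etale pretopology is contained in the one generated by the \'etale pretopology, so every $n$-\'etale covering sieve is also an \'etale covering sieve.

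First I would record the elementary fact that a presheaf which is a sheaf for a finer Grothendieck topology is automatically a sheaf for any coarser one: if $F$ satisfies descent with respect to every \'etale covering sieve, then in particular it satisfies descent with respect to the subcollection of $n$-\'etale covering sieves. Applying this with $F=y\left(\cD\right)$ a representable presheaf on $\T$, and invoking Proposition \ref{prop:topsubcan} to the effect that every such $y\left(\cD\right)$ is an \'etale sheaf, I would conclude that every representable is also an $n$-\'etale sheaf; this is precisely the assertion that the $n$-\'etale topology is subcanonical.

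Alternatively, one can run the proof of Proposition \ref{prop:topsubcan} verbatim: given an $n$-\'etale covering family $\left(\cE_\alpha \to \cE\right)$, the argument that $\coprod_\alpha \cE_\alpha \to \cE$ is an effective epimorphism in $\T$, and hence that $\cE \simeq \colim C_\bullet$ for its \v{C}ech nerve, uses only that the corresponding family $\coprod_\alpha E_\alpha \to 1_\cE$ is an effective epimorphism in $\cE$, a condition built into the definition of an $n$-\'etale covering family. The $n$-truncatedness of the $E_\alpha$ plays no role in that colimit computation, so the same proof applies without modification.

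I do not expect any genuine obstacle here: the content is entirely formal, the only thing to get right being the direction of the comparison (coarser topologies have fewer covering sieves, and sheaves for the finer topology remain sheaves for the coarser one). The substantive geometric input, namely that an \'etale covering map is an effective epimorphism of $\i$-topoi, has already been supplied by Proposition \ref{prop:topsubcan}.
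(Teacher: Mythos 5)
Your proposal is correct and is essentially the argument the paper intends: the corollary is left without proof precisely because every $n$-\'etale covering family is in particular an \'etale covering family, so the $n$-\'etale topology is coarser and representables, being sheaves for the \'etale topology by Proposition \ref{prop:topsubcan}, remain sheaves for it (equivalently, as you note, the \v{C}ech-nerve proof runs verbatim since the truncation hypothesis is never used). You also get the comparison direction right — coarser topologies have fewer covering sieves, so the descent conditions only weaken — which is the one place such an argument could go wrong.
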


\begin{rmk}
The \'etale topology on $\T$ naturally restricts to $\T^{\et}$. It is again subcanonical by virtue of \cite{htt}, Theorem 6.3.5.13. Moreover, for any $\i$-topos $\cE,$ the \'etale topology induces a Grothendieck topology on $\T^{\et}/\cE$, which under the equivalence $$\T^{\et}/\cE \simeq \cE$$ coincides with the epimorphism topology on $\cE.$
\end{rmk}

\begin{rmk}
One may restrict the $n$-\'etale Grothendieck pretopology on $\T$ to $m$-localic $\i$-topoi in the obvious way and get an induced $n$-\'etale Grothendieck topology on $\mathfrak{Top}_m.$ For $m=n-1,$ this topology can be described by declaring a collection of \'etale geometric morphisms of $m$-topoi $$\left(f_\alpha:\cE_\alpha \to \cE\right),$$ which under the equivalence $$\T^{\et}/\cE \simeq \cE,$$ correspond to objects $\left(E_\alpha\right)$ of $\cE,$ to be a covering family if $$\coprod\limits_\alpha E_\alpha \to 1_\cE$$ is an effective epimorphism. Call this Grothendieck topology the \textbf{\'etale topology} on $\mathfrak{Top}_m.$ Notice that the \'etale topology on the category $\mathfrak{Top}_0$ of locales is the standard topology whose covering families are given by open coverings.
\end{rmk}

\begin{lem}\label{lem:lalalucas}
Let $n < \i$ and let $\cE$ be an $n$-localic $\i$-topos and $F \in \cE$ any object. The $\cE/F$ admits an \'etale covering family $\left(\cF_\beta \to \cE/F\right)$ such that each composite $$\cF_\beta\ \to \cE/F \to \cE$$ is $\left(n-1\right)$-\'etale.
\end{lem}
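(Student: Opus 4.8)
The plan is to reduce the statement to covering the object $F$ by $(n-1)$-truncated objects of $\cE$. First recall how the data of the required covering family is encoded. By Lemma \ref{lem:slicek}, an \'etale morphism $\cF_\beta \to \cE/F$ is the same as an object $g_\beta\colon F_\beta \to F$ of $\cE/F$, and under the equivalence $(\cE/F)/g_\beta \simeq \cE/F_\beta$ the composite $\cF_\beta \to \cE/F \to \cE$ is identified with the canonical \'etale morphism $\cE/F_\beta \to \cE$ classified by $F_\beta$. By Definition \ref{dfn:metale} this composite is $(n-1)$-\'etale exactly when $F_\beta$ is $(n-1)$-truncated. Moreover, under the equivalence $\T^{\et}/(\cE/F) \simeq \cE/F$ of Proposition \ref{lem:htt6.3.5.10}, the family $(\cF_\beta \to \cE/F)$ is an \'etale covering family iff $\coprod_\beta g_\beta \to 1_{\cE/F}$ is an effective epimorphism; since $1_{\cE/F} = (\mathrm{id}_F\colon F \to F)$ and the forgetful functor $\cE/F \to \cE$ creates colimits and hence detects effective epimorphisms, this is precisely the requirement that $\coprod_\beta F_\beta \to F$ be an effective epimorphism in $\cE$. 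Thus it suffices to find $(n-1)$-truncated objects $F_\beta$ and maps $g_\beta\colon F_\beta \to F$ with $\coprod_\beta F_\beta \to F$ an effective epimorphism.

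The heart of the argument is then to cover an arbitrary $F \in \cE$ by $(n-1)$-truncated objects, and this is where I would use the hypothesis $n < \i$. Since $\cE$ is $n$-localic, it is equivalent to $\Shi(\cE_0)$, the $\i$-topos of $\i$-sheaves on the canonical site of the $n$-topos $\cE_0 := \tau_{\le n-1}(\cE)$; that limit-preserving functors on $\cE_0$ recover $\cE$ is the content of Remark \ref{rmk:limitpresrep}, and is exactly where $n < \i$ is needed. Under this identification, the representable sheaves $y(E_0)$ for $E_0 \in \cE_0 \simeq \tau_{\le n-1}(\cE)$ are $(n-1)$-truncated objects of $\cE$, and (being an $\i$-topos) $\cE$ is generated under colimits by a small set of them.

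Finally I would invoke the standard fact that every object of a sheaf $\i$-topos is a colimit of representables: writing $F$ as the colimit over its category of elements, the canonical map $\coprod_\beta F_\beta \to F$ from the coproduct of the representables mapping to $F$ (cut down to a small generating subcollection) is an effective epimorphism, since it is the initial stage of a colimit diagram realizing $F$ and sheafification---being left exact and colimit-preserving---carries effective epimorphisms of presheaves to effective epimorphisms. Setting $F_\beta := y(E_0^\beta)$ with the structure maps $g_\beta$ then supplies the desired family, and the reduction of the first paragraph finishes the proof. The main obstacle is the bookkeeping in that first paragraph---threading the slice equivalence of Lemma \ref{lem:slicek} so that the composite to $\cE$ is manifestly $(n-1)$-\'etale---together with the clean identification $\cE \simeq \Shi(\cE_0)$ in which the $(n-1)$-truncated objects are generated by representables; the latter genuinely fails for $n = \i$ (cf. Remark \ref{rmk:limitpresrep}), which is why the hypothesis $n < \i$ cannot be dropped.
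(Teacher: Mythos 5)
Your proof is correct and follows essentially the same route as the paper: the paper likewise reduces to covering $F$ by $\left(n-1\right)$-truncated representables, choosing a small $\left(n,1\right)$-site $\left(\C,J\right)$ for the $n$-localic $\cE$, writing $F$ as the colimit of $\C/F \to \C \to \cE$ (Proposition 5.1.5.3 of \cite{htt}) and invoking Lemma 6.2.3.13 of \cite{htt} to get the effective epimorphism $\coprod y\left(C_\beta\right) \to F$, with the $\left(n-1\right)$-truncatedness of each $y\left(C_\beta\right)$ coming from $\C$ being an $\left(n,1\right)$-category. The only real difference is cosmetic: where you work over the large canonical site $\tau_{\le n-1}\left(\cE\right)$ and then ``cut down to a small generating subcollection,'' the paper starts from a small site outright, which is exactly the cleanest way to implement that cut-down and makes the smallness of the covering family automatic.
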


\begin{proof}
Choose an $n$-site $\left(\C,J\right)$ for $\cE.$ Then, by \cite{htt} Proposition 5.1.5.3, $F$ is the colimit of $$\C/F \to \C \stackrel{y}{\longrightarrow}\cE,$$ where $y$ denotes the Yoneda functor into $\i$-sheaves. By Proposition 6.2.3.13 of \cite{htt}, it follows that the canonical map $$\coprod\limits_{y\left(C_\beta\right) \to F} y\left(C_\beta\right) \to F$$ is an effective epimorphism. Notice that each object $y\left(C_\beta\right)$ is $\left(n-1\right)$-truncated, since $\C$ is an $\left(n,1\right)$-category. The collection of maps $\left(\cE/y\left(C_\beta\right) \to \cE/F\right)$ now constitutes an \'etale covering family of $\cE/F$ with the desired property.
\end{proof}

\begin{prop}\label{prop:highlandertop}
For any $m \ge n-1,$ the $m$-\'etale topology on $\Tn$ is the same as the \'etale topology.
\end{prop}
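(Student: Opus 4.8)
The plan is to show that, for $m \ge n-1$, the collection of $m$-\'etale covering families of any $n$-topos $\cE$ coincides \emph{on the nose} with the collection of its \'etale covering families; since two Grothendieck pretopologies with identical covering families generate the same Grothendieck topology, this immediately yields the claim. Thus the entire argument reduces to a comparison of the two admissible classes of geometric morphisms.

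First I would observe that the two notions of covering family differ only in the class of maps they are permitted to use: by definition an $m$-\'etale covering family is precisely an \'etale covering family all of whose members happen to be $m$-\'etale. Hence one inclusion — every $m$-\'etale covering family is an \'etale covering family — is immediate from Definition \ref{dfn:metale}. The content is therefore concentrated entirely in the reverse inclusion, namely that for $m \ge n-1$ \emph{every} \'etale morphism of $n$-topoi is automatically $m$-\'etale.

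To establish this I would invoke Remark \ref{rmk:noetale}: since every object of an $n$-topos is $\left(n-1\right)$-truncated, each \'etale morphism $\cF \to \cE$ of $n$-topoi corresponds, under the equivalence $\cE \simeq \Tn^{\et}/\cE$ of Corollary \ref{cor:xin}, to an $\left(n-1\right)$-truncated object $E \in \cE.$ I would then apply the elementary monotonicity of truncation — a $j$-truncated object is $k$-truncated for every $k \ge j$ — in the case $j = n-1 \le m = k,$ to conclude that $E$ is in fact $m$-truncated, so that $\cF \to \cE$ is $m$-\'etale. This gives the reverse inclusion, hence equality of the two classes of morphisms, and therefore equality of the covering families and of the generated topologies.

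The only point requiring care is the regime $m > n-1,$ which lies outside the literal scope of Definition \ref{dfn:metale} (stated only for $m \le n-1$): here I would simply read ``$m$-\'etale'' through its defining condition that the corresponding object be $m$-truncated, note that in an $n$-topos this condition is vacuous (all objects being $\left(n-1\right)$-truncated, hence $m$-truncated), and thereby identify $m$-\'etale with \'etale outright. I do not anticipate any genuine obstacle; the argument is a direct bookkeeping of truncation levels, with the one substantive geometric input — that slicing an $n$-topos by one of its objects yields only $\left(n-1\right)$-truncated data — already recorded in Remark \ref{rmk:noetale}.
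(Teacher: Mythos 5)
Your argument fails at its central claim, namely that for $m \ge n-1$ the $m$-\'etale and \'etale covering families of an $n$-topos coincide ``on the nose.'' The topologies in the proposition are the restrictions to $n$-localic objects of the pretopologies on $\T$ (as set up in the remark immediately preceding the statement): a covering family member is an \'etale map $\nu_n\left(\cE\right)/E \to \nu_n\left(\cE\right)$ classified by an object $E$ of the \emph{$\infty$-topos} $\nu_n\left(\cE\right)$, not of the $n$-topos $\cE$ itself. Remark \ref{rmk:noetale} says only that \'etale maps of $n$-topoi in the sense of Definition \ref{dfn:metale} are $\left(n-1\right)$-\'etale; the converse inclusion you assert is false. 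For instance, an $n$-truncated object $E$ of $\nu_n\left(\cE\right)$ that is not $\left(n-1\right)$-truncated yields, by Proposition \ref{prop:2.3.16}, an \'etale map between $n$-localic $\infty$-topoi --- hence a legitimate member of an $m$-\'etale covering family in $\Tn$ for $m \ge n$ --- which is \emph{not} induced by slicing $\cE$ over one of its own objects. This is precisely the phenomenon the paper dwells on around (\ref{eq:emmm}) in distinguishing \'etale maps from local homeomorphisms, so your reading that the $m$-truncation condition is ``vacuous'' for $m > n-1$ conflates exactly the two things that discussion warns must be kept apart. The upshot is that the $m$-\'etale covering families properly contain the $\left(n-1\right)$-\'etale ones, and the two pretopologies are genuinely different; only the generated Grothendieck topologies agree.

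Consequently the actual content of the proposition, which your proposal omits entirely, is a refinement argument: one must show that every \'etale covering family of an $n$-localic $\infty$-topos can be refined by an $\left(n-1\right)$-\'etale covering family. This is what the paper's Lemma \ref{lem:lalalucas} supplies: choosing an $n$-site $\left(\C,J\right)$ for $\cE$, an arbitrary object $F$ of $\cE$ is a colimit of representables $y\left(C_\beta\right)$, each $\left(n-1\right)$-truncated because $\C$ is an $\left(n,1\right)$-category, so the effective epimorphism $\coprod_\beta y\left(C_\beta\right) \to F$ produces an \'etale cover of $\cE/F$ whose composites to $\cE$ are $\left(n-1\right)$-\'etale; combining this with closure of covers under composition (Proposition \ref{prop:etexists}) refines any \'etale cover by an $\left(n-1\right)$-\'etale one. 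Some such use of the fact that an $n$-localic topos is generated under colimits by $\left(n-1\right)$-truncated objects is indispensable --- pure bookkeeping of truncation levels cannot close the gap, because the gap is not one of bookkeeping.
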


\begin{proof}
It suffices to show that for any $n$-localic $\i$-topos $\cE$, any \'etale covering family $$\left(f_\alpha:\cE_\alpha \to \cE\right),$$ can be refined by an $\left(n-1\right)$-\'etale covering family. Each map $f_\alpha$ corresponds to an object $E_\alpha \in \cE.$ By Lemma \ref{lem:lalalucas}, for each $\alpha,$ there exists an \'etale covering family $\left(\cF^\beta_\alpha \to \cE_\alpha\right)$ such that each composite $\cF^\beta_\alpha \to \cE_\alpha \to \cE$ is $\left(n-1\right)$ \'etale. By Proposition \ref{prop:etexists}, $$\left(\cF^\beta_\alpha \to \cE\right)_{\alpha,\beta}$$ is now an $\left(n-1\right)$-\'etale covering family refining $$\left(f_\alpha:\cE_\alpha \to \cE\right)_\alpha.$$
\end{proof}

We conclude that the \'etale topology on $\Tn$ is the natural Grothendieck topology to use when viewing $n$-topoi as generalized spaces.

%\begin{rmk}
%One may restrict the \'etale pretopology on $\T$ to the subcategory $\Top$ of (sober) topological spaces. With help of Remark \ref{rmk:0truncov}, one can easily show that the restricted Grothendieck site is equivalent to the category $\Top,$  with covering families given by surjective local homeomorphisms. This pretopology generates the same Grothendieck topology as the classical Grothendieck topology on $\Top$ given by open coverings.
%An analogous ``pointless'' statement remains true for locales.
%\end{rmk}

\chapter{Structured $\i$-Topoi}\label{chap:structured}
This section is meant to give some intuition behind the definitions of \emph{geometries} and \emph{geometric structures} as defined in \cite{dag}. These serve as a means to provide a general framework for describing the appropriate ambient $\i$-categories of $\i$-topoi with structure sheaves of various types of algebraic objects (e.g. commutative rings, differential graded commutative algebras, commutative ring spectra, etc.), and a suitable notion of such a structure sheaf having ``local stalks.'' This framework will be used in an essential way in the rest of this manuscript. However, if the reader is already familiar with the framework of geometries and geometric structure, or is uninterested in pedagogical text, they may simply skip to Section \ref{sec:etstr}.

\section{Structure Sheaves and Classifying Topoi}
In geometry, one studies objects which have more structure than merely that of a topological space. Typically this extra structure can be encoded succinctly by a structure sheaf. For example, in algebraic geometry, the classical approach to schemes is by modeling them as locally ringed spaces, i.e. topological spaces with a sheaf of rings (with local stalks). Although typically smooth manifolds are described by local charts, the category of smooth manifolds may also be realized as a full subcategory of locally ringed spaces, and this sheaf theoretic point of view becomes more essential when working with complex manifolds or supermanifolds. Still more exotic variants exist for the modern \emph{derived} geometer, where they can study various flavors of derived schemes which can be described by topological spaces with a sheaf of differential graded algebras, simplicial rings, or $\mathbf{E_\i}$-ring spectra, etc.

When using the \'etale topology on affine schemes, one should not think of a scheme as a locally ringed \emph{space}, at least not in the naive sense; the ``space'' underlying the affine scheme associated to a commutative ring $A$ whose cohomology yields \'etale cohomology is actually a topos--- the small \'etale topos $\Sh\left(A_{\acute{e}t}\right)$ of $A.$ Moreover, the small \'etale topos $\Sh\left(A_{\acute{e}t}\right)$ of $A$ carries a canonical ring object, i.e. a sheaf $\O_A$ of rings, which may be interpreted as a structure sheaf. Indeed, the appropriate category of such ringed topoi of the form $\left(\Sh\left(A_{\acute{e}t}\right),\O_A\right)$ is equivalent to the category of affine schemes, and the assignment $$A \mapsto \left(\Sh\left(A_{\acute{e}t}\right),\O_A\right)$$ should be thought of as a replacement of the classical spectrum functor $\mathbf{Spec}$ appropriate for working with the \'etale topology. A similar such spectrum functor exists for the \'etale topology for various flavors of \emph{derived} affine schemes, except in order to properly encode the structure sheaves, it is most convenient to replace the role of topoi with that of $\i$-topoi, see e.g. \cite{dag}.

%Once one makes the conceptual leap, when working with the \'etale topology, that schemes should be not be thought of as spaces with a structure sheaf but rather should be thought of as \emph{topoi} with a structure sheaf, if one mimics the classical definition of a scheme in this setting by saying a scheme should be a \emph{topos} (with an appropriate structure sheaf) which can be covered by local homeomorphisms \emph{of topoi} by objects of the form $\left(\Sh\left(A_{\acute{e}t}\right),\O_A\right),$ one arrives at a $2$-category equivalent to Deligne-Mumford stacks \footnote{with no separation conditions.}. A similar phenomena persists if one allow

A common theme in studying spaces with \emph{geometric structure} as spaces with a structure sheaf is that all the structure sheaves we use are sheaves with some sort of ``\emph{algebraic} structure''. To make this statement precise, one can speak of models for theories; quite a versatile sea of possible ``algebraic structures'' is provided by \emph{geometric theories} (and their higher categorical analogues). Roughly speaking, a geometric theory $\mathbb{T}$ is a certain first-order theory in logic (but possibly with infinitary disjunctions), and it can have models in any topos. E.g., there is a geometric theory of rings, and the category of models for this theory in the topos $\Set$ is the classical category of rings. This framework allows one to easily speak about structure sheaves with algebraic structure. For example, specifying a model for the geometric theory of rings in $\Sh\left(X\right),$ the topos of sheaves on a topological space, is the same as specifying a sheaf of rings on $X.$

Moreover, given a model $M$ of a geometric theory $\mathbb{T}$ in a topos $\cE,$ and a geometric morphism $$f:\cF \to \cE,$$ $f^*\left(M\right)$ is a model in $\cF,$ in other words the assignment to each topos the category of models for $\mathbb{T}$ is functorial. The functoriality of the assignment of models can more precisely be stated as follows:\\
\\
Given a geometric theory $\mathbb{T},$ it determines a functor
\begin{eqnarray*}
\mathfrak{Top}^{op} &\to& \mathbf{Cat}\\
\cE &\mapsto& \mathcal{M}\mbox{od}_\mathbb{T}\left(\cE\right)
\end{eqnarray*}
from the $2$-category of topoi to the $2$-category of (possibly large) categories, which sends a topos to the category of models for the theory $\mathbb{T}$ in $\cE.$ In fact, geometric theories are precisely those first order theories which enjoy this functoriality of models with respect to geometric morphisms. Moreover, for any geometric theory $\mathbb{T}$, it turns out that the functor $\mathcal{M}\mbox{od}_\mathbb{T}\left(\mspace{3mu}\cdot\mspace{3mu}\right)$ is always \emph{representable} by a topos $\mathcal{C}_\mathbb{T},$ called the classifying topos of $\mathbb{T}.$ Put more concretely, there is a unique topos $\mathcal{C}_\mathbb{T}$ such that for any other topos $\cE$, models for $\mathbb{T}$ in $\cE$ are the same as geometric morphisms $$\cE \to \mathcal{C}_\mathbb{T}.$$ In particular, for a topological space $X$, geometric morphisms $$\Sh\left(X\right) \to \mathcal{C}_\mathbb{T}$$ are the same as ``sheaves of $\mathbb{T}$-models'' on $X$. Moreover, for two geometric theories $\mathbb{S}$ and $\mathbb{T},$ morphisms of geometric theories $$\mathbb{S} \to \mathbb{T}$$ are the same as morphisms of topoi $$\mathcal{C}_\mathbb{T} \to \mathcal{C}_\mathbb{S},$$ which in turn are the same as $\mathbb{S}$-models in $\mathcal{C}_\mathbb{T}.$ Conversely, given any topos $\cF$, there is a unique geometric theory $\mathbb{T}_\cF$ with the property that maps into to $\cF$ classify $\mathbb{T}_\cF$-models. In a nutshell: there is a duality between the $2$-category of geometric theories and the $2$-category of topoi. (See e.g. \cite{elephant2}, Section D.1.)

Let us return to geometry with an important illustrative example:
\begin{ex}\label{ex:classrings}
Consider the opposite category $\left(\mathbf{fp\mbox{\textendash} rings}\right)^{op}$ of finitely presented commutative rings. In general, for any small category $\C$ with small limits and any topos $\cE$, there is a canonical equivalence of categories between the category of left exact functors $F:\C \to \cE$ and the category of geometric morphisms $\Geo\left(\cE,\Set^{\C^{op}}\right)$. Indeed, given such a left exact functor $F$, its left Kan extension along the Yoneda embedding $y:\C \to \Set^{\C^{op}},$ $$\Lan_y F:\Set^{\C^{op}} \to \cE$$ is left exact and has a right adjoint, and conversely, given a geometric morphism $f:\cE \to \Set^{\C^{op}},$ $f^* \circ y:\C \to \cE$ is left exact. Now, the category $\left(\mathbf{fp\mbox{\textendash} rings}\right)^{op}$ is the category freely generated under finite limits by its canonical commutative ring object $\mathbb{Z}[X]$, so that for any category $\sD$ with finite limits (for instance a topos), there is an equivalence of categories between left exact functors $$\left(\mathbf{fp\mbox{\textendash} rings}\right)^{op} \to \sD$$ and the category of commutative ring objects in $\sD$. Putting this all together, what this means is, the presheaf topos $\Set^{\left(\mathbf{fp\mbox{\textendash} rings}\right)}$ is the \emph{classifying topos for the theory of commutative rings}.\\
\newline
In particular, we know that for any topological space $X$, a geometric morphism $$\Sh\left(X\right) \to \Set^{\left(\mathbf{fp\mbox{\textendash} rings}\right)}$$ is the same thing as a sheaf of commutative rings on $X$. Suppose we want more, namely that we want that $X$ is \emph{locally ringed.}\\
\newline
Let us return again to the category $\left(\mathbf{fp\mbox{\textendash} rings}\right)^{op}.$ It comes equipped with the \emph{Zariski Grothendieck topology} $\mathcal{ZAR}$. A covering family of a ring $A$ in $\left(\mathbf{fp\mbox{\textendash} rings}\right)^{op}$ is determined by a collection elements $a_1,a_2,...,a_n$ of $A$ which generate the unit ideal. To such a collection, one may consider the associated localizations of rings $$A \to A\left[\frac{1}{a_i}\right].$$ The dual maps $$A\left[\frac{1}{a_i}\right] \to A$$ in $\left(\mathbf{fp\mbox{\textendash} rings}\right)^{op}$ constitute a cover.  Now, the Zariski Grothendieck topology $\mathcal{ZAR}$ induces a geometric morphism (in fact geometric embedding) $$\Sh_{\mathcal{ZAR}}\left(\left(\mathbf{fp\mbox{\textendash} rings}\right)^{op}\right) \hookrightarrow \Set^{\mathbf{fp\mbox{\textendash} rings}}$$ and composition with it produces from any geometric morphism $$\cE \to \Sh_{\mathcal{ZAR}}\left(\left(\mathbf{fp\mbox{\textendash} rings}\right)^{op}\right)$$ a commutative ring object in $\cE.$ In fact, the commutative ring objects in $\cE$ whose classifying maps $$\cE \to \Set^{\left(\mathbf{fp\mbox{\textendash} rings}\right)}$$ factor through $\Sh_{\mathcal{ZAR}}\left(\left(\mathbf{fp\mbox{\textendash} rings}\right)^{op}\right)$ are precisely the ones which are \emph{local ring objects} with respect to the internal logic of $\cE.$ In particular, local rings internal to the topos $Sh\left(X\right)$ are precisely sheaves of commutative rings with local stalks. In other words, the topos $\Sh_{\mathcal{ZAR}}\left(\left(\mathbf{fp\mbox{\textendash} rings}\right)^{op}\right)$ is the \emph{classifying topos for the theory of local rings}. (See e.g. Chapter VIII, Section 5.5 of \cite{sheaves}.)
\end{ex}

%Concretely, what this means is that  and the category of geometric morphisms $\Geo\left(\cE,\Sh_{\mathcal{ZAR}}\left(\left(\mathbf{fp\mbox{\textendash} rings}\right)^{op}\right)\right)$ is equivalent to the category of local ring objects in $\cE$. In particular, if $\cE=\Sh\left(X\right)$ for $X$ a topological space, $\Geo\left(\Sh\left(X\right),\Set^{\mathbf{fp\mbox{\textendash} rings}}\right)$ is equivalent to the category of sheaves of rings over $X,$ whereas $\Geo\left(\Sh\left(X\right),\Sh_{\mathcal{ZAR}}\left(\left(\mathbf{fp\mbox{\textendash} rings}\right)^{op}\right)\right)$ is equivalent to the category of sheaves of local rings over $X$, i.e. the category of structures of a locally ringed space on $X$.

\section{Geometries and Geometric Structures}

Consider Example \ref{ex:classrings}, and consider the lax slice $2$-category $\mathfrak{Top}^{lax}/\Set^{\left(\mathbf{fp\mbox{\textendash} rings}\right)}.$ The objects may be identified with pairs $\left(\cE,\mathcal{O}_\cE\right),$ with $\mathcal{O}_\cE$ a commutative ring object in $\cE.$ An arrow $$\left(\cF,\mathcal{O}_\cF\right) \to \left(\cE,\mathcal{O}_\cE\right),$$ is a pair $\left(f,\varphi\right)$ with $$f:\cF \to \cE$$ a geometric morphism and $$\varphi:f^*\left(\mathcal{O}_\cE\right) \to \mathcal{O}_\cF$$ a homomorphism of ring objects. One easily sees that the full subcategory thereof consisting of geometric morphisms with domain of the form $\Sh\left(X\right)$ for $X$ a space is equivalent to the category of ringed-spaces\footnote{when restricting to sober topological spaces.}. However, the full subcategory of the lax slice $2$-category $\mathfrak{Top}^{lax}/\Sh_{\mathcal{ZAR}}\left(\left(\mathbf{fp\mbox{\textendash} rings}\right)^{op}\right)$ consisting of geometric morphisms with domain of the form $\Sh\left(X\right)$ is \emph{not} equivalent to the category of locally-ringed spaces; the morphisms need not induce a map of local rings at the level of stalks. In order to remedy this situation, one must turn to a certain factorization system on the category $\mathbf{CRings}$ of commutative rings.

\begin{dfn}
A map of commutative rings $$\varphi:A \to B$$ is a \textbf{localization} if there exists a set $S \subset A$ such that $\varphi$ is isomorphic to the morphism $$A \to A\left[S^{-1}\right],$$ where $A\left[S^{-1}\right]$ is the ring $A$ localized at the set $S.$
\end{dfn}

\begin{dfn}
A map of commutative rings $$\varphi:A \to B$$ is \textbf{conservative} if for all $a \in A,$ $a$ is invertible if and only if $\varphi\left(a\right)$ is.
\end{dfn}

\begin{rmk} \label{rmk:locmap}
A ring homomorphism $$\varphi:A \to B$$ between local rings is conservative if and only if it is a map of local rings (i.e. respects the unique maximal ideals).
\end{rmk}

The localizations and conservative morphisms pair together to form the left and right set of a factorization system on $\mathbf{CRings}$ respectively. Moreover, these classes of morphisms readily generalize to the category $\mathcal{M}_\mathbf{CRings}\left(\cE\right)$ of internal commutative rings in any topos $\cE,$ and again form a factorization system. In fact, this factorization system is even functorial in the sense that if $f:\cF \to \cE$ is any geometric morphism, then the induced functor $$\mathcal{M}\mbox{od}_\mathbf{CRings}\left(\cE\right) \to \mathcal{M}\mbox{od}_\mathbf{CRings}\left(\cF\right)$$ sends localizations to localizations, and conservative maps to conservative maps. With this in mind, by Remark \ref{rmk:locmap}, ones sees that for two locally ringed spaces $\left(X,\mathcal{O}_X\right)$ and $\left(Y,\mathcal{O}_Y\right),$ a map of ringed spaces $\left(f,\varphi\right)$ is a map of locally ringed spaces if and only if $\varphi$ is conservative.

This motivates the following definition in \cite{dag}:
\begin{dfn}
(\cite{dag} Definition 1.4.3) Let $\cB$ be an $\i$-topos. A \textbf{geometric structure} $\G$ on $\cB$ is an assignment to every $\i$-topos $\cE$ a factorization system $$\G\left(\cE\right)=( \G_L^{\cE}, \G_{R}^{\cE}),$$ on $\Geo(\cE,\cB)$, where $\G_L^{\cE}$ is the left set of the factorization system (in the sense of \cite{htt}, Section 5.2.8), and $\G_R^{\cE}$ the right set, which depends functorially on $\cE$ in the following sense:\\
\\
For every geometric morphism $f: \cE \rightarrow \cF$, the induced functor
$$\Geo(\cF,\cB) \to \Geo(\cE,\cB)$$ (given by composition) carries $\G_{L}^{\cF}$ to $\G_{L}^{\cE}$ and $\G_{R}^{\cF}$ to $\G_{R}^{\cE}$.
\end{dfn}

\begin{rmk}
To fully see the analogy, note that $\Geo(\cE,\cB)$ should be thought of as the $\i$-category of $\mathbb{T}_\cB$-models in $\cE,$ where $\mathbb{T}_\cB$ is the geometric $\i$-theory classified by $\cB.$ At this moment however, the concept of a geometric $\i$-theory has not been truly developed, so at present, this is only a philosophical remark.
\end{rmk}

\begin{dfn}
Let $\cB$ be an $\i$-topos equipped with a geometric structure $\G$. A \textbf{$\G$-structured $\i$-topos} is a geometric morphism $$\O_\cE:\cE \to \cB.$$ We will sometimes call this a $\G$-structure on $\cE$. The $\icat$ of $\G$-structured $\i$-topoi is the subcategory of $\T^{\mathit{lax}}/\cB,$ (as defined in Section \ref{sec:lax}) which has the same objects as the whole $\icat$, but whose morphisms $$\left(\varphi,\alpha\right):e \to f$$ must have the natural transformation $$\alpha:\varphi^*f^* \Rightarrow e^*,$$ which is an arrow in $\Geo\left(\cE,\cB\right),$ be in $\G^{\cE}_R$. Denote this $\icat$ by $\Str\left(\G\right).$ %Denote by $\Str_n\left(\G\right)$ the full subcategory spanned by those objects whose underlying $\infty$-topoi are $n$-localic.
\end{dfn}

If $\left(\cE,\O_E\right)$ is an object of $\Str\left(\G\right),$ the morphism $$\O_\cE:\cE \to \cB$$ should be thought of as a structure sheaf on the $\i$-topos $\cE,$ with values in models for the geometric $\i$-theory classified by $\cB.$ When the geometric structure comes from a \emph{geometry} (as will be explained in the next section) this philosophical remark can be made precise.

\subsection{Geometries}\label{sec:geometry}
A wealth of examples of geometric structures arise from \emph{geometries}, as defined in \cite{dag}. We will only provide the basic idea here.

\begin{dfn}\label{dfn:essalg}
An \textbf{essentially algebraic $\i$-theory} is a small $\icat$ $\mathbb{T}$ with finite limits. A model for such a theory in an $\icat$ $\C$ with finite limits, is a finite limit preserving functor $$M:\mathbb{T} \to \C.$$ Such a theory $\mathbb{T}$ will be called \textbf{idempotent complete} if $\mathbb{T}$ is an idempotent complete $\icat$.
\end{dfn}

\begin{rmk}
The terminology ``essentially algebraic $\i$-theory'' is our own.
\end{rmk}

\begin{rmk}
The requirement that $\mathbb{T}$ be idempotent complete is superfluous if $\mathbb{T}$ is an $n$-category for any finite $n$ (Lemma 1.5.12 of \cite{dag}).
\end{rmk}

\begin{rmk}\label{rmk:classifyingT}
It follows immediately from Proposition 6.1.5.2 of \cite{htt} that the $\i$-category of models for $\mathbb{T}$ in an $\i$-topos $\cE,$ $\mathcal{M}\mbox{od}_\mathbb{T}\left(\cE\right)$ is canonically equivalent to the $\i$-category of geometric morphisms $$\Geo\left(\cE,\Pshi\left(\mathbb{T}\right)\right).$$
\end{rmk}

\begin{ex}
The category $\left(\mathbf{fp\mbox{\textendash} rings}\right)^{op},$ from Example \ref{ex:classrings}, is such an idempotent complete essentially algebraic ($\i$-)theory.
\end{ex}

A \textbf{geometry} $\g$ consists two sets of data.
\begin{itemize}
 \item[i)] an idempotent complete essentially algebraic $\i$-theory $\mathbb{T},$ and
\item[ii)] an \textbf{admissibility structure} on $\mathbb{T}.$
\end{itemize}

We refer the reader to \cite{dag} for the precise definition of an admissibility structure. For our purposes, it will suffice to remark that it consists of a Grothendieck pretopology $\mathcal{J}$ generated by a class of morphisms, called \emph{admissible morphisms}, which are closed under retracts and pullbacks, satisfy a left-cancellation property, and have the property that pullbacks along them exist. The role of the pretopology $\mathcal{J}$ is that, just as the $\i$-topos $\Pshi\left(\mathbb{T}\right)$ classifies models for $\mathbb{T}$ (Remark \ref{rmk:classifyingT}) the $\i$-topos $\Sh_\i\left(\mathbb{T},\mathcal{J}\right)$ classifies \emph{local} $\mathbb{T}$-models; its instructive to think of these as ``generalized local rings.'' The axioms which the admissible morphisms must satisfy are designed in such a way that they canonically give rise to a functorial factorization system on each $\icat$ of local $\mathbb{T}$-models in any $\i$-topos, and hence they induce a geometric structure $\G\left(\g\right)$ on the $\i$-topos $\Sh_\i\left(\mathbb{T},\mathcal{J}\right).$ For each $\i$-topos $\cE,$ it is instructive to view $\G\left(\g\right)_L^{\cE}$ as ``generalized localizations'' and $\G\left(\g\right)_{R}^{\cE}$ as ``generalized conservative maps'' between local $\mathbb{T}$-models in $\cE$.

%guaranteeing the existence of a factorization system on each $\icat$ of $\g$-models.
%Any \emph{geometry} $\g$ in the sense of \cite{dag} naturally gives rise to a geometric structure $\G\left(\g\right)$ on $\Sh_\i\left(\g\right)$ in such a way that it induces an equivalence of infinity categories between $\G$-structured $\i$-topoi and  $\g$-structured $\i$-topoi. See \cite{dag}, Section 1.4.
The following table illustrates the Zariski and \'Etale geometry for ordinary commutative rings introduced by Lurie:
\begin{center}
\resizebox{6in}{!}{\begin{tabular}{llllll}
Geometry & algebras & local algebras & admissible morphisms  &``Localizations'' & ``Conservative maps''\\
\hline \\
Zariski & commutative rings & local rings & maps of the form $A \to A\left[a^{-1}\right]$& localizations & conservative maps\\
\'Etale & commutative rings & strict Henselian local rings & \'etale maps &ind-\'etale maps & Henselian maps\\
\end{tabular}}
\end{center}

There are analogues of the Zariski and \'Etale geometries for simplicial commutative rings and $\Ei$-ring spectra. We refer the reader to \cite{dag} and \cite{spectral} respectively.

\begin{ex}
The Grothendieck site associated to the Zariski geometry is $\left(\mathbf{fp\mbox{\textendash} rings}\right)^{op}$ equipped with the Zariski Grothendieck topology $\mathcal{ZAR}$ as in Example \ref{ex:classrings}. In particular, the classifying topos for the geometry is the classifying topos for local rings.
\end{ex}

%There should also be Zariski and \'Etale geometries for supercommutative rings, $C^\i$-rings etc.

Notice that if $\left(\cE,\O_\cE\right)$ is a $\G\left(\g\right)$-structured $\i$-topos, since $\mathbb{T}$ is an essentially algebraic $\i$-theory, $\O_\cE$ may be identified with a finite limit preserving functor $$\O_\cE:\mathbb{T} \to \cE.$$ Composing with the essentially unique map to the terminal $\i$-topos, one gets a finite limit preserving functor $$\mathbb{T} \stackrel{\O_\cE}{\longlongrightarrow} \cE \to \iGpd,$$ i.e. a model for $\mathbb{T}$ in $\iGpd.$ Since $\mathbb{T}$ has finite limits, the $\i$-category $\mathcal{M}\mbox{od}_\mathbb{T}\left(\iGpd\right)$ can be canonically identified with $\mathbf{Ind}\left(\mathbb{T}^{op}\right).$ Since morphisms of $\G\left(\g\right)$-structured $\i$-topoi induce morphisms of structure sheaves in the opposite direction, one gets a global section functor $$\Gamma_\g:\Str_{\G\left(\g\right)} \to \mathbf{Ind}\left(\mathbb{T}^{op}\right)^{op}=:\mathbf{Pro}\left(\g\right).$$ It is shown in \cite{dag} that for any geometry, the global sections functor $\Gamma_\g$ has a full and faithful left adjoint $$\mathbf{Spec}_\g:\mathbf{Pro}\left(\g\right) \to \Str_{\G\left(\g\right)}$$- a spectrum functor of $\g.$ For example, when $\g$ is the Zariski geometry, $\mathbf{Pro}\left(\g\right)$ is the opposite category of commutative rings and $\Specg$ is the classical spectrum functor, sending each ring $A$ to its associated affine scheme modeled as a locally ringed space (viewed as a locally ringed $\i$-topos).

\begin{dfn}\cite{dag}\label{dfn:affine}
An \textbf{affine $\g$-scheme} is a $\G\left(\g\right)$-structured $\i$-topos in the essential image of $\mathbf{Spec}_\g$. Denote the corresponding $\i$-category by $\Affg.$
\end{dfn}

In op. cit., Lurie goes on to develop the theory of general $\g$-schemes, by using affine $\g$-schemes as local models and gluing them together along local homeomorphisms of $\i$-topoi. When $\g$ is the \'etale geometry, this produces a theory of higher Deligne-Mumford stacks. In this manuscript, we generalize this theory, to allow for the starting local models to be of any form. This, for example, allows one to start with smooth manifolds as local models, and get a theory of higher smooth orbifolds (see Section \ref{sec:orbifolds}).

\section{\'Etale Morphisms of Structured $\i$-Topoi}\label{sec:etstr}
\begin{dfn}
Fix a geometric structure $\G$ on an $\i$-topos $\cB.$ Denote by $\Str^{\et}\left(\G\right)$ the subcategory of $\T/\cB,$ on the same objects, whose morphisms are \'etale geometric morphisms. Similarly, for any subcategory $\sL$ of $\Str\left(\G\right),$ denote by $\sL^{\et}$ the full subcategory of $\Str^{\et}\left(\G\right)$ whose objects are in $\sL$.
\end{dfn}

\begin{prop}\label{prop:etcolims}
The $\icat$ $\Str^{\et}\left(\G\right)$ is cocomplete and the inclusion $$\Str^{\et}\left(\G\right) \to \Str\left(\G\right)$$ preserves colimits. Moreover, a cocone $$\xymatrix{K^{\triangleright} \ar[r]^-{\mu} & \Str^{\et}\left(\G\right) \\ K \ar@{^{(}->}[u] \ar[ru]_-{F} & &}$$ is colimiting if and only if the induced cocone $$K^{\triangleright} \stackrel{\mu}{\longrightarrow} \Str^{\et}\left(\G\right) \to \T^{\et}$$ is,
where $\T^{\et}$ is as in Definition \ref{dfn:metale}.
\end{prop}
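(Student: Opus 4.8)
The plan is to exhibit the forgetful functor $U:\Str^{\et}\left(\G\right) \to \T^{\et}$ as a right fibration and to extract all three assertions from the interaction between right fibrations and colimits. First I would note that $\Str^{\et}\left(\G\right)$ is precisely the pullback $\left(\T/\cB\right)\times_{\T}\T^{\et}$ over $\T^{\et}$: its objects are all geometric morphisms $\O_\cE:\cE\to\cB$, and its morphisms are exactly those of the slice $\T/\cB$ whose underlying geometric morphism is \'etale. Since the slice projection $\T/\cB \to \T$ is the right fibration represented by $\cB$, i.e. classified by $\Hom_\T\left(\fdot,\cB\right)$ (which by Remark \ref{rmk:loclocsmall} is the maximal sub-$\i$-groupoid of $\Geo\left(\fdot,\cB\right)$), its pullback $U$ is again a right fibration, classified by the restriction of $\Hom_\T\left(\fdot,\cB\right)$ to $\left(\T^{\et}\right)^{op}$. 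In particular the fibre of $U$ over $\cE$ is the $\i$-groupoid of structure sheaves on $\cE$, and every morphism of $\Str^{\et}\left(\G\right)$ is $U$-Cartesian.

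The key input is that this classifying presheaf is \emph{continuous} on $\T^{\et}$. Indeed $\Hom_\T\left(\fdot,\cB\right)$ is representable, hence sends colimits in $\T$ to limits in $\iGpd$; and by \cite{htt} Theorem 6.3.5.13 the $\icat$ $\T^{\et}$ admits small colimits and the inclusion $\T^{\et}\to\T$ preserves them. Composing, $\Hom_\T\left(\fdot,\cB\right)$ carries each colimit cocone of $\T^{\et}$ to a limit cone in $\iGpd$. Now for any right fibration $p:X\to S$ classified by a presheaf $F$ that is continuous along a colimit cocone $p\bar\sigma$ (apex $s_\infty\simeq\colim_k s_k$, marking $\xi_\infty$), a direct computation of mapping spaces
$$\Hom_X\!\left(x_\infty,y\right)\simeq \Hom_S\!\left(s_\infty,p\left(y\right)\right)\times_{F\left(s_\infty\right)}\left\{\xi_\infty\right\}\simeq \lim_k \Hom_X\!\left(x_k,y\right)$$
(using that $\Hom_S\left(s_\infty,\fdot\right)$ and $F\left(s_\infty\right)$ are the respective limits, and that finite limits commute with $\lim_k$) shows that $\bar\sigma$ is colimiting in $X$ whenever $p\bar\sigma$ is colimiting in $S$; moreover the compatible family underlying $\sigma$ determines a point of $F\left(s_\infty\right)\simeq\lim_k F\left(s_k\right)$, so a lift of any colimit cocone of $S$ exists. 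Applying this to $U$ gives simultaneously that $\Str^{\et}\left(\G\right)$ is cocomplete and that a cocone $\mu$ is colimiting if and only if its image in $\T^{\et}$ is (the ``only if'' following from uniqueness of colimits). This settles the first and third assertions.

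For the inclusion $\iota:\Str^{\et}\left(\G\right)\hookrightarrow\Str\left(\G\right)$, I would compare $U$ with the Cartesian fibration $q:\Str\left(\G\right)\to\T$ obtained by restricting the lax-slice fibration $\T^{\mathit{lax}}/\cB\to\T$ of Section \ref{sec:lax} to its $\G_R$-morphisms; note $\iota$ sends every morphism to a $q$-Cartesian one, since its structural $2$-cell is an equivalence. For a colimit cocone with apex $\left(\cE,\O_\cE\right)$, $\cE\simeq\colim_k\cE_k$, and a test object $\left(\cD,\O_\cD\right)$, the mapping space $\Hom_{\Str\left(\G\right)}\!\left(\left(\cE,\O_\cE\right),\left(\cD,\O_\cD\right)\right)$ fibres over $\Hom_\T\left(\cE,\cD\right)$ with fibre over $g$ the space of $\G_R$-morphisms $\O_\cD\circ g\to\O_\cE$ in $\Geo\left(\cE,\cB\right)$. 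The base is a limit by the core-continuity above, so the comparison reduces to continuity of $\Geo\left(\fdot,\cB\right)$ now valued in $\iCat$, i.e. $\Geo\left(\cE,\cB\right)\simeq\lim_k\Geo\left(\cE_k,\cB\right)$ compatibly with the factorization systems $\G\left(\cE\right)$. Continuity at the level of $\i$-categories follows because the underlying $\i$-category of a colimit in $\T^{\et}$ is computed as the corresponding limit along inverse-image functors (see \cite{htt}, \S6.3), the left-exact/right-adjointable condition being detected componentwise. Finally, the functoriality axiom of the geometric structure forces $\G_L^\cE$ and $\G_R^\cE$ into the componentwise left and right classes of $\lim_k\Geo\left(\cE_k,\cB\right)$; since a factorization system is determined by either class through orthogonality, these inclusions are equalities, so $\G_R$-morphisms over $\cE$ are exactly the componentwise ones. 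Assembling these identifications yields $\Hom_{\Str\left(\G\right)}\!\left(\left(\cE,\O_\cE\right),\left(\cD,\O_\cD\right)\right)\simeq\lim_k\Hom_{\Str\left(\G\right)}\!\left(\left(\cE_k,\O_{\cE_k}\right),\left(\cD,\O_\cD\right)\right)$, proving $\iota$ preserves colimits.

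The main obstacle I anticipate is exactly this last step: upgrading the easy, representable core-continuity of $\Hom_\T\left(\fdot,\cB\right)$ to genuine continuity of $\Geo\left(\fdot,\cB\right)$ valued in $\iCat$, and verifying that the factorization-system data descends to the limit. By contrast, the right-fibration computation underlying cocompleteness and the detection statement is formal once Theorem 6.3.5.13 of \cite{htt} is in hand.
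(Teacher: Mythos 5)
Your proof is correct, but be aware that the paper offers no written argument to compare it against line by line: its proof of this proposition is a one-line citation of Proposition 2.3.5 of \cite{dag}, which treats the case where $\G$ arises from a geometry, together with the assertion that the proof carries over verbatim to an arbitrary geometric structure. Your route is self-contained and organized differently. For cocompleteness and the detection clause you observe that $\Str^{\et}\left(\G\right)\simeq\left(\T/\cB\right)\times_{\T}\T^{\et}$ is a right fibration over $\T^{\et}$ classified by the restriction of the representable presheaf $\Hom_{\T}\left(\fdot,\cB\right)$, whose continuity follows from representability together with Theorem 6.3.5.13 of \cite{htt} (the same theorem that powers the cited proof); the mapping-space formula for right fibrations then makes both clauses essentially formal. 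The genuine divergence is in the preservation clause. For a geometry $\g$, structures on $\cE$ are left exact admissible-cover-preserving functors $\g\to\cE$ and local morphisms are cut out by a pointwise pullback condition, so once one knows that a colimit in $\T$ has underlying $\icat$ the limit of the $\cE_k$ along inverse image functors, compatibility of $\Geo\left(\cE,\cB\right)\simeq\lim_k\Geo\left(\cE_k,\cB\right)$ with the factorization systems is nearly definitional; for an abstract geometric structure the right class is only axiomatically given, and your orthogonality argument --- functoriality forces $\G_L^{\cE}$ and $\G_R^{\cE}$ into the componentwise classes, componentwise filler spaces are contractible so the componentwise pair is orthogonal in the limit, and a factorization system refining an orthogonal pair must equal it --- is exactly the content hidden behind the paper's ``verbatim,'' and is worth recording. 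Two compressions would each need a line in a full write-up: componentwise detection of ``admits a right adjoint'' (colimits in the limit $\icat$ are computed and detected componentwise and everything in sight is presentable, so the adjoint functor theorem applies), and the existence half of your lifting lemma, since a right fibration does not lift cocones along $K\hookrightarrow K^{\triangleright}$ directly; one must first manufacture the apex from $F\left(s_\infty\right)\simeq\lim_k F\left(s_k\right)$ via the section determined by the diagram, and then check that the space of cocone fillers over the given diagram is a path space in $F\left(s_\infty\right)$ that is nonempty precisely by the construction of the apex --- your sketch has the order right but elides this last check.
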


\begin{proof}
This is proven in \cite{dag} Proposition 2.3.5 for the case when $\G$ comes from a geometry. However, the same proof caries over verbatim for any geometric structure.
\end{proof}

\begin{rmk}
The morphisms from $e:\cE \to \cB$ to $f:\cF \to \cB$ in $\T/\cB$ consist of geometric morphisms $\varphi:\cE \to \cF$ together with a natural equivalence $$\alpha:\varphi^*f^* \Rightarrow e^*.$$ Since every equivalence is in $\G_{R},$ $\Str^{\et}\left(\G\right)$ may be seen as a (faithful but not full) subcategory of $\Str\left(\G\right).$
\end{rmk}

By Remark 2.3.4 of \cite{dag}, we have the following proposition:

\begin{prop}
Let $\left(\cE,\O_\cE\right)$ and $\left(\cF,\O_\cF\right)$ be $\G$-structured $\i$-topoi and let $E \in \cE_0$ be an object. Then the following is a pullback diagram of $\i$-groupoids:
$$\xymatrix{\Hom_{\Str\left(\G\right)}\left(\left(\cE/E,\O_\cE|_{E}\right),\left(\cF,\O_\cF\right)\right) \ar[r] \ar[d] & \Hom_{\Str\left(\G\right)}\left(\left(\cE,\O_\cE\right),\left(\cF,\O_\cF\right)\right) \ar[d]\\
\Hom_{\T}\left(\cE/E,\cF\right) \ar[r] & \Hom_{\T}\left(\cE,\cF\right),}$$
where $\O_\cE|_{E}$ denotes the composite $$\cE/E \longrightarrow \cE \stackrel{\O_\cE}{\longlongrightarrow} \cB.$$
\end{prop}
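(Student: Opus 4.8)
The plan is to identify each row of the square with mapping spaces of the Cartesian fibration $p_\cB:\T^{\mathit{lax}}/\cB \to \T$ from Section~\ref{sec:lax}, reducing the assertion to a fibrewise comparison of structure-sheaf $2$-cells. Since $\Str(\G)$ is the subcategory of $\T^{\mathit{lax}}/\cB$ on all objects and $p_\cB$ is classified by $\cE \mapsto \Geo(\cE,\cB)^{op}$, for any $(\cE,\O_\cE)$ and $(\cF,\O_\cF)$ the forgetful map $\Hom_{\Str(\G)}((\cE,\O_\cE),(\cF,\O_\cF)) \to \Hom_\T(\cE,\cF)$ has, over a geometric morphism $\psi:\cE \to \cF$, fibre the space of $2$-cells $\psi^*\O_\cF^* \Rightarrow \O_\cE^*$ lying in $\G_R^{\cE}$. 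The two vertical legs of the square are precisely these projections, so it suffices to show that for each $\varphi:\cE/E \to \cF$ the fibre of the left leg over $\varphi$ is carried equivalently onto the fibre of the right leg over the geometric morphism underlying the image of $\varphi$.

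First I would record that the comparison is strict: by construction $\O_\cE|_E$ is the composite $\cE/E \xrightarrow{\pi} \cE \xrightarrow{\O_\cE} \cB$, so $(\O_\cE|_E)^* = \pi^*\O_\cE^*$ on the nose, and the canonical arrow $(\pi,\mathrm{id}):(\cE/E,\O_\cE|_E) \to (\cE,\O_\cE)$ carries the identity $2$-cell. Its fibre component being an equivalence, this arrow is $p_\cB$-Cartesian, which is the structural reason a pullback should appear. Concretely, the fibre of the left leg over $\varphi$ is the space of $2$-cells $\varphi^*\O_\cF^* \Rightarrow \pi^*\O_\cE^*$, and the adjunction $\pi_! \dashv \pi^*$ attached to $\pi$ (the extra left adjoint to $\pi^*$ characteristic of étale morphisms, together with the universal property of the slice underlying Proposition~\ref{prop:6358}) transposes these to comparison data with target $\O_\cE^*$ on the $\cE$-side. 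This transposition both identifies the two fibres and pins down the bottom map $\Hom_\T(\cE/E,\cF) \to \Hom_\T(\cE,\cF)$.

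The main obstacle is not the bare bijection of $2$-cells but its compatibility with the factorization system: one must check that the transposition carries $\G_R^{\cE/E}$ bijectively onto $\G_R^{\cE}$. This is where the axioms of a geometric structure enter. Functoriality of $\G$ along $\pi$ sends $\G_R^{\cE}$ into $\G_R^{\cE/E}$, and since the comparison $2$-cell of $(\pi,\mathrm{id})$ is an equivalence it automatically lies in $\G_R$, which is what keeps the inverse correspondence inside the right class; some care is required because $\pi_!$ is not left exact, so the transpose must be recognised as genuine structure-sheaf comparison data rather than an arbitrary natural transformation. This is exactly the content of Remark~2.3.4 of \cite{dag} when $\G$ arises from a geometry, and, just as in Proposition~\ref{prop:etcolims}, the argument is formal enough to go through verbatim for an arbitrary geometric structure; I would conclude by invoking it in that generality.
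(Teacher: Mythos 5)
The paper offers no proof of its own here: the proposition is quoted directly from Remark~2.3.4 of \cite{dag}, and the displayed square is a verbatim transcription of Lurie's statement, which lives in the $\icat$ whose morphisms are \emph{inverse image} functors (the opposite of this paper's $\T$). Read in the present paper's conventions, the hom-arguments must therefore be reversed: the actual content is that for every $\left(\cF,\O_\cF\right),$ composition with the canonical \'etale morphism $\left(\cE/E,\O_\cE|_E\right) \to \left(\cE,\O_\cE\right)$ induces a pullback square
$$\xymatrix{\Hom_{\Str\left(\G\right)}\left(\left(\cF,\O_\cF\right),\left(\cE/E,\O_\cE|_{E}\right)\right) \ar[r] \ar[d] & \Hom_{\Str\left(\G\right)}\left(\left(\cF,\O_\cF\right),\left(\cE,\O_\cE\right)\right) \ar[d]\\
\Hom_{\T}\left(\cF,\cE/E\right) \ar[r] & \Hom_{\T}\left(\cF,\cE\right),}$$
and in this form the proof is a tautology: writing $\pi:\cE/E \to \cE$ for the projection, one has $\left(\O_\cE|_E\right)^* \simeq \pi^* \circ \O_\cE^*,$ so for $g:\cF \to \cE/E$ a $2$-cell $g^*\left(\O_\cE|_E\right)^* \Rightarrow \O_\cF^*$ is \emph{literally the same datum} as a $2$-cell $\left(\pi g\right)^*\O_\cE^* \Rightarrow \O_\cF^*,$ and the right-class condition is membership in the single class $\G^{\cF}_R$ on both sides --- no transport of the factorization system along $\pi$ ever occurs, so the compatibility issue you spend your last paragraph on is a red herring.

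Your proof instead takes the printed variance at face value --- homs \emph{out} of the slice --- and this is where it collapses. With the slice in the source there is no map $\Hom_\T\left(\cE/E,\cF\right) \to \Hom_\T\left(\cE,\cF\right)$ to serve as the bottom edge, and the transposition you propose does not manufacture one: applying $\pi_! \dashv \pi^*$ to $\alpha:\varphi^*\O_\cF^* \Rightarrow \pi^*\O_\cE^*$ yields $\pi_!\varphi^*\O_\cF^* \Rightarrow \O_\cE^*,$ and since $\pi_!$ is not left exact, $\pi_!\varphi^*$ is not the inverse image of \emph{any} geometric morphism $\cE \to \cF$; the caveat you flag (``the transpose must be recognised as genuine structure-sheaf comparison data'') is not a verification to be supplied but precisely the point at which the construction fails. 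Worse, the statement in this variance is false outright: take $E$ to be the initial object, so that $\cE/E$ is the initial $\i$-topos. Then both left-hand spaces are contractible, whereas the fibers of the right-hand vertical map are the spaces of $\G_R$-morphisms $f^*\O_\cF^* \Rightarrow \O_\cE^*,$ which are in general non-contractible and may be empty; hence no choice of horizontal maps whatsoever makes the square a pullback. Your structural instinct was nonetheless correct, just applied with the wrong handedness: $\left(\pi,\mathrm{id}\right)$ is indeed a $p_\cB$-Cartesian edge of $\T^{\mathit{lax}}/\cB,$ and a Cartesian edge $x' \to x$ induces pullback squares of mapping spaces of the form $\Hom\left(y,x'\right) \to \Hom\left(y,x\right)$ over the base --- i.e.\ it controls maps \emph{into} the slice, which is exactly the corrected statement above; it says nothing about maps out of $x'.$
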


By Remark 2.3.20 of \cite{dag}, the following corollary follows:

\begin{cor}\label{cor:2.3.20}
Let $\left(\cE,\O_\cE\right)$ and $\left(\cF,\O_\cF\right)$ be $\G$-structured $\i$-topoi and let $E \in \cE_0$ be an object, and $$\left(f,\varphi\right):\left(\cF,\O_\cF\right) \to \left(\cE,\O_\cE\right)$$ a morphism in $\Str\left(\G\right)$ Then the following is a pullback diagram in $\Str\left(\G\right)$:
$$\xymatrix{\left(\cF/f^*\left(E\right), \O_{\cF}|_{f^*\left(E\right)}\right) \ar[d] \ar[r] & \left(\cE/E, \O_{\cE}|_{E}\right) \ar[d]\\
\left(\cF,\O_\cF\right) \ar[r] & \left(\cE,\O_\cE\right).}$$
\end{cor}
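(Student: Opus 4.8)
The plan is to verify the universal property of the pullback directly, by testing against an arbitrary object. Write $\mathbf{P}:=\left(\cF/f^*(E),\O_\cF|_{f^*(E)}\right)$ for the top-left corner. A commutative square is a pullback exactly when, for every $\G$-structured $\i$-topos $(\cD,\O_\cD)$, the induced map from $\Hom_{\Str(\G)}\left((\cD,\O_\cD),\mathbf{P}\right)$ to the pullback of the other three mapping spaces is an equivalence of $\i$-groupoids. By the definition of $\Str(\G)$ as a subcategory of the lax slice $\T^{\mathit{lax}}/\cB$, each such mapping space projects to the underlying space of geometric morphisms $\Hom_\T(\cD,-)$ by forgetting the structure comparison $2$-cell, compatibly with the four edges of the square. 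First I would invoke Proposition \ref{prop:6358}, which asserts that the underlying square of $\i$-topoi (with top-left corner $\cF/f^*(E)$) is a pullback in $\T$; hence the square of underlying mapping spaces $\Hom_\T(\cD,-)$ is already a pullback of $\i$-groupoids. Since pullbacks of $\i$-groupoids can be checked fiberwise over a base pullback square, it then remains only to compare the fibers of the structured mapping spaces over a fixed underlying point.

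Next I would fix a geometric morphism $\psi:\cD \to \cF/f^*(E)$ and record its three induced components under the identification of Proposition \ref{prop:6358}: $\psi_2 := \pi_{f^*(E)}\circ\psi:\cD\to\cF$, $\psi_1 := \bar{f}\circ\psi:\cD\to\cE/E$ (where $\bar{f}:\cF/f^*(E)\to\cE/E$ is the top edge of the topos pullback), and $\psi_0 := \pi_E\circ\psi_1 = f\circ\psi_2:\cD\to\cE$, the last equality holding because the underlying square commutes. Because a morphism in $\Str(\G)$ is a pair $(\varphi,\alpha)$ with $\alpha$ in the right class $\G^{\cD}_R$, the fiber of each structured mapping space over the relevant component is precisely the space of comparison maps into $\O_\cD$ lying in $\G^{\cD}_R$. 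Using $\O_\cE|_E = \pi_E^*\O_\cE$ and $\O_\cF|_{f^*(E)} = \pi_{f^*(E)}^*\O_\cF$ together with $\psi^*\pi_{f^*(E)}^* \simeq \psi_2^*$, the fiber over $\psi$ of the top-left mapping space is identified with the space of maps $\alpha:\psi_2^*\O_\cF \to \O_\cD$ in $\G^{\cD}_R$.

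Finally I would compute the fiber of the candidate pullback. The two projections of a top-left test map carry comparison maps $\alpha_1 = \alpha\circ\psi_2^*\varphi$ (since the top edge has comparison $\pi_{f^*(E)}^*\varphi$ and $\psi^*\pi_{f^*(E)}^*\varphi \simeq \psi_2^*\varphi$) and $\alpha_2 = \alpha$ (since the left edge is \'etale, with identity comparison); both then map to the same comparison over $(\cE,\O_\cE)$, so the agreement datum defining the pullback fiber is canonically satisfied. Thus the pullback fiber is the space of pairs $(\alpha_1,\alpha_2)\in\G^{\cD}_R$ with $\alpha_1\simeq\alpha_2\circ\psi_2^*\varphi$, in which $\alpha_1$ is determined by $\alpha_2$, and the comparison map $\alpha\mapsto(\alpha\circ\psi_2^*\varphi,\alpha)$ is an equivalence onto it. The main obstacle is the coherent bookkeeping of these structure $2$-cells — in particular checking that the ``agreement over $(\cE,\O_\cE)$'' datum is canonically contractible and that the forced component $\alpha_2\circ\psi_2^*\varphi$ still lies in $\G^{\cD}_R$. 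The latter is exactly where the axioms of a geometric structure enter, namely that $\G_R$ is stable under the pullback functor $\psi_2^*$ (functoriality of the factorization system) and closed under composition.
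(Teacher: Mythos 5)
Your proof is correct and takes essentially the same route as the paper: the paper deduces the corollary by citing Remark 2.3.20 of \cite{dag}, whose argument is precisely your combination of Proposition \ref{prop:6358} (the underlying square is a pullback in $\T$, so the underlying mapping spaces form a pullback) with the fiberwise identification of structure-sheaf $2$-cells, which is the content of the Proposition immediately preceding the corollary (Remark 2.3.4 of \cite{dag}). Your explicit fiber computation --- including the observations that $\psi_2^*$ carries $\G_R$ to $\G_R$ by functoriality of the geometric structure and that the right class of a factorization system is closed under composition --- simply spells out what that citation encapsulates.
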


We now show that the \'etale topology on $\T$ naturally extends to $\Str\left(\G\right).$

\begin{dfn}
We say that a family of morphisms $$\left(\left(f_\alpha,\varphi_\alpha\right):\left(E_\alpha,\O_{E_\alpha}\right) \to \left(E,\O_\cE\right)\right)$$ in $\Str\left(\G\right)$ is an \textbf{\'etale covering family} if $\left(f_\alpha:\cE_\alpha \to \cE\right)$ is one in $\T.$ Similarly for $n$-\'etale covering families.
\end{dfn}

\begin{prop}
The assignment of each $\g$-structured $\i$-topos $\left(\cE,\O_\cE\right)$ its \'etale covering families define a Grothendieck pretopology on $\Str\left(\G\right).$ Similarly for $n$-\'etale covering families.
\end{prop}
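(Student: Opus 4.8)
The plan is to verify the three axioms of a Grothendieck pretopology directly, reducing each to the corresponding property already established for the \'etale topology on $\T$ in Proposition \ref{prop:etexists}. The guiding principle is that a family in $\Str(\G)$ is declared to be an \'etale covering family precisely when its underlying family of geometric morphisms is one in $\T$; consequently, each axiom whose conclusion is again an \'etale covering condition follows once we check that the underlying family in $\T$ satisfies the corresponding axiom of Proposition \ref{prop:etexists}, provided the requisite fibered products exist in $\Str(\G)$.

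For axiom (i), if $(f,\varphi):(\cF,\O_\cF) \to (\cE,\O_\cE)$ is an equivalence in $\Str(\G)$, then its underlying geometric morphism $f$ is an equivalence in $\T$, hence a singleton \'etale covering family there, so $(f,\varphi)$ is an \'etale covering family in $\Str(\G)$. For axiom (ii), I would start with a morphism $(g,\psi):(\cF,\O_\cF) \to (\cE,\O_\cE)$ and an \'etale covering family $((f_\alpha,\varphi_\alpha):(\cE_\alpha,\O_{\cE_\alpha}) \to (\cE,\O_\cE))$, with each $f_\alpha$ corresponding to an object $E_\alpha \in \cE$. The essential point---and the step I expect to be the main obstacle---is the existence of the fibered products in $\Str(\G)$; this is supplied by Corollary \ref{cor:2.3.20}, which identifies each pullback with $(\cF/g^*(E_\alpha), \O_\cF|_{g^*(E_\alpha)})$ and in particular exhibits it as again \'etale over $(\cF,\O_\cF)$. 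Its underlying family is $(g^*(f_\alpha):\cF \times_{\cE} \cE_\alpha \to \cF)$, which is an \'etale covering family in $\T$ by axiom (ii) of Proposition \ref{prop:etexists}; hence the pulled-back family is an \'etale covering family in $\Str(\G)$.

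For axiom (iii), given an \'etale covering family $((f_\alpha,\varphi_\alpha))$ over $(\cE,\O_\cE)$ and, for each $\alpha$, an \'etale covering family $((f^\beta_\alpha,\varphi^\beta_\alpha))$ over $(\cE_\alpha,\O_{\cE_\alpha})$, I would form the composite morphisms in $\Str(\G)$. Their underlying geometric morphisms are composites of \'etale morphisms, hence \'etale by Corollary \ref{cor:compet}, and the resulting family underlies an \'etale covering family of $\cE$ in $\T$ by axiom (iii) of Proposition \ref{prop:etexists}; therefore the composite family is an \'etale covering family in $\Str(\G)$. The $n$-\'etale variant is handled identically, the only additional bookkeeping being that inverse image functors and composites preserve $n$-truncated objects (exactly as used in the proof of Proposition \ref{prop:etexists}), so the $n$-\'etale condition is inherited at each step. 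Thus all three axioms hold, and the whole argument hinges on Corollary \ref{cor:2.3.20} to guarantee that the structured pullbacks exist and remain \'etale, with everything else reducing to the unstructured case already treated in Proposition \ref{prop:etexists}.
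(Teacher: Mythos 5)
Your proposal is correct and matches the paper's proof, which simply states that in light of Corollary \ref{cor:2.3.20} the proof of Proposition \ref{prop:etexists} carries over verbatim; you have merely spelled out in detail the same reduction, correctly identifying Corollary \ref{cor:2.3.20} as the key input guaranteeing the existence (and \'etaleness) of the structured pullbacks.
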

\begin{proof}
In light of Corollary \ref{cor:2.3.20}, the proof of Proposition \ref{prop:etexists} carries over verbatim.
\end{proof}

\begin{rmk}
We shall again call the Grothendieck topology on $\Str\left(\G\right)$ generated by \'etale covering families and $n$-\'etale covering families respectively the \textbf{\'etale topology}, and the \textbf{$n$-\'etale topology}.
\end{rmk}

\begin{prop}\label{prop:subcanong}
For any geometric structure $\g,$ the \'etale topology on $\Str\left(\G\right)$ is subcanonical.
\end{prop}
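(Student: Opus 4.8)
The plan is to mirror the proof of Proposition \ref{prop:topsubcan} verbatim, replacing $\T$ by $\Str\left(\G\right)$ and $\T^{\et}$ by $\Str^{\et}\left(\G\right)$, with Corollary \ref{cor:2.3.20} and Proposition \ref{prop:etcolims} supplying the structured analogues of the facts used there. As in that proof, to establish subcanonicity it suffices to show that for every \'etale covering family $\left((f_\alpha,\varphi_\alpha):\left(\cE_\alpha,\O_{\cE_\alpha}\right) \to \left(\cE,\O_\cE\right)\right)$ the induced morphism
$$\coprod_\alpha \left(\cE_\alpha,\O_{\cE_\alpha}\right) \to \left(\cE,\O_\cE\right)$$
is an effective epimorphism in $\Str\left(\G\right)$; equivalently, that $\left(\cE,\O_\cE\right)$ is the colimit of the associated \v{C}ech nerve. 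Granting this, every representable presheaf $\Hom_{\Str\left(\G\right)}\left(\fdot,\left(\cF,\O_\cF\right)\right)$ sends this colimit to the corresponding limit, which is precisely the descent condition for the covering sieve, so each representable is an $\i$-sheaf.

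First I would produce the coproduct $\coprod_\alpha \left(\cE_\alpha,\O_{\cE_\alpha}\right)$ in $\Str^{\et}\left(\G\right)$, which exists by the cocompleteness half of Proposition \ref{prop:etcolims} and is computed in $\T^{\et}$. Under the equivalence $\T^{\et}/\cE \simeq \cE$ the family corresponds to objects $E_\alpha$, and the coproduct corresponds to $\coprod_\alpha E_\alpha \in \cE$, so the induced morphism $\coprod_\alpha \left(\cE_\alpha,\O_{\cE_\alpha}\right) \to \left(\cE,\O_\cE\right)$ is again \'etale. I would then check that the whole augmented \v{C}ech nerve $\widehat{C}_\bullet:\Delta_+^{op} \to \Str\left(\G\right)$ of this \'etale morphism factors through $\Str^{\et}\left(\G\right)$: the terms $C_n$ are iterated fibered products over $\left(\cE,\O_\cE\right)$, which exist by Corollary \ref{cor:2.3.20}, and every face map, degeneracy, and the augmentation is a pullback of the \'etale coproduct morphism, hence \'etale by the same corollary. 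Thus $\widehat{C}_\bullet$ is a cocone in $\Str^{\et}\left(\G\right)$ with vertex $\left(\cE,\O_\cE\right)$.

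Next I would apply Proposition \ref{prop:etcolims}: a cocone in $\Str^{\et}\left(\G\right)$ is colimiting if and only if its image in $\T^{\et}$ is, and the inclusion $\Str^{\et}\left(\G\right) \hookrightarrow \Str\left(\G\right)$ preserves colimits. Hence it is enough to verify that the underlying cocone $\Delta_+^{op} \to \T^{\et}$, namely the augmented \v{C}ech nerve of $\coprod_\alpha \cE_\alpha \to \cE$, is colimiting. But $\left(f_\alpha:\cE_\alpha \to \cE\right)$ is by definition an \'etale covering family in $\T$, so this is exactly what was proved in Proposition \ref{prop:topsubcan}: lifting along $\T^{\et}/\cE \simeq \cE$ identifies this \v{C}ech nerve with that of the effective epimorphism $\coprod_\alpha E_\alpha \to 1_\cE$, whence $\colim C_\bullet \simeq \cE$ in $\T^{\et}$. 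Combining the two halves of Proposition \ref{prop:etcolims} then yields $\left(\cE,\O_\cE\right) \simeq \colim C_\bullet$ in $\Str\left(\G\right)$, as required. The $n$-\'etale case is identical, using that pullbacks and coproducts of $n$-truncated objects remain $n$-truncated.

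The main obstacle I anticipate is bookkeeping rather than conceptual: one must confirm that the full \v{C}ech diagram together with its augmentation genuinely lands in $\Str^{\et}\left(\G\right)$ — in particular that the coproduct of the structured \'etale morphisms is still \'etale, so that Proposition \ref{prop:etcolims} applies — and one must cleanly translate between the effective-epimorphism (colimit of \v{C}ech nerve) statement and the descent condition defining a subcanonical topology, exactly as in the reduction carried out in Proposition \ref{prop:topsubcan}. Once the diagram is seen to live in $\Str^{\et}\left(\G\right)$, everything reduces to the already-established unstructured statement.
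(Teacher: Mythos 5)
Your proposal is correct and follows the paper's own proof essentially verbatim: the paper likewise lifts the \v{C}ech nerve of $\coprod_\alpha \left(\cE_\alpha,\O_{\cE_\alpha}\right) \to \left(\cE,\O_\cE\right)$ into $\Str^{\et}\left(\G\right)$ using Corollary \ref{cor:2.3.20} (together with pullback stability of \'etale maps, Corollary \ref{cor:pullstabet}), and then invokes Proposition \ref{prop:etcolims} to transfer the colimit statement between $\Str^{\et}\left(\G\right)$, $\T^{\et}$, and $\Str\left(\G\right)$, reducing subcanonicity to the unstructured case settled by Proposition \ref{prop:topsubcan}. The only difference is cosmetic: you spell out the \v{C}ech-nerve and descent bookkeeping that the paper leaves implicit.
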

\begin{proof}
In light of Corollary \ref{cor:pullstabet} and Corollary \ref{cor:2.3.20}, if $$\left(\left(f_\alpha,\varphi_\alpha\right)\left(E_\alpha,\O_{E_\alpha}\right) \to \left(E,\O_\cE\right)\right)$$ is an \'etale covering family, the \v{C}ech nerve $C_\bullet$ of $$\coprod\limits_{\alpha} \left(\cE_\alpha,\O_{\cE_\alpha}\right) \to \left(\cE,\O_\cE\right),$$ has a canonical lift
$$\xymatrix{\Delta^{op} \ar[r]^-{C_\bullet} \ar@{-->}[rd]_-{\tilde C_\bullet} & \Str\left(\G\right) \\
& \Str^{\et}\left(\G\right) \ar[u].}$$ In light of Proposition \ref{prop:etcolims} and Corollary \ref{cor:2.3.20}, it follows that the \'etale topology on $\Str\left(\G\right)$ is subcanonical if and only if the \'etale topology on $\T$ is. So we are done by Proposition \ref{prop:topsubcan}.
\end{proof}

\begin{cor}
 The $n$-\'etale topology on $\Str\left(\G\right)$ is subcanonical.
\end{cor}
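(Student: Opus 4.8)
The plan is to run the proof of Proposition \ref{prop:subcanong} essentially unchanged, replacing ``\'etale'' by ``$n$-\'etale'' and, at the very last step, invoking the $n$-\'etale analogue of Proposition \ref{prop:topsubcan} (recorded immediately after it) in place of Proposition \ref{prop:topsubcan} itself. Concretely, I would fix an $n$-\'etale covering family $\left(\left(f_\alpha,\varphi_\alpha\right):\left(\cE_\alpha,\O_{\cE_\alpha}\right) \to \left(\cE,\O_\cE\right)\right)$ in $\Str\left(\G\right)$ and show that the \v{C}ech nerve $C_\bullet$ of the induced morphism $\coprod_\alpha \left(\cE_\alpha,\O_{\cE_\alpha}\right) \to \left(\cE,\O_\cE\right)$ exhibits $\left(\cE,\O_\cE\right)$ as its colimit; since the $n$-\'etale topology is generated by such families, this is exactly what subcanonicity requires.

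First I would produce the canonical lift of $C_\bullet$ through the \'etale subcategory, i.e. a factorization $\tilde C_\bullet:\Delta^{op} \to \Str^{\et}\left(\G\right)$. This is available because, by Corollary \ref{cor:pullstabet}, \'etale geometric morphisms are stable under pullback, and by Corollary \ref{cor:2.3.20} these pullbacks are computed in $\Str\left(\G\right)$ by slicing; hence every simplex of $C_\bullet$, being an iterated fibered product of \'etale maps over $\left(\cE,\O_\cE\right)$, is again \'etale. Note that, by definition, an $n$-\'etale covering family in $\Str\left(\G\right)$ is one whose underlying family $\left(f_\alpha:\cE_\alpha \to \cE\right)$ in $\T$ is $n$-\'etale, so the underlying \v{C}ech nerve is the \v{C}ech nerve of an $n$-\'etale covering family of the $\i$-topos $\cE$.

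Next I would carry out the reduction. By the first assertion of Proposition \ref{prop:etcolims}, the inclusion $\Str^{\et}\left(\G\right) \hookrightarrow \Str\left(\G\right)$ preserves colimits, so it suffices to check that $\tilde C_\bullet$ is colimiting \emph{in} $\Str^{\et}\left(\G\right)$ with vertex $\left(\cE,\O_\cE\right)$. By the criterion in the ``moreover'' clause of Proposition \ref{prop:etcolims} (together with Corollary \ref{cor:2.3.20}, which identifies the underlying diagram), this holds if and only if the induced cocone $\Delta^{op} \to \Str^{\et}\left(\G\right) \to \T^{\et}$ is colimiting; that induced cocone is precisely the \v{C}ech nerve of the underlying $n$-\'etale covering family of $\cE$. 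Thus subcanonicity of the $n$-\'etale topology on $\Str\left(\G\right)$ reduces to subcanonicity of the $n$-\'etale topology on $\T$, which is exactly the corollary recorded immediately after Proposition \ref{prop:topsubcan}. Chaining these equivalences gives $\colim \tilde C_\bullet \simeq \left(\cE,\O_\cE\right)$, as desired.

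I do not expect any genuine obstacle here, since the argument is purely formal once Proposition \ref{prop:etcolims} is in hand: the only point deserving a moment's attention is that the reduction afforded by that proposition is insensitive to the $n$-\'etale refinement, the $n$-truncation hypothesis entering solely through the underlying covering in $\T$. Should one wish, the lift $\tilde C_\bullet$ can in fact be taken to land in the subcategory of $n$-\'etale morphisms, using that the relevant inverse image functors are left exact and hence preserve $n$-truncated objects --- the very observation already used in step (ii) of the proof of Proposition \ref{prop:etexists} --- but this sharper statement is not needed for subcanonicity.
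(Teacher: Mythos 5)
Your proof is correct and matches the paper's intended argument: the corollary is stated without proof precisely because the proof of Proposition \ref{prop:subcanong} goes through verbatim with ``\'etale'' replaced by ``$n$-\'etale,'' using the lift of the \v{C}ech nerve via Corollaries \ref{cor:pullstabet} and \ref{cor:2.3.20}, the reduction to $\T^{\et}$ via Proposition \ref{prop:etcolims}, and the subcanonicity of the $n$-\'etale topology on $\T$ (the corollary following Proposition \ref{prop:topsubcan}) in place of Proposition \ref{prop:topsubcan} itself. Your closing observation --- that the $n$-truncation hypothesis enters only through the underlying covering in $\T$, so the lift need not land in $n$-\'etale morphisms --- is exactly the right point to flag.
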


\begin{cor}
The \'etale topology on $\Str\left(\G\right)$ naturally restricts to a subcanonical topology on $\Str^{\et}\left(\G\right).$
\end{cor}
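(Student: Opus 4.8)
The plan is to establish two things in turn: that the étale covering families of $\Str\left(\G\right)$, which by construction consist of étale morphisms, are closed under the pretopology operations \emph{inside} the subcategory $\Str^{\et}\left(\G\right)$ and hence define a Grothendieck topology there; and that this restricted topology is subcanonical. The second point will reduce formally, via Proposition \ref{prop:etcolims}, to the subcanonicity of the étale topology on $\T^{\et}$ already recorded after Proposition \ref{prop:topsubcan}.

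For the first point, the only axiom requiring care is stability under pullback, since a pretopology on $\Str^{\et}\left(\G\right)$ demands that the relevant fibered products exist inside $\Str^{\et}\left(\G\right)$ itself, not merely in the ambient $\Str\left(\G\right)$. Given an étale covering family $\left(E_\alpha \to E\right)$ and an étale morphism $D \to E$, Corollary \ref{cor:2.3.20} produces the pullbacks $E_\alpha \times_E D$ in $\Str\left(\G\right)$ together with the fact that both projections are again étale. I would then argue that such a pullback square is already a pullback square in $\Str^{\et}\left(\G\right)$: given any object $W$ with étale maps to $E_\alpha$ and to $D$ agreeing over $E$, the induced map $k:W \to E_\alpha \times_E D$ in $\Str\left(\G\right)$ sits in a triangle whose two other edges are étale, namely the composite $W \to E_\alpha \times_E D \to D$ and the projection $E_\alpha \times_E D \to D$. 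By the left-cancellation property of étale morphisms, Proposition \ref{prop:6.3.5.9} applied to the underlying geometric morphisms, together with the two-out-of-three behaviour of the structural equivalence $\alpha$, the map $k$ is itself étale. Hence the fibered products of $\Str\left(\G\right)$ lie in, and compute the fibered products of, $\Str^{\et}\left(\G\right)$, and the pulled-back family is an étale covering family. Closure under composition is immediate from Corollary \ref{cor:compet} together with the corresponding axiom already verified for the étale topology on $\Str\left(\G\right)$, so the covering families do define a Grothendieck topology on $\Str^{\et}\left(\G\right)$.

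For subcanonicity I would mirror the proof of Proposition \ref{prop:subcanong}. It suffices to check that for every étale covering family $\left(E_\alpha \to E\right)$ the object $E$ is the colimit, computed in $\Str^{\et}\left(\G\right)$, of the \v{C}ech nerve $C_\bullet$ of $\coprod_\alpha E_\alpha \to E$; by the previous paragraph this \v{C}ech nerve is levelwise built from fibered products lying in $\Str^{\et}\left(\G\right)$, so it is a genuine simplicial object there. By Proposition \ref{prop:etcolims} the augmented cocone with vertex $E$ is colimiting in $\Str^{\et}\left(\G\right)$ if and only if its image under the forgetful functor to $\T^{\et}$ is colimiting; and since that functor carries $C_\bullet$ to the topos-level \v{C}ech nerve of $\coprod_\alpha \cE_\alpha \to \cE$ (again by Corollary \ref{cor:2.3.20}), the question becomes whether $\cE$ is the colimit of this \v{C}ech nerve in $\T^{\et}$. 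This is precisely the subcanonicity of the étale topology on $\T^{\et}$, which holds by Proposition \ref{prop:topsubcan} together with \cite{htt} Theorem 6.3.5.13.

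The main obstacle I anticipate is the pullback step: verifying not merely that pullbacks of étale maps are étale, which is Corollary \ref{cor:2.3.20}, but that the pullback taken in the ambient $\Str\left(\G\right)$ genuinely satisfies the universal property within the non-full subcategory $\Str^{\et}\left(\G\right)$, so that the étale covering families form a bona fide pretopology there rather than only in the ambient category. Once the left-cancellation argument secures this, everything else is a transport of the already-established $\T^{\et}$ statements through Proposition \ref{prop:etcolims}.
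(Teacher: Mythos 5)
Your proposal is correct and takes essentially the same route as the paper, whose entire proof is the one line ``This follows from Proposition \ref{prop:etcolims}'': you reduce the \v{C}ech-nerve colimit condition in $\Str^{\et}\left(\G\right)$ to $\T^{\et}$ via that proposition and then invoke the subcanonicity already recorded there (Proposition \ref{prop:topsubcan} with \cite{htt}, Theorem 6.3.5.13). Your extra care in the pullback step --- checking via the left-cancellation of Proposition \ref{prop:6.3.5.9} and the invertibility of the structural transformations that the fibered products of Corollary \ref{cor:2.3.20} satisfy the universal property inside the non-full subcategory $\Str^{\et}\left(\G\right)$ --- is a correct filling-in of a detail the paper leaves implicit, not a different argument.
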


\begin{proof}
This follows from Proposition \ref{prop:etcolims}.
\end{proof}

The following proposition follows by an analogous argument as the proof of Proposition \ref{prop:highlandertop}:

\begin{prop}
The restriction of each $m$-\'etale topology to the full subcategory of $\Str\left(\G\right)$ spanned by objects of the form $\left(\cE,\O_\cE\right),$ with $\cE$ $n$-localic, are all equivalent for $m \ge n-1.$
\end{prop}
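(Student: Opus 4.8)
The plan is to follow the proof of Proposition \ref{prop:highlandertop} line by line, inserting the structure-sheaf bookkeeping supplied by Corollary \ref{cor:2.3.20}. Two Grothendieck topologies agree precisely when each covering family of one is refined by a covering family of the other, so I would first record the easy inclusion: since an $\left(n-1\right)$-truncated object is $m$-truncated for every $m \ge n-1$, any $\left(n-1\right)$-\'etale covering family is already an $m$-\'etale covering family. The content is therefore the reverse refinement, and I would reduce the whole statement to showing that every $m$-\'etale covering family of a $\G$-structured $\i$-topos $\left(\cE,\O_\cE\right)$ with $\cE$ $n$-localic can be refined by an $\left(n-1\right)$-\'etale one. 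Since this would identify each $m$-\'etale topology ($m \ge n-1$) with the $\left(n-1\right)$-\'etale topology on the subcategory in question, it yields the claimed coincidence of all of them.

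To carry this out, I would start with an $m$-\'etale covering family $\left(\left(f_\alpha,\varphi_\alpha\right):\left(\cE_\alpha,\O_{\cE_\alpha}\right) \to \left(\cE,\O_\cE\right)\right)$, pass to the underlying \'etale covering family $\left(f_\alpha:\cE_\alpha \to \cE\right)$ in $\T$, and identify each $\cE_\alpha$ with $\cE/E_\alpha$ for an object $E_\alpha \in \cE$. Applying Lemma \ref{lem:lalalucas} to each $E_\alpha$ (which is legitimate because $\cE$ is $n$-localic) refines $\cE/E_\alpha \to \cE$ by \'etale maps $\cE/E^\beta_\alpha \to \cE/E_\alpha$, using Lemma \ref{lem:slicek} to make the slice identification, whose composites to $\cE$ are $\left(n-1\right)$-\'etale. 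This is exactly the topos-theoretic refinement already used in Proposition \ref{prop:highlandertop}.

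The step that needs the structured input is lifting this refinement back to $\Str\left(\G\right)$. Here I would invoke Corollary \ref{cor:2.3.20}: restricting $\O_\cE$ along each of the \'etale slice projections yields canonical $\left(n-1\right)$-\'etale morphisms $\left(\cE/E^\beta_\alpha,\O_\cE|_{E^\beta_\alpha}\right) \to \left(\cE_\alpha,\O_{\cE_\alpha}\right)$ in $\Str\left(\G\right)$ lying over the morphisms just constructed, and the structured analogue of the transitivity axiom (which holds verbatim by Corollary \ref{cor:2.3.20}, exactly as in the proof of Proposition \ref{prop:etexists}) shows that the composites assemble into an $\left(n-1\right)$-\'etale covering family of $\left(\cE,\O_\cE\right)$ refining the original one. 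Since $m \ge n-1$ was arbitrary, this completes the reduction.

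I expect the only genuine obstacle to be ensuring that the refinement produced at the level of underlying topoi is compatible with the structure sheaf and its restrictions, i.e. that slicing and restricting $\O_\cE$ is functorial and stable under composition. This is precisely the role of Corollary \ref{cor:2.3.20}, which here plays the part that stability of \'etale maps and transitivity of \'etale covers played in the unstructured Proposition \ref{prop:highlandertop}; once it is in hand, the remainder is a direct transcription of that earlier argument, with no new input required.
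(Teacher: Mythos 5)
Your proposal is correct and coincides with the paper's own (implicit) proof: the paper merely asserts that the result follows by an argument analogous to that of Proposition \ref{prop:highlandertop}, and your write-up is precisely that argument, with Lemma \ref{lem:lalalucas} (via Lemma \ref{lem:slicek}) supplying the $\left(n-1\right)$-\'etale refinement and Corollary \ref{cor:2.3.20} handling the structure-sheaf bookkeeping exactly as the paper itself uses it to transport Proposition \ref{prop:etexists} to $\Str\left(\G\right)$. No gap remains.
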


\begin{prop}\label{prop:trivkan}
Let $\cE$ be an object of $\Str^{\et}\left(\G\right).$ Then the induced map $$\Str^{\et}\left(\G\right) /\left(\cE,\O_\cE\right) \to \T^{\et}/\cE$$ is a trivial Kan fibration.
\end{prop}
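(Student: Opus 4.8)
The plan is to exhibit the forgetful functor $U\colon \Str^{\et}(\G) \to \T^{\et}$ as a right fibration and then to invoke the general principle that slicing a right fibration produces a trivial Kan fibration.

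First I would identify $\Str^{\et}(\G)$ as a fibre product. By definition $\Str^{\et}(\G)$ is the subcategory of the slice $\icat$ $\T/\cB$ having all objects but only those morphisms whose underlying geometric morphism of $\i$-topoi is \'etale; equivalently, it is the subcategory of morphisms of $\T/\cB$ that project into the subcategory $\T^{\et} \hookrightarrow \T$. Writing $\pi\colon \T/\cB \to \T$ for the domain projection, this says precisely that $\Str^{\et}(\G) \simeq \T^{\et} \times_\T (\T/\cB)$, with $U$ the base change of $\pi$ along $\T^{\et}\hookrightarrow\T$. Note that the factorization system $\G$ plays no role here: on the \'etale subcategory the structural $2$-cells $\alpha$ are required to be equivalences, which lie in $\G_R$ for any geometric structure. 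Now the domain projection $\pi$ from a slice $\icat$ is a right fibration (it is classified by $\cF \mapsto \Hom_\T(\cF,\cB)$, which is contravariant), by \cite{htt}, and the class of right fibrations is stable under pullback, so $U$ is a right fibration. Concretely, the Cartesian lift of an \'etale $\varphi\colon\cF\to\cE$ with respect to the object $(\cE,\O_\cE)$ is the canonical \'etale morphism $(\cF,\varphi^*\O_\cE)\to(\cE,\O_\cE)$ furnished by Corollary \ref{cor:2.3.20}, and the requirement that $\alpha$ be invertible makes such lifts essentially unique.

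Second I would apply the following observation with $p=U$ and $x=(\cE,\O_\cE)$: for any right fibration $p\colon X\to S$ and any object $x\in X$, the induced map $X/x \to S/p(x)$ is a trivial Kan fibration. To see this one checks the right lifting property against each boundary inclusion $\partial\Delta^n \hookrightarrow \Delta^n$. Under the join/slice adjunction such a lifting problem corresponds to a square in $X$ over $S$ whose left-hand map is $\partial\Delta^n\star\Delta^0 \hookrightarrow \Delta^n\star\Delta^0$; since $\Delta^n\star\Delta^0=\Delta^{n+1}$ and $\partial\Delta^n\star\Delta^0=\Lambda^{n+1}_{n+1}$, this is exactly the final horn inclusion $\Lambda^{n+1}_{n+1}\hookrightarrow\Delta^{n+1}$, against which every right fibration has the right lifting property. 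Transporting the resulting filler back across the adjunction solves the original problem. Since $U((\cE,\O_\cE))=\cE$, this gives that $\Str^{\et}(\G)/(\cE,\O_\cE) \to \T^{\et}/\cE$ is a trivial Kan fibration, as desired.

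The only genuinely delicate point is the first identification: one must check that the subcategory $\Str^{\et}(\G)$ of $\T/\cB$ really coincides with the pullback $\T^{\et}\times_\T(\T/\cB)$, i.e. that a morphism of $\T/\cB$ lies in $\Str^{\et}(\G)$ exactly when its image under $\pi$ is a morphism of $\T^{\et}$. This is immediate from the definitions of $\Str^{\et}(\G)$ and of $\T^{\et}$ together with the fact that $\T^{\et}$ is a (non-full) subcategory of $\T$ on all objects. Everything else is formal: that slice projections are right fibrations and that right fibrations are pullback-stable are standard facts of \cite{htt}, and the horn computation above is the only hands-on step. Alternatively, one could bypass the fibre-product description and verify the right-fibration property of $U$ directly, using Corollary \ref{cor:2.3.20} to produce the Cartesian lifts as pullbacks of structure sheaves; but the pullback identification makes this automatic.
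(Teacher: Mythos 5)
Your proof is correct, and it organizes the argument differently from the paper. The paper omits the proof with a pointer to Lemma \ref{lem:slicek}, whose method is a bare-hands simplex chase: unwind the lifting problem against $\partial\Delta^n \hookrightarrow \Delta^n$ into data in the ambient $\icat$ and fill a single inner horn $\Lambda^{n+2}_{n+1}$ there (here that ambient $\icat$ would be $\T$, with the cone point over $\cB$; the missing face $\Delta^{\{0,\dots,n+1\}}$ already lies in the horn, so the filler automatically projects to the prescribed simplex of $\T^{\et}$). You instead modularize: the identification $\Str^{\et}\left(\G\right) \simeq \T^{\et}\times_{\T}\left(\T/\cB\right)$ is indeed immediate from the paper's definitions, since $\Str^{\et}\left(\G\right)$ is by definition the subcategory of $\T/\cB$ on all objects whose morphisms lie over $\T^{\et}$, and as you note the factorization system is invisible at this stage (the paper itself remarks that the structural $2$-cells in the genuine slice are equivalences, which lie in $\G_R$ for any geometric structure); then slice projections are right fibrations, right fibrations are pullback-stable, and your horn computation $\partial\Delta^n \star \Delta^0 = \Lambda^{n+1}_{n+1}$ correctly reduces the trivial-fibration property of $X/x \to S/p\left(x\right)$ to right-horn lifting against $p$. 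At bottom the two arguments coincide: the right-fibration property of $\T/\cB \to \T$ is itself verified by converting $\Lambda^m_m$-lifting into inner-horn filling in $\T$, so your route fills the same horn in two steps. What your packaging buys is a reusable general lemma (slicing any right fibration at an object yields a trivial Kan fibration to the slice of the base), no ad hoc bookkeeping of the slicek type, and it renders the appeal to Corollary \ref{cor:2.3.20} purely illustrative rather than load-bearing; the paper's implicit direct argument is more self-contained but single-use.
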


\begin{proof}
The proof is similar to that of Lemma $\ref{lem:slicek}$ so we omit it.
\end{proof}

\begin{cor}\label{cor:235}
For each $\g$-structured $\i$-topos $\left(\cE,\O_\cE\right)$ in $\Str^{\et}\left(\G\right),$\\ $\Str^{\et}\left(\G\right)/\left(\cE,\O_\cE\right)$ is equivalent to the underlying $\infty$-category of the $\i$-topos $\cE$.
\end{cor}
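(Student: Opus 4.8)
The plan is to deduce the corollary formally from the trivial Kan fibration of Proposition \ref{prop:trivkan} together with the \'etal\'e-space equivalence of Proposition \ref{lem:htt6.3.5.10}. The point is that Proposition \ref{prop:trivkan} already carries all the real content, so what remains is only to repackage it as an equivalence of $\i$-categories and to identify the target.

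First I would observe that a trivial Kan fibration of simplicial sets is in particular a categorical equivalence: it is surjective on vertices and has the right lifting property against every boundary inclusion $\partial\Delta^n \hookrightarrow \Delta^n$, so it admits a section that is a homotopy inverse, and it is therefore an equivalence of $\i$-categories. Applying this to Proposition \ref{prop:trivkan} yields a canonical equivalence $\Str^{\et}\left(\G\right)/\left(\cE,\O_\cE\right) \simeq \T^{\et}/\cE$.

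Next I would invoke Proposition \ref{lem:htt6.3.5.10}, which provides the equivalence $\chi_\cE:\cE \to \T^{\et}/\cE$ sending an object $E$ of $\cE$ to the canonical \'etale morphism $\cE/E \to \cE$. Choosing a quasi-inverse $\chi_\cE^{-1}$ and composing it with the equivalence of the previous paragraph produces the desired equivalence $\Str^{\et}\left(\G\right)/\left(\cE,\O_\cE\right) \simeq \cE$ with the underlying $\i$-category of the $\i$-topos $\cE$. One can also check that this composite is compatible with the forgetful functors to $\T^{\et}/\cE$, so that an object of $\Str^{\et}\left(\G\right)/\left(\cE,\O_\cE\right)$ corresponds under the equivalence to the object $E \in \cE$ for which the underlying \'etale geometric morphism is $\cE/E \to \cE$, with the structure sheaf being the restriction $\O_\cE|_E$.

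I do not expect any genuine obstacle here, since the substantive work is hidden in Proposition \ref{prop:trivkan} (whose proof parallels that of Lemma \ref{lem:slicek}); the only subtleties are the routine facts that a trivial Kan fibration is a categorical equivalence and that $\T^{\et}/\cE$ is identified with $\cE$ via $\chi_\cE$.
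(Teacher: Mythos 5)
Your proposal is correct and follows exactly the paper's own argument: the paper likewise combines Proposition \ref{prop:trivkan} (giving $\Str^{\et}\left(\G\right)/\left(\cE,\O_\cE\right) \simeq \T^{\et}/\cE$) with Proposition \ref{lem:htt6.3.5.10} (giving $\T^{\et}/\cE \simeq \cE$). Your additional remarks, that a trivial Kan fibration is a categorical equivalence and that the composite identifies an object over $\left(\cE,\O_\cE\right)$ with the pair $\left(\cE/E,\O_\cE|_E\right)$, are routine and consistent with how the paper uses the corollary later.
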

\begin{proof}
$\Str^{\et}\left(\G\right)/\left(\cE,\O_\cE\right)$ is equivalent to $\T^{\et}/\cE$ by Proposition \ref{prop:trivkan}, and by Proposition \ref{lem:htt6.3.5.10}, this is in turn equivalent to $\cE.$
\end{proof}

\begin{rmk}
This is proven in the case of geometric structures arising from geometries in \cite{dag} Proposition 2.3.5.
\end{rmk}

\begin{rmk}\label{rmk:canonstr}
For any $\g$-structured $\i$-topos $\left(\cE,\O_\cE\right),$ the \'etale topology on $\Str^{\et}\left(\G\right)$ induces a Grothendieck topology on $\Str^{\et}\left(\G\right)/\left(\cE,\O_\cE\right),$ which under the equivalence given by Corollary \ref{cor:235}, agrees with the epimorphism topology on $\cE.$
\end{rmk}

\chapter{\'Etendues: Gluing Local Models}\label{chap:etendues}
In this section, we will make precise what is means to glue structured $\i$-topoi along local homeomorphisms (i.e. \'etale maps) starting from a collection of local models. This parallels the way one builds manifolds out of Euclidean spaces, or schemes out of affine schemes. Since we are allowing our ``spaces'' to be $\i$-topoi however, in these two instances we get much richer theories than just the theory of smooth manifolds, or the theory of schemes, but rather get a theory of higher generalized orbifolds and a theory of higher Deligne-Mumford stacks respectively. This same framework extends to the setting of derived and spectral geometry as well. This will be spelled out in Chapter \ref{chap:examples}.

\section{\'Etendues}

\begin{notat}
If $\sL$ is a full subcategory of $\Str\left(\G\right)$ for some geometric structure $\G,$ we will denote by $\sL^{\et}$ the subcategory of $\sL$ whose morphisms are all the \'etale morphisms.
\end{notat}

\begin{dfn}
Let $\sL$ be a full subcategory of $\Str\left(\G\right)$ for some geometric structure $\G$. $\sL$ is said to be \textbf{$n$-localic} if every object $\left(\cL,\O_\cL\right)$ of $\sL$ has $\cL$ an $n$-localic $\i$-topos.
\end{dfn}

\begin{dfn}
Let $\sL$ be a full subcategory of $\Str\left(\G\right)$ for some geometric structure $\G$. Define $\sl\left(\sL\right)$ to be the full subcategory of $\Str\left(\G\right)$ consisting of those objects of the form $$\left(\cE,f^*\O_\cL\right),$$ where $$f:\cE \to \cL$$ is an \'etale geometric morphism, with $\cL \in \sL.$ %Denote by $\sl_n\left(\sL\right)$ the full subcategory thereof for which $f$ is also required to be $\left(n-1\right)$-\'etale.
\end{dfn}

%\begin{rmk}
%Suppose that $\sL$ is actually a full subcategory of $\Str_n\left(\G\right)$ for some $0 \le n < \infty$. This means that each
%object $\cE$ has $$\cE \simeq \Sh_\infty\left(\cE'\right)$$ for some $n$-topos $\cE'$. Then one can
%identify $\sl_n\left(\sL\right)$ as the full subcategory of $\sl\left(\sL\right)$ spanned by those objects $$\cF \to \cE$$
%such that their underlying \'etale geometric morphism is of the form $$\Sh_\infty\left(\cF' \to \cE'\right)$$ for some \'etale
%geometric morphism $$\cF' \to \cE'$$ of $n$-topoi.
%\end{rmk}

\begin{rmk}\label{rmk:slclsd}
Any object of $\sl\left(\sL\right)$ is equivalent to one of the form $$\left(\cE/E,\O_\cE|_{E}\right),$$ where $\left(\cE,\O_\cE\right)$ is an object of $\sL$. Moreover, since the composition of two \'etale morphisms is again \'etale, it follows that $\sl\left(\sl\left(\sL\right)\right)=\sl\left(\sL\right).$ %Similarly for $\sl_n\left(\sL\right).$
\end{rmk}

%\begin{dfn}
%Given an $\i$-topos $\cE,$ there is a canonical map $$\chi_\cE:\cE \to \T^{\et}/\cE$$ which sends an object $E \in \cE$ to the canonical \'etale projection $\cE/E \to E.$
%\end{dfn}

%\begin{lem}\label{lem:htt6.3.5.10} (\cite{htt} Remark 6.3.5.10)
%For every $\i$-topos $\cE,$ the functor $\chi_\cE$ is an equivalence.
%\end{lem}

\begin{rmk}\label{rmk:sl}
In light of Remark \ref{rmk:open}, $\sl\left(\sL\right)$ should be regarded as the $\i$-category of all generalized open subsets of elements of $\sL.$ These will form the basic building blocks for scheme or manifold theory based on $\sL.$ For example, if $\sL$ consists of the single object $\mathbb{C}^n$ with is holomorphic structure sheaf, a general $n$-dimensional complex manifold is not locally holomorphic to $\mathbb{C}^n,$ but is rather locally holomorphic to open subspaces of $\mathbb{C}^n.$ This is why when starting with a collection of generalized spaces $\sL$ that we wish to glue together, we first expand the collection to include all generalized open subsets. This is the role of the $\i$-category $\sl\left(\sL\right).$
\end{rmk}

The following is an immediate consequence of Corollary \ref{cor:235}:

\begin{cor}\label{cor:2352}
For $\sL$ any subcategory of $\Str\left(\G\right),$ and $\left(\cE,\O_\cE\right)$ any object of $\sl\left(\sL\right),$ $\sl\left(\sL\right)^{\et}/\left(\cE,\O_\cE\right)$ is equivalent to the underlying $\icat$ of $\cE.$
\end{cor}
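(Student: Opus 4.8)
The plan is to deduce the statement from Corollary \ref{cor:235} by comparing the slice $\sl\left(\sL\right)^{\et}/\left(\cE,\O_\cE\right)$ with the larger slice $\Str^{\et}\left(\G\right)/\left(\cE,\O_\cE\right)$. Since $\sl\left(\sL\right)^{\et}$ is, by definition, the full subcategory of $\Str^{\et}\left(\G\right)$ spanned by the objects of $\sl\left(\sL\right)$, the induced functor on slices
$$\sl\left(\sL\right)^{\et}/\left(\cE,\O_\cE\right) \hookrightarrow \Str^{\et}\left(\G\right)/\left(\cE,\O_\cE\right)$$
is automatically fully faithful. Thus the entire content of the corollary reduces to showing that this inclusion is essentially surjective: for then composing with the equivalence $\Str^{\et}\left(\G\right)/\left(\cE,\O_\cE\right) \simeq \cE$ supplied by Corollary \ref{cor:235} yields the desired equivalence.

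To establish essential surjectivity, I would first note that an object of $\Str^{\et}\left(\G\right)/\left(\cE,\O_\cE\right)$ is an \'etale morphism $\left(\cF,\O_\cF\right) \to \left(\cE,\O_\cE\right)$ whose underlying geometric morphism $\cF \to \cE$ is \'etale; hence $\cF$ is equivalent to $\cE/E$ for some $E \in \cE$, and the structure sheaf $\O_\cF$ is forced to be the restriction $\O_\cE|_{E}$. Therefore every object of the larger slice is equivalent to one of the form $\left(\cE/E,\O_\cE|_{E}\right)$, an \'etale open of $\left(\cE,\O_\cE\right)$.

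The crux of the argument is then the observation that each such \'etale open already lies in $\sl\left(\sL\right)$. This is precisely the idempotence recorded in Remark \ref{rmk:slclsd}: since $\left(\cE,\O_\cE\right)$ is assumed to be an object of $\sl\left(\sL\right)$, any \'etale open $\left(\cE/E,\O_\cE|_{E}\right)$ belongs to $\sl\left(\sl\left(\sL\right)\right)=\sl\left(\sL\right)$. Consequently $\left(\cE/E,\O_\cE|_{E}\right)$ defines an object of $\sl\left(\sL\right)^{\et}/\left(\cE,\O_\cE\right)$ mapping to the chosen object of the larger slice, which proves essential surjectivity and hence that the inclusion is an equivalence. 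I do not expect any genuine obstacle here; the only subtlety --- and the reason the corollary is truly ``immediate'' --- is to recognize that the closure property $\sl\left(\sl\left(\sL\right)\right)=\sl\left(\sL\right)$ is exactly what guarantees that passing to the smaller subcategory $\sl\left(\sL\right)$ loses none of the \'etale opens supplied by Corollary \ref{cor:235}.
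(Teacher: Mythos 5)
Your proof is correct and follows essentially the same route as the paper: the paper likewise observes that, by Remark \ref{rmk:slclsd} (idempotence of $\sl$), every structured $\i$-topos admitting an \'etale map to $\left(\cE,\O_\cE\right)$ already lies in $\sl\left(\sL\right)$, so that $\sl\left(\sL\right)^{\et}/\left(\cE,\O_\cE\right)=\Str^{\et}\left(\G\right)/\left(\cE,\O_\cE\right)$, and then concludes by Corollary \ref{cor:235}. Your write-up merely makes explicit the (automatic) full faithfulness of the slice inclusion and the identification of objects of the larger slice as \'etale opens, which the paper leaves implicit.
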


\begin{proof}
Any object of $\cF$ of $\Str\left(\G\right)$ which admits an \'etale map $\cF \to \cE$ over $\cB$ is in the subcategory $\sl\left(\sL\right)$ by Remark \ref{rmk:slclsd}. Hence $\Str^{\et}\left(\G\right)/\cE=\sl\left(\sL\right)^{\et}/\cE$ and the result follows from Corollary \ref{cor:235}.
\end{proof}

\begin{rmk}
From now on, when there is no risk of confusion, we will often abuse notation and denote an object $\left(\cE,\O_\cE\right)$ of $\sL$ by $\cE$ and similarly for morphisms. Furthermore, if $E \in \cE$, and $\left(\cE,\O_\cE\right)$ is an object of $\Str\left(\G\right),$ we will often write $\cE/E$ to mean $\left(\cE/E,\O_\cE|_{E}\right).$
\end{rmk}

\begin{rmk}
The initial $\i$-topos $\emptyset$ (which itself, as an $\icat$ is a contractible $\i$-groupoid) has $\Hom\left(\emptyset,\cB\right)$ a contractible $\i$-groupoid for each $\i$-topos $\cB.$ Hence, up to a contractible space of ambiguity, $\emptyset$ has a unique and canonical $\G$-structure. We will call $\emptyset$ with this $\G$-structure the initial $\G$-topos. Note that since for any $\left(\cE,\O_\cE\right)$ $$\xymatrix{\emptyset \ar[dr] \ar[rr] & & \cE \ar[dl]^-{\O_\cE}\\ & \cB&}$$ commutes up to a contractible choice of homotopies, it follows that the initial $\G$-topos is an initial object in $\Str\left(\G\right).$
\end{rmk}

\begin{dfn}
Let $\sL$ be a full subcategory of $\Str\left(\G\right)$ for some geometric structure $\G$. Define $\cosl\left(\sL\right)$ to be the full subcategory of $\Str\left(\G\right)$ consisting of the initial $\G$-topos, together with those objects $\cF$ which admit an \'etale covering family $\left(\cF_\alpha \to \cF\right)$ such that each $\cF_\alpha$ is in $\sL.$
\end{dfn}

\begin{rmk}\label{rmk:locequiv}
As \'etale morphisms are to be regarded as local homeomorphisms, or ``the inclusions of generalized open subsets,'' the $\i$-category $\cosl\left(\sL\right)$ can be thought of as the collection of all structured $\i$-topoi which are ``locally equivalent'' to an object  in $\sL.$
\end{rmk}

\begin{ex}\label{ex:affines}
If $\G$ comes from a geometry $\g,$ and $\sL$ is the full subcategory of $\Str\left(\G\right)$ spanned by affine $\g$-schemes in the sense of \cite{dag}, then $\cosl\left(\sL\right)$ is the $\icat$ of $\g$-schemes.
\end{ex}

%\begin{dfn}
%Denote by $\cosl_n\left(\sL\right)$ the full subcategory thereof for which the set $\left(F_\alpha\right)_{\alpha \in A}$ must consist of $\left(n-1\right)$-truncated objects.
%\end{dfn}

%\begin{rmk}
%Suppose that $\sL$ is actually a full subcategory of $\Str_n\left(\G\right)$ for some $0 \le n < \infty,$ with each object $\cE$ of the form $$\cE \simeq \Sh_\infty\left(\cE'\right)$$ for some $n$-topos $\cE'$. Then $\cosl_n\left(\sL\right)$ is the full subcategory of $\cosl\left(\sL\right)$ consisting of those $\i$-topoi of the form $\Sh_\i\left(\cE'\right),$ where $\cE'$ is an $n$-topos for which there exists a set $\left(E_\alpha\right)_{\alpha \in A}$ of objects of $\cE'$ such that $$\coprod\limits_\alpha E_\alpha \to 1$$ is an effective epimorphism and such that each $$\Sh_\i\left(\cE/E_\alpha\right)$$ is in $\sL.$
%\end{rmk}

\begin{dfn}\label{dfn:letendue}
For a fixed $\sL,$ the $\icat$ $\cosl\left(\sl\left(\sL\right)\right)=: \overline \sL$ is called the \textbf{\'etale closure} of $\sL.$ Objects $\left(\cE,\O_\cE\right)$ are called \textbf{$\sL$-\'etendues}.
\end{dfn}

\begin{rmk}
Combining Remark \ref{rmk:sl} and \ref{rmk:locequiv}, the $\i$-category $\lbar$ of $\sL$-\'etendues is the $\i$-category consisting of those structured $\i$-topoi which can be glued together along local homeomorphisms out of generalized open subsets of elements of $\sL.$ I.e, the objects of $\lbar$ are the ``manifolds'' or ``schemes'' that one can build starting with objects in $\sL.$
\end{rmk}

%\begin{dfn}
%If $\sL$ is a full subcategory of $\Str_n\left(\G\right)$ for some $0 \le n < \infty,$ the $\icat$ %$\cosl_n\left(\sl_n\left(\sL\right)\right)=: \overline \sL_n$ is called the \textbf{$n$-localic \'etale closure} of $\sL.$
%\end{dfn}

\begin{prop}\label{prop:coslcosl}
For any full subcategory $\sL$ of $\Str\left(\G\right),$ $\cosl\left(\cosl\left(\sL\right)\right)=\cosl\left(\sL\right)$
\end{prop}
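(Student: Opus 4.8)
The plan is to observe first that $\cosl(\sL)$ and $\cosl(\cosl(\sL))$ are by construction \emph{full} subcategories of $\Str(\G)$, so the asserted equality reduces to an equality of their collections of objects; it then suffices to prove the two inclusions of object classes separately. The inclusion $\cosl(\sL) \subseteq \cosl(\cosl(\sL))$ is essentially formal: any object $\cF$ of $\cosl(\sL)$ admits the singleton \'etale covering family given by $\mathrm{id}_\cF$, which is an \'etale covering family because equivalences are covering families (the first axiom of the Grothendieck pretopology established in Proposition \ref{prop:etexists}, whose proof carries over verbatim to $\Str(\G)$ in light of Corollary \ref{cor:2.3.20}), and whose single member $\cF$ lies in $\cosl(\sL)$; the initial $\G$-topos lies in $\cosl(\cosl(\sL))$ by definition. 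Hence $\cosl(\sL) \subseteq \cosl(\cosl(\sL))$.

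The content of the proposition is the reverse inclusion $\cosl(\cosl(\sL)) \subseteq \cosl(\sL)$, which I would prove using the transitivity axiom (iii) of the \'etale Grothendieck pretopology on $\Str(\G)$. Let $\cF$ be an object of $\cosl(\cosl(\sL))$. If $\cF$ is the initial $\G$-topos, it lies in $\cosl(\sL)$ by definition, so assume otherwise; then $\cF$ admits an \'etale covering family $\left(\cF_\alpha \to \cF\right)_\alpha$ with each $\cF_\alpha$ in $\cosl(\sL)$. For each $\alpha$, either $\cF_\alpha$ is the initial $\G$-topos, in which case it admits the empty \'etale covering family (the empty coproduct maps by an equivalence onto the terminal object of the initial topos, and is therefore an effective epimorphism), which is vacuously a cover by objects of $\sL$; or $\cF_\alpha$ admits an \'etale covering family $\left(\cF_\alpha^\beta \to \cF_\alpha\right)_\beta$ with each $\cF_\alpha^\beta$ in $\sL$. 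Applying axiom (iii) of the Grothendieck pretopology, the composite family $\left(\cF_\alpha^\beta \to \cF_\alpha \to \cF\right)_{\alpha,\beta}$ is again an \'etale covering family of $\cF$, and each of its members lies in $\sL$. Therefore $\cF \in \cosl(\sL)$, completing the reverse inclusion and hence the proof.

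The only subtlety I anticipate is the bookkeeping around the initial $\G$-topos, i.e.\ verifying that when some $\cF_\alpha$ in the intermediate cover happens to be initial it does not obstruct the transitivity argument; this is handled by noting that the initial object is covered by the empty family, which qualifies (vacuously) as an \'etale cover by objects of $\sL$. Beyond this, the argument is a direct invocation of the fact that \'etale covering families form a Grothendieck pretopology, with the transitivity axiom doing all the work; no additional properties of the geometric structure $\G$ are needed, which is why the statement holds for an arbitrary full subcategory $\sL$.
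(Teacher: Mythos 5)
Your proof is correct and takes essentially the same route as the paper's: the paper's entire argument is your second paragraph, namely composing the two covering families via the transitivity axiom of the \'etale pretopology established in Proposition \ref{prop:etexists}. The extra bookkeeping you supply --- the trivial inclusion $\cosl\left(\sL\right) \subseteq \cosl\left(\cosl\left(\sL\right)\right)$ and the observation that an initial $\cF_\alpha$ is covered by the empty family (valid, since in the initial $\G$-topos the initial and terminal objects coincide) --- is sound but left implicit in the paper.
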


\begin{proof}
Suppose that $\cE$ is such that it admits an \'etale covering family $$\left(\cE_\alpha \to \cE\right)_\alpha$$ with each $\cE_\alpha$ admitting an \'etale covering family $$\left(\cE^\beta_\alpha \to \cE_\alpha\right)_\beta$$ with each $\cE^\beta_\alpha \in \sL.$ Then by Proposition \ref{prop:etexists}, the composite $$\left(\cE^\beta_\alpha \to \cE\right)_{\alpha,\beta}$$ is an \'etale covering family, so we are done.
\end{proof}

\begin{cor}
$\cosl\left(\overline \sL\right)=\overline \sL$
\end{cor}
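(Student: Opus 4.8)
The plan is to reduce the statement directly to Proposition \ref{prop:coslcosl}, which has just been established. First I would unfold the definition of the \'etale closure: by Definition \ref{dfn:letendue} we have $\lbar = \cosl\left(\sl\left(\sL\right)\right)$, so the left-hand side of the claimed identity is $\cosl\left(\lbar\right) = \cosl\left(\cosl\left(\sl\left(\sL\right)\right)\right)$.

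Next I would observe that $\sl\left(\sL\right)$ is itself a full subcategory of $\Str\left(\G\right)$ (its objects were defined as a full subcategory of $\Str\left(\G\right)$), so Proposition \ref{prop:coslcosl} applies verbatim with $\sL$ replaced by $\sl\left(\sL\right)$. This yields $\cosl\left(\cosl\left(\sl\left(\sL\right)\right)\right) = \cosl\left(\sl\left(\sL\right)\right)$, and recognizing the right-hand side as $\lbar$ again completes the argument. I expect essentially no obstacle: all of the genuine content lives in Proposition \ref{prop:coslcosl} (which in turn rests on the transitivity axiom for the \'etale pretopology from Proposition \ref{prop:etexists}, allowing a cover of covers to be composed into a single \'etale covering family). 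The corollary is simply the instance of that proposition obtained by feeding in the enlarged collection of building blocks $\sl\left(\sL\right)$ in place of $\sL$, so the proof is a one-line substitution followed by re-applying the definition of $\lbar$.
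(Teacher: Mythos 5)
Your proof is correct and is exactly the intended argument: the paper states this corollary without proof precisely because it is the instance of Proposition \ref{prop:coslcosl} with $\sl\left(\sL\right)$ (which is indeed a full subcategory of $\Str\left(\G\right)$ by its definition) substituted for $\sL$, giving $\cosl\left(\lbar\right)=\cosl\left(\cosl\left(\sl\left(\sL\right)\right)\right)=\cosl\left(\sl\left(\sL\right)\right)=\lbar$. Nothing is missing.
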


\begin{prop}\label{prop:2.3.10}(\cite{dag} 2.3.10)
If $\G$ arises from a geometry $\g$ and $\cF \to \cE$ is an \'etale morphism in $\Str\left(\G\right)$ with $\cE$ is a $\g$-scheme, then $\cF$ is a $\g$-scheme.
\end{prop}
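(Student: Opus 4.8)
The plan is to first reduce to the case where the base $\cE$ is affine by pulling back an affine cover, and then to treat the affine case by exhibiting an affine cover of $\cF$ coming from the site of $\Specg\left(A\right)$. Throughout, a $\g$-scheme means an object of $\cosl\left(\sL\right)$ where $\sL$ is the full subcategory of affine $\g$-schemes (Example \ref{ex:affines}).

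\emph{Reduction to the affine case.} Since $\cE$ is a $\g$-scheme it admits an étale covering family $\left(\cE_\alpha \to \cE\right)$ with each $\cE_\alpha$ affine. Form the pullbacks $\cF_\alpha := \cF \times_\cE \cE_\alpha$ in $\Str\left(\G\right)$; these exist, and both projections out of the square are étale, by Corollary \ref{cor:2.3.20} (applied with $\cE_\alpha \to \cE$ étale). By pullback-stability of étale covering families (Proposition \ref{prop:etexists} and its structured analogue), $\left(\cF_\alpha \to \cF\right)$ is again an étale covering family, so it suffices to prove each $\cF_\alpha$ is a $\g$-scheme: if so, then $\cF$ admits an étale covering family by $\g$-schemes, whence $\cF \in \cosl\left(\cosl\left(\sL\right)\right) = \cosl\left(\sL\right)$ by Proposition \ref{prop:coslcosl}. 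Since the projection $\cF_\alpha \to \cE_\alpha$ is the pullback of the étale map $\cF \to \cE$ along $\cE_\alpha \to \cE$, we have reduced to the case in which $\cE = \Specg\left(A\right)$ is affine and $\cF \to \cE$ is étale.

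\emph{The affine case.} By Corollary \ref{cor:235} the étale map $\cF \to \cE$ is classified by an object $E$ of the underlying $\i$-topos of $\cE$, with $\cF \simeq \left(\cE/E, \O_\cE|_E\right)$. The key input is the description of $\Specg\left(A\right)$ arising from the admissibility structure of $\g$: its underlying $\i$-topos is the $\i$-topos of $\i$-sheaves on a small site whose objects are the admissible morphisms $A \to B$ in $\Pro\left(\g\right)$, and the representable $y\left(B\right)$ attached to such a morphism satisfies $\left(\cE/y\left(B\right), \O_\cE|_{y\left(B\right)}\right) \simeq \Specg\left(B\right)$, an affine $\g$-scheme étale over $\cE$. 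Then, exactly as in the proof of Lemma \ref{lem:lalalucas}, Proposition 5.1.5.3 of \cite{htt} expresses $E$ as the colimit of the representables mapping to it, and Proposition 6.2.3.13 of \cite{htt} shows the induced map $\coprod_\beta y\left(B_\beta\right) \to E$ (over all maps $y\left(B_\beta\right) \to E$) is an effective epimorphism. Each such map is an object of $\cF = \cE/E$, and by Lemma \ref{lem:slicek} its slice satisfies $\cF/\left(y\left(B_\beta\right) \to E\right) \simeq \cE/y\left(B_\beta\right) \simeq \Specg\left(B_\beta\right)$. Hence $\left(\Specg\left(B_\beta\right) \to \cF\right)$ is an étale covering family of $\cF$ by affine $\g$-schemes, so $\cF \in \cosl\left(\sL\right)$ is a $\g$-scheme.

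\emph{Main obstacle.} The only genuinely geometric ingredient is the claim in the affine case that $\Specg\left(A\right)$ is generated, via effective epimorphisms out of coproducts, by objects whose slices are affine $\g$-schemes. This rests on the construction of $\Specg$ from the Grothendieck pretopology $\mathcal{J}$ of admissible morphisms of $\g$ (so that representables correspond to admissible morphisms $A \to B$, and hence to affine schemes $\Specg\left(B\right)$ étale over $\Specg\left(A\right)$), and it is precisely here that the hypothesis that $\G$ arises from a \emph{geometry}, rather than an arbitrary geometric structure, is needed. Everything else is formal manipulation of the étale pretopology (pullback, composition, and slicing) already established above.
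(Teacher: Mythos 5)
Your proof is correct, but there is no internal proof to compare it against: the paper imports this statement verbatim from \cite{dag} (Proposition 2.3.10) without argument. Your two-step route --- pulling back an affine \'etale cover of $\cE$ to reduce to $\cE = \Specg\left(A\right)$, then covering the slice $\cF \simeq \left(\cE/E, \O_\cE|_E\right)$ by the representables $y\left(B_\beta\right) \to E$ coming from the site $\Pro\left(\g\right)^{ad}/A$ --- is essentially Lurie's original argument, and every ingredient is independently available in this paper: pullbacks and pullback-stability of \'etale morphisms in $\Str\left(\G\right)$ (Corollary \ref{cor:2.3.20}, Corollary \ref{cor:pullstabet}), the idempotency $\cosl\left(\cosl\left(\sL\right)\right) = \cosl\left(\sL\right)$ (Proposition \ref{prop:coslcosl}), the identification of \'etale maps into $\left(\cE,\O_\cE\right)$ with objects of $\cE$ (Corollary \ref{cor:235}), the slice identity of Lemma \ref{lem:slicek}, the colimit-of-representables/effective-epimorphism pattern of Lemma \ref{lem:lalalucas}, and the key geometric fact $\cL/y\left(f\right) \simeq \Specg\left(Y\right)$ for $f$ in $\Pro\left(\g\right)^{ad}/X$, which the paper records in Example \ref{ex:affineblossom}. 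You also correctly isolate this last fact as the only place the hypothesis that $\G$ arises from a geometry, rather than an arbitrary geometric structure, is used. Two cosmetic points: \'etaleness of the second projection $\cF_\alpha \to \cE_\alpha$ does not come directly from Corollary \ref{cor:2.3.20} but follows cleanest by left cancellation (Proposition \ref{prop:6.3.5.9}), since the composite $\cF_\alpha \to \cF \to \cE$ is \'etale (Corollary \ref{cor:compet}) and factors through the \'etale map $\cE_\alpha \to \cE$; and your appeal to \cite{htt} Proposition 5.1.5.3 should be read, as in Lemma \ref{lem:lalalucas}, via the Yoneda embedding into $\i$-sheaves, with the indexing category small because $\Pro\left(\g\right)^{ad}/A$ is essentially small (Remark \ref{rmk:esssmallad}).
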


\begin{cor}
If $\g$ is a geometry and $\sL$ is the $\icat$ of \emph{affine $\g$-schemes} viewed as a subcategory of $\G\left(\g\right)$-structured $\i$-topoi, then $\overline \sL$ is the $\icat$ of $\g$-schemes.
\end{cor}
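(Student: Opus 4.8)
The plan is to prove the two inclusions $\cosl\left(\sL\right) \subseteq \lbar$ and $\lbar \subseteq \cosl\left(\sL\right)$ separately, and then invoke Example \ref{ex:affines}, which already identifies $\cosl\left(\sL\right)$ with the $\icat$ of $\g$-schemes. The only general fact I would isolate first is that $\cosl$ is \emph{monotone} with respect to inclusions of full subcategories: if $\sL \subseteq \sL'$, then every \'etale covering family whose pieces lie in $\sL$ also has its pieces in $\sL'$, so $\cosl\left(\sL\right) \subseteq \cosl\left(\sL'\right)$.

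For the inclusion $\cosl\left(\sL\right) \subseteq \lbar$, I would first observe that $\sL \subseteq \sl\left(\sL\right)$: taking $E = 1_\cE$ in Remark \ref{rmk:slclsd} realizes each object of $\sL$ as an object of $\sl\left(\sL\right)$ (equivalently, the identity is an \'etale map). Monotonicity of $\cosl$ then gives $\cosl\left(\sL\right) \subseteq \cosl\left(\sl\left(\sL\right)\right) = \lbar$.

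For the reverse inclusion, the key step is to show $\sl\left(\sL\right) \subseteq \cosl\left(\sL\right)$. By Remark \ref{rmk:slclsd}, every object of $\sl\left(\sL\right)$ is equivalent to one of the form $\left(\cE/E, \O_\cE|_E\right)$ with $\left(\cE,\O_\cE\right)$ an affine $\g$-scheme, and the canonical map $\cE/E \to \cE$ is \'etale. Since each affine $\g$-scheme is trivially self-covered (the identity is an \'etale covering family), it lies in $\cosl\left(\sL\right)$, hence is a $\g$-scheme by Example \ref{ex:affines}; Proposition \ref{prop:2.3.10} then shows that $\cE/E$ is again a $\g$-scheme, i.e. lies in $\cosl\left(\sL\right)$. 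Thus $\sl\left(\sL\right) \subseteq \cosl\left(\sL\right)$, and applying monotonicity of $\cosl$ together with the idempotency of $\cosl$ from Proposition \ref{prop:coslcosl} yields
$$\lbar = \cosl\left(\sl\left(\sL\right)\right) \subseteq \cosl\left(\cosl\left(\sL\right)\right) = \cosl\left(\sL\right).$$

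Combining the two inclusions gives $\lbar = \cosl\left(\sL\right)$, which is the $\icat$ of $\g$-schemes by Example \ref{ex:affines}. I do not expect a genuine obstacle here: all the geometric content is already packaged into Proposition \ref{prop:2.3.10} (\'etale maps preserve the property of being a $\g$-scheme) and the identification in Example \ref{ex:affines}, so the argument reduces to a formal manipulation of the operations $\sl$ and $\cosl$. The only points demanding a little care are keeping the direction of the inclusions straight and confirming that affine $\g$-schemes indeed sit inside $\cosl\left(\sL\right)$, so that Proposition \ref{prop:2.3.10} is applicable.
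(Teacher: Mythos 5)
Your argument is correct and is essentially the paper's own proof: the inclusion $\cosl\left(\sL\right) \subseteq \cosl\left(\sl\left(\sL\right)\right) = \overline{\sL}$, the key step $\sl\left(\sL\right) \subseteq \cosl\left(\sL\right)$ via Proposition \ref{prop:2.3.10}, and the concluding chain $\overline{\sL} = \cosl\left(\sl\left(\sL\right)\right) \subseteq \cosl\left(\cosl\left(\sL\right)\right) = \cosl\left(\sL\right)$ using Proposition \ref{prop:coslcosl} all appear verbatim there. Your only additions---spelling out the monotonicity of $\cosl$ and checking that affine $\g$-schemes lie in $\cosl\left(\sL\right)$ so that Proposition \ref{prop:2.3.10} applies---are details the paper leaves implicit.
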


\begin{proof}
Notice that one always has the inclusion $$\cosl\left(\sL\right) \subseteq \cosl\left(\sl\left(\sL\right)\right)=\overline \sL.$$ From Proposition \ref{prop:2.3.10}, $$\sl\left(\sL\right) \subseteq \cosl\left(\sL\right).$$ It follows then that $$\overline \sL =\cosl\left(\sl\left(\sL\right)\right) \subseteq \cosl\left(\cosl\left(\sL\right)\right)=\cosl\left(\sL\right).$$
\end{proof}

%\begin{proof}
%Suppose that $\cF \in \cosl\left(\cosl\left(\sL\right)\right)$. This means there exists a set $\left(F_\alpha\right)_{\alpha \in A}$ of objects of $\cF$ such that $$\coprod\limits_\alpha F_\alpha \to 1$$ is an effective epimorphism and such that each $\cF/F_\alpha$ is in $\cosl\left(\cL\right)$. So, for each $\alpha$, there exists a set of arrows $$\left(f^\beta_\alpha:F^\beta_\alpha \to F_\alpha\right)$$ such that $$\coprod\limits_\alpha F^\beta_\alpha \to F_\alpha$$ is an effective epimorphism and for each $\beta,$ $$\left(\cF/F_\alpha\right)/f^\beta_\alpha \in \sl\left(\sL\right).$$ It suffices to show that the canonical map
%\begin{equation}\label{eq:epiepiss}
%\coprod\limits_{\alpha,\beta} F^\beta_\alpha \to 1
%\end{equation}
%Note by Lemma \ref{lem:slicek}, $\left(\cF/F_\alpha\right)/f^\beta_\alpha \simeq \cF/F^\beta_\alpha$. Observe now that by \cite{htt}, Corollary 6.2.3.11, the induced map $$\coprod\limits_{\alpha,\beta} F^\beta_\alpha \to \coprod\limits_\alpha F_\alpha$$ is an effective epimorphism, and hence, since composites of effective epimorphisms are effective epimorphisms (Corollary 6.2.3.12 in \cite{htt}) it follows that (\ref{eq:epiepiss}) is an effective epimorphism as desired.
%\end{proof}

\begin{prop}
$\sl\left(\cosl\left(\sL\right)\right) \subseteq \cosl\left(\sl\left(\sL\right)\right).$
\end{prop}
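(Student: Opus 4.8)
The plan is to unwind the two closure operations and reduce everything to the stability of \'etale maps under pullback. Take an object $\cE$ of $\sl\left(\cosl\left(\sL\right)\right)$. By definition of $\sl$, there is an \'etale morphism $f:\cE \to \cF$ in $\Str\left(\G\right)$ with $\cF \in \cosl\left(\sL\right)$. The goal is to produce an \'etale covering family of $\cE$ whose members all lie in $\sl\left(\sL\right)$, which is exactly the condition for membership in $\cosl\left(\sl\left(\sL\right)\right)$.

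First I would dispose of the degenerate case where $\cF$ is the initial $\G$-topos: an \'etale morphism into the initial topos forces $\cE$ to be initial as well, and the initial $\G$-topos belongs to $\cosl\left(\sl\left(\sL\right)\right)$ by definition, so there is nothing to prove. Otherwise, by the definition of $\cosl$, the object $\cF$ admits an \'etale covering family $\left(\cF_\alpha \to \cF\right)_\alpha$ with each $\cF_\alpha \in \sL$.

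The key step is to pull this covering family back along $f$. By Corollary \ref{cor:2.3.20}, \'etale morphisms in $\Str\left(\G\right)$ are stable under pullback and the pullbacks carry the expected restricted structure sheaves, so for each $\alpha$ the fibered product $\cE \times_\cF \cF_\alpha$ exists in $\Str\left(\G\right)$ and both projections are \'etale. By the pretopology axiom established in Proposition \ref{prop:etexists} (stability of \'etale covering families under pullback), the family $\left(\cE \times_\cF \cF_\alpha \to \cE\right)_\alpha$ is then an \'etale covering family of $\cE$. It remains to observe that each $\cE \times_\cF \cF_\alpha$ lies in $\sl\left(\sL\right)$: the projection $\cE \times_\cF \cF_\alpha \to \cF_\alpha$ is \'etale, being the pullback of the \'etale map $f$, and its target $\cF_\alpha$ is an object of $\sL$, which is precisely the defining condition for membership in $\sl\left(\sL\right)$. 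Hence $\cE$ is covered by objects of $\sl\left(\sL\right)$, so $\cE \in \cosl\left(\sl\left(\sL\right)\right)$, as desired.

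The argument is essentially formal, and the points requiring care are bookkeeping rather than genuine obstacles. The main thing to watch is that every pullback be taken in $\Str\left(\G\right)$ with the correct restricted structure sheaves, and not merely in $\T$ at the level of underlying $\i$-topoi; this is exactly what Corollary \ref{cor:2.3.20} guarantees. Together with the edge case of the initial $\G$-topos noted above, this completes the plan.
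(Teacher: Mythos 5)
Correct, and essentially the paper's own argument in external form: where you pull back the covering family along $f$ and invoke Corollary \ref{cor:2.3.20} together with the pretopology stability from Proposition \ref{prop:etexists}, the paper performs the same computation internally, writing the object as $\cL/T$ with $\cL \in \cosl\left(\sL\right)$ and identifying each pullback $\cL/T \times_{\cL} \cL/T_\alpha$ by hand as $\cL/\left(T \times T_\alpha\right) \simeq \left(\cL/T_\alpha\right)/W_\alpha$ using products in $\cL$ and Lemma \ref{lem:slicek}. The covering family of $\cE$ by slices of objects of $\sL$ is the same in both cases (and your separate treatment of the initial $\G$-topos is harmless but redundant, since the empty family already covers it).
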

\begin{proof}
Suppose that $\cL \in \cosl\left(\sL\right)$ and $T \in \cL_0$. Let $\left(T_\alpha\right)_{\alpha \in A}$ be a set of objects such that $$\coprod\limits_\alpha T_\alpha \to 1$$ is an effective epimorphism and such that each $\cL/T_\alpha$ is in $\sL$. Then, since effective epimorphisms are stable under pullback, the projection map $$T \times \coprod\limits_\alpha T_\alpha \to T$$ is an effective epimorphism. Each projection
\begin{equation*}\label{eq:projection}
pr^1_\alpha:T \times T_\alpha \to T
\end{equation*}
is an object $V_\alpha$ in the slice $\i$-topos $\cL/T$ and it follows that the canonical map $$\coprod\limits_\alpha V_\alpha \to 1_{\cL/T}=id_T$$ is an effective epimorphism. Note that we also have the projections
\begin{equation*}
pr^2_\alpha:T \times T_\alpha \to T_\alpha,
\end{equation*}
which are objects $W_\alpha$ in the slice topos $\left(\cL/T_\alpha\right).$ By Lemma \ref{lem:slicek}, for each $\alpha$, one has $$\left(\cL/T\right)/V_\alpha \simeq \cL/\left(T \times T_\alpha\right)\simeq \left(\cL/T_\alpha\right)/W_\alpha.$$
So each $\left(\cL/T\right)/V_\alpha \in \sl\left(\sL\right),$ showing that $\cL/T$ is in $\cosl\left(\sl\left(\sL\right)\right).$
\end{proof}

\begin{cor}\label{cor:stabslic}
$\sl\left(\overline \sL\right)=\overline \sL.$
\end{cor}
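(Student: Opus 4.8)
The plan is to prove the two inclusions $\overline{\sL} \subseteq \sl\left(\overline{\sL}\right)$ and $\sl\left(\overline{\sL}\right) \subseteq \overline{\sL}$ separately, relying entirely on the formal properties of the operations $\sl$ and $\cosl$ established above, rather than on any new geometric input.

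For the first inclusion, I would observe that every equivalence is an \'etale morphism, so that for any full subcategory $\sU$ of $\Str\left(\G\right)$ one automatically has $\sU \subseteq \sl\left(\sU\right)$: an object $\left(\cE,\O_\cE\right)$ of $\sU$ is of the form $\left(\cE,\mathrm{id}^*\O_\cE\right)$ with $\mathrm{id}$ \'etale, hence lies in $\sl\left(\sU\right)$. Applying this with $\sU = \overline{\sL}$ immediately gives $\overline{\sL} \subseteq \sl\left(\overline{\sL}\right)$.

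The reverse inclusion is where the previously established lemmas do the work. Expanding the definition of the \'etale closure, one has $\sl\left(\overline{\sL}\right) = \sl\left(\cosl\left(\sl\left(\sL\right)\right)\right)$. The key move is to apply the preceding proposition, which asserts $\sl\left(\cosl\left(\mspace{3mu}\cdot\mspace{3mu}\right)\right) \subseteq \cosl\left(\sl\left(\mspace{3mu}\cdot\mspace{3mu}\right)\right)$, with its input category taken to be $\sl\left(\sL\right)$ rather than $\sL$; this yields $\sl\left(\cosl\left(\sl\left(\sL\right)\right)\right) \subseteq \cosl\left(\sl\left(\sl\left(\sL\right)\right)\right)$. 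I would then invoke the idempotency of $\sl$ recorded in Remark \ref{rmk:slclsd}, namely $\sl\left(\sl\left(\sL\right)\right) = \sl\left(\sL\right)$, to rewrite the right-hand side as $\cosl\left(\sl\left(\sL\right)\right) = \overline{\sL}$. Combining the two inclusions then finishes the proof.

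The argument is purely formal, so there is no serious analytic obstacle; the only thing that requires care is the bookkeeping of which auxiliary result to feed which category into. In particular, the crux is recognizing that the preceding proposition should be applied to $\sl\left(\sL\right)$ and that the resulting double application of $\sl$ collapses via the idempotency $\sl \circ \sl = \sl$ of Remark \ref{rmk:slclsd}.
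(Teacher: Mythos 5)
Your proof is correct and is essentially the argument the paper intends: the inclusion $\sl\left(\overline{\sL}\right) \subseteq \overline{\sL}$ follows from the preceding proposition applied to $\sl\left(\sL\right)$ together with the idempotency $\sl\left(\sl\left(\sL\right)\right)=\sl\left(\sL\right)$ of Remark \ref{rmk:slclsd}, and the reverse inclusion is immediate since identities are \'etale. The paper leaves this deduction implicit, and you have supplied exactly the intended bookkeeping.
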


\begin{cor}\label{cor:etclscls}
The \'etale closure  of $\overline \sL$ is $\overline \sL$ itself.
\end{cor}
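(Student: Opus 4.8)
The plan is to reduce the statement to the two stabilization results already established, since by Definition \ref{dfn:letendue} the \'etale closure of any full subcategory $\sU$ of $\Str\left(\G\right)$ is \emph{by definition} the composite $\cosl\left(\sl\left(\sU\right)\right)$. Applying this with $\sU=\overline{\sL}$, the \'etale closure of $\overline{\sL}$ is $\cosl\left(\sl\left(\overline{\sL}\right)\right)$, so it suffices to evaluate this composite.

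First I would invoke Corollary \ref{cor:stabslic}, which asserts $\sl\left(\overline{\sL}\right)=\overline{\sL}$; this collapses the inner operation. Then I would apply the corollary to Proposition \ref{prop:coslcosl}, namely $\cosl\left(\overline{\sL}\right)=\overline{\sL}$, which collapses the outer operation. Chaining the two equalities gives
$$\cosl\left(\sl\left(\overline{\sL}\right)\right)=\cosl\left(\overline{\sL}\right)=\overline{\sL},$$
which is exactly the claim.

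There is essentially no obstacle here: all the content lives in the two cited corollaries, and the corollary amounts to observing that both constituent operations $\sl\left(-\right)$ and $\cosl\left(-\right)$ have already been shown to fix $\overline{\sL}$. The only point requiring care is the bookkeeping of definitions: one must confirm that the operator ``\'etale closure'' unwinds precisely to $\cosl\circ\sl$ as fixed in Definition \ref{dfn:letendue}, and that Corollary \ref{cor:stabslic} together with the corollary to Proposition \ref{prop:coslcosl} are exactly the statements $\sl\left(\overline{\sL}\right)=\overline{\sL}$ and $\cosl\left(\overline{\sL}\right)=\overline{\sL}$ for the specific subcategory $\overline{\sL}$ in question. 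Since both are proved for $\overline{\sL}$ directly, no further generality is needed and the argument is a direct substitution.
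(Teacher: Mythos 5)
Your proposal is correct and is precisely the argument the paper intends (the corollary is stated without a written proof, but the intended chain is exactly yours): unwinding Definition \ref{dfn:letendue}, the \'etale closure of $\overline{\sL}$ is $\cosl\left(\sl\left(\overline{\sL}\right)\right)$, which collapses to $\cosl\left(\overline{\sL}\right)=\overline{\sL}$ by Corollary \ref{cor:stabslic} and the corollary to Proposition \ref{prop:coslcosl}. No gaps; the bookkeeping you flag is the only content, and you handle it correctly.
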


\begin{rmk}\label{rmk:2352}
By Corollary \ref{cor:2352}, it follows that for each object $\left(\cE,\O_\cE\right)$ in $\overline{\sL},$ $\overline{\sL}^{\et}/\cE$ is equivalent to the underlying $\infty$-category of $\cE$.
\end{rmk}

\begin{prop}\label{prop:etalestable}
For any subcategory $\sD$ of $\Str\left(\G\right),$ the following are equivalent:
\begin{itemize}
\item[1)] $\cosl\left(\sD\right)=\overline \sD$
\item[2)] $\sl\left(\cosl\left(\sD\right)\right) = \cosl\left(\sD\right).$
\item[3)] $\sl\left(\sD\right) \subseteq \cosl\left(\sD\right)$
\end{itemize}
\end{prop}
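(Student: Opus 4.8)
The plan is to prove the three conditions equivalent by establishing the cyclic chain of implications $1) \Rightarrow 2) \Rightarrow 3) \Rightarrow 1)$. The whole argument is formal manipulation of the two operations $\sl(-)$ and $\cosl(-)$, so first I would record the elementary properties on which everything rests. Both operations are \emph{monotone}: if $\sD_1 \subseteq \sD_2$ are full subcategories of $\Str(\G)$ then $\sl(\sD_1) \subseteq \sl(\sD_2)$ and $\cosl(\sD_1) \subseteq \cosl(\sD_2)$, which is immediate from the definitions. Next I would note the two inflationary inclusions $\sD \subseteq \sl(\sD)$ and $\sD \subseteq \cosl(\sD)$: the former holds because any object of $\sD$ is of the form $(\cE/E, \O_\cE|_{E})$ with $E = 1_\cE$ the terminal object, and the latter because every object admits the singleton \'etale covering family given by its identity morphism (an equivalence is \'etale, cf. Proposition \ref{prop:etexists} i). Finally I would invoke the idempotency results already in hand: $\cosl(\cosl(\sD)) = \cosl(\sD)$ (Proposition \ref{prop:coslcosl}) and $\sl(\overline{\sD}) = \overline{\sD}$ (Corollary \ref{cor:stabslic}), together with $\overline{\sD} = \cosl(\sl(\sD))$ by definition.

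With these in place the implications are short. For $1) \Rightarrow 2)$, assuming $\cosl(\sD) = \overline{\sD}$ I would compute $\sl(\cosl(\sD)) = \sl(\overline{\sD}) = \overline{\sD} = \cosl(\sD)$, using Corollary \ref{cor:stabslic}. For $2) \Rightarrow 3)$, assuming $\sl(\cosl(\sD)) = \cosl(\sD)$, I would apply monotonicity of $\sl$ to the inclusion $\sD \subseteq \cosl(\sD)$ to get $\sl(\sD) \subseteq \sl(\cosl(\sD)) = \cosl(\sD)$, which is exactly $3)$.

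The implication $3) \Rightarrow 1)$ is where the two idempotency statements do the real work, and is the step I expect to be the main (though still mild) obstacle, since it is the only one requiring both inclusions to be checked separately. Assuming $\sl(\sD) \subseteq \cosl(\sD)$, for the inclusion $\overline{\sD} \subseteq \cosl(\sD)$ I would write $\overline{\sD} = \cosl(\sl(\sD)) \subseteq \cosl(\cosl(\sD)) = \cosl(\sD)$, using monotonicity of $\cosl$ on the hypothesis and then Proposition \ref{prop:coslcosl} to absorb the outer $\cosl$. For the reverse inclusion $\cosl(\sD) \subseteq \overline{\sD}$ I would apply monotonicity of $\cosl$ to $\sD \subseteq \sl(\sD)$, obtaining $\cosl(\sD) \subseteq \cosl(\sl(\sD)) = \overline{\sD}$. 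Combining the two gives $\cosl(\sD) = \overline{\sD}$, closing the cycle. The only points demanding any care are confirming that $\sl$ and $\cosl$ genuinely respect inclusions and that the inflationary inclusions hold; everything else is bookkeeping with the previously established idempotency lemmas.
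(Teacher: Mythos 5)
Your proof is correct and follows essentially the same route as the paper's: the cyclic chain $1) \Rightarrow 2) \Rightarrow 3) \Rightarrow 1)$, using Corollary \ref{cor:stabslic} for the first implication and Proposition \ref{prop:coslcosl} together with monotonicity of $\cosl$ for the last. You are somewhat more explicit than the paper, which leaves the inflationary inclusions and the always-true containment $\cosl\left(\sD\right) \subseteq \cosl\left(\sl\left(\sD\right)\right) = \overline{\sD}$ implicit in the step $3) \Rightarrow 1)$, but the substance is identical.
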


\begin{proof}
By Corollary  \ref{cor:etclscls}, $1) \implies 2).$ $2 \implies 3)$ is obvious. Suppose $3)$ holds. Then by Proposition \ref{prop:coslcosl}
$$\cosl\left(\sD\right)=\cosl\left(\cosl\left(\sL\right)\right) \supseteq \cosl\left(\sl\left(\sD\right)\right)=\overline \sD.$$ So $3) \implies 1).$
\end{proof}

\begin{dfn}
For any subcategory $\sD$ of $\Str\left(\G\right)$ satisfying any of the equivalent conditions of Proposition \ref{prop:etalestable}, $\sD$ is said to be an \textbf{\'etale blossom}. If $\sL$ is another subcategory such that $$\lbar=\dbar,$$ $\sD$ is said to be an \textbf{\'etale blossom for $\lbar.$}
\end{dfn}

\begin{rmk}
For any $\sL,$ $\sl\left(\sL\right)$ is an \'etale blossom for $\overline \sL.$
\end{rmk}

The following proposition is a special case of Proposition 1.2.13.8 of \cite{htt}:

\begin{prop}\label{prop:htt1.2.13.8}
Let $C$ be an object in an $\infty$-category $\C$ and suppose that $$p:K \to \C/C$$ is a functor such that the composite $p_0$ of $$K \stackrel{p}{\longrightarrow} \C/C \to \C$$ has a colimit. Then
\begin{itemize}
\item[i)] A colimit for $p$ exists and this colimit is preserved by the projection $$\C/C \to \C.$$
\item[ii)] A cocone $$\xymatrix{K^{\triangleright} \ar[r]^-{\mu} & \C/C \\ K \ar@{^{(}->}[u] \ar[ru]_-{p} & &}$$ is colimiting if and only if the composite $$\xymatrix{K^{\triangleright} \ar[r]^-{\mu} & \C/C \ar[r] & \C \\ K \ar@{^{(}->}[u] \ar[rru]_-{p_0} & &}$$ is.
\end{itemize}
\end{prop}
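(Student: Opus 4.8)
The plan is to prove the statement by reducing colimits to initial objects of cocone $\infty$-categories and then exploiting an ``associativity of slicing.'' Recall that a colimit of a diagram is precisely an initial object of its associated $\infty$-category of cocones; write $\C_{p_0/}$ for the $\infty$-category of cocones under $p_0$ (the co-slice, in the notation of \cite{htt}), so that the hypothesis that $p_0$ has a colimit is exactly the assertion that $\C_{p_0/}$ has an initial object $\gamma_0$, whose cone point is $\colim p_0$. Likewise, a colimit of $p$ in $\C/C$ is an initial object of $\left(\C/C\right)_{p/}$.

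The key step is to identify the cocone $\infty$-category of $p$ with a slice of the cocone $\infty$-category of $p_0$. Transposing $p:K \to \C/C$ across the slice adjunction $\Hom\left(Y,\C/C\right) \cong \Hom_C\left(Y \star \Delta^0, \C\right)$ produces a map $\bar p:K^{\triangleright} \to \C$ sending the cone point to $C$ and restricting to $p_0$ on $K$; this $\bar p$ is nothing but a distinguished cocone under $p_0$ with vertex $C$, i.e. an object $\gamma_C \in \C_{p_0/}$. I claim there is a natural equivalence
$$\left(\C/C\right)_{p/} \;\simeq\; \left(\C_{p_0/}\right)/\gamma_C.$$
Indeed, unwinding both sides: an object of the left-hand side is a vertex $\left(x \to C\right)$ of $\C/C$ together with a cocone $p \Rightarrow \left(x\to C\right)$, which is the same data as a cocone $p_0 \Rightarrow x$ in $\C$ equipped with a compatible map $x \to C$; an object of the right-hand side is a cocone $p_0 \Rightarrow x$ together with a map of cocones to $\gamma_C$, i.e. again a map $x \to C$ compatible with the cocone structures. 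The two descriptions agree, and the comparison is made precise, exactly as in Lemma \ref{lem:slicek} and its dual, by repeatedly applying the join/slice adjunction of \cite{htt}, Section 1.2.9.

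Granting this identification, both parts follow formally. For part i): since $\gamma_0$ is initial in $\C_{p_0/}$, the object $\gamma_0 \to \gamma_C$ (using the essentially unique map out of an initial object) is initial in $\left(\C_{p_0/}\right)/\gamma_C$, so $p$ has a colimit in $\C/C$; its cone point is the vertex of $\gamma_0$ equipped with its map to $C$, namely $\left(\colim p_0 \to C\right)$, whence the projection $q:\C/C \to \C$ carries this colimit to $\colim p_0$, proving preservation. For part ii): a cocone $\mu$ over $p$ corresponds under the identification to an object of $\left(\C_{p_0/}\right)/\gamma_C$ whose image in $\C_{p_0/}$ is the cocone $q \circ \mu$; one checks directly that an object of a slice $\D/d$ is initial if and only if its image in $\D$ is initial (the initial object of $\D/d$ is $\left(\emptyset \to d\right)$ for $\emptyset$ initial in $\D$, and conversely any $x \to d$ with $x$ initial is initial in $\D/d$ since $\Hom_\D\left(x,d\right)$ is then contractible). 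Thus $\mu$ is colimiting if and only if $q\circ\mu$ is.

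The main obstacle is the equivalence $\left(\C/C\right)_{p/} \simeq \left(\C_{p_0/}\right)/\gamma_C$: while conceptually transparent, making it rigorous at the level of simplicial sets requires careful bookkeeping of the join operation and of the restriction conditions defining the two slice constructions, in the style of the explicit horn-filling argument of Lemma \ref{lem:slicek}. Once this is in place, everything else is formal manipulation of initial objects.
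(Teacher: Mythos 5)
The paper never proves this proposition: it is quoted, without argument, as a special case of Proposition 1.2.13.8 of \cite{htt}, so there is no in-paper proof to compare against; your proposal supplies an actual proof, and it is correct --- indeed it is essentially the standard argument for Lurie's result. Two points are worth tightening. First, the step you flag as the main obstacle, the identification $\left(\C/C\right)_{p/} \simeq \left(\C_{p_0/}\right)/\gamma_C,$ is in fact \emph{easier} than the horn-filling of Lemma \ref{lem:slicek}: it is a strict isomorphism of simplicial sets, by a representability computation. For any simplicial set $X,$ a map $X \to \left(\C/C\right)_{p/}$ is a map $K \star X \to \C/C$ restricting to $p$ on $K,$ i.e.\ (transposing) a map $\left(K \star X\right)\star \Delta^0 \to \C$ restricting to $\bar p$ on $K \star \Delta^0;$ a map $X \to \left(\C_{p_0/}\right)/\gamma_C$ is a map $X \star \Delta^0 \to \C_{p_0/}$ sending the cone point to $\gamma_C,$ i.e.\ a map $K \star \left(X \star \Delta^0\right) \to \C$ restricting to $\bar p$ on $K \star \Delta^0.$ Associativity of the join identifies the two sets naturally in $X,$ and the resulting isomorphism is visibly compatible with the two projections to $\C_{p_0/},$ which is exactly what part ii) requires; no horn-filling is needed. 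Second, your auxiliary lemma as stated --- an object of $\mathcal{D}/d$ is initial if and only if its image in $\mathcal{D}$ is initial --- is false without the hypothesis that $\mathcal{D}$ has an initial object: take $\mathcal{D}$ discrete on two objects $a,b$ and $d=a;$ then $\mathcal{D}/a \cong \Delta^0$ has an initial object while $a$ is not initial in $\mathcal{D}.$ The ``only if'' direction genuinely uses existence, via uniqueness of initial objects (and in the ``if'' direction one should note that $\Hom_{\mathcal{D}/d}\left(f,g\right)$ is the homotopy fiber of $\Hom_{\mathcal{D}}\left(x,y\right) \to \Hom_{\mathcal{D}}\left(x,d\right)$ over $f,$ so contractibility of \emph{both} targets is what is used). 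In your application $\mathcal{D}=\C_{p_0/}$ has the initial object $\gamma_0$ by hypothesis, so the argument goes through unchanged, but the lemma should carry that hypothesis. With these two points repaired, the proof is complete, and the same join manipulations (replacing the terminal $\Delta^0$ by a general diagram) prove the full statement of Proposition 1.2.13.8 of \cite{htt}, of which the paper's statement is the special case.
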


\begin{prop}\label{prop:colim1}
Let $F:K \to \C$ be a functor between $\infty$-categories and $$\xymatrix{K^{\triangleright} \ar[r]^-{\rho} & \C\\ K \ar@{^{(}->}[u] \ar[ru]_-{F} & &}$$
a colimiting cocone for $F$ with vertex $\rho\left(\infty\right)=C.$ Then $\rho$ induces a canonical lift of $F$ to $$\xymatrix{K \ar[r]^-{\tilde \rho} \ar[rd]_-{F} & \C/C \ar[d]^-{\pi}\\ & \C}$$ and the colimit of $\tilde \rho$ exists and is a terminal object.
\end{prop}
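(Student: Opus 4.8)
The first assertion is simply the translation between cocones and lifts to the slice. By the defining (join/slice) adjunction for $\C/C$, a cocone under $F$ with vertex $C$ is precisely a functor $K \to \C/C$ whose composite with the projection $\pi:\C/C \to \C$ equals $F$. Under this correspondence the given colimiting cocone $\rho$ produces the desired lift $\tilde\rho$, with $\tilde\rho\left(k\right)$ the object $F\left(k\right) \to C$ of $\C/C$ determined by the leg of $\rho$ at $k$; in particular $\pi \circ \tilde\rho = F$, giving the canonical lift of the statement.

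To produce the colimit of $\tilde\rho$, I would invoke Proposition \ref{prop:htt1.2.13.8} with $p = \tilde\rho$. Its composite $p_0 = \pi \circ \tilde\rho = F$ has a colimit by hypothesis, namely $C$ witnessed by $\rho$, so part (i) of that proposition guarantees that $\colim \tilde\rho$ exists and is preserved by $\pi$. In particular the colimit is an object of $\C/C$ lying over $C$, i.e.\ some morphism $C \to C$.

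It then remains to identify this colimit as the terminal object $id_C$ of $\C/C$. The plan is to exhibit an explicit colimiting cocone $\mu:K^{\triangleright} \to \C/C$ extending $\tilde\rho$ whose vertex is $id_C$. Using the slice adjunction again, such a $\mu$ is the same datum as a map $K \star \Delta^1 \to \C$; I would take the composite of $\rho:K^{\triangleright} = K \star \Delta^0 \to \C$ with the collapse $K \star \Delta^1 \to K \star \Delta^0$ induced by the unique map $\Delta^1 \to \Delta^0$. By construction this $\mu$ restricts to $\tilde\rho$ on $K$, has as its vertex the degenerate edge $id_C$, and satisfies $\pi \circ \mu = \rho$. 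Since $\rho$ is colimiting by hypothesis, part (ii) of Proposition \ref{prop:htt1.2.13.8} then shows that $\mu$ is itself a colimiting cocone, whence $\colim \tilde\rho \simeq id_C$. As $id_C$ is the terminal object of $\C/C$, this simultaneously identifies the colimit and proves it is terminal.

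The substantive input is entirely Proposition \ref{prop:htt1.2.13.8}; the only point demanding care is the bookkeeping with the join/slice adjunction, namely verifying that collapsing the cone point of $\rho$ along $\Delta^1 \to \Delta^0$ yields a cocone in $\C/C$ with restriction exactly $\tilde\rho$ and vertex exactly $id_C$. This is where one must be attentive to the coherences implicit in the quasicategorical slice construction, but it involves no genuine difficulty beyond unwinding the definitions.
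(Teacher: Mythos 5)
Your proof is correct, and while it shares the overall skeleton of the paper's argument (lift $\rho$ to $\tilde\rho$ via the join/slice adjunction, produce a cocone for $\tilde\rho$ with vertex $id_C$ projecting to $\rho$, then invoke Proposition \ref{prop:htt1.2.13.8}(ii)), the way you construct that cocone is genuinely different and is the crux of the proof in either version. The paper uses terminality of $id_C$ abstractly: since $id_C$ is terminal, the projection $\left(\C/C\right)/id_C \to \C/C$ is a trivial Kan fibration, one chooses a section $\sigma$, composes $\sigma \circ \tilde\rho$, adjoints to get $\widehat{\rho}:\left(K^{\triangleright},\infty\right) \to \left(\C/C,id_C\right)$, and then verifies $\pi \circ \widehat{\rho} = \rho$ via the naturality square for the counit of the join/slice adjunction. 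You instead build the cocone combinatorially, as the adjunct of $\rho \circ \left(K \star s\right):K \star \Delta^1 \to \C$ where $s:\Delta^1 \to \Delta^0$, using $\Delta^0 \star \Delta^0 \cong \Delta^1$ and functoriality of the join; the identities $\mu|_K = \tilde\rho$, $\mu\left(\infty\right) = id_C$ (the degenerate edge at $C$), and $\pi \circ \mu = \rho$ then hold strictly, by direct inspection of restrictions along $K \star \{1\}$, $\Delta^{\{0,1\}}$, and $K \star \{0\}$ respectively. Your route is choice-free and avoids both the trivial-fibration/section step and the counit-naturality verification, at the cost of some join bookkeeping (which you correctly flag as the only delicate point); the paper's route trades that bookkeeping for abstract inputs. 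One small looseness worth noting: after invoking part (i) you say the colimit is ``some morphism $C \to C$,'' but preservation by $\pi$ only gives an edge $X \to C$ with $X$ \emph{equivalent} to $C$; this does not affect anything, since your explicit colimiting cocone $\mu$ identifies the colimit as $id_C$ directly, and terminality of $id_C$ in $\C/C$ is the same standard fact the paper also uses without proof.
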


\begin{proof}
Note by \cite{Joyal}, for every simplicial set $X,$ the functor $$\left( \mspace{3mu} \cdot \mspace{3mu}\right)\star X:\Sset \to X/\Sset$$ has a right adjoint $sl$ which associates a map $p:X \to Y$ to a simplicial set $Y/p.$ In the case that $X=\Delta^0,$ $X/\Sset=\Sset_{*}$ is the category of pointed simplicial sets and if $v:\Delta^0 \to \C$ is an object of an $\infty$-category
$\C$, the simplicial set $sl\left(\C,v\right)$ is the $\infty$-slice category $\C/v.$ In this case, the left adjoint $\left( \mspace{3mu} \cdot \mspace{3mu}\right)\star \Delta^0$ is best denoted $\left( \mspace{3mu} \cdot \mspace{3mu}\right)^{\triangleright}$
as it associates a simplicial set to its right cone. Denote the co-unit of this adjunction by $\varepsilon.$ The cocone
$$\rho:K^{\triangleright} \to \C$$ by definition corresponds to a map $$\rho:K \star \Delta^{0} \to \C$$ such that
$\rho|_{\Delta^0}=C,$ i.e. a map of pointed simplicial sets $$\left(K^{\triangleright},\infty\right) \to \left(\C,C\right).$$
By adjunction, this corresponds to a map $\tilde \rho:K \to sl\left(\C,C\right)=\C/C$ such that $\pi \circ p =F,$
where $\pi:\C/C \to \C$ is the canonical projection. Consider the terminal object $id_C \in \C/C.$ Since it is terminal, the canonical projection
$$\lambda:\left(\C/C\right)/id_C \to \C/C$$
is a trivial Kan fibration, so we can choose a section $\sigma.$ Composition with $\sigma$ induces a map $$K \stackrel{\tilde \rho}{\longrightarrow} \C/C \stackrel{\sigma}{\longrightarrow} \left(\C/C\right)/id_C=sl\left(\left(\C/C\right),id_C\right)$$ which, by adjunction corresponds to a map of pointed simplicial sets
$$\widehat{\rho}:\left(K^{\triangleright},\infty\right) \to \left(\C/C,id_C\right),$$
such that the following diagram commutes: $$\xymatrix{K^{\triangleright} \ar[r]^-{\widehat{\rho}} & \C/C, \\ K \ar@{^{(}->}[u] \ar[ru]_-{\tilde \rho} & &}$$ i.e. it corresponds to a cocone for $\tilde \rho$ with vertex $id_C.$ Notice that $\lambda$ is the functor $sl$ applied to the map of pointed simplicial sets
$$\pi:\left(\C/C,id_C\right) \to \left(\C,C\right).$$
 Since $\sigma$ is a section of $\lambda,$ the naturality square for the co-unit $\varepsilon$ shows that $\pi \circ \widehat{\rho}=\rho,$ which is colimiting. Now, Proposition \ref{prop:htt1.2.13.8} implies that $\widehat{\rho}$ is also colimiting.
\end{proof}

\begin{lem}\label{lem:holmol}
Let $\sL$ be any full subcategory of $\Str^{\et}\left(\G\right).$ Then $\cosl\left(\sL\right)^{\et}$ is cocomplete and the inclusion $$\cosl\left(\sL\right)^{\et} \hookrightarrow \Str^{\et}\left(\G\right)$$ preserves colimits. Moreover, a cocone $$\xymatrix{K^{\triangleright} \ar[r]^-{\mu} & \cosl\left(\sL\right)^{\et} \\ K \ar@{^{(}->}[u] \ar[ru]_-{F} & &}$$ is colimiting if and only if the induced cocone $$K^{\triangleright} \longrightarrow \Str^{\et}\left(\G\right)$$ is.
\end{lem}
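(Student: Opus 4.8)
The plan is to deduce all three assertions from the single statement that the full subcategory $\cosl(\sL)^{\et}$ of $\Str^{\et}(\G)$ is stable under the small colimits that exist in $\Str^{\et}(\G)$ by Proposition \ref{prop:etcolims}. Granting this closure, the rest is formal: a full subcategory of a cocomplete $\icat$ which contains the $\Str^{\et}(\G)$-colimit of each of its diagrams is itself cocomplete, its inclusion preserves colimits, and a cocone over a diagram $F$ with vertex in $\cosl(\sL)$ is colimiting in $\cosl(\sL)^{\et}$ exactly when it is colimiting in $\Str^{\et}(\G)$ --- for the $\Str^{\et}(\G)$-colimit then has its vertex in $\cosl(\sL)$, so it also computes the colimit in the full subcategory, and any two colimiting cocones over $F$ agree up to equivalence. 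Thus everything reduces to showing: for $F: K \to \cosl(\sL)^{\et}$ with $K$ small, the colimit $\cE := \operatorname{colim}_{K} F$ taken in $\Str^{\et}(\G)$ lies in $\cosl(\sL)$.

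First I would dispose of the degenerate case: if $K$ is empty then $\cE$ is the initial $\G$-topos, which is in $\cosl(\sL)$ by definition, so I may assume $K$ nonempty and write $\cF_k := F(k)$. The colimiting cocone provides \'etale legs $\varepsilon_k: \cF_k \to \cE$, and under the equivalence $\Str^{\et}(\G)/\cE \simeq \cE$ of Corollary \ref{cor:235} each $\varepsilon_k$ corresponds to an object $E_k$ of the underlying $\i$-topos $\cE$. Next I would feed the colimiting cocone into Proposition \ref{prop:colim1}, applied in $\Str^{\et}(\G)$ with vertex $\cE$: the induced lift $K \to \Str^{\et}(\G)/\cE$ is exactly the diagram $k \mapsto E_k$, and its colimit in the slice is a terminal object. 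Transporting across $\Str^{\et}(\G)/\cE \simeq \cE$, which carries the terminal object $\mathrm{id}_\cE$ to $1_\cE$, this reads $\operatorname{colim}_{k} E_k \simeq 1_\cE$ in $\cE$.

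The heart of the argument is then to recognise $(\varepsilon_k: \cF_k \to \cE)_k$ as an \'etale covering family of $\cE$. By definition this means the map $\coprod_{k} E_k \to 1_\cE$ is an effective epimorphism; and since $\operatorname{colim}_{k} E_k \simeq 1_\cE$, this map is just the canonical comparison $\coprod_{k} E_k \to \operatorname{colim}_{k} E_k$, which is an effective epimorphism in any $\i$-topos. I would justify this last fact by applying the colimit-preserving truncation $\tau_{\le 0}$ and reducing to the elementary statement that in a $1$-topos a colimit is a quotient of the coproduct of its vertices (the same coproduct/effective-epimorphism bookkeeping used via \cite{htt} Lemma 6.2.3.13 in the proof of Proposition \ref{prop:etexists}).

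Finally I would refine this cover down into $\sL$. Each $\cF_k$ lies in $\cosl(\sL)$, so (after discarding any $\cF_k$ equal to the initial $\G$-topos) it admits an \'etale covering family $(\cL^j_k \to \cF_k)_j$ with every $\cL^j_k$ in $\sL$; composing with $\varepsilon_k$, using that composites of \'etale morphisms are \'etale (Corollary \ref{cor:compet}) and that \'etale covering families satisfy the transitivity axiom of a Grothendieck pretopology on $\Str(\G)$ (established just as in Proposition \ref{prop:etexists}), the family $(\cL^j_k \to \cE)_{k,j}$ is an \'etale covering family of $\cE$ with all members in $\sL$. Hence $\cE \in \cosl(\sL)$, which completes the closure statement and therefore the lemma. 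I expect the main obstacle to be precisely the identification of the colimit legs as a covering family --- i.e. combining Proposition \ref{prop:colim1} with the effective-epimorphism fact to pass from ``the $E_k$ have colimit $1_\cE$'' to ``the $E_k$ jointly cover $\cE$''; once that is in hand, the pretopology axioms and the generalities on colimit-closed full subcategories do the rest.
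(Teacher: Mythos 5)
Your proposal is correct and follows the paper's own argument essentially step for step: the paper likewise reduces everything to closure of $\cosl\left(\sL\right)^{\et}$ under colimits computed in $\Str^{\et}\left(\G\right)$, handles the empty diagram via the initial $\G$-topos, applies Proposition \ref{prop:colim1} together with the equivalence $\Str^{\et}\left(\G\right)/\cE \simeq \cE$ of Corollary \ref{cor:235} to identify the colimit of the objects $E_k$ with $1_\cE$, deduces that $\coprod_k E_k \to 1_\cE$ is an effective epimorphism, and concludes $\cE \in \cosl\left(\cosl\left(\sL\right)\right)=\cosl\left(\sL\right)$. The only cosmetic differences are that you sketch the effective-epimorphism fact via the colimit-preserving truncation $\tau_{\le 0}$ where the paper simply cites Lemma 6.2.3.13 of \cite{htt}, and that you refine the covering family down to objects of $\sL$ by pretopology transitivity where the paper instead invokes the idempotency $\cosl\left(\cosl\left(\sL\right)\right)=\cosl\left(\sL\right)$ of Proposition \ref{prop:coslcosl} (whose proof is exactly your refinement step); both substitutions are valid.
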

\begin{proof}
By definition, $\cosl\left(\sL\right)^{\et}$ contains the initial $\G$-topos. Notice that the canonical map $\emptyset \to \cE$ is always \'etale since it is induced by slicing over the initial object of $\cE,$ hence the initial $\G$-topos is an initial object in $\cosl\left(\sL\right)^{\et}.$
Now, suppose that $F:J \to \cosl\left(\sL\right)^{\et}$ is any functor from a non-empty small $\infty$-category $J$. Denote by $\cE$ the colimit of the functor $$J\stackrel{F}{\longrightarrow} \cosl\left(\sL\right)^{\et} \to \Str^{\et}\left(\G\right)$$ (which exists by Proposition \ref{prop:etcolims}). Let $$\rho:J^{\triangleright} \to \Str^{\et}\left(\G\right)$$ be a colimiting cocone on $\cE$. Then by Proposition \ref{prop:colim1}, there exist a lift $$\widetilde{F}:J \to \Str^{\et}\left(\G\right)/\cE$$ of $F$ over the canonical projection $$\Str^{\et}\left(\G\right)/\cE \to \Str^{\et}\left(\G\right),$$ such that $$\colim \widetilde{F} \simeq id_\cE.$$ Under the equivalence $\Str^{\et}\left(\G\right)/\cE \simeq \cE,$ the colimit of $\widetilde{F}$ is the terminal object $1$ and the components of the cocone for $\widetilde{F},$ $$\rho\left(i\right):F(i) \to \cE$$ correspond to objects $F_i \in \cE,$ such that each $\cE/F_i\simeq F(i)$ is in $\cosl\left(\sL\right)^{\et}.$ By \cite{htt}, Lemma 6.2.3.13, it follows that the induced map $\coprod\limits_{i \in  J_0} F_i \to 1$ is an effective epimorphism. Hence $\cE \in \cosl\left(\cosl\left(\sL\right)^{\et}\right)=\cosl\left(\sL\right)^{\et}$ and is a colimit for $F$.
\end{proof}

\begin{cor}
For any full subcategory $\sL$ of $\Str\left(\G\right),$ $\overline{\sL}^{\et}$ is cocomplete.
\end{cor}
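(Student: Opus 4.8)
The plan is to obtain this as an immediate consequence of Lemma \ref{lem:holmol}. By Definition \ref{dfn:letendue} one has $\overline{\sL} = \cosl\left(\sl\left(\sL\right)\right)$, and by the convention for the notation $\left(-\right)^{\et}$ (the full subcategory of $\Str^{\et}\left(\G\right)$ on the objects of a given subcategory) this gives $\overline{\sL}^{\et} = \cosl\left(\sl\left(\sL\right)\right)^{\et}$, the full subcategory of $\Str^{\et}\left(\G\right)$ on the objects of $\cosl\left(\sl\left(\sL\right)\right)$.

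The one point to check is that the hypotheses of Lemma \ref{lem:holmol} can be met. That lemma takes as input a full subcategory of $\Str^{\et}\left(\G\right)$, whereas $\sl\left(\sL\right)$ was defined as a full subcategory of $\Str\left(\G\right)$. The crucial observation is that the construction $\cosl$ depends only on the underlying collection of objects of its input: by definition $\cosl\left(\sD\right)$ consists of the initial $\G$-topos together with those objects admitting an \'etale covering family by objects of $\sD$, a condition that involves only which objects lie in $\sD$ and not the non-\'etale morphisms. Hence, passing to $\sl\left(\sL\right)^{\et}$, the full subcategory of $\Str^{\et}\left(\G\right)$ on the objects of $\sl\left(\sL\right)$, the category $\cosl\left(\sl\left(\sL\right)^{\et}\right)$ has the same objects as $\cosl\left(\sl\left(\sL\right)\right) = \overline{\sL}$, and therefore $\cosl\left(\sl\left(\sL\right)^{\et}\right)^{\et} = \overline{\sL}^{\et}$.

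With this identification in hand I would simply invoke Lemma \ref{lem:holmol} for the full subcategory $\sl\left(\sL\right)^{\et}$ of $\Str^{\et}\left(\G\right)$: the lemma asserts precisely that $\cosl\left(\sl\left(\sL\right)^{\et}\right)^{\et}$ is cocomplete, which is the desired statement.

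There is no genuine obstacle here; all the substantive work has already been carried out in Lemma \ref{lem:holmol}, whose proof constructs colimits in $\cosl\left(-\right)^{\et}$ by lifting them from the cocomplete category $\Str^{\et}\left(\G\right)$ (via Proposition \ref{prop:etcolims} and Proposition \ref{prop:colim1}) and then verifying that the resulting colimit still admits an \'etale cover by objects of the given subcategory. The present corollary is merely the specialization of that lemma to $\sl\left(\sL\right)^{\et}$.
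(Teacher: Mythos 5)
Your proof is correct and is essentially the argument the paper intends: the corollary is stated without proof as an immediate consequence of Lemma \ref{lem:holmol} applied with $\sl\left(\sL\right)$ in the role of the given subcategory, using $\overline{\sL}=\cosl\left(\sl\left(\sL\right)\right)$ from Definition \ref{dfn:letendue}. Your observation that $\cosl$ depends only on the underlying objects, so that one may legitimately feed $\sl\left(\sL\right)^{\et}\subseteq \Str^{\et}\left(\G\right)$ into the lemma and still obtain $\cosl\left(\sl\left(\sL\right)^{\et}\right)^{\et}=\overline{\sL}^{\et}$, correctly fills in the one bookkeeping point the paper leaves implicit.
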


%\begin{prop}\label{prop:nlocenuuf}
%If $\sL$ is $n$-localic, $\overline{\sL}=\cosl\left(\sl_n\left(\sL\right)\right).$
%\end{prop}
%\begin{proof}
%Since $\sl_n\left(\sL\right) \subset \sl\left(\sL\right),$ one clearly has $$\cosl\left(\sl_n\left(\sL\right)\right) \subset \cosl\left(\sl\left(\sL\right)\right).$$
%Suppose that $\cE \in \sl\left(\sL\right).$ Then there exist $\cL \in \sL$ and an object $T \in \cL,$ such that $$\cE \simeq \cL/T.$$ Since $\cL$ is $n$-localic, there exist an $n$-site $\left(\C,J\right)$ such that $$\cL \simeq \Sh_\i\left(\C,J\right).$$ It follows from \cite{htt} Proposition 5.1.5.3 that $T$ can be written as a colimit $$T \simeq \colim y\left(C_\alpha\right),$$ with each $y\left(C_\alpha\right)$ a (possibly sheafified) representable presheaf. In particular, each $y\left(C_\alpha\right)$ is $\left(n-1\right)$-truncated, so each $\cL/y\left(C_\alpha\right)$ is in $\sl_n\left(\sL\right).$ From the equivalence $$\Str\left(\G\right)^{\et}/\cL \simeq \cL,$$ it follows that $$\cL/T \simeq \colim \cL/y\left(C_\alpha\right)$$ is a colimit in $\Str\left(\G\right)^{\et}.$ From Lemma \ref{lem:holmol}, it follows that $\cL/T$ is in $\cosl\left(\sl_n\left(\sL\right)\right).$ Hence $$\sl\left(\sL\right) \subset \cosl\left(\sl_n\left(\sL\right)\right).$$ Therefore $$\overline{\sL} = \cosl\left(\sl\left(\sL\right)\right) \subset \cosl\left(\cosl\left(\sl_n\left(\sL\right)\right)\right)=\cosl\left(\sl_n\left(\sL\right)\right).$$
%\end{proof}

\begin{dfn}
A full subcategory $\C$ of an $\i$-category $\sD$ is said to \textbf{generate $\sD$ under colimits} if the smallest subcategory of $\sD$ containing $\C$ and closed under taking small colimits in $\sD$ is $\sD$ itself.
\end{dfn}

\begin{prop}\label{prop:cobloss}
If $\sD$ is a full subcategory of $\lbar$ such that $\sD^{\et}$ generates $\lbare$ under colimits, then $\sD$ is an \'etale blossom for $\lbar$. Similarly, if $\sD$ is a full subcategory of $\sl\left(\sL\right)$ such that $\sD^{\et}$ generates $\sl\left(\sL\right)^{\et}$ under colimits, then $\sD$ is an \'etale blossom for $\lbar$.
\end{prop}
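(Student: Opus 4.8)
The plan is to deduce both assertions from the single equality $\cosl\left(\sD\right) = \lbar$. Granting this, both conclusions follow formally: since $\sD \subseteq \sl\left(\sD\right)$, monotonicity of $\cosl$ gives $\cosl\left(\sD\right) \subseteq \cosl\left(\sl\left(\sD\right)\right) = \dbar$, while $\sl\left(\sD\right) \subseteq \sl\left(\lbar\right) = \lbar$ (Corollary \ref{cor:stabslic}) forces $\dbar = \cosl\left(\sl\left(\sD\right)\right) \subseteq \cosl\left(\lbar\right) = \lbar$, the last equality coming from $\cosl\left(\cosl\left(\sl\left(\sL\right)\right)\right) = \cosl\left(\sl\left(\sL\right)\right)$ (Proposition \ref{prop:coslcosl}). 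Hence $\lbar = \cosl\left(\sD\right) \subseteq \dbar \subseteq \lbar$, so $\dbar = \lbar$; and simultaneously $\cosl\left(\sD\right) = \cosl\left(\sl\left(\sD\right)\right)$, which is the first of the equivalent conditions of Proposition \ref{prop:etalestable}, so $\sD$ is an \'etale blossom. Thus $\sD$ is an \'etale blossom for $\lbar$.

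\textbf{The key closure lemma.} The heart of the matter is the following, which I would isolate as a lemma. Let $\mathscr{M}$ denote $\lbare$ (for the first statement) or $\sl\left(\sL\right)^{\et}$ (for the second). Then any small diagram $p : K \to \cosl\left(\sD\right)^{\et}$ admitting a colimit $C$ in $\mathscr{M}$ has $C \in \cosl\left(\sD\right)$. To prove this I would apply Proposition \ref{prop:colim1} to the colimiting cocone, obtaining a lift $\tilde p : K \to \mathscr{M}/C$ whose colimit is the terminal object $\mathrm{id}_C$. Under the slice equivalence $\mathscr{M}/C \simeq C$ — which is Remark \ref{rmk:2352} when $\mathscr{M} = \lbare$ and Corollary \ref{cor:2352} when $\mathscr{M} = \sl\left(\sL\right)^{\et}$ — the diagram $\tilde p$ corresponds to a diagram $k \mapsto P_k$ in the $\i$-topos $C$ with $C/P_k \simeq p\left(k\right)$ and $\colim_K P_k \simeq 1_C$. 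By \cite{htt} Lemma 6.2.3.13 the canonical map $\coprod_k P_k \to 1_C$ is an effective epimorphism, so $\left(p\left(k\right) \to C\right)_k$ is an \'etale covering family of $C$ by objects of $\cosl\left(\sD\right)$; since $\cosl\left(\cosl\left(\sD\right)\right) = \cosl\left(\sD\right)$ (Proposition \ref{prop:coslcosl}), this yields $C \in \cosl\left(\sD\right)$.

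\textbf{Conclusion.} With the lemma in hand I would finish as follows. In both cases $\cosl\left(\sD\right)^{\et}$ contains $\sD^{\et}$, because $\sD \subseteq \cosl\left(\sD\right)$. For the first statement, $\lbare$ is cocomplete with colimits computed in $\Str^{\et}\left(\G\right)$ by Lemma \ref{lem:holmol}, so the lemma shows $\cosl\left(\sD\right)^{\et}$ is a full subcategory of $\lbare$ closed under all small colimits; the generation hypothesis then gives $\cosl\left(\sD\right)^{\et} = \lbare$, and comparing objects yields $\cosl\left(\sD\right) = \lbar$. For the second statement I would set $\mathscr{C}_0 = \cosl\left(\sD\right)^{\et} \cap \sl\left(\sL\right)^{\et}$; the lemma shows $\mathscr{C}_0$ is closed under those colimits that exist in $\sl\left(\sL\right)^{\et}$, so the generation hypothesis forces $\mathscr{C}_0 = \sl\left(\sL\right)^{\et}$, i.e. $\sl\left(\sL\right) \subseteq \cosl\left(\sD\right)$. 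Then $\lbar = \cosl\left(\sl\left(\sL\right)\right) \subseteq \cosl\left(\cosl\left(\sD\right)\right) = \cosl\left(\sD\right) \subseteq \lbar$ by Proposition \ref{prop:coslcosl}, again giving $\cosl\left(\sD\right) = \lbar$.

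\textbf{Main obstacle.} The delicate point — and the reason a naive transitivity argument (first that $\sl\left(\sL\right)^{\et}$ generates $\lbare$, then compose) fails — is that colimits in $\sl\left(\sL\right)^{\et}$ need \emph{not} agree with colimits in $\Str^{\et}\left(\G\right)$: gluing generalized open subsets can produce an \'etendue lying outside $\sl\left(\sL\right)$ (for instance, gluing intervals of $\RR$ into a circle). Routing everything through Proposition \ref{prop:colim1} and the slice equivalences is exactly what sidesteps this difficulty: I never need the ambient colimit to be computed inside $\sl\left(\sL\right)^{\et}$, only the fact that \emph{any} colimiting cocone out of $\cosl\left(\sD\right)^{\et}$ is an \'etale covering family, which suffices to return the colimit to $\cosl\left(\sD\right)$ by the idempotence of $\cosl$.
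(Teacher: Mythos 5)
Your proof is correct, and its overall skeleton is the paper's: the paper likewise reduces both assertions to showing that $\cosl\left(\sD\right)^{\et}$ contains the relevant generating category, invoking Lemma \ref{lem:holmol} as a black box ($\cosl\left(\sD\right)^{\et}$ is cocomplete and the inclusion $\cosl\left(\sD\right)^{\et} \hookrightarrow \lbare$ preserves and reflects colimits, since both compute colimits in $\Str^{\et}\left(\G\right)$), and it leaves implicit the final deduction that $\cosl\left(\sD\right)=\lbar$ makes $\sD$ an \'etale blossom for $\lbar$, which you spell out correctly via Proposition \ref{prop:etalestable}, Corollary \ref{cor:stabslic}, and Proposition \ref{prop:coslcosl}. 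Where you genuinely add something is the second case, which the paper dismisses with ``the other case is analogous.'' As you observe, the analogy is delicate: $\sl\left(\sL\right)^{\et}$ is not closed under colimits in $\Str^{\et}\left(\G\right)$, so a colimiting cocone in $\sl\left(\sL\right)^{\et}$ need not be colimiting ambiently, and the preservation/reflection argument does not transfer verbatim. Your relative closure lemma is exactly the engine inside the paper's proof of Lemma \ref{lem:holmol} (Proposition \ref{prop:colim1}, the slice equivalences of Remark \ref{rmk:2352} and Corollary \ref{cor:2352}, and Lemma 6.2.3.13 of \cite{htt}), but reformulated relative to the ambient category $\mathscr{M}$ rather than $\Str^{\et}\left(\G\right)$: any colimiting cocone in $\mathscr{M}$ with vertex $C$ and values in $\cosl\left(\sD\right)^{\et}$ exhibits an \'etale covering family of $C$ by objects of $\cosl\left(\sD\right)$, after which idempotence of $\cosl$ closes the loop, with no comparison of subcategory and ambient colimits ever needed. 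This is precisely what makes the ``analogous'' case rigorous, and your diagnosis of why a naive transitivity-of-generation argument fails is accurate. One cosmetic point: your lemma's effective-epimorphism step tacitly treats $K$ as nonempty; for $K=\emptyset$ the vertex is the initial $\G$-topos, which lies in $\cosl\left(\sD\right)$ by definition (the paper's Lemma \ref{lem:holmol} handles this case separately), so correctness is unaffected.
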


\begin{proof}
Suppose $\sD^{\et}$ generates $\lbare$ under colimits. The inclusion $$\cosl\left(\sD\right)^{\et} \hookrightarrow \cosl\left(\lbar\right)^{\et}=\lbar^{\et}$$ preserves and reflects colimits and $\cosl\left(\sD\right)^{\et}$ is cocomplete. Since $\sD^{\et}$ generates $\lbare$ under colimits, this implies $\cosl\left(\sD\right)^{\et}$ contains $\lbare.$ The other case is analogous.
\end{proof}

\begin{prop}\label{prop:gens}
Denote by $\left(\sl\left(\sL\right)^{\et}\right)^{\coprod}$ the full subcategory of $\overline{\sL}^{\et}$ generated by $\sl\left(\sL\right)^{\et}$ under arbitrary coproducts. Then every object of $\overline{\sL}^{\et}$ is the colimit of a diagram of the form $$\Delta^{op} \stackrel{G_\bullet}{\longlongrightarrow} \left(\sl\left(\sL\right)^{\et}\right)^{\coprod} \hookrightarrow \overline{\sL}^{\et}.$$ %Moreover, one may arrange for $G_\bullet$ to be a groupoid object in the sense of \cite{htt}.
In particular, $\overline{\sL}^{\et}$ is generated under colimits by $\sl\left(\sL\right)^{\et}.$
%If $\sL$ is $n$-localic, then this groupoid object can be arranged to take values in $\left(\sl_n\left(\sL\right)^{\et}\right)^{\coprod}.$  (and by $\sl_n\left(\sL\right)$ when $\sL$ is $n$-localic).
\end{prop}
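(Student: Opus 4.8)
The plan is to exhibit each $\sL$-\'etendue as the colimit of the \v{C}ech nerve of a single \'etale cover by a coproduct of generalized open subsets, and to check that every term of this \v{C}ech nerve lies in $\left(\sl\left(\sL\right)^{\et}\right)^{\coprod}$. Fix an object $\cE$ of $\overline{\sL}^{\et}$. Since $\cE \in \overline{\sL}=\cosl\left(\sl\left(\sL\right)\right)$, by definition it admits an \'etale covering family $\left(\cF_\alpha \to \cE\right)_\alpha$ with each $\cF_\alpha$ in $\sl\left(\sL\right)$ (if $\cE$ is the initial $\G$-topos we take the empty family). Set $\cD := \coprod_\alpha \cF_\alpha$, which lies in $\left(\sl\left(\sL\right)^{\et}\right)^{\coprod}$ and whose formation agrees with the coproduct in $\T^{\et}$ by Lemma \ref{lem:holmol} and Proposition \ref{prop:etcolims}; write $p:\cD \to \cE$ for the resulting \'etale morphism. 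Exactly as in the proof of Proposition \ref{prop:topsubcan}, $p$ lifts to a \v{C}ech nerve $\tilde C\left(p\right)_\bullet:\Delta^{op} \to \T^{\et}/\cE$; since all of its structure maps are morphisms between objects \'etale over $\cE$, Proposition \ref{prop:6.3.5.9} shows each is \'etale, so composing with the forgetful functor yields a simplicial object $C\left(p\right)_\bullet:\Delta^{op} \to \T^{\et}$ augmented over $\cE$.

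Next I would decompose each term $C\left(p\right)_n = \cD \times_\cE \cdots \times_\cE \cD$. Under the equivalence $\chi_\cE:\cE \simeq \T^{\et}/\cE$ of Proposition \ref{lem:htt6.3.5.10}, $p$ corresponds to the canonical map $U := \coprod_\alpha E_\alpha \to 1_\cE$, where $E_\alpha \in \cE$ is the object classifying $\cF_\alpha \to \cE$; as $\chi_\cE$ is an equivalence it preserves the relevant limits and colimits, so $C\left(p\right)_\bullet$ corresponds to the \v{C}ech nerve of $U \to 1_\cE$ in $\cE$, whose $n$-th term is $U^{\times\left(n+1\right)}$. Because coproducts in the $\i$-topos $\cE$ are universal, $U^{\times\left(n+1\right)} \simeq \coprod_{\left(\alpha_0,\ldots,\alpha_n\right)} E_{\alpha_0}\times \cdots \times E_{\alpha_n}$, and transporting back gives
$$C\left(p\right)_n \simeq \coprod_{\left(\alpha_0,\ldots,\alpha_n\right)} \cF_{\alpha_0}\times_\cE \cdots \times_\cE \cF_{\alpha_n}.$$
Each factor is \'etale over $\cF_{\alpha_n}$, the projection being a pullback of an \'etale map (Corollary \ref{cor:pullstabet}); since $\cF_{\alpha_n}\in\sl\left(\sL\right)$ and $\sl\left(\sl\left(\sL\right)\right)=\sl\left(\sL\right)$ by Remark \ref{rmk:slclsd}, each factor lies in $\sl\left(\sL\right)$. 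Hence $C\left(p\right)_n$ is a coproduct of objects of $\sl\left(\sL\right)^{\et}$, i.e.\ an object of $\left(\sl\left(\sL\right)^{\et}\right)^{\coprod}$, and $C\left(p\right)_\bullet$ factors through $\left(\sl\left(\sL\right)^{\et}\right)^{\coprod}\subseteq\overline{\sL}^{\et}$ as required.

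It remains to identify the colimit. By definition of an \'etale covering family, $U = \coprod_\alpha E_\alpha \to 1_\cE$ is an effective epimorphism in $\cE$, so the argument of Proposition \ref{prop:topsubcan} (which applies to $\Str\left(\G\right)$ by Proposition \ref{prop:subcanong}) shows that the augmented cocone on $C\left(p\right)_\bullet$ is colimiting in $\T^{\et}$ with vertex $\cE$. By Proposition \ref{prop:etcolims} together with Lemma \ref{lem:holmol}, this cocone is then colimiting in $\overline{\sL}^{\et}$ as well, so $\cE \simeq \colim C\left(p\right)_\bullet$ there; this is the desired diagram $G_\bullet$. For the final assertion, every object of $\left(\sl\left(\sL\right)^{\et}\right)^{\coprod}$ is a coproduct, hence a colimit, of objects of $\sl\left(\sL\right)^{\et}$, so each $\sL$-\'etendue is a colimit of colimits of objects of $\sl\left(\sL\right)^{\et}$; thus $\sl\left(\sL\right)^{\et}$ generates $\overline{\sL}^{\et}$ under colimits. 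The main obstacle is the term-wise decomposition of the second paragraph: one must transport the coproduct and fibered-product bookkeeping faithfully across $\chi_\cE$ and verify, via the closure $\sl\left(\sl\left(\sL\right)\right)=\sl\left(\sL\right)$, that the iterated fibered products remain generalized open subsets rather than merely \'etendues.
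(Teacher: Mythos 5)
Your proof is correct and follows essentially the same route as the paper's: both take the \v{C}ech nerve of the map from the coproduct of an \'etale cover by objects of $\sl\left(\sL\right)$, use universality of colimits in $\cE$ under the slice equivalence to decompose each term as a coproduct of iterated fibered products lying in $\sl\left(\sL\right)$ (via pullback-stability of \'etale maps and $\sl\left(\sl\left(\sL\right)\right)=\sl\left(\sL\right)$ from Remark \ref{rmk:slclsd}), and conclude using Lemma \ref{lem:holmol} and Proposition \ref{prop:etcolims}. The only cosmetic difference is that you transport the bookkeeping along $\T^{\et}/\cE \simeq \cE$ and then descend the colimit statement to $\overline{\sL}^{\et}$, whereas the paper works directly with the equivalence $\overline{\sL}^{\et}/\cE \simeq \cE$ and the colimit-preserving projection to $\overline{\sL}^{\et}$.
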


\begin{proof}
Let $\cE$ be an object in $\overline{\sL}.$ Then there exists a set of objects $\left(E_\alpha \in \cE\right)_{\alpha \in A}$ such that $$\rho:\coprod E_\alpha \to 1_\cE$$ is an effective epimorphism,
and such that each $\cE/E_\alpha \in \sl\left(\sL\right).$ Since $1_{\cE}$ is the colimit of the \v{C}ech nerve of $\rho,$ and the composite $$\cE \simeq \overline \sL^{\et}/\cE \to \overline \sL^{\et}$$
is colimit preserving, it follows that $\cE$ is the colimit of the \v{C}ech nerve $C_\bullet$ of $$\coprod{\cE/E_\alpha} \to \cE.$$ Since $\cE \simeq \overline \sL^{\et}/\cE,$ and colimits are universal in $\cE,$ we have for each $n,$
\begin{eqnarray*}
C_n &\simeq& \left(\coprod \cE/E_\alpha\right) \times_{\cE} \cdots \times_{\cE} \left(\coprod \cE/E_\alpha\right)\\
 &\simeq& \cE/\left(\coprod E_\alpha \times \cdots \times\coprod E_\alpha\right)\\
 &\simeq& \cE/\left(\coprod\limits_{\tiny \left(\alpha_1,\cdots,\alpha_n\right) \in A^n} \left(E_{\alpha_1} \times \cdots \times E_{\alpha_n}\right)
\right)\\
&\simeq& \coprod\limits_{\tiny \left(\alpha_1,\cdots,\alpha_n\right) \in A^n} \cE/\left(E_{\alpha_1} \times \cdots \times E_{\alpha_n}\right).\\
\end{eqnarray*}
Consider the map $pr_1:E_{\alpha_1} \times \cdots \times E_{\alpha_n} \to E_{\alpha_1}$. Under the equivalence $\cE \simeq \overline \sL^{\et}/\cE,$ it corresponds to an \'etale map $$\cE/\left(E_{\alpha_1} \times \cdots \times E_{\alpha_n}\right) \to \cE/E_{\alpha_1}.$$ Since each $\cE/E_{\alpha_i}$ is in $\sl\left(\sL\right)^{\et},$ so is each $\cE/\left(E_{\alpha_1} \times \cdots \times E_{\alpha_n}\right)$ by Remark \ref{rmk:slclsd}. Hence, each $C_n \simeq \coprod \cE/\left(E_{\alpha_1} \times \cdots \times E_{\alpha_n}\right) \in \left(\sl\left(\sL\right)^{\et}\right)^{\coprod},$ and $\cE \simeq \colim C_\bullet.$
\end{proof}

%\begin{rmk}
%If $\sL$ is $n$-localic, and $\cE$ admits an $\left(n-1\right)$-\'etale covering by objects of $\sl_n\left(\sL\right),$ then we may assume each of the $E_\alpha$ in the preceding proof to be $\left(n-1\right)$-truncated, and by
%hence by \cite{htt}, Proposition 5.5.6.5, so is $$E_{\alpha_1} \times \cdots \times E_{\alpha_n}.$$ By \cite{htt}, Proposition 5.5.6.14, so is the morphism $pr_1.$ Hence each $\cE/\left(E_{\alpha_1} \times \cdots \times E_{\alpha_n}\right)$ is in $\sl_n\left(\sL\right).$ It follows that $C_\bullet$ factors through $\left(\sl_n\left(\sL\right)^{\et}\right)^{\coprod}.$
%\end{rmk}

\begin{thm}\label{thm:cosluniv}
Let $\sD$ be an \'etale blossom. Then $\cosl\left(\sD\right)^{\et}$ is the smallest subcategory of $\Str\left(\G\right)^{\et}$ containing $\sl\left(\sD\right)$ such that its inclusion into $\Str\left(\G\right)^{\et}$ preserves and reflects colimits.
\end{thm}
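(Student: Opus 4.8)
The plan is to exploit the blossom hypothesis to reduce everything to two results already in hand: Lemma \ref{lem:holmol} and Proposition \ref{prop:gens}. The crucial preliminary observation is that, since $\sD$ is an \'etale blossom, condition (1) of Proposition \ref{prop:etalestable} gives $\cosl\left(\sD\right) = \dbar = \cosl\left(\sl\left(\sD\right)\right)$, so that $\cosl\left(\sD\right)^{\et} = \cosl\left(\sl\left(\sD\right)\right)^{\et} = \dbar^{\et}$. This identification is what lets me invoke Lemma \ref{lem:holmol} with $\sL = \sl\left(\sD\right)$ and Proposition \ref{prop:gens} with $\sL = \sD$. I would organize the argument in two halves: first verify that $\cosl\left(\sD\right)^{\et}$ actually enjoys the two stated properties, and then prove that it is contained in any other subcategory with those properties.

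For the first half, containment is immediate from condition (3) of Proposition \ref{prop:etalestable}, which gives $\sl\left(\sD\right) \subseteq \cosl\left(\sD\right)$ and hence $\sl\left(\sD\right)^{\et} \subseteq \cosl\left(\sD\right)^{\et}$ inside $\Str\left(\G\right)^{\et}$. That the inclusion $\cosl\left(\sD\right)^{\et} \hookrightarrow \Str^{\et}\left(\G\right)$ preserves and reflects colimits, and that $\cosl\left(\sD\right)^{\et}$ is cocomplete, is then exactly the content of Lemma \ref{lem:holmol} applied to $\sL = \sl\left(\sD\right)$: the ``if and only if'' statement on cocones there supplies preservation (forward direction) and reflection (backward direction) at once. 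Here I use the identification $\cosl\left(\sl\left(\sD\right)\right)^{\et} = \cosl\left(\sD\right)^{\et}$ noted above, and that $\Str^{\et}\left(\G\right)$ is cocomplete by Proposition \ref{prop:etcolims}.

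For minimality, let $\C$ be any subcategory of $\Str\left(\G\right)^{\et}$ containing $\sl\left(\sD\right)$ whose inclusion into $\Str^{\et}\left(\G\right)$ preserves and reflects colimits; in the present cocomplete setting this amounts to $\C$ being closed under the formation of small colimits in $\Str^{\et}\left(\G\right)$. I would fix an object $\cE$ of $\cosl\left(\sD\right)^{\et} = \dbar^{\et}$ and apply Proposition \ref{prop:gens} to obtain a simplicial diagram $G_\bullet : \Delta^{op} \to \left(\sl\left(\sD\right)^{\et}\right)^{\coprod} \hookrightarrow \dbar^{\et}$ with $\cE \simeq \colim G_\bullet$. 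Each value $G_n$ is a coproduct of objects of $\sl\left(\sD\right)^{\et}$, so lies in $\C$ because $\C$ contains $\sl\left(\sD\right)$ and is closed under coproducts; thus $G_\bullet$ is a diagram in $\C$, and its colimit (formed in $\Str^{\et}\left(\G\right)$) again lies in $\C$. Since $\cE$ is arbitrary this yields $\cosl\left(\sD\right)^{\et} \subseteq \C$, which together with the first half establishes that $\cosl\left(\sD\right)^{\et}$ is the smallest such subcategory.

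The one point that I expect to require genuine care is the compatibility of the several ambient notions of colimit at play: those computed in $\C$, in $\cosl\left(\sD\right)^{\et}$, and in $\Str^{\et}\left(\G\right)$. All three must be shown to coincide, for otherwise closure of $\C$ and the generation statement would not combine cleanly. This is precisely guaranteed by the preservation clauses: the inclusion of $\cosl\left(\sD\right)^{\et}$ preserves colimits by Lemma \ref{lem:holmol}, and the hypothesis on $\C$ says the same of its inclusion, so that the coproducts $G_n$ and the colimit $\colim G_\bullet$ may be computed unambiguously in the cocomplete category $\Str^{\et}\left(\G\right)$ and equal $\cE$ throughout. Once this compatibility is pinned down, the minimality argument is a direct application of the generation statement of Proposition \ref{prop:gens}, which is exactly the assertion that $\sl\left(\sD\right)^{\et}$ generates $\cosl\left(\sD\right)^{\et}$ under colimits.
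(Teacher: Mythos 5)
Your proof is correct and takes essentially the same route as the paper's: the minimality half is exactly the paper's argument, combining Proposition \ref{prop:gens} with Lemma \ref{lem:holmol} to write each object of $\cosl\left(\sD\right)^{\et}$ as the colimit of a $\Delta^{op}$-indexed diagram in $\left(\sl\left(\sD\right)^{\et}\right)^{\coprod}$ and then using that the competing subcategory, being cocomplete with colimit-preserving inclusion, is closed under these colimits. Your explicit first half (that $\cosl\left(\sD\right)^{\et}$ itself enjoys the stated properties, via Proposition \ref{prop:etalestable} and Lemma \ref{lem:holmol} applied to $\sl\left(\sD\right)$) is left implicit in the paper but rests on the same identifications.
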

\begin{proof}
Suppose $i:\K^{\et} \hookrightarrow \Str\left(\G\right)^{\et}$ preserves and reflects colimits and $\K^{\et}$ contains $\sl\left(\sD\right).$ It suffices to show that each object $\cE$ of $\Str\left(\G\right)^{\et}$ which lies in $\cosl\left(\sD\right)$ is also in $\K^{\et}.$ Since $i$ reflects colimits and $\Str\left(\G\right)^{\et}$ is cocomplete, $\K^{\et}$ is cocomplete. Hence, $\K^{\et}$ contains $\left(\sl\left(\sD\right)^{\et}\right)^{\coprod}.$ Let $j:\cosl\left(\sD\right)=\overline\sD \hookrightarrow \Str\left(\G\right)^{\et}$ denote the inclusion. Then Proposition \ref{prop:gens} together  with Lemma \ref{lem:holmol} implies that for any object $\cE$ of $\overline \sD=\cosl\left(\sD\right),$ $j\left(\cE\right)$ can be written as a colimit of a diagram of the form $$\Delta^{op} \to \left(\sl\left(\sD\right)^{\et}\right)^{\coprod} \to \Str\left(\G\right)^{\et}.$$ Since $\K^{\et}$ contains $\left(\sl\left(\sD\right)^{\et}\right)^{\coprod}$ and $i$ reflects colimits, it follows that $\cE \in \K^{\et}.$
\end{proof}

%\begin{rmk}
%By the same proof, if $\sD$ is $n$-localic, then $\cosl\left(\sD\right)^{\et}$ is the smallest subcategory of $\Str\left(\G\right)^{\et}$ containing $\sl_n\left(\sD\right)$ such that its inclusion into $\Str\left(\G\right)^{\et}$ preserves and reflects colimits, by virtue of Proposition \ref{prop:gens}.
%\end{rmk}

\begin{cor}
Suppose that $\sD \subseteq \sl\left(\sL\right)$ is any full subcategory. Then $$\cosl\left(\sD\right)=\overline \sL$$ if and only if $$\cosl\left(\sD\right) \supseteq \sl\left(\sL\right).$$
\end{cor}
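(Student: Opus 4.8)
The plan is to reduce the whole statement to two formal properties of the operator $\cosl$ together with its idempotence, Proposition \ref{prop:coslcosl}. First I would record the two monotonicity-type facts I will repeatedly use. (a) The operator $\cosl$ is \emph{monotone}: if $\sA \subseteq \sB$ are full subcategories of $\Str\left(\G\right)$, then $\cosl\left(\sA\right) \subseteq \cosl\left(\sB\right)$, since an \'etale covering family whose members lie in $\sA$ is \emph{a fortiori} one whose members lie in $\sB$. (b) Every full subcategory satisfies $\sA \subseteq \cosl\left(\sA\right)$, because for $\cF \in \sA$ the identity $\cF \to \cF$ is an equivalence, hence \'etale, and the singleton $\left\{\cF \to \cF\right\}$ is an \'etale covering family. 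Specializing (b) to $\sl\left(\sL\right)$ gives the key inclusion $\sl\left(\sL\right) \subseteq \cosl\left(\sl\left(\sL\right)\right) = \overline{\sL}$.

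For the forward implication I would argue as follows. Assuming $\cosl\left(\sD\right) = \overline{\sL}$, the inclusion just noted gives $\sl\left(\sL\right) \subseteq \overline{\sL} = \cosl\left(\sD\right)$, which is precisely $\cosl\left(\sD\right) \supseteq \sl\left(\sL\right)$.

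For the converse I would assume $\sl\left(\sL\right) \subseteq \cosl\left(\sD\right)$ and prove the two inclusions between $\cosl\left(\sD\right)$ and $\overline{\sL}$ separately. Since $\sD \subseteq \sl\left(\sL\right)$ by hypothesis, monotonicity (a) yields $\cosl\left(\sD\right) \subseteq \cosl\left(\sl\left(\sL\right)\right) = \overline{\sL}$. For the reverse inclusion I would apply the monotone operator $\cosl$ to the assumed inclusion $\sl\left(\sL\right) \subseteq \cosl\left(\sD\right)$, obtaining $\cosl\left(\sl\left(\sL\right)\right) \subseteq \cosl\left(\cosl\left(\sD\right)\right)$; by Proposition \ref{prop:coslcosl} the right-hand side collapses to $\cosl\left(\sD\right)$, so $\overline{\sL} = \cosl\left(\sl\left(\sL\right)\right) \subseteq \cosl\left(\sD\right)$. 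The two inclusions together give $\cosl\left(\sD\right) = \overline{\sL}$.

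I do not expect any genuine obstacle, as the argument is entirely formal and relies only on monotonicity and idempotence of $\cosl$. The single point demanding care is the idempotence $\cosl\left(\cosl\left(\sD\right)\right) = \cosl\left(\sD\right)$, but this is exactly Proposition \ref{prop:coslcosl}, whose proof rests on the transitivity axiom for the \'etale pretopology (Proposition \ref{prop:etexists}); thus no new verification is needed.
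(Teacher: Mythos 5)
Your proof is correct, and it is genuinely more elementary than the paper's. You treat $\cosl$ purely as a closure operator: monotonicity, extensivity ($\sA \subseteq \cosl\left(\sA\right)$, via equivalences being singleton \'etale covering families), and idempotence (Proposition \ref{prop:coslcosl}) suffice, and each of these checks out against the definitions. The paper instead takes a heavier route for the nontrivial direction: from $\sl\left(\sL\right) \subseteq \cosl\left(\sD\right)$ and $\sl\left(\sl\left(\sL\right)\right)=\sl\left(\sL\right)$ (Remark \ref{rmk:slclsd}) it first verifies condition 3) of Proposition \ref{prop:etalestable}, concluding that $\sD$ is an \'etale blossom with $\cosl\left(\sD\right)=\overline{\sD}$, and then obtains the inclusion $\cosl\left(\sD\right) \subseteq \overline{\sL}$ by invoking the universal property of Theorem \ref{thm:cosluniv} (that $\overline{\sL}^{\et} \hookrightarrow \Str\left(\G\right)^{\et}$ preserves and reflects colimits and contains $\sl\left(\sD\right)$). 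Your monotonicity argument $\cosl\left(\sD\right) \subseteq \cosl\left(\sl\left(\sL\right)\right) = \overline{\sL}$ replaces that appeal to the colimit machinery outright, while your reverse inclusion $\overline{\sL} = \cosl\left(\sl\left(\sL\right)\right) \subseteq \cosl\left(\cosl\left(\sD\right)\right) = \cosl\left(\sD\right)$ matches the paper's first inclusion in spirit (the paper's phrasing of that step contains misprints, but the intended argument is the same use of the hypothesis plus idempotence). What the paper's longer route buys is the intermediate fact, of independent interest and in keeping with the corollary's placement right after Theorem \ref{thm:cosluniv}, that $\sD$ is itself an \'etale blossom; what yours buys is a short, self-contained argument whose only nonformal input is the transitivity of the \'etale pretopology hidden in Proposition \ref{prop:coslcosl}, exactly as you note.
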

\begin{proof}
Suppose that $\cosl\left(\sD\right) \supseteq \sl\left(\sL\right).$ Then since $$\sl\left(\sL\right) \supseteq \sD,$$ by Remark \ref{rmk:slclsd} $$\sl\left(\sL\right)=\sl\left(\sl\left(\sL\right)\right) \supseteq \sl\left(\sD\right),$$ and hence $\sD$ is an \'etale blossom. Hence, $\cosl\left(\sD\right)=\overline \sD.$ On one hand, since $$\sl\left(\sL\right) \subseteq \sD,$$ one has that $$\overline \sL=\cosl\left(\sl\left(\sD\right)\right)\subseteq \cosl\left(\sD\right).$$ On the other hand, $\overline \sL^{\et} \hookrightarrow \Str\left(\G\right)^{\et}$ preserves and reflects colimits and contains $\sl\left(\sL\right),$ which in turn contains $\sl\left(\sD\right).$ Hence, by Theorem \ref{thm:cosluniv}, $$\cosl\left(\sD\right) \subseteq \overline \sL.$$ The converse is trivial.
\end{proof}

We now will define a technical, yet highly important concept for this manuscript, namely that of a \emph{strong \'etale blossom}. A general subcategory $\sL$ of $\Str_\G$ may not be rich enough so that $\sL$-\'etendues are faithfully represented by the $\i$-sheaves they induce over $\sL,$ even if $\sL$ is an \'etale blossom. Heuristically, the main problem is that for $\cL$ in $\sL,$ $\sL$ may ``fail to contain enough of the open sets'' of $\cL.$ Strong \'etale blossoms are defined in such a way as to avoid such a pathology. It will turn out that strong \'etale blossoms are in abundant enough supply that for any subcategory $\sL$ of $\Str_\G,$ there exists a strong \'etale blossom $\sD \supset \sL,$ such that a structured $\i$-topos $\left(\cE,\O_\cE\right)$ in $\Str_\G$ is an $\sL$-\'etendue if and only if it is a $\sD$-\'etendue. The principal difference is only that the canonical functor $$\lbar \to \Pshi\left(\sL\right)$$ may not be fully faithful, whereas the functor $$\lbar \to \Pshi\left(\sD\right)$$ will be. However, this will not be proven until the end of Section \ref{sec:functorofpoints}.

Suppose that $\sD$ is a locally small full subcategory of $\Str\left(\G\right)$ and let $\cD$ be an object of $\sD.$ Let $$i:\sD^{\et} \hookrightarrow \overline{\sD}^{\et}$$ denote the full and faithful inclusion. Denote by $\widetilde{i/\cD}$ the induced full and faithful inclusion $$\sD^{\et}/\cD \stackrel{i/\cD}{\longlonghookrightarrow} \overline{\sD}^{\et}/\cD \stackrel{\sim}{\longrightarrow} \cD.$$ Suppose that $$r_\cD:\C_\cD\hookrightarrow \sD^{\et}/\cD$$ is the inclusion of a small full subcategory. Then there exists a left Kan extension
$$L_\cD:=\Lan_{y} \left(\widetilde{i/\cD} \circ r_\cD\right):\Psh_\i\left(\C_\cD\right) \to \cD,$$ where $y$ denotes the Yoneda embedding of $\C_\cD.$
This functor has a right adjoint $R_{\cD};$ If $D$ is an object of $\cD$ and $f:\cE \to \cD$ is an \'etale morphism in $\sD$ belonging to $\C_\cD,$ then it corresponds to an object $E$ in $\cD$ and  $$R_{\cD}\left(D\right)\left(f\right)=\Hom_\cD\left(E,D\right).$$

\begin{dfn}\label{dfn:strongblossom}
A locally small full subcategory $\sD$ of $\Str\left(\G\right)$ as above is said to be a \textbf{strong \'etale blossom} (for $\overline{\sD}$) if for each object $\left(\cD,\O_\cD\right)$ of $\sD,$ the slice category $\sD^{\et}/\cD$ admits a small full subcategory $\C_\cD$ as above such that the induced adjunction
$$\xymatrix@C=1.5cm{\cD \ar@{^{(}->}[r]<-0.9ex>_-{R_\cD} & \Psh_\i\left(\C_\cD\right) \ar@<-0.5ex>[l]_-{L_{\cD}}},$$
exhibits $\cD$ as an accessible left exact localization of $\Psh_\i\left(\C_\cD\right),$ i.e. $R_{\cD}$ is full and faithful and $L_\cD$ is left exact and accessible.
\end{dfn}

\begin{rmk}\label{rmk:supblossom}
If $\sD \hookrightarrow \sD' \hookrightarrow \dbar$ are fully faithful inclusions of locally small full subcategory of $\Str\left(\G\right)$ and $\sD$ is a strong \'etale blossom for $\dbar,$ then so is $\sD'.$
\end{rmk}

\begin{prop}\label{prop:largesite}
Let $\cD$ be an object of a strong \'etale blossom $\sD.$ Then the functor $i_\cD$ defined as the composite
\begin{equation}\label{eq:iD}
\cD \stackrel{y}{\longhookrightarrow} \Pshi\left(\cD\right) \stackrel{\sim}{\longrightarrow} \Pshi\left(\dbare/\cD\right) \stackrel{\left(i/\cD\right)^*}{\longlongrightarrow} \Psh_\i\left(\sD^{\et}/\cD\right),
\end{equation}
admits a left exact left adjoint $a_\cD.$
\end{prop}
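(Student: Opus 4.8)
The plan is to realize $i_\cD$ as a composite of two right adjoints: one coming from the strong blossom data $L_\cD \dashv R_\cD$ on the small subcategory $\C_\cD$, and one coming from restriction along the fully faithful inclusion $r_\cD\colon \C_\cD \hookrightarrow \sD^{\et}/\cD$. Write $s:=r_\cD$ and abbreviate $G:=\widetilde{i/\cD}\colon \sD^{\et}/\cD \hookrightarrow \cD$ for the fully faithful functor obtained from the equivalence $\dbare/\cD \simeq \cD$ (Remark~\ref{rmk:2352}). Unwinding the definition, $i_\cD$ is exactly the restricted-Yoneda (nerve) functor of $G$, i.e. $i_\cD(D)=\Hom_\cD(G(-),D)$; consequently $s^*\circ i_\cD = R_\cD$ on the nose, since $s^*(i_\cD D)(c)=\Hom_\cD(Gs(c),D)=R_\cD(D)(c)$.

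First I would set up the restriction adjunction $s^*\dashv s_*$ between $\Psh_\i(\sD^{\et}/\cD)$ and $\Psh_\i(\C_\cD)$. Because $\C_\cD$ is small and $\cD$ is locally small, for each $e\in\sD^{\et}/\cD$ the comma category $(s/e)$ is small, so the pointwise right Kan extension $s_*(H)(e)=\lim_{(s/e)} H$ exists in $\iGpd$; this exhibits $s_*$ as right adjoint to restriction $s^*$, with both functors landing in small-groupoid–valued presheaves. Routing everything through the small $\C_\cD$ in this way is precisely what keeps all Kan extensions and colimits small and avoids the size pathologies that would arise from working directly with the large category $\sD^{\et}/\cD$.

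The crux is to identify $i_\cD \simeq s_*\circ R_\cD$. By the previous paragraph the unit of $s^*\dashv s_*$ supplies a canonical map $i_\cD \to s_* s^* i_\cD = s_* R_\cD$, and I must show it is an equivalence. Evaluated at $D$ and $e$ it is the map $\Hom_\cD(G(e),D) \to \lim_{(c,s(c)\to e)}\Hom_\cD(Gs(c),D) \simeq \Hom_\cD\bigl(\colim_{(c,s(c)\to e)} Gs(c),\, D\bigr)$ induced by the canonical cocone $\colim_{(c,s(c)\to e)} Gs(c)\to G(e)$, so it suffices to prove this comparison map is an equivalence, i.e. that $\C_\cD$ is dense in $\cD$. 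This is where the strong blossom hypothesis (Definition~\ref{dfn:strongblossom}) enters: since $R_\cD$ is fully faithful, the counit $L_\cD R_\cD \to id_\cD$ is an equivalence. Writing $R_\cD(D)$ as the colimit of representables over its category of elements $(Gs\downarrow D)$ (\cite{htt}, 5.1.5.3) and using $L_\cD=\Lan_y(Gs)$ together with $L_\cD\circ y \simeq Gs$ (\cite{htt}, 5.1.5.6), the functor $L_\cD$ carries this to $\colim_{(c,Gs(c)\to D)} Gs(c)$, so the counit becomes exactly the density comparison $\colim_{(c,Gs(c)\to D)} Gs(c) \stackrel{\sim}{\longrightarrow} D$. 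Since $G$ is fully faithful, $(Gs\downarrow G(e))$ agrees with $(s/e)$, which gives the required equivalence at $D=G(e)$ and hence $i_\cD \simeq s_* R_\cD$.

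It then follows formally that $i_\cD$, being a composite of the right adjoints $s_*$ (right adjoint to $s^*$) and $R_\cD$ (right adjoint to $L_\cD$), admits the left adjoint $a_\cD := L_\cD\circ s^*$. Finally, left exactness of $a_\cD$ is immediate: $s^*$ is a restriction functor and hence preserves all limits, while $L_\cD$ is left exact by the defining property of a strong blossom, so their composite is left exact. The main obstacle — and the only place the hypotheses are used essentially — is the density identification $i_\cD \simeq s_* R_\cD$; everything else is formal adjoint bookkeeping.
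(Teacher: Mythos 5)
Your proposal is correct and takes essentially the same route as the paper's own proof: both factor $a_\cD = L_\cD \circ r_\cD^*$ through the small subcategory $\C_\cD$ and reduce everything to the identification $i_\cD \simeq \left(r_\cD\right)_* \circ R_\cD$, which the paper verifies by the direct Yoneda computation $L_\cD r_\cD^* y\left(\varphi\right) \simeq \theta \circ \left(i/\cD\right)\left(\varphi\right)$ while you package the same use of the counit equivalence $L_\cD R_\cD \simeq \operatorname{id}_\cD$ as a density statement for $\C_\cD$ in $\cD$, restricted along the fully faithful $\widetilde{i/\cD}$. The remaining steps (the pointwise right Kan extension $\left(r_\cD\right)_*$ staying small-valued, and left exactness of $L_\cD \circ r_\cD^*$ from limit-preservation of restriction plus left exactness of $L_\cD$) coincide with the paper's handling.
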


\begin{proof}
Choose a site $\C_\cD$ for $\cD,$ with a full and faithful inclusion $$r_\cD:\C_\cD \hookrightarrow \sD^{\et}/\cD$$ as in Definition \ref{dfn:strongblossom}. Consider the functor $$r_\cD^*:\Pshi\left(\sD^{\et}/\cD\right) \to \Pshi\left(\C_\cD\right)$$ given by restriction along $r_\cD$. Denote the composite
$$\C_\cD \stackrel{r_\cD}{\longlonghookrightarrow} \sD^{\et}/\cD \stackrel{i/\cD}{\longlonghookrightarrow} \dbare/\cD \stackrel{\theta}{\longrightarrow} \cD,$$ with $\theta$ the canonical equivalence, by $\psi_\cD.$ Consider the left exact localization
$$\xymatrix@C=1.5cm{\cD \ar@{^{(}->}[r]<-0.9ex>_-{R_\cD} & \Psh_\i\left(\C_\cD\right) \ar@<-0.5ex>[l]_-{L_{\cD}}}.$$
Notice that $L_\cD=\Lan_y\left(\psi_\cD\right).$ Let $f$ be an object of $\C_\cD$ and $D$ an object of $\cD.$ On one hand, one has
\begin{eqnarray*}
R_\cD\left(D\right)\left(f\right)  &\simeq& \Hom\left(y\left(f\right),R_\cD\left(D\right)\right)\\
&\simeq& \Hom\left(\psi_\cD\left(f\right),D\right).
\end{eqnarray*}
On the other hand, if $\varphi$ is in $\sD^{\et}/\cD,$ one has
\begin{eqnarray*}
r_\cD^*y\left(\varphi\right)\left(f\right) &\simeq& \Hom\left(r_\cD\left(f\right),\varphi\right)\\
&\simeq& \Hom\left(\psi_\cD\left(f\right),\theta\circ i/\cD\left(\varphi\right)\right).
\end{eqnarray*} One concludes that $$r_\cD^*y\left(\varphi\right)\simeq R_\cD\left(\theta\circ i/\cD\left(\varphi\right)\right),$$ and hence
\begin{equation}\label{eq:rds}
L_\cD r_\cD^*y\left(\varphi\right) \simeq \theta\circ i/\cD\left(\varphi\right).
\end{equation}
Notice that the restriction functor $$\widehat{r_\cD}^*:\LPshi\left(\sD^{\et}/cD\right) \to \LPshi\left(\C_\cD\right)$$ between presheaves of large $\i$-groupoids has a right adjoint $\widehat{r_\cD}_*$ given by the formula
$$
\widehat{r_\cD}_*F\left(\varphi\right) \simeq \Hom\left(\widehat{r_\cD}^*y\left(\varphi\right),F\right),
$$
which takes values in small $\i$-groupoids if $F$ does, and hence we have an induced adjunction
$$\xymatrix@C=1.5cm{\Psh_\i\left(\C_\cD\right)\ar@{^{(}->}[r]<-0.9ex>_-{{r_\cD}_*} & \Psh_\i\left(\sD^{\et}/\cD\right). \ar@<-0.5ex>[l]_-{r_\cD^*}}$$
The functor $r_\cD^*$ preserves all limits since they are compute point-wise, and a standard argument shows that since $r_\cD$ is full and faithful so is ${r_\cD}_*.$  Combining the two localizations exhibits $\cD$ as a left exact localization of $\Psh_\i\left(\sD^{\et}/\cD\right)$ :
$$\xymatrix@C=1.5cm@R=1.5cm{\cD \ar@{^{(}->}[r]<-0.9ex>_-{i_\cD} & \Psh_\i\left(\sD^{\et}/\cD\right) \ar@<-0.5ex>[l]_-{a_\cD}.}$$
We know wish to identify $i_\cD={r_\cD}_* \circ R_\cD,$ with the functor (\ref{eq:iD}). Let $D$ be an object of $\cD$ and $\varphi$ an object of $\sD^{\et}/\cD$. Notice that
\begin{eqnarray*}
{r_\cD}_*R_\cD\left(D\right)\left(\varphi\right) &\simeq& \Hom\left(r_\cD^*y\left(\varphi\right),R_\cD\left(D\right)\right)\\
&\simeq& \Hom\left(L_\cD r_\cD^*y\left(\varphi\right),D\right)\\
&\simeq& \Hom\left(\theta \circ i/\cD\left(\varphi\right),D\right)\\
&\simeq& \left(\theta^*\left(i/\cD\right)^*y\left(D\right)\right)\left(\varphi\right).
\end{eqnarray*}
with the second to last equivalence following from (\ref{eq:rds}).
\end{proof}

%\begin{dfn}
%With the notation as in Proposition \ref{prop:largesite}, a functor $$F:\sD^{\et}/\cD \to \iGpd$$ is a \textbf{sheaf}, if the unit map $$F \to i_\cD a_\cD F$$ is an equivalence.
%\end{dfn}

It is not clear from the definition that a strong \'etale blossom is in fact an \'etale blossom. However, the following proposition shows that this is indeed the case:

\begin{prop}\label{prop:stretex}
If $\sD$ is a strong \'etale blossom, then $\sD$ is an \'etale blossom for $\overline{\sD}.$
\end{prop}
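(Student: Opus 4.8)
The plan is to verify condition (3) of Proposition \ref{prop:etalestable}, namely that $\sl\left(\sD\right) \subseteq \cosl\left(\sD\right)$; by that proposition this is equivalent to $\cosl\left(\sD\right) = \dbar$, which is exactly the assertion that $\sD$ is an \'etale blossom for $\dbar$. By Remark \ref{rmk:slclsd} every object of $\sl\left(\sD\right)$ is equivalent to one of the form $\cD/E$ with $\left(\cD,\O_\cD\right)$ in $\sD$ and $E \in \cD$, so it suffices to exhibit, for each such $\cD/E$, an \'etale covering family by objects of $\sD$; this is precisely what it means for $\cD/E$ to lie in $\cosl\left(\sD\right)$.

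First I would unwind the strong blossom hypothesis. By Definition \ref{dfn:strongblossom} there is a small full subcategory $\C_\cD \hookrightarrow \sD^{\et}/\cD$ for which $\left(L_\cD,R_\cD\right)$ exhibits $\cD$ as an accessible left exact localization of $\Psh_\i\left(\C_\cD\right)$, where, as recorded in Proposition \ref{prop:largesite}, the functor $\psi_\cD = L_\cD \circ y$ is the composite $\C_\cD \hookrightarrow \sD^{\et}/\cD \hookrightarrow \dbare/\cD \stackrel{\sim}{\to} \cD$. The key observation is that each object $C$ of $\C_\cD$ is by construction an \'etale map $\cE_C \to \cD$ with $\cE_C$ in $\sD$, so that under the equivalence $\dbare/\cD \simeq \cD$ its image $E_C := \psi_\cD\left(C\right)$ is an object of $\cD$ with $\cD/E_C \simeq \cE_C \in \sD$.

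Next, mimicking the proof of Lemma \ref{lem:lalalucas}, I would express an arbitrary $E \in \cD$ as a colimit of objects coming from the site $\C_\cD$. Since $R_\cD$ is fully faithful we have $E \simeq L_\cD R_\cD\left(E\right)$; and since $R_\cD\left(E\right) \in \Psh_\i\left(\C_\cD\right)$ is canonically the colimit of representables indexed by $\C_\cD/R_\cD\left(E\right)$ (Proposition 5.1.5.3 of \cite{htt}), applying the colimit-preserving functor $L_\cD$ and using the adjunction $\Hom\left(y\left(C\right),R_\cD\left(E\right)\right) \simeq \Hom\left(\psi_\cD\left(C\right),E\right)$ to reindex gives $E \simeq \colim \psi_\cD\left(C_\beta\right)$, the colimit being taken over all maps $\psi_\cD\left(C_\beta\right) \to E$ with $C_\beta \in \C_\cD$. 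Then, exactly as in the proof of Lemma \ref{lem:lalalucas}, the induced map $\coprod_\beta \psi_\cD\left(C_\beta\right) \to E$ is an effective epimorphism in $\cD$; writing $E_\beta := \psi_\cD\left(C_\beta\right)$, each $\cD/E_\beta$ lies in $\sD$ by the previous paragraph. Finally I would transport this into the slice: each component $E_\beta \to E$ is an object of $\cD/E$ whose associated \'etale map has total space $\left(\cD/E\right)/\left(E_\beta \to E\right) \simeq \cD/E_\beta \in \sD$ by Lemma \ref{lem:slicek}, and the coproduct $\coprod_\beta\left(E_\beta \to E\right) \to 1_{\cD/E} = \mathrm{id}_E$ is carried by the forgetful functor $\cD/E \to \cD$ to $\coprod_\beta E_\beta \to E$. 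Since this functor preserves pullbacks and colimits and is conservative, it preserves and reflects effective epimorphisms, so the displayed map is an effective epimorphism in $\cD/E$; under the equivalence $\dbare/\left(\cD/E\right) \simeq \cD/E$ of Remark \ref{rmk:2352} this says precisely that $\left(\cD/E_\beta \to \cD/E\right)_\beta$ is an \'etale covering family of $\cD/E$ by objects of $\sD$, whence $\cD/E \in \cosl\left(\sD\right)$.

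I expect the main obstacle to be the third step: producing a \emph{general} object $E$ of $\cD$ (not a priori glued out of \'etale pieces) as a colimit of the images $\psi_\cD\left(C_\beta\right)$ of the chosen site, and deducing that the resulting coproduct map is an effective epimorphism. This is exactly the place where the strong blossom hypothesis does essential work, since it supplies $\C_\cD$ as a genuine site presenting $\cD$ as a left exact localization of $\Psh_\i\left(\C_\cD\right)$ rather than as a mere generating family of \'etale maps; an ordinary \'etale blossom would not guarantee that enough generalized open subsets of $\cD$ lie in $\sD$ to cover an arbitrary $E$, which is also the reason the weaker notion is insufficient for the faithful representation results later.
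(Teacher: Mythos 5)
Your proof is correct and follows essentially the same route as the paper's: both hinge on using the small subcategory $\C_\cD \hookrightarrow \sD^{\et}/\cD$ supplied by Definition \ref{dfn:strongblossom} together with Proposition 5.1.5.3 of \cite{htt} to write an arbitrary object $E$ of $\cD$ as a colimit of the objects $\psi_\cD\left(C\right) \simeq L_\cD y\left(C\right)$ indexed by $\C_\cD/R_\cD\left(E\right)$, and on Lemma \ref{lem:slicek} to identify each $\cD/L_\cD y\left(C\right)$ with an object of $\sD$. The only difference is packaging: the paper stops once each object of $\sl\left(\sD\right)^{\et}$ is exhibited as a colimit of a diagram in $\sD^{\et}$ and invokes Proposition \ref{prop:cobloss}, whereas you convert the colimit presentation into an explicit \'etale covering family (via Lemma 6.2.3.13 of \cite{htt} and transport of effective epimorphisms along the slice projection $\cD/E \to \cD$) so as to verify condition (3) of Proposition \ref{prop:etalestable} directly, in effect inlining the content of Proposition \ref{prop:cobloss}.
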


\begin{proof}
By Proposition \ref{prop:cobloss}, it suffices to show that each object in $\sl\left(\sD\right)^{\et}$ can be obtained as the colimit of a diagram in $\sD^{\et}.$ Since $\sD$ is a strong \'etale blossom, choose for each $\cD$ in $\sD$ a small subcategory $$r_\cD:\C_\cD \hookrightarrow \sD^{\et}/\cD$$ satisfying the conditions of Definition \ref{dfn:strongblossom}, and let
$$\xymatrix@C=1.5cm@R=2.5cm{\cD \ar@{^{(}->}[r]<-0.9ex>_-{R_\cD} & \Psh_\i\left(\C_\cD\right) \ar@<-0.5ex>[l]_-{L_{\cD}}}$$
denote the corresponding localization. Suppose that $\cE \in \sl\left(\sD\right)^{\et}_0.$ Then $\cE \simeq \cD/D$ for some $D \in \cD,$ for some $\cD \in \sD.$ Now, by \cite{htt} Proposition 5.1.5.3, it follows that $R_\cD\left(D\right)$ is the colimit of $$\C_\cD/R_\cD\left(D\right) \to \C_\cD \stackrel{y}{\longhookrightarrow} \Psh_\i\left(\C_\cD\right),$$ where $y$ denotes the Yoneda embedding. So $D$ itself is the colimit of the composite $$\C_\cD/R_\cD\left(D\right) \to \C_\cD \stackrel{y}{\longhookrightarrow} \Psh_\i\left(\C_\cD\right) \stackrel{L_\cD}{\longlongrightarrow} \cD.$$ Under the equivalence $\cD \simeq \sl\left(\sD\right)^{\et}/\cD,$ this implies that $$\cD/D \to \cD$$ is the colimit of the functor
\begin{eqnarray*}
\C_\cD/R_\cD\left(D\right) &\to& \sl\left(\sD\right)^{\et}/\cD\\
\left(C \to D \right) &\mapsto& \left(\cD/L_\cD y\left(C\right)\to \cD\right).\\
\end{eqnarray*}
Each $C$ in $\C_\cD$ corresponds to an \'etale map $$r_\cD\left(C\right):\cD_C \to \cD,$$ with $\cD_C$ in $\sD.$ Since $L_\cD=\Lan_y\left(i/\cD \circ r_\cD\right)$ (with notation as in Definition \ref{dfn:strongblossom}), this implies that for each $C$ $$\cD/L_\cD y\left(C\right)\simeq \left(\sD^{\et}/\cD\right)/r_\cD\left(C\right).$$ By Lemma \ref{lem:slicek}, this in turn implies $$\cD/L_{\cD}y\left(C\right) \simeq \sD^{\et}/\cD_C \simeq \cD_C \in \sD.$$
Since the canonical functor $$\sl\left(\sD\right)^{\et}/\cD \to \sl\left(\sD\right)^{\et}$$ preserves colimits, and each $\cD/L_\cD y\left(C\right)$ is in $\sD,$ this shows that $\cE$ is a colimit of a diagram in $\sD^{\et}.$
\end{proof}

\begin{cor}\label{cor:stetgen}
If $\sD$ is a strong \'etale blossom, then $\sD^{\et}$ generates $\dbare$ under colimits.
\end{cor}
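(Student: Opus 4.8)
The plan is to deduce the corollary directly from Proposition \ref{prop:gens} together with the proof of Proposition \ref{prop:stretex}, assembling them as a transitivity statement for the relation ``generates under colimits.'' Concretely, I want to show that the smallest subcategory $\mathcal{K}$ of $\dbare$ containing $\sD^{\et}$ and closed under small colimits is all of $\dbare$.

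First I would invoke Proposition \ref{prop:gens}, which tells us that $\sl\left(\sD\right)^{\et}$ generates $\dbare$ under colimits: every object of $\dbare$ is the colimit of a simplicial diagram valued in the coproduct-completion $\left(\sl\left(\sD\right)^{\et}\right)^{\coprod}$. Hence any colimit-closed subcategory of $\dbare$ that contains $\sl\left(\sD\right)^{\et}$ automatically contains $\left(\sl\left(\sD\right)^{\et}\right)^{\coprod}$ (being closed under coproducts) and therefore equals $\dbare$. Second, I would extract from the \emph{proof} of Proposition \ref{prop:stretex} the sharper assertion it actually establishes, namely that each object of $\sl\left(\sD\right)^{\et}$ is itself the colimit of a diagram valued in $\sD^{\et}$. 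This is exactly the content of that argument: using the strong-blossom localization with left adjoint $L_\cD$ and right adjoint $R_\cD$, writing $R_\cD\left(D\right)$ as a colimit of representables, and applying Lemma \ref{lem:slicek} to identify each $\cD/L_\cD y\left(C\right)$ with an object $\cD_C \in \sD$.

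Finally I would combine the two steps. By the second step $\mathcal{K}$ contains $\sl\left(\sD\right)^{\et}$, and then by the first step $\mathcal{K} = \dbare$, which is the claim. The one point I would check explicitly — and the only place any care is needed — is the compatibility of colimits across the two inputs: by Lemma \ref{lem:holmol} colimits in $\dbare$ are computed in $\Str\left(\G\right)^{\et}$, so the colimits exhibiting objects of $\sl\left(\sD\right)^{\et}$ as $\sD^{\et}$-colimits are the same whether formed in $\sl\left(\sD\right)^{\et}$ or in the ambient $\dbare$. This ensures there is no ambiguity about where colimits are taken, so the transitivity goes through and everything else is a formal consequence of the two cited propositions.
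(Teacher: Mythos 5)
Your proposal is correct and is essentially the paper's own proof: the paper likewise deduces the corollary by noting that the proof of Proposition \ref{prop:stretex} shows $\sD^{\et}$ generates $\sl\left(\sD\right)^{\et}$ under colimits, which in turn generates $\dbare$ by Proposition \ref{prop:gens}. Your additional check via Lemma \ref{lem:holmol} that the relevant colimits agree in $\sl\left(\sD\right)^{\et}$, $\dbare$, and $\Str\left(\G\right)^{\et}$ is a sensible piece of care that the paper leaves implicit.
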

\begin{proof}
By the proof of Proposition \ref{prop:stretex}, $\sD^{\et}$ generates $\sl\left(\sD\right)^{\et}$ under colimits, which in turn, by Proposition \ref{prop:gens}, generates $\dbare$.
\end{proof}

Since the definition of a strong \'etale blossom, on its surface, may seem complicated, one may be forgiven for asking whether or not strong \'etale blossoms exist, and if they are easy to construct. The following proposition (and its proof) shows that they indeed are in abundant supply:

\begin{prop}\label{prop:existbloss}
If $\sL$ is any full subcategory of $\Str\left(\G\right),$ then there exists a strong \'etale blossom $\sD$ for $\lbar.$ If $\sL$ is small, $\sD$ may be chosen small. If $\sL$ is $n$-localic, $\sD$ may be chosen $n$-localic. If $\sL$ is both $n$-localic and small, $\sD$ may be chosen to be also.
\end{prop}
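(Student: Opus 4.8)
The plan is to build $\sD$ so that $\sL \subseteq \sD \subseteq \sl\left(\sL\right)$. This makes the requirement $\dbar = \lbar$ automatic: monotonicity of $\sl$ together with the identity $\sl\left(\sl\left(\sL\right)\right) = \sl\left(\sL\right)$ of Remark \ref{rmk:slclsd} forces $\sl\left(\sL\right) \subseteq \sl\left(\sD\right) \subseteq \sl\left(\sl\left(\sL\right)\right) = \sl\left(\sL\right)$, hence $\sl\left(\sD\right) = \sl\left(\sL\right)$ and $\dbar = \cosl\left(\sl\left(\sD\right)\right) = \cosl\left(\sl\left(\sL\right)\right) = \lbar$. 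Local smallness is also free: the mapping spaces of $\T$ and of $\Geo\left(\fdot,\cB\right)$ are small by Remark \ref{rmk:loclocsmall}, so those of $\Str\left(\G\right)$, and of any full subcategory, are small. Thus the entire content is to arrange, uniformly over all objects of $\sD$, the site condition of Definition \ref{dfn:strongblossom}: each $\cD \in \sD$ must admit a small full subcategory $\C_\cD \subseteq \sD^{\et}/\cD$ realizing $\cD$ as an accessible left exact localization of $\Psh_\i\left(\C_\cD\right)$ via restricted Yoneda. Since $\sD^{\et}/\cD$ sits fully faithfully in $\dbare/\cD \simeq \cD$, specifying $\C_\cD$ amounts to specifying a small full subcategory $\C\left(\cD\right)$ of $\cD$ all of whose objects $E$ satisfy $\cD/E \in \sD$, and the functor $R_\cD$ of Definition \ref{dfn:strongblossom} is then exactly restricted Yoneda along $\C\left(\cD\right) \hookrightarrow \cD$.

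The single external ingredient is that every $\i$-topos has a small site realized inside itself: for an $\i$-topos $\cL$ there is a small full subcategory $\C\left(\cL\right) \subseteq \cL$, closed under finite limits and generating $\cL$ under colimits, for which restricted Yoneda exhibits $\cL$ as an accessible left exact localization of $\Psh_\i\left(\C\left(\cL\right)\right)$. One takes any presentation $a\colon \Psh_\i\left(\C_1\right) \rightleftarrows \cL \colon i$, lets $\C\left(\cL\right)$ be the finite-limit closure in $\cL$ of the small set $\{a\,y\left(C\right)\}_{C \in \C_1}$ (these generate under colimits as $a$ is an essentially surjective, colimit-preserving localization), and invokes the standard small-site presentation of an $\i$-topos (Section 6.2.2 of \cite{htt}, together with \cite{htt} Proposition 6.1.5.2 for left exactness of $\Lan_y$ of the left exact inclusion $\C\left(\cL\right) \hookrightarrow \cL$). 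When $\cL$ is $n$-localic, $\C\left(\cL\right)$ may moreover be taken to consist of $\left(n-1\right)$-truncated objects: by Section 6.4.5 of \cite{htt} one may choose $\C_1$ to be an $\left(n,1\right)$-category, whence each $y\left(C\right)$ is $\left(n-1\right)$-truncated in $\Psh_\i\left(\C_1\right)$ and each generator $a\,y\left(C\right)$ is $\left(n-1\right)$-truncated in $\cL$ since $a$ is left exact (\cite{htt} Proposition 5.5.6.16), and finite limits preserve $\left(n-1\right)$-truncatedness. For such $E \in \C\left(\cL\right)$ the slice $\cL/E$ is $n$-localic by Proposition \ref{prop:2.3.16}.

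Now fix a choice of such a site $\C\left(\fdot\right)$ on objects of $\sl\left(\sL\right)$, taking $\left(n-1\right)$-truncated sites on $n$-localic objects in the $n$-localic case. For $\cE \in \sl\left(\sL\right)$ and $E \in \C\left(\cE\right)$ the slice $\cE/E$ again lies in $\sl\left(\sl\left(\sL\right)\right) = \sl\left(\sL\right)$, so passing to site-slices preserves $\sl\left(\sL\right)$. Define $\sD_0 := \sL$, $\sD_{k+1} := \sD_k \cup \{\cE/E : \cE \in \sD_k,\ E \in \C\left(\cE\right)\}$, and let $\sD$ be the full subcategory of $\Str\left(\G\right)$ on $\bigcup_{k<\omega}\sD_k$. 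Then $\sL = \sD_0 \subseteq \sD \subseteq \sl\left(\sL\right)$, so $\dbar = \lbar$ by the first paragraph. To verify the site condition, fix $\cD \in \sD$; it lies in some $\sD_k$, so every site-slice $\cD/E$ with $E \in \C\left(\cD\right)$ lies in $\sD_{k+1} \subseteq \sD$ and hence defines an object of $\sD^{\et}/\cD$. Taking $\C_\cD \subseteq \sD^{\et}/\cD$ to be the full subcategory on these slices, the comparison $\widetilde{i/\cD}\colon \sD^{\et}/\cD \hookrightarrow \dbare/\cD \xrightarrow{\sim} \cD$ carries $\C_\cD$ isomorphically onto $\C\left(\cD\right)$, and under this identification $R_\cD$ is the restricted Yoneda of the previous paragraph; it is therefore fully faithful with left exact accessible left adjoint. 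Thus $\sD$ is a strong \'etale blossom.

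For the size refinements: if $\sL$ is small, then each $\sD_k$ is small by induction — at each stage one adjoins, for each of set-many objects, the set-many slices indexed by a small site — so $\sD = \bigcup_{k<\omega}\sD_k$ is small; and if $\sL$ is $n$-localic, then, having chosen $\left(n-1\right)$-truncated sites, each adjoined slice $\cE/E$ is $n$-localic by Proposition \ref{prop:2.3.16}, so every $\sD_k$, and hence $\sD$, is $n$-localic, the two refinements combining directly. The step I expect to require the most care is the use of a genuine $\omega$-iteration rather than a one-step closure: a slice $\left(\cL/E\right)/E'$ of a first-stage object is only of the form $\cL/E''$ (Lemma \ref{lem:slicek}) with $E''$ in general outside $\C\left(\cL\right)$, so a single closure does not return the site-slices of the newly created objects to $\sD$; the iteration repairs this, and $\omega$ steps suffice precisely because every object of $\sD$ occurs at a finite stage whose successor already contains its site-slices.
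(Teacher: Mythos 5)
Your architecture is sound — the sandwich $\sL \subseteq \sD \subseteq \sl\left(\sL\right)$ disposing of $\dbar=\lbar$, the reduction of Definition \ref{dfn:strongblossom} to choosing, inside each $\cD$, a small full subcategory $\C\left(\cD\right)$ of objects $E$ with $\cD/E \in \sD$, and the $\omega$-iteration with its smallness and $n$-localic bookkeeping are all correct. The genuine gap is in your ``single external ingredient,'' specifically in the full faithfulness of restricted Yoneda. You justify it by saying the objects $a\,y\left(C\right)$ \emph{generate $\cL$ under colimits}; but what Definition \ref{dfn:strongblossom} requires is that $\C\left(\cL\right)$ be \emph{strongly generating} in the sense of Definition \ref{dfn:stronglygen}, and by Proposition \ref{prop:5.2.9.4} strong generation is exactly equivalent to full faithfulness of restricted Yoneda. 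Generation under colimits is strictly weaker: it says every object is \emph{some} iterated colimit of objects of $\C\left(\cL\right)$, while strong generation demands that the \emph{canonical} colimit $\Lan_i\left(i\right)\left(E\right)\simeq \underset{C \to E}\colim\, C$ recover $E$, and the first does not imply the second. Closing under finite limits does not repair this: the cover-descent arguments that make ``lex-closed plus generating implies dense'' work for $1$-topoi (Giraud, the comparison lemma) are precisely the arguments this paper flags as unavailable at $n=\infty$ in Remark \ref{rmk:limitpresrep}, where it is noted that the identification of limit-preserving presheaves on an $\i$-topos with sheaves for the epimorphism topology is unknown for $n=\infty$. Neither Section 6.2.2 nor Proposition 6.1.5.2 of \cite{htt} supplies full faithfulness — the latter gives only left exactness of the left adjoint, which you use correctly.

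The repair is standard and is what the paper's own proof tacitly invokes when it asserts one ``may choose'' $\C_\cL$ with $a_\cL \circ y$ fully faithful: take $\C\left(\cL\right):=\cL^{\kappa}$, the $\kappa$-compact objects for $\kappa$ large enough that $\cL$ is $\kappa$-accessible and $\cL^{\kappa}$ is closed under finite limits; full faithfulness then follows from accessibility, $\cL \simeq \operatorname{Ind}_\kappa\left(\cL^{\kappa}\right) \hookrightarrow \Psh_\i\left(\cL^{\kappa}\right)$, rather than from any covering argument (in the $n$-localic case one cuts down to the $\left(n-1\right)$-truncated $\kappa$-compact objects, using that for $n<\infty$ the obstruction of Remark \ref{rmk:limitpresrep} disappears). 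With that substitution your proof goes through. It is worth noting that the step you flagged as requiring the most care — the need for a genuine $\omega$-iteration — is exactly where the paper's proof diverges from yours: instead of re-choosing a site for each newly created slice, the paper takes for $\cD=\cL/a_\cL y\left(C\right)$ the \emph{slice site} $\C_\cL/C$, using $\Psh_\i\left(\C_\cL/C\right)\simeq \Psh_\i\left(\C_\cL\right)/y\left(C\right)$ (Remark \ref{rmk:5.3.5.4}) and Proposition \ref{prop:locslice}; since the site-slices of $\cD$ are then again of the form $\cL/a_\cL y\left(C'\right)$, the one-step construction is already closed under slicing and no iteration is needed. Your iteration is a valid, if slightly heavier, workaround for a non-coherent choice of sites.
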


\begin{proof}
For each $\i$-topos $\cL$ in $\sL,$ one may choose a small $\icat$ $\C_\cL$ such that $\cL$ is obtained as a left exact localization $$\xymatrix@1{\cL\mspace{4mu} \ar@{^{(}->}[r]<-0.9ex>_-{i_\cL} & \Psh_\i\left(\C_\cL\right) \ar@<-0.5ex>[l]_-{a_\cL}},$$ and such that the induced functor $$\C_\cL \stackrel{y}{\longhookrightarrow} \Psh_\i\left(\C_\cL\right) \stackrel{a_\cL}{\longlongrightarrow} \cL$$ is full and  faithful. If $\sL$ is $n$-localic, we may arrange each of the $\C_\cL$ to be $\left(n,1\right)$-categories.  Let $\sD$ the subcategory of $\sl\left(\sL\right)$ spanned by objects of the form $$\cL/a_\cL y\left(C\right)$$ for some $C \in \C_\cL.$ Notice that if $\sL$ is small, so is $\sD.$ Notice further that if $\C_\cL$ is an $\left(n,1\right)$-category, each object $C$ is $\left(n-1\right)$-truncated, so $\cL/a_\cL y\left(C\right)$ is $n$-localic by Proposition \ref{prop:2.3.16}. We claim that $\sD$ is a strong \'etale blossom. Indeed, let $$\cD=\cL/a_\cL y\left(C\right)$$ be an arbitrary object of $\sD.$ The $\icat$ $\sD^{\et}/\cD$ can be identified with the full subcategory of $\cD=\cL/a_\cL y\left(C\right)$ spanned by those morphisms $$f:e \to a_\cL y\left(C\right)$$ such that $\cD/f \simeq \cL/e$ is in $\sD.$ Under this  identification and the equivalence in Remark \ref{rmk:5.3.5.4}, the functor
\begin{equation}\label{eq:rd}
\C_\cL/C \stackrel{y}{\longhookrightarrow} \Psh_\i\left(\C_\cL/C\right) \stackrel{\sim}{\longrightarrow} \Psh_\i\left(\C_\cL\right)/y\left(C\right) \hookrightarrow \cL/a_\cL y\left(C\right)=\cD
\end{equation}
identifies $\C_\cL/C$ as a full subcategory $$r_\cD:\C_\cL/C \hookrightarrow \sD^{\et}/\cD$$ in such a way that, in the notation of Definition \ref{dfn:strongblossom}, $i/\cD \circ r_\cD$ is equivalent to the above functor (\ref{eq:rd}). By the proof of Proposition \ref{prop:locslice} and the uniqueness of left adjoints, the left Kan extension $\Lan_{y}\left(i/\cD \circ r_\cD\right)$ is a left exact (accessible) left adjoint since its right adjoint can be identified with the canonical fully faithful functor $$ \cD=\cL/a_\cL y\left(C\right)\hookrightarrow \Psh_\i\left(\C_\cL\right)/y\left(C\right).$$
\end{proof}

This proposition combined with Remark \ref{rmk:supblossom} immediately implies the following:

\begin{cor}\label{cor:largestblossom}
For any full subcategory $\sL$ of $\Str\left(\G\right),$ $\lbar$ is a strong \'etale blossom.
\end{cor}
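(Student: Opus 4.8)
The plan is to obtain this as a formal consequence of Proposition~\ref{prop:existbloss} and Remark~\ref{rmk:supblossom}, using the strong blossom constructed there as a small seed and then enlarging it to all of $\lbar.$

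First I would apply Proposition~\ref{prop:existbloss} to the subcategory $\sL$ itself. This yields a strong \'etale blossom $\sD$ with $\overline{\sD}=\lbar;$ by the convention in Definition~\ref{dfn:strongblossom}, $\sD$ is then a strong \'etale blossom for $\overline{\sD}=\lbar.$ I would also record that $\sD\subseteq\sl\left(\sD\right)\subseteq\overline{\sD}=\lbar,$ so that $\sD$ sits inside $\lbar$ as a full subcategory, which is what lets me invoke the enlargement remark.

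Next I would check that $\lbar$ is itself a locally small full subcategory of $\Str\left(\G\right),$ which is required for it to be eligible as a strong \'etale blossom. This is the only step carrying any genuine content, and it is light: by Remark~\ref{rmk:loclocsmall} the mapping spaces of $\T$ are locally small, hence so are the mapping spaces of the lax slice underlying $\Str\left(\G\right),$ and therefore every full subcategory of $\Str\left(\G\right)$---in particular $\lbar$---is locally small.

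Finally I would feed the chain of fully faithful inclusions $$\sD\hookrightarrow\lbar\hookrightarrow\lbar=\overline{\sD}$$ (the second arrow being the identity) into Remark~\ref{rmk:supblossom}. Since $\sD$ is a strong \'etale blossom for $\overline{\sD},$ the remark promotes the intermediate subcategory $\lbar$ to a strong \'etale blossom for $\overline{\sD}=\lbar$ as well. Because the \'etale closure of $\lbar$ equals $\lbar$ by Corollary~\ref{cor:etclscls}, this is exactly the assertion that $\lbar$ is a strong \'etale blossom, completing the argument. I expect no real obstacle: the construction of the seed blossom and the enlargement mechanism are both already packaged in the cited results, so once local smallness of $\lbar$ is observed the corollary is essentially bookkeeping.
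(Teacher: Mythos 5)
Your skeleton is exactly the paper's argument: the corollary is stated there as following immediately from Proposition \ref{prop:existbloss} combined with Remark \ref{rmk:supblossom}, i.e., produce a seed strong \'etale blossom $\sD$ with $\overline{\sD}=\lbar$ and feed the inclusions $\sD\hookrightarrow\lbar\hookrightarrow\overline{\sD}=\lbar$ into the enlargement remark. So your first and third steps are precisely the intended proof.

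Your middle step, however, is wrong, and the error is concrete. Remark \ref{rmk:loclocsmall} asserts the \emph{opposite} of what you cite: the $\icat$ $\T$ is \emph{not} locally small --- the mapping space $\Hom_{\T}\left(\Set,\mathcal{A}\right)$ into the classifying topos for abelian groups is the core of the category of abelian groups, which is not essentially small. The remark's closing phrase ``locally small mapping spaces'' means that each mapping space, viewed as a possibly large $\i$-category, is itself locally small (its hom-spaces are small), not that it is essentially small; only the latter would give local smallness of full subcategories of $\Str\left(\G\right)$. Worse, the conclusion you are trying to verify is genuinely false in general: the paper's counterexample due to Fetisov (in the discussion of strong local smallness) exhibits a \emph{small} full subcategory $\sL$ of $\i$-topoi with $\Shi\left(L\right)\in\sl\left(\sL\right)\subseteq\lbar$ and $\cE\in\sL\subseteq\lbar$ for which $\Geo\left(\Sh\left(L\right),\cE\right)$ is not essentially small, so $\lbar$ need not be locally small. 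What is really going on is that the paper itself elides the local-smallness clause of Definition \ref{dfn:strongblossom} when stating this corollary: the substantive content --- that every object $\cD$ of $\lbar$ admits a small site $\C_\cD\hookrightarrow\lbar^{\et}/\cD$ inducing an accessible left exact localization, via the seed from Proposition \ref{prop:existbloss} and the mechanism of Remark \ref{rmk:supblossom} --- goes through unchanged, while local smallness is only restored later by restricting attention to locally small strong \'etale blossoms, where Proposition \ref{prop:locallsmall} shows $\dbar$ is locally small whenever $\sD$ is. So keep your steps one and three, delete the appeal to Remark \ref{rmk:loclocsmall}, and either read the corollary with the local-smallness clause waived or impose strong local smallness on $\sL$ if you want it to hold literally.
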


\begin{prop}\label{prop:locblossom}
Suppose that $\sD$ is a strong \'etale blossom and $\cE$ is an object of $\dbar.$ Then there exists small full subcategory $\sD_\cE \hookrightarrow \sD$ such that $\sD_\cE$ is a small strong \'etale blossom, and $\cE$ is in $\overline{\sD_\cE}.$
\end{prop}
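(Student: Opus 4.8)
The plan is to start from a covering family exhibiting $\cE$ as an $\sD$-\'etendue, reduce it to a \emph{small} one, collect the local models appearing in it, and then enlarge that collection just enough that the strong \'etale blossom structure of $\sD$ restricts to it. First I would unwind the hypothesis $\cE \in \dbar = \cosl\left(\sl\left(\sD\right)\right)$. If $\cE$ is the initial $\G$-topos there is nothing to prove, since any small strong \'etale blossom (even the empty one) has $\cE$ in its \'etale closure; so assume $\cE$ admits an \'etale covering family $\left(\cE_\alpha \to \cE\right)_{\alpha \in A}$ with each $\cE_\alpha \in \sl\left(\sD\right)$. Under the equivalence $\dbare/\cE \simeq \cE$ of Remark \ref{rmk:2352} this family corresponds to objects $E_\alpha \in \cE$ with $\coprod_\alpha E_\alpha \to 1_\cE$ an effective epimorphism. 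Since $\cE$ is an $\i$-topos, its poset of subobjects of $1_\cE$ (the $\left(-1\right)$-truncated objects) is a \emph{small} frame, and as $\left(-1\right)$-truncation preserves colimits the image of $\coprod_\alpha E_\alpha \to 1_\cE$ is the join of the images of the individual $E_\alpha \to 1_\cE$; choosing one index for each distinct image yields a small subset $A' \subseteq A$ with $\coprod_{\alpha \in A'} E_\alpha \to 1_\cE$ still an effective epimorphism. Replacing $A$ by $A'$, I may thus assume $A$ is small, and by Remark \ref{rmk:slclsd} each $\cE_\alpha \simeq \cD_\alpha/D_\alpha$ with $\cD_\alpha \in \sD$ and $D_\alpha \in \cD_\alpha$.

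Next I would build $\sD_\cE$ by closing the small set $\{\cD_\alpha\}_{\alpha \in A'}$ under the sites witnessing the blossom structure. Fix once and for all, for each object $\cD \in \sD$, a small full subcategory $\C_\cD \hookrightarrow \sD^{\et}/\cD$ as in Definition \ref{dfn:strongblossom}; its objects are \'etale maps $\cD' \to \cD$ with $\cD' \in \sD$, so $\C_\cD$ ``references'' only a small set of objects of $\sD$. Setting $S_0 = \{\cD_\alpha\}_{\alpha \in A'}$ and letting $S_{n+1}$ be $S_n$ together with all objects referenced by $\C_\cD$ for $\cD \in S_n$, the union $S_\infty = \bigcup_n S_n$ is a small set of objects of $\sD$; let $\sD_\cE$ be the full subcategory of $\sD$ on $S_\infty$, which is small and hence locally small.

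It then remains to verify the two required properties. For membership, each $\cE_\alpha \simeq \cD_\alpha/D_\alpha$ lies in $\sl\left(\sD_\cE\right)$ since $\cD_\alpha \in S_0 \subseteq \sD_\cE$, so $\left(\cE_\alpha \to \cE\right)_{\alpha \in A'}$ is an \'etale covering family of $\cE$ by objects of $\sl\left(\sD_\cE\right)$, whence $\cE \in \cosl\left(\sl\left(\sD_\cE\right)\right) = \overline{\sD_\cE}$. For the strong blossom property I would check that, for each $\cD \in \sD_\cE$, the \emph{same} subcategory $\C_\cD$ works inside the smaller slice. Because $\sD_\cE$ is full in $\sD$, its \'etale slice $\sD_\cE^{\et}/\cD$ is a full subcategory of $\sD^{\et}/\cD$, and by construction of $S_\infty$ every object of $\C_\cD$ has source in $\sD_\cE$, so $\C_\cD$ is a full subcategory of $\sD_\cE^{\et}/\cD$. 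The left adjoint $L_\cD = \Lan_y\left(\widetilde{i/\cD} \circ r_\cD\right)$ depends only on the composite functor $\C_\cD \to \cD$, and this composite is intrinsic: the canonical equivalence $\dbare/\cD \simeq \cD$ sends an \'etale map to the object of $\cD$ it represents, which coincides with the object assigned by $\overline{\sD_\cE}^{\et}/\cD \simeq \cD$. Hence the localization adjunction of Definition \ref{dfn:strongblossom} is literally unchanged, exhibiting $\cD$ as an accessible left exact localization of $\Pshi\left(\C_\cD\right)$ relative to $\sD_\cE$ as well, so $\sD_\cE$ is a strong \'etale blossom.

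The main obstacle is exactly this last point: shrinking $\sD$ can delete objects appearing in the chosen sites $\C_\cD$, which would destroy the blossom structure. The resolution is the closure construction of the second paragraph together with the observation that the defining localization depends only on the intrinsic \'etale slice over each object and not on the ambient category. By contrast, the reduction to a small covering family in the first paragraph is a routine consequence of the presentability of $\i$-topoi.
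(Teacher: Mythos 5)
Your proof is correct, but it closes up the local models differently than the paper does. The paper exploits the self-closing nature of the construction in Proposition \ref{prop:existbloss}: having chosen sites $\C_{\cD_\alpha} \hookrightarrow \sD^{\et}/\cD_\alpha$ for the finitely many (well, set-many) covering models $\cD_\alpha$, it takes $\sD_\cE$ to be the objects $\cD_\alpha/L_{\cD_\alpha}\left(y\left(C\right)\right)$ for $C \in \C_{\cD_\alpha}$ in a \emph{single} step, because the proof of Proposition \ref{prop:existbloss} shows that a site for such a sliced object may be taken to be the slice site $\C_{\cD_\alpha}/C$, whose objects have sources again of the same form; the final observation of the paper's proof (that $L_{\cD_\alpha}\left(y\left(C\right)\right)$ is the object $D$ with $r_{\cD_\alpha}\left(C\right) : \cD_\alpha/D \to \cD_\alpha$) then identifies $\sD_\cE$ as a full subcategory of $\sD$. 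You instead keep the globally fixed sites $\C_\cD$ for \emph{every} $\cD \in \sD$ and iterate the closure under site references, which forces the countable union $S_\infty$; the price is the iteration (and a global choice of sites over the possibly large $\sD$, harmless in the universe setting), and the payoff is that you must then verify directly that the localization adjunction of Definition \ref{dfn:strongblossom} is intrinsic to the \'etale slice --- which you do correctly, the key point being that by Corollary \ref{cor:2352} both $\overline{\sD_\cE}^{\et}/\cD$ and $\dbare/\cD$ coincide with $\Str^{\et}\left(\G\right)/\cD \simeq \cD$, so the composite $\C_\cD \to \cD$, and hence $L_\cD$ and $R_\cD$, do not see the ambient blossom. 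The paper gets this intrinsic-ness for free by re-running Proposition \ref{prop:existbloss} with re-chosen (slice) sites. One remark: your first-paragraph reduction to a small index set via the frame of subobjects of $1_\cE$ is sound but superfluous, since \'etale covering families are set-indexed by the definition of the pretopology (the paper simply writes down a set $\left(E_\alpha\right)_{\alpha \in A_\cE}$); also note the paper covers $\cE$ by objects of $\sD$ itself rather than $\sl\left(\sD\right)$, using $\dbar = \cosl\left(\sD\right)$ (valid since strong blossoms are blossoms by Proposition \ref{prop:stretex}), though your version through $\sl\left(\sD\right)$ works just as well.
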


\begin{proof}
Since $\cE$ is in $\dbar=\cosl\left(\sD\right),$ there exists a set of objects $$\left(E_\alpha \in \cE_0\right)_{\alpha \in A_\cE},$$ such that $\left(\cE/E_\alpha \to \cE\right)_{\alpha \in A_\cE}$ is an \'etale covering family, and each $\cE/E_\alpha \simeq \cD_\alpha$ for some $\cD_\alpha$ in $\sD.$ Since $\sD$ is a strong \'etale blossom, for each $\alpha,$ one can find a small full subcategory $$r_{\cD_\alpha}:\C_{\cD_\alpha} \hookrightarrow \sD^{\et}/\cD_\alpha$$ such that, in the notation of Definition \ref{dfn:strongblossom},
$$\xymatrix@1{\cD_\alpha \mspace{4mu} \ar@{^{(}->}[r]<-0.9ex>_-{R_{\cD_\alpha}} & \Psh_\i\left(\C_{\cD_\alpha}\right) \ar@<-0.5ex>[l]_-{L_{\cD_\alpha}}},$$
is a left exact accessible localization. Let $\sD_\cE$ be the full subcategory of $\dbar$ on those objects $\cF$ of the form $\cD_\alpha/L_{\cD_\alpha}\left(y\left(C\right)\right),$ for some $\alpha,$ with $C$ in $\C_{\cD_\alpha}$. By the proof of Proposition \ref{prop:existbloss}, $\sD_\cE$ is a small strong \'etale blossom, and since each $\cD_\alpha$ is in $\sD_\cE,$ it follows that $\cE$ is in $\overline{\sD_\cE}.$ Finally, observe that if $C$ is in $\C_{\cD_\alpha},$ then $r_{\cD_\alpha}\left(C\right)$ is an \'etale map of the form $$\cD_\alpha/D \to \cD_\alpha$$ with $\cD_\alpha/D$ an object of $\sD,$ and $L_{\cD_\alpha}\left(y\left(C\right)\right)$ is equivalent to the object $D.$ It follows that $\sD_\cE$ is in fact a full subcategory of $\sD.$
\end{proof}

\begin{ex}\label{ex:affineblossom}
Let $\g$ is a geometry in the sense of \cite{dag} and let $\sL=\Affg$ be the $\icat$ of affine $\g$-schemes viewed as a subcategory of $\G\left(\g\right)$-structured $\i$-topoi. Each object $\cL$ has its underlying $\i$-topos of the form $\Sh_\i\left(\Pro\left(\g\right)^{ad}/X\right)$ (in the sense of Notation 2.2.6 of \cite{dag}) where $X$ is an object of $\Pro\left(\g\right)$, so letting $\C_\cL=\Pro\left(\g\right)^{ad}/X$ gives a canonical choice of site. Moreover, for every object $f:Y \to X$ of $\Pro\left(\g\right)^{ad}/X,$ $$\cL/y\left(f\right) \simeq \operatorname{Spec}_\g\left(Y\right).$$ It follows from the proof of Proposition \ref{prop:existbloss} that $\sL$ is a strong \'etale blossom. Moreover, if $\cE$ is a $\g$-scheme, then one can find an \'etale covering family $$\left(\operatorname{Spec}_\g\left(X_\alpha\right) \to \cE\right)_\alpha,$$ and the $\icat$ $\Affg^\cE$ of affine schemes of the form $$\operatorname{Spec}_\g\left(Y\right)$$ where $Y$ is any object of $\Pro\left(\g\right)$ admitting an admissible morphism
$$Y \to X_\alpha,$$ for some $X_\alpha,$ is a small strong \'etale blossom such that $\cE$ is in $\overline{\Affg^\cE}.$
\end{ex}

\section{The functor of points approach}\label{sec:functorofpoints}
In this subsection, we will explain how to represent \'etendues by sheaves over their basic building blocks. Many ideas for the proofs in this section come from Section 2.4 of \cite{dag}, however we develop these results in a more general setting.

Given a locally small subcategory $\sL$ of $\Str\left(\G\right),$ one may expect that it should be possible to represent $\sL$-\'etendues by the sheaves they induce over $\sL.$ This will not work in full generality however, as one may have to add to $\sL$ ``more open subsets,'' i.e. one can embed the $\icat$ of $\lbar$ into an $\icat$ of sheaves over a strong \'etale blossom $\sD$ for $\lbar.$ In fact, we will need something a bit stronger than local smallness for this to work:

\begin{dfn}
A subcategory $\sL$ of $\Str_\G$ is said to be \textbf{strongly locally small} if $\sl\left(\sL\right)$ is locally small.
\end{dfn}

\begin{rmk}
One can rephrase strong local smallness by a sheaf theoretic condition: A subcategory $\sL$ of $\Str_\G$ is strongly locally small if and only if for all $\cL$ and $\cL'$ in $\sL,$ the functor
$$\cL^{op} \times \cL' \stackrel{\sim}{\longrightarrow} \left(\lbar^{\et}/\cL\right)^{op} \times \lbar^{\et}/\cL'  \to \left(\lbar\right)^{op} \times \lbar \stackrel{\Hom_{\lbar}\left(\mspace{3mu} \cdot \mspace{3mu},\mspace{3mu} \cdot \mspace{3mu}\right)}{\longlonglongrightarrow} \LiGpd,$$ i.e. the functor sending $\left(L,L'\right)$ to $\Hom_{\lbar}\left(\cL/L,\cL'/L\right)$ factors through the inclusion $$\iGpd \hookrightarrow \LiGpd$$ of essentially small $\i$-groupoids into large $\i$-groupoids.
\end{rmk}

\begin{ex}
It follows from Proposition 2.3.12 of \cite{dag} that if $\sL$ is affine $\g$-schemes as in Example \ref{ex:affineblossom}, that $\sL$ is strongly locally small.
\end{ex}

\begin{ex}
If $\sL$ is small and locally small, then it does \emph{not} follow that it is strongly locally small. We would like to thank Anton Fetisov for explaining to us how to construct counterexamples. For simplicity, we will provide a counterexample for the case of trivial geometric structure. For example, suppose that $M$ is a locale with no points (e.g. the locale of regular open subsets of $\RR$), and let $\cB$ be the classifying topos for abelian groups. Consider the product (in the $2$-category of topoi) $$\cE:=\Sh\left(M\right) \times \cB.$$ Notice that there exists no geometric morphism $$\Set \to \cE$$ as such a morphism would induce a geometric morphism $\Set=\Sh\left(*\right) \to \Sh\left(M\right),$ which would correspond to a point of the locale $M$. By \cite{ext}, there exists an open surjection $\Sh\left(L\right) \to \cE$ with $L$ a locale. Let $\cF$ denote the topos $$\cF:=\Sh\left(L \coprod *\right)\simeq \Sh\left(L\right) \coprod \Set.$$ Notice that the there cannot exist a geometric morphism $\cF \to \cE,$ as this would induce a geometric morphism $$\Set \to \cE$$ which we just proved cannot exist. Since $\cF$ is localic, the category of geometric morphisms from $\cE$ to $\cF$ is equivalent to the poset of continuous maps from the localic reflection of $\cE$ to $L \coprod *,$ and hence is essentially small. It follows that if $\sL$ is the full subcategory of topoi on $\cF$ and $\cE$ (which we may identify with a full subcategory of $\i$-topoi), then $\sL$ is locally small. Observe that the \'etale map of locales $L \to L \coprod *$ corresponds to an \'etale geometric morphism $$\Sh\left(L\right) \to \cF,$$ hence $\Sh\left(L\right)$ (or rather $\Shi\left(L\right)$) is in $\sl\left(\sL\right).$ However, the category of geometric morphism from $\Sh\left(L\right)$ to $\cE$ is equivalent to the product 
$$\Geo\left(\Sh\left(L\right),\Sh\left(M\right)\right) \times Ab\left(\Sh\left(L\right)\right),$$
where $Ab\left(\Sh\left(L\right)\right)$ is the category of sheaves of abelian groups on $L.$ Consider the composite $$\Sh\left(L\right) \to \Sh\left(M\right) \times \cB \to \Sh\left(M\right)$$ where the map $\Sh\left(L\right) \to \Sh\left(M\right) \times \cB=\cE$ is the open surjection. The existence of this map implies that $\Geo\left(\Sh\left(L\right),\Sh\left(M\right)\right)$ is non-empty, and hence one concludes that $\Geo\left(\Sh\left(L\right),\cE\right)$ is not essentially small, as the category of abelian sheaves can easily be shown to not be essentially small. It follows that $\sL$ is not strongly locally small.
\end{ex}

\begin{prop}\label{prop:etlocsmall}
For $\G$ any geometric structure, $\Str_\G^{\et}$ is locally small.
\end{prop}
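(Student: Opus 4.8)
The plan is to analyse the mapping space $\Hom_{\Str^{\et}\left(\G\right)}\left(\left(\cF,\O_\cF\right),\left(\cE,\O_\cE\right)\right)$ through the faithful forgetful functor $U:\Str^{\et}\left(\G\right) \to \T^{\et}$ that discards the structure sheaf (it lands in $\T^{\et}$ precisely because morphisms in $\Str^{\et}\left(\G\right)$ are \'etale). This induces a map of $\i$-groupoids $U:\Hom_{\Str^{\et}\left(\G\right)}\left(\left(\cF,\O_\cF\right),\left(\cE,\O_\cE\right)\right) \to \Hom_{\T^{\et}}\left(\cF,\cE\right)$, and since a map of spaces whose base and all of whose homotopy fibers are essentially small has essentially small total space, it suffices to bound these two separately. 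First I would treat the fibers: a point of the fiber over an \'etale geometric morphism $\varphi:\cF \to \cE$ is the structure-sheaf compatibility datum, i.e.\ a natural equivalence between the composite geometric morphism $\O_\cE\circ\varphi:\cF\to\cB$ and $\O_\cF:\cF\to\cB$. Such equivalences form a union of connected components of the mapping space of $\Geo\left(\cF,\cB\right)$ between these two objects, which is small because $\Geo\left(\cF,\cB\right)$ is locally small by Remark \ref{rmk:loclocsmall}; hence every fiber of $U$ is essentially small.

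It remains to show the base $\Hom_{\T^{\et}}\left(\cF,\cE\right)$ is essentially small, which is the heart of the argument. Since \'etaleness is a property of a geometric morphism that is stable under equivalence and closed under composition (Corollary \ref{cor:compet}), this space is the union of those connected components of $\Hom_{\T}\left(\cF,\cE\right) \simeq \Geo\left(\cF,\cE\right)^{\simeq}$ consisting of \'etale morphisms. Each such component is small because $\Geo\left(\cF,\cE\right)$ is locally small (Remark \ref{rmk:loclocsmall}), so the only remaining point is to show that the \emph{set} of components is small; by Proposition \ref{lem:htt6.3.5.10} this set is exactly the set of equivalence classes of objects $E\in\cE$ for which $\cE/E \simeq \cF$.

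The key step is a compactness bound confining all such $E$ to one essentially small subcategory. Each \'etale morphism is, up to equivalence, the projection $\pi:\cE/E \to \cE$, whose inverse image $\pi^*$ preserves all colimits (it is a left adjoint to $\pi_*$), so its further left adjoint $\pi_!$ preserves $\lambda$-compact objects for every regular cardinal $\lambda$. I would fix a single regular $\lambda$, depending only on $\cE$ and $\cF$, large enough that $\cE$ is $\lambda$-accessible (so that the full subcategory $\cE^\lambda$ of $\lambda$-compact objects is essentially small) and that the terminal object $1_\cF$ is $\lambda$-compact; such $\lambda$ exists because $\i$-topoi are presentable. Given any equivalence $\cE/E \simeq \cF$, the terminal object $1_{\cE/E}$ corresponds to $1_\cF$ and is thus $\lambda$-compact; since $1_{\cE/E}=\mathrm{id}_E$ and $\pi_!\left(\mathrm{id}_E\right)=E$, it follows that $E=\pi_!\left(1_{\cE/E}\right)$ is $\lambda$-compact, i.e.\ $E$ lies in $\cE^\lambda$. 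Hence only a small set of such $E$ exist up to equivalence, so $\Hom_{\T^{\et}}\left(\cF,\cE\right)$ is essentially small, and combining with the fiber bound proves local smallness.

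I expect this compactness bound to be the main obstacle, since it is what rescues smallness: $\T$ is only locally small in the weak sense that its mapping spaces have small path-components but possibly a proper class of components, so one cannot simply cite local smallness of $\T$ to control the set of \'etale morphisms. The substantive content is that the equivalence $\cE/E \simeq \cF$ forces the compactness rank of $E$ to be bounded by that of $1_\cF$, uniformly in $E$, which is precisely what pins the class of candidates down to an essentially small subcategory.
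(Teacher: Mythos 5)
Your decomposition into fibers and base is exactly the shape of the paper's proof, and your treatment of the fibers (spaces of structure-sheaf equivalences, small by local smallness of $\Geo\left(\cF,\cB\right)$ from Remark \ref{rmk:loclocsmall}) is fine; your $\lambda$-compactness bound on $E$ is also correct as far as it goes. The gap is in the base. It is false that $\pi_0\Hom_{\T^{\et}}\left(\cF,\cE\right)$ is ``exactly'' the set of equivalence classes of objects $E\in\cE$ with $\cE/E\simeq\cF$. Proposition \ref{lem:htt6.3.5.10} gives only a surjection from $\pi_0\Hom_{\T^{\et}}\left(\cF,\cE\right)$ onto that set, whose fiber over $\left[E\right]$ is (a quotient by the image of $\pi_0\mathrm{Aut}_\cE\left(E\right)$ of) the set of homotopy classes of equivalences $\cF\simeq\cE/E$. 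A concrete failure already occurs for $0$-localic topoi: take $\cE=\cF=\Shi\left(\RR\right)$; the identity and the equivalence induced by a translation of $\RR$ are non-homotopic \'etale morphisms $\cF\to\cE$ (for sober spaces, homotopy classes of geometric morphisms are just continuous maps), yet both send $1_\cF$ to the terminal object, i.e.\ both lie over the same class $\left[E\right]=\left[1_\cE\right]$. So your compactness bound controls only the image of the surjection; you still owe smallness of its fibers, i.e.\ of $\pi_0$ of the space of equivalences $\cF\simeq\cE/E$ (equivalently of $\pi_0\mathrm{Aut}_{\T}\left(\cF\right)$), and this does not follow from local smallness of $\Geo$, for precisely the reason you yourself flag: that space is a union of components of $\Hom_{\T}\left(\cF,\cE/E\right)$, which a priori has a large set of components.

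The missing step is true, but proving it requires the same accessibility input again, and once you have it your detour through $E$ becomes unnecessary. This is in fact how the paper argues: for $f:\cF\to\cE$ \'etale, $f^*$ preserves all colimits, and by Propositions 5.4.7.7 and 5.5.1.4 of \cite{htt} applied to the adjunction $f_!\dashv f^*$, the functor $f^*$ carries $\kappa$-compact objects to $\kappa$-compact objects for any regular $\kappa$ such that both topoi are $\kappa$-accessible. Since $f^*$ is determined up to equivalence by its restriction $\cE^{\kappa}\to\cF^{\kappa}$, a functor between essentially small $\i$-categories, the set $\pi_0\Hom_{\T^{\et}}\left(\cF,\cE\right)$ is small in one stroke, multiplicities included; applying the same observation to equivalences is also the quickest way to patch your argument, by bounding $\pi_0\mathrm{Aut}_{\T}\left(\cF\right)$. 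In short: the fiber/base strategy and the fiber analysis match the paper, your bound $E=\pi_!\left(1_{\cE/E}\right)\in\cE^{\lambda}$ is a correct and genuinely different way to control the classes $\left[E\right]$, but the asserted bijection between components and classes of $E$ is wrong, and repairing it needs the compactness-of-$f^*$ argument that the paper uses directly on all \'etale morphisms.
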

\begin{proof}
We will first prove that $\T^{\et}$ is locally small. We would like to thank Zhen Lin for pointing out a proof of this for $1$-topoi, whose proof readily generalizes as follows: Let $\cE$ and $\cF$ be two $\i$-topoi. We may choose a regular cardinal $\kappa$ such that both $\cE$ and $\cF$ are $\kappa$-accessible. If $f:\cE \to \cF$ is an \'etale geometric morphism, since $f^*$ is a left adjoint, it preserves all colimits, and since $\cE$ is generated under colimits by its $\kappa$-compact objects, it follows that $f^*$ is uniquely determined (up to equivalence) by what it does on $\kappa$-compact objects. Notice that since $f$ is \'etale, there is a further adjunction $$f_! \dashv f^*,$$ and so it follows from Proposition 5.4.7.7 of \cite{htt} that $f_!$ (and also $f^*$) is $\kappa$-accessible. Combining this observation with Proposition 5.5.1.4 of \cite{htt}, we conclude that $f^*$ must carry $\kappa$-compact objects to $\kappa$-compact objects.  Since the subcategory of $\kappa$-compact objects in either $\i$-topos is essentially small, one concludes that $\pi_0\left(\Hom_{\T^{\et}}\left(\cE,\cF\right)\right)$ is small. Since the $\i$-category $\Geo^{\et}\left(\cE,\cF\right)$ of \'etale geometric morphisms from $\cE$ to $\cF$ is a full subcategory of $\Geo\left(\cE,\cF\right)$ which is locally essentially small by Remark \ref{rmk:loclocsmall}, we conclude that the former $\i$-category is also locally essentially small, so the result now follows.

Now consider the case where $\G$ is not trivial and let $\cB$ be the base $\i$-topos for $\G.$ Suppose that $\left(\cE,\O_\cE\right)$ and $\left(\cF,\O_\cF\right)$ are objects of $\Str_\G.$ Up to equivalence, the projection $$\Hom_{\Str_\G^{\et}}\left(\left(\cE,\O_\cE\right),\left(\cF,\O_\cF\right)\right) \to \Hom_{\T^{\et}}\left(\cE,\cF\right)$$ can be identified with the left fibration classifying the functor
\begin{eqnarray*}
\Hom_{\T^{\et}}\left(\cE,\cF\right)^{op} &\to& \LiGpd\\
f:\cE \to \cF &\mapsto& \Hom_{\Hom_{\T}\left(\cE,\cB\right)}\left(\O_\cF \circ f,\O_\cE\right).
\end{eqnarray*}
It suffices to show the above functor factors through the inclusion $$\iGpd \hookrightarrow \LiGpd.$$ Since $\Hom_{\T}\left(\cE,\cB\right)$ is locally small by Remark \ref{rmk:loclocsmall}, we are done.
\end{proof}

\begin{prop}\label{prop:stronglocsmallblossom}
If $\sL$ is strongly locally small, then there exists a locally small strong \'etale blossom $\sD$ for $\lbar,$ which can be chosen to be small if $\sL$ is small, and can be chosen to be $n$-localic if $\sL$ is.
\end{prop}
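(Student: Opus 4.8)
The plan is to reuse, essentially verbatim, the construction carried out in the proof of Proposition \ref{prop:existbloss}, and then to observe that the extra hypothesis of strong local smallness is exactly what is needed to upgrade the output of that construction to a genuinely \emph{locally small} full subcategory, i.e. to a strong \'etale blossom in the literal sense of Definition \ref{dfn:strongblossom}. In other words, Proposition \ref{prop:existbloss} has already done all of the hard analytic work (producing the site-wise left exact localizations); the present statement is the refinement that records what happens to the local smallness hypothesis under that same construction.

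First I would recall the blossom $\sD$ produced by Proposition \ref{prop:existbloss}: for each $\cL$ in $\sL$ one chooses a small $\icat$ $\C_\cL$ exhibiting $\cL$ as a left exact localization of $\Pshi\left(\C_\cL\right)$ with the induced composite $\C_\cL \to \cL$ full and faithful, and one sets $\sD$ to be the full subcategory of $\sl\left(\sL\right)$ spanned by the objects $\cL/a_\cL y\left(C\right)$ for $C \in \C_\cL$. The cited proof already establishes everything about $\sD$ except its local smallness: for every object $\cD$ of $\sD$ the slice $\sD^{\et}/\cD$ carries a small full subcategory $\C_\cD$ (of the form $\C_\cL/C$) exhibiting $\cD$ as an accessible left exact localization of $\Pshi\left(\C_\cD\right)$, which is precisely the localization condition of Definition \ref{dfn:strongblossom}, and moreover $\overline{\sD}=\lbar$. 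That same proof also shows that if $\sL$ is small then $\sD$ may be chosen small, and that if $\sL$ is $n$-localic then each $\C_\cL$ may be taken to be an $\left(n,1\right)$-category, so that each $\cL/a_\cL y\left(C\right)$ is $n$-localic by Proposition \ref{prop:2.3.16}; hence $\sD$ is $n$-localic in that case.

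The only remaining point, and the sole place at which the new hypothesis is used, is local smallness of $\sD$. Since $\sD$ is by construction a \emph{full} subcategory of $\sl\left(\sL\right)$, its mapping spaces coincide with those computed in $\sl\left(\sL\right)$. By hypothesis $\sL$ is strongly locally small, which by definition means $\sl\left(\sL\right)$ is locally small; therefore $\sD$ is locally small. Consequently $\sD$ now satisfies \emph{all} the conditions of Definition \ref{dfn:strongblossom} and is a bona fide strong \'etale blossom for $\lbar$, with the claimed smallness and $n$-localic refinements inherited directly from Proposition \ref{prop:existbloss}. (In the case $\sL$ small there is nothing extra to check, since a small category is a fortiori locally small, so the two conclusions are consistent.)

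I do not expect a serious obstacle here: the argument is largely bookkeeping on top of Proposition \ref{prop:existbloss}. The one thing worth verifying carefully is that the construction genuinely places $\sD$ \emph{as a full subcategory} of $\sl\left(\sL\right)$ — which is what allows local smallness to descend from $\sl\left(\sL\right)$ to $\sD$ — and that the strong-blossom localization data chosen in Proposition \ref{prop:existbloss} are unaffected by (indeed, independent of) this local smallness refinement, so that nothing in that construction needs to be re-run.
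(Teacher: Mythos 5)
Your proposal is correct and matches the paper's own argument, which consists of exactly the observation you make: the blossom $\sD$ constructed in the proof of Proposition \ref{prop:existbloss} is by construction a (full) subcategory of $\sl\left(\sL\right)$, so local smallness descends directly from the hypothesis that $\sl\left(\sL\right)$ is locally small, while the smallness and $n$-localic refinements are inherited unchanged from Proposition \ref{prop:existbloss}. Your expanded bookkeeping, including the check of fullness of $\sD$ in $\sl\left(\sL\right)$, is a faithful elaboration of the paper's one-line proof.
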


\begin{proof}
The construction of the strong \'etale blossom in Proposition \ref{prop:existbloss} is a subcategory of $\sl\left(\sL\right).$
\end{proof}

Let $\cE$ and $\cD$ be two objects in $\Str_\G.$  Consider the functor
$$\cD^{op} \times \cE \stackrel{\sim}{\longrightarrow} \left(\Str_\G^{\et}/\cD\right)^{op} \times \Str_\G^{\et}/\cE  \to \left(\Str_\G\right)^{op} \times \Str_\G \stackrel{\Hom_{\Str_\G}\left(\mspace{3mu} \cdot \mspace{3mu},\mspace{3mu} \cdot \mspace{3mu}\right)}{\longlonglongrightarrow} \LiGpd.$$
By adjunction, it determines a functor $$\tilde\tau^\cE_\cD:\cE \to \LPshi\left(\cD\right),$$ such that for all $E$ in $\cE$ and $D$ in $\cD,$ $$\tilde \tau^\cE_\cD\left(E\right)\left(D\right) \simeq \Hom_{\dbar}\left(\cD/D,\cE/E\right).$$ Observe that for all $E$ in $\cE,$ $\tau^\cE_\cD\left(E\right):\cD^{op} \to \LiGpd$ preserves small limits, hence is a sheaf in the sense of Definition \ref{dfn:sheavesoverE}. Denote the induced functor by $$\tau^\cE_\cD:\cE \to \LShi\left(\cD\right).$$ Analogously, consider the functor
$$\cD^{op} \times \cE \stackrel{\sim}{\longrightarrow} \left(\Str_\G^{\et}/\cD\right)^{op} \times \Str_\G^{\et}/\cE  \to \left(\Str_\G^{\et}\right)^{op} \times \dbare \stackrel{\Hom_{\Str_\G^{\et}}\left(\mspace{3mu} \cdot \mspace{3mu},\mspace{3mu} \cdot \mspace{3mu}\right)}{\longlonglongrightarrow} \LiGpd,$$ which induces a functor
$$\tau^{\cE,{\et}}_\cD:\cE \to \LShi\left(\cD\right).$$

\begin{lem}\label{lem:taucolim}
Let $\cE$ and $\cD$ be two objects in $\Str_\G.$ Then both functors $\tau^\cE_\cD$ and $\tau^{\cE,{\et}}_\cD$  preserve small colimits.
\end{lem}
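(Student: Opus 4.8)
The plan is to factor each functor through the slice construction and then reduce the statement to a descent argument on $\cD$. Write $\Phi\colon \cE \to \Str_\G^{\et}$ for the functor $E \mapsto \left(\cE/E,\O_\cE|_E\right)$, i.e. the composite of the equivalence $\cE \simeq \Str_\G^{\et}/\cE$ of Corollary \ref{cor:235} (see also Remark \ref{rmk:2352}) with the projection $\Str_\G^{\et}/\cE \to \Str_\G^{\et}$; by Proposition \ref{prop:htt1.2.13.8}(i) this projection preserves colimits, so $\Phi$ does as well. Both $\tau^\cE_\cD$ and $\tau^{\cE,\et}_\cD$ are of the form $N\circ \Phi$, where $N$ is a restricted-Yoneda functor $\cF \mapsto \bigl(D \mapsto \Hom(\cD/D,\cF)\bigr)$, the two cases differing only in whether the mapping space is taken in $\Str_\G$ or in $\Str_\G^{\et}$; the argument below applies verbatim to both, so I suppress the distinction. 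Since $\Phi$ preserves colimits, it suffices to show that for a colimit $E \simeq \colim_{k\in K} E_k$ in $\cE$ the canonical comparison map in $\LShi(\cD)$ from $\colim_k \tau(E_k)$ to $\tau(E)$ is an equivalence. Because $\LShi(\cD)$ is a left exact localization of $\LPshi(\cD)$ (Remark \ref{rmk:limitpresrep}), with colimits computed by applying the reflector to the objectwise colimit, this in turn reduces to showing that the comparison map of large presheaves from the objectwise colimit of $\tilde\tau(E_k)$ to $\tilde\tau(E)$ is inverted by that reflector.

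The geometric input is that colimit cocones in $\cE$ become \'etale covers after slicing. Indeed, for $E \simeq \colim_k E_k$ the canonical map $\coprod_k E_k \to E$ is an effective epimorphism (it is the map from the $0$-simplices of the simplicial replacement of the diagram to its realization, hence an effective epimorphism by \cite{htt}); applying $\Phi$ this says precisely that $\left(\cE/E_k \to \cE/E\right)_k$ is an \'etale covering family of $\cE/E$. Now fix $D\in\cD$ and a morphism $\phi\colon \cD/D \to \cE/E$ representing a point of $\tilde\tau(E)(D)$. Pulling this covering family back along $\phi$ — using Corollary \ref{cor:2.3.20} for the existence and \'etaleness of the pullbacks, and the stability of covering families under pullback from Proposition \ref{prop:etexists} — yields an \'etale covering family $\left(\cD/D_i \to \cD/D\right)_i$, equivalently an effective epimorphism $\coprod_i D_i \to D$ in $\cD$, such that on each $\cD/D_i$ the restriction of $\phi$ factors through the cocone map $\cE/E_{k(i)} \to \cE/E$ and so defines a point of $\tilde\tau(E_{k(i)})(D_i)$. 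In the \'etale case these factorizations are again \'etale, being pullbacks of $\phi$ by Corollary \ref{cor:pullstabet}. This exhibits the comparison map as a local epimorphism for the epimorphism topology on $\cD$.

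To upgrade local surjectivity to a genuine local equivalence it is cleanest to reduce to generating colimits: a functor out of a cocomplete $\i$-category preserves all small colimits as soon as it preserves small coproducts and geometric realizations (\cite{htt}), so I treat these two cases. For a coproduct $E \simeq \coprod_k E_k$ the pulled-back cover is the disjoint decomposition of $D$ furnished by disjointness and universality of coproducts in $\cD$, and $\phi$ decomposes accordingly, identifying the reflection of the objectwise coproduct of the $\tilde\tau(E_k)$ with $\tilde\tau(E)$ directly. For a geometric realization $E \simeq |E_\bullet|$ the single map $E_0 \to E$ is an effective epimorphism, so $\left(\cE/E_0 \to \cE/E\right)$ is a one-element \'etale cover, and descent along its \v{C}ech nerve — which under $\Phi$ is computed by the iterated fibre products $\cE/\left(E_0\times_E\cdots\times_E E_0\right)$ and matches $\Phi(E_\bullet)$ up to the usual comparison — shows the comparison map is $\infty$-connective, hence a local equivalence for the epimorphism topology. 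Finally, the passage from ``local equivalence for the epimorphism topology'' to ``inverted by the reflector onto $\LShi(\cD)$'' is automatic: every limit-preserving functor $G\colon\cD^{op}\to\LiGpd$ is a sheaf for the epimorphism topology, since for an effective epimorphism $\coprod_i D_i \to D$ one has $D\simeq \colim \check C_\bullet$ and $G$ sends this colimit to the corresponding descent limit. Thus the limit-preserving functors form a full subcategory of the epimorphism-topology sheaves, so an epimorphism-topology local equivalence is in particular inverted by the localization defining $\LShi(\cD)$, and the comparison map becomes an equivalence.

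I expect the main obstacle to be the geometric-realization case: local surjectivity drops out immediately from the cover-pullback mechanism, but proving that the comparison map is $\infty$-connective requires carefully matching the \v{C}ech nerve of $E_0 \to E$ with the given simplicial object $E_\bullet$ and controlling every higher homotopy sheaf by descent. The coproduct case and the reduction to these two classes of colimits are routine, and note that the open question flagged in Remark \ref{rmk:limitpresrep} — whether epimorphism-topology sheaves coincide with limit-preserving functors — is sidestepped entirely, since only the easy implication ``limit-preserving $\Rightarrow$ sheaf'' is needed.
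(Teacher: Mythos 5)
Your reduction to coproducts and geometric realizations is legitimate, and the coproduct case can be pushed through, but the geometric-realization case --- which you yourself flag as the main obstacle --- contains a genuine gap that your route cannot close. The inference ``the comparison map is $\infty$-connective, hence a local equivalence for the epimorphism topology, hence inverted by the reflector onto $\widehat{\Sh}_\i\left(\cD\right)$'' is false in general: the class of maps inverted by a localization whose local objects are epimorphism-topology sheaves is the strongly saturated class generated by covering \emph{monomorphisms}, and $\infty$-connective maps belong to it only when the relevant topos is hypercomplete. Here $\widehat{\Sh}_\i\left(\cD\right)$ is essentially $\cD$ itself in a larger universe (Remark \ref{rmk:limitpresrep}), and $\cD$ is an arbitrary object of $\Str_\G$; no hypercompleteness is available, and the paper itself notes (last remark of Section \ref{sec:orbifolds}) that non-hypercomplete examples occur already for sheaves on topological spaces. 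Moreover, for a general simplicial object $E_\bullet$ with $E \simeq \left|E_\bullet\right|$, the levelwise maps $E_n \to E_0 \times_E \cdots \times_E E_0$ to the \v{C}ech nerve of $E_0 \to E$ are not equivalences, so ``matching'' the two forces exactly the hypercover-style induction whose output is only $\infty$-connectivity of $\left|\tau\left(E_\bullet\right)\right| \to \tau\left(E\right)$ --- which, in a non-hypercomplete $\cD$, does not imply equivalence. So as written your argument proves the lemma only for hypercomplete $\cD$. (Your closing observation that only the easy implication ``limit-preserving $\Rightarrow$ epimorphism-topology sheaf'' is needed is correct, but it does not rescue the $\infty$-connectivity step.)

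The paper's proof (following Proposition 2.3.12 of \cite{dag}) avoids topologies, sheafification and connectivity estimates entirely, and it is worth seeing why. Writing $E \simeq \colim E_\alpha$ with cocone components $\rho_\alpha : E_\alpha \to E$, one uses \emph{universality of colimits} in the $\infty$-topos $\widehat{\Sh}_\i\left(\cD\right)$ to reduce the claim to showing $\colim \left(y\left(D\right) \times_{\tau\left(E\right)} \tau\left(E_\alpha\right)\right) \simeq y\left(D\right)$ for every map $\varphi : y\left(D\right) \to \tau\left(E\right)$ from a representable. By Yoneda, $\varphi$ corresponds to $\tilde\varphi : \cD/D \to \cE/E$, and a mapping-space computation --- Corollary \ref{cor:2.3.20} for the pullback square in $\Str_\G$, Proposition \ref{prop:6.3.5.9} to see that every morphism in sight over $\cD/D$ is automatically \'etale (this is also why a single argument handles both $\tau^{\cE}_\cD$ and $\tau^{\cE,{\et}}_\cD$), and the equivalence $\Str^{\et}_\G/\left(\cD/D\right) \simeq \cD/D$ --- identifies each pullback as the \emph{representable} $y\left(\pi_D \varphi^* \rho_\alpha\right)$. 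One then concludes because $\varphi^*$ and $\pi_D$ preserve colimits, $\colim \rho_\alpha \simeq id_E$ by Proposition \ref{prop:colim1}, and --- the one place where descent in $\cD$ enters --- the Yoneda embedding of $\cD/D$ into limit-preserving large presheaves preserves small colimits by Remark 6.3.5.17 of \cite{htt}. That last fact packages on the nose the descent you were attempting to establish by hand via \v{C}ech nerves, with no connectivity estimate and hence no hypercompleteness hypothesis. If you want to salvage your outline, replace the $\infty$-connectivity step by this representability-of-pullbacks mechanism; your cover-pullback observation is then subsumed by it.
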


\begin{proof}
This proof is based off of the proof of Proposition 2.3.12 in \cite{dag}. We will prove both cases at once, and denote by $\tau$ a functor which could represent either $\tau^\cE_\cD$ or $\tau^{\cE,{\et}}_\cD,$ and distinguish when necessary. Let
$$J^{\triangleright} \stackrel{\rho}{\longrightarrow} \cE$$ be a colimit for a functor $$F:J \to \cE$$ with $J$ a small $\icat$. We will informally write this colimit as $E=\colim E_\alpha,$ and the components of the cocone as $$\rho_\alpha:E_\alpha \to E.$$ We wish to show that $$\colim \tau\left(E_\alpha\right) \simeq \tau\left(\colim E_\alpha\right).$$ Since colimits in $\LShi\left(\cD\right)$ are universal, this holds if and only if for each map from a representable $\varphi:y\left(D\right) \to \tau\left(E\right),$ one has $$\colim y\left(D\right) \times_{\tau\left(E\right)} \tau\left(E_\alpha\right) \simeq y\left(D\right).$$ By the Yoneda lemma, such a map $\varphi$ corresponds to a morphism $\tilde \varphi:\cD/D \to \cE/E$ (which is \'etale if $\tau=\tau^{\cE,{\et}}_\cD$). Denote the inverse image functor of the underlying geometric morphism by $$\varphi^*:\cE/E \to \cD/D.$$ Suppose that $f:D' \to D$ is a morphism in $\cD.$ Since we have a pullback diagram
$$\xymatrix{y\left(D\right) \times_{\tau\left(E\right)} \tau\left(E_\alpha\right) \ar[r] \ar[d]_-{pr_\alpha} & \tau\left(E_\alpha\right) \ar[d]^-{\tau\left(\rho_\alpha\right)} \\
y\left(D\right) \ar[r]^-{\varphi} & \tau\left(E\right),}$$
there is a canonical equivalence $$\Hom_{\LShi\left(\cD\right)/y\left(D\right)}\left(y\left(f\right),pr_\alpha\right) \simeq \Hom_{\LShi\left(\cD\right)/\tau\left(E\right)}\left(\varphi \circ y\left(f\right),\tau\left(\rho_\alpha\right)\right).$$
By the Yoneda lemma, we have
$$\Hom_{\LShi\left(\cD\right)/\tau\left(E\right)}\left(\varphi \circ y\left(f\right),\tau\left(\rho_\alpha\right)\right) \simeq \Hom_{\Str_\G/\left(\cE/E\right)}\left(\left(\cD/D\right)/f \to\cD/D \stackrel{\tilde \varphi}{\to} \cE/E,\cE/E_\alpha \to \cE/E\right),$$ which in turn is equivalent to
$$\Hom_{\Str_\G/\left(\cD/D\right)}\left(\left(\cD/D\right)/f \to \cD/D,\left(\cD/D\right)/\varphi^*\rho_\alpha \to  \cD/D\right)$$ by Corollary \ref{cor:2.3.20}. Since \'etale maps are stable under pullback, each of the morphisms are \'etale, therefore any morphism between them is \'etale (Proposition \ref{prop:6.3.5.9}) and hence this $\i$-groupoid is equivalent to
$$\Hom_{\Str^{\et}_\G/\left(\cD/D\right)}\left(\left(\cD/D\right)/f \to \cD/D,\left(\cD/D\right)/\varphi^*\rho_\alpha \to  \cD/D\right).$$ Finally, under the equivalence $\Str_\G^{\et}/\left(\cD/D\right)\simeq \cD/D,$ one has
$$\Hom_{\Str^{\et}_\G/\left(\cD/D\right)}\left(\left(\cD/D\right)/f \to \cD/D,\left(\cD/D\right)\varphi^*\rho_\alpha \to  \cD/D\right) \simeq \Hom_{\cD/D}\left(f,\varphi^*\left(\rho_\alpha\right)\right).$$ Let $$\pi_D:\cD/D \to \cD$$ be the canonical projection. By Remark \ref{rmk:5.3.5.4}, $$\LPshi\left(\cD\right)/y\left(D\right)\simeq \LPshi\left(\cD/D\right)$$ so from the equivalence 
$$\Hom_{\LShi\left(\cD\right)/y\left(D\right)}\left(y\left(f\right),pr_\alpha\right) \simeq \Hom_{\cD/D}\left(f,\varphi^*\left(\rho_\alpha\right)\right)$$
it follows that one has $$pr_\alpha \simeq y_{\cD/D}\left(\varphi^*\rho_\alpha\right)$$ and hence $$y\left(D\right) \times_{\tau\left(E\right)} \tau\left(E_\alpha\right) \simeq y\left(\pi_D \varphi^* \rho_\alpha\right).$$ By Remark 6.3.5.17 of \cite{htt}, the Yoneda embedding $$y:\cD/D \hookrightarrow \LShi\left(\cD\right)$$ preserves small colimits, and $\varphi^*$ and $\pi_D$ do as well as they are left adjoint. By Proposition \ref{prop:colim1}, $$\colim \rho_\alpha \simeq id_E,$$ so that $$\colim \pi_D \varphi^* \rho_\alpha \simeq E.$$ It follows that

\begin{eqnarray*}
\colim \left(y\left(D\right) \times_{\tau\left(E\right)} \tau\left(E_\alpha\right)\right) &\simeq& \colim y\left(\pi_D \varphi^*\rho_\alpha\right)\\
&\simeq& y\left(\pi_D \varphi^*\colim \rho_\alpha\right)\\
&\simeq& y\left(\pi_D id_D\right)\\
&\simeq& y\left(D\right)
\end{eqnarray*}
as desired.
\end{proof}

\begin{prop}\label{prop:locallsmall}
If $\sD$ is a locally small strong \'etale blossom, then $\dbar$ is locally small.
\end{prop}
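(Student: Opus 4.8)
The plan is to reformulate the mapping spaces of $\dbar$ in terms of the functors $\tau$ introduced just before Lemma \ref{lem:taucolim}, and then to show that these \emph{a priori} large-valued sheaves are in fact small-valued. Concretely, for $\cE,\cF\in\dbar$ the functor $\tau^\cF_\cE\colon\cF\to\LShi(\cE)$ satisfies $\tau^\cF_\cE(F)(E)\simeq\Hom_{\dbar}(\cE/E,\cF/F)$, so taking $F=1_\cF$ and $E=1_\cE$ gives $\Hom_{\dbar}(\cE,\cF)\simeq\tau^\cF_\cE(1_\cF)(1_\cE)$. Thus it suffices to prove that $\tau^\cF_\cE(1_\cF)$ lies in the subcategory $\Shi(\cE)\subseteq\LShi(\cE)$ of small-valued sheaves; evaluating at $1_\cE$ then gives the result. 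I will use two closure facts throughout: first, that $\Shi(\cE)$ is closed under small colimits in $\LShi(\cE)$, since by Remark \ref{rmk:limitpresrep} it is the essential image of the Yoneda embedding $\cE\hookrightarrow\LShi(\cE)$, which preserves small colimits (Remark 6.3.5.17 of \cite{htt}); and second, that any object of $\LShi(\cE)$, being limit-preserving, carries a small colimit $E\simeq\colim_j E_j$ in $\cE$ to the small limit $\lim_j$ of its values, and hence is small-valued as soon as it is small-valued on a family generating $\cE$ under small colimits.

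The heart of the argument is the base case: for every object $\cD$ of $\sD$ and every $\cE\in\dbar$, the sheaf $\tau^{\cD}_\cE(1_{\cD})\colon E\mapsto\Hom_{\dbar}(\cE/E,\cD)$ is small-valued. Here I would invoke a dévissage of $\cE$: every object $E\in\cE$ is a small colimit of objects $E_j$ for which $\cE/E_j$ lies in $\sD$ itself. This follows by applying the cover–\v{C}ech presentation of Proposition \ref{prop:gens} to $\cE/E\in\dbar$ (which is an \'etendue by Corollary \ref{cor:stabslic}), reducing the resulting $\sl(\sD)$-slices to $\sD$-slices via the small site furnished by the strong blossom structure (Definition \ref{dfn:strongblossom}, Corollary \ref{cor:stetgen}), and pushing forward along the colimit-preserving projection $\cE/E\to\cE$, using Lemma \ref{lem:slicek} to identify iterated slices. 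Since $\tau^{\cD}_\cE(1_{\cD})$ is limit-preserving, its value at such an $E$ is the small limit $\lim_j\Hom_{\dbar}(\cE/E_j,\cD)$, and each factor $\Hom_{\dbar}(\cE/E_j,\cD)$ is small because $\cE/E_j$ and $\cD$ both lie in the locally small category $\sD$. Hence $\tau^{\cD}_\cE(1_{\cD})\in\Shi(\cE)$.

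With the base case established, the main reduction is immediate. Applying the same dévissage to $\cF$, write $1_\cF$ as a small colimit $\colim_k F_k$ of objects with $\cF/F_k\in\sD$. Because $\tau^\cF_\cE$ preserves small colimits (Lemma \ref{lem:taucolim}), $\tau^\cF_\cE(1_\cF)\simeq\colim_k\tau^\cF_\cE(F_k)$ in $\LShi(\cE)$, and $\tau^\cF_\cE(F_k)(E)\simeq\Hom_{\dbar}(\cE/E,\cF/F_k)$ is exactly the base-case sheaf $\tau^{\cF/F_k}_\cE(1)$ attached to the object $\cF/F_k\in\sD$, hence lies in $\Shi(\cE)$. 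Since $\Shi(\cE)$ is closed under small colimits in $\LShi(\cE)$, the colimit $\tau^\cF_\cE(1_\cF)$ again lies in $\Shi(\cE)$; evaluating at $1_\cE$ shows $\Hom_{\dbar}(\cE,\cF)$ is small.

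I expect the main obstacle to be the dévissage lemma asserting that every object of $\cE$ is a small colimit of objects whose slice lies in $\sD$ (rather than merely in $\sl(\sD)$): this is precisely where the \emph{strong} \'etale blossom hypothesis is essential, and it requires combining the \'etale-cover/\v{C}ech presentation of an \'etendue with the small-site presentation of Definition \ref{dfn:strongblossom} and transporting everything along the slice equivalences of Lemma \ref{lem:slicek}. A secondary point needing care is the universe bookkeeping, namely verifying that the limits and colimits of $\U$-small $\i$-groupoids occurring above remain $\U$-small and that $\Shi(\cE)$ is genuinely closed under the small colimits produced by $\tau^\cF_\cE$.
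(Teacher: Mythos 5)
Your proposal is correct and follows essentially the same route as the paper's proof: both hinge on writing $1_\cF$ as an iterated colimit of objects $F$ with $\cF/F$ in $\sD$ (via Corollary \ref{cor:stetgen} and Proposition \ref{prop:colim1}), applying the colimit-preservation of $\tau^\cF_\cE$ from Lemma \ref{lem:taucolim}, and invoking closure of small sheaves under small colimits (Proposition 6.3.5.17 of \cite{htt}) before evaluating at $1_\cE$. Your base case is handled by an objectwise d\'evissage inside $\cE$, whereas the paper runs the equivalent closure argument once in $\dbare$ (the subcategory $Z_\cD$ of sources with small mapping space to a fixed $\cD \in \sD$ is closed under colimits and contains the generating subcategory $\sD^{\et}$), but these are the same argument transported along the equivalence $\cE \simeq \dbare/\cE$.
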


\begin{proof}
First, suppose that $\cD$ is in $\dbar$ and let $Z_\cD$ be full subcategory of $\dbar^{\et}$ on those $\cE$ such that $\Hom\left(\cE,\cD\right)$ is essentially small. Notice that $Z_\cD$ is closed under colimits, and since $\sD$ is locally small, it also contains $\sD^{\et}.$ By Corollary \ref{cor:stetgen}, $\sD^{\et}$ generates $\dbare$ under colimits, so it follows that $Z_\cD=\dbare,$ and hence $\Hom\left(\cE,\cD\right)$ is essentially small for all $\cE$ in $\dbar.$

%Now suppose that $\cE$ and $\cF$ are arbitrary objects of $\Str_\G.$ Let $F$ be an object of $\cF.$ Consider the canonical functor $$\Hom_{\Str_\G}\left(\cE,\cF\right) \to \Hom_{\T}\left(\cE,\cF\right) \to \Fun\left(\cF,\cE\right)$$  sending a morphism $f:\cE \to \cF$ to the inverse image functor of the underlying geometric morphism $$f^*:\cF \to \cE.$$ Consider also the functor $$\Fun\left(\cF,\cE\right) \to \cE$$ given by evaluation at the object $F.$ Finally, consider the direct image functor of the (essentially) unique geometric morphism $\cE \to \iGpd,$ i.e. $$\Gamma:\cE \to \iGpd.$$ The composite of all these functors $$\alpha:\Hom_{\Str_\G}\left(\cE,\cF\right) \to \iGpd$$ can informally be written as $$f \mapsto \Hom_{\cE}\left(1,f^*F\right).$$ The underlying $\icat$ of the right fibration classifying this functor can be identified with $\Hom_{\Str_\G}\left(\cE,\cF/F\right),$ so one concludes that if $\Hom_{\Str_\G}\left(\cE,\cF\right)$ is essentially small, so is $\Hom_{\Str_\G}\left(\cE,\cF/F\right).$ In particular, this implies that if $\cE'$ is in $\sl\left(\sD\right),$ then $\Hom\left(\cE,\cE'\right)$ is essentially small for all $\cE$ in $\dbar.$

Now suppose that $\cE$ and $\cF$ are arbitrary objects of $\dbar.$ Again by Corollary \ref{cor:stetgen}, $\cF$ be written as an iterative colimit in $\dbare$ using objects in $\sD.$ This observation combined with Proposition \ref{prop:colim1} implies that $id_\cF$ may be written as an iterative colimit in $\dbare/\cF$ using objects in $\sD^{\et}/\cF.$ Under the equivalence $\dbare/\cF\simeq \cF,$ this implies that $1_\cF$ may be written as an iterative colimit in $\cF$ using objects $F$ such that
$\cF/F$ is in $\sD.$ Consider the functor $$\tau^\cF_\cE:\cF \to \LShi\left(\cE\right),$$ which preserves colimits by Lemma \ref{lem:taucolim}. It follows that the $\i$-sheaf $\tau^\cF_\cE\left(1_\cF\right)$ may be written as an iterative colimit of $\i$-sheaves of the form $\tau^\cF_\cE\left(F\right),$ with $\cF/F$ in $\sD.$ Notice that for all $E$ in $\cE,$ $$\tau^\cF_\cE\left(F\right)\left(E\right)=\Hom\left(\cE/E,\cF/F\right),$$ hence is a small $\i$-groupoid. So, $\tau^\cF_\cE\left(F\right)$ may be written as an iterative colimit of small $\i$-sheaves, hence is again a small sheaf by Proposition 6.3.5.17 of \cite{htt}. Therefore $$\tau^\cF_\cE\left(1_\cF\right)\left(1_\cE\right)=\Hom\left(\cE,\cF\right)$$ is an essentially small $\i$-groupoid as desired.

%By the proof of Proposition \ref{prop:gens}, there exists a functor $$G:\Delta^{op} \to \dbare$$ whose colimit is $\cF,$ such that the lift $$\tilde G:\Delta^{op} \to \sl\left(\sD\right)^{\et}/\cF\simeq \cF$$ ensured by Proposition \ref{prop:colim1}, with colimit $1_\cF,$ $$\colim F_\alpha\simeq 1_\cF,$$ is such that for each $\alpha,$ $F_\alpha=\coprod\limits_\beta F^\beta_\alpha,$ such that each $\cF/F^\beta_\alpha$ is in $\sl\left(\sD\right).$ Consider the functor $$\tau^\cF_\cE:\cF \to \LShi\left(\cE\right),$$ which preserves colimits by Lemma \ref{lem:taucolim}. Notice that $$\tau^\cF_\cE\left(1_\cF\right)\left(1_\cE\right)=\Hom\left(\cE,\cF\right),$$ so to show that $\Hom\left(\cE,\cF\right)$ is essentially small, it would suffices to show that $\tau^\cF_\cE\left(1_\cF\right)$ is actually a small sheaf. Since $\tau^\cF_\cE$ preserves colimits, one has $$\tau^\cF_\cE\left(1_\cF\right) \simeq \colim \tau^\cF_\cE\left(F_\alpha\right).$$ By Proposition 6.3.5.17, the subcategory $\Shi\left(\cE\right)$ of sheaves of essentially small $\i$-groupoids is stable under small colimits in $\LShi\left(\cE\right).$ It follows that for all $\alpha,$ $$\tau^\cF_\cE\left(F_\alpha\right)\simeq \coprod_\beta \tau^\cF_\cE\left(F^\beta_\alpha\right),$$ so each $\tau^\cF_\cE\left(F_\alpha\right)$ is a small sheaf and hence $\tau^\cF_\cE\left(1_\cF\right)$ is as well.

\end{proof}

For the rest of section, unless otherwise noted, $\sD$ will be a \emph{locally small strong \'etale blossom.} By Proposition \ref{prop:locallsmall}, both functors $\tau^\cE_\cD$ and $\tau^{\cE,{\et}}_\cD$ restrict to functors (which we will write by the same symbols)
$$\tau^{\cE}_\cD:\cE \to \Shi\left(\cD\right),$$ and
$$\tau^{\cE,{\et}}_\cD:\cE \to \Shi\left(\cD\right).$$

\begin{dfn}
Recall from Proposition \ref{prop:largesite} that given an object $\cD$ of a strong \'etale blossom $\sD,$ the underlying $\i$-topos of $\cD$ can be written canonically as a left exact localization $$\xymatrix@C=1.5cm@R=1.5cm{\cD \ar@{^{(}->}[r]<-0.9ex>_-{i_\cD} & \Psh_\i\left(\sD^{\et}/\cD\right) \ar@<-0.5ex>[l]_-{a_\cD}.}$$
A functor $F:\sD^{\et}/\cD \to \iGpd$ is an \textbf{$\i$-sheaf} if it is in the essential image of $i_\cD.$ Equivalently, $F$ is an $\i$-sheaf if and only if the co-unit $F \to i_\cD a_\cD\left(F\right)$ is an equivalence. Denote by $\Sh_\i\left(\sD^{\et}/\cD\right)$ the full subcategory of $\Psh_\i\left(\sD^{\et}/\cD\right)$ on the sheaves.
\end{dfn}

\begin{rmk}\label{rmk:immm}
It follows immediately that $\Sh_\i\left(\sD^{\et}/\cD\right)$ is canonically equivalent to $\cD$.
\end{rmk}

\begin{dfn}\label{dfn:localargesheaf}
Let $\cD$ be an object of a strong \'etale blossom $\sD$. Denote by $S_\cD$ the collection of morphisms $f$ in $\Psh_\i\left(\sD^{\et}/\cD\right)$ such that $a_\cD\left(f\right)$ is an equivalence, and denote by $\widehat{S}_\cD$ its saturation in the $\i$-category $\LPshi\left(\sD^{\et}/\cD\right).$ A presheaf of large $\i$-groupoids $F:\sD^{\et}/\cD \to \LiGpd$ is an \textbf{$\i$-sheaf} if it is $\widehat{S}_\cD$-local. Denote by $\LShi\left(\sD^{\et}/\cD\right)$ the full subcategory of $\LShi\left(\sD^{\et}/\cD\right)$ on the sheaves, and denote the corresponding localization by
$$\xymatrix{\LShi\left(\sD^{\et}/\cD\right) \ar@{^{(}->}[r]<-0.9ex>_-{\widehat{i}_\cD} & \LPshi\left(\sD^{\et}/\cD\right). \ar@<-0.5ex>[l]_-{\widehat{a}_\cD}}$$
\end{dfn}

\begin{prop}\label{prop:pop}
For $\cD$ an object of a strong \'etale blossom $\sD$, the is a canonical equivalence $$\LShi\left(\sD^{\et}/\cD\right) \simeq \LShi\left(\cD\right),$$ where $\LShi\left(\cD\right)$ is the $\i$-category of functors $F:\cD^{op} \to \LiGpd$ which preserve small limits.
\end{prop}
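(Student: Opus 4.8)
The plan is to exhibit both sides as localizations of one and the same large presheaf $\i$-category, exploiting that by Proposition \ref{prop:largesite} the underlying $\i$-topos $\cD$ is an accessible left exact localization $a_\cD : \Psh_\i(\sD^{\et}/\cD) \to \cD$ of small presheaves at the class $S_\cD$, with fully faithful right adjoint $i_\cD$.

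First I would record the large-coefficient free-cocompletion identity for the category $\sD^{\et}/\cD$. Sending a large presheaf $P$ to the functor $X \mapsto \Hom_{\LPshi(\sD^{\et}/\cD)}(X,P)$ (for $X$ ranging over $\Psh_\i(\sD^{\et}/\cD) \subseteq \LPshi(\sD^{\et}/\cD)$) gives an equivalence between $\LPshi(\sD^{\et}/\cD)$ and the $\i$-category of functors $\Psh_\i(\sD^{\et}/\cD)^{op} \to \LiGpd$ preserving $\U$-small limits; its inverse is restriction along the Yoneda embedding $\sD^{\et}/\cD \hookrightarrow \Psh_\i(\sD^{\et}/\cD)$. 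This is the large-coefficient form of the universal property of presheaves (the dual of \cite{htt}, Theorem 5.1.5.6, together with the density of representables, \cite{htt} 5.1.5.3), valid since $\LiGpd^{op}$ is $\U$-cocomplete. The bonus of this description is that $X \mapsto \Hom_{\LPshi(\sD^{\et}/\cD)}(X,P)$ inverts a morphism $s$ of $\Psh_\i(\sD^{\et}/\cD)$ if and only if $P$ is $s$-local; hence, as $\widehat{S}_\cD$ is by definition the saturation of $S_\cD$, the functor associated to $P$ inverts $S_\cD$ precisely when $P$ is $\widehat{S}_\cD$-local, i.e. precisely when $P$ lies in $\LShi(\sD^{\et}/\cD)$ of Definition \ref{dfn:localargesheaf}.

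Next I would feed in the universal property of the localization. Since $\cD^{op}$ is obtained from $\Psh_\i(\sD^{\et}/\cD)^{op}$ by inverting $S_\cD$, precomposition with $a_\cD^{op}$ identifies $\Fun(\cD^{op},\LiGpd)$ with the full subcategory of $\Fun(\Psh_\i(\sD^{\et}/\cD)^{op},\LiGpd)$ spanned by the $S_\cD$-inverting functors (\cite{htt}, Section 5.2.7). I then have to check this identification is compatible with preservation of $\U$-small limits. In one direction $a_\cD^{op}$ preserves limits, because $a_\cD$ is a left adjoint and so preserves colimits; thus $a_\cD^{op}$ carries $\LShi(\cD)$ into the limit-preserving $S_\cD$-inverting functors. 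In the other direction, if $G$ is limit-preserving and inverts $S_\cD$, the unique $F$ with $F \circ a_\cD^{op} \simeq G$ is again limit-preserving: every $\U$-small colimit in $\cD$ is $a_\cD$ applied to the corresponding colimit computed in $\Psh_\i(\sD^{\et}/\cD)$ (again because $a_\cD$ exhibits $\cD$ as a localization), so $F$ sends $\U$-small colimits to limits since $G$ does.

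Combining the two steps, $\LShi(\cD)$ --- which by Definition \ref{dfn:sheavesoverE} is exactly the $\i$-category of $\U$-small-limit-preserving functors $\cD^{op} \to \LiGpd$ --- is identified with the limit-preserving functors on $\Psh_\i(\sD^{\et}/\cD)^{op}$ inverting $S_\cD$, and the first step identifies these with the $\widehat{S}_\cD$-local objects of $\LPshi(\sD^{\et}/\cD)$, namely $\LShi(\sD^{\et}/\cD)$; this yields the asserted equivalence. I expect the only real obstacle to be the universe bookkeeping --- making sure the free-cocompletion identity of the second paragraph is legitimate with large coefficients over the (possibly $\U$-large but locally $\U$-small) category $\sD^{\et}/\cD$, and that ``preservation of $\U$-small limits'' is the condition transported at each stage --- together with the nontrivial half of the third paragraph, that the functor $F$ transported across the localization remains limit-preserving, which is precisely why one must express colimits in $\cD$ through $a_\cD$.
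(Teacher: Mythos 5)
Your third paragraph (the universal property of the reflective localization $a_\cD$, and the transport of limit-preservation across it) is correct, and it is in fact the same mechanism that drives the proof of Remark 6.3.5.17 of \cite{htt}. The genuine gap is your first step, the ``large-coefficient free-cocompletion identity,'' and it is more than universe bookkeeping. In this proposition $\sD$ is only assumed locally small, not small (indeed the paper later applies the result to $\overline{\sD}$ itself, via Corollary \ref{cor:largestblossom}), so $\sD^{\et}/\cD$ is in general a large $\i$-category. For a large, locally small $\C$, the assignment $P \mapsto \Hom_{\LPshi\left(\C\right)}\left(\,\cdot\,,P\right)$ is fully faithful --- it is the right Kan extension along the Yoneda embedding, since every presheaf $X$ is the colimit of the canonical (now \emph{large}) diagram $\C/X \to \LPshi\left(\C\right)$ of representables --- but it is \emph{not} essentially surjective onto the $\U$-small-limit-preserving functors $\Psh_\i\left(\C\right)^{op} \to \LiGpd$. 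A functor in the image must convert that large colimit decomposition into a large limit, $\Hom\left(X,P\right) \simeq \lim P\left(C\right)$ over $\C/X$, and preservation of $\U$-small limits gives no control over such large limits: a presheaf on a large category need not be a $\U$-small colimit of representables, so Theorem 5.1.5.6 of \cite{htt} (stated for small $\C$, and the basis of your citation) does not apply. Concretely, in your composite argument the functor $G = F \circ a_\cD^{op}$ with $F \in \LShi\left(\cD\right)$ would have to satisfy $F\left(a_\cD X\right) \simeq \lim F\left(a_\cD y\left(C\right)\right)$ over the large diagram $\C/X$ for every $X$, i.e.\ $F$ would have to preserve a specific large limit, which does not follow from $F$ preserving small limits.

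Note that your argument nowhere invokes the hypothesis that $\sD$ is a \emph{strong} \'etale blossom; if it were valid it would prove the statement for an arbitrary locally small category equipped with a left exact localization onto $\cD$, which these generalities do not yield. That hypothesis is exactly what plugs the hole, and it is how the paper proceeds: choose the small full subcategory $r_\cD:\C_\cD \hookrightarrow \sD^{\et}/\cD$ furnished by Definition \ref{dfn:strongblossom}, whose associated localization $L_\cD \dashv R_\cD$ exhibits $\cD$ as an accessible left exact localization of $\Psh_\i\left(\C_\cD\right)$; use the colimit-preserving restriction $\widehat{\left(r_\cD\right)}^*$ to identify $\widehat{S}_\cD$ with the class of morphisms whose restriction lies in the saturation $\widehat{S}_\C$ over the small site, whence $\LShi\left(\sD^{\et}/\cD\right) \simeq \widehat{S}_\C^{-1}\LPshi\left(\C_\cD\right)$; and then apply the proof of Remark 6.3.5.17 of \cite{htt} --- which is precisely your free-cocompletion-plus-localization argument, carried out legitimately over a \emph{small} index category --- to conclude $\widehat{S}_\C^{-1}\LPshi\left(\C_\cD\right) \simeq \LShi\left(\cD\right)$. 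Your second and third paragraphs survive once transported to $\C_\cD$; as written over $\sD^{\et}/\cD$, the essential-surjectivity half of your first step fails.
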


\begin{proof}
Choose a small subcategory $$r_\cD:\C_\cD \hookrightarrow \sD^{\et}/\cD$$ satisfying the conditions of Definition \ref{dfn:strongblossom}, and let
$$\xymatrix@C=1.5cm@R=2.5cm{\cD \ar@{^{(}->}[r]<-0.9ex>_-{R_\cD} & \Psh_\i\left(\C_\cD\right) \ar@<-0.5ex>[l]_-{L_{\cD}}}$$
denote the corresponding localization. Let $S_\C$ denote the collection of morphisms $f$ in $\Psh_\i\left(\C_\cD\right)$ such that $L_\cD\left(f\right)$ is an equivalence. Denote by $\widehat{S}_\C$ the saturation of this class of morphisms in $\LPshi\left(\C_\cD\right).$ Denote by $$\xymatrix@C=1.5cm{\LPshi\left(\C_\cD\right)\ar@{^{(}->}[r]<-0.9ex>_-{\widehat{\left(r_\cD\right)}_*} & \LPshi\left(\sD^{\et}/\cD\right) \ar@<-0.5ex>[l]_-{\widehat{\left(r_\cD\right)}^*}}$$ the geometric morphism of $\i$-topoi in the universe of large sets induced by $r_\cD.$ Since $\widehat{\left(r_\cD\right)}^*$ preserves colimits, one can identify $\widehat{S}_\cD$ with the collection of morphisms $g$ in $\LPshi\left(\sD^{\et}/\cD\right)$ such that $\widehat{\left(r_\cD\right)}^*\left(g\right)$ is in $\widehat{S}_\C$. It follows that $$\LShi\left(\sD^{\et}/\cD\right) \simeq \widehat{S}_\C^{-1}\LPshi\left(\C_\cD\right).$$ By the proof of Remark 6.3.5.17 of \cite{htt}, $$\widehat{S}_\C^{-1}\LPshi\left(\C_\cD\right) \simeq \LShi\left(\cD\right),$$ so we are done.
\end{proof}

\begin{dfn}\label{dfn:sheaveDet}
For $\cD$ an object of a strong \'etale blossom $\sD,$ consider the canonical projection $$\pi_\cD:\sD^{\et}/\cD \to \sD^{\et}.$$ A functor $$F:\left(\sD^{\et}\right)^{op} \to \iGpd$$ is an \textbf{$\i$-sheaf} if for all objects $\cD$ of $\sD,$ $\pi_\cD^*\left(F\right)$ is an $\i$-sheaf on $\cD$. Denote the full subcategory of $\Psh_\i\left(\sD^{\et}\right)$ on the sheaves by $\Sh_\i\left(\sD^{\et}\right).$ Denote the analogously defined subcategory of $\LPshi\left(\sD^{\et}\right)$ of sheaves by $\LShi\left(\sD^{\et}\right).$
\end{dfn}

\begin{dfn}\label{dfn:sheaveD}
Let $\sD$ be a strong \'etale blossom and denote by $j:\sD^{\et} \to \sD$ the canonical functor. A functor $$F:\sD^{op} \to \iGpd$$ is an \textbf{$\i$-sheaf} if $j^*F$ is an $\i$-sheaf over $\sD^{\et}.$ Denote the full subcategory of $\Pshi\left(\sD\right)$ on the sheaves by $\Sh_\i\left(\sD\right)$ and the analogously defined subcategory of $\LPshi\left(\sD\right)$ of sheaves by $\LShi\left(\sD\right).$
\end{dfn}

\begin{rmk}\label{rmk:affshvs}
If $\sD$ is the strong \'etale blossom of affine $\g$-schemes as in Example \ref{ex:affineblossom}, by unwinding the definitions, one sees that the equivalence $$\Specg:\Pro\left(\g\right) \stackrel{\sim}{\longlongrightarrow} \Affg$$ induces a canonical equivalence $$\Shi\left(\Affg\right) \stackrel{\sim}{\longlongrightarrow} \Shi\left(\Pro\left(\g\right)\right),$$ where $\Shi\left(\Pro\left(\g\right)\right)$ is as in Definition 2.4.3 of \cite{dag} (where it is denoted $\operatorname{Shv}\left(\Pro\left(\g\right)\right)$).
\end{rmk}

\begin{prop}\label{prop:loclimsheaf}
Consider the strong \'etale blossom $\dbar.$ For a presheaf $$F:\left(\dbare\right)^{op} \to \iGpd,$$ the following conditions are equivalent:
\begin{itemize}
\item[i)] $F$ is an $\i$-sheaf.
\item[ii)] For all objects $\cE$ of $\dbar,$ $$\left(\dbare/\cE\right)^{op} \stackrel{\pi^{op}_\cE}{\longlonglongrightarrow} \left(\dbare\right)^{op} \stackrel{F}{\longrightarrow} \iGpd$$ preserves small limits.
\item[iii)] For all objects $\cE$ of $\dbar,$ the above functor $\pi_\cE^*\left(F\right)$ is representable.
\end{itemize}
\end{prop}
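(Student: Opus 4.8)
The plan is to show that, once each functor $\pi_\cE^*(F)$ is transported across the equivalence $\dbare/\cE \simeq \cE$, all three conditions become the single assertion that a presheaf on $\cE$ is representable; since each of (i), (ii), (iii) is quantified over all objects $\cE$ of $\dbar$, it then suffices to treat a single fixed $\cE$. So first I would fix $\cE$ in $\dbar$ and invoke Remark \ref{rmk:2352}, which identifies the slice $\dbare/\cE$ with the underlying $\i$-category of the $\i$-topos $\cE$. Under this identification the functor $\pi_\cE^*(F) : \left(\dbare/\cE\right)^{op} \to \iGpd$ appearing in (ii) and (iii) is carried to a presheaf $G_\cE : \cE^{op} \to \iGpd$.

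The equivalence (ii) $\Leftrightarrow$ (iii) is then read off from Remark \ref{rmk:limitpresrep}: the Yoneda embedding $y : \cE \hookrightarrow \Shi\left(\cE\right)$ is an equivalence onto the full subcategory of presheaves preserving small limits. Hence $G_\cE$ preserves small limits if and only if it lies in the essential image of $y$, i.e. if and only if it is representable; transporting back along $\dbare/\cE \simeq \cE$, this says exactly that $\pi_\cE^*(F)$ preserves small limits iff it is representable. Note that this uses only that $\cE$ is an $\i$-topos, so no finiteness restriction on $\cE$ is required.

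For (i) $\Leftrightarrow$ (iii) the decisive point is that here the strong \'etale blossom is $\sD = \dbar$ itself, so the inclusion $i/\cE : \sD^{\et}/\cE \hookrightarrow \dbare/\cE$ appearing in Proposition \ref{prop:largesite} is the identity; thus $\left(i/\cE\right)^*$ is an equivalence and the functor $i_\cE$ of that proposition reduces to the Yoneda embedding $\cE \stackrel{y}{\hookrightarrow} \Pshi\left(\cE\right) \simeq \Pshi\left(\dbare/\cE\right)$. By Definition \ref{dfn:sheaveDet} together with the preceding definition of an $\i$-sheaf on $\cE$, $F$ is an $\i$-sheaf iff each $\pi_\cE^*(F)$ lies in the essential image of $i_\cE$; since $i_\cE = y$ in the present case, this essential image is precisely the class of representable presheaves on $\dbare/\cE$. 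Therefore (i) restricted to $\cE$ is literally the statement that $\pi_\cE^*(F)$ is representable, which is (iii) restricted to $\cE$. Combining the two equivalences over all $\cE$ completes the argument.

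The one step deserving genuine care — and the place I would slow down — is the reduction of $i_\cE$ to the Yoneda embedding: I must check that when $\sD = \dbar$ the auxiliary \'etale slice $\sD^{\et}/\cE$ really coincides with $\dbare/\cE$ (so that $i/\cE$ is an equivalence rather than a proper inclusion), and that the equivalence $\dbare/\cE \simeq \cE$ used to define $G_\cE$ is the very one built into $i_\cE$ in Proposition \ref{prop:largesite}. Once these compatibilities are confirmed, everything else is the routine transport of representability and of limit-preservation across equivalences of $\i$-categories.
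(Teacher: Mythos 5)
Your proof is correct and follows essentially the same route as the paper's: both identify $\i$-sheafhood on $\dbare/\cE$ with representability by observing that, for $\sD=\dbar$, the functor $i_\cE$ of Proposition \ref{prop:largesite} collapses to the Yoneda embedding transported along $\theta:\dbare/\cE \stackrel{\sim}{\longrightarrow} \cE$, and both deduce (ii) $\Leftrightarrow$ (iii) from Remark \ref{rmk:limitpresrep}. Your explicit verification that $\sD^{\et}/\cE = \dbare/\cE$ in this case is exactly the point the paper treats implicitly, so no gap remains.
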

\begin{proof}
A presheaf $F$ is an $\i$-sheaf if and only if $\pi_\cE^*\left(F\right)$ is an $\i$-sheaf for all $\cE$ in $\dbar.$ An $\i$-sheaf on $\dbare/\cE,$ by definition, is a presheaf in the essential image of  $$\cE \stackrel{y}{\longhookrightarrow} \Pshi\left(\cE\right) \stackrel{\theta^*}{\longrightarrow} \Pshi\left(\dbare/\cE\right),$$ where $$\theta:\dbare/\cE \stackrel{\sim}{\longrightarrow} \cE$$ is the equivalence. Since $\theta^*$ sends representables to representables, one concludes that $i) \Leftrightarrow iii).$ Notice that $ii) \Leftrightarrow iii)$ follows from Remark \ref{rmk:limitpresrep}.
\end{proof}

\begin{prop}\label{prop:subcanonical}
Denote by $$i_\sD:\sD \hookrightarrow \overline{\sD}$$ and $$i^{\et}_\sD:\sD^{\et} \hookrightarrow \dbare$$ the canonical inclusions. Then, the functors $$\dbar \stackrel{y}{\longhookrightarrow} \Pshi\left(\dbar\right) \stackrel{i^*_\sD}{\longlongrightarrow} \Pshi\left(\sD\right)$$
and
$$\dbare \stackrel{y^{\et}}{\longlonghookrightarrow} \Pshi\left(\dbare\right) \stackrel{\left(i^{\et}_\sD\right)^*}{\longlongrightarrow} \Pshi\left(\sD^{\et}\right)$$
factor through the inclusions $$i:\Shi\left(\sD\right) \hookrightarrow \Pshi\left(\sD\right)$$
and $$i^{\et}:\Shi\left(\sD^{\et}\right) \hookrightarrow \Pshi\left(\sD^{\et}\right)$$ respectively, where $y$ and $y^{\et}$ are the appropriate Yoneda embeddings.
\end{prop}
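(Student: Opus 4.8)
The plan is to treat both statements in parallel, since each reduces to the same local computation. Fix $\cE$ in $\dbar$. For the second (\'etale) functor, write $F := \left(i^{\et}_\sD\right)^* y^{\et}\left(\cE\right)$, so that $F\left(\cD\right) \simeq \Hom_{\dbare}\left(\cD,\cE\right)$ for $\cD$ in $\sD$; for the first, write $G := i^*_\sD\, y\left(\cE\right)$ and note that, by Definition \ref{dfn:sheaveD}, $G$ is an $\i$-sheaf on $\sD$ precisely when $j^* G$ is an $\i$-sheaf on $\sD^{\et}$, where $\left(j^* G\right)\left(\cD\right) \simeq \Hom_{\dbar}\left(\cD,\cE\right)$. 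Thus in both cases it suffices, by Definition \ref{dfn:sheaveDet}, to show that for every object $\cD$ of $\sD$ the restriction $\pi_\cD^*\left(F\right)$ (respectively $\pi_\cD^*\left(j^* G\right)$) along $\pi_\cD:\sD^{\et}/\cD \to \sD^{\et}$ is an $\i$-sheaf on $\cD$ in the sense of Proposition \ref{prop:largesite}.

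First I would identify these restrictions with the $\tau$-functors of the previous subsection. Unwinding the definitions, an object of $\sD^{\et}/\cD$ is an \'etale map $\cD/D \to \cD$ for $D$ an object of $\cD$ with $\cD/D$ in $\sD$, and under the equivalence $\dbare/\cD \simeq \cD$ the composite $i^{\et}_\sD \circ \pi_\cD$ sends it to its domain $\cD/D$. Hence
$$\pi_\cD^*\left(F\right)\left(\cD/D \to \cD\right) \simeq \Hom_{\dbare}\left(\cD/D,\cE\right) = \tau^{\cE,{\et}}_\cD\left(1_\cE\right)\left(D\right),$$
so that $\pi_\cD^*\left(F\right)$ is the restriction of $\tau^{\cE,{\et}}_\cD\left(1_\cE\right)$ along the fully faithful inclusion $\widetilde{i/\cD}:\sD^{\et}/\cD \hookrightarrow \cD$; the identical computation gives $\pi_\cD^*\left(j^* G\right) \simeq \widetilde{i/\cD}^*\,\tau^{\cE}_\cD\left(1_\cE\right)$.

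The crux is then that $\tau^{\cE,{\et}}_\cD\left(1_\cE\right)$ and $\tau^{\cE}_\cD\left(1_\cE\right)$ are representable. By construction each takes values in $\LShi\left(\cD\right)$, i.e. preserves small limits, and by Proposition \ref{prop:locallsmall} each in fact lands in $\Shi\left(\cD\right)$. Since the Yoneda embedding $y_\cD:\cD \to \Shi\left(\cD\right)$ is an equivalence by Remark \ref{rmk:limitpresrep}, there is an object $D_0$ of $\cD$ with $\tau^{\cE,{\et}}_\cD\left(1_\cE\right) \simeq y_\cD\left(D_0\right)$, and similarly an object representing the non-\'etale case. Finally I would match $\widetilde{i/\cD}^* y_\cD\left(D_0\right)$ with $i_\cD\left(D_0\right)$: since $\widetilde{i/\cD}$ factors as $\sD^{\et}/\cD \xrightarrow{i/\cD} \dbare/\cD \xrightarrow{\sim} \cD$, the restriction $\widetilde{i/\cD}^* y_\cD\left(D_0\right)$ is exactly the composite defining $i_\cD$ in \eqref{eq:iD}, hence is an $\i$-sheaf on $\cD$. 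This yields $\pi_\cD^*\left(F\right) \simeq i_\cD\left(D_0\right)$ for every $\cD$, and likewise for $j^* G$, establishing both claims.

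The main obstacle I anticipate is the bookkeeping in the last paragraph: carefully tracking the chain of equivalences $\dbare/\cD \simeq \cD$, the identification $\Pshi\left(\cD\right) \simeq \Pshi\left(\dbare/\cD\right)$ used in \eqref{eq:iD}, and the explicit description of $i_\cD$ from Proposition \ref{prop:largesite}, so as to produce an honest natural equivalence $\pi_\cD^*\left(F\right) \simeq i_\cD\left(D_0\right)$ rather than merely a pointwise one. Local smallness (Proposition \ref{prop:locallsmall}) is essential here, since it is precisely what guarantees that $\tau^{\cE,{\et}}_\cD\left(1_\cE\right)$ is a genuine small sheaf, and hence representable via Remark \ref{rmk:limitpresrep}.
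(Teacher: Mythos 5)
Your proposal is correct and follows essentially the same route as the paper's own proof: reduce to the local sheaf condition over each $\cD$ in $\sD$, identify the restricted presheaves with $\tau^{\cE}_\cD\left(1_\cE\right)$ and $\tau^{\cE,\et}_\cD\left(1_\cE\right)$, and use local smallness (Proposition \ref{prop:locallsmall}) together with the equivalence $\cD \simeq \Shi\left(\cD\right)$ of Remark \ref{rmk:limitpresrep} to produce representing objects, so that both presheaves lie in the image of $i_\cD = \left(i/\cD\right)^{*} \circ \theta^{*} \circ y$. The bookkeeping you flag in the final step is exactly the content of the paper's chain of identifications, and your factorization of $\widetilde{i/\cD}$ through $\dbare/\cD \stackrel{\sim}{\longrightarrow} \cD$ handles it correctly.
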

\begin{proof}
Suppose that $\cE$ is in $\dbar.$ We want to show that $y\left(\cE\right)$ and $y^{\et}\left(\cE\right)$ are sheaves. Therefore, using the notation of Definitions \ref{dfn:sheaveDet} and \ref{dfn:sheaveD}  we need to show for all $\cD$ in $\sD,$ $j^*\pi_\cD^*y\left(\cE\right)$ and $\pi_\cD^*y^{\et}\left(\cE\right)$ are sheaves on $\sD^{\et}/\cD.$ Suppose that $$f:\cD/D \to \cD$$ is an \'etale map, with $\cD/D$ in $\sD.$
Notice that
$$\left(j^*\pi_\cD^*i_\cD^*y\left(\cE\right)\right)\left(f\right) \simeq \Hom_{\dbar}\left(\cD/D,\cE\right)$$
and
$$\left(\pi_\cD^* \left(i^{\et}_\cD\right)^* y^{\et}\left(\cE\right)\right)\left(f\right) \simeq \Hom_{\dbare}\left(\cD/D,\cE\right).$$
Denote by $\theta:\dbare/\cD \stackrel{\sim}{\longrightarrow} \cD,$ and denote by $$q:\Shi\left(\cD\right) \hookrightarrow \Pshi\left(\cD\right)$$ the canonical inclusion. Notice that $$\pi_D\left(f\right)\simeq D \simeq \theta\circ i/\cD\left(f\right),$$ so that one has canonical equivalences
$$j^*\pi_\cD^*i_\cD ^*y\left(\cE\right) \simeq \left(\left(i/\cD\right)^*\theta^*q\left(\tau^\cE_\cD\left(1_\cE\right)\right)\right)$$
and
$$\pi_\cD^*\left(i^{\et}_\cD\right)^*y^{\et}\left(\cE\right) \simeq \left(\left(i/\cD\right)^*\theta^*q\left(\tau^{\cE,{\et}}_\cD\left(1_\cE\right)\right)\right).$$
By Proposition 6.3.5.17 of \cite{htt}, there exists an equivalence $$\cD \stackrel{\sim}{\longrightarrow} \Shi\left(\cD\right)$$ such that the following diagram commutes:
$$\xymatrix{ \Shi\left(\cD\right) \ar@{_{(}->}[rd]^-{q} & \\
\cD \ar@{^{(}->}[r]^-{y} \ar[u]^-{\rotatebox[origin=C]{90}{$\sim$}} & \Pshi\left(\cD\right).}$$
Therefore, there exists $D_\cE$ and $D'_\cE$ in $\cD$ such that for all $D$ in $\cD$,
$$\Hom\left(D,D_\cE\right)\simeq \tau^{\cE}_\cD\left(1_\cE\right)\left(D\right)$$
and
$$\Hom\left(D,D'_\cE\right)\simeq \tau^{\cE,{\et}}_\cD\left(1_\cE\right)\left(D\right).$$ This implies that both $j^*\pi_\cD^*y\left(\cE\right)$ and $\pi_\cD^*y^{\et}\left(\cE\right)$ are in the image of $$i_\cD=\left(i/\cD\right)^* \circ \theta^* \circ y.$$
\end{proof}

\begin{cor}
Every representable presheaf on $\sD$ and $\sD^{\et}$ is an $\i$-sheaf.
\end{cor}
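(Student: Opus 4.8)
The plan is to deduce this immediately from Proposition \ref{prop:subcanonical} by observing that a representable presheaf on $\sD$ is nothing but the restriction along $i_\sD$ of a representable presheaf on $\dbar$. Concretely, let $\cD_0$ be an object of $\sD$ and let $y_\sD:\sD \hookrightarrow \Pshi\left(\sD\right)$ denote the Yoneda embedding. First I would record the standard compatibility of the Yoneda embedding with the fully faithful inclusion $i_\sD:\sD \hookrightarrow \dbar$: for every object $\cD$ of $\sD$ one has
$$\left(i^*_\sD\, y\left(i_\sD \cD_0\right)\right)\left(\cD\right) \simeq \Hom_{\dbar}\left(i_\sD \cD, i_\sD \cD_0\right) \simeq \Hom_{\sD}\left(\cD,\cD_0\right),$$
where the last equivalence uses that $i_\sD$ is full and faithful. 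Hence $i^*_\sD\, y\left(i_\sD \cD_0\right) \simeq y_\sD\left(\cD_0\right)$.

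Next I would invoke Proposition \ref{prop:subcanonical} directly: it asserts that $i^*_\sD\, y\left(\cE\right)$ lands in $\Shi\left(\sD\right)$ for \emph{every} object $\cE$ of $\dbar$. Applying this to the particular object $\cE = i_\sD \cD_0$ and combining with the identification above yields that $y_\sD\left(\cD_0\right)$ is an $\i$-sheaf. Since $\cD_0$ was arbitrary, every representable presheaf on $\sD$ is an $\i$-sheaf.

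The \'etale case is handled by the verbatim analogue, using instead the fully faithful inclusion $i^{\et}_\sD:\sD^{\et} \hookrightarrow \dbare$ and the \'etale Yoneda embedding $y^{\et}$. For $\cD_0$ in $\sD^{\et}$ one has $\left(i^{\et}_\sD\right)^* y^{\et}\left(i^{\et}_\sD \cD_0\right) \simeq y_{\sD^{\et}}\left(\cD_0\right)$ by full faithfulness of $i^{\et}_\sD$, and the second assertion of Proposition \ref{prop:subcanonical} then shows this restriction is an $\i$-sheaf on $\sD^{\et}$.

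I do not expect any genuine obstacle here: the corollary is purely formal once Proposition \ref{prop:subcanonical} is in hand, the only content being the observation that restricting a representable along a fully faithful functor returns a representable. The single point requiring (minor) care is verifying the compatibility $i^*_\sD\, y \circ i_\sD \simeq y_\sD$, which is an instance of the general fact that for a fully faithful $F:\C \hookrightarrow \sD'$ the composite $F^* \circ y_{\sD'} \circ F$ is equivalent to $y_\C$; this is where one uses full faithfulness in an essential way, and it is the step to state explicitly before citing the proposition.
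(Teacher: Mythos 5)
Your proof is correct and is exactly the argument the paper intends: the corollary is stated without a separate proof precisely because it follows immediately from Proposition \ref{prop:subcanonical} via the identification $i^*_\sD \, y \circ i_\sD \simeq y_\sD$ (and its \'etale analogue), which holds by full faithfulness of the inclusions. Your explicit verification of that compatibility is the only step with content, and you have it right.
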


\begin{rmk}\label{rmk:moresheaves}
Notice that nowhere in the proof of Proposition \ref{prop:subcanonical} is it needed that $\cE$ is actually in $\dbar$; any object in $\Str_\G$ would do, so there are induced functors $$\Str_\G \to \Shi\left(\sD\right)$$ and $$\Str^{\et}_\G \to \Shi\left(\sD^{\et}\right).$$
\end{rmk}

\begin{dfn}\label{dfn:lowerstar}
Let $f:\cD \to \cD'$ be a morphism in $\sD.$ Denote by $\widetilde{f}$ the functor
\begin{equation}
\widetilde{f}:\sD^{\et}/\cD' \to \sD^{\et}/\cD
\end{equation}
induced by pulling back along $f,$ i.e. an \'etale map $\cE' \to \cD'$ is sent to the map $\cD \times_{\cD'} \cE' \to \cD,$ which is \'etale since \'etale maps are stable under pullback.
\end{dfn}

\begin{prop}\label{prop:lowerstar}
Suppose that $f:\cD \to \cD'$ in $\sD$ has an underlying geometric morphism of $\i$-topoi
$$\xymatrix@C=1.5cm@R=2.5cm{\cD \ar@<-0.5ex>[r]_-{f_*} & \cD' \ar@<-0.5ex>[l]_-{f^*}}.$$
Then the following square
$$\xymatrix{\cD \ar@{^{(}->}[r]^-{i_\cD} \ar[d]_-{f_*} & \Pshi\left(\sD^{\et}/\cD\right) \ar[d]^-{\widetilde{f}^*} \\
\cD' \ar@{^{(}->}[r]^-{i_\cD'} & \Pshi\left(\sD^{\et}/\cD'\right)}$$ commutes up to canonical homotopy.
\end{prop}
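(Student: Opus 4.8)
The plan is to check the commutativity by unwinding both composites and to organize the verification as a concatenation of canonical homotopies, so that coherence is handled automatically. Recall from Proposition \ref{prop:largesite} that $i_\cD$ factors as
$$i_\cD \simeq (i/\cD)^* \circ \theta_\cD^* \circ y,$$
where $y$ is the Yoneda embedding of $\cD$, the map $\theta_\cD : \dbare/\cD \to \cD$ is the canonical equivalence of Remark \ref{rmk:2352} (so that $\theta_\cD^* : \Pshi(\cD) \to \Pshi(\dbare/\cD)$ is the induced equivalence on presheaves), and $i/\cD : \sD^{\et}/\cD \to \dbare/\cD$ is the inclusion, with $(i/\cD)^*$ its restriction functor; the same factorization holds for $\cD'$. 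It therefore suffices to produce a canonical homotopy $\widetilde{f}^* \circ i_\cD \simeq i_{\cD'} \circ f_*$, and I would assemble this from three commuting squares of $\i$-categories, each restricted to presheaves.

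First I would show that the pullback functor $\widetilde{f} : \sD^{\et}/\cD' \to \sD^{\et}/\cD$ of Definition \ref{dfn:lowerstar} is the restriction of the pullback functor $\overline{f} : \dbare/\cD' \to \dbare/\cD$, which is well defined since \'etale maps are stable under pullback (Corollary \ref{cor:pullstabet}); that is, $i/\cD \circ \widetilde{f} \simeq \overline{f} \circ i/\cD'$, because the pullback along $f$ of an \'etale map lying in $\sD$ is computed identically in $\sD^{\et}$ and in $\dbare$. Applying the contravariant restriction construction $(-)^*$ then gives $\widetilde{f}^* \circ (i/\cD)^* \simeq (i/\cD')^* \circ \overline{f}^*$.

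Second, I would identify $\overline{f}$ with $f^*$ under the equivalences $\theta$: by Proposition \ref{prop:6358} the pullback of $\cD'/E' \to \cD'$ along $f$ is $\cD/f^*(E') \to \cD$, so $\theta_\cD \circ \overline{f} \simeq f^* \circ \theta_{\cD'}$, and passing to presheaves yields $\overline{f}^* \circ \theta_\cD^* \simeq \theta_{\cD'}^* \circ (f^*)^*$, where $(f^*)^*$ denotes restriction along $f^* : \cD' \to \cD$. Third---the only step that uses the adjunction---I would invoke the standard fact that for an adjunction $f^* \dashv f_*$ the square relating Yoneda with restriction along the left adjoint commutes, namely $(f^*)^* \circ y \simeq y \circ f_*$; pointwise this is exactly the adjunction equivalence $\Hom_\cD(f^*(X), D) \simeq \Hom_{\cD'}(X, f_*(D))$. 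Concatenating the three squares gives
\begin{align*}
\widetilde{f}^* \circ i_\cD
&\simeq \widetilde{f}^* \circ (i/\cD)^* \circ \theta_\cD^* \circ y \\
&\simeq (i/\cD')^* \circ \overline{f}^* \circ \theta_\cD^* \circ y \\
&\simeq (i/\cD')^* \circ \theta_{\cD'}^* \circ (f^*)^* \circ y \\
&\simeq (i/\cD')^* \circ \theta_{\cD'}^* \circ y \circ f_* \simeq i_{\cD'} \circ f_*,
\end{align*}
as desired.

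The main obstacle is keeping the homotopy coherences honest: each of the three squares commutes only up to a canonical equivalence, and one must be certain these are genuinely natural---in particular that the equivalence in the third square is the one induced by the unit and counit of $f^* \dashv f_*$, rather than merely a pointwise identification of homotopy types. Phrasing the argument as the restriction functor $(-)^*$ applied to honest commuting squares of $\i$-categories, instead of verifying naturality object by object, is precisely what makes the coherence automatic, since $(-)^*$ is functorial and hence carries commuting squares, together with their witnessing homotopies, to commuting squares.
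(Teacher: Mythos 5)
Your proof is correct, and it takes a genuinely different route from the paper's, whose own argument is a two-line pointwise computation: for $D$ in $\cD$ and an object $\cD'/E \to \cD'$ of $\sD^{\et}/\cD',$ the paper evaluates both composites, finding $i_{\cD'}f_*\left(D\right)\left(\cD'/E \to \cD'\right) \simeq \Hom\left(E,f_*D\right) \simeq \Hom\left(f^*E,D\right)$ by the adjunction $f^* \dashv f_*,$ and $\widetilde{f}^*\left(i_\cD\left(D\right)\right)\left(\cD'/E \to \cD'\right) \simeq i_\cD\left(D\right)\left(\cD/f^*E \to \cD\right) \simeq \Hom\left(f^*E,D\right)$ by Definition \ref{dfn:lowerstar} and the formula for $i_\cD$ from Proposition \ref{prop:largesite}. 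Your three squares contain exactly this computation at their core --- your second square is Proposition \ref{prop:6358} (in the structured form of Corollary \ref{cor:2.3.20}), your third square is the adjunction step --- but you promote the verification from an objectwise identification of values to a concatenation of functor-level commuting squares, using the factorization $i_\cD \simeq \left(i/\cD\right)^* \circ \theta_\cD^* \circ y$ of Proposition \ref{prop:largesite} and the functoriality of restriction along a functor. What your packaging buys is that the ``up to canonical homotopy'' clause is genuinely addressed: in the $\i$-categorical setting a pointwise equivalence of values does not by itself assemble into an equivalence of functors, and the paper's computation implicitly relies on the naturality of the adjunction equivalence and of the identification in Proposition \ref{prop:6358}, which your argument makes automatic by only ever applying $\left(\fdot\right)^*$ to squares of $\i$-categories. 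What it costs is one extra construction the paper avoids: you must know that the pullback functor $\overline{f}:\dbare/\cD' \to \dbare/\cD$ exists as an honest functor of $\i$-categories together with the equivalence $\theta_\cD \circ \overline{f} \simeq f^* \circ \theta_{\cD'}$; this is unproblematic --- the pullbacks stay in $\dbar$ since they are slices of $\cD$ and $\sl\left(\dbar\right)=\dbar$ by Corollary \ref{cor:stabslic}, and the coherent construction of such pullback functors is precisely the Cartesian-fibration mechanism the paper itself deploys in the proof of Proposition \ref{prop:sheafification} --- but it is a point you should cite rather than treat as definitional, since $\sD^{\et}/\cD'$ need not literally admit these pullbacks as limits; $\widetilde{f}$ and $\overline{f}$ are both defined by pulling back in $\Str_\G,$ which is why your first square commutes.
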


\begin{proof}

Let $D$ be an object of $\cD.$ On one hand, if $E$ is an object of $\cD'$ such that $\cD'/E$ is in $\sD,$ one has that
\begin{eqnarray*}
i_\cD f_*\left(D\right)\left(\cD'/E \to \cD'\right) &\simeq& \Hom\left(E,f_*D\right)\\
&\simeq& \Hom\left(f^*E,D\right).
\end{eqnarray*}
On the other hand, one has the following string of equivalences:
\begin{eqnarray*}
\widetilde{f}^*\left(i_\cD\left(D\right)\right)\left(\cD'/E \to \cD'\right) &\simeq& i_\cD\left(D\right)\left(\cD/f^*E \to \cD\right)\\
&\simeq& \Hom\left(f^*E,D\right).
\end{eqnarray*}
\end{proof}

\begin{dfn}\label{dfn:upperstar}
Let $\varphi:\cE \to \cD$ be an \'etale map in $\sD.$ Denote by $\tp$ the functor
\begin{eqnarray*}
\sD^{\et}/\cE &\to& \sD^{\et}/\cD\\
\lambda &\mapsto& \varphi \circ \lambda.
\end{eqnarray*}
\end{dfn}

\begin{prop}\label{prop:upperstar}
Let $\varphi:\cE \to \cD$ be an \'etale map in $\sD.$
Then, the following diagram commutes up to canonical homotopy:
$$\xymatrix{\cD \ar[r]^-{\varphi^*} \ar@{^{(}->}[d]_-{i_\cD} & \cE \ar@{^{(}->}[d]^-{i_\cE}\\
\Pshi\left(\sD^{\et}/\cD\right) \ar[r]^-{\tp^*}  & \Pshi\left(\sD^{\et}/\cE\right).}$$
\end{prop}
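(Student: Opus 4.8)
The plan is to verify the commutativity objectwise, in exactly the spirit of the proof of Proposition \ref{prop:lowerstar}: I would evaluate the two composites $i_\cE \circ \varphi^*$ and $\tp^* \circ i_\cD$ on an arbitrary object $D$ of $\cD$, and then evaluate the resulting presheaves on $\sD^{\et}/\cE$ at an arbitrary object $g$, producing a chain of natural equivalences identifying the two $\i$-groupoids so obtained. The starting point is the explicit description of the embeddings supplied by Proposition \ref{prop:largesite}. Unwinding the definition (\ref{eq:iD}) of $i_\cD$ as $(i/\cD)^* \circ \theta^* \circ y$, one reads off that for $D$ in $\cD$ and an object $f$ of $\sD^{\et}/\cD$ corresponding, under $\sD^{\et}/\cD \hookrightarrow \dbare/\cD \simeq \cD$, to the object $F$ of $\cD$ with $\cD/F \simeq \mathrm{dom}(f)$, there is a natural equivalence $i_\cD(D)(f) \simeq \Hom_\cD(F, D)$; likewise $i_\cE(X)(g) \simeq \Hom_\cE(G, X)$ for $X$ in $\cE$ and $g$ in $\sD^{\et}/\cE$ corresponding to the object $G$ of $\cE$.

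Next I would compute the top-right composite. Here $(\tp^* i_\cD(D))(g) = i_\cD(D)(\varphi \circ g)$, and $\varphi \circ g$ is again \'etale by Corollary \ref{cor:compet}, so it remains to identify the object of $\cD$ underlying it. Writing $\cE \simeq \cD/E$ with $E$ the object of $\cD$ corresponding to $\varphi$, and presenting $G \in \cE = \cD/E$ as a morphism $g_0 \colon \bar G \to E$ in $\cD$, Lemma \ref{lem:slicek} gives $\mathrm{dom}(g) \simeq (\cD/E)/g_0 \simeq \cD/\bar G$, with the composite projection to $\cD$ the canonical one. Hence $(\tp^* i_\cD(D))(g) \simeq \Hom_\cD(\bar G, D)$.

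For the left-bottom composite, $(i_\cE(\varphi^* D))(g) \simeq \Hom_\cE(G, \varphi^* D)$. Since $\varphi$ is \'etale, $\varphi^*$ admits the left adjoint $\varphi_!$ (\cite{htt}, Proposition 6.3.5.1), which on $\cE = \cD/E$ is the forgetful functor sending $g_0$ to $\bar G$; therefore $\Hom_\cE(G, \varphi^* D) \simeq \Hom_\cD(\varphi_! G, D) = \Hom_\cD(\bar G, D)$. The two computations agree, and since every equivalence used is natural in $D$ and in $g$, they assemble into the sought natural equivalence $i_\cE \circ \varphi^* \simeq \tp^* \circ i_\cD$, i.e. the square commutes up to canonical homotopy.

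The main obstacle I anticipate is coherence bookkeeping at the $\i$-categorical level rather than the computation itself: one must check that the object $\bar G$ produced by the slice identification of Lemma \ref{lem:slicek} genuinely coincides, naturally and coherently, with the value $\varphi_! G$ of the left adjoint to the \'etale inverse-image functor, and that the adjunction equivalence $\Hom_\cE(-, \varphi^* D) \simeq \Hom_\cD(\varphi_!(-), D)$ is natural in the slice variable $g$ and not merely objectwise. Making these compatibilities precise—so that the pointwise equivalences glue to an honest homotopy of functors rather than a strictly commuting square—is where the real work lies; everything else is a direct parallel to Proposition \ref{prop:lowerstar}.
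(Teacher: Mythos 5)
Your proposal is correct and follows essentially the same route as the paper's own proof: both evaluate the two composites on an object $D$ of $\cD$ and then on an object of $\sD^{\et}/\cE$ (your $g_0:\bar G \to E$ is the paper's $x:E' \to E$), identifying the left-bottom composite via the adjunction $\varphi_! \dashv \varphi^*$ with $\varphi_!$ the forgetful functor, and the top-right composite via Lemma \ref{lem:slicek} and the equivalence $\overline{\cD}^{\et}/\cD \simeq \cD$, so that both yield $\Hom_\cD\left(E',D\right)$. The coherence concerns you raise at the end are legitimate but are left implicit in the paper as well, which records only the same objectwise strings of natural equivalences.
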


\begin{proof}
Let $E \in \cD$ be the object corresponding to $\varphi:\cE \to \cD$ under the equivalence $\overline{\cD}^{\et}/\cD \simeq \cD.$ Let $D$ be an object of $\cD$ and let $x:E' \to E$ be an object of $\cD/E\simeq \cE$ such that $\cE/x$ is in $\sD.$ On one hand, one has the following:
\begin{eqnarray*}
i_\cE\varphi^*\left(D\right)\left(\cE/x \to \cE\right) &\simeq& \Hom\left(x,\varphi^*D\right)\\
&\simeq& \Hom\left(\varphi_!x,D\right)\\
&\simeq& \Hom\left(E',D\right),
\end{eqnarray*}
where $\varphi_!$ is the left adjoint to $\varphi^*$ (See Chapter \ref{chap:etale}).
On the other hand:
\begin{eqnarray*}
\tp^*\left(i_\cD\left(D\right)\right)\left(\cE/x \to \cE\right) &\simeq& \Hom\left(\tp\left(\cE/x \to \cE\right),\cD/D \to \cD\right)\\
&\simeq& \Hom\left(\left(\cD/E\right)/x \to \cD/E \to \cD, \cD/D \to \cD\right)\\
&\simeq& \Hom\left(\cD/E' \to \cD,\cD/D \to \cD\right)\\
&\simeq& \Hom\left(E',D\right),
\end{eqnarray*}
with the last equivalence following from the equivalence $\overline{\cD}^{\et}/\cD \simeq \cD.$
\end{proof}

\begin{rmk}
If $\varphi:\cE \to \cD$ is an \'etale map in $\sD,$ then it follows by the universal property for pullbacks that $\tp$ is left adjoint to $\wt$, and $\tp^*$ is left adjoint to $\wt^*.$
\end{rmk}

\begin{prop}\label{prop:upperstar2}
Let $\varphi:\cE \to \cD$ be an \'etale map in $\sD.$
Then, the following diagram commutes up to canonical homotopy:
$$\xymatrix{\Pshi\left(\sD^{\et}/\cD\right) \ar[d]_-{a_\cD} \ar[r]^-{\tp^*} & \Pshi\left(\sD^{\et}/\cE\right) \ar[d]^-{a_\cE}\\
 \cD \ar[r]^-{\varphi^*}  & \cE.}$$
\end{prop}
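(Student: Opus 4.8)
The plan is to prove the equivalence $a_\cE \circ \tp^* \simeq \varphi^* \circ a_\cD$ of functors $\Pshi\left(\sD^{\et}/\cD\right) \to \cE$ by reducing to a computation on representables. First I would check that both composites preserve small colimits. Indeed, all four constituent functors do: $a_\cD$ and $a_\cE$ are left adjoints by Proposition \ref{prop:largesite}; $\varphi^*$ is the inverse image of a geometric morphism, hence a left adjoint; and $\tp^*$ is a restriction functor between presheaf $\i$-categories, so it preserves all colimits since these are computed objectwise. Consequently both $a_\cE \tp^*$ and $\varphi^* a_\cD$ are colimit-preserving, and by the universal property of presheaf $\i$-categories (Theorem 5.1.5.6 of \cite{htt}) each is the left Kan extension along the Yoneda embedding $y\colon \sD^{\et}/\cD \to \Pshi\left(\sD^{\et}/\cD\right)$ of its restriction to representables. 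It therefore suffices to produce a natural equivalence between the two functors $\sD^{\et}/\cD \to \cE$ obtained by precomposing each composite with $y$.

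Next I would compute each restriction. For an object $\psi$ of $\sD^{\et}/\cD$, write $P_\psi \in \cD$ for the object corresponding to $\psi$ under $\sD^{\et}/\cD \hookrightarrow \dbare/\cD \stackrel{\sim}{\longrightarrow} \cD.$ Using the adjunction $a_\cD \dashv i_\cD$ together with the identity $i_\cD\left(D\right)\left(\psi\right) \simeq \Hom_\cD\left(P_\psi, D\right)$ coming from the proof of Proposition \ref{prop:largesite} and the Yoneda lemma, one gets $a_\cD\left(y\left(\psi\right)\right) \simeq P_\psi$, naturally in $\psi$; hence $\varphi^* a_\cD y\left(\psi\right) \simeq \varphi^*\left(P_\psi\right)$. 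On the other side, the adjunction $\tp \dashv \wt$ from the remark following Proposition \ref{prop:upperstar} gives, for every $\mu$ in $\sD^{\et}/\cE$,
$$\tp^* y\left(\psi\right)\left(\mu\right) = \Hom_{\sD^{\et}/\cD}\left(\tp\left(\mu\right), \psi\right) \simeq \Hom_{\sD^{\et}/\cE}\left(\mu, \wt\left(\psi\right)\right),$$
so that $\tp^* y\left(\psi\right) \simeq y\left(\wt\left(\psi\right)\right)$. Applying the analogue of the first computation over $\cE$ then yields $a_\cE y\left(\wt\left(\psi\right)\right) \simeq Q_\psi$, the object of $\cE$ corresponding to $\wt\left(\psi\right)$.

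Finally I would identify $Q_\psi$ with $\varphi^*\left(P_\psi\right)$. By Definition \ref{dfn:lowerstar}, $\wt\left(\psi\right)$ is the pullback of the \'etale map $\psi$ along $\varphi$, and by Proposition \ref{prop:6358} this pullback is precisely $\cE/\varphi^*\left(P_\psi\right) \to \cE$; under $\sD^{\et}/\cE \hookrightarrow \dbare/\cE \simeq \cE$ this reads $Q_\psi \simeq \varphi^*\left(P_\psi\right)$. The key point is that this identification is \emph{natural}: it exhibits commutativity of the square comparing the functor $\wt\colon \sD^{\et}/\cD \to \sD^{\et}/\cE$ (followed by the inclusion into $\cE$) with $\varphi^*$ (preceded by the inclusion of $\sD^{\et}/\cD$ into $\cD$), which is exactly the content of Proposition \ref{prop:6358}. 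Thus both restrictions to representables are naturally equivalent to the assignment $\psi \mapsto \varphi^*\left(P_\psi\right)$, and passing to their colimit-preserving extensions gives the desired equivalence. I expect the only genuine subtlety to be this last step: upgrading the objectwise identification $Q_\psi \simeq \varphi^*\left(P_\psi\right)$ to a natural equivalence of functors, for which the pullback-stability of \'etale maps encoded in Proposition \ref{prop:6358} (equivalently Corollary \ref{cor:pullstabet}) is the essential input.
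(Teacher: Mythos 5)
Your proposal is correct and begins exactly as the paper does --- both composites preserve small colimits (you justify this carefully; the paper asserts it in one line), so it suffices to compare restrictions to representables --- but your treatment of the representable case takes a genuinely different route. The paper never invokes $\wt$: it observes that the representable $y\left(\cD/D \to \cD\right)$ is precisely $i_\cD\left(D\right)$ and that $a_\cD y\left(\cD/D \to \cD\right) \simeq D,$ then quotes the pointwise computations from the proof of Proposition \ref{prop:upperstar} (whose engine is the adjunction $\varphi_! \dashv \varphi^*$) to conclude $\tp^*y\left(\cD/D \to \cD\right) \simeq i_\cE\varphi^*a_\cD y\left(\cD/D \to \cD\right),$ and finishes by applying $a_\cE$ together with $a_\cE i_\cE \simeq \operatorname{id}.$ You instead use the site-level adjunction $\tp \dashv \wt$ to get $\tp^*y\left(\psi\right) \simeq y\left(\wt\left(\psi\right)\right)$ and then identify $a_\cE y\left(\wt\left(\psi\right)\right) \simeq \varphi^*\left(P_\psi\right)$ geometrically, via pullback-stability of \'etale maps; in effect you re-prove the content of Proposition \ref{prop:upperstar} on representables rather than citing it (noting $y\left(\psi\right) \simeq i_\cD\left(P_\psi\right)$ would have let you quote it and delete your final paragraph). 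What your route buys is a visibly geometric explanation of the square as the presheaf shadow of pullback-stability; what the paper's buys is brevity, since Proposition \ref{prop:upperstar} was already on the books. Two refinements: in the structured setting the precise citation for your pullback identification is Corollary \ref{cor:2.3.20}, of which Proposition \ref{prop:6358} is the unstructured shadow; and the naturality-in-$\psi$ concern you rightly flag is exactly the implicit ``canonical homotopy'' the paper likewise elides in Proposition \ref{prop:upperstar}, so you are no worse off --- it is cleanest to phrase the identification once and for all as an equivalence of the two composite functors $\sD^{\et}/\cD \to \cE,$ one through $\wt$ and one through $\varphi^*,$ which is how Definition \ref{dfn:lowerstar} together with Corollary \ref{cor:2.3.20} packages it.
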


\begin{proof}
As both composites $\varphi^* \circ a_\cD$ and $a_\cE \circ \tp^*$ are colimit preserving, it suffices to show that they agree on representables. Let $E \in \cD$ be the object corresponding to $\varphi:\cE \to \cD$ under the equivalence $\overline{\cD}^{\et}/\cD \simeq \cD.$ Let $D$ be an object of $\cD$ such that $\cD/D$ is in $\sD,$ and let $x:E' \to E$ be an object of $\cD/E\simeq \cE$ such that $\cE/x$ is in $\sD.$ Since $$a_\cD y\left(\cD/D \to \cD\right)\simeq D,$$ by the proof of Proposition \ref{prop:upperstar}, one has that $$\left(i_\cE\varphi^*a_Dy\left(\cD/D \to \cD\right)\right)\left(\cE/x \to \cE\right) \simeq  \Hom\left(E',D\right).$$ Also by the proof of Proposition \ref{prop:upperstar}, since $$y\left(\cD/D \to \cD\right)=i_\cD\left(D\right),$$ it follows that $$\left(\tp^*y\left(\cD/D \to \cD\right)\right)\left(\cE/x \to \cE\right) \simeq \Hom\left(E',D\right).$$ Hence one has that $$\tp^*y\left(\cD/D \to \cD\right) \simeq i_\cE\varphi^*a_Dy\left(\cD/D \to \cD\right),$$ so that
\begin{eqnarray*}
a_\cE \tp^*y\left(\cD/D \to \cD\right) &\simeq& a_\cE i_\cE\varphi^*a_Dy\left(\cD/D \to \cD\right)\\
&\simeq& \varphi^*a_Dy\left(\cD/D \to \cD\right).
\end{eqnarray*}
\end{proof}

\begin{prop}\label{prop:sheafification}
The inclusion $$i:\Pshi\left(\sD\right) \hookrightarrow \Shi\left(\sD\right)$$ has a left exact left adjoint $a.$ Moreover, if $\sD$ is small, $\Shi\left(\sD\right)$ is an $\i$-topos.
\end{prop}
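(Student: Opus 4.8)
Throughout, the inclusion meant is $\Shi(\sD)\hookrightarrow\Pshi(\sD)$, and the left adjoint $a$ to be produced is sheafification; once $a$ is shown to be accessible and left exact, the case of small $\sD$ realizes $\Shi(\sD)$ as an accessible left exact localization of $\Pshi(\sD)$, which is by definition an $\i$-topos. The plan rests on reading the sheaf condition of Definition \ref{dfn:sheaveD} slicewise. For each object $\cD$ of $\sD$ write $u_\cD=j\circ\pi_\cD\colon\sD^{\et}/\cD\to\sD$; unwinding Definitions \ref{dfn:sheaveDet} and \ref{dfn:sheaveD}, a presheaf $F$ is a sheaf exactly when, for every $\cD$, the restriction $u_\cD^*F$ lies in the essential image of the left exact localization $a_\cD\colon\Pshi(\sD^{\et}/\cD)\to\cD$ of Proposition \ref{prop:largesite} (cf. Remark \ref{rmk:immm}), i.e. when $u_\cD^*F$ is $a_\cD$-local.

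First I would produce the reflector. Each $a_\cD$ is accessible, so the class of morphisms it inverts is generated by a small set $\tilde S_\cD$; since $u_\cD^*$ preserves all limits and colimits and is left adjoint to $(u_\cD)_!$, a presheaf is a sheaf if and only if it is local with respect to $S:=\bigcup_\cD(u_\cD)_!(\tilde S_\cD)$. For small $\sD$ this is a small set, so $\Shi(\sD)$ is an accessible localization of $\Pshi(\sD)$ by \cite{htt}, Proposition 5.5.4.15, which supplies $a$. For $\sD$ merely locally small I would instead build $a$ by hand --- sheafifying on each slice through $a_\cD$ and gluing the results $(a_\cD u_\cD^*F)_\cD$ into a single presheaf on $\sD$ --- the coherence of the gluing being guaranteed by the compatibility of the local sheafifications with the transition functors: Proposition \ref{prop:lowerstar} for pushforward along a general morphism of $\sD$, and Propositions \ref{prop:upperstar} and \ref{prop:upperstar2} for restriction along an \'etale morphism. (The requisite large-universe bookkeeping is handled by Proposition \ref{prop:pop}.)

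The crux is left exactness of $a$. I would invoke the standard criterion (\cite{htt}, Section 6.2) that an accessible localization of an $\i$-topos is left exact precisely when its class $W$ of local equivalences is stable under pullback. The favorable structural facts are that each $a_\cD$ is left exact, so the maps it inverts are stable under base change, and that each $u_\cD^*$ preserves pullbacks; consequently the \emph{slicewise} class $W':=\{f:\ a_\cD u_\cD^*f\ \text{is invertible for all }\cD\}$ is strongly saturated (an intersection of classes inverted by the colimit-preserving functors $a_\cD u_\cD^*$) and stable under pullback. What must still be shown --- and this is the main obstacle --- is that $W'$ coincides with the genuine class $W$ of $a$-equivalences, equivalently that the family $\{a_\cD\}$ is base-change compatible, so that slicewise local equivalences and global ones agree. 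This is exactly the Beck--Chevalley content of the commuting square of Proposition \ref{prop:upperstar2} (together with Propositions \ref{prop:lowerstar} and \ref{prop:upperstar}): it says the local sheafifications commute with the restriction functors, which is precisely the descent statement needed to glue the left exact localizations $a_\cD$ into a single left exact localization rather than a merely reflective one. Granting this identification, pullback-stability of $W'=W$ yields left exactness of $a$.

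Finally, with $a$ accessible and left exact, the second assertion is immediate: for small $\sD$, $\Shi(\sD)$ is an accessible left exact localization of the presheaf $\i$-category $\Pshi(\sD)$, hence an $\i$-topos. The same slicewise argument carried out with $\pi_\cD$ in place of $u_\cD$ establishes the companion fact that $\Shi(\sD^{\et})$ is an $\i$-topos.
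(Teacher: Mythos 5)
Your small-$\sD$ construction of the reflector is correct and is a genuinely different, cleaner route to existence than the paper takes there: since each slicewise localization $a_\cD \dashv i_\cD$ of Proposition \ref{prop:largesite} is accessible, the sheaves are exactly the $S$-local objects for the small set $S=\bigcup_\cD (u_\cD)_!(\tilde S_\cD)$, and \cite{htt}, Proposition 5.5.4.15 hands you $a$ (the paper instead verifies accessibility by a direct cardinality argument at the end of its proof). But the proposition asserts the existence of a left exact $a$ for \emph{any} locally small strong \'etale blossom $\sD$ --- and the non-small case is used later, e.g. for $\dbar$ and the large-sheaf corollaries --- and it is precisely your treatment of that case (``sheafify on each slice and glue'') together with the identification $W'=W$ in your left-exactness step that carries the real mathematical content, and as written both have the same genuine gap.

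Concretely: Propositions \ref{prop:lowerstar}, \ref{prop:upperstar} and \ref{prop:upperstar2} are single-square statements, and they do not say what you need. Proposition \ref{prop:upperstar2} commutes sheafification with restriction only along an \emph{\'etale} morphism $\varphi$ (via $\tp^{\,*}$), and Proposition \ref{prop:lowerstar} gives only the direct-image compatibility $\widetilde{f}^*\circ i_\cD \simeq i_{\cD'}\circ f_*$ for a general $f$; none of them computes the base change $u_{\cD'}^*\circ (u_\cD)_!$ along an arbitrary morphism of $\sD$, which is what both inclusions $S\subseteq W'$ and $W'\subseteq W$ require, so your ``Beck--Chevalley content'' claim is an assertion, not a consequence of the cited squares. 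Worse, the family $\left(a_\cD u_\cD^* F\right)_\cD$ does not assemble into a presheaf on $\sD$ from square-by-square compatibilities: $\Pshi\left(\sD\right)$ is \emph{not} the limit of the slicewise presheaf categories --- in the paper's proof it is identified with a proper reflective subcategory $\M'$ of the $\icat$ $\M\simeq \Pshi\left(\Fun^{\et}\left(\Delta^1,\sD\right)\right)$ of sections of the Cartesian fibration $p:\Kk \to \sD^{op}$ --- so the higher coherences must be produced, not presumed. Producing them is exactly what the paper's machinery does: Proposition \ref{prop:lowerstar} makes $\Kk_0$ a Cartesian subfibration, \cite{dag} Lemma 2.4.9 then localizes fiberwise \emph{coherently} to give $L$, and the claim $\tp^{\,*}\xi_\cD^*H\simeq \xi_\cE^*H$ for $H \in \M'$, combined with Propositions \ref{prop:upperstar} and \ref{prop:upperstar2} (note: \'etale maps suffice at this stage, which is why the missing general base change never arises there), shows $L$ preserves $\M'$, yielding $a=\nabla^*\circ L\circ ev_0^*$. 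Once that construction exists, left exactness is immediate slicewise ($a_\cD$ is left exact and $\xi_\cD^*$ preserves limits), so your pullback-stability criterion, while valid, buys nothing; conversely, without the coherent construction your $W'=W$ has no proof. Repairing your argument means rebuilding this fibrational gluing, at which point you have rejoined the paper's proof.
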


\begin{proof}
The proof is similar in spirit to Proposition 2.4.4 of \cite{dag} and Lemma 6.3.5.21 of \cite{htt}. Let $\Fun^{\et}\left(\Delta^1,\sD\right)$ denote the full subcategory of the functor $\icat$ $\Fun\left(\Delta^1,\sD\right)$ on the \'etale morphisms. We will denote morphisms in this category vertically, that is, if  $\varphi:\cE \to \cD$ and $\varphi:\cE' \to \cD'$ are \'etale morphisms, a homotopy commutative square
$$\xymatrix{\cE \ar[r]^-{\varphi} \ar[d] & \cD \ar[d]\\
\cD' \ar[r]^-{\varphi'} & \cD'}$$ will be regarded as a morphism $\varphi \to \varphi'$ in $\Fun^{\et}\left(\Delta^1,\sD\right)$.
Consider the functor given by evaluation at $\{1\} \in \Delta^1,$ $$q:\Fun^{\et}\left(\Delta^1,\sD\right) \to \sD.$$ Since \'etale maps are stable under pullback, this is a Cartesian fibration classifying the functor $$\sD^{op} \to \iCat$$ which on objects sends each object $\cD$ of $\sD$ to the $\icat$ $\sD^{\et}/\cD,$ and on arrows sends a map $f:\cD \to \cD'$ to the functor $\widetilde{f}$ of Definition \ref{dfn:lowerstar} induced by pulling back along $f.$ The corresponding functor $$q^{op}:\Fun^{\et}\left(\Delta^1,\sD\right)^{op} \to \sD^{op}$$ is a coCartesian fibration. Consider the projection functor $$r:\sD^{op} \times \iGpd \to \sD^{op},$$ which may be regarded as the Cartesian fibration classifying the constant functor into $\iCat$ on $\sD^{op}$ with value $\iGpd.$ Consider the simplicial set $\Kk$ together with a map $$p:\Kk \to \sD^{op},$$ satisfying the universal property that for every $j:X \to \sD^{op},$
\begin{equation}\label{eq:sections}
\Hom_{\Set^{\Delta^{op}}/\sD^{op}}\left(j,p\right) \cong \Hom_{\Set^{\Delta^{op}}}\left(X \times_{\sD^{op}} \Fun^{\et}\left(\Delta^1,\sD\right)^{op}, \iGpd\right).
\end{equation}
By \cite{htt}, Corollary 3.2.2.13, $p$ is a Cartesian fibration. It classifies the functor $$\left(\sD^{op}\right)^{op}=\sD \to \iCat$$ which on objects sends $\cD$ to $\Pshi\left(\sD^{\et}/\cD\right),$ and to arrows $f:\cD \to \cD',$ associates the functor $$\widetilde{f}^*:\Pshi\left(\sD^{\et}/\cD\right) \to \Pshi\left(\sD^{\et}/\cD'\right),$$ where $\widetilde{f}$ is as in Definition \ref{dfn:lowerstar}. To get a handle on what $\Kk$ looks like as an $\icat$, it is conceptually easier to describe $\Kk^{op}$. The objects of $\Kk^{op}$ are pairs $\left(\cD,F\right)$ with $\cD$ an object of $\sD$ and $$F:\left(\sD^{\et}/\cD\right)^{op} \to \iGpd,$$ and  morphisms $\left(\cD,F\right) \to \left(\cD',F'\right)$ correspond to pairs $\left(f,\alpha\right),$ with $f:\cD \to \cD'$ in $\sD$ and $$\alpha:F' \to \widetilde{f}^*\left(F\right)$$ in $\Pshi\left(\sD^{\et}/\cD'\right).$ %It follows from \cite{htt}, Corollary 3.2.2.13 that such a morphism, viewed as a its opposite $1$-simplex in $\K,$ is $p$-Cartesian precisely when $\alpha$ is an equivalence.  Moreover, if $f:\cD \to \cD'$ is \'etale, then a morphism $$f:\left(\cD,F\right) \to \left(\cD,F'\right)$$ is $p$-coCartesian if and only if for every \'etale map $$\varphi':\cE' \to \cD',$$ the induced map $$F\left(f \circ \varphi'\right) \stackrel{\alpha\left(f\circ\varphi'\right)}{\longlonglongrightarrow} F'\left(\widetilde{f}^* \varphi'\right) \stackrel{F\left(\overline{\varphi'}\right)}{\longlongrightarrow} F'\left(\varphi'\right)$$ is an equivalence, where $$\overline{\varphi'}:\cE' \to \cD'\times_\cD \cE'$$ is the map induced by $\varphi'$ and $id_\cE'.$

Let $\M$ denote the $\icat$ of sections of $p.$ From equation (\ref{eq:sections}), it follows that it is canonically equivalent to the $\icat$ $$\Fun\left(\Fun^{\et}\left(\Delta^1,\sD\right)^{op},\iGpd\right)=\Pshi\left(\Fun^{\et}\left(\Delta^1,\sD\right)\right).$$ Notice that if $\cD$ is an object of $\sD,$ then $\sD^{\et}/\cD$ can be canonically identified with the fiber of $q$ over $$\Delta^0 \stackrel{\cD}{\longrightarrow} \sD,$$ and hence one has a canonical functor $$\xi_\cD:\sD^{\et}/\cD \to \Fun^{\et}\left(\Delta^1,\sD\right)$$ for each $\cD.$ Concretely, one has that an object $f:\cE \to \cD$ of $\sD^{\et}/\cD$ is sent to itself, whereas a morphism, regarded as a triangle
$$\xymatrix@R=0.7cm@C=0.2cm{\cE \ar[dr]_-{\varphi} \ar[rr]^-{\psi} & & \cE' \ar[dl]^-{\varphi'}\\
& \cD & }$$ is sent to the square
$$\xymatrix{\cE \ar[r]^-{\varphi}  \ar[d]_-{\psi} & \cD \ar[d]^-{id_\cD}\\
\cE' \ar[r]^-{\varphi'} & \cD,}$$
regarded as a morphism in $\Fun^{\et}\left(\Delta^1,\sD\right).$ Notice that there is a natural bijection between sections of $p$ and sections of $p^{op}.$ Unwinding definitions, one sees that given an object $H$ in $\M,$ that is a presheaf $H$ on $\Fun^{\et}\left(\Delta^1,\sD\right),$ the section $$\sigma_H:\sD \to \Kk^{op}$$ of $p$ corresponding to $H$ sends each object $\cD$ to the pair $\left(\cD,\xi_\cD^*\left(H\right)\right),$ and each morphism $f:\cD \to \cD'$ (of $\sD$) to the morphism $$\left(f,\alpha\left(f\right)\right):\left(\cD,\xi_\cD^*\left(H\right)\right) \to \left(\cD',\xi_\cD'^*\left(H\right)\right),$$ where $\alpha\left(f\right)$ can be described as follows:\\
Given an object $\varphi:\cE' \to \cD'$ of $\sD^{\et}/\cD',$ the homotopy commutative square
$$\xymatrix{\cE' \times_{\cD'} \cD \ar[d]_-{pr_1} \ar[r]^-{\widetilde{f}\left(\varphi'\right)} & \cD \ar[d]^-{f}\\
\cE' \ar[r]^-{\varphi'} & \cD',}$$ may be regarded as an arrow $$\left(pr_1,f\right):\widetilde{f}\left(\varphi'\right) \to \varphi'$$ in $\Fun^{\et}\left(\Delta^1,\sD\right).$ The morphism $$\alpha\left(f\right):\xi_\cD'^*\left(H\right) \to \widetilde{f}^*\xi_\cD^*\left(H\right)$$ is the natural transformation whose component along $\varphi'$ is given by $H\left(pr_1,f\right).$

Denote by $$\nabla:\sD \to \Fun^{\et}\left(\Delta^1,\sD\right)$$ the full and faithful inclusion which associates to each object $\cD$ its identity morphism. Notice that the functor $\nabla$ is in fact left adjoint to the functor $$ev_0:\Fun^{\et}\left(\Delta^1,\sD\right) \to \sD$$ given by evaluation at the vertex $\{0\} \in \Delta^1.$ We claim that $\nabla^*$ is left adjoint to $$ev_0^*:\Pshi\left(\sD\right) \to \Pshi\left(\Fun^{\et}\left(\Delta^1,\sD \right)\right).$$ Consider the corresponding functors
$$\widehat{\nabla}^*:\LPshi\left(\Fun^{\et}\left(\Delta^1,\sD\right)\right) \to \LPshi\left(\sD\right)$$
$$\widehat{ev_0}^*:\LPshi\left(\sD\right) \to \LPshi\left(\Fun^{\et}\left(\Delta^1,\sD \right)\right),$$
between the $\qcats$ of large presheaves. Since these restrict to $\nabla^*$ and $ev_0^*$ respectively on the subcategories of small presheaves, it suffices to show that $\widehat{\nabla}^*$ is left adjoint to $\widehat{ev_0}^*$. Since $\sD$ is $\V$-small, where $\V$ is the universe of large sets, there exists a global right Kan extension functor $$\widehat{\nabla}_*:\LPshi\left(\sD\right) \to \LPshi\left(\Fun^{\et}\left(\Delta^1,\sD \right)\right)$$ which is right adjoint to $\widehat{\nabla}^*$. Let $\varphi:\cE \to \cD$ be an \'etale morphism, and let $F$ be a large presheaf on $\sD.$ Then
\begin{eqnarray*}
\widehat{\nabla}_*\left(F\right)\left(\varphi\right) &\simeq& \Hom\left(y\left(\varphi\right),\widehat{\nabla}_*\left(F\right)\right)\\
&\simeq& \Hom\left(\widehat{\nabla}^*y\left(\varphi\right),F\right).
\end{eqnarray*}
Notice that if $\cE'$ is in $\sD,$ one has
\begin{eqnarray*}
\widehat{\nabla}^*y\left(\varphi\right)\left(\cE'\right) &\simeq& y\left(\varphi\right)\left(\nabla\left(\cE\right)\right)\\
&=& \Hom\left(\nabla\left(\cE'\right),\varphi\right)\\
&\simeq& \Hom\left(\cE',ev_0\left(\varphi\right)\right)\\
&=&\Hom\left(\cE',\cE\right).
\end{eqnarray*}
It follows that $\widehat{\nabla}^*y\left(\varphi\right)=y\left(\cE\right).$ Notice that
\begin{eqnarray*}
\widehat{ev_0}^*\left(F\right)\left(\varphi\right) &=& F\left(\cE\right)\\
&\simeq& \Hom\left(y\left(\cE\right),F\right).
\end{eqnarray*}
Therefore, there is a natural equivalence $\widehat{ev_0}^*\simeq \widehat{\nabla}_*$ and since $\widehat{ev_0}^*$ is right adjoint to $\widehat{\nabla}^*$, it follows that $ev_0^*$ is right adjoint to $\nabla^*.$ Since $\nabla$ is full and faithful, it follows that $\widehat{ev_0}^*$ and hence $ev_0^*$ are as well, and therefore
$$\xymatrix@C=1.5cm{\Pshi\left(\sD\right)\ar@{^{(}->}[r]<-0.9ex>_-{ev_0^*} & \Pshi\left(\Fun^{\et}\left(\Delta^1,\sD \right)\right)=\M \ar@<-0.5ex>[l]_-{\nabla^*}}$$
is a localization. Denote by $\M'$ the essential image of $ev_0^*.$

In particular, $ev_0^*$ preserves small limits, however, since colimits are computed point-wise, it follows that it also preserves small colimits. Notice further that $\nabla$ also has a \emph{left} adjoint given by $q=ev_1.$ By a similar argument for the adjunction $\nabla^* \vdash ev_0^*$, one can deduce that $ev_1^*$ is left adjoint to $\nabla^*$ so that likewise $\nabla^*$ preserves all small limits and colimits.

Denote by $\Kk_0$ the full subcategory of $\Kk$ on those $\left(\cD,F\right)$ such that $F$ is an $\i$-sheaf. Suppose that $f:\cD \to \cD'$ in $\sD$ has an underlying geometric morphism of $\i$-topoi
$$\xymatrix@C=1.5cm@R=2.5cm{\cD \ar@<-0.5ex>[r]_-{f_*} & \cD' \ar@<-0.5ex>[l]_-{f^*}}.$$
It follows immediately from Proposition \ref{prop:lowerstar} that $p$ restricts to a Cartesian fibration $$p_0:\Kk_0 \to \sD^{op}.$$ Denote by $\M_0$ the full subcategory of $\M$ on those sections factoring through the inclusion $$\Kk_0 \hookrightarrow \Kk.$$ This is the full subcategory on those presheaves $H$ such that for each $\cD,$ $\xi_\cD^*\left(H\right)$ is an $\i$-sheaf. Fix an object $\cD$ of $\sD$ and consider the following diagram of functors:
$$\xymatrix{\sD^{\et}/\cD \ar[d]_-{\pi_\cD} \ar[r]^-{\xi_\cD} & \Fun^{\et}\left(\Delta^1,\sD\right)\\
\sD^{\et} \ar[r]^-{j} & \sD \ar@{^{(}->}[u]_-{\nabla}.}$$
This diagram does \emph{not} commute but, denoting the composite $j \circ \pi_\cD=:\chi_\cD,$ there is a canonical non-invertible natural transformation $$\omega_\cD: \nabla \circ \chi_\cD \Rightarrow \xi_\cD,$$ whose component along an \'etale map $\varphi:\cE \to \cD$ is given by the morphism
$$\xymatrix{\cE \ar[d]_-{id_\cE} \ar[r]^-{id_\cE} & \cE \ar[d]^-{\varphi}\\
\cE \ar[r]^-{\varphi} & \cD}$$
in $\Fun^{\et}\left(\Delta^1,\sD\right).$
Now suppose that $H$ is in $\M'.$ Then, we have an equivalence $H \simeq ev_0^*\nabla^*H.$ The component of the natural transformation $$\left(ev_0^*\nabla^*H\right) \circ \nabla \circ \chi_\cD \Rightarrow \left(ev_0^*\nabla^*H\right) \circ \xi_\cD$$ is then $$ev_0^*\nabla^*H\left(\omega_\cD\left(\varphi\right)\right) = H\left(\nabla ev_0 \omega_\cD\left(\varphi\right)\right)=\left(H \circ \nabla\right)\left(id_\cE\right),$$ hence an equivalence. It follows that if $H$ is in $\M',$ then the following diagram commutes up to natural homotopy:
$$\xymatrix{\left(\sD^{\et}/\cD\right)^{op} \ar[r]^-{\xi^{op}_\cD} \ar[rd]_-{\chi^{op}_\cD} & \Fun^{\et}\left(\Delta^1,\sD\right)^{op} \ar[r]^-{H} & \iGpd.\\
& \sD^{op}  \ar[ur]_-{\nabla^*H} &}$$
Hence, for $H$ in $\M' \simeq \Pshi\left(\sD\right),$ $\xi_\cD^*H$ is an $\i$-sheaf if and only if $\chi_\cD^*\left(\nabla^*H\right)$ is an $\i$-sheaf. Let $\M'_0$ denote the $\icat$ $\M' \cap \M_0.$ It follows that $\nabla^*$ restricts to an equivalence $$\nabla^*|_{\M'_0}:\M'_0 \stackrel{\sim}{\longlongrightarrow}\Shi\left(\sD\right).$$
By \cite{dag}, Lemma 2.4.9, since both $p$ and $p_0$ are Cartesian fibrations, one has the following assertion:
\begin{itemize}
\item The inclusion $\M_0 \hookrightarrow \M$ admits a left adjoint $L.$ Moreover, a morphism $f:X \to X'$ in $\M$ exhibits $X'$ as a $\M_0$-localization of $X$ if and only if for each $\cD,$ $$\xi_\cD^*f:\xi_\cD^*X \to \xi_\cD^*X'$$ exhibits $\xi_\cD^*X'$ as the sheafification of $\xi_\cD^*X$ in $\Pshi\left(\sD^{\et}/\cD\right).$
\end{itemize}
To show that the inclusion $$i:\Shi\left(\sD\right) \hookrightarrow \Pshi\left(\sD\right)$$ admits a left adjoint, it suffices to show that the functor $L$ carries $\M'$ to $\M'.$ To prove this, we first make the following claim:\\
\textbf{Claim:} If $H$ is in $\M',$ then for each \'etale map $\varphi:\cE \to \cD$ in $\sD,$
\begin{equation}\label{eq:claimchi}
\tp^*\xi_\cD^*H \simeq \xi_\cE^*H,
\end{equation}
where $\tp$ is the functor in Definition \ref{dfn:upperstar}.
To see this, let $\lambda:\cE' \to \cE$ be \'etale, then, on one hand, one has the following string of equivalences:
\begin{eqnarray*}
\tp^*\xi_\cD^*H\left(\lambda\right) &\simeq& H\left(\xi_\cD\tp\left(\lambda\right)\right)\\
&\simeq& H\left(\varphi \circ \lambda\right)\\
&\simeq& ev_0^*\nabla^*H\left(\varphi \circ \lambda\right)\\
&\simeq& H\left(\nabla ev_0\left(\varphi \circ \lambda\right)\right)\\
&\simeq& H\left(id_\cE'\right).
\end{eqnarray*}
On the other hand:
\begin{eqnarray*}
\xi_\cE^*H\left(\lambda\right) &=& H\left(\lambda\right)\\
&\simeq& ev_0^*\nabla^*H\left(\lambda\right)\\
&\simeq& H\left(id_\cE'\right).
\end{eqnarray*}
We will now show that if $H$ is in $\M',$ then $LH$ is too. Let $\varphi:\cE \to \cD$ be \'etale in $\sD$. Then, by \cite{dag}, Lemma 2.4.9, one has $$LH\left(\varphi\right) \simeq \left(i_Da_\cD\xi_\cD^*H\right)\left(\varphi\right).$$ Notice that
$$\left(i_\cD a_\cD \xi_\cD^*H\right)\left(\varphi\right) \simeq \left(\tp^*i_\cD a_\cD \xi_\cD^* H\right)\left(id_\cE\right).$$
Moreover, by Proposition \ref{prop:upperstar}, one has $$\tp^*i_Da_\cD\xi_\cD^*H \simeq i_\cE\varphi^*a_\cD\xi_\cD^*H,$$ and by Proposition \ref{prop:upperstar2}, one has $$i_\cE\varphi^*a_\cD\xi_\cD^*H \simeq i_\cE a_\cE \tp^* \xi_\cD^*H.$$ Furthermore by (\ref{eq:claimchi}), it follows that $$i_\cE a_\cE \tp^* \xi_\cD^*H \simeq i_\cE a_\cE\xi_\cE^*H.$$ Putting all this together, one has:
\begin{eqnarray*}
LH\left(\varphi\right) &\simeq& \left(i_Da_\cD\xi_\cD^*H\right)\left(\varphi\right)\\
&\simeq& \left(i_\cE a_\cE\xi_\cE^*H\right)\left(id_\cE\right)\\
&\simeq& LH\left(id_\cE\right)\\
&\simeq& \left(ev_0^*\nabla^*LH\right)\left(\varphi\right),
\end{eqnarray*}
so that $LH$ is in $\M'.$ It follows that the composite $$a:=\nabla^* \circ L \circ ev_0^*$$ is left adjoint to $i$. Since both $\nabla^*$ and $ev_0^*$ preserve all small limits, to deduce that $a$ is left exact, it suffices to show that $L$ is. To this end, let $\lim H_i$ be a finite limit, and let $\varphi:\cE \to \cD$ be an \'etale map. Notice that since limits in presheaf $\infty$-category are computed point wise, for all $\cD,$ $\xi_\cD^*$ preserves all limits. Hence
\begin{eqnarray*}
L\left(\lim H_i\right)\left(\varphi\right)&\simeq& \left(a_\cD\xi_\cD^*\lim H_i\right)\left(\varphi\right)\\
&\simeq& \lim a_\cD\xi_\cD^*H_i\left(\varphi\right)\\
&\simeq& \lim LH_i\left(\varphi\right),
\end{eqnarray*}
since $a_\cD$ is left exact.

Suppose now that $\sD$ is small. Since $\Shi\left(\sD\right)$ is a left exact localization of $\Pshi\left(\sD\right),$ to show that it is an $\i$-topos, we only need to show that this localization is accessible, that is, we need to show that there exists a regular cardinal $\kappa$ for which $\Pshi\left(\sD\right)$ is $\kappa$-accessible, and such that the composite $$\Pshi\left(\sD\right) \stackrel{a}{\longrightarrow} \Shi\left(\sD\right) \stackrel{i}{\longrightarrow} \Pshi\left(\sD\right)$$ preserves $\kappa$-filtered colimits. Since $\sD$ is small,  $\Pshi\left(\sD\right)$ is $\kappa_0$-accessible for some regular cardinal $\kappa_0.$ Moreover, for each $\cD$ in $\sD,$ there exists regular cardinals $\kappa_\cD$ such that each $\Pshi\left(\sD^{\et}/\cD\right)$ is $\kappa_\cD$-accessible and the composite $$\Pshi\left(\sD^{\et}/\cD\right) \stackrel{a_\cD}{\longlongrightarrow} \Shi\left(\sD^{\et}/\cD\right) \stackrel{i_\cD}{\longlongrightarrow} \Pshi\left(\sD^{\et}/\cD\right)$$ preserves $\kappa_\cD$-filtered colimits. Choose a large enough regular cardinal $\kappa$ greater than $\kappa_0$ and each $\kappa_\cD,$ and let $$\colim F_\alpha$$ be a $\kappa$-filtered colimit in $\Pshi\left(\sD\right).$ It suffices to show that $ia$ preserves this colimit. Since colimits in $\Pshi\left(\sD\right)$ are computed point wise, it suffices to show that for each $\cD$, one has $$\left(ia\colim F_\alpha\right)\left(\cD\right) \simeq \colim F_\alpha\left(\cD\right).$$ It follows from the construction of $a$ that
$$\left(ia\colim F_\alpha\right)\left(\cD\right) \simeq \left(a_\cD\xi_\cD^*ev_0^*\colim F_\alpha\right)\left(id_\cD\right).$$ Notice that $$\left(a_\cD\xi_\cD^*ev_0^*\colim F_\alpha\right)\left(id_\cD\right) \simeq \left(a_\cD\colim \xi_\cD^*ev_0^*F_\alpha\right)\left(id_\cD\right),$$ and since the colimit is in particular $\kappa_\cD$-filtered, one has further that $$\left(a_\cD\colim \xi_\cD^*ev_0^*F_\alpha\right)\left(id_\cD\right) \simeq \left(\colim \xi_\cD^*ev_0^*F_\alpha\right)\left(id_\cD\right).$$ Notice finally that
\begin{eqnarray*}
\left(\colim \xi_\cD^*ev_0^*F_\alpha\right)\left(id_\cD\right) &\simeq& \colim F_\alpha\left(ev_0\xi_\cD id_\cD\right)\\
&\simeq& \colim F_\alpha\left(\cD\right).
\end{eqnarray*}

%we have the following string of equivalences:
%\begin{eqnarray*}
%LH\left(\varphi\right) & \simeq& \left(i_Da_\cD\xi_\cD^*H\right)\left(\varphi\right)\\
%&\simeq& \left(\tp^*i_Da_\cD\xi_\cD^*H\right)\left(\id_\cE\right)
%Since we also have the localization $$\xymatrix@C=1.5cm{\M'\simeq\Pshi\left(\sD\right)\ar@{^{(}->}[r]<-0.9ex>_-{ev_0^*} & %\Pshi\left(\Fun^{\et}\left(\Delta^1,\sD \right)\right)=\M, \ar@<-0.5ex>[l]_-{\nabla^*}}$$
%to show that the inclusion $$i:\Shi\left(\sD\right) \hookrightarrow \Pshi\left(\sD\right)$$ admits a left adjoint, it suffices to show that the functor $ev_0^*\nabla^*$ carries $\M_0$ to $\M_0'.$ Suppose that $H$ is in $\M_0.$ Consider its $\nabla$-localization $ev_0^*\nabla^*H$. A presheaf $G$ is in $\M_0$ if and only if for each object $\cD$ of $\sD,$ $\xi_\cD^*G$ is a sheaf for all $\cD.$ Notice however that since $ev_0^*\nabla^*H$ is $\nabla$-local, for all such $\cD,$ $$\xi_\cD^*ev_0^*\nabla^*H\simeq \chi_\cD^*\nabla^*ev_0^*\nabla^*H \simeq \chi_\cD^*\nabla^*H,$$ hence $ev_0^*\nabla^*H$ is in $\M_0'$. NOPE. That's not a proof
\end{proof}

\begin{cor}\label{cor:colimps}
If $F:J \to \Shi\left(\sD\right)$ is a functor with $J$ small, then it has a colimit, and a cocone $$\rho:J^{\triangleright} \to \Shi\left(\sD\right)$$ for $F$ is colimiting if and only if for all $\cD$ in $\sD$, the composite
$$J^{\triangleright} \stackrel{\rho}{\longrightarrow} \Shi\left(\sD\right) \stackrel{\chi_\cD^*}{\longlongrightarrow} \Shi\left(\sD^{\et}/\cD\right)\simeq \cD$$ is, where $\chi_\cD:\sD^{\et}/\cD \to \sD$ is the canonical projection functor $j \circ \pi_\cD$.
\end{cor}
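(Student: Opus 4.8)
The plan is to derive everything from Proposition \ref{prop:sheafification}, which exhibits $\Shi\left(\sD\right)$ as an accessible left exact reflective localization of $\Pshi\left(\sD\right)$ with reflector $a \dashv i$. Existence of the colimit is then immediate: being a reflective subcategory of the cocomplete $\icat$ $\Pshi\left(\sD\right)$, $\Shi\left(\sD\right)$ is cocomplete, and for any $F\colon J \to \Shi\left(\sD\right)$ one has $\colim F \simeq a\left(\colim\, i F\right)$, the right-hand colimit being computed pointwise in $\Pshi\left(\sD\right)$. Write $\overline{\chi}_\cD^*\colon \Shi\left(\sD\right) \to \Shi\left(\sD^{\et}/\cD\right) \simeq \cD$ for the functor induced by restriction along $\chi_\cD$; this does land in sheaves precisely because, by Definitions \ref{dfn:sheaveD} and \ref{dfn:sheaveDet}, $F$ being a sheaf on $\sD$ means $\chi_\cD^* F = \pi_\cD^* j^* F$ is a sheaf on $\sD^{\et}/\cD$ for every $\cD$. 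The corollary will follow once I establish (a) that each $\overline{\chi}_\cD^*$ preserves small colimits, and (b) that the family $\left\{\overline{\chi}_\cD^*\right\}_{\cD \in \sD}$ is jointly conservative.

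The crux, and what I expect to be the main obstacle, is an intertwining formula relating the sheafification $a$ on $\Pshi\left(\sD\right)$ to the sheafification $a_\cD$ of Proposition \ref{prop:largesite} on $\Pshi\left(\sD^{\et}/\cD\right)$: for every $G$ in $\Pshi\left(\sD\right)$,
\[
\chi_\cD^*\left(i\,a\,G\right) \simeq i_\cD\, a_\cD\,\chi_\cD^*\left(G\right).
\]
I would prove this by reusing the constructions in the proof of Proposition \ref{prop:sheafification}. Recall from there that $ev_0^*$ identifies $\Pshi\left(\sD\right)$ with the reflective subcategory $\M' \subseteq \M$, with inverse $\nabla^*$; that $a = \nabla^* \circ L \circ ev_0^*$; and that $L$ carries $\M'$ into itself. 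Setting $H := ev_0^*\left(G\right) \in \M'$, the commuting triangle in that proof gives $\chi_\cD^*\left(\nabla^* H'\right) \simeq \xi_\cD^* H'$ for all $H' \in \M'$, while Lemma 2.4.9 of \cite{dag} gives $\xi_\cD^*\left(L H\right) \simeq i_\cD a_\cD \xi_\cD^* H$. Since $a G = \nabla^* L H$ with $L H \in \M'$, these combine to
\[
\chi_\cD^*\left(i\,a\,G\right) \simeq \xi_\cD^*\left(L H\right) \simeq i_\cD a_\cD \xi_\cD^* H \simeq i_\cD a_\cD \chi_\cD^*\left(G\right),
\]
using $\xi_\cD^* H \simeq \chi_\cD^*\left(\nabla^* H\right) = \chi_\cD^*\left(G\right)$ at the last step. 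The difficulty here is purely bookkeeping: keeping straight which functors are restrictions and which are (co)reflectors, and that $LH$ again lies in $\M'$ so the triangle applies a second time.

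With the formula in hand, claim (a) is a short computation. For $F\colon J \to \Shi\left(\sD\right)$, using $a_\cD i_\cD \simeq \mathrm{id}$, the formula, and the fact that the restriction $\chi_\cD^*$ and the left adjoint $a_\cD$ preserve colimits, I get
\[
\overline{\chi}_\cD^*\left(\colim F\right) \simeq a_\cD\,\chi_\cD^*\left(\colim\, i F\right) \simeq a_\cD\,\colim\,\chi_\cD^*\left(i F\right) \simeq \colim\, a_\cD\,\chi_\cD^*\left(i F\right) \simeq \colim\,\overline{\chi}_\cD^* F .
\]
Thus $\overline{\chi}_\cD^*$ preserves small colimits, which proves the "only if" direction. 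Claim (b) is elementary: since $\chi_\cD\left(\mathrm{id}_\cD\right) = \cD$, one has $\left(\overline{\chi}_\cD^* F\right)\left(\mathrm{id}_\cD\right) \simeq F\left(\cD\right)$, so if a morphism $\phi$ of sheaves on $\sD$ has $\overline{\chi}_\cD^*\left(\phi\right)$ invertible for every $\cD$, then $\phi_\cD$ is invertible for every object $\cD$ of $\sD$, whence $\phi$ is an objectwise, hence an honest, equivalence.

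Finally, for the "if" direction I would argue by comparison with the genuine colimit. Given a cocone $\rho\colon J^{\triangleright} \to \Shi\left(\sD\right)$ whose composite with each $\chi_\cD^*$ is colimiting, let $c\colon \colim F \to \rho\left(\i\right)$ be the canonical map to the vertex of $\rho$ induced by the universal property; then $\rho$ is colimiting if and only if $c$ is an equivalence. Applying $\overline{\chi}_\cD^*$ and invoking (a), the morphism $\overline{\chi}_\cD^*\left(c\right)$ is identified with the comparison map associated to the cocone $\overline{\chi}_\cD^* \rho$, which is colimiting by hypothesis; hence $\overline{\chi}_\cD^*\left(c\right)$ is an equivalence for every $\cD$. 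By the joint conservativity of (b), $c$ is an equivalence, so $\rho$ is colimiting, completing the proof.
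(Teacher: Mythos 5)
Your proof is correct, and it assembles the same raw ingredients as the paper into a genuinely different argument. The paper's own proof stays inside the machinery of Proposition \ref{prop:sheafification}: it observes that the Cartesian fibration $p_0:\Kk_0 \to \sD^{op}$ lets one detect colimits in the section $\icat$ $\M_0$ pointwise via the functors $\xi_\cD^*$ (Proposition 5.1.2.2 of \cite{htt}), shows that $ev_0^*$ restricts to a fully faithful functor $\widetilde{ev_0^*}:\Shi\left(\sD\right) \to \M_0$ which is compatible with colimits (via the chain $\colim F_\alpha \simeq \nabla^*\left(\colim \widetilde{ev_0^*}F_\alpha\right)$), and concludes from the identification $\xi_\cD^* \circ \widetilde{ev_0^*} \simeq \chi_\cD^*$ --- so both directions of the ``if and only if'' drop out at once from the fibration statement. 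You instead distill the single intertwining formula $\chi_\cD^*\left(i\,a\,G\right) \simeq i_\cD\, a_\cD\, \chi_\cD^*\left(G\right)$ (whose derivation from $a = \nabla^* \circ L \circ ev_0^*$, the stability of $\M'$ under $L$, the triangle $\xi_\cD^* H \simeq \chi_\cD^*\nabla^* H$ for $H \in \M'$, and Lemma 2.4.9 of \cite{dag} is exactly right), and then run the standard pattern: a family of colimit-preserving, jointly conservative functors out of a category in which the colimit is known to exist detects colimiting cocones, with the ``if'' direction handled by the comparison map $c$ and the fact that the projection from the cocone category is conservative. Your joint-conservativity step, via evaluation at $id_\cD \in \sD^{\et}/\cD$, is a nice observation that the paper never needs to make explicit. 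What each approach buys: yours is more elementary and self-contained, replacing the appeal to pointwise colimits in section categories of Cartesian fibrations by a purely formal localization argument; the paper's fibration formulation is more uniform, in that the identical proof immediately yields the large-sheaf and $\Shi\left(\sD^{\et}\right)$ variants stated in the two corollaries that follow. The only point deserving a word of care in your write-up is that the intertwining equivalence must be natural in $G$ (so that your chain of equivalences in claim (a) is implemented by the canonical comparison map); this holds because each constituent --- the counit of $a \dashv i$, the triangle homotopy $\omega_\cD$, and the localization map of Lemma 2.4.9 of \cite{dag} --- is natural, and the paper's own proof operates at the same level of rigor, so this is a remark rather than a gap.
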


\begin{proof}
We will freely use notation introduced in the proof of Proposition \ref{prop:sheafification}. Since $\LPshi\left(\sD\right)$ has all (large) colimits, and $\Pshi\left(\sD\right)$ is stable under small colimits in $\LPshi\left(\sD\right),$ it follows that $\Pshi\left(\sD\right)$ has all small colimits. Hence, $\Shi\left(\sD\right)$ does as well, since $a$ preserves small colimits. Since $p_0$ is a Cartesian fibration, by Proposition 5.1.2.2 of \cite{htt}, it follows that if $\mu:J^{\triangleright} \to \M_0,$ is a cocone, then it is colimiting if and only if for each $\cD$ in $\sD,$ the composite $$J^{\triangleright} \stackrel{\mu}{\longrightarrow} \M_0 \stackrel{\xi_\cD^*}{\longlongrightarrow} \Shi\left(\sD^{\et}/\cD\right)$$ is. Notice that if $F$ is an $\i$-sheaf in $\Shi\left(\sD\right),$ then $ev_0^*\left(iF\right)$ is in $\M_0.$ To see this, note that for all $\cD$ in $\sD,$ by the proof of Proposition \ref{prop:sheafification} it follows that $$\xi_\cD^*ev_0^*\left(iF\right)\simeq \chi_\cD^*\nabla^*ev_0^*\left(iF\right) \simeq \chi_\cD^*  F.$$ It follows that $ev_0^*$ restricts to a functor $$\widetilde{ev_0^*}:\Shi\left(\sD\right) \to \M_0.$$ Consider the following string of equivalences:
\begin{eqnarray*}
\colim F_\alpha &\simeq& a\left(\colim i F_\alpha\right)\\
&\simeq& \nabla^* \circ L \circ ev_0^*\left(\colim i F_\alpha\right)\\
&\simeq& \nabla^*\left(\colim L ev_0^* i F_\alpha\right)\\
&\simeq& \nabla^*\left( \colim \widetilde{ev_0^*}F_\alpha\right),
\end{eqnarray*}
from whence it follows that $$\widetilde{ev_0^*}\left(\colim F_\alpha\right) \simeq ev_0^*\nabla^*\left( \colim \widetilde{ev_0^*}F_\alpha\right) \simeq \colim \widetilde{ev_0^*}F_\alpha.$$ Putting this all together, it follows that $\rho$ is colimiting if and only if the composite
$$J^{\triangleright} \stackrel{\rho}{\longrightarrow} \Shi\left(\sD\right) \stackrel{\widetilde{ev_0^*}}{\longlonglongrightarrow} \M_0$$ is, which in turn is if and only if each composite $$J^{\triangleright} \stackrel{\rho}{\longrightarrow} \Shi\left(\sD\right) \stackrel{\widetilde{ev_0^*}}{\longlonglongrightarrow} \M_0 \stackrel{\xi_\cD^*}{\longlonglongrightarrow} \Shi\left(\sD^{\et}/\cD\right)$$ is. However, from the proof of Proposition \ref{prop:sheafification}, it follows that this final composite is equivalent to the composite $$J^{\triangleright} \stackrel{\rho}{\longrightarrow} \Shi\left(\sD\right) \stackrel{\chi_\cD^*}{\longlongrightarrow} \Shi\left(\sD^{\et}/\cD\right).$$
\end{proof}

\begin{cor}
The inclusion $$\widehat{i}:\LPshi\left(\sD\right) \hookrightarrow \LShi\left(\sD\right)$$ has a left exact left adjoint $\widehat{a},$ and $\LShi\left(\sD\right)$ is an $\i$-topos in the Grothendieck universe $\mathcal{V}$ of large sets. If $F:J \to \LShi\left(\sD\right)$ is a functor with $J$ a $\mathcal{V}$-small $\icat$, then it has a colimit, and a cocone $$\rho:J^{\triangleright} \to \LShi\left(\sD\right)$$ for $F$ is colimiting if and only if for all $\cD$ in $\sD$, the composite
$$J^{\triangleright} \stackrel{\rho}{\longrightarrow} \LShi\left(\sD\right) \stackrel{\chi_\cD^*}{\longlongrightarrow} \LShi\left(\sD^{\et}/\cD\right)$$ is.
\end{cor}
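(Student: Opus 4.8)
The plan is to run the proofs of Proposition \ref{prop:sheafification} and Corollary \ref{cor:colimps} again, but carried out one Grothendieck universe higher: wherever the small-set arguments used $\Pshi$ and $\Shi$ I would use $\LPshi$ and $\LShi$, and wherever they used $\iGpd$ I would use $\LiGpd$. To keep the Kan-extension steps legitimate in this enlarged setting I would first fix a third Grothendieck universe $\mathcal{W}$ with $\V \in \mathcal{W}$, so that functor categories valued in $\mathcal{W}$-small $\i$-groupoids are available whenever the construction of an adjoint requires a global right Kan extension. This is exactly the device already exploited inside the proof of Proposition \ref{prop:sheafification}, where the adjoint $\widehat{\nabla}_*$ was built at the level of large presheaves; here the same move is performed once more, relative to $\mathcal{W}$.

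Concretely, the steps, in order, would be as follows. First, I would reconstruct, with $\LiGpd$-valued presheaves in place of $\iGpd$-valued ones, the Cartesian fibration $q:\Fun^{\et}\left(\Delta^1,\sD\right) \to \sD$ and the associated Cartesian fibration $\widehat{p}:\widehat{\Kk} \to \sD^{op}$ whose fiber over $\cD$ is $\LPshi\left(\sD^{\et}/\cD\right)$. Second, I would record the localization
$$\xymatrix@C=1.5cm{\LPshi\left(\sD\right)\ar@{^{(}->}[r]<-0.9ex>_-{\widehat{ev_0}^*} & \LPshi\left(\Fun^{\et}\left(\Delta^1,\sD\right)\right)=\widehat{\M} \ar@<-0.5ex>[l]_-{\widehat{\nabla}^*}},$$
whose existence and whose full faithfulness of $\widehat{ev_0}^*$ are already established in the proof of Proposition \ref{prop:sheafification}. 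Third, I would invoke Lemma 2.4.9 of \cite{dag}, now applied in the universe $\V$, to produce a left adjoint $\widehat{L}$ to the inclusion $\widehat{\M}_0 \hookrightarrow \widehat{\M}$ of those sections that are fiberwise $\i$-sheaves, using that the fiberwise sheafifications $\widehat{a}_\cD:\LPshi\left(\sD^{\et}/\cD\right) \to \LShi\left(\sD^{\et}/\cD\right)$ exist by Definition \ref{dfn:localargesheaf} and Proposition \ref{prop:pop}. Fourth, the claim that $\widehat{L}$ carries the essential image $\widehat{\M}'$ of $\widehat{ev_0}^*$ into itself goes through unchanged, since Propositions \ref{prop:upperstar} and \ref{prop:upperstar2} hold verbatim for presheaves of large $\i$-groupoids. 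Setting $\widehat{a}:=\widehat{\nabla}^* \circ \widehat{L} \circ \widehat{ev_0}^*$ then exhibits $\widehat{a}$ as left adjoint to $\widehat{i}$, and left exactness follows from left exactness of each $\widehat{a}_\cD$ exactly as in the original argument. The colimit characterization is the one-universe-up transcription of Corollary \ref{cor:colimps} and requires no new idea.

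The one genuinely new point, and the main obstacle, is accessibility: to conclude that $\LShi\left(\sD\right)$ is an $\i$-topos \emph{in} $\V$, rather than merely a left exact reflective localization of $\LPshi\left(\sD\right)$, I must verify that $\widehat{i}\,\widehat{a}$ preserves sufficiently filtered colimits. In Proposition \ref{prop:sheafification} this used the $\U$-smallness of $\sD$ to bound the accessibility cardinals $\kappa_0$ and $\{\kappa_\cD\}_{\cD \in \sD}$; here $\sD$ need not be $\U$-small, so I would instead use that $\sD$ is $\V$-small (its objects, being objects of $\iCat$, form a $\V$-small collection) to ensure both that $\LPshi\left(\sD\right)$ is accessible and that $\{\kappa_\cD\}$ is a genuine $\V$-set, hence admits an upper bound. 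Fixing a regular cardinal $\kappa$ dominating $\kappa_0$ and every $\kappa_\cD$, the pointwise computation at the end of the proof of Proposition \ref{prop:sheafification} shows that $\widehat{i}\,\widehat{a}$ commutes with $\kappa$-filtered colimits. The passage to $\mathcal{W}$ is precisely what makes this supremum and the requisite right Kan extensions legitimate, and it is the bookkeeping around the three nested universes $\U \in \V \in \mathcal{W}$ that constitutes the only real subtlety of the argument.
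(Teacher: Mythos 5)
Your proposal is correct and is essentially the paper's own proof, which simply declares that the arguments of Proposition \ref{prop:sheafification} and Corollary \ref{cor:colimps} carry over with the role of $\iGpd$ replaced by that of $\LiGpd$; your observation that the locally small $\sD$ is automatically $\V$-small, so that the accessibility cardinals $\{\kappa_\cD\}$ form a genuine $\V$-set admitting an upper bound, is precisely why the corollary needs no smallness hypothesis. The auxiliary universe $\mathcal{W}$ is harmless but not strictly required: since $\sD$ is $\V$-small and $\LiGpd$ admits $\V$-small limits, the requisite right Kan extensions (e.g.\ the analogue of $\widehat{\nabla}_*$) already exist at the level of $\LPshi$ itself, mirroring how the original proof used $\V$ for the small case.
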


\begin{proof}
The proof is completely analogous to that of Proposition \ref{prop:sheafification} and Corollary \ref{cor:colimps}, with the role of $\iGpd$ replaced with that of $\LiGpd.$
\end{proof}

\begin{cor}\label{cor:colimpset}
The inclusion $$i^{\et}:\Pshi\left(\sD^{\et}\right) \hookrightarrow \Shi\left(\sD^{\et}\right)$$ has a left exact left adjoint $a^{\et},$ and if $\sD$ is small, $\Shi\left(\sD^{\et}\right)$ is an $\i$-topos. If $F:J \to \Shi\left(\sD^{\et}\right)$ is a functor with $J$ small, then it has a colimit, and a cocone $$\rho:J^{\triangleright} \to \Shi\left(\sD^{\et}\right)$$ for $F$ is colimiting if and only if for all $\cD$ in $\sD$, the composite
$$J^{\triangleright} \stackrel{\rho}{\longrightarrow} \Shi\left(\sD^{\et}\right) \stackrel{\pi_\cD^*}{\longlongrightarrow} \Shi\left(\sD^{\et}/\cD\right)$$ is. Finally, the obvious adaptation of these two statement for large $\i$-sheaves also holds.
\end{cor}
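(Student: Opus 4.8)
The plan is to transcribe the proofs of Proposition \ref{prop:sheafification} and Corollary \ref{cor:colimps} almost verbatim, after replacing the auxiliary arrow category $\Fun^{\et}\left(\Delta^1,\sD\right)$ used there by the full arrow category $\Fun\left(\Delta^1,\sD^{\et}\right)$ of $\sD^{\et}$, and replacing the base $\sD$ by $\sD^{\et}$ throughout. Concretely, the objects of $\Fun\left(\Delta^1,\sD^{\et}\right)$ are \'etale maps $\varphi:\cE\to\cD$ and its morphisms are commutative squares all of whose edges are \'etale; the point of this modification is that now \emph{both} evaluation functors $ev_0,ev_1:\Fun\left(\Delta^1,\sD^{\et}\right)\to\sD^{\et}$ take values in $\sD^{\et}$, whereas in the situation of Proposition \ref{prop:sheafification} only the codomain evaluation did. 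First I would check that $q:=ev_1:\Fun\left(\Delta^1,\sD^{\et}\right)\to\sD^{\et}$ is a Cartesian fibration whose fiber over $\cD$ is $\sD^{\et}/\cD$: its Cartesian lifts are formed by pulling back \'etale maps along \'etale maps, which is legitimate and again \'etale by Corollary \ref{cor:pullstabet}.

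By the universal property used in the proof of Proposition \ref{prop:sheafification} (the analogue of equation (\ref{eq:sections})) this produces a Cartesian fibration $p^{\et}:\Kk^{\et}\to\left(\sD^{\et}\right)^{op}$ classifying $\cD\mapsto\Pshi\left(\sD^{\et}/\cD\right)$ with transition functors $\widetilde{f}^*$, whose $\icat$ of sections $\M^{\et}$ is canonically $\Pshi\left(\Fun\left(\Delta^1,\sD^{\et}\right)\right)$; its full subcategory $\M^{\et}_0$ of sections that are fiberwise \'etale sheaves admits a left adjoint $L^{\et}$ by \cite{dag}, Lemma 2.4.9, computed fiberwise by the $a_\cD$. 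The diagonal $\nabla:\sD^{\et}\to\Fun\left(\Delta^1,\sD^{\et}\right)$, sending $\cD$ to $id_\cD$, is left adjoint to $ev_0$, so $\nabla^*$ is left adjoint to $ev_0^*$ and, $\nabla$ being fully faithful, $ev_0^*:\Pshi\left(\sD^{\et}\right)\hookrightarrow\M^{\et}$ is a localization onto its essential image $\M^{\et\prime}$.

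Next I would show, exactly as in Proposition \ref{prop:sheafification}, that $L^{\et}$ carries $\M^{\et\prime}$ into itself. This uses the \'etale-slice statements of Propositions \ref{prop:upperstar} and \ref{prop:upperstar2} (already proved for arbitrary \'etale $\varphi$) together with the identity $\tp^*\xi_\cD^*H\simeq\xi_\cE^*H$ for $H\in\M^{\et\prime}$ and $\varphi:\cE\to\cD$ \'etale; the only change is that the natural transformation $\omega_\cD:\nabla\circ\pi_\cD\Rightarrow\xi_\cD$ now uses the projection $\pi_\cD:\sD^{\et}/\cD\to\sD^{\et}$ directly, rather than its composite $\chi_\cD=j\circ\pi_\cD$, since the base category is $\sD^{\et}$ itself. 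One then obtains $a^{\et}:=\nabla^*\circ L^{\et}\circ ev_0^*$ as a left adjoint to $i^{\et}$, and the left exactness of $a^{\et}$ and, when $\sD$ is small, the accessibility of the localization (hence the fact that $\Shi\left(\sD^{\et}\right)$ is an $\i$-topos) are verified by the identical cardinality bookkeeping. The colimit criterion is deduced as in Corollary \ref{cor:colimps}: since the restriction $p^{\et}_0:\Kk^{\et}_0\to\left(\sD^{\et}\right)^{op}$ is a Cartesian fibration, \cite{htt} Proposition 5.1.2.2 detects colimiting cocones fiberwise, and unwinding the equivalence $\Shi\left(\sD^{\et}\right)\simeq\M^{\et\prime}_0$ turns ``fiberwise'' into the condition that $\pi_\cD^*\rho$ be colimiting in $\Shi\left(\sD^{\et}/\cD\right)$ for every $\cD$. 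The large-sheaf variant follows by repeating all of the above with $\iGpd$ replaced by $\LiGpd$, as in the preceding corollary.

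The main obstacle I anticipate is the very first structural point: confirming that the correct auxiliary object is the arrow category $\Fun\left(\Delta^1,\sD^{\et}\right)$ of \emph{\'etale} squares (equivalently, that one must cut down the morphisms of $\Fun^{\et}\left(\Delta^1,\sD\right)$ so that $ev_0$ lands in $\sD^{\et}$, which is exactly what makes $ev_0^*$ and $\nabla$ available for presheaves on $\sD^{\et}$), and then verifying that this restricted arrow category still carries a Cartesian fibration over $\sD^{\et}$ with fiber $\sD^{\et}/\cD$. This is the one place where the argument genuinely departs from Proposition \ref{prop:sheafification}, and it rests on the stability of \'etale maps under pullback along \'etale maps (Corollary \ref{cor:pullstabet}); everything downstream is a faithful transcription.
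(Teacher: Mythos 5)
Your proposal is correct and is essentially the paper's own proof: the paper establishes Corollary \ref{cor:colimpset} precisely by replacing the Cartesian fibration $q:\Fun^{\et}\left(\Delta^1,\sD\right)\to\sD$ with $ev_1:\Fun\left(\Delta^1,\sD^{\et}\right)\to\sD^{\et}$ and the functor $\chi_\cD$ with $\pi_\cD$, then running the proofs of Proposition \ref{prop:sheafification} and Corollary \ref{cor:colimps} verbatim, exactly as you describe. The structural point you flag as the main obstacle (passing to the arrow category of $\sD^{\et}$ so that $ev_0$ also lands in $\sD^{\et}$, with Cartesian lifts supplied by pullback stability of \'etale maps, Corollary \ref{cor:pullstabet}) is indeed the only substantive change, and your handling of it is correct.
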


\begin{proof}
The proof of the first statement is analogous to that of Proposition \ref{prop:sheafification}, with the role of the Cartesian fibration $$q:\Fun^{\et}\left(\Delta^1,\sD\right) \to \sD$$ replaced by the Cartesian fibration $$ev_1:\Fun\left(\Delta^1,\sD^{\et}\right) \to \sD^{\et}$$ given by evaluation on the vertex $\{1\} \in \Delta^1,$ and the role of $\chi_\cD$ replaced by that of $\pi_\cD.$ The rest of the proof is entirely the same. The second statement follows almost verbatim from the proof of Corollary \ref{cor:colimps}. The third statement is trivial.
\end{proof}

\begin{prop}\label{prop:bothfuncolims}
With the notation of Proposition \ref{prop:subcanonical}, both functors
$$\dbare \stackrel{j}{\longrightarrow} \dbar \stackrel{i^*_\sD \circ y}{\longlonglongrightarrow} \Shi\left(\sD\right)$$
and
$$\dbare \stackrel{\left(i^{\et}_\sD\right)^* \circ y^{\et}}{\longlonglongrightarrow} \Shi\left(\sD^{\et}\right)$$
preserve small colimits.
\end{prop}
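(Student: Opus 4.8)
The plan is to reduce the statement, for each of the two functors, to the colimit-detection criteria already established, and then to import the colimit-preservation of the $\tau$-functors from Lemma~\ref{lem:taucolim}. First I would invoke Proposition~\ref{prop:subcanonical}, which guarantees that the first functor lands in $\Shi\left(\sD\right)$ and the second in $\Shi\left(\sD^{\et}\right)$. By Corollary~\ref{cor:colimps} a cocone in $\Shi\left(\sD\right)$ is colimiting if and only if its image under each $\chi_\cD^{*}:\Shi\left(\sD\right)\to\Shi\left(\sD^{\et}/\cD\right)\simeq\cD$ is colimiting, and by Corollary~\ref{cor:colimpset} the analogous statement holds in $\Shi\left(\sD^{\et}\right)$ with $\pi_\cD^{*}$ in place of $\chi_\cD^{*}$. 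Hence it suffices to fix an object $\cD$ of $\sD$ and prove that the two composites landing in $\cD$,
$$\Phi_\cD:\dbare\stackrel{j}{\longrightarrow}\dbar\stackrel{i^{*}_\sD\circ y}{\longrightarrow}\Shi\left(\sD\right)\stackrel{\chi_\cD^{*}}{\longrightarrow}\Shi\left(\sD^{\et}/\cD\right)\simeq\cD$$
and
$$\Psi_\cD:\dbare\stackrel{\left(i^{\et}_\sD\right)^{*}\circ y^{\et}}{\longrightarrow}\Shi\left(\sD^{\et}\right)\stackrel{\pi_\cD^{*}}{\longrightarrow}\Shi\left(\sD^{\et}/\cD\right)\simeq\cD,$$
preserve small colimits, in the sense of carrying colimiting cocones to colimiting cocones.

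Next I would identify these composites concretely. By the computation carried out in the proof of Proposition~\ref{prop:subcanonical}, for $\cE$ in $\dbar$ the sheaf $\chi_\cD^{*}i^{*}_\sD y\left(\cE\right)$ corresponds, under the equivalence $\Shi\left(\sD^{\et}/\cD\right)\simeq\cD$, to the object of $\cD$ representing $\tau^\cE_\cD\left(1_\cE\right)$, whose value at $D$ is $\Hom_{\dbar}\left(\cD/D,\cE\right)$; likewise $\Psi_\cD\left(\cE\right)$ corresponds to $\tau^{\cE,{\et}}_\cD\left(1_\cE\right)$, with value $\Hom_{\dbare}\left(\cD/D,\cE\right)$ at $D$. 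Thus $\Phi_\cD\left(\cE\right)\simeq\tau^\cE_\cD\left(1_\cE\right)$ and $\Psi_\cD\left(\cE\right)\simeq\tau^{\cE,{\et}}_\cD\left(1_\cE\right)$. The point of this identification is that Lemma~\ref{lem:taucolim} controls the colimit behaviour of $\tau^\cE_\cD$ and $\tau^{\cE,{\et}}_\cD$ in their argument.

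For the colimit computation, let $\cE\simeq\colim\cE_i$ be a small colimit in $\dbare$. By Lemma~\ref{lem:holmol} together with Proposition~\ref{prop:colim1}, there is a family of objects $F_i\in\cE$ with $\cE/F_i\simeq\cE_i$, with the cocone map $\cE_i\to\cE$ corresponding under $\dbare/\cE\simeq\cE$ to $F_i\to 1_\cE$, and with $1_\cE\simeq\colim F_i$ computed in $\cE$ itself. Since $\tau^\cE_\cD$ preserves small colimits by Lemma~\ref{lem:taucolim}, I obtain
$$\Phi_\cD\left(\cE\right)\simeq\tau^\cE_\cD\left(1_\cE\right)\simeq\colim\tau^\cE_\cD\left(F_i\right).$$
Then I would identify $\tau^\cE_\cD\left(F_i\right)\simeq\tau^{\cE_i}_\cD\left(1_{\cE_i}\right)=\Phi_\cD\left(\cE_i\right)$, since both send $D$ to $\Hom_{\dbar}\left(\cD/D,\cE_i\right)$ via $\cE/F_i\simeq\cE_i$. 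This yields $\Phi_\cD\left(\cE\right)\simeq\colim\Phi_\cD\left(\cE_i\right)$, and the identical argument with $\tau^{\cE,{\et}}_\cD$ gives $\Psi_\cD\left(\cE\right)\simeq\colim\Psi_\cD\left(\cE_i\right)$; by the two criteria above, both original functors then preserve small colimits.

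The main obstacle I anticipate is coherence rather than the bare objectwise equivalences: I must verify that the identifications $\tau^\cE_\cD\left(F_i\right)\simeq\tau^{\cE_i}_\cD\left(1_{\cE_i}\right)$ assemble into an equivalence of \emph{cocones}, so that the colimiting cocone produced by Lemma~\ref{lem:taucolim} is genuinely the image under $\Phi_\cD$ of the original colimiting cocone $\left(\cE_i\to\cE\right)$. This should follow by naturality: under $\cE\simeq\dbare/\cE$ the cocone $F_i\to 1_\cE$ in $\cE$ is exactly the diagram of étale structure maps $\cE_i\to\cE$, so applying $\tau^\cE_\cD$ and postcomposing recovers $\Phi_\cD$ applied to those maps. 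The conceptual device here is converting variation of the ambient topos $\cE$ into variation of the argument within the single fixed colimit topos $\cE$, which is what makes Lemma~\ref{lem:taucolim} applicable at all. A minor separate point, the empty diagram (preservation of the initial object), is subsumed since Lemma~\ref{lem:taucolim} covers all small colimits, including the empty one.
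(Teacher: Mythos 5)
Your proposal is correct and follows essentially the same route as the paper's proof: reduce via Corollaries \ref{cor:colimps} and \ref{cor:colimpset} to the composites landing in $\Shi\left(\sD^{\et}/\cD\right)$, lift the colimiting cocone to $\dbare/\cE$ with vertex $id_\cE$ using Proposition \ref{prop:colim1}, and invoke Lemma \ref{lem:taucolim} after identifying the result with $\tau^\cE_\cD$ and $\tau^{\cE,{\et}}_\cD.$ The coherence issue you flag at the end is dealt with in the paper exactly by your own remedy, namely identifying the whole composite functor $\cE \simeq \dbare/\cE \to \Shi\left(\sD^{\et}/\cD\right) \simeq \Shi\left(\cD\right)$ with $\tau^\cE_\cD$ (respectively $\tau^{\cE,{\et}}_\cD$) at once rather than objectwise.
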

\begin{proof}
Let $$J^\triangleright \stackrel{\rho}{\longrightarrow} \dbare$$ be a small colimiting cocone, with vertex $\cE$. By Corollary \ref{cor:colimps} and Corollary \ref{cor:colimpset}, we need to show that for all $\cD$ in $\sD,$ the composites
$$J^\triangleright \stackrel{\rho}{\to} \dbare \stackrel{j}{\longrightarrow} \dbar \stackrel{i^*_\sD \circ y}{\longlonglongrightarrow} \Shi\left(\sD\right) \stackrel{\left(j \circ \pi_\cD\right)^*}{\longlonglongrightarrow} \Shi\left(\sD^{\et}/\cD\right)$$
and
$$J^\triangleright \stackrel{\rho}{\to} \dbare \stackrel{\left(i^{\et}_\sD\right)^* \circ y^{\et}}{\longlonglongrightarrow} \Shi\left(\sD^{\et}\right) \stackrel{\pi_\cD^*}{\longlongrightarrow} \Shi\left(\sD^{\et}/\cD\right)$$
are colimiting cocones. Notice that, by the proof of Proposition \ref{prop:colim1}, there exists a colimiting cocone
$$J^{\triangleright} \stackrel{\widehat{\rho}}{\longrightarrow} \dbare/\cE$$ with vertex $id_\cE$ such that the composition $$J^\triangleright \stackrel{\widehat{\rho}}{\longrightarrow} \dbare/\cE \to \dbare$$ is equivalent to $\rho.$ It therefore suffices to show that
$$J^{\triangleright} \stackrel{\widehat{\rho}}{\longlongrightarrow} \dbare/\cE \to \dbare \stackrel{j}{\longrightarrow} \dbar \stackrel{i^*_\sD \circ y}{\longlongrightarrow} \Shi\left(\sD\right) \stackrel{\left(j \circ \pi_\cD\right)^*}{\longlongrightarrow} \Shi\left(\sD^{\et}/\cD\right)$$
and
$$J^{\triangleright} \stackrel{\widehat{\rho}}{\longlongrightarrow} \dbare/\cE \to \dbare  \stackrel{\left(i^{\et}_\sD\right)^* \circ y^{\et}}{\longlonglongrightarrow} \Shi\left(\sD^{\et}\right) \stackrel{\pi_\cD^*}{\longlongrightarrow} \Shi\left(\sD^{\et}/\cD\right)$$
are colimiting cocones. However, observe that the composites
$$\resizebox{5in}{!}{$\cE \stackrel{\sim}{\longrightarrow} \dbare/\cE \to \dbare \stackrel{j}{\longrightarrow} \dbar \stackrel{i^*_\sD \circ y}{\longlongrightarrow} \Shi\left(\sD\right) \stackrel{\left(j \circ \pi_\cD\right)^*}{\longlongrightarrow} \Shi\left(\sD^{\et}/\cD\right) \stackrel{\sim}{\longrightarrow} \Shi\left(\cD\right)$}$$
and
$$\resizebox{5in}{!}{$\cE \stackrel{\sim}{\longrightarrow} \dbare/\cE \to \dbare  \stackrel{\left(i^{\et}_\sD\right)^* \circ y^{\et}}{\longlonglongrightarrow} \Shi\left(\sD^{\et}\right) \stackrel{\pi_\cD^*}{\longrightarrow} \Shi\left(\sD^{\et}/\cD\right) \stackrel{\sim}{\longrightarrow} \Shi\left(\cD\right)$}$$
are equivalent to $\tau^\cE_\cD$ and $\tau^{\cE,{\et}}_\cD$ respectively, which preserve small colimits by Lemma \ref{lem:taucolim}.
\end{proof}

\begin{thm}\label{thm:fullyfaithful}
With the notation of Proposition \ref{prop:subcanonical}, both functors
\begin{equation}\label{eq:ff1}
\dbar \stackrel{i^*_\sD \circ y}{\longlonglongrightarrow} \Shi\left(\sD\right)
\end{equation}
and
\begin{equation}\label{eq:ff2}
\dbare \stackrel{\left(i^{\et}_\sD\right)^* \circ y^{\et}}{\longlonglongrightarrow} \Shi\left(\sD^{\et}\right)
\end{equation}
are full and faithful.
\end{thm}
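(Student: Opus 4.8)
The plan is to prove both statements by the same device: fix a target, consider the full subcategory of sources on which the canonical comparison map on mapping spaces is an equivalence, and show that this subcategory is closed under small colimits and contains the objects of $\sD$. Since $\sD^{\et}$ generates $\dbare$ under colimits by Corollary \ref{cor:stetgen}, and since $\dbar$ and $\dbare$ have \emph{the same objects}, this will force the comparison to be an equivalence for all sources, which is exactly full faithfulness. Throughout, for a fixed $\cF$ I will write $\mathcal{S}_\cF$ for the relevant subcategory.

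First I would dispose of the base case $\cE\in\sD$. For the functor \eqref{eq:ff1} and $\cF$ in $\dbar$, the presheaf $i^*_\sD y(\cE)$ is the representable $y_\sD(\cE)$, because $i^*_\sD y(\cE)(\cD)=\Hom_\dbar(\cD,\cE)$ for every $\cD$ in $\sD$; by the corollary to Proposition \ref{prop:subcanonical} this representable is an $\i$-sheaf, so the Yoneda lemma will give
\begin{equation*}
\Hom_{\Shi\left(\sD\right)}\left(i^*_\sD y(\cE),i^*_\sD y(\cF)\right)\simeq \left(i^*_\sD y(\cF)\right)(\cE)=\Hom_\dbar(\cE,\cF),
\end{equation*}
and one checks that this equivalence is the comparison map. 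The identical computation, with $\dbare$, $y^{\et}$ and the site $\sD^{\et}$ replacing $\dbar$, $y$ and $\sD$, handles \eqref{eq:ff2}. Thus the objects of $\sD$ will lie in $\mathcal{S}_\cF$ in both cases.

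Next I would establish closure under colimits. For the \'etale functor \eqref{eq:ff2}, this is immediate: if $\cE=\colim_{J}\cE_j$ is a small colimit in $\dbare$ with each $\cE_j$ in $\mathcal{S}_\cF$, then $\Hom_\dbare(\cE,\cF)\simeq\lim_{J^{op}}\Hom_\dbare(\cE_j,\cF)$ by the universal property of the colimit, while \eqref{eq:ff2} preserves the colimit by Proposition \ref{prop:bothfuncolims}, so mapping out of its image in $\Shi\left(\sD^{\et}\right)$ likewise exhibits the target mapping space as $\lim_{J^{op}}$ of the pieces; naturality of the comparison then places $\cE$ in $\mathcal{S}_\cF$. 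Together with the base case and Corollary \ref{cor:stetgen} this proves \eqref{eq:ff2}.

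The functor \eqref{eq:ff1} requires one extra observation, which is the crux: although $i^*_\sD y$ need not preserve colimits along \emph{all} morphisms of $\dbar$, it does along \'etale ones, and that is all that is needed, provided $\Hom_\dbar(-,\cF)$ also converts \'etale colimits into limits. For the latter I would argue that an \'etale colimit is a genuine colimit in the ambient category: the inclusion $\dbare\hookrightarrow\Str^{\et}(\G)$ preserves colimits by Lemma \ref{lem:holmol}, the inclusion $\Str^{\et}(\G)\hookrightarrow\Str(\G)$ preserves colimits by Proposition \ref{prop:etcolims}, and $\dbar\hookrightarrow\Str(\G)$ is fully faithful, so a small colimit $\cE=\colim_{J}\cE_j$ formed in $\dbare$ is in fact a colimit in $\Str(\G)$; hence, for every $\cF$ in $\dbar$,
\begin{equation*}
\Hom_\dbar(\cE,\cF)=\Hom_{\Str(\G)}(\cE,\cF)\simeq \lim_{J^{op}}\Hom_{\Str(\G)}(\cE_j,\cF)=\lim_{J^{op}}\Hom_\dbar(\cE_j,\cF).
\end{equation*}
Combining this with the fact that the composite $\dbare\to\dbar\to\Shi\left(\sD\right)$ preserves colimits (Proposition \ref{prop:bothfuncolims}) shows that $\mathcal{S}_\cF$ is closed under small colimits in $\dbare$; as it contains the objects of $\sD$ and $\sD^{\et}$ generates $\dbare$ under colimits, it contains every object, whence $\mathcal{S}_\cF=\dbar$. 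The main obstacle is exactly this last step: one cannot invoke colimit-preservation of \eqref{eq:ff1} directly in $\dbar$, and must instead recognize the \'etale colimits furnished by Corollary \ref{cor:stetgen} as honest colimits in $\Str(\G)$, so that the functor of points $\Hom_\dbar(-,\cF)$ turns them into limits.
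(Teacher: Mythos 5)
Your proposal is correct and follows essentially the same route as the paper's own proof: fix one side of the mapping space, consider the full subcategory of $\dbare$ on which the comparison map is an equivalence, verify the base case on $\sD$ via the Yoneda lemma (using that representables are sheaves by Proposition \ref{prop:subcanonical}), and close under colimits using Proposition \ref{prop:bothfuncolims} together with Corollary \ref{cor:stetgen}. In particular, your ``crux'' observation --- that a colimit formed in $\dbare$ is an honest colimit in $\Str\left(\G\right)$ (via Lemma \ref{lem:holmol} and Proposition \ref{prop:etcolims}), so that $\Hom_{\dbar}\left(\fdot,\cF\right)$ converts it into a limit --- is exactly the step the paper makes when it notes that such a colimit ``is also a colimit in $\dbar$.''
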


\begin{proof}
We will prove that (\ref{eq:ff1}) is full and faithful. The proof for (\ref{eq:ff2}) is completely analogous. Let $\cE$ be an object of $\dbar.$ We wish show that for all $\cF$ in $\dbar,$ the canonical map
$$\Hom\left(\cF,\cE\right) \to \Hom\left(i^*_\sD y\left(\cF\right),i^*_\sD y\left(\cE\right)\right)$$
is an equivalence.
Denote by $\overline{\sD^{\et}}'$ the full subcategory of $\dbare$ on those objects $\cF$ for which this holds. Notice that if $\cD$ is in $\sD,$ then $$i^*_\sD y\left(\cD\right) \simeq y\left(\cD\right),$$ where on the left hand side $y$ is the Yoneda embedding for $\dbar,$ and on the right hand side it is the Yoneda embedding for $\sD$. By the Yoneda lemma if follows that
\begin{eqnarray*}
\Hom\left(i^*_\sD y\left(\cD\right),i^*_\sD y\left(\cE\right)\right) &\simeq& \Hom\left(y\left(\cD\right),i^*_\sD y\left(\cE\right)\right)\\
&\simeq& i^*_\sD y\left(\cE\right)\left(\cD\right)\\
&\simeq& \Hom\left(\cD,\cE\right),
\end{eqnarray*}
so $\sD^{\et}$ is contained in $\overline{\sD^{\et}}'.$ Suppose that $\cF \simeq  \colim \cF_\alpha$ is a colimit in $\dbare,$ with each $\cF_\alpha$ in $\overline{\sD^{\et}}'.$ Notice that by Lemma \ref{lem:holmol} and Proposition \ref{prop:etcolims}, this is also a colimit in $\dbar.$  Furthermore, by Proposition \ref{prop:bothfuncolims}, $$i^*_\sD y\left(\colim \cF_\alpha\right) \simeq \colim i^*_\sD y\left(\cF_\alpha\right),$$ so one has that
\begin{eqnarray*}
\Hom\left(i^*_\sD y\left(\colim \cF_\alpha\right),i^*_\sD y\left(\cE\right)\right) &\simeq& \lim \Hom\left(i^*_\sD  y\left(\cF_\alpha\right),i^*_\sD y\left(\cE\right)\right)\\
&\simeq& \lim \Hom\left(\cF_\alpha,\cE\right)\\
&\simeq& \Hom\left(\cF,\cE\right).
\end{eqnarray*}
Hence $\overline{\sD^{\et}}'$ is stable under colimits and contains the strong \'etale blossom $\sD,$ so we are done by Corollary \ref{cor:stetgen}.
\end{proof}

\section{A classification of the functor of points.}

By Theorem \ref{thm:fullyfaithful}, for a small and locally small strong \'etale blossom $\sD,$ one can equivalently view the $\icat$ $\dbar$ of $\sD$-\'etendues via their functor of points:

\begin{dfn}
An $\i$-sheaf $\X$ on $\sD$ is called a \textbf{Deligne-Mumford stack} if it is in the essential image of $i^*_\sD \circ \overline{y},$ where $$\overline{y}:\dbar \hookrightarrow \Shi\left(\dbar\right),$$ is the Yoneda embedding and $$i^*_\sD:\Shi\left(\dbar\right) \to \Shi\left(\sD\right)$$ is the canonical restriction functor. Denote the corresponding $\icat$ by $\dm\left(\sD\right).$ A Deligne-Mumford stack is called a Deligne-Mumford $n$-stack, if it is $n$-truncated.
\end{dfn}

\begin{rmk}
It follows immediately that $\dm\left(\sD\right) \simeq \dbar.$
\end{rmk}

We now wish to give a categorical representability criteria for Deligne-Mumford stacks, that is, we wish to characterize categorically precisely which $\i$-sheaves on $\sD$ are Deligne-Mumford. We will show that the $\i$-category of Deligne-Mumford stacks on $\sD$ is determined entirely by the canonical functor $$j:\sD^{\et} \to \sD$$ and the Yoneda embedding $$y:\sD \hookrightarrow \Shi\left(\sD\right).$$
 First, we will need an important concept:

\begin{dfn}\label{dfn:stronglygen}
A subcategory $\C \stackrel{i}{\longhookrightarrow} \mathscr{E}$ of a  $\icat$ $\mathscr{E}$ is said to be \textbf{strongly generating} if the left Kan extension $$\Lan_i \left(i\right) \simeq id_\mathscr{E}.$$ A cocomplete $\icat$ $\mathscr{E}$ is said to be of \textbf{small generation} if it admits a small strongly generating subcategory.
\end{dfn}

\begin{rmk}
If $\mathscr{E}$ in Definition \ref{dfn:stronglygen} is cocomplete, such a Kan extension is computed point wise by assigning an object $E$ of $\mathscr{E}$ the colimit of the canonical projection $$\C/E \to \mathscr{E},$$ where  $\C/E$ is the full subcategory of the slice category $\mathscr{E}/E$ on arrows with domain in $\C.$ This can be expressed more informally by the formula
$$\Lan_i \left(i\right)\left(E\right) \simeq \underset{C \to E}\colim \mspace{2mu} C.$$
\end{rmk}

\begin{prop}(\cite{htt} Remark 5.2.9.4)\label{prop:5.2.9.4}
A small subcategory $\C \stackrel{i}{\longhookrightarrow} \mathscr{E}$ of a cocomplete locally small $\icat$ $\mathscr{E}$ is strongly generating if and only if the composite
$$\mathscr{E} \stackrel{y}{\longhookrightarrow} \Pshi\left(\mathscr{E}\right) \stackrel{i^*}{\longrightarrow} \Pshi\left(\C\right)$$ is full and faithful.
\end{prop}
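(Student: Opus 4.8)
The plan is to recognize the composite $i^* \circ y$ as the \emph{restricted Yoneda functor} $N_i \colon \mathscr{E} \to \Pshi\left(\C\right)$, given on objects by $N_i\left(E\right)\left(C\right) \simeq \Hom\left(i\left(C\right), E\right)$, and then to invoke the standard fact that a right adjoint is full and faithful precisely when the counit of its adjunction is an equivalence.

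First I would note that, since $\C$ is small and $\mathscr{E}$ is cocomplete, the left Kan extension $L := \Lan_{y} i \colon \Pshi\left(\C\right) \to \mathscr{E}$ along the Yoneda embedding $y \colon \C \longhookrightarrow \Pshi\left(\C\right)$ exists, and by the universal property of presheaf $\i$-categories (Theorem 5.1.5.6 of \cite{htt}) it is left adjoint to $N_i$; indeed its right adjoint sends $E$ to the presheaf $C \mapsto \Hom\left(L y\left(C\right), E\right) \simeq \Hom\left(i\left(C\right), E\right)$, which is exactly $N_i\left(E\right)$. Local smallness of $\mathscr{E}$ ensures $N_i$ genuinely takes values in $\Pshi\left(\C\right)$ rather than in large presheaves.

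Next I would compute the composite $L \circ N_i$. By Proposition 5.1.5.3 of \cite{htt}, every presheaf $F$ on $\C$ is the colimit of the canonical diagram $\C/F \to \C \stackrel{y}{\longhookrightarrow} \Pshi\left(\C\right)$; since $L$ preserves colimits and $L \circ y \simeq i$, one gets $L\left(F\right) \simeq \colim_{y\left(C\right) \to F} i\left(C\right)$. Specializing to $F = N_i\left(E\right)$, the $\i$-Yoneda lemma identifies the indexing $\i$-category with the comma category $\C/E$ of arrows $i\left(C\right) \to E$ with $C \in \C$, so that $L N_i\left(E\right) \simeq \colim_{\left(C \to E\right) \in \C/E} i\left(C\right)$. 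This is precisely the pointwise formula for $\Lan_i\left(i\right)\left(E\right)$ recorded in the Remark above, and the counit $L N_i\left(E\right) \to E$ is induced by the tautological cocone $\left(i\left(C\right) \to E\right)$, hence coincides with the canonical comparison map $\Lan_i\left(i\right)\left(E\right) \to E$.

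Assembling these steps, $N_i = i^* \circ y$ is full and faithful if and only if the counit of $L \dashv N_i$ is an equivalence, which holds if and only if the canonical map $\Lan_i\left(i\right) \to id_{\mathscr{E}}$ is an equivalence, i.e. exactly when $\C$ is strongly generating. The main obstacle I anticipate is the coherence bookkeeping in the middle step: producing the adjunction $L \dashv N_i$ and identifying its counit with the canonical comparison $\Lan_i\left(i\right) \to id_{\mathscr{E}}$ demands matching the $\i$-category of elements of $N_i\left(E\right)$ with $\C/E$ and verifying that the two comparison cocones agree up to coherent homotopy, not merely objectwise.
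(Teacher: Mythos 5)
Your proposal is correct and follows essentially the same route as the paper's own proof: identify $L=\Lan_y\left(i\right)$ as left adjoint to $i^*\circ y$, reduce full faithfulness to the counit being an equivalence, and compute $L\left(i^*y\left(E\right)\right)\simeq \underset{C \to E}\colim\mspace{2mu} i\left(C\right)\simeq \Lan_i\left(i\right)\left(E\right)$ with the counit matching the canonical comparison map. The extra details you supply (the universal property of presheaf $\i$-categories, Proposition 5.1.5.3 of \cite{htt}, and the role of local smallness) are exactly the ingredients the paper leaves implicit.
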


\begin{proof}
Let $$L=\Lan_y\left(i\right):\Pshi\left(\C\right) \to \mathscr{E}.$$ This is a colimit preserving functor whose right adjoint can be identified with $i^* \circ y.$ Hence, $i^* \circ y$ is full and faithful if and only if the co-unit of the adjunction $$L \dashv i^* \circ y$$ is an equivalence. Notice that for $E$ an object of $\mathscr{E},$ one has that
\begin{eqnarray*}
\left(\Lan_y i \right) \left( i^*y\left(E\right)\right) &\simeq& \underset{\resizebox{1cm}{!}{$y(C) \to i^*y(E)$}}\colim \mspace{2mu} C\\
&\simeq& \underset{C \to E}\colim \mspace{2mu} C\\
&\simeq& \Lan_i \left(i\right)\left(E\right),
\end{eqnarray*}
and the component of the co-unit along $E$ can be identified with the canonical map $$\Lan_i \left(i\right)\left(E\right) \to E.$$ It follows that $i^* \circ y$ is full and faithful if and only if $$\Lan_i \left(i\right) \simeq id_\mathscr{E}.$$
\end{proof}

\begin{cor}\label{cor:5.2.9.4}
If a small subcategory $\C \stackrel{i}{\longhookrightarrow} \mathscr{E}$ of a cocomplete locally small $\icat$ $\mathscr{E}$ is strongly generating, then $\mathscr{E}$ is a reflective subcategory of $\Pshi\left(\C\right).$
\end{cor}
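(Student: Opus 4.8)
The plan is to deduce this immediately from Proposition \ref{prop:5.2.9.4} together with the adjunction already constructed in its proof. Recall from that proof that the functor $$L:=\Lan_y\left(i\right):\Pshi\left(\C\right) \to \mathscr{E}$$ is colimit-preserving and admits a right adjoint which may be identified with the composite $i^* \circ y.$ This adjunction $L \dashv \left(i^* \circ y\right)$ is the only structural input we need.

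Given this, the argument is short. First I would invoke the hypothesis that $\C \stackrel{i}{\longhookrightarrow} \mathscr{E}$ is strongly generating, so that Proposition \ref{prop:5.2.9.4} applies and tells us that the composite $$\mathscr{E} \stackrel{i^* \circ y}{\longlongrightarrow} \Pshi\left(\C\right)$$ is full and faithful. Thus $i^* \circ y$ exhibits $\mathscr{E}$ (up to equivalence) as a full subcategory of $\Pshi\left(\C\right),$ namely the essential image of this functor.

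It then remains only to observe that this full and faithful embedding admits a left adjoint: by the first paragraph this left adjoint is exactly $L.$ By definition, a full subcategory whose inclusion admits a left adjoint is a reflective subcategory, so identifying $\mathscr{E}$ with the essential image of $i^* \circ y$ exhibits $\mathscr{E}$ as a reflective subcategory of $\Pshi\left(\C\right),$ as desired.

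There is essentially no obstacle here, since all the substantive work was done in Proposition \ref{prop:5.2.9.4}; the only point requiring a word of care is keeping track of the direction of the adjunction (the fully faithful functor $i^* \circ y$ is the \emph{right} adjoint, and its left adjoint $L$ plays the role of the reflector), which is why I would state explicitly that a reflective subcategory is one whose fully faithful inclusion admits a left adjoint.
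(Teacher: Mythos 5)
Your proof is correct and matches the paper's intent exactly: the paper states this corollary without a separate argument precisely because, as you observe, the adjunction $L=\Lan_y\left(i\right) \dashv \left(i^* \circ y\right)$ constructed in the proof of Proposition \ref{prop:5.2.9.4} together with the full faithfulness of $i^* \circ y$ established there immediately exhibits $\mathscr{E}$ as a reflective subcategory of $\Pshi\left(\C\right).$ Your explicit remark about which functor is the right adjoint (the fully faithful one) and which is the reflector is exactly the right point of care, and nothing further is needed.
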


The following corollary follows from Theorem \ref{thm:fullyfaithful}:

\begin{cor}\label{cor:dense}
If $\sD$ is a small strong \'etale blossom, then $\sD^{\et}$ strongly generates $\dbare.$
\end{cor}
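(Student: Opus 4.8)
The plan is to deduce this directly from Theorem~\ref{thm:fullyfaithful} by feeding it into the abstract criterion for strong generation recorded in Proposition~\ref{prop:5.2.9.4}. I would apply that proposition with $\mathscr{E} = \dbare$ and with the small full subcategory $i^{\et}_\sD:\sD^{\et} \hookrightarrow \dbare$. First I must verify the standing hypotheses of Proposition~\ref{prop:5.2.9.4}: that $\mathscr{E}$ is cocomplete and locally small, and that the generating subcategory is small. Cocompleteness of $\dbare$ is exactly the corollary to Lemma~\ref{lem:holmol}. Local smallness of $\dbar$ is Proposition~\ref{prop:locallsmall} (a small strong \'etale blossom is in particular locally small); since $\dbare$ has the same objects as $\dbar$ and only a subspace of its mapping spaces, $\dbare$ is locally small as well. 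Finally, $\sD^{\et}$ is small because $\sD$ is.

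With these hypotheses in place, Proposition~\ref{prop:5.2.9.4} reduces the claim that $\sD^{\et}$ strongly generates $\dbare$ to the assertion that the composite
\[
\dbare \stackrel{y^{\et}}{\longhookrightarrow} \Pshi\left(\dbare\right) \stackrel{\left(i^{\et}_\sD\right)^*}{\longrightarrow} \Pshi\left(\sD^{\et}\right)
\]
is full and faithful. But this is precisely the functor analyzed in Proposition~\ref{prop:subcanonical}, where it is shown to factor through the full subcategory inclusion $i^{\et}:\Shi\left(\sD^{\et}\right) \hookrightarrow \Pshi\left(\sD^{\et}\right)$. Its corestriction $\dbare \to \Shi\left(\sD^{\et}\right)$, namely the functor $\left(i^{\et}_\sD\right)^* \circ y^{\et}$ of equation~(\ref{eq:ff2}), is full and faithful by Theorem~\ref{thm:fullyfaithful}; composing with the fully faithful inclusion $i^{\et}$ then shows the displayed composite is full and faithful, which is exactly what is required.

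There is essentially no serious obstacle here: the mathematical content has already been absorbed into Theorem~\ref{thm:fullyfaithful}, and the remaining work is the bookkeeping of identifying the functor of Proposition~\ref{prop:subcanonical} with the one appearing in the strong-generation criterion, together with the verification of the smallness, cocompleteness, and local-smallness hypotheses. The only point warranting a moment's care is confirming that $\sD^{\et}$ is genuinely a \emph{full} subcategory of $\dbare$ (so that restriction along it behaves as in Proposition~\ref{prop:5.2.9.4}); this holds because $\sD$ is full in $\dbar$ and both sides are cut down to the same class of \'etale morphisms.
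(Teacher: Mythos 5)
Your proposal is correct and takes essentially the same route as the paper, which presents the corollary as an immediate consequence of Theorem~\ref{thm:fullyfaithful} via the strong-generation criterion of Proposition~\ref{prop:5.2.9.4}: the composite $\dbare \hookrightarrow \Pshi\left(\sD^{\et}\right)$ is full and faithful because its corestriction to $\Shi\left(\sD^{\et}\right)$ is (Theorem~\ref{thm:fullyfaithful}) and the inclusion of sheaves into presheaves is fully faithful. Your verification of the auxiliary hypotheses is also sound; note that local smallness of $\dbare$ can be cited even more directly from Proposition~\ref{prop:etlocsmall}.
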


\begin{lem}\label{lem:sheavesthesame}
For any locally small strong \'etale blossom $\sD,$ there are canonical equivalences
\begin{equation}\label{eq:smallnotet}
\Shi\left(\sD\right) \simeq \Shi\left(\dbar\right),
\end{equation}
\begin{equation}\label{eq:largenotet}
\LShi\left(\sD\right) \simeq \LShi\left(\dbar\right),
\end{equation}
\begin{equation}\label{eq:smallet}
\Shi\left(\sD^{\et}\right) \simeq \Shi\left(\dbare\right),
\end{equation}
and
\begin{equation}\label{eq:larget}
\LShi\left(\sD^{\et}\right) \simeq \LShi\left(\dbare\right).
\end{equation}
\end{lem}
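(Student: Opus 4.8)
The plan is to treat all four equivalences uniformly as instances of the single assertion that \emph{restriction along the inclusion of the small site into the large site is an equivalence of sheaf $\i$-categories}. The key structural input is that $\dbar$ is itself a locally small strong \'etale blossom (Corollary \ref{cor:largestblossom} and Proposition \ref{prop:locallsmall}) with $\overline{\dbar}=\dbar$ (Corollary \ref{cor:etclscls}), so that $\dbare$ plays the role of ``$\dbar^{\et}$''; consequently Definitions \ref{dfn:sheaveDet} and \ref{dfn:sheaveD}, and all the machinery of this section (in particular Propositions \ref{prop:largesite}, \ref{prop:loclimsheaf}, \ref{prop:pop} and Corollaries \ref{cor:colimps}, \ref{cor:colimpset}), apply verbatim with $\dbar$ in place of $\sD$. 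The small and large, and the \'etale and non-\'etale, cases are formally parallel; I would carry out the non-\'etale small case $\Shi\left(\sD\right)\simeq\Shi\left(\dbar\right)$ in detail and indicate the cosmetic changes for the others, replacing $\iGpd$ by $\LiGpd$ throughout for the large statements.

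First I would show that the restriction functor $i_\sD^*:\Pshi\left(\dbar\right)\to\Pshi\left(\sD\right)$ along $i_\sD:\sD\hookrightarrow\dbar$ carries sheaves to sheaves. By Definition \ref{dfn:sheaveD} a presheaf $F$ on $\dbar$ is a sheaf precisely when, for every $\cE$ in $\dbar$, the restriction $\pi_\cE^*\left(j^*F\right)$ is a sheaf on $\dbare/\cE\simeq\cE$, equivalently (Proposition \ref{prop:loclimsheaf}) a limit-preserving functor $\cE^{op}\to\iGpd$. For an object $\cD$ of $\sD\subseteq\dbar$ one has a fully faithful inclusion of slices $\sD^{\et}/\cD\hookrightarrow\dbare/\cD\simeq\cD$ which, by Proposition \ref{prop:largesite}, exhibits $\sD^{\et}/\cD$ as a site for $\cD$. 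Restricting the limit-preserving functor $\pi_\cD^*\left(j^*F\right)$ along this inclusion yields a sheaf on $\sD^{\et}/\cD$; since that restriction is exactly $\pi_\cD^*\left(j^*\left(i_\sD^*F\right)\right)$, the presheaf $i_\sD^*F$ satisfies the sheaf condition of Definition \ref{dfn:sheaveD}, and we obtain a functor $i_\sD^*:\Shi\left(\dbar\right)\to\Shi\left(\sD\right)$.

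Next I would prove that this functor is fully faithful and essentially surjective. The conceptual content, already visible in the proof of Proposition \ref{prop:sheafification}, is that a sheaf is a compatible (``Cartesian'') family of objects $M_\cD\in\cD\simeq\Shi\left(\sD^{\et}/\cD\right)$ indexed by the blossom, with transition maps the direct-image functors $f_*$; the two sheaf categories are the families indexed by $\sD$ and by $\dbar$ respectively, and $i_\sD^*$ is the evident comparison. To see it is an equivalence I would construct the inverse by \emph{descent}: a sheaf $G$ on $\sD$ supplies an object $M_\cD$ of each $\cD\in\sD$, and for a general $\cE\in\dbar$ one chooses an \'etale covering family $\left(\cD_\alpha\to\cE\right)$ with $\cD_\alpha$ in $\sD$ and defines the value on $\cE$ by gluing the $M_{\cD_\alpha}$ along the \v{C}ech nerve of the cover inside $\cE$. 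That this is well defined, independent of the chosen cover, and again a sheaf, is governed by the local colimit criterion of Corollary \ref{cor:colimps} together with the fact (Corollary \ref{cor:stetgen}) that $\sD^{\et}$ generates $\dbare$ under colimits, so that $\{\,\cD\to\cE : \cD\in\sD\,\}$ is a dense subsite of $\cE\simeq\dbare/\cE$; full faithfulness of $i_\sD^*$ then follows from the compatibility of these mutually inverse constructions with the Cartesian-family description.

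I expect the main obstacle to be exactly this essential-surjectivity/descent step: one must check that for \emph{every} $\cE$ in $\dbar$ the $\sD$-objects \'etale over $\cE$ form a dense subsite presenting $\cE$, and that the gluing is functorial and coherent, not merely defined object-by-object. For the small \'etale case there is a clean shortcut that sidesteps descent: combining Theorem \ref{thm:fullyfaithful}, Proposition \ref{prop:bothfuncolims} and Corollary \ref{cor:stetgen} shows that for any locally small strong \'etale blossom $\sB$ the functor $\left(i^{\et}_\sB\right)^*\circ y^{\et}:\overline{\sB}^{\et}\to\Shi\left(\sB^{\et}\right)$ is fully faithful, colimit-preserving, and has image containing the representables; since a fully faithful colimit-preserving functor out of a cocomplete $\i$-category has image closed under colimits, and the representables generate $\Shi\left(\sB^{\et}\right)$, this functor is an equivalence. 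Applying this with $\sB=\sD$ and, using Corollary \ref{cor:largestblossom}, with $\sB=\dbar$ (so $\overline{\dbar}^{\et}=\dbare$) gives $\Shi\left(\sD^{\et}\right)\simeq\dbare\simeq\Shi\left(\dbare\right)$, establishing (\ref{eq:smallet}). The non-\'etale equivalence (\ref{eq:smallnotet}) and the two large equivalences (\ref{eq:largenotet}) and (\ref{eq:larget}) then require the descent argument above, the large cases running verbatim with $\LiGpd$ in place of $\iGpd$ and with the large analogues of Corollaries \ref{cor:colimps} and \ref{cor:colimpset}.
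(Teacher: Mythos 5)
Your opening step --- that restriction along $\sD \hookrightarrow \dbar$ (resp.\ $\sD^{\et} \hookrightarrow \dbare$) carries sheaves to sheaves --- is sound and matches the paper's first move (the paper proves it via the factorization of the localization of each $\cD$ through a chosen small site $\C_\cD$, essentially your appeal to Proposition \ref{prop:largesite}). But your ``clean shortcut'' for (\ref{eq:smallet}) contains a genuine error: the claim that the representables generate $\Shi\left(\sB^{\et}\right)$ under small colimits fails when the locally small blossom $\sB$ is large, which is precisely the case this lemma exists for. A presheaf of small $\i$-groupoids on a large category need not be a small colimit of representables; concretely, the terminal presheaf $1$ is a sheaf by Proposition \ref{prop:loclimsheaf}, but it lies in the essential image of $\left(i^{\et}_\sB\right)^* \circ y^{\et}$ only if $\overline{\sB}^{\et}$ has a terminal object, which by Theorem \ref{thm:whatisK} happens only when $\overline{\sB}=\lbar$ for a \emph{small} $\sL$. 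Were your shortcut valid, it would prove Theorem \ref{thm:yonequil} with ``small'' weakened to ``locally small,'' which is false --- this is exactly why Theorem \ref{thm:classificationlarge} is stated in terms of relative prolongations, and why the paper remarks that there is no analogue of Theorem \ref{thm:fields} for topological $\i$-\'etendues.

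For the remaining three equivalences you explicitly defer the essential-surjectivity/descent step, and that deferred step is the actual content of the lemma: an object-by-object gluing over chosen covers does not by itself produce a functor, and the coherence you flag is not routine. The paper avoids descent altogether by enlarging the universe: since $\sD^{\et}$ and $\dbare$ are $\V$-small, the left Kan extension $\tilde l_!$ along $l:\sD^{\et}\hookrightarrow \dbare$ exists on \emph{large} presheaves, and $l_!:=\widehat{\overline{a}}^{\et}\circ \tilde l_!$ is left adjoint to $l^*$ on large sheaves; in $\LPshi$ the (sheafified) representables do generate under $\V$-small colimits, so by Proposition 5.5.4.20 and Theorem 5.1.5.6 of \cite{htt} the unit and counit of $l_! \dashv l^*$ need only be checked on representables. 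The unit is immediate, and for the counit the key geometric input is Proposition \ref{prop:locblossom}: each $\cE$ in $\dbar$ lies in $\overline{\sD_\cE}$ for a \emph{small} sub-blossom $\sD_\cE \subseteq \sD$, so by Corollary \ref{cor:dense} and Lemma \ref{lem:holmol} one writes $\cE$ as a small colimit of objects of $\sD$ and pushes through colimit-preserving functors ($l^*$ preserves colimits because each slice comparison $\left(l/\cD\right)^*$ is an equivalence, via the factorization through $\C_\cD$ and Corollary \ref{cor:colimpset}). The small statements are then \emph{deduced} from the large ones, with smallness of $l_!\left(G\right)\left(\cE\right)$ verified by the limit formula $l_!\left(G\right)\left(\cE\right)\simeq \lim G\left(\cD_\alpha\right)$ over the small diagram above. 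Your Cartesian-family picture is a reasonable heuristic, but the coherence problem you identified is exactly what this adjunction formalism resolves; I recommend replacing both the gluing sketch and the shortcut with this universe-enlargement argument.
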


\begin{proof}
We will prove the equivalences (\ref{eq:smallet}) and (\ref{eq:larget}). The proof of the other equivalences are analogous. To ease notation, denote by $$l:\sD^{\et} \hookrightarrow \dbare$$ the canonical functor. If $\V$ is the universe of large sets, then both $\sD^{\et}$ and $\dbare$ are $\V$-small. Denote by $\widehat{y^{\et}}$ and $\widehat{\overline{y}^{\et}}$ the Yoneda embeddings into presheaves of large $\i$-groupoids of $\sD^{\et}$ and $\dbare$ respectively. Let $\tilde l_!$ denote the left Kan extension $$\tilde l_!=\Lan_{\widehat{y^{\et}}}\left(\widehat{\overline{y}^{\et}}\circ l\right):\LPshi\left(\sD^{\et}\right) \to \LPshi\left(\dbare\right).$$
It is left adjoint to the obvious restriction functor $\tilde l^*.$ We claim that $\tilde l^*$ sends sheaves to sheaves. To see this, let $\cD$ be an object of $\sD.$ Then we can choose a small full subcategory $$r_\cD:\C_\cD \hookrightarrow \sD^{\et}/\cD$$ such that, with the same notation as in Definition \ref{dfn:strongblossom}, $$\xymatrix@C=1.5cm{\cD \ar@{^{(}->}[r]<-0.9ex>_-{R_\cD} & \Pshi\left(\C_\cD\right) \ar@<-0.5ex>[l]_-{L_{\cD}}}$$ is an accessible left exact localization. By the proof of Proposition \ref{prop:subcanonical} (where $\dbar$ is regarded as the strong \'etale blossom in the proposition), one concludes that (using the notation of Definition \ref{dfn:localargesheaf} and the proof of Proposition \ref{prop:pop}) the localization
$$\xymatrix{\LShi\left(\sD^{\et}/\cD\right) \ar@{^{(}->}[r]<-0.9ex>_-{\widehat{i}_\cD} & \LPshi\left(\sD^{\et}/\cD\right) \ar@<-0.5ex>[l]_-{\widehat{a}_\cD}}$$ factors as
\begin{equation}\label{eq:factorization}
\resizebox{4.8in}{!}{$\xymatrix{\LShi\left(\sD^{\et}/\cD\right) \simeq \widehat{S}_\C^{-1} \LPshi\left(\C_\cD\right) \ar@{^{(}->}[r]<-0.9ex> & \LPshi\left(\C_\cD\right) \ar@<-0.5ex>[l] \ar@{^{(}->}[r]<-0.9ex>_-{\left(\widehat{r}_\cD\right)_*} & \LPshi\left(\sD^{\et}/\cD\right) \ar@<-0.5ex>[l]_-{\left(\widehat{r}_\cD\right)^*} \ar@{^{(}->}[r]<-0.9ex>_-{\left(l/\cD\right)_*} & \LPshi\left(\dbare/\cD\right). \ar@<-0.5ex>[l]_-{\left(l/\cD\right)^*}}$}
\end{equation}
Moreover, notice that for all $\cD$ in $\sD,$ the following diagram commutes up to canonical homotopy
\begin{equation}\label{eq:sqqq}
\xymatrix{\sD^{\et}/\cD \ar[d] \ar@{^{(}->}[r]^-{l/\cD} & \dbare/\cD \ar[d]\\
\sD^{\et} \ar@{^{(}->}[r]^-{l} & \dbare.}
\end{equation}
It follows that $\tilde l^*$ restricts to a functor $$l^*:\LShi\left(\dbare\right) \to \LShi\left(\sD^{\et}\right),$$ which is right adjoint to $$l_!:= \widehat{\overline{a}}^{\et} \circ \tilde l_!,$$ where $\widehat{\overline{a}}^{\et}$ denotes sheafification.
We claim that $$l_! \dashv l^*$$ is an adjoint equivalence. To prove this, one needs to show that the unit and the co-unit are equivalences. Consider the unit $$\eta:id_{\LShi\left(\sD^{\et}\right)} \to l^*l_!.$$ The domain and codomain of $\eta$ are colimit preserving functors, and by Proposition 5.5.4.20 and Theorem 5.1.5.6 of \cite{htt}, composition with $$\widehat{a}_\sD \circ \widehat{y}^{\et}$$ induces a full and faithful functor $$\Fun^{L}\left(\LShi\left(\sD^{\et}\right),\LShi\left(\sD^{\et}\right)\right) \hookrightarrow \Fun\left(\sD^{\et},\LShi\left(\sD^{\et}\right)\right),$$ where $\Fun^{L}\left(\LShi\left(\sD^{\et}\right),\LShi\left(\sD^{\et}\right)\right)$ is the full subcategory of the functor category on all functors which preserve small colimits. It therefore suffices to check that the components of $\eta$ along representable sheaves are equivalences, which is clear by the definitions. We will now show that the co-unit $$\varepsilon:l_!l^* \to id_{\LShi\left(\dbare\right)}$$ is an equivalence. By an analogous argument, it suffices to check that the components of it along representable sheaves are equivalences. Let $\cE$ be an object of $\dbar.$ By Proposition \ref{prop:locblossom}, there exists a small strong \'etale blossom $\sD_\cE$ contained in $\sD$ such that $\cE$ is $\overline{\sD_\cE}.$ By Corollary \ref{cor:dense} combined with Lemma \ref{lem:holmol}, one concludes that $\cE$ is the colimit of the canonical functor $$\left(\sD_\cE\right)^{\et}/\cE \to \left(\sD_\cE\right)^{\et} \hookrightarrow \dbare.$$ In particular, we can write, in a canonical way, $\cE$ as a colimit $$\cE \simeq \colim \cD_\alpha$$ with each $\cD_\alpha$ in $\sD.$ Notice that Proposition \ref{prop:bothfuncolims} applied to the strong \'etale blossom $\dbar,$ implies that $$\overline{a}^{\et} \circ\overline{y}^{\et}$$ preserves small colimits. By the proof of Proposition 6.3.5.17 of \cite{htt}, it follows that the inclusion $$\Shi\left(\dbare\right) \hookrightarrow \LShi\left(\dbare\right)$$ preserves small colimits, and we conclude that  $$\widehat{\overline{a}}^{\et} \circ \widehat{\overline{y}}^{\et}$$ preserve small colimits as well. Moreover, notice that the factorization (\ref{eq:factorization}) implies that the induced map $$\left(l/\cD\right)^*:\LShi\left(\dbare/\cD\right) \to \LShi\left(\sD^{\et}/\cD\right)$$ is an equivalence. This observation combined with the homotopy commutative square (\ref{eq:sqqq}) implies that the following diagram commutes up to canonical homotopy
$$\xymatrix@C=2.5cm{\LShi\left(\dbare\right) \ar[r]^-{l^*} \ar[d] & \LShi\left(\sD^{\et}\right) \ar[d]\\
\LShi\left(\dbare/\cD\right) \ar[r]^-{\left(l/\cD\right)^*} & \LShi\left(\sD^{\et}/\cD\right),}$$
and hence, by Corollary \ref{cor:colimpset}, it follows that $l^*$ preserves $\V$-small colimits. It follows that
the composite $$l_!\circ l^* \circ \widehat{a_{\dbar}}^{\et} \circ \widehat{\overline{y}}^{\et}$$ preserves small colimits.
Finally, the following string of equivalences
\begin{eqnarray*}
l_!l^* \widehat{\overline{a}}^{\et} \widehat{\overline{y}}^{\et}\left(\cE\right) &\simeq & \colim l_!l^* \widehat{\overline{a}}^{\et} \widehat{\overline{y}}^{\et}\left(l\cD_\alpha\right)\\
&\simeq& \colim l_! \widehat{a}^{\et} \widehat{y}^{\et}\left(\cD_\alpha\right)\\
&\simeq& \colim \widehat{\overline{a}}^{\et} \widehat{\overline{y}}^{\et}\left(l\cD_\alpha\right)\\
&\simeq& \widehat{\overline{a}}^{\et} \widehat{\overline{y}}^{\et}\left(\colim l\cD_\alpha\right)\\
&\simeq& \widehat{\overline{a}}^{\et} \widehat{\overline{y}}^{\et}\left(\cE\right)
\end{eqnarray*}
show that the component of $\varepsilon$ along $\widehat{\overline{a}}^{\et} \widehat{\overline{y}}^{\et}\left(\cE\right)$ is an equivalence. It follows that
\begin{equation}\label{eq:adjeql}
l_! \dashv l^*
\end{equation}
is an adjoint equivalence.

We now wish to show that the adjoint equivalence (\ref{eq:adjeql}) restricts to an adjoint equivalence of the form (\ref{eq:smallet}). It follows immediately from definitions that $l^*$ restricts to a functor $$l^*:\Shi\left(\dbare\right) \to \Shi\left(\sD^{\et}\right).$$ We wish to show the same is true for $l_!.$ Since both the unit $\eta$ and the co-unit $\varepsilon$ have been shown to be equivalences, this implies that $l_!$ is also \emph{right} adjoint to $l^*.$ Now suppose that $G$ is in $\Shi\left(\sD^{\et}\right).$ We wish to show that $l_!\left(G\right)\left(\cE\right)$ is essentially small for all $\cE$ in $\dbar.$ We can again write $\cE$ as a small colimit $$\cE \simeq \colim \cD_\alpha$$ in $\dbare$ of objects in $\sD.$ By the Yoneda lemma, one then has
\begin{eqnarray*}
l_!\left(G\right)\left(\cE\right) &\simeq& \Hom\left(\widehat{\overline{a}}^{\et} \widehat{\overline{y}}^{\et}\left(\cE\right),l_!G\right)\\
&\simeq& \Hom\left(l^*\widehat{\overline{a}}^{\et} \widehat{\overline{y}}^{\et}\left(\cE\right),G\right)\\
&\simeq& \Hom\left(\colim l^*\widehat{\overline{a}}^{\et} \widehat{\overline{y}}^{\et}\left(l \cD_\alpha\right),G\right)\\
&\simeq& \lim \Hom\left(\widehat{\overline{a}}^{\et} \widehat{y}^{\et}\left(\cD_\alpha\right),G\right)\\
&\simeq& \lim G\left(\cD_\alpha\right),
\end{eqnarray*}
which is essentially small.
\end{proof}

\begin{cor}
If $\sD$ and $\sD'$ are two locally small strong \'etale blossoms for $\lbar,$ then there are canonical equivalences
\begin{equation*}
\Shi\left(\sD\right) \simeq \Shi\left(\sD'\right),
\end{equation*}
\begin{equation*}
\LShi\left(\sD\right) \simeq \LShi\left(\sD'\right),
\end{equation*}
\begin{equation*}
\Shi\left(\sD^{\et}\right) \simeq \Shi\left(\sD'^{\et}\right),
\end{equation*}
and
\begin{equation*}
\LShi\left(\sD^{\et}\right) \simeq \LShi\left(\sD'^{\et}\right).
\end{equation*}
\end{cor}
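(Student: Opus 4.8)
The plan is to deduce this corollary directly from Lemma \ref{lem:sheavesthesame} by passing through the common étale closure. The key observation is purely definitional: to say that $\sD$ and $\sD'$ are both strong étale blossoms \emph{for $\lbar$} means precisely that $\dbar = \lbar$ and $\overline{\sD'} = \lbar$, so in particular $\dbar = \overline{\sD'}$. Thus both blossoms have the same étale closure, namely $\lbar$ itself, which we may treat as a fixed reference object independent of the chosen blossom.

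With this in hand, I would simply apply Lemma \ref{lem:sheavesthesame} twice. Applied to $\sD$, the lemma produces canonical equivalences $\Shi\left(\sD\right) \simeq \Shi\left(\dbar\right) = \Shi\left(\lbar\right)$, and likewise for the large, étale, and large-étale variants. Applied to $\sD'$, it produces canonical equivalences $\Shi\left(\sD'\right) \simeq \Shi\left(\overline{\sD'}\right) = \Shi\left(\lbar\right)$, and their three analogues. Composing the first family of equivalences with the inverses of the second yields the desired canonical equivalences
$$\Shi\left(\sD\right) \simeq \Shi\left(\lbar\right) \simeq \Shi\left(\sD'\right),$$
and similarly
$$\LShi\left(\sD\right) \simeq \LShi\left(\lbar\right) \simeq \LShi\left(\sD'\right), \quad \Shi\left(\sD^{\et}\right) \simeq \Shi\left(\lbare\right) \simeq \Shi\left(\sD'^{\et}\right),$$
together with the large-étale case. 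Each composite is canonical because the equivalences supplied by Lemma \ref{lem:sheavesthesame} are themselves canonical, being assembled from the adjoint pair $l_! \dashv l^*$ (a left Kan extension and its restriction) constructed in the proof of that lemma.

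There is essentially no genuine obstacle here: the content is entirely carried by Lemma \ref{lem:sheavesthesame}, and the corollary is a formal consequence of the transitivity of equivalences through a common target. The only point requiring a word of care is the claim of \emph{canonicity} of the composite, which I would address by remarking that the equivalences of Lemma \ref{lem:sheavesthesame} are natural in the sense that the functors $l_!$ and $l^*$ restrict from the canonical inclusion $l\colon \sD^{\et} \hookrightarrow \dbare$ (and its non-étale counterpart), so no arbitrary choices intervene beyond those already fixed in the lemma's proof.
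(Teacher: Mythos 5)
Your proposal is correct and is precisely the argument the paper intends: the corollary is stated without proof because it follows immediately from Lemma \ref{lem:sheavesthesame} applied to each blossom, using that $\dbar = \lbar = \overline{\sD'}$ by the definition of a strong \'etale blossom \emph{for} $\lbar$, and composing the resulting canonical equivalences through the common target. Your additional remark on canonicity, via the construction of $l_! \dashv l^*$ in the lemma's proof, is a sound justification of a point the paper leaves implicit.
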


We learned the proof of the following lemma from Gijs Heuts:

\begin{lem}
Let $\C$ be an $\icat$ and let $$\rho:\C^\triangleright \to \C$$ be a cocone for the identity functor, with vertex $E$. By adjunction, this corresponds to a functor $$\bar \rho:\C \to \C/E.$$ Suppose that $$\bar\rho\left(E\right):E \to E$$ is an equivalence in $\C.$ Then $E$ is a terminal object.
\end{lem}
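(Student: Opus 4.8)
The plan is to deduce terminality of $E$ from the two functors in play: the comparison functor $\bar\rho:\C \to \C/E$ furnished by the cocone, and the canonical projection $\pi:\C/E \to \C$. To show $E$ is terminal it suffices to show that $\Hom_\C\left(C,E\right)$ is contractible for every object $C$ of $\C$, and I will obtain this by exhibiting $\Hom_\C\left(C,E\right)$ as a retract of a mapping space in $\C/E$ that is visibly contractible.

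First I would record that $\pi \circ \bar\rho \simeq \mathrm{id}_\C$; this is precisely the content of the adjunction producing $\bar\rho$ from the cocone $\rho$, and is verified exactly as in the proof of Proposition \ref{prop:colim1} (via the counit of the slice adjunction $\left(\mspace{3mu}\cdot\mspace{3mu}\right)^{\triangleright} \dashv sl$). In particular $\bar\rho$ sends each object $C$ to the object $\rho_C:C \to E$ of $\C/E$, and so $\bar\rho\left(E\right)$ is the object $\rho_E:E \to E$.

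Next I would show that $\bar\rho\left(E\right)$ is a terminal object of $\C/E$. The slice $\C/E$ has $\mathrm{id}_E$ as its terminal object, and there is a canonical morphism $\bar\rho\left(E\right) \to \mathrm{id}_E$ in $\C/E$ whose underlying morphism in $\C$ is $\rho_E$ (it is given by $h=\rho_E$ together with the tautological homotopy $\mathrm{id}_E \circ \rho_E \simeq \rho_E$). Since the projection $\pi:\C/E \to \C$ is conservative (being the projection from a slice $\icat$) and $\rho_E$ is an equivalence by hypothesis, this morphism is an equivalence in $\C/E$. Hence $\bar\rho\left(E\right) \simeq \mathrm{id}_E$ is terminal, and therefore $\Hom_{\C/E}\left(\bar\rho\left(C\right),\bar\rho\left(E\right)\right)$ is contractible for every $C$.

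Finally I would assemble the retraction. The functors $\bar\rho$ and $\pi$ induce maps on mapping spaces
\[
\Hom_\C\left(C,E\right) \xrightarrow{\bar\rho} \Hom_{\C/E}\left(\bar\rho\left(C\right),\bar\rho\left(E\right)\right) \xrightarrow{\pi} \Hom_\C\left(C,E\right),
\]
and by functoriality of the mapping-space action together with the equivalence $\pi\circ\bar\rho \simeq \mathrm{id}_\C$, the composite is homotopic to the identity of $\Hom_\C\left(C,E\right)$. Thus $\Hom_\C\left(C,E\right)$ is a retract of the contractible space $\Hom_{\C/E}\left(\bar\rho\left(C\right),\bar\rho\left(E\right)\right)$, hence is itself contractible; as $C$ was arbitrary, $E$ is terminal. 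The step requiring the most care is the middle one: identifying the canonical comparison $\bar\rho\left(E\right) \to \mathrm{id}_E$ and invoking conservativity of $\pi$ to promote the hypothesis ``$\rho_E$ an equivalence'' to an equivalence in the slice. Once $\bar\rho\left(E\right)$ is known to be terminal, the retract argument is purely formal.
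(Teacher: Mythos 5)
Your proof is correct, but it takes a genuinely different route from the paper's. The paper argues directly at the simplicial level: it shows the projection $\C/E \to \C$ is a trivial Kan fibration by using the join/slice adjunction to convert each lifting problem $\left(\partial\Delta^n \to \C/E,\ \Delta^n \to \C\right)$ into the problem of extending a map $\partial\Delta^{n+1} \to \C$ with last vertex $E$, then applying $\bar\rho$ and the adjunction once more to produce an outer horn $\Lambda^{n+2}_{n+2} \to \C$ whose final edge is exactly $\bar\rho\left(E\right)$; Joyal's characterization of equivalences (\cite{htt}, Proposition 1.2.4.3) supplies the filler, and restricting it solves the original problem. You instead upgrade the hypothesis inside the slice: the canonical morphism $\bar\rho\left(E\right) \to id_E$ (the degenerate $2$-simplex $s_1\left(\rho_E\right)$) has underlying map $\rho_E$, so conservativity of the right fibration $\pi:\C/E \to \C$ makes it an equivalence, whence $\bar\rho\left(E\right)$ is terminal in $\C/E$; the retract argument on mapping spaces, using $\pi\circ\bar\rho = id_\C$, then shows each $\Hom_\C\left(C,E\right)$ is a retract of a contractible (in particular nonempty) space, hence contractible, and you conclude by the mapping-space criterion for terminal objects. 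Each step of yours is sound. The trade-off: the paper's argument is self-contained and establishes terminality in its trivial-fibration form directly, with a single horn-filling move; yours is modular and model-independent in flavor, but the structural facts you black-box---conservativity of right fibrations, terminality of $id_E$ in $\C/E$, and the equivalence between the trivial-fibration and contractible-mapping-space formulations of terminality---are themselves proved by horn-filling arguments of precisely the kind the paper deploys, so the combinatorial content is the same, just repackaged. Your route also makes visible the conceptual point that the hypothesis really says $\bar\rho\left(E\right) \simeq id_E$ in the slice, which the paper's proof uses only implicitly through the position of the edge in the horn.
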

\begin{proof}
We wish to show that the canonical projection functor $$\C/E \to \C$$ is a trivial Kan fibration. By adjunction, the lifting problem
$$\xymatrix{\partial \Delta^n \ar[r] \ar[d] & \C/E \ar[d]\\
\Delta^n\ar[r] \ar@{-->}[ur] & \C}$$
is equivalent to the lifting problem
$$\xymatrix{\partial \Delta^{n+1} \ar[r]^-{f} \ar[d] & \C\\
\Delta^{n+1} \ar@{-->}[ur] &},$$
in which $f\left([n+1]\right)=E.$ Consider the diagram
$$\bar \rho \circ f:\partial \Delta^{n+1} \to \C/E,$$
which by adjunction corresponds to the diagram
$$\varphi:\Lambda^{n+2}_{n+2} \cong \left(\partial \Delta^{n+1}\right)^\triangleright \to \C$$ such that $$\varphi\left([n+2]\right)=E.$$ Note that $$\varphi\left(\Delta^{\left\{n+2,n+1\right\}}\right)=\bar\rho\left(E\right).$$
By Joyal's characterization of equivalences, (\cite{htt}, Proposition 1.2.4.3), since $$\varphi\left(\Delta^{\left\{n+2,n+1\right\}}\right)$$ is an equivalence, there exists an extension
$$\xymatrix{\Lambda^{n+2}_{n+2} \ar[r]^-{\varphi} \ar[d] & \C\\
\Delta^{n+2} \ar@{-->}[ur]_-{\psi} & }.$$
Finally, observe that $\psi|_{\Delta^{\left\{0,\cdots,n+1\right\}}}$ is a solution to the lifting problem for $f.$
\end{proof}

\begin{lem}
If $\C$ is an $\icat$ and $$\rho:\C^\triangleright \to \C$$ is a colimiting cocone for the identity functor, with vertex $E,$ then the component along $E$ of $\rho$ is an equivalence.
\end{lem}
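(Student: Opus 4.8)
The plan is to deduce the statement directly from the universal property of the colimit together with the naturality that is already encoded in the cocone $\rho$. Write $E=\rho(\i)$ for the vertex, where $\i$ is the cone point of $\C^{\triangleright}=\C\star\Delta^0$, and for each object $X$ of $\C$ let $\rho_X:X\to E$ denote the corresponding leg of $\rho$ (the image under $\rho$ of the cone edge $X\to\i$). The component along $E$ is then exactly the leg $\rho_E:E\to E$, and the goal is to show $\rho_E$ is an equivalence.

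First I would record the universal property in its mapping-space form. Since $\rho$ exhibits $E$ as a colimit of $\mathrm{id}_\C$, for every object $Z$ of $\C$ precomposition with the legs yields an equivalence of $\i$-groupoids
$$\Phi_Z:\Hom_\C(E,Z)\xrightarrow{\ \sim\ }\lim_{X\in\C}\Hom_\C(X,Z),\qquad g\longmapsto (g\circ\rho_X)_X,$$
the right-hand side being the $\i$-groupoid of cocones under $\mathrm{id}_\C$ with nadir $Z$; this is precisely the content of $\rho$ being an initial object of $\C_{\mathrm{id}_\C/}$. I would then specialize to $Z=E$ and compare the two points $\mathrm{id}_E$ and $\rho_E$ of $\Hom_\C(E,E)$. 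Under $\Phi_E$ the identity $\mathrm{id}_E$ is sent to the tautological cocone $(\rho_X)_X=\rho$, while $\rho_E$ is sent to $(\rho_E\circ\rho_X)_X$.

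The heart of the argument is that these two cocones agree, which is just the naturality of $\rho$ applied to its own legs. Each leg $\rho_X:X\to E$ is a morphism of $\C$, and the join structure of $\C^{\triangleright}$ supplies a $2$-simplex over the triangle $X\xrightarrow{\rho_X}E\to\i$; applying $\rho$ produces a $2$-simplex witnessing $\rho_E\circ\rho_X\simeq\rho_X$, coherently in $X$. Assembling these witnesses gives an identification $\Phi_E(\rho_E)\simeq\Phi_E(\mathrm{id}_E)$ of points of $\lim_{X}\Hom_\C(X,E)$. Since $\Phi_E$ is an equivalence of $\i$-groupoids it is in particular injective on $\pi_0$, so $\rho_E\simeq\mathrm{id}_E$ in $\Hom_\C(E,E)$; hence $\rho_E$ is an equivalence, as desired.

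The main obstacle is the coherence in the middle step: one must produce $\Phi_E(\rho_E)\simeq\Phi_E(\mathrm{id}_E)$ as a genuine equivalence of points of the homotopy limit, not merely objectwise. I would handle this by working inside $\C_{\mathrm{id}_\C/}$, where $\rho$ is by hypothesis initial: the cocone obtained by postcomposing every leg of $\rho$ with $\rho_E$ is classified by the map $\rho_E$, and the family of $2$-simplices above assembles into a single morphism in $\C_{\mathrm{id}_\C/}$ from $\rho$ to this postcomposed cocone. Initiality of $\rho$ forces that morphism to be an equivalence, which is exactly the coherent identification required. The remaining bookkeeping — matching the legs $\rho_X$ with the cone edges $X\to\i$ and identifying $\Phi_E(\mathrm{id}_E)$ with $\rho$ itself — is routine.
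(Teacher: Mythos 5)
Your overall strategy coincides with the paper's: the paper forms the map $\widehat{\rho}\colon \Hom_\C\left(E,E\right) \to \Hom_{\Fun\left(\C,\C\right)}\left(id_\C,\Delta_E\right)$ given by composition with $\rho$, invokes Lemma 4.2.4.3 of \cite{htt} to see it is an equivalence (this is your $\Phi_E$, since $\Hom_{\Fun\left(\C,\C\right)}\left(id_\C,\Delta_E\right)\simeq \lim_X \Hom_\C\left(X,E\right)$), observes that both $id_E$ and the component $\rho_E$ are sent to cocones equivalent to $\rho$, and concludes $\rho_E \simeq id_E$. So your reduction, and your use of the cocone's coherences applied to its own legs, is exactly the paper's argument carried out in more detail.

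However, the justification you give at the crux is wrong as stated: initiality of $\rho$ in $\C_{id_\C/}$ does not force a morphism out of $\rho$ to be an equivalence --- maps out of an initial object are essentially unique, not invertible; invertibility would require the target cocone to be initial as well, which is essentially what is being proved. The step is easily repaired in either of two ways. First, your family of $2$-simplices assembles canonically into the map $h=\rho \circ \left(\rho \star id_{\Delta^0}\right)\colon \C \star \Delta^1 \to \C$ (using $\C \star \Delta^1 \cong \left(\C \star \Delta^0\right)\star \Delta^0$); one checks that $h|_\C = id_\C$, that $h$ restricts to $\rho$ on both $\C \star \{0\}$ and $\C \star \{1\}$, and that $h$ carries the edge $0 \to 1$ to $\rho_E.$ Thus $h$ is an endomorphism of $\rho$ in $\C_{id_\C/}$ projecting to $\rho_E$ in $\C.$ Initiality now enters correctly through the contractibility of the endomorphism space $\Hom_{\C_{id_\C/}}\left(\rho,\rho\right),$ which contains both $h$ and $id_\rho$; projecting to $\C$ identifies $\rho_E$ with $id_E.$ Second, and alternatively, one can present the same data as a morphism from $\rho$ to the postcomposed cocone lying over the degenerate edge $id_E$; it is then an edge in the fiber over $E$ of the left fibration $\C_{id_\C/} \to \C,$ and every edge of this Kan complex is automatically an equivalence, so the identification $\Phi_E\left(\rho_E\right)\simeq \Phi_E\left(id_E\right)$ requires no appeal to initiality at all. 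With either repair your proof is complete and agrees with the paper's.
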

\begin{proof}
The cocone $\rho$ induces a functor
$$\widehat{\rho}:\Hom_\C\left(E,E\right) \to \Hom_{\Fun\left(\C,\C\right)}\left(id_\C,\Delta_E\right)$$ by composition, uniquely defined up to a contractible spaces of choices, where $\Delta_E$ is the constant functor with vertex $E$. Since $\rho$ is colimiting, $\widehat{\rho}$ is an equivalence of $\i$-groupoids, by \cite{htt}, Lemma 4.2.4.3. Since $\rho$ is a cocone, it follows that $\widehat{\rho}\left(\bar\rho\left(E\right)\right)$ and $\rho=\widehat{\rho}\left(id_E\right)$ are equivalent cocones, i.e. equivalent objects in the $\i$-groupoid $$\Hom_{\Fun\left(\C,\C\right)}\left(id_\C,\Delta_E\right).$$ Since $\widehat{\rho}$ is an equivalence, it follows that $\bar\rho\left(E\right)$ is equivalent to $id_E$ in $\Hom_\C\left(E,E\right),$ and in particular, is an equivalence.
\end{proof}

\begin{cor}\label{cor:colimernal}
If $\C$ is an $\icat$ and $E \in \C$ is a colimit for the identity functor, then $E$ is a terminal object.
\end{cor}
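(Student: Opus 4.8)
The plan is to obtain this corollary immediately by chaining the two preceding lemmas. First I would observe that since $E$ is a colimit for the identity functor $id_\C$, there is, by definition, a colimiting cocone $\rho:\C^\triangleright \to \C$ for $id_\C$ with vertex $E$. Applying the immediately preceding lemma (the one asserting that for a colimiting cocone on the identity functor the component along the vertex is an equivalence), we conclude that the component of $\rho$ along the object $E$ is an equivalence in $\C$.

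Next I would translate this conclusion into the hypothesis of the first of the two lemmas. Under the cocone/slice adjunction, $\rho$ corresponds to a functor $\bar\rho:\C \to \C/E$, and the component of the cocone $\rho$ at the object $E$ is precisely the arrow $\bar\rho(E):E \to E$. Hence the statement just obtained says exactly that $\bar\rho(E)$ is an equivalence, which is verbatim the hypothesis of the first lemma. Invoking that lemma then yields that $E$ is a terminal object, completing the argument.

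The only point requiring any care—and thus the main (very mild) obstacle—is the bookkeeping identification of the ``component along $E$'' appearing in the second lemma with the arrow $\bar\rho(E)$ figuring in the first lemma; once one verifies that these name the same morphism under the adjunction, the corollary follows formally with no further work.
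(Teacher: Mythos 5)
Your proposal is correct and is exactly the argument the paper intends: the corollary is stated without proof precisely because it follows immediately by chaining the two preceding lemmas, with the second lemma's ``component along $E$'' being identified with $\bar\rho\left(E\right)$ under the cocone/slice adjunction, just as you note. The bookkeeping point you flag is indeed the only content, and it is already implicit in the paper's formulation of the first lemma, where $\bar\rho\left(E\right)$ is defined via that same adjunction.
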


We will now give a more Yoneda-like formulation of what it means to be a left Kan extension:\\ \newline
Suppose that we are given a diagram of $\i$-categories $$\xymatrix{\sA \ar[r]^-{h} \ar[d]_-{\omega} & \sE\\
\sB. & }$$ Consider the functor
\begin{eqnarray*}
L_\omega h :\Fun\left(\sB,\sE\right) &\to& \LiGpd\\
\sB \stackrel{\varphi}{\longrightarrow} \sE &\mapsto& \Hom_{\Fun\left(\sA,\sE\right)}\left(h,w^* \varphi\right).
\end{eqnarray*}
For each object $\psi$ of $\Fun\left(\sB,\sE\right)$ denote by $$y_{co}\left(\psi\right):\Fun\left(\sB,\sE\right) \to \LiGpd$$ the functor it co-represents. The data of a left Kan extension of $h$ along $\omega$ can be identified with the data of a functor $$\Lan_\omega\left(h\right):\sB \to \sE$$ together with an equivalence $$\theta:y_{co}\left(\Lan_\omega\left(h\right)\right) \stackrel{\sim}{\longrightarrow} L_\omega h$$ in $\Fun\left(\Fun\left(\sB,\sE\right),\LiGpd\right).$ Indeed, by the Yoneda lemma applied to $\Fun\left(\sB,\sE\right)^{op},$ one has equivalences
\begin{eqnarray*}
\Hom_{\Fun\left(\sA,\sE\right)}\left(h,w^*\Lan_\omega h\right)&=&\left(L_\omega h\right)\left(\Lan_\omega h\right)\\
&\simeq& \Hom_{\Fun\left(\Fun\left(\sB,\sE\right),\LiGpd\right)}\left(y_{co}\left(\Lan_\omega h\right),L_\omega h\right)
\end{eqnarray*}
under which $\theta$ corresponds to a morphism $$h \to w^*\Lan_\omega h$$ exhibiting $\Lan_\omega h$ as an extension of $h;$  since $\theta$ is an equivalence, this extension is in fact a left Kan extension. We will often abuse notation and suppress the mention of the equivalence $\theta.$ In particular, when $\sB$ is contractible, it follows that under the identification $$\Fun\left(\sB,\sE\right) \simeq \sE$$ the functor $\Lan_\omega h$ corresponds to a choice of an object $E \in \sE_0$ and $\theta$ corresponds to a colimiting cocone $$h \to \Delta_E,$$ where $\Delta_E$ is the constant functor with vertex $E.$

\begin{lem}\label{lem:kext}
Let $$\sA \stackrel{w}{\longrightarrow} \sB \stackrel{y}{\longrightarrow} \C$$  be a diagram of composable functors between $\i$-categories, and suppose that $$h:\sA \to \sE$$ is another such functor. Suppose that the left Kan extension $$\Lan_{w}h$$ exists. Then, if the left Kan extension $$\Lan_{yw}\left(h\right)$$ also exists, it can be exhibited as a left Kan extension of $\Lan_{w}h$ along $y,$ i.e. $$\Lan_{yw}\left(h\right)=\Lan_y\left(\Lan_w h\right).$$
\end{lem}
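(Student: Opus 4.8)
The plan is to verify the defining universal property of $\Lan_y(\Lan_w h)$ directly, using the Yoneda-style reformulation of left Kan extensions set up immediately above the lemma. Concretely, I will show that the object $\Lan_{yw}(h) \in \Fun(\C,\sE)$ co-represents the functor $L_y(\Lan_w h)\colon \Fun(\C,\sE) \to \LiGpd$, $\psi \mapsto \Hom_{\Fun(\sB,\sE)}(\Lan_w h, y^* \psi)$, which is exactly what it means for $\Lan_{yw}(h)$ to be a left Kan extension of $\Lan_w h$ along $y$.

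First I would record the purely formal identity $(yw)^* = w^* \circ y^*$ for the restriction functors $\Fun(\C,\sE) \xrightarrow{y^*} \Fun(\sB,\sE) \xrightarrow{w^*} \Fun(\sA,\sE)$, which holds because restriction is contravariantly functorial in the source. Next, I would use the hypothesis that $\Lan_w h$ exists: by the reformulation above, this furnishes a natural equivalence $y_{co}(\Lan_w h) \simeq L_w h$ of functors $\Fun(\sB,\sE) \to \LiGpd$. Precomposing this equivalence with the restriction functor $y^*\colon \Fun(\C,\sE) \to \Fun(\sB,\sE)$ yields a natural equivalence $L_y(\Lan_w h) \simeq L_{yw} h$ of functors $\Fun(\C,\sE) \to \LiGpd$; here the left-hand side is $\psi \mapsto \Hom_{\Fun(\sB,\sE)}(\Lan_w h, y^* \psi)$ and the right-hand side is $\psi \mapsto \Hom_{\Fun(\sA,\sE)}(h, (yw)^* \psi)$, the two being identified via the displayed identity $(yw)^* = w^* y^*$.

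Finally, I would invoke the hypothesis that $\Lan_{yw}(h)$ exists: again by the reformulation, this gives an equivalence $y_{co}(\Lan_{yw}(h)) \simeq L_{yw} h$. Chaining the two equivalences produces $y_{co}(\Lan_{yw}(h)) \simeq L_y(\Lan_w h)$, which is precisely the datum $\theta$ exhibiting $\Lan_{yw}(h)$ as the left Kan extension $\Lan_y(\Lan_w h)$ in the sense defined just before the lemma; uniqueness of co-representing objects then gives the asserted equality.

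The one point requiring care — and the only genuine obstacle — is keeping all the equivalences natural in the test functor $\psi$, i.e. working throughout at the level of functors valued in $\LiGpd$ rather than objectwise, so that precomposition with $y^*$ legitimately transports the equivalence $y_{co}(\Lan_w h) \simeq L_w h$ to one between $L_y(\Lan_w h)$ and $L_{yw} h$. Everything else reduces to the identity $(yw)^* = w^* y^*$ together with two applications of the universal property already established, so no real computation is involved.
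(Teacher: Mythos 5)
Your proposal is correct and is essentially the paper's own argument: the paper likewise establishes the chain of equivalences $\Hom_{\Fun\left(\C,\sE\right)}\left(\Lan_{yw}h,\varphi\right) \simeq \Hom_{\Fun\left(\sA,\sE\right)}\left(h,\left(yw\right)^*\varphi\right) \simeq \Hom_{\Fun\left(\sB,\sE\right)}\left(\Lan_{w}h, y^*\varphi\right)$, natural in the test functor $\varphi$, using $\left(yw\right)^*=w^*\circ y^*$ and the two given universal properties. Your explicit attention to naturality via the co-representability framework $y_{co}\left(\mspace{2mu}\cdot\mspace{2mu}\right)\simeq L_\omega h$ set up before the lemma is exactly the intended reading of the paper's more tersely written proof.
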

\begin{proof}
Notice that $\Lan_{yw}\left(h\right)$ satisfies the following universal property for all $$\varphi:\C \to \sE:$$
\begin{eqnarray*}
\Hom_{\Fun\left(\C,\sE\right)}\left(\Lan_{yw}h,\varphi\right) & \simeq& \Hom_{\Fun\left(\sA,\sE\right)}\left(h,\left(yw\right)^*h\right)\\
&\simeq& \Hom_{\Fun\left(\sA,\sE\right)}\left(h,w^*\left(y^*h\right)\right)\\
&\simeq& \Hom_{\Fun\left(\sB,\sE\right)}\left(\Lan_{w}h, y^*h\right),\\
\end{eqnarray*}
which is the universal property uniquely defining a left Kan extension of $\Lan_{w} h$ along $y.$
\end{proof}

\begin{lem}\label{lem:termexists}
If $\mathscr{E}$ is a cocomplete $\icat$ such that there exists a full and faithful inclusion $$i:\C \hookrightarrow \mathscr{E}$$ from a small subcategory $\C$ which is strongly generating, then $\mathscr{E}$ has a terminal object.
\end{lem}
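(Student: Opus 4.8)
The plan is to produce the terminal object as the colimit of the identity functor, invoking Corollary \ref{cor:colimernal}, which says that any colimit of the identity functor is automatically terminal. The one genuine difficulty is that $\mathscr{E}$ is not assumed to be small, so $\colim\, \mathrm{id}_{\mathscr{E}}$ is a priori a \emph{large} colimit, which need not exist in a category that is merely (small-)cocomplete. The whole point of the argument will be to rewrite this large colimit as a small one using the strong generation hypothesis, so that its existence is guaranteed.

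First I would unwind the definitions. By Definition \ref{dfn:stronglygen}, the fact that $\C$ strongly generates $\mathscr{E}$ means precisely that $\Lan_i\left(i\right) \simeq \mathrm{id}_{\mathscr{E}}$; in particular this left Kan extension exists. Let $p : \mathscr{E} \to \Delta^0$ denote the projection to the point, so that for any functor $F : \mathscr{E} \to \mathscr{E}$ the colimit of $F$ is by definition $\Lan_p\left(F\right)$; thus $\colim\, \mathrm{id}_{\mathscr{E}} \simeq \Lan_p\left(\mathrm{id}_{\mathscr{E}}\right)$. On the other hand, since $\C$ is small and $\mathscr{E}$ is cocomplete, the small colimit $\Lan_{p \circ i}\left(i\right)$ (the colimit of $i$ over $\C$) exists in $\mathscr{E}$.

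Then I would apply Lemma \ref{lem:kext} to the composable pair $\C \xrightarrow{\ i\ } \mathscr{E} \xrightarrow{\ p\ } \Delta^0$ with $h = i$ (so $w = i$ and $y = p$ in the notation of that lemma). Since $\Lan_i\left(i\right) = \mathrm{id}_{\mathscr{E}}$ exists and $\Lan_{p \circ i}\left(i\right)$ exists, the lemma yields the chain of equivalences $\Lan_{p \circ i}\left(i\right) \simeq \Lan_p\left(\Lan_i\left(i\right)\right) \simeq \Lan_p\left(\mathrm{id}_{\mathscr{E}}\right) \simeq \colim\, \mathrm{id}_{\mathscr{E}}$. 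In particular the colimit of the identity functor exists and is represented by the small colimit $\Lan_{p \circ i}\left(i\right)$. By Corollary \ref{cor:colimernal}, this object is a terminal object of $\mathscr{E}$, which is what we wanted.

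I expect the main obstacle to be the careful bookkeeping of the size and existence issues in the transitivity of Kan extensions: one must be sure that Lemma \ref{lem:kext} is applied in the direction that converts the possibly non-existent large colimit $\Lan_p\left(\mathrm{id}_{\mathscr{E}}\right)$ into the genuinely existing small colimit $\Lan_{p \circ i}\left(i\right)$, without any circular appeal to ``the colimit of the identity'' already existing. Everything else is formal: the identification of strong generation with the Kan-extension identity $\Lan_i\left(i\right) \simeq \mathrm{id}_{\mathscr{E}}$, and the identification of a colimit with a left Kan extension along the projection to $\Delta^0$. Note also that this route never requires $\mathscr{E}$ to be locally small, so Proposition \ref{prop:5.2.9.4} and Corollary \ref{cor:5.2.9.4} are not needed here.
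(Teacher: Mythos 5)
Your proof is correct and is essentially the paper's own argument: both exploit that the small colimit $\colim i \simeq \Lan_{p\circ i}\left(i\right)$ exists, apply Lemma \ref{lem:kext} to the factorization $\C \stackrel{i}{\to} \mathscr{E} \stackrel{p}{\to} \Delta^0$ together with the strong-generation identity $\Lan_i\left(i\right) \simeq \mathrm{id}_{\mathscr{E}}$ to identify it with a colimit of the identity functor, and then invoke Corollary \ref{cor:colimernal} to conclude it is terminal. Your explicit attention to the size issue (converting the a priori large colimit into an existing small one, with no circularity) is exactly the point of the paper's proof, just spelled out more carefully.
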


\begin{proof}
Denote by $$t:\mathscr{E} \to 1$$ the essentially unique functor to the terminal $\icat$. Since $\mathscr{E}$ is cocomplete and $\C$ is small, a colimit for $i$ exists. Such a colimit, $\colim i$ is a left Kan extension of $i$ along $t \circ i$. Observe that by Lemma \ref{lem:kext} $\colim i$ is a left Kan extension of $$\Lan_i i \simeq id_\mathscr{E}$$ along $t,$ i.e. a colimit for $id_\mathscr{E}$. By Corollary \ref{cor:colimernal}, this must be a terminal object.
\end{proof}

\begin{cor}\label{cor:trex1}
If $\sL$ is a small subcategory of $\Str_\G,$ $\overline \sL^{\et}$ has a terminal object $\Uni.$
\end{cor}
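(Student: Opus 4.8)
The plan is to deduce this as an immediate instance of Lemma \ref{lem:termexists}, applied to the $\icat$ $\mathscr{E} = \lbare$. That lemma requires exactly two things: that $\mathscr{E}$ be cocomplete, and that it admit a small, full, strongly generating subcategory. So the whole argument reduces to verifying these two hypotheses for $\lbare$ and then invoking the lemma.

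The first hypothesis is already available. The corollary immediately following Lemma \ref{lem:holmol} asserts that $\lbare$ is cocomplete for \emph{any} full subcategory $\sL$ of $\Str_\G$, so in particular for our small $\sL$. For the second hypothesis I would invoke Proposition \ref{prop:existbloss}: since $\sL$ is small, that proposition produces a \emph{small} strong \'etale blossom $\sD$ for $\lbar$, meaning a small full subcategory $\sD$ of $\Str_\G$ with $\dbar = \lbar$, and hence $\dbare = \lbare$. The associated \'etale subcategory $\sD^{\et}$ is then a small full subcategory of $\lbare$; the inclusion $i^{\et}_\sD \colon \sD^{\et} \hookrightarrow \dbare = \lbare$ is full and faithful since $\sD^{\et}$ is by definition the full subcategory of $\Str^{\et}\left(\G\right)$ on the objects of $\sD$.

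Finally I would apply Corollary \ref{cor:dense}, which states that for a small strong \'etale blossom $\sD$ the subcategory $\sD^{\et}$ strongly generates $\dbare$. Transferring this along the identification $\dbare = \lbare$, we conclude that $\sD^{\et}$ is a small, full, strongly generating subcategory of the cocomplete $\icat$ $\lbare$. The hypotheses of Lemma \ref{lem:termexists} are thus met, and the lemma yields a terminal object of $\lbare$, which we christen $\Uni$.

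I do not anticipate any genuine obstacle: all of the substantive work has been carried out in Proposition \ref{prop:existbloss}, Corollary \ref{cor:dense}, and Lemma \ref{lem:termexists}. The only point requiring care is the bookkeeping that a small strong \'etale blossom $\sD$ for $\lbar$ satisfies $\dbare = \lbare$ (via $\dbar = \lbar$), so that the strong-generation statement phrased for $\sD^{\et}$ inside $\dbare$ may legitimately be read as a statement about $\lbare$; once this identification is made, the conclusion is formal.
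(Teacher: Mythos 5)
Your proposal is correct and follows essentially the same route as the paper's own proof: Proposition \ref{prop:existbloss} to obtain a small strong \'etale blossom $\sD$ with $\dbare=\lbare$, Corollary \ref{cor:dense} together with Lemma \ref{lem:holmol} (cocompleteness) for strong generation by $\sD^{\et}$, and Lemma \ref{lem:termexists} to conclude. The only cosmetic difference is that the paper additionally cites Proposition \ref{prop:etlocsmall} to record the local smallness underlying the strong-generation step, a point your direct appeal to Corollary \ref{cor:dense} subsumes.
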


\begin{proof}
Since $\sL$ is a small, by Proposition \ref{prop:existbloss} there exists a small strong \'etale blossom $\sD$ for $\lbar.$ Moreover, by Proposition \ref{prop:etlocsmall}, $\sD^{\et}$ is locally small.  Corollary \ref{cor:dense} and Lemma \ref{lem:holmol} then imply that $\sD^{\et}$ strongly generates $\lbare,$ and hence we are done by Lemma \ref{lem:termexists}.
\end{proof}

\begin{thm}\label{thm:trex2}
If $\sL$ is a small subcategory of $\Str_\G,$ $\overline \sL^{\et}$ is an $\i$-topos. Moreover, this $\i$-topos comes equipped with a canonical $\G$-structure $$\left(\overline \sL^{\et},\O_{\Uni}\right)$$ making it an object of $\overline \sL.$ When viewed as an object of $\overline \sL^{\et},$ this object is terminal.
\end{thm}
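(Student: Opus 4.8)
The plan is to realise $\lbare$ concretely as an $\i$-topos of sheaves, and then read off the structure sheaf from the terminal object produced in Corollary \ref{cor:trex1}. Since $\sL$ is small, Proposition \ref{prop:existbloss} furnishes a small strong \'etale blossom $\sD$ for $\lbar$, so that $\lbare=\dbare$. By Theorem \ref{thm:fullyfaithful} the functor
$$\Phi:=\left(i^{\et}_\sD\right)^* \circ y^{\et}:\dbare \longrightarrow \Shi\left(\sD^{\et}\right)$$
is full and faithful, and since $\sD$ is small, $\Shi\left(\sD^{\et}\right)$ is an $\i$-topos by Corollary \ref{cor:colimpset}. It therefore suffices to prove that $\Phi$ is an equivalence.

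For this I would argue that $\Phi$ is essentially surjective. By Proposition \ref{prop:bothfuncolims}, $\Phi$ preserves small colimits, and $\dbare$ is cocomplete by Lemma \ref{lem:holmol}; hence the essential image of $\Phi$ is closed under small colimits computed in $\Shi\left(\sD^{\et}\right)$. This image contains every representable sheaf, since for $\cD\in\sD$ one has $\Phi\left(\cD\right)\simeq y\left(\cD\right)$, as recorded in the proof of Theorem \ref{thm:fullyfaithful}. Finally, the representable sheaves generate $\Shi\left(\sD^{\et}\right)$ under colimits: every presheaf is a colimit of representables, sheafification preserves colimits (Corollary \ref{cor:colimpset}) and fixes representables, the topology being subcanonical by Proposition \ref{prop:subcanonical}. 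A full subcategory that is closed under small colimits and contains a generating family must be the whole category, so $\Phi$ is essentially surjective, hence an equivalence, and $\lbare\simeq\Shi\left(\sD^{\et}\right)$ is an $\i$-topos. (Equivalently, one may package the generation step through Corollary \ref{cor:dense}.)

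To produce the structure sheaf, take the terminal object $\Uni$ of $\lbare$, which exists by Corollary \ref{cor:trex1} and is in particular an object of $\overline{\sL}$, i.e. a $\G$-structured $\i$-topos $\left(\mathcal{U},\O_{\mathcal{U}}\right)$. By Remark \ref{rmk:2352}, $\lbare/\Uni$ is equivalent to the underlying $\i$-category of $\mathcal{U}$; on the other hand, since $\Uni$ is terminal, the projection $\lbare/\Uni \to \lbare$ is an equivalence. Composing these two equivalences identifies $\mathcal{U}$ with the $\i$-topos $\lbare$ itself, and transporting $\O_{\mathcal{U}}$ along this identification yields the desired canonical $\G$-structure $\O_\Uni$, exhibiting $\left(\lbare,\O_\Uni\right)$ as an $\sL$-\'etendue that is terminal in $\lbare$.

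The colimit bookkeeping is routine; the step requiring the most care is the identification of the underlying $\i$-topos of $\Uni$ with $\lbare$, where one must combine the equivalence $\lbare/\cE\simeq\cE$ of Remark \ref{rmk:2352} with terminality so that the abstractly-produced terminal object's topos is recognised as the very $\i$-topos we have just shown $\lbare$ to be. I would also verify that $\Phi$ and this slice identification are compatible, so that the resulting structure sheaf is genuinely canonical and, via the corollary to Lemma \ref{lem:sheavesthesame}, independent of the auxiliary blossom $\sD$.
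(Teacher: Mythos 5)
Your proof is correct, but it splits into two halves of very different status relative to the paper. The second half --- taking the terminal object $\left(\Uni,\O_\Uni\right)$ supplied by Corollary \ref{cor:trex1}, observing that terminality gives $\lbare/\Uni \simeq \lbare$ while Remark \ref{rmk:2352} gives $\lbare/\left(\Uni,\O_\Uni\right) \simeq \Uni$, and transporting $\O_\Uni$ along the resulting identification $\lbare \simeq \Uni$ --- is precisely the paper's entire proof. The first half, identifying $\lbare$ with $\Shi\left(\sD^{\et}\right)$, is logically superfluous for this theorem: once $\lbare \simeq \Uni$, the left-hand side is the underlying $\i$-topos of an $\sL$-\'etendue and is therefore an $\i$-topos with no further work, and the paper never invokes the sheaf realization here. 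That said, your essential-surjectivity argument (the image of $\Phi$ is closed under small colimits by Proposition \ref{prop:bothfuncolims} and Lemma \ref{lem:holmol}, contains the representables by the computation in the proof of Theorem \ref{thm:fullyfaithful}, and the representables generate because sheafification is a colimit-preserving reflector fixing representables, Corollary \ref{cor:colimpset} and Proposition \ref{prop:subcanonical}) is a valid and non-circular proof of the stronger Theorem \ref{thm:yonequil}, and a genuinely different one: the paper deduces that theorem \emph{from} the present one, using the equivalence $\pi_\Uni:\dbare/\Uni \to \dbare$ together with the representability criterion of Proposition \ref{prop:loclimsheaf}, whereas you obtain it by colimit generation before the terminal object ever enters. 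Your route buys independence of Theorem \ref{thm:yonequil} from the terminal-object argument; the paper's buys brevity, its proof of the present theorem being two lines. Finally, your closing worry about canonicity is lighter than you suggest: $\Uni$ is characterized intrinsically as the terminal object of $\lbare$, so no compatibility check between $\Phi$ and the slice identification is required, and independence of the auxiliary blossom $\sD$ is automatic.
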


\begin{proof}
Let $\left(\Uni,\O_\Uni\right)$ denote the terminal object of $\overline \sL^{\et}.$ Then, on one hand, since it is terminal,
$$\overline \sL^{\et}/ \Uni \simeq \overline \sL^{\et}.$$
On the other hand, by Remark \ref{rmk:2352}, $$\overline \sL^{\et}/ \left(\Uni,\O_\Uni\right) \simeq \Uni.$$ Hence,
\begin{equation}\label{eq:uni}
\overline \sL^{\et} \simeq \Uni
\end{equation} is an $\i$-topos and under the identification (\ref{eq:uni}), $$\O_\Uni:\Uni \to \cB$$ is a canonical $\G$-structure on $\overline \sL^{\et},$ and $\overline \sL^{\et}$ with this $\G$-structure is a terminal object of $\overline \sL^{\et}.$
\end{proof}

\begin{dfn}
For $\sL$ a small subcategory of $\Str_\G,$ the $\G$-structured $\i$-topos
$$\left(\lbare=\Uni,\O_\Uni\right)$$ is the \textbf{universal $\sL$-\'etendue}.
\end{dfn}

\begin{thm}\label{thm:yonequil}
For $\sD$ a small strong \'etale blossom, the functor $$\dbare \stackrel{\left(i^{\et}_\sD\right)^* \circ y^{\et}}{\longlonglongrightarrow} \Shi\left(\sD^{\et}\right)$$ is an equivalence.
\end{thm}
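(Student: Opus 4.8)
The plan is to exhibit the functor $\left(i^{\et}_\sD\right)^* \circ y^{\et}$ as a composite of two equivalences, rather than to argue full faithfulness and essential surjectivity separately (full faithfulness is in any case already furnished by Theorem \ref{thm:fullyfaithful}). First I would invoke Theorem \ref{thm:trex2}: since $\sD$ is small, $\dbare = \overline{\sD}^{\et}$ is an $\i$-topos, and it carries a terminal object $\Uni$ which is an object of $\dbar$. Being an $\i$-topos, $\dbare$ falls under Remark \ref{rmk:limitpresrep}, which shows that its Yoneda embedding $y^{\et}:\dbare \to \Shi\left(\dbare\right)$ is an equivalence, where here $\Shi\left(\dbare\right)$ denotes the $\icat$ of small-limit-preserving functors $\left(\dbare\right)^{op} \to \iGpd$ in the sense of Definition \ref{dfn:sheavesoverE}. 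On the other hand, Lemma \ref{lem:sheavesthesame}, equation (\ref{eq:smallet}), supplies an equivalence $\Shi\left(\sD^{\et}\right) \simeq \Shi\left(\dbare\right)$ realized by the restriction functor $l^* = \left(i^{\et}_\sD\right)^*$ along the inclusion $i^{\et}_\sD:\sD^{\et} \hookrightarrow \dbare$, but where now $\Shi\left(\dbare\right)$ is read in the sheaf-on-a-strong-\'etale-blossom sense of Definition \ref{dfn:sheaveDet}, applied to the strong \'etale blossom $\dbar$ of Corollary \ref{cor:largestblossom}.

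The step I expect to require the most care, and the genuine content of the argument, is reconciling these two a priori distinct meanings of $\Shi\left(\dbare\right)$, so that the two equivalences can be composed. For this I would appeal to Proposition \ref{prop:loclimsheaf}: a presheaf $F:\left(\dbare\right)^{op} \to \iGpd$ is an $\i$-sheaf in the sense of Definition \ref{dfn:sheaveDet} if and only if $\pi_\cE^*\left(F\right)$ preserves small limits for every object $\cE$ of $\dbar$. I would then show this family of conditions is equivalent to $F$ itself preserving small limits. One direction is immediate by taking $\cE = \Uni$: since $\Uni$ is terminal, $\dbar^{\et}/\Uni \simeq \dbar^{\et}$ and $\pi_\Uni \simeq id$, so $\pi_\Uni^*\left(F\right) = F$ preserves small limits. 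For the converse I would note that the slice projection $\pi_\cE:\dbar^{\et}/\cE \to \dbar^{\et}$ preserves colimits by Proposition \ref{prop:htt1.2.13.8} (using that $\dbare$ is cocomplete), so that $\pi_\cE^{op}$ preserves limits, whence $\pi_\cE^*\left(F\right) = F \circ \pi_\cE^{op}$ preserves small limits whenever $F$ does. This identifies the two categories $\Shi\left(\dbare\right)$; and since every representable presheaf $y^{\et}\left(\cE\right)$ is limit-preserving, it simultaneously identifies the two Yoneda embeddings.

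With this identification in place, the named functor factors as $\dbare \xrightarrow{\,y^{\et}\,} \Shi\left(\dbare\right) \xrightarrow{\,\left(i^{\et}_\sD\right)^*\,} \Shi\left(\sD^{\et}\right)$: indeed, by Proposition \ref{prop:subcanonical}, $\left(i^{\et}_\sD\right)^* \circ y^{\et}$ is precisely the restriction along $i^{\et}_\sD$ of the Yoneda embedding of $\dbare$, which is exactly $l^* \circ y^{\et}$ under the identification above. The first arrow is an equivalence by Remark \ref{rmk:limitpresrep} and the second by Lemma \ref{lem:sheavesthesame}, so their composite, namely $\left(i^{\et}_\sD\right)^* \circ y^{\et}$, is an equivalence, as desired.
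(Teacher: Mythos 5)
Your proof is correct and follows essentially the same route as the paper's: both reduce via Lemma \ref{lem:sheavesthesame} to sheaves on $\dbare$ and then exploit the terminal object $\Uni$ furnished by Theorem \ref{thm:trex2}, for which $\pi_\Uni:\dbare/\Uni \to \dbare$ is an equivalence, through Proposition \ref{prop:loclimsheaf}. The paper invokes condition (iii) of that proposition (representability of $\pi_\Uni^*F$) to deduce essential surjectivity of $y^{\et}$ directly, whereas you route through condition (ii) and Remark \ref{rmk:limitpresrep}; since the equivalence (ii) $\Leftrightarrow$ (iii) is itself supplied by Remark \ref{rmk:limitpresrep}, this is a repackaging rather than a genuinely different argument.
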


\begin{proof}
By Lemma \ref{lem:sheavesthesame}, it suffices to show that the Yoneda embedding $$y^{\et}:\dbare \hookrightarrow \Shi\left(\dbare\right)$$ is an equivalence. Since the Yoneda embedding is full and faithful, it suffices to show it is essentially surjective. Let $F$ be an $\i$-sheaf on $\dbare.$ We wish to show that $F$ is representable. Since $F$ is an $\i$-sheaf on $\dbare,$ by Proposition \ref{prop:loclimsheaf}, $\pi_\Uni^*\left(F\right)$ is representable, where $$\pi_\Uni:\dbare/\Uni \to \dbare$$ is the canonical projection, which is an equivalence. Hence, $F$ is representable.
\end{proof}

The following proposition gives an explicit inverse functor for $\left(i^{\et}_\sD\right)^* \circ y^{\et}$:

\begin{prop}\label{prop:theta}
For $\sD$ a small strong \'etale blossom, the functor
\begin{eqnarray*}
\Shi\left(\sD^{\et}\right)=\Uni &\to& \dbare\\
F &\mapsto& \left(\Uni/F,\O_\Uni|_F\right)
\end{eqnarray*}
is right adjoint to $\left(i^{\et}_\sD\right)^* \circ y^{\et}.$ In particular, it is an equivalence.
\end{prop}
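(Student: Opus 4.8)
The plan is to exhibit the stated functor—call it $\Theta$—as a right adjoint to $\Phi:=\left(i^{\et}_\sD\right)^*\circ y^{\et}$ by a direct computation of mapping spaces. Since $\Phi$ is an equivalence by Theorem \ref{thm:yonequil}, any right adjoint to it is automatically a quasi-inverse, and this yields the final clause ``in particular, it is an equivalence.''

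First I would pin down precisely how the identification $\Shi\left(\sD^{\et}\right)=\Uni$ is made. By Theorem \ref{thm:trex2} (equation (\ref{eq:uni})) the assignment $\Theta_0\colon E\mapsto \Uni/E$ is an equivalence $\Uni\stackrel{\sim}{\longrightarrow}\dbare$: it is the composite of the equivalence $\Uni\stackrel{\sim}{\longrightarrow}\dbare/\Uni$ of Remark \ref{rmk:2352} with the projection $\dbare/\Uni\stackrel{\sim}{\longrightarrow}\dbare$, the latter being an equivalence because $\Uni$ is terminal in $\dbare$. Composing with $\Phi$ gives exactly the identification $\Psi:=\Phi\circ\Theta_0\colon\Uni\stackrel{\sim}{\longrightarrow}\Shi\left(\sD^{\et}\right)$. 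Under this identification the functor of the statement is $F\mapsto \Uni/\Psi^{-1}F=\Theta_0\left(\Psi^{-1}F\right)$, that is, $\Theta=\Theta_0\circ\Psi^{-1}$, and in particular $\Psi\circ\Theta_0^{-1}=\Phi$.

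The heart of the proof is the following chain of natural equivalences, valid for any $\cE$ in $\dbare$ and any object $\tilde F$ of $\Uni$:
$$\Hom_{\dbare}\left(\cE,\Uni/\tilde F\right)\simeq \Hom_{\dbare/\Uni}\left(\cE\to\Uni,\ \Uni/\tilde F\to \Uni\right)\simeq \Hom_{\Uni}\left(\Theta_0^{-1}\cE,\tilde F\right).$$
The first equivalence holds because $\Uni$ is terminal in $\dbare$, so $\dbare\simeq\dbare/\Uni$; the second applies Remark \ref{rmk:2352}, under which the canonical \'etale morphism $\Uni/\tilde F\to\Uni$ corresponds to $\tilde F$ and the terminal map $\cE\to\Uni$ corresponds to the object $\Theta_0^{-1}\cE$ of $\Uni$ classifying $\cE$. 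Taking $\tilde F=\Psi^{-1}F$ and applying the equivalence $\Psi$ to the right-hand side then gives
$$\Hom_{\dbare}\left(\cE,\Theta F\right)\simeq \Hom_{\Uni}\left(\Theta_0^{-1}\cE,\Psi^{-1}F\right)\simeq \Hom_{\Shi\left(\sD^{\et}\right)}\left(\Psi\Theta_0^{-1}\cE,F\right)=\Hom_{\Shi\left(\sD^{\et}\right)}\left(\Phi\cE,F\right),$$
using $\Psi\Theta_0^{-1}=\Phi$. As naturality in both variables is built into each step, this exhibits the adjunction $\Phi\dashv\Theta$.

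The main obstacle here is bookkeeping rather than conceptual difficulty: one must check that each equivalence above ($\dbare\simeq\dbare/\Uni$, Remark \ref{rmk:2352}, and the substitution $\tilde F=\Psi^{-1}F$) is natural in $\cE$ and $F$ simultaneously, and that everything is the \emph{structured} version, taking place in $\dbare=\overline{\sD}^{\et}$ and respecting the restricted structure sheaves $\O_\Uni|_{\tilde F}$; for this I would rely on Corollary \ref{cor:2352} and Remark \ref{rmk:2352} in their structured form, which are precisely what identify $\dbare/\Uni$ with the underlying $\i$-category of $\Uni$. Once $\Phi\dashv\Theta$ is established, Theorem \ref{thm:yonequil} completes the argument, since a functor right adjoint to an equivalence is a quasi-inverse and hence is itself an equivalence.
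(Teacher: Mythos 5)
Your proposal is correct and follows essentially the same route as the paper's own proof: both reduce $\Hom_{\dbare}\left(\cE,\Uni/F\right)$ to a mapping space in the slice $\dbare/\Uni$ using that $\Uni$ is terminal, then identify that slice with $\Uni\simeq\Shi\left(\sD^{\et}\right)$ via Remark \ref{rmk:2352} (Proposition \ref{prop:trivkan}), with Theorem \ref{thm:yonequil} supplying both the compatibility $\Uni/\left(\left(i^{\et}_\sD\right)^*y^{\et}\cE\right)\simeq\cE$ (your tautological $\Psi\circ\Theta_0^{-1}=\Phi$) and the final clause. Your only departure is to make the identification $\Shi\left(\sD^{\et}\right)=\Uni$ explicit before computing, which is harmless bookkeeping rather than a different argument.
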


\begin{proof}
Let $F$ be an $\i$-sheaf on $\sD^{\et}$ and let $\cE$ be an object of $\dbare.$ Notice that by Theorem \ref{thm:yonequil} and Remark \ref{rmk:2352}, there is a canonical equivalence $$\Uni/\left(\left(i^{\et}_\sD\right)^*\overline{y}^{\et}\left(\cE\right)\right) \simeq \cE,$$ hence there is a canonical equivalence
$$\Hom_{\dbare}\left(\cE,\Uni/F\right) \simeq \Hom_{\dbare}\left(\Uni/\left(\left(i^{\et}_\sD\right)^*\overline{y}^{\et}\left(\cE\right)\right),\Uni/F\right).$$ Since $\Uni$ is terminal in $\dbare,$ it follows that one has a canonical equivalence
$$\Hom_{\dbare}\left(\Uni/\left(\left(i^{\et}_\sD\right)^*\overline{y}^{\et}\left(\cE\right)\right),\Uni/F\right) \simeq \Hom_{\dbare/\Uni}\left(\Uni/\left(\left(i^{\et}_\sD\right)^*\overline{y}^{\et}\left(\cE\right)\right) \to \Uni,\Uni/F \to \Uni\right).$$
By Proposition \ref{prop:trivkan}, one finally has the equivalence
$$\Hom_{\dbare/\Uni}\left(\Uni/\left(\left(i^{\et}_\sD\right)^*\overline{y}^{\et}\left(\cE\right)\right) \to \Uni,\Uni/F \to \Uni\right) \simeq \Hom_{\Shi\left(\sD^{\et}\right)}\left(\left(i^{\et}_\sD\right)^*\overline{y}^{\et}\left(\cE\right),F\right).$$
\end{proof}

\begin{rmk}\label{rmk:theta}
It follows from the Yoneda lemma that for a given $\i$-sheaf $F,$ $\left(\Uni/F,\O_\Uni|_F\right)$ is uniquely determined by the property that for each object $\left(D,\O_D\right)$ in $\sD,$ there is a equivalence of $\i$-groupoids
$$\Hom_{\dbare}\left(\left(D,\O_D\right),\left(\Uni/F,\O_\Uni|_F\right)\right) \simeq F\left(\left(D,\O_D\right)\right)$$ natural in $\left(D,\O_D\right).$
\end{rmk}

\begin{lem}\label{lem:ntruncatedlocalic}
If $\left(\cB,\G\right)$ is a geometric structure, with $\cB$ $n$-localic, then any $n$-localic object $\left(\cE,\O_\cE\right)$ in $\Str\left(\G\right)$ is $n$-truncated.
\end{lem}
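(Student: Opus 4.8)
The plan is to show directly that $\left(\cE,\O_\cE\right)$ is an $n$-truncated object of $\Str\left(\G\right)$, i.e. that $\Hom_{\Str\left(\G\right)}\left(\left(\cF,\O_\cF\right),\left(\cE,\O_\cE\right)\right)$ is $n$-truncated for every object $\left(\cF,\O_\cF\right)$. Since $\Str\left(\G\right)$ is a subcategory of the lax slice $\T^{\mathit{lax}}/\cB$ of Section \ref{sec:lax}, this mapping space sits over $\Hom_\T\left(\cF,\cE\right)$ via the projection remembering only the underlying geometric morphism, and the fibre over a geometric morphism $\varphi:\cF \to \cE$ is the subspace of $\Hom_{\Geo\left(\cF,\cB\right)}\left(\varphi^*\O_\cE^*,\O_\cF^*\right)$ consisting of those natural transformations $\alpha:\varphi^*\O_\cE^* \Rightarrow \O_\cF^*$ that lie in $\G^\cF_R$. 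As $\G^\cF_R$ is the right class of a factorization system it is closed under equivalence, so this fibre is a union of connected components of the full mapping space $\Hom_{\Geo\left(\cF,\cB\right)}\left(\varphi^*\O_\cE^*,\O_\cF^*\right)$. Because $n$-truncated maps are stable under composition, it suffices to prove two things: that the base $\Hom_\T\left(\cF,\cE\right)$ is $n$-truncated, and that each such fibre is $n$-truncated.

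For the base I would use that $\cE$ is $n$-localic: writing $\cE \simeq \nu^\i_n\left(\cE_0\right)$ for an $n$-topos $\cE_0$, the adjunction $\tau^\i_{\le n-1} \dashv \nu^\i_n$ of Definition \ref{dfn:nloc} gives $\Hom_\T\left(\cF,\cE\right) \simeq \Hom_{\Tn}\left(\tau^\i_{\le n-1}\cF,\cE_0\right)$, and this is $n$-truncated since $\Tn$ is an $\left(n+1,1\right)$-category.

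The heart of the argument, and the step I expect to be the main obstacle, is the truncatedness of the fibre, where I would exploit that $\cB$ is $n$-localic. The objects of $\Geo\left(\cF,\cB\right)$ are the left exact, colimit-preserving functors $\cB \to \cF$, so $\Geo\left(\cF,\cB\right)$ is a full subcategory of $\Fun^{L}\left(\cB,\cF\right)$ and the fibre is a space of natural transformations of inverse image functors. I would choose a presentation $\cB \simeq \Shi\left(\C,J\right)$ with $\C$ an $\left(n,1\right)$-category, which is possible as $\cB$ is $n$-localic. Restriction along $\C \hookrightarrow \cB$ gives a full and faithful embedding $\Fun^{L}\left(\cB,\cF\right) \hookrightarrow \Fun\left(\C,\cF\right)$ (as in Remark \ref{rmk:loclocsmall}, via Propositions 5.5.4.20 and 5.1.5.6 of \cite{htt}), so the fibre is identified with a space of natural transformations between the restricted functors $\C \to \cF$. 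Now the representable sheaves generating $\cB$ are $\left(n-1\right)$-truncated, since $\C$ is an $\left(n,1\right)$-category, exactly as in the proof of Lemma \ref{lem:lalalucas}; and inverse image functors, being left exact, carry $\left(n-1\right)$-truncated objects to $\left(n-1\right)$-truncated objects by Proposition 5.5.6.16 of \cite{htt}. Thus both restricted functors $\C \to \cF$ take values in $\tau_{\le n-1}\cF$. Since a mapping space in $\cF$ whose target is $\left(n-1\right)$-truncated is itself $\left(n-1\right)$-truncated, and the space of natural transformations is a limit of such mapping spaces, it is $\left(n-1\right)$-truncated; a union of its components is then again $\left(n-1\right)$-truncated, hence a fortiori $n$-truncated.

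Combining the two estimates, the projection from $\Hom_{\Str\left(\G\right)}\left(\left(\cF,\O_\cF\right),\left(\cE,\O_\cE\right)\right)$ to the $n$-truncated space $\Hom_\T\left(\cF,\cE\right)$ has $n$-truncated fibres, so its total space is $n$-truncated, which is what is needed. The only genuine subtlety is the fibre computation: the mechanism producing $n$-truncation is the interplay between left exactness of inverse image functors and the fact that $n$-localicity of $\cB$ lets one generate it under colimits by $\left(n-1\right)$-truncated objects; without the hypothesis on $\cB$ there would be no reason for these natural transformation spaces to be truncated at all.
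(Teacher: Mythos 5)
Your proof is correct and follows essentially the same route as the paper: both factor the mapping space through the projection $\phi$ to $\Hom_{\T}\left(\cF,\cE\right)$, use $n$-localicity of $\cE$ to truncate the base, and reduce via HTT 5.5.6.14 to showing each fibre, a union of components of $\Hom_{\Geo\left(\cF,\cB\right)}\left(\varphi^*\O_\cE,\O_\cF\right)$, is $n$-truncated using $n$-localicity of $\cB$. The only difference is that the paper simply asserts this last fibre computation (deferring to Lemma 2.6.19 of \cite{htt}), whereas you supply the argument explicitly via an $\left(n,1\right)$-site for $\cB$, full faithfulness of restriction on colimit-preserving functors, and preservation of $\left(n-1\right)$-truncated objects by left exact functors; this is a correct filling-in of the detail the paper omits.
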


\begin{proof}
We will closely follow the proof of Lemma 2.6.19 of \cite{htt}. Let $\left(\cF,\O_\cF\right)$ be an arbitrary object of $\Str\left(\G\right)$. Let $$\phi:\Hom\left(\left(\cF,\O_\cF\right),\left(\cE,\O_\cE\right)\right) \to \Hom\left(\cF,\cE\right)$$ be the obvious map. Since $\cE$ is $n$-localic, the right hand side is equivalent to $\Hom\left(\tau_{\le n-1} \cF,\tau_{\le n-1} \cE\right)$ which is an $n$-groupoid, i.e. an $n$-truncated object of the $\icat$ of $\i$-groupoids. By \cite{htt}, Proposition 5.5.6.14, $\Hom\left(\left(\cF,\O_\cF\right),\left(\cE,\O_\cE\right)\right)$ is $n$-truncated if and only if $\phi$ is. We therefore must prove that each homotopy fiber of $\phi$ is $n$-truncated. An arbitrary point of $\Hom\left(\cF,\cE\right)$ is given by a geometric morphism $$f:\cF \to \cE.$$ The homotopy fiber of $\phi$ over such a point can be identified with the full subcategory of $$\Hom_{\Geo\left(\cF,\cB\right)} \left(f^*\O_\cE,\O_\cE\right)$$ spanned by those morphisms in $\G^{\cF}_R.$ Since $\cB$ is $n$-localic, this is $n$-truncated so we are done.
\end{proof}

\begin{cor}\label{cor:ntruncatedlocalic}
If $\left(\cB,\G\right)$ is the underlying geometric structure, with $\cB$ $n$-localic, and $\sL$ is a small subcategory of $\Str_\G$ all of whose objects are $n$-localic, then the universal $\sL$-\'etendue $\Uni$ is $\left(n+1\right)$-localic.
\end{cor}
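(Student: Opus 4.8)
The plan is to exhibit $\Uni$ as the $\i$-topos of $\i$-sheaves on an essentially small $\left(n+1,1\right)$-category and then to invoke the standard relationship between such sheaf topoi and $\left(n+1\right)$-localic $\i$-topoi. Since $\sL$ is small and $n$-localic, Proposition \ref{prop:existbloss} provides a small, $n$-localic strong \'etale blossom $\sD$ for $\lbar$. For this $\sD$ we have $\lbar=\dbar$, so $\Uni=\lbare=\dbare$, and Theorem \ref{thm:yonequil} (with explicit inverse given by Proposition \ref{prop:theta}) supplies a canonical equivalence $\Uni\simeq\Shi\left(\sD^{\et}\right)$. It therefore suffices to prove that $\Shi\left(\sD^{\et}\right)$ is $\left(n+1\right)$-localic.

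First I would check that $\sD^{\et}$ is an essentially small $\left(n+1,1\right)$-category. Each object of $\sD$ is $n$-localic and $\cB$ is $n$-localic by hypothesis, so Lemma \ref{lem:ntruncatedlocalic} shows that every object of $\sD$ is $n$-truncated in $\Str\left(\G\right)$. Hence, for all $\cD$ and $\cD'$ in $\sD$, the mapping space $\Hom_{\Str\left(\G\right)}\left(\cD,\cD'\right)$ is $n$-truncated; as $\Hom_{\sD^{\et}}\left(\cD,\cD'\right)$ is a union of connected components of this space (being \'etale is a property of a morphism), it too is $n$-truncated. Thus the mapping spaces of $\sD^{\et}$ are $n$-groupoids, i.e. $\sD^{\et}$ is an essentially small $\left(n+1,1\right)$-category.

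Next I would identify $\Shi\left(\sD^{\et}\right)$ with the $\i$-topos of $\i$-sheaves for the \'etale Grothendieck topology on $\sD^{\et}$. By Remark \ref{rmk:canonstr} the \'etale topology restricts on each slice $\sD^{\et}/\cD\simeq\cD$ to the epimorphism topology, so the descent condition of Definition \ref{dfn:sheaveDet} is exactly the sheaf condition for the \'etale topology; this exhibits $\Shi\left(\sD^{\et}\right)\hookrightarrow\Pshi\left(\sD^{\et}\right)$ as the topological localization generated by the covering sieves. Since $\sD^{\et}$ is an essentially small $\left(n+1,1\right)$-category, $\Pshi\left(\sD^{\et}\right)$ is $\left(n+1\right)$-localic, and topological localizations of $\left(n+1\right)$-localic $\i$-topoi remain $\left(n+1\right)$-localic (see \cite{htt}, Section 6.4.5). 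Therefore $\Uni\simeq\Shi\left(\sD^{\et}\right)$ is $\left(n+1\right)$-localic.

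The step I expect to be the main obstacle is the last one: one must ensure that the localization defining $\Shi\left(\sD^{\et}\right)$ is genuinely topological and not merely left exact. This matters because for $\i$-topoi a general left exact localization of an $\left(n+1\right)$-localic $\i$-topos --- for instance a hypercompletion --- need not be $\left(n+1\right)$-localic, and it is not enough to observe that $\Uni$ is generated under colimits by its $n$-truncated objects (which follows quickly from Lemma \ref{lem:ntruncatedlocalic} and Corollary \ref{cor:stetgen}), since that property alone does not force $\left(n+1\right)$-localic-ness. Closing this gap rests on the verification, via the slice identifications of Remark \ref{rmk:canonstr}, that the \'etale topology on $\sD^{\et}$ is an honest Grothendieck topology whose sheaf condition coincides with Definition \ref{dfn:sheaveDet}.
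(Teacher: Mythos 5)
Your skeleton is exactly the paper's: produce a small, $n$-localic strong \'etale blossom $\sD$ (you cite Proposition \ref{prop:existbloss}; the paper invokes Proposition \ref{prop:stronglocsmallblossom}, to the same effect), apply Theorem \ref{thm:yonequil} to get $\Uni\simeq\Shi\left(\sD^{\et}\right)$, and use Lemma \ref{lem:ntruncatedlocalic} to see that $\sD^{\et}$ is an essentially small $\left(n+1,1\right)$-category. (One small amendment to your truncation step: a morphism of $\Str^{\et}\left(\G\right)$ is not merely an \'etale morphism of $\Str\left(\G\right)$ --- the structure-sheaf comparison is additionally required to be an equivalence --- so $\Hom_{\sD^{\et}}\left(\cD,\cD'\right)$ is not literally a union of components of $\Hom_{\Str\left(\G\right)}\left(\cD,\cD'\right)$; but the fibration argument in the proof of Lemma \ref{lem:ntruncatedlocalic} adapts and the conclusion stands.) You are also right, creditably so, about where the real content lies: the paper's own proof simply asserts that since $\sD^{\et}$ is an $\left(n+1\right)$-category, $\lbare$ is $\left(n+1\right)$-localic, and your observation that colimit-generation by $n$-truncated objects is insufficient is correct (nontrivial hypercompletions are the standard counterexample, being left exact but cotopological localizations).

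However, your closure of that gap does not go through as cited. Remark \ref{rmk:canonstr} identifies the topology induced on each slice $\sD^{\et}/\cD\simeq\cD$ with the epimorphism topology, but the sheaf condition of Definition \ref{dfn:sheaveDet} is that $\pi_\cD^*F$ lies in the essential image of $i_\cD$, i.e.\ that it is representable (equivalently, small-limit-preserving) as a presheaf on the $\i$-topos $\cD$ --- \emph{not} that it satisfies descent for the epimorphism topology. Equating these two conditions is precisely the point the paper isolates in Remark \ref{rmk:limitpresrep}: the identification is known for $n$-topoi with $\mathbf{Gpd}_n$-values, and is explicitly stated to be unknown for $\i$-topoi with $\iGpd$-values. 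Your slices are $n$-localic $\i$-topoi with $\iGpd$-valued presheaves, a case covered by nothing stated in the paper, so your sentence ``the descent condition of Definition \ref{dfn:sheaveDet} is exactly the sheaf condition for the \'etale topology'' asserts rather than proves that the localization $\Shi\left(\sD^{\et}\right)\hookrightarrow\Pshi\left(\sD^{\et}\right)$ is topological. A repair should exploit $n$-localicness directly: each slice carries a small $\left(n,1\right)$-site $\C_\cD$ (Definition \ref{dfn:strongblossom}, chosen as in the proof of Proposition \ref{prop:existbloss}) for which $\cD\simeq\Shi\left(\C_\cD,J_\cD\right)$ is a genuinely topological localization by Theorem \ref{thm:nlocalicsheaf} and Section 6.4.5 of \cite{htt}; one must then show that the condition of being right Kan extended from $\C_\cD$ with $J_\cD$-sheaf restriction --- which is what membership in the image of $i_\cD=\left(r_\cD\right)_*\circ R_\cD$ amounts to, per the proof of Proposition \ref{prop:largesite} --- is itself descent along honest covering sieves, a comparison-lemma-type argument that your proposal does not contain.
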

\begin{proof}
By Theorem \ref{thm:yonequil}, $$\overline{\sL}^{\et} \simeq \Shi\left(\sD^{\et}\right),$$ for a small and locally small strong \'etale blossom $\sD$, which we moreover may assume consists of $n$-localic objects by Proposition \ref{prop:stronglocsmallblossom}. It follows from Lemma \ref{lem:ntruncatedlocalic} that $\sD^{\et}$ is an $\left(n+1\right)$-category, so hence $\lbare$ is $\left(n+1\right)$-localic, as desired.
\end{proof}

Suppose that $\sD$ is a small and locally small strong \'etale blossom. Consider the inclusion $$j:\sD^{\et} \to \sD.$$ The left Kan extension
$$\Lan_y^{\et}\left(y \circ j\right):\Pshi\left(\sD^{\et}\right) \to \Shi\left(\sD\right),$$ has a right adjoint $R_j$, which by the Yoneda lemma is given by the formula $$R_j\left(F\right)\left(\cD\right) \simeq j^*F\left(\cD\right).$$ By Definition \ref{dfn:sheaveD}, this implies that the essential image of $R_j=j^*$ lands in sheaves. Hence, there is an induced functor $$j_!:\Shi\left(\sD^{\et}\right) \to \Shi\left(\sD\right),$$ which is left adjoint to $j^*.$

\begin{dfn}\label{dfn:prolongation}
We call the functor $j_!$ above the \textbf{\'etale prolongation functor} of $\sD.$
\end{dfn}

\begin{thm}\label{thm:classificationsmall}
An $\i$-sheaf $\X$ on a small and locally small strong \'etale  blossom $\sD$ is a Deligne-Mumford stack if and only if it is in the essential image of the \'etale prolongation functor $j_!$.
\end{thm}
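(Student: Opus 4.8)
The plan is to reduce the statement to the commutativity of a single square and then verify that square by a density argument. By definition, together with Theorem \ref{thm:fullyfaithful}, the $\icat$ $\dm\left(\sD\right)$ of Deligne-Mumford stacks is the essential image of the fully faithful functor $i^*_\sD \circ y:\dbar \to \Shi\left(\sD\right)$. Writing $\Phi:=\left(i^{\et}_\sD\right)^* \circ y^{\et}:\dbare \to \Shi\left(\sD^{\et}\right)$ for the equivalence of Theorem \ref{thm:yonequil}, and $j:\dbare \to \dbar$ for the canonical functor forgetting that morphisms are \'etale (the same symbol as in Proposition \ref{prop:bothfuncolims}), I claim that the square
$$\xymatrix{\dbare \ar[r]^-{\Phi} \ar[d]_-{j} & \Shi\left(\sD^{\et}\right) \ar[d]^-{j_!} \\ \dbar \ar[r]^-{i^*_\sD \circ y} & \Shi\left(\sD\right)}$$
commutes up to canonical equivalence. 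Granting this, the theorem is immediate: since $\Phi$ is an equivalence, the essential image of $j_!$ equals that of $j_! \circ \Phi$; by commutativity the latter equals that of $\left(i^*_\sD \circ y\right) \circ j$; and since $j:\dbare \to \dbar$ is a bijection on objects, hence essentially surjective, this in turn equals the essential image of $i^*_\sD \circ y$, namely $\dm\left(\sD\right)$.

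To prove the square commutes, I will first observe that both composites $j_! \circ \Phi$ and $\left(i^*_\sD \circ y\right) \circ j$ from $\dbare$ to $\Shi\left(\sD\right)$ preserve small colimits: the functor $\Phi$ does by Proposition \ref{prop:bothfuncolims} and $j_!$ does as a left adjoint (Definition \ref{dfn:prolongation}), while $\left(i^*_\sD \circ y\right) \circ j$ does directly by Proposition \ref{prop:bothfuncolims}. Transporting along the equivalence $\Phi$, both are colimit-preserving functors out of $\Shi\left(\sD^{\et}\right)$, which is an $\i$-topos by Corollary \ref{cor:colimpset}. By Propositions 5.5.4.20 and 5.1.5.6 of \cite{htt}, exactly as used in the proof of Lemma \ref{lem:sheavesthesame}, restriction along the sheafified Yoneda embedding $\sD^{\et} \to \Shi\left(\sD^{\et}\right)$ embeds the $\icat$ $\Fun^{L}\left(\Shi\left(\sD^{\et}\right),\Shi\left(\sD\right)\right)$ of colimit-preserving functors fully faithfully into $\Fun\left(\sD^{\et},\Shi\left(\sD\right)\right)$. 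Hence it suffices to produce a natural equivalence between the restrictions of the two composites to the generating subcategory $\sD^{\et}$, which strongly generates $\dbare$ by Corollary \ref{cor:stetgen}.

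This restriction check is a direct computation. For $\cD$ an object of $\sD^{\et}$, the argument in the proof of Theorem \ref{thm:fullyfaithful} identifies $\Phi\left(\cD\right)=\left(i^{\et}_\sD\right)^* y^{\et}\left(\cD\right)$ with the representable sheaf $y_{\sD^{\et}}\left(\cD\right)$ on $\sD^{\et}$ (a sheaf by the corollary following Proposition \ref{prop:subcanonical}); since $j_!$ is the corestriction to sheaves of the left Kan extension of $y \circ j$ along the Yoneda embedding, it sends $y_{\sD^{\et}}\left(\cD\right)$ to $y_\sD\left(j\left(\cD\right)\right)=y_\sD\left(\cD\right)$. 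On the other side, $\left(i^*_\sD \circ y\right)\left(j\left(\cD\right)\right)=i^*_\sD y\left(\cD\right) \simeq y_\sD\left(\cD\right)$ by the same computation in the proof of Theorem \ref{thm:fullyfaithful}. Both composites therefore agree with $y_\sD \circ j$ on $\sD^{\et}$, naturally in $\cD$, which supplies the required natural equivalence and completes the proof.

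The main obstacle I anticipate is the descent of the equivalence from the generating subcategory $\sD^{\et}$ to all of $\dbare$: objectwise agreement on representables is straightforward, but promoting it to a natural equivalence of colimit-preserving functors requires the full faithfulness of restriction to representables for $\Fun^{L}$, and hence the verification that both composites genuinely preserve small colimits, which rests on Proposition \ref{prop:bothfuncolims}. Everything else reduces to formal bookkeeping with essential images and the equivalence of Theorem \ref{thm:yonequil}.
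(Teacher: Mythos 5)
Your proposal is correct and takes essentially the same approach as the paper: the paper's proof establishes exactly your commuting square (its diagram (\ref{eq:25})), verifies that both composites $\dbare \to \Shi\left(\sD\right)$ preserve small colimits via Proposition \ref{prop:bothfuncolims} and the adjunction defining $j_!$, and then invokes Propositions 5.5.4.20 and 5.1.5.6 of \cite{htt} to reduce to agreement on $\sD^{\et}$, precisely as you do. The only cosmetic difference is that the paper treats $\dbare$ directly as an accessible reflective localization of $\Pshi\left(\sD^{\et}\right)$ (via Corollaries \ref{cor:dense} and \ref{cor:5.2.9.4} and Proposition \ref{prop:sheafification}), whereas you transport along the equivalence of Theorem \ref{thm:yonequil} to work out of the $\i$-topos $\Shi\left(\sD^{\et}\right)$ — an equivalent formulation.
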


\begin{proof}
By Corollary \ref{cor:dense} and Corollary \ref{cor:5.2.9.4}, $\dbare$ is a reflective subcategory of $\Pshi\left(\sD^{\et}\right).$ The reflector may be identified with the reflector $$a:\Pshi\left(\sD^{\et}\right) \to \Shi\left(\sD^{\et}\right)$$ via the equivalence $$\dbare \stackrel{\overline{y}^{\et}}{\underset{{\widetilde{}}} \longhookrightarrow \:} \Shi\left(\dbare\right) \stackrel{\left(i^{\et}_\sD\right)^*}{\underset{{\widetilde{}}} \longlonglongrightarrow  \:} \Shi\left(\sD^{\et}\right),$$ ensured by Theorem \ref{thm:yonequil} and Lemma \ref{lem:sheavesthesame}, and hence is accessible by Proposition \ref{prop:sheafification}.
We claim that the following diagram commutes up to canonical homotopy:
\begin{equation}\label{eq:25}
\xymatrix{ \dbare \ar[r]^-{{\overline{j}}} \ar@{^{(}->}[d]_-{{\overline{y}^{\et}}}^{\rotatebox[origin=C]{90}{$\sim$}} & \dbar \ar@{_{(}->}[d]^-{\overline{y}}\\
\Shi\left(\dbare\right) \ar[d]_-{\left(i^{\et}_\sD\right)^\ast}^{\rotatebox[origin=C]{90}{$\sim$}} & \Shi\left(\dbar\right) \ar[d]^-{\left(i_\sD\right)^\ast}_{\rotatebox[origin=C]{90}{$\sim$}}\\
\Shi\left(\sD^{\et}\right) \ar[r]^-{j_!} & \Shi\left(\sD\right).}
\end{equation}
The fact that $j_!$ is a left adjoint together with Proposition \ref{prop:bothfuncolims} implies that both composite functors $$\dbare \to \Shi\left(\sD\right)$$ preserve small colimits. As $\dbare$ is an accessible localization of $\Pshi\left(\sD^{\et}\right),$ by Proposition 5.5.4.20 and Theorem 5.1.5.6 of \cite{htt}, it suffices to show that both functors agree after being restricted to $\sD^{\et},$ which is true by definition. It follows that the essential image of $j_!$ is equivalent to the essential image of $i_\sD^* \circ y,$ which is precisely the $\icat$ $\dm\left(\sD\right)$ of Deligne-Mumford stacks, by definition.
\end{proof}

The following proposition gives a more explicit description of the Deligne-Mumford stack associated to an $\i$-sheaf on $\sD^{\et}$:

\begin{prop}\label{prop:explicit}
Let $\sD$ be a small and locally small strong \'etale blossom with associated universal \'etendue $\Uni.$ Then if $F$ is an $\i$-sheaf on $\sD^{\et},$ the associated Deligne-Mumford stack $j_!\left(F\right)$ is the functor of points of $\Uni/F.$
\end{prop}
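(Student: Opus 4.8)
The plan is to deduce the statement almost formally from two results already in hand: the homotopy commutative square \eqref{eq:25} established in the proof of Theorem \ref{thm:classificationsmall}, and the identification of $\Uni/F$ with the $\sD$-\'etendue corresponding to $F$ furnished by Proposition \ref{prop:theta}. The whole argument amounts to plugging the object $\Uni/F$ into that square and reading off the two legs.

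First I would recall that, by Theorem \ref{thm:yonequil} together with Proposition \ref{prop:theta}, the assignment $F \mapsto \left(\Uni/F,\O_\Uni|_F\right)$ is an inverse to the equivalence $\left(i^{\et}_\sD\right)^* \circ y^{\et}:\dbare \stackrel{\sim}{\longrightarrow} \Shi\left(\sD^{\et}\right)$. In particular, for any $\i$-sheaf $F$ on $\sD^{\et}$ there is a canonical equivalence $\left(i^{\et}_\sD\right)^* \circ \overline{y}^{\et}\left(\Uni/F\right) \simeq F$, exhibiting $\Uni/F$ as the \'etendue whose $\i$-sheaf of \'etale points is $F$. (Here I would be careful to note that the Yoneda embedding $\overline{y}^{\et}$ appearing in the left-hand column of \eqref{eq:25} is the same one used in Theorem \ref{thm:yonequil}, so that the two equivalences may legitimately be composed.)

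Next I would invoke the square \eqref{eq:25} itself, which asserts that the outer diagram commutes up to canonical homotopy, i.e. $\left(i_\sD\right)^* \circ \overline{y} \circ \overline{j} \simeq j_! \circ \left(i^{\et}_\sD\right)^* \circ \overline{y}^{\et}$ as functors $\dbare \to \Shi\left(\sD\right)$. Evaluating both sides at $\Uni/F$ and using the equivalence of the previous step yields
$$j_!\left(F\right) \simeq j_! \circ \left(i^{\et}_\sD\right)^* \circ \overline{y}^{\et}\left(\Uni/F\right) \simeq \left(i_\sD\right)^* \circ \overline{y} \circ \overline{j}\left(\Uni/F\right).$$
Finally, since the canonical functor $\overline{j}:\dbare \to \dbar$ acts as the identity on objects, $\overline{j}\left(\Uni/F\right)$ is just $\Uni/F$ regarded as an object of $\dbar$, and $\left(i_\sD\right)^* \circ \overline{y}$ is by definition the functor of points, i.e. the Deligne-Mumford stack associated to an $\sD$-\'etendue. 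Hence $j_!\left(F\right)$ is the functor of points of $\Uni/F$, as claimed.

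The argument is essentially bookkeeping, so there is no substantial analytic obstacle; the only delicate point — and the step I would check most carefully — is the compatibility of the naturality in the second paragraph, namely that the inverse equivalence of Proposition \ref{prop:theta} genuinely identifies with the composite down the left column of \eqref{eq:25}. Once that compatibility is granted (it is already implicit in how the square was constructed from Lemma \ref{lem:sheavesthesame} and Theorem \ref{thm:yonequil}), the conclusion follows by simple substitution, and no further computation is required.
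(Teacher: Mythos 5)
Your proposal is correct and follows essentially the same route as the paper's own proof: both identify $F \mapsto \left(\Uni/F,\O_\Uni|_F\right)$ as the inverse of $\left(i^{\et}_\sD\right)^* \circ \overline{y}^{\et}$ via Proposition \ref{prop:theta} and then read the conclusion off the homotopy commutative square (\ref{eq:25}) from the proof of Theorem \ref{thm:classificationsmall}. The paper merely phrases the inverse as an equivalence $\Shi\left(\sD^{\et}\right) \simeq \dbare/\Uni$ (harmless, since $\Uni$ is terminal in $\dbare$), so your more explicit substitution argument is just a fleshed-out version of the same two-step deduction.
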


\begin{proof}
Denote by $\psi$ the equivalence
\begin{eqnarray*}
\Shi\left(\sD^{\et}\right) &\stackrel{\sim}{\longlongrightarrow}& \dbare/\Uni\\
F &\mapsto& \left(\Uni/F \to \Uni\right)
\end{eqnarray*}
which is inverse to the equivalence
$$\dbare/\Uni \stackrel{\sim}{\longlongrightarrow} \dbare \stackrel{\left(i^{\et}_\sD\right)^* \circ {\overline{y}^{\et}}}{\underset{{\widetilde{}}} \longlonglongrightarrow  \:} \Shi\left(\sD^{\et}\right).$$ Since the diagram (\ref{eq:25}) commutes, the result follows.
\end{proof}

%Denote the composite
%$$\Shi\left(\sD^{\et}\right) \stackrel{\sim}{\longlongrightarrow} \dbare/\Uni \to \dbare \stackrel{j}{\longrightarrow} \dbar \stackrel{i^*_\sD \circ \overline{y}}{\longlongrightarrow} \Shi\left(\sD\right)$$ by $\psi.$ The functor $\psi$ preserves colimits by Proposition \ref{prop:bothfuncolims}. Notice that the equivalence $$\Shi\left(\sD^{\et}\right) \stackrel{\sim}{\longlongrightarrow} \dbare/\Uni$$ is given by sending an $\i$-sheaf $F$ to the \'etale map $$\Uni/F \to \Uni.$$ It follows that if $F$ is an $\i$-sheaf, $$\psi\left(F\right)\simeq i^*_\sD \circ \overline{y}\left(\Uni/F\right).$$
%In particular, if $F=y^{\et}\left(\cD\right)$ is representable, $$\psi\left(y^{\et}\left(\cD\right)\right)\simeq y\left(\cD\right).$$ So $\psi$ is colimit preserving and agrees with $j_!$ in representables, and hence it follows that $j_! \simeq \psi.$ The result now follows.
%\end{proof}

\begin{dfn}
A geometric morphism $f:\cE \to \cF$ between $n$-topoi, with $0 \le n \le \i,$ is \textbf{connected} if the inverse image functor $f^*:\cF \to \cE$ is full and faithful.
\end{dfn}

\begin{prop}\label{prop:connected}
Let $\sD$ be a small and locally small strong \'etale blossom. Then the \'etale prolongation functor $$j_!:\Shi\left(\sD^{\et}\right) \to \Shi\left(\sD\right),$$ induces a connected geometric morphism $$\Lambda:\Shi\left(\sD\right)/j_!\left(1\right) \to \Shi\left(\sD^{\et}\right).$$
\end{prop}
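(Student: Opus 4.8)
The plan is to construct $\Lambda$ by writing down its inverse image functor explicitly. Define
$$\Lambda^{\ast}:\Shi\left(\sD^{\et}\right) \to \Shi\left(\sD\right)/j_!\left(1\right), \qquad F \mapsto \left(j_!\left(F\right) \xrightarrow{\,j_!\left(!_F\right)\,} j_!\left(1\right)\right),$$
where $!_F:F \to 1$ is the canonical map to the terminal sheaf; composing with the \'etale projection $\pi:\Shi\left(\sD\right)/j_!\left(1\right) \to \Shi\left(\sD\right)$ recovers $j_!$ itself. The organizing idea is to rewrite $\Lambda^{\ast}$ through the universal \'etendue. Writing $P:=i^{\ast}_\sD \circ \overline{y}:\dbar \to \Shi\left(\sD\right)$ for the fully faithful functor of points of Theorem \ref{thm:fullyfaithful}, Proposition \ref{prop:explicit} (via diagram (\ref{eq:25})) identifies $j_!\left(F\right)$ with $P\left(\Uni/F\right)$ and $j_!\left(1\right)$ with $P\left(\Uni/1\right)=P\left(\Uni\right)$. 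Since $\Uni$ is terminal in $\dbare$ (Theorem \ref{thm:trex2}) and $\Shi\left(\sD^{\et}\right)\simeq \dbare$ (Theorem \ref{thm:yonequil}), one checks that $\Lambda^{\ast}$ is identified with the composite
$$\Shi\left(\sD^{\et}\right) \simeq \dbare/\Uni \hookrightarrow \dbar/\Uni \longrightarrow \Shi\left(\sD\right)/j_!\left(1\right),$$
where the middle arrow $\iota$ forgets that a morphism over $\Uni$ is \'etale and the last arrow is the slice over $\Uni$ of $P$.

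Next I would verify that $\Lambda$ is a geometric morphism. For colimit preservation, $\pi_! \circ \Lambda^{\ast}=j_!$ preserves small colimits (it is a left adjoint) and the forgetful functor $\pi_!$ creates colimits in the slice, so $\Lambda^{\ast}$ preserves them; both sides being $\i$-topoi, hence presentable, $\Lambda^{\ast}$ then admits a right adjoint $\Lambda_*$ by the adjoint functor theorem. For left exactness, $\Lambda^{\ast}$ plainly preserves the terminal object, since $\Lambda^{\ast}\left(1\right)=\left(j_!\left(1\right)\xrightarrow{\mathrm{id}} j_!\left(1\right)\right)$ is terminal in the slice; as finite limits are built from the terminal object and pullbacks, and pullbacks in a slice are computed in the base, it suffices to show $j_!$ preserves pullbacks. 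Here the factorization above helps: the equivalence and $\iota$ are left exact — $\iota$ preserves pullbacks because a pullback of \'etale maps over $\Uni$ is computed as in $\Str\left(\G\right)$ and stays \'etale (Corollary \ref{cor:2.3.20}, Corollary \ref{cor:pullstabet}) — while $P$ preserves every limit that exists in $\dbar$, since the representables $\overline{y}\left(\cD\right)$ ($\cD\in\sD$) generate $\Shi\left(\sD\right)$ under colimits and each $\Hom_{\dbar}\left(\cD,-\right)$ preserves limits. Hence $\Lambda^{\ast}$ is left exact and $\Lambda$ is a geometric morphism.

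Finally, connectedness is the assertion that $\Lambda^{\ast}$ is fully faithful, and through the same factorization this reduces to the full faithfulness of the inclusion $\iota:\dbare/\Uni \hookrightarrow \dbar/\Uni$, the outer equivalence and the sliced $P$ being fully faithful by Theorem \ref{thm:fullyfaithful}. This is exactly where Proposition \ref{prop:6.3.5.9} is decisive: given objects $\left(\cE\to\Uni\right)$ and $\left(\cE'\to\Uni\right)$ of $\dbare/\Uni$, both structure maps are \'etale, so any morphism $\cE \to \cE'$ over $\Uni$ has \'etale composite with $\cE'\to\Uni$ and is therefore \'etale; thus $\Hom_{\dbar/\Uni}$ and $\Hom_{\dbare/\Uni}$ agree on these objects and $\iota$ is fully faithful. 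I expect the main obstacle to be the bookkeeping of the first two paragraphs — rigorously identifying $\Lambda^{\ast}$ with the displayed composite and confirming that pullbacks of \'etale maps are computed compatibly across $\dbare$, $\dbar$, and $\Str\left(\G\right)$ so that left exactness genuinely goes through — whereas the connectedness itself, once everything is arranged around $\Uni$ being terminal in $\dbare$ together with the \'etale cancellation Proposition \ref{prop:6.3.5.9}, becomes essentially immediate.
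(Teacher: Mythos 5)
Your proposal is correct and takes essentially the same route as the paper's own proof: you identify $\Lambda^*$ with $j_!/\Uni$ via the factorization $\Shi\left(\sD^{\et}\right) \simeq \dbare/\Uni \hookrightarrow \dbar/\Uni \to \Shi\left(\sD\right)/j_!\left(1\right)$, get colimit preservation from Proposition \ref{prop:htt1.2.13.8}, left exactness from Corollary \ref{cor:2.3.20} together with limit preservation of the (sliced) restricted Yoneda functor, the right adjoint $\Lambda_*$ from the adjoint functor theorem, and full faithfulness from the \'etale cancellation Proposition \ref{prop:6.3.5.9}, exactly as the paper does. The only cosmetic differences are that the paper checks finite-limit preservation of the composite directly rather than reducing to pullbacks computed in the base, and additionally writes out $\Lambda_*$ explicitly via the Yoneda lemma.
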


\begin{proof}
From the proof of Theorem \ref{thm:classificationsmall}, it follows that $$j_!\left(1\right)\simeq i_\cD^* y\left(\Uni\right).$$ Furthermore, the following diagram commutes up to canonical homotopy:
$$\xymatrix@C=2cm{ \dbare/\Uni \ar[d]_-{\rotatebox[origin=C]{90}{$\sim$}} \ar[r]^-{\overline{j}/\Uni} & \dbar/\Uni \ar@{_{(}->}[dd]^-{ y/\Uni}\\
\dbare \ar@{^{(}->}[d]_-{{\overline{y}^{\et}}}^{\rotatebox[origin=C]{90}{$\sim$}} & \\
\Shi\left(\dbare\right) \ar[d]_-{\left(i^{\et}_\sD\right)^\ast}^{\rotatebox[origin=C]{90}{$\sim$}} & \Shi\left(\dbar\right)/y\left(\Uni\right) \ar[d]^-{\left(i_\sD\right)^\ast/\Uni}_{\rotatebox[origin=C]{90}{$\sim$}}\\
\Shi\left(\sD^{\et}\right) \ar[r]^-{j_!/\Uni} & \Shi\left(\sD\right)/j_!\left(1\right).}$$
Notice that furthermore the diagram
$$\xymatrix@C=2.5cm{\dbare/\Uni \ar[d]_-{\rotatebox[origin=C]{90}{$\sim$}} \ar[r]^-{j/\Uni} & \dbar/\Uni \ar@{^{(}->}[r]^-{y/\Uni} & \Shi\left(\dbar\right)/\Uni\ar[d]\\
\dbare \ar[r]^-{j} & \dbar \ar@{^{(}->}[r]^-{y} & \Shi\left(\dbar\right),}$$
also commutes up to homotopy. It then follows from Proposition \ref{prop:bothfuncolims} and Proposition \ref{prop:htt1.2.13.8}, that the composite $y/\Uni \circ \overline{j}/\Uni$ preserves small colimits. Notice also that it follows from Corollary \ref{cor:2.3.20} that $\overline{j}/\Uni$ preserves finite limits, and since $y/\Uni$ preserves all small limits, we conclude that $y/\Uni \circ \overline{j}/\Uni$ preserves all finite limits. It follows that $j_!/\Uni$ is left exact and preserves small colimits. By Corollary 5.5.2.9 of \cite{htt}, it follows that $j_!/\Uni$ has a right adjoint. Let us denote $j_!/\Uni$ by  $\Lambda^*.$ It follows from the Yoneda lemma that its right adjoint can be described as the functor $\Lambda_*$ which sends an object $$f:F \to j_!\left(1\right)$$ in $\Shi\left(\sD\right)/j_!\left(1\right)$ to the $\i$-sheaf on $\sD^{\et}$ which assigns each $\cD$ in $\sD^{\et}$ the $\i$-groupoid $$\Lambda_*\left(f:F \to j_!\left(1\right)\right)\left(\cD\right) \simeq \Hom_{\Shi\left(\cD\right)/j_!\left(1\right)}\left(j_!y^{\et}\left(t_\cD\right),f\right),$$ where $t_\cD:\cD \to \Uni$ is the essentially unique map to the terminal object. Since $\Lambda^*$ is left exact, the adjoint pair $$\Lambda^* \dashv \Lambda_*$$ constitute a geometric morphism $$\xymatrix{\Shi\left(\sD\right)/j_!\left(1\right) \ar@<-0.5ex>[r]_-{\Lambda_*} & \Shi\left(\sD^{\et}\right) \ar@<-0.5ex>[l]_-{\Lambda^*}}.$$ It suffices to show that $\Lambda^*$ is full and faithful. Notice that for $G$ and $G'$ in $\Shi\left(\sD^{\et}\right),$ under slight abuse of notation induced by the equivalence $$\Uni \simeq \Shi\left(\sD^{\et}\right),$$ one has
$$\Hom_{\Shi\left(\cD\right)/j_!\left(1\right)}\left(j_!\left(t_G\right),j_!\left(t_{G'}\right)\right) \simeq \Hom_{\dbar/\Uni}\left(\Uni/G \to \Uni,\Uni/G' \to \Uni\right).$$
Since both of the maps in question are \'etale, it follows from Proposition \ref{prop:6.3.5.9} that this $\i$-groupoid is equivalent to
$$\Hom_{\dbare/\Uni}\left(\Uni/G \to \Uni,\Uni/G' \to \Uni\right) \simeq \Hom_\Uni\left(G,G'\right).$$
\end{proof}

\begin{thm}\label{thm:smalltruncatedversion}
Suppose that $\left(\cB,\G\right)$ is the underlying geometric structure, with $\cB$ $n$-localic. Let $\sL$ be a small and strongly locally small subcategory of $\Str_\G$ and suppose that $\lbar$ has a small and locally small strong \'etale blossom $\sD$ which is equivalent to an $n$-category. Denote by $$y_n:\sD \hookrightarrow \Sh_{n}\left(\sD\right)$$ and $$y^{\et}_n:\sD \hookrightarrow \Sh_{n}\left(\sD^{\et}\right)$$ the Yoneda embeddings into $n$-sheaves (i.e. sheaves of $n$-groupoids), where $\Sh_n\left(\sD\right)$ denotes the $n$-truncated objects of $\Shi\left(\sD\right),$ and similarly for $\Sh_{n}\left(\sD^{\et}\right),$ and denote by $j^n_!$ the left Kan extension $$\Lan_{y^{\et}_n}\left(y_n \circ j\right):\Sh_{n}\left(\sD^{\et}\right)\to \Sh_{n}\left(\sD\right)$$- call it the \textbf{$n$-truncated \'etale prolongation functor.} An $n$-stack in $\Sh_{n}\left(\sD\right)$ is Deligne-Mumford if and only if it is in the essential image of $j^n_!.$
\end{thm}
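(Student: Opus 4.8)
The plan is to reduce the statement to the untruncated classification, Theorem~\ref{thm:classificationsmall}, by comparing $j^n_!$ with $j_!$ through the truncation functors. Write $\iota_n$ for the fully faithful inclusions $\Sh_n\left(\sD^{\et}\right)\hookrightarrow \Shi\left(\sD^{\et}\right)$ and $\Sh_n\left(\sD\right)\hookrightarrow \Shi\left(\sD\right)$, each right adjoint to its truncation $\tau_{\le n}$. The first step is the formal identity $j^n_!\simeq \tau_{\le n}\circ j_!\circ \iota_n$. Since $n$-truncatedness of a presheaf is detected objectwise and $j^*$ is computed objectwise, the restriction $j^*$ preserves $n$-truncated sheaves, so $\left(j^n\right)^*\simeq \tau_{\le n}\circ j^*\circ \iota_n$. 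A direct chain of mapping-space equivalences, using $\tau_{\le n}\dashv \iota_n$, $j_!\dashv j^*$, the relation $j^*\iota_n\simeq \iota_n\left(j^n\right)^*$, and full faithfulness of $\iota_n$, exhibits $\tau_{\le n}\circ j_!\circ \iota_n$ as left adjoint to $\left(j^n\right)^*$; uniqueness of left adjoints then yields the identity. This reduces everything to understanding the essential image of $\tau_{\le n}\circ j_!\circ \iota_n$.

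For the forward direction, suppose $\X\in \Sh_n\left(\sD\right)$ is Deligne-Mumford, so by Theorem~\ref{thm:fullyfaithful} and the definition of $\dm\left(\sD\right)$ it is the functor of points $i^*_\sD\,\overline y\left(\cE\right)=\X$ of an étendue $\cE$ in $\dbar$, and $\X$ is $n$-truncated. Set $F:=\left(i^{\et}_\sD\right)^*y^{\et}\left(\cE\right)$, the \emph{étale} functor of points of $\cE$. Because the inclusion $\dbare\hookrightarrow \dbar$ is faithful, the map $\Hom_{\dbare}\left(\cD,\cE\right)\to \Hom_{\dbar}\left(\cD,\cE\right)$ is an inclusion of connected components for every $\cD$ in $\sD$; hence $F\left(\cD\right)$ is a sub-$\i$-groupoid of the $n$-truncated space $\X\left(\cD\right)$ and is itself $n$-truncated, so $F\in \Sh_n\left(\sD^{\et}\right)$. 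By the commuting square~(\ref{eq:25}) one has $j_!\left(\iota_n F\right)=i^*_\sD\,\overline y\left(\cE\right)=\X$, and therefore $j^n_!\left(F\right)=\tau_{\le n}\,j_!\left(\iota_n F\right)=\tau_{\le n}\X=\X$. Thus every Deligne-Mumford $n$-stack lies in the essential image of $j^n_!$.

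The backward direction is the crux. Given $F\in \Sh_n\left(\sD^{\et}\right)$, the identity gives $j^n_!\left(F\right)=\tau_{\le n}\,j_!\left(\iota_n F\right)$, and by Proposition~\ref{prop:explicit} and Proposition~\ref{prop:theta} the sheaf $j_!\left(\iota_n F\right)$ is the full functor of points of the étendue $\cE_F=\Uni/\iota_n F$. So I must show that the $n$-truncation of the functor of points of an étendue is again the functor of points of an étendue, i.e.\ that the essential image $\dm\left(\sD\right)$ of the fully faithful functor of points is closed under $\tau_{\le n}$ \emph{in this bounded setting}. The obstruction is genuine: since $\cB$ is $n$-localic and the objects of $\sD$ are $n$-localic, Corollary~\ref{cor:ntruncatedlocalic} shows $\Uni=\dbare$ is only $\left(n+1\right)$-localic, and correspondingly $j_!\left(\iota_n F\right)$ is typically $\left(n+1\right)$-truncated rather than $n$-truncated, so $\tau_{\le n}$ acts nontrivially and $j^n_!\left(F\right)$ is not literally a full functor of points but a truncation of one.

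To resolve this I would proceed as follows. Using Corollary~\ref{cor:stetgen} and Proposition~\ref{prop:bothfuncolims}, write $\cE_F\simeq \colim \cD_\alpha$ in $\dbare$ over an étale cover by objects $\cD_\alpha$ of $\sD$, so that $j_!\left(\iota_n F\right)\simeq \colim y\left(\cD_\alpha\right)$ in $\Shi\left(\sD\right)$. As $\sD$ is an $n$-category each $y\left(\cD_\alpha\right)$ is already $n$-truncated, hence $j^n_!\left(F\right)=\tau_{\le n}\colim y\left(\cD_\alpha\right)=\colim^{\Sh_n\left(\sD\right)} y_n\left(\cD_\alpha\right)$. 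The remaining task is to realize this $n$-truncated colimit as a full functor of points, which I plan to do by constructing a \emph{universal $n$-étendue}: the terminal object of the $\left(n+1\right)$-topos $\Sh_n\left(\sD^{\et}\right)$ of $n$-sheaves, obtained by gluing the $\cD_\alpha$ within the $\left(n+1\right)$-localic structured topoi rather than within all $\i$-topoi, and then repeating the argument of Theorem~\ref{thm:classificationsmall} verbatim with $\Shi$, $\iGpd$ replaced by $\Sh_n$, $\nGpd$. An alternative, more economical route is to invoke the connected geometric morphism $\Lambda$ of Proposition~\ref{prop:connected}: since $\Lambda^*$ is fully faithful and left exact it preserves and reflects $n$-truncated objects, which lets one transport the $n$-truncation of $j_!\left(\iota_n F\right)$ back along $\Lambda^*$ to an $n$-truncated object of $\Shi\left(\sD^{\et}\right)$ and hence identify $j^n_!\left(F\right)$ with an object of $\dm\left(\sD\right)$. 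Either way, once closure of $\dm\left(\sD\right)$ under $\tau_{\le n}$ is established, the two inclusions combine to give the asserted equivalence between Deligne-Mumford $n$-stacks and the essential image of $j^n_!$.
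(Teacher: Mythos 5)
Your reduction $j^n_!\simeq \tau_{\le n}\circ j_!\circ \iota_n$ and your forward direction are sound (the paper reaches the same endpoint differently: it shows directly that $j_!$ preserves $n$-truncatedness and then identifies its restriction with $j^n_!$ by verifying colimit preservation via the geometric morphism $\Lambda_{n+1}$ of $\left(n+1\right)$-topoi obtained from Proposition \ref{prop:connected}). But in the backward direction, which you correctly flag as the crux, the obstruction you describe does not exist under the theorem's hypotheses. You invoke Corollary \ref{cor:ntruncatedlocalic} to conclude that $\Uni$ is only $\left(n+1\right)$-localic and that $j_!\left(\iota_n F\right)$ is ``typically $\left(n+1\right)$-truncated,'' but that corollary concerns the weaker hypothesis that the \emph{objects} of $\sL$ are $n$-localic; here $\sD$ is assumed equivalent to an $n$-category, so $\Uni\simeq \Shi\left(\sD^{\et}\right)$ is $n$-localic. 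Consequently, for $F\in \Sh_n\left(\sD^{\et}\right)$, Proposition \ref{prop:2.3.16} makes $\Uni/\iota_n F$ $n$-localic, and Lemma \ref{lem:ntruncatedlocalic} (using that $\cB$ is $n$-localic) makes it $n$-truncated as an object of $\dbar$; hence $j_!\left(\iota_n F\right)\simeq i^*_\sD\,\overline{y}\left(\Uni/\iota_n F\right)$ is already $n$-truncated, $\tau_{\le n}$ acts trivially, and $j^n_!\left(F\right)$ is Deligne-Mumford by Theorem \ref{thm:classificationsmall}. This short observation is exactly the paper's argument, and your proof as written never arrives at it.

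Your two proposed remedies do not repair the gap you created. Route 1 (``construct a universal $n$-\'etendue and repeat Theorem \ref{thm:classificationsmall} verbatim'') is a program, not a proof: the assertion that $\dm\left(\sD\right)$ is closed under $\tau_{\le n}$ --- that a truncation of a functor of points is again a functor of points --- is precisely what is at stake and is nowhere established; note also that colimits of $n$-truncated objects computed in $\Shi\left(\sD\right)$ differ from those in $\Sh_n\left(\sD\right)$, so $\tau_{\le n}\colim y\left(\cD_\alpha\right)$ is not, without argument, representable by a structured $\i$-topos. Route 2 is internally inconsistent with your own premise: it hinges on $j_!\left(1\right)$ being $n$-truncated, which again requires $\Uni$ to be $n$-localic --- the very fact you denied --- and once granted it shows $j_!\left(\iota_n F\right)$ was $n$-truncated all along, so there is no nontrivial truncation to ``transport back'' along $\Lambda^*$ (nor is the essential image of $\Lambda^*$ known to be closed under truncation). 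A final small point: your ``formal identity'' presupposes that the Kan extension $\Lan_{y^{\et}_n}\left(y_n\circ j\right)$ is left adjoint to restriction; this holds by the Yoneda argument of Proposition \ref{prop:relprolexist}, but it should be stated, since the paper instead establishes the identification by checking colimit preservation of the restricted $j_!$ directly.
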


\begin{proof}
Notice that $j_!\left(1\right)\simeq i_\sD^* \overline{y}\left(\Uni\right),$ and $\Uni\simeq\Shi\left(\sD^{\et}\right)$ is $n$-localic, so by Lemma \ref{lem:ntruncatedlocalic} it is $n$-truncated, when regarded as an object of $\dbare$. Therefore, $j_!\left(1\right)$ is $n$-truncated, so may be regarded as an object of $\Sh_{n}\left(\sD\right).$ Suppose that $\X$ is a Deligne-Mumford $n$-stack. By Theorem \ref{thm:classificationsmall}, there exists (an essentially unique) $G$ in $\Shi\left(\sD^{\et}\right),$ such that $j_!\left(G\right) \simeq \X.$ By Proposition \ref{prop:explicit}, $$j_!\left(G\right) \simeq i_\sD^* \overline{y}\left(\Uni/G\right),$$ so it follows that $\Uni/G$ is $n$-truncated as an object of $\dbar$. By Theorem \ref{thm:yonequil}, it follows that $G \simeq \left(i^{\et}_\sD\right)^*\overline{y}^{\et}\left(\Uni/G\right),$ and hence $G$ is also $n$-truncated. Conversely, suppose that $G$ is $n$-truncated, then by Proposition \ref{prop:2.3.16}, $\Uni/G$ is $n$-localic, and hence $n$-truncated by Lemma \ref{lem:ntruncatedlocalic}. It follows that $\X=j_!\left(G\right)$ is $n$-truncated.

So far, we have shown that the Deligne-Mumford $n$-stacks are precisely the $\i$-sheaves in the essential image of the restriction of $j_!$ to $\Sh_{n}\left(\sD^{\et}\right).$ In other words, the functor $j_!$ restricts to a functor $$\tau_{\le n}j_!:\Sh_{n}\left(\sD^{\et}\right) \to \Sh_{n}\left(\sD\right),$$ the essential image of which is the Deligne-Mumford $n$-stacks. Moreover, by construction, this functor agrees with $j^n_!$ on representables. It suffices to show it preserves colimits. Notice that $j_!$ factors as
$$\Shi\left(\sD^{\et}\right) \stackrel{\Lambda^*}{\longlongrightarrow} \Shi\left(\sD\right)/j_!\left(1\right) \to \Shi\left(\sD\right).$$ Since $\Lambda^*$ is left exact, it preserves $n$-truncated objects by Proposition 5.5.6.16 of \cite{htt}. Observe that since $j_!\left(1\right)$ is $n$-truncated, by Lemma 5.5.6.14 of \cite{htt}, an object $$f:F \to j_!\left(1\right)$$ in $\Shi\left(\sD\right)/j_!\left(1\right)$ is $n$-truncated if and only if $F$ is. Since $\sD$ is an $n$-category, by Proposition \ref{prop:2.3.16}, $\Shi\left(\sD\right)/j_!\left(1\right)$ is $n$-localic, hence in particular $\left(n+1\right)$-localic. Its associated $\left(n+1\right)$-topos is just the full subcategory on the $n$-truncated objects of $\Shi\left(\sD\right)/j_!\left(1\right),$ and the latter we have just argued can be identified with $\Sh_{n}\left(\sD\right)/j_!\left(1\right).$ Since $\sD^{\et}$ is also an $n$-category, both the domain and codomain of the geometric morphism $$\Lambda:\Shi\left(\sD\right)/j_!\left(1\right) \to \Shi\left(\sD^{\et}\right)$$ are $n$-localic, hence also $\left(n+1\right)$-localic. Since the functor $$\nu_{n+1}:\mathfrak{Top}_{n+1} \hookrightarrow \T$$ is full and faithful (see Definition \ref{dfn:nloc}), $\Lambda$ corresponds to a geometric morphism $$\Lambda_{n+1}:\Sh_{n}\left(\sD\right)/j_!\left(1\right) \to \Sh_{n}\left(\sD^{\et}\right)$$ of $\left(n+1\right)$-topoi, such that the following diagram commutes:
$$\xymatrix@C=1.5cm{\Shi\left(\sD^{\et}\right) \ar[r]^-{\Lambda^*} & \Shi\left(\sD\right)/j_!\left(1\right) \ar[r] & \Shi\left(\sD\right)\\
\Sh_{n}\left(\sD^{\et}\right) \ar@{_{(}->}[u] \ar[r]^-{\Lambda_{n+1}^*} & \Sh_{n}\left(\sD\right)/j_!\left(1\right) \ar@{_{(}->}[u] \ar[r] & \Sh_{n}\left(\sD\right). \ar@{^{(}->}[u]}$$
Since the composite of the top row is $j_!,$ the composite of the bottom is its restriction to $\Sh_{n}\left(\sD^{\et}\right).$ However, we have factored  this bottom row by two functors which preserve small colimits, so we are done.
\end{proof}

\begin{prop}\label{prop:Ufirst}
Suppose that $\sU$ is a small and locally small strong \'etale blossom which is a subcategory of a locally small strong \'etale blossom $\sD,$ which need not be small. Denote the full and faithful inclusion by $$\lambda_\sU:\sU \hookrightarrow \sD.$$ Then the restriction functors $$\lambda^{\et\mspace{2mu} *}_\sU:\Pshi\left(\sD^{\et}\right) \to \Pshi\left(\sU^{\et}\right)$$ and $$\lambda_\sU^*:\Pshi\left(\sD\right) \to \Pshi\left(\sU\right)$$ send sheaves to sheaves.
\end{prop}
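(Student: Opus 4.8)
The plan is to first reduce the assertion about $\lambda_\sU^*$ to the one about $\lambda^{\et *}_\sU$, and then prove the latter by verifying the sheaf condition of Definition \ref{dfn:sheaveDet} one slice at a time.

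For the reduction, write $j_\sU : \sU^{\et} \to \sU$ and $j_\sD : \sD^{\et} \to \sD$ for the canonical functors and $\lambda_\sU^{\et} : \sU^{\et} \hookrightarrow \sD^{\et}$ for the inclusion induced by $\lambda_\sU$. Because $\lambda_\sU$ carries \'etale morphisms to \'etale morphisms, the square
$$\xymatrix{\sU^{\et} \ar[r]^-{\lambda_\sU^{\et}} \ar[d]_-{j_\sU} & \sD^{\et} \ar[d]^-{j_\sD}\\ \sU \ar[r]^-{\lambda_\sU} & \sD}$$
commutes, so that $\lambda^{\et *}_\sU \circ j_\sD^* \simeq j_\sU^* \circ \lambda_\sU^*$ on presheaves. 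If $F$ is a sheaf on $\sD$, then $j_\sD^* F$ is a sheaf on $\sD^{\et}$ by Definition \ref{dfn:sheaveD}; granting the \'etale case, $\lambda^{\et *}_\sU j_\sD^* F \simeq j_\sU^* \lambda_\sU^* F$ is then a sheaf on $\sU^{\et}$, which is precisely the requirement (Definition \ref{dfn:sheaveD}) for $\lambda_\sU^* F$ to be a sheaf on $\sU$.

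So fix a sheaf $F$ on $\sD^{\et}$ and put $G := \lambda^{\et *}_\sU F$. By Definition \ref{dfn:sheaveDet} it suffices to show that for every object $\cU$ of $\sU$ the presheaf $\left(\pi^\sU_\cU\right)^*(G)$ is a sheaf on $\sU^{\et}/\cU$, where $\pi^\sU_\cU : \sU^{\et}/\cU \to \sU^{\et}$ is the projection. The functor $\lambda_\sU^{\et}$ restricts to a full and faithful inclusion $\mu_\cU : \sU^{\et}/\cU \hookrightarrow \sD^{\et}/\cU$ of slices, and the strict factorization $\lambda_\sU^{\et} \circ \pi^\sU_\cU = \pi^\sD_\cU \circ \mu_\cU$ gives $\left(\pi^\sU_\cU\right)^*(G) \simeq \mu_\cU^*\left(\left(\pi^\sD_\cU\right)^* F\right)$. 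Now $\cU$ is in particular an object of $\sD$, so since $F$ is a sheaf on $\sD^{\et}$ the presheaf $\left(\pi^\sD_\cU\right)^* F$ is a sheaf on $\sD^{\et}/\cU$; that is, it lies in the essential image of the full and faithful functor $i^\sD_\cU : \cU \hookrightarrow \Pshi(\sD^{\et}/\cU)$ of Proposition \ref{prop:largesite}, say $\left(\pi^\sD_\cU\right)^* F \simeq i^\sD_\cU(D)$ for some object $D$ of $\cU$.

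It remains to identify $\mu_\cU^* \circ i^\sD_\cU$ with the analogous functor $i^\sU_\cU : \cU \hookrightarrow \Pshi(\sU^{\et}/\cU)$, and this is the only genuinely delicate point. I would verify it objectwise using the formula recorded in the proof of Proposition \ref{prop:largesite}: for an \'etale map $\varphi$ in $\sU$ corresponding, under the canonical equivalence $\ubare/\cU \simeq \cU$ of Remark \ref{rmk:2352}, to an object $E$ of $\cU$, one has $i^\sU_\cU(D)(\varphi) \simeq \Hom_\cU(E, D)$. Viewing the same $\varphi$ in $\sD^{\et}/\cU$ via $\mu_\cU$, the equivalence $\dbare/\cU \simeq \cU$ assigns it the \emph{same} object $E$ (both are the canonical slicing equivalences, and the object associated to $\varphi$ is computed intrinsically in $\cU$, independently of whether $\varphi$ is read in $\ubare$ or $\dbare$), whence $\left(\mu_\cU^* i^\sD_\cU(D)\right)(\varphi) = i^\sD_\cU(D)(\varphi) \simeq \Hom_\cU(E, D)$. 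These identifications are natural in $\varphi$ and in $D$, so $\mu_\cU^* \circ i^\sD_\cU \simeq i^\sU_\cU$. Consequently $\left(\pi^\sU_\cU\right)^*(G) \simeq i^\sU_\cU(D)$ is a sheaf, as needed. The hard part is thus entirely the coherence of the two slice equivalences $\ubare/\cU \simeq \cU \simeq \dbare/\cU$ under the inclusion $\mu_\cU$; once this compatibility (which follows from $\ubar \subseteq \dbar$ and the naturality of the equivalence in Corollary \ref{cor:2352}) is in hand, everything else is a direct unwinding of the definitions.
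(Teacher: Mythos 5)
Your proof is correct and is essentially the paper's own argument made explicit: the paper's entire proof consists of asserting, for each object $\cD$ of $\sU,$ the commutativity of a diagram identifying the functor $i_\cD$ of Proposition \ref{prop:largesite} formed relative to $\sU$ with the composite of the one formed relative to $\sD$ and restriction along the slice inclusion $\sU^{\et}/\cD \hookrightarrow \sD^{\et}/\cD,$ which is precisely the compatibility of the two slice equivalences that you verify objectwise. Your preliminary reduction of the statement about $\lambda_\sU^*$ to the \'etale case via $j_\sU^* \circ \lambda_\sU^* \simeq \lambda^{\et\mspace{2mu}*}_\sU \circ j_\sD^*$ is exactly what the paper leaves implicit in its closing ``the result now follows.''
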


\begin{proof}
For every $\cD$ in $\sU,$ the following diagram commutes up to canonical homotopy:
$$\xymatrix{\cD \ar@{^{(}->}[r]^-{y} & \Pshi\left(\cD\right) \ar[r]^-{\sim} \ar[rd]_-{\rotatebox[origin=C]{-25}{$\sim$}} & \Pshi\left(\dbare/\cD\right) \ar[d]^{\rotatebox[origin=C]{90}{$\sim$}} \ar[r] &\Pshi\left(\sD^{\et}/\cD\right) \ar[d]^-{\lambda^{\et\mspace{2mu} \ast}_\sU} \\
& & \Pshi\left(\ubare/\cD\right) \ar[r] & \Pshi\left(\sU^{\et}/\cD\right),}$$
where the unlabeled functors are the obvious ones. This implies that $\left(\lambda^{\et}_\sU/\cD\right)^*$ restricts to a functor
$$\left(\lambda^{\et}_\sU/\cD\right)^*:\Shi\left(\sD^{\et}/\cD\right) \to \Shi\left(\sU^{\et}/\cD\right).$$ The result now follows.
\end{proof}

\begin{prop}\label{prop:relprolexist}
With the notation as in Proposition \ref{prop:Ufirst}, denote by $j_\sU$ the composite $$\sU^{\et} \stackrel{\lambda^{\et}_\sU}{\longlonghookrightarrow} \sD^{\et} \stackrel{j}{\longrightarrow} \sD.$$ Then the restriction functor
$$\Pshi\left(\sD\right) \to \Pshi\left(\sU^{\et}\right)$$ restricts to a functor $$j^*_\sU:\Shi\left(\sD\right) \to \Shi\left(\sU^{\et}\right)$$ which has a left adjoint $j^\sU_!.$
\end{prop}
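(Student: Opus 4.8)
The plan is to mirror, relative to the small subcategory $\sU$, the construction of the absolute étale prolongation functor given just before Definition \ref{dfn:prolongation}. First I would check that restriction along $j_\sU$ really does carry $\i$-sheaves to $\i$-sheaves. Since $j_\sU = j \circ \lambda^{\et}_\sU$, the restriction functor factors as $j_\sU^* = \left(\lambda^{\et}_\sU\right)^* \circ j^*$, where $j^*:\Pshi\left(\sD\right) \to \Pshi\left(\sD^{\et}\right)$ is restriction along $j:\sD^{\et} \to \sD$. By Definition \ref{dfn:sheaveD} a presheaf $F$ on $\sD$ is an $\i$-sheaf precisely when $j^*F$ is an $\i$-sheaf on $\sD^{\et}$, so $j^*$ lands in $\Shi\left(\sD^{\et}\right)$; and by Proposition \ref{prop:Ufirst} the further restriction $\left(\lambda^{\et}_\sU\right)^*$ carries $\Shi\left(\sD^{\et}\right)$ into $\Shi\left(\sU^{\et}\right)$. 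Composing yields the asserted functor $j_\sU^*:\Shi\left(\sD\right) \to \Shi\left(\sU^{\et}\right)$.

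For the left adjoint I would exploit that $\sU^{\et}$ is small (as $\sU$ is) and that $\Shi\left(\sD\right)$ is cocomplete by Corollary \ref{cor:colimps}. Write $y:\sD \hookrightarrow \Shi\left(\sD\right)$ and $h:\sU^{\et} \hookrightarrow \Pshi\left(\sU^{\et}\right)$ for the two Yoneda embeddings; the first factors through $\i$-sheaves since every representable presheaf on $\sD$ is an $\i$-sheaf (the corollary following Proposition \ref{prop:subcanonical}). By Theorem 5.1.5.6 of \cite{htt} the composite $y \circ j_\sU:\sU^{\et} \to \Shi\left(\sD\right)$ extends, essentially uniquely, to a colimit-preserving functor $L:\Pshi\left(\sU^{\et}\right) \to \Shi\left(\sD\right)$ with $L \circ h \simeq y \circ j_\sU$, and $L$ has a right adjoint $R$ given by the restricted-Yoneda construction $R\left(G\right)\left(\cU\right) \simeq \Hom_{\Shi\left(\sD\right)}\left(L\, h\left(\cU\right), G\right)$. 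Using $L \circ h \simeq y \circ j_\sU$ together with the Yoneda lemma for the representable sheaf $y\left(j_\sU\cU\right)$, I would compute
$$R\left(G\right)\left(\cU\right) \simeq \Hom_{\Shi\left(\sD\right)}\left(y\left(j_\sU\cU\right), G\right) \simeq G\left(j_\sU\cU\right) = \left(j_\sU^* G\right)\left(\cU\right),$$
so that $R = j_\sU^*$ as a functor $\Shi\left(\sD\right) \to \Pshi\left(\sU^{\et}\right)$; the mapping spaces are automatically small, being values of the small-valued presheaf $G$.

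By the first step, $R = j_\sU^*$ factors through the fully faithful inclusion $\iota:\Shi\left(\sU^{\et}\right) \hookrightarrow \Pshi\left(\sU^{\et}\right)$, say $R \simeq \iota \circ j_\sU^*$ with $j_\sU^*$ now denoting the corestricted functor $\Shi\left(\sD\right) \to \Shi\left(\sU^{\et}\right)$. I would then set $j^\sU_! := L \circ \iota$ and verify the adjunction directly: for $G$ in $\Shi\left(\sU^{\et}\right)$ and $H$ in $\Shi\left(\sD\right)$,
$$\Hom_{\Shi\left(\sD\right)}\left(L\iota G, H\right) \simeq \Hom_{\Pshi\left(\sU^{\et}\right)}\left(\iota G,\, \iota\, j_\sU^* H\right) \simeq \Hom_{\Shi\left(\sU^{\et}\right)}\left(G,\, j_\sU^* H\right),$$
using $L \dashv R$ and then full faithfulness of $\iota$. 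This exhibits $j^\sU_!$ as the desired left adjoint to $j_\sU^*$.

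The argument is essentially the relative refinement of the passage constructing the absolute $j_!$, so I expect no conceptual obstacle; the delicate points are bookkeeping ones. The one most deserving care is the smallness accounting: confirming that $R$ takes values in small rather than merely large presheaves, and that the cocompleteness of $\Shi\left(\sD\right)$ supplied by Corollary \ref{cor:colimps} is genuinely available even though $\sD$ is not assumed small. Both are already in place—the former via the identity $R\left(G\right)\left(\cU\right) \simeq G\left(j_\sU\cU\right)$, and the latter by the standing hypothesis that $\sD$ is a locally small strong \'etale blossom—so the main work is simply to arrange these inputs in the order above.
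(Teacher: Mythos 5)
Your proof is correct and follows essentially the same route as the paper's: both factor the restriction as $\left(\lambda^{\et}_\sU\right)^* \circ j^*$ and invoke Definition \ref{dfn:sheaveD} with Proposition \ref{prop:Ufirst} to see it preserves sheaves, then form the left Kan extension $\Lan_{y^{\et}}\left(y \circ j_\sU\right):\Pshi\left(\sU^{\et}\right) \to \Shi\left(\sD\right)$ (your $L$), identify its right adjoint with $j_\sU^* \circ i$ via the Yoneda lemma, and restrict along $\Shi\left(\sU^{\et}\right) \hookrightarrow \Pshi\left(\sU^{\et}\right)$ to obtain $j^\sU_!$. The only difference is expository: you make explicit the final restriction-of-adjunction step via full faithfulness of the inclusion and the smallness bookkeeping, both of which the paper leaves implicit.
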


\begin{proof}
The restriction functor in question is equivalent to $\lambda_\sU^* \circ j^*,$ and $j^*$ preserves the sheaf condition by definition, and $\lambda_\sU^*$ does by Proposition \ref{prop:Ufirst}. Consider the left Kan extension
$$\Lan_{y^{\et}}\left(y \circ j_\sU\right):\Pshi\left(\sU^{\et}\right) \to \Shi\left(\sD\right).$$ By the Yoneda lemma, it follows that this is left adjoint to $\lambda_\sU^* \circ j^* \circ i,$ where $$i:\Shi\left(\sD\right) \hookrightarrow \Pshi\left(\sD\right)$$ is the canonical inclusion. It follows that $\Lan_{y^{\et}}\left(y \circ j_\sU\right)$ restricts to a functor $$j^\sU_!:\Shi\left(\sU^{\et}\right) \to \Shi\left(\sD\right)$$ which is left adjoint to $j^*_\sU$.
\end{proof}

\begin{dfn}
We call the functor $j^\sU_!$ above the \textbf{relative \'etale prolongation functor of $\sU$ with respect to $\sD.$}
\end{dfn}

\begin{thm}\label{thm:classificationlarge}
Let $\sD$ be a locally small strong \'etale blossom. An $\i$-sheaf in $\Shi\left(\sD\right)$ is a Deligne-Mumford stack if and only if there exists a small strong \'etale blossom $\sU$ which is a subcategory of $\sD$ such that $\X$ is in the essential image of the relative prolongation functor $j^\sU_!$.
\end{thm}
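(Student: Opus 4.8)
The plan is to deduce both implications from a single commuting square relating the relative prolongation functor to the functor of points. First note that, because Theorem~\ref{thm:fullyfaithful} is valid for an arbitrary locally small strong \'etale blossom (not only a small one), the notion of Deligne-Mumford stack is available for $\sD$ verbatim: an $\i$-sheaf $\X$ in $\Shi(\sD)$ is Deligne-Mumford precisely when $\X \simeq i^*_\sD\,\overline{y}(\cE)$ for some $\cE$ in $\dbar$. Now fix a small strong \'etale blossom $\sU$ which is a (full) subcategory of $\sD$; since $\sU \subseteq \sD$ we have $\ubar \subseteq \dbar$. Write $\Psi_\sU := (i^{\et}_\sU)^* \circ \overline{y}^{\et} \colon \ubare \xrightarrow{\ \sim\ } \Shi(\sU^{\et})$ for the equivalence of Theorem~\ref{thm:yonequil} (whose inverse sends $F$ to $\Uni_\sU/F$ by Proposition~\ref{prop:theta}), and let $\overline{j}_\sU \colon \ubare \to \dbar$ be the functor carrying a $\sU$-\'etendue and its \'etale morphisms to the same data regarded in $\dbar$. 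The heart of the argument is to show that the square
$$\xymatrix@C=1.7cm{ \ubare \ar[r]^-{\overline{j}_\sU} \ar[d]_-{\Psi_\sU}^-{\rotatebox[origin=C]{90}{$\sim$}} & \dbar \ar[d]^-{i^*_\sD \circ \overline{y}} \\ \Shi(\sU^{\et}) \ar[r]^-{j^\sU_!} & \Shi(\sD) }$$
commutes up to canonical homotopy.

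Granting the square, the theorem is immediate. For the forward direction, suppose $\X$ is Deligne-Mumford and pick $\cE$ in $\dbar$ with $\X \simeq i^*_\sD\,\overline{y}(\cE)$. By Proposition~\ref{prop:locblossom} there is a small strong \'etale blossom $\sU = \sD_\cE$, a full subcategory of $\sD$, with $\cE$ in $\ubar$; then $\cE$ is an object of $\ubare$, and setting $F := \Psi_\sU(\cE)$ the square gives $j^\sU_!(F) \simeq i^*_\sD\,\overline{y}(\overline{j}_\sU(\cE)) = i^*_\sD\,\overline{y}(\cE) \simeq \X$, so $\X$ lies in the essential image of $j^\sU_!$. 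Conversely, if $\X \simeq j^\sU_!(F)$ for some small strong \'etale blossom $\sU \subseteq \sD$ and some $F$ in $\Shi(\sU^{\et})$, put $\cE := \Psi_\sU^{-1}(F)$, an object of $\ubar \subseteq \dbar$; the square yields $\X \simeq i^*_\sD\,\overline{y}(\cE)$, exhibiting $\X$ as Deligne-Mumford.

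The real work, and the main obstacle, is the commutativity of the square, which I would carry out exactly in the style of the proof of Theorem~\ref{thm:classificationsmall}. Both composites $\ubare \to \Shi(\sD)$ preserve small colimits: the functor $j^\sU_! \circ \Psi_\sU$ because $\Psi_\sU$ is an equivalence and $j^\sU_!$ is a left adjoint (Proposition~\ref{prop:relprolexist}); and $(i^*_\sD \circ \overline{y}) \circ \overline{j}_\sU$ because $\overline{j}_\sU$ factors as $\ubare \hookrightarrow \dbare \to \dbar$, where the first inclusion preserves colimits by Lemma~\ref{lem:holmol} and the composite $\dbare \to \dbar \xrightarrow{i^*_\sD \circ \overline{y}} \Shi(\sD)$ preserves small colimits by Proposition~\ref{prop:bothfuncolims}. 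Next, both composites agree on the representables $\sU^{\et}$: on an object $\cU$ of $\sU$ each returns the $\sD$-representable $y(j_\sU(\cU))$ — for $j^\sU_! \circ \Psi_\sU$ because $\Psi_\sU(\cU) \simeq y^{\et}_\sU(\cU)$ and $j^\sU_!$ is the left Kan extension of $y \circ j_\sU$ along the Yoneda embedding (Proposition~\ref{prop:relprolexist}), and for $(i^*_\sD \circ \overline{y}) \circ \overline{j}_\sU$ by the Yoneda computation $i^*_\sD\,\overline{y}(\cU')(\cD') \simeq \Hom_{\dbar}(\cD',\cU') \simeq \Hom_\sD(\cD',\cU')$ valid for $\cU',\cD'$ in $\sD$. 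Finally, since $\sU$ is small, $\Shi(\sU^{\et})$ is an accessible localization of $\Pshi(\sU^{\et})$ (Corollary~\ref{cor:colimpset}), so by Proposition 5.5.4.20 and Theorem 5.1.5.6 of \cite{htt} a colimit-preserving functor out of $\Shi(\sU^{\et})$ is determined up to equivalence by its restriction to the representables; hence the two composites are equivalent and the square commutes.

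The point demanding the most care is the last step of the preceding paragraph: one must identify the two restrictions to $\sU^{\et}$ not merely objectwise but as equivalent functors on $\sU^{\et}$, so that the localization argument genuinely produces an equivalence of the colimit-preserving extensions. Concretely this means tracking the natural identifications $j^\sU_! \circ \Psi_\sU|_{\sU^{\et}} \simeq y \circ j_\sU \simeq (i^*_\sD \circ \overline{y}) \circ \overline{j}_\sU|_{\sU^{\et}}$ through the equivalences $\Psi_\sU$ and $\ubare/\Uni_\sU \simeq \Uni_\sU$, and confirming that the factorization $\overline{j}_\sU = \big(\dbare \to \dbar\big)\circ\big(\ubare \hookrightarrow \dbare\big)$ is compatible with these identifications on representables. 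Once this bookkeeping is in place, everything else reduces to the colimit-preservation and localization results already established for strong \'etale blossoms.
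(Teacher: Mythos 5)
Your proof is correct and follows essentially the same route as the paper's: the paper establishes the identical commuting square (with $\ubare$, $\Psi_\sU = (i^{\et}_\sU)^*\circ \overline{y}^{\et}$, $\overline{j}_\sU$, and $j^\sU_!$) by the same three-step argument — colimit preservation of both composites via Lemma \ref{lem:holmol}, Proposition \ref{prop:bothfuncolims}, and the adjointness of $j^\sU_!$; agreement on the representables $\sU^{\et}$; and the accessible-localization rigidity argument from Proposition 5.5.4.20 and Theorem 5.1.5.6 of \cite{htt}, exactly as in Theorem \ref{thm:classificationsmall} — and then deduces the forward direction from Proposition \ref{prop:locblossom} and the converse directly from the square, just as you do. Your closing paragraph on tracking the identifications as natural equivalences of functors rather than objectwise is a fair elaboration of what the paper compresses into ``which is true by definition.''
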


\begin{proof}
Let $\X$ be a Deligne-Mumford stack. Then $\X \simeq i_\sD^*y\left(\cE\right)$ for some object $\cE$ in $\dbar.$ By Proposition \ref{prop:locblossom}, there exists small full subcategory $\lambda_\sU:\sU \hookrightarrow \sD$ such that $\sU$ is a small strong \'etale blossom, and $\cE$ is in $\ubar.$ In other words, $\X \simeq i_\sD^*y\left(j_\sU\cE\right).$ We claim the following diagram commutes up to canonical homotopy:
\begin{equation}\label{eq:relprol}
\xymatrix@R=1.5cm{\sU^{\et} \ar[r]^-{j_\sU} \ar@{^{(}->}[d]_-{\left(i^{\et}_\sU\right)^*\circ y^{\et}}^-{\rotatebox[origin=C]{90}{$\sim$}} & \dbar \ar@{_{(}->}[d]^{i_\sD^* \circ y} \\
\Shi\left(\sU^{\et}\right) \ar[r]^-{j^\sU_!} & \Shi\left(\sD\right).}
\end{equation}
By definition, $i_\sD^* \circ y \circ j_\sU=i_\sD^* \circ y \circ j \circ \lambda^{\et}_\sU,$ so by Corollary \ref{cor:dense} combined with Lemma \ref{lem:holmol} and Proposition \ref{prop:bothfuncolims}, it follows that this composite preserves small colimits. The other composite does as well by Theorem \ref{thm:yonequil} and the fact that $j^\sU_!$ is a left adjoint. By the same argument as in the proof of Theorem \ref{thm:classificationsmall}, it suffices to check that the restriction of both functors $$\ubare \to \Shi\left(\sD\right)$$ to $\sU^{\et}$ are equivalent, which is true by definition. It follows that $\X \simeq j^\sU_!\left(i^{\et}_\sU\right)^*y^{\et}\left(\cE\right).$ Conversely, suppose that $\X$ is in the essential image of $j^\sU_!$ for some $\sU$ as in the statement of the theorem. Then since the square (\ref{eq:relprol}) commutes up to homotopy, $\X$ is also in the essential image of $i_\sD^* \circ y,$ hence a Deligne-Mumford stack.
\end{proof}

\begin{thm}\label{thm:largetruncatedversion}
Suppose that $\left(\cB,\G\right)$ is the underlying geometric structure, with $\cB$ $n$-localic. Let $\sL$ be a strongly locally small subcategory of $\Str_\G$ and suppose that $\lbar$ has a small and locally small strong \'etale blossom $\sD$ which is equivalent to an $n$-category. For each small strong \'etale blossom $\sU$ which is a subcategory of $\sD,$ denote by $$y^{\sU,et}_n:\sU^{\et} \hookrightarrow \Sh_{n}\left(\sU^{\et}\right)$$ the Yoneda embedding into $n$-sheaves. Denote by $j^{\sU,n}_!$ the left Kan extension $$\Lan_{y^{\sU,et}_n}\left(y_n \circ \lambda_\sU\right):\Sh_{n}\left(\sU^{\et}\right)\to \Sh_{n}\left(\sD\right)$$- call it the \textbf{$n$-truncated relative \'etale prolongation functor of $\sU$ with respect to $\sD.$} An $n$-stack in $\Sh_{n}\left(\sD\right)$ is Deligne-Mumford if and only if it is in the essential image of one such $n$-truncated relative prolongation functor.
\end{thm}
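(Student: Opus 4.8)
The plan is to reduce to the small case by combining Theorem \ref{thm:classificationlarge} with the truncation analysis of Theorem \ref{thm:smalltruncatedversion}, applied to a small strong \'etale blossom $\sU \subseteq \sD$ rather than to $\sD$ itself. The feature that lets the small-case machinery run is that, since $\sD$ is equivalent to an $n$-category, every full subcategory $\sU$ (and hence $\sU^{\et}$) is again an $n$-category; so by Theorem \ref{thm:yonequil} the universal $\sU$-\'etendue $\Uni_\sU \simeq \Shi\left(\sU^{\et}\right)$ is $n$-localic, and likewise $\Shi\left(\sD\right)$ is $n$-localic. First I would record the relative analogue of Proposition \ref{prop:explicit}: for such a $\sU$ and $F \in \Shi\left(\sU^{\et}\right)$ one has $j^\sU_!\left(F\right) \simeq i_\sD^* y\left(\Uni_\sU/F\right)$, which is immediate from the commuting square \eqref{eq:relprol} in the proof of Theorem \ref{thm:classificationlarge} together with Proposition \ref{prop:theta} for $\sU$, by writing $F \simeq \left(i^{\et}_\sU\right)^* y^{\et}\left(\cE\right)$ with $\cE = \Uni_\sU/F$ and pushing through the square.

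Next I would establish the truncation correspondence. Given a Deligne--Mumford $n$-stack $\X$, Theorem \ref{thm:classificationlarge} produces a small strong \'etale blossom $\sU \subseteq \sD$ and $G \in \Shi\left(\sU^{\et}\right)$ with $\X \simeq j^\sU_!\left(G\right) \simeq i_\sD^* y\left(\Uni_\sU/G\right)$. Since $i_\sD^* y$ is fully faithful (Theorem \ref{thm:fullyfaithful}) it reflects $n$-truncatedness, so $\Uni_\sU/G$ is $n$-truncated in $\dbar$, hence in $\dbare$ (a mapping space of \'etale morphisms is a union of components of the full mapping space, so inherits $n$-truncatedness for $n \ge 0$); Theorem \ref{thm:yonequil} for $\sU$ then forces $G$ to be $n$-truncated. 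Conversely, if $G$ is $n$-truncated then $\Uni_\sU/G$ is $n$-localic by Proposition \ref{prop:2.3.16} (using that $\Uni_\sU$ is $n$-localic), hence $n$-truncated by Lemma \ref{lem:ntruncatedlocalic}, so $\X = i_\sD^* y\left(\Uni_\sU/G\right)$ is $n$-truncated. This shows that $j^\sU_!$ restricts to a functor $\Sh_n\left(\sU^{\et}\right) \to \Sh_n\left(\sD\right)$ and that, as $\sU$ ranges over small strong \'etale blossoms in $\sD$, the essential images of these restrictions exhaust precisely the Deligne--Mumford $n$-stacks.

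Finally I would identify each such restriction with $j^{\sU,n}_!$. The two agree on representables, since for $\cD \in \sU^{\et}$ the sheaf $y\left(j_\sU \cD\right)$ is already $n$-truncated (as $\sD$ is an $n$-category) and coincides with $y_n\left(\lambda_\sU \cD\right)$; as $j^{\sU,n}_!$ is by definition the colimit-preserving extension of $y_n \circ \lambda_\sU$, it remains to check that $j^\sU_!|_{\Sh_n\left(\sU^{\et}\right)}$ preserves colimits. For this I would prove the relative analogue of the first half of Proposition \ref{prop:connected}, namely that $j^\sU_!/\Uni_\sU \colon \Shi\left(\sU^{\et}\right) \to \Shi\left(\sD\right)/j^\sU_!\left(1\right)$ is left exact and colimit-preserving, hence the inverse image of a geometric morphism $\Lambda_\sU$: finite-limit preservation comes from Corollary \ref{cor:2.3.20} applied to the \'etale slice $\ubare/\Uni_\sU$ under the inclusion $\ubar \hookrightarrow \dbar$, and colimit preservation from Proposition \ref{prop:bothfuncolims}. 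Since $j^\sU_!\left(1\right) \simeq i_\sD^* y\left(\Uni_\sU\right)$ is $n$-truncated and $\Shi\left(\sD\right)$ is $n$-localic, Proposition \ref{prop:2.3.16} makes $\Shi\left(\sD\right)/j^\sU_!\left(1\right)$ $n$-localic; both it and $\Shi\left(\sU^{\et}\right)$ are then $\left(n+1\right)$-localic, so by full faithfulness of $\nu_{n+1}$ (Definition \ref{dfn:nloc}) the morphism $\Lambda_\sU$ descends to a geometric morphism $\Lambda_{\sU,n+1}$ of the associated $\left(n+1\right)$-topoi. Exactly as in Theorem \ref{thm:smalltruncatedversion}, $j^\sU_!|_{\Sh_n\left(\sU^{\et}\right)}$ then factors as the colimit-preserving $\Lambda_{\sU,n+1}^*$ followed by the colimit-preserving slice projection $\Sh_n\left(\sD\right)/j^\sU_!\left(1\right) \to \Sh_n\left(\sD\right)$, and so preserves colimits, giving $j^\sU_!|_{\Sh_n\left(\sU^{\et}\right)} \simeq j^{\sU,n}_!$ and the desired characterization.

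I expect the main obstacle to be the relative geometric-morphism step: constructing $\Lambda_\sU$ and verifying left-exactness of $j^\sU_!/\Uni_\sU$. Unlike the absolute situation of Proposition \ref{prop:connected}, the target $\Shi\left(\sD\right)/j^\sU_!\left(1\right)$ is a slice of the possibly non-small $\Shi\left(\sD\right)$ and $\Uni_\sU$ lives in the proper subcategory $\ubar \subseteq \dbar$, so one must check that the finite-limit and colimit computations descend correctly through the inclusion $\ubar \hookrightarrow \dbar$; once this is in place, the remaining truncation and $\left(n+1\right)$-localic bookkeeping is routine given the identifications above.
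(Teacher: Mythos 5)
Your overall architecture matches the paper's: reduce to Theorem \ref{thm:classificationlarge}, transfer the truncation bookkeeping from Theorem \ref{thm:smalltruncatedversion} (via Theorem \ref{thm:fullyfaithful}, Proposition \ref{prop:2.3.16} and Lemma \ref{lem:ntruncatedlocalic}), and prove colimit preservation of the restricted $j^\sU_!$ by factoring it through a relative connected geometric morphism $\Lambda_\sU$ as in Proposition \ref{prop:connected}. But there is a genuine gap at the decisive step. You assert that $\Shi\left(\sD\right)$ is $n$-localic, apply Proposition \ref{prop:2.3.16} to conclude that $\Shi\left(\sD\right)/j^\sU_!\left(1\right)$ is $n$-localic, and then descend $\Lambda_\sU$ along $\nu_{n+1}$. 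This presupposes that $\Shi\left(\sD\right)$ is an $\i$-topos, which is exactly what fails in the case this theorem is for: despite the stray ``small and'' in the statement, the blossom $\sD$ here is large --- the paper's own proof opens by warning that ``$\Shi\left(\sD\right)$ is not an infinity-topos,'' and the intended applications (e.g.\ Theorem \ref{thm:classificationetale}, with $\sD$ the admissibly closed blossom inside all of $\Aff_k$) force largeness. Proposition \ref{prop:sheafification} yields topos-ness of $\Shi\left(\sD\right)$ only when $\sD$ is small; for large $\sD$ the notion ``$n$-localic,'' Proposition \ref{prop:2.3.16}, and the full and faithful embedding $\nu_{n+1}$ are simply not applicable to $\Shi\left(\sD\right)$ or its slice, so the $\left(n+1\right)$-localic descent cannot be performed globally. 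You do concede in your closing paragraph that $\Shi\left(\sD\right)$ is ``possibly non-small,'' but you misidentify the main obstacle as the left exactness of $\Lambda^*_\sU$ (which goes through as in Proposition \ref{prop:connected}, essentially as you sketch); the unresolved obstacle is precisely the localic descent that your main argument performs.

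The paper's repair is to make the descent local. By Corollary \ref{cor:colimps}, colimits in $\Shi\left(\sD\right)$ (hence in $\Sh_{n}\left(\sD\right)$) are detected by the restrictions $\chi_\cD^*$ for $\cD$ in $\sD$, and by Proposition \ref{prop:locblossom} the relevant objects all lie in small strong \'etale blossoms; so it suffices to show that for every small strong \'etale blossom $\mathscr{V}$ with $\sU \hookrightarrow \mathscr{V} \hookrightarrow \sD$, the composite $\lambda^*_{\mathscr{V}} \circ \tau_{\le n}\Lambda^*_{\sU}$ preserves colimits. That composite is governed by the induced geometric morphism $$\Shi\left(\sU^{\et}\right) \to \Shi\left(\mathscr{V}\right)/\lambda^*_{\mathscr{V}}j^{\sU}_!\left(1\right),$$ whose codomain \emph{is} an honest $n$-localic $\i$-topos ($\mathscr{V}$ is small and, being a subcategory of $\sD$, an $n$-category), and there your $\nu_{n+1}$ argument becomes legitimate, exactly as in Theorem \ref{thm:smalltruncatedversion}. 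With that substitution, your truncation correspondence and your identification of the restricted $j^\sU_!$ with $j^{\sU,n}_!$ on representables are sound and agree with the paper.
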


\begin{proof}
Given Theorem \ref{thm:classificationlarge}, the proof is essentially the same as Theorem \ref{thm:smalltruncatedversion}, only some slight care must be taken since $\Shi\left(\sD\right)$ is not an infinity-topos. However, by essentially the same proof as Proposition \ref{prop:connected}, one can construct a left exact colimit preserving functor $$\Lambda^*_\sU:\Shi\left(\sU^{\et}\right) \to \Shi\left(\sD\right)/j^\sU_!\left(1\right).$$ By an analogous argument as in the proof of Theorem \ref{thm:smalltruncatedversion}, this restricts to a functor $$\tau_{\le n}\Lambda^*_{\sU}:\Sh_{n}\left(\sU^{\et}\right) \to \Sh_{n}\left(\sD\right)/j^\sU_!\left(1\right),$$ and one needs only argue this functor preserves colimits. By Corollary \ref{cor:colimps} combined with Proposition \ref{prop:locblossom}, it suffices to show that for every small strong \'etale blossom $\mathscr{V}$ such that $$\sU \hookrightarrow \mathscr{V} \hookrightarrow \sD,$$ the composite $$\Sh_{n}\left(\sU^{\et}\right) \stackrel{\Lambda^*_{\sU}}{\longlongrightarrow} \Sh_{n}\left(\sD\right)/j^\sU_!\left(1\right) \stackrel{\lambda^*_{\mathscr{V}}}{\longlongrightarrow} \Sh_{n}\left(\mathscr{V}\right)$$ preserves colimits. Now the same argument as in the proof of Theorem \ref{thm:smalltruncatedversion} can be applied to the induced geometric morphism $$\Shi\left(\sU^{\et}\right) \to \Shi\left(\mathscr{V}\right)/\lambda^*_{\mathscr{V}}j^{\sU}_!\left(1\right).$$
\end{proof}

We will end this section with a classification of universal \'etendues and $\i$-categories of the form $\lbare,$ for $\sL$ small:

\begin{thm}\label{thm:whatisK}
For $\K$ a full subcategory of $\Str_\G$ the following conditions are equivalent
\begin{itemize}
\item[a)] There exists a small subcategory $\sL$ of $\Str_\G$ such that $\K=\lbar.$
\item[b)] There exists a single object $\left(\cE,\O_\cE\right)$ in $\Str_\G$ such that  $\K=\overline{\{\left(\cE,\O_\cE\right)\}}.$
\item[c)] $\K^{\et}$ has a terminal object.
\item[d)] $\K^{\et}$ is an $\i$-topos.
\item[e)] $\K^{\et}$ is of small generation and cocomplete.
\end{itemize}
\end{thm}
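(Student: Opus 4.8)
The plan is to prove the cycle of implications $a)\Rightarrow d)\Rightarrow e)\Rightarrow c)\Rightarrow b)\Rightarrow a)$, where the three forward steps are formal consequences of the étendue machinery of this chapter and the substantive content lies in the reverse reconstruction. For $a)\Rightarrow d)$, if $\K=\overline{\sL}$ with $\sL$ small then $\K^{\et}=\overline{\sL}^{\et}$ is an $\i$-topos by Theorem \ref{thm:trex2}. For $d)\Rightarrow e)$, recall that an $\i$-topos is by definition an accessible left exact localization of a presheaf $\i$-category, hence presentable and in particular cocomplete; choosing a regular cardinal $\kappa$ for which $\K^{\et}$ is $\kappa$-accessible, the full subcategory $\C$ of $\kappa$-compact objects is small and the restricted Yoneda embedding $\K^{\et}\to\Pshi(\C)$ is fully faithful, so $\C$ is strongly generating by Proposition \ref{prop:5.2.9.4}; thus $\K^{\et}$ is cocomplete and of small generation. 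For $e)\Rightarrow c)$, apply Lemma \ref{lem:termexists} verbatim: a cocomplete $\icat$ admitting a small strongly generating full subcategory has a terminal object.

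The crux is $c)\Rightarrow b)$. Let $(\cU,\O_\cU)$ be a terminal object of $\K^{\et}$. Every object $(\cF,\O_\cF)$ of $\K$ then carries a unique étale morphism to $\cU$, so under the equivalence $\Str^{\et}(\G)/\cU\simeq\cU$ of Corollary \ref{cor:235} it is identified with a slice $(\cU/F,\O_\cU|_F)$; hence $\K\subseteq\sl(\{(\cU,\O_\cU)\})$ and, since $\cU$ is terminal, $\K^{\et}\simeq\K^{\et}/\cU$ is a full subcategory of the underlying $\i$-topos $\cU$. \emph{The main obstacle is the reverse inclusion}: one must show that every étale slice $\cU/G$, $G\in\cU$, already belongs to $\K$, i.e. that the fully faithful inclusion $\K^{\et}\hookrightarrow\cU$ is essentially surjective. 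Mere existence of a terminal object does not suffice for this; the plan is to promote it, using the cocompleteness and small strongly generating family carried along the cycle from $e)$, to the statement that $\K^{\et}$ is closed under the étale colimits computed in $\Str^{\et}(\G)$ (Lemma \ref{lem:holmol} and Corollary \ref{cor:235}), whereupon a generating family together with colimit-closedness forces $\K^{\et}\simeq\cU$. Granting this, $\K=\sl(\{(\cU,\O_\cU)\})$ on objects, and since $\overline{\{(\cU,\O_\cU)\}}$ is its own étale closure (Corollary \ref{cor:etclscls}) with terminal étendue recovering $\cU$ exactly as in Theorem \ref{thm:trex2}, we conclude $\K=\overline{\{(\cU,\O_\cU)\}}$, which is $b)$. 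I expect this passage from ``a terminal object exists'' to ``the whole topos of the terminal object is captured'' to be the genuine difficulty; everything else is bookkeeping.

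Finally, $b)\Rightarrow a)$: given $\K=\overline{\{(\cE,\O_\cE)\}}$, choose a small full subcategory $\C_\cE\subseteq\cE$ generating the underlying $\i$-topos under colimits and set $\sL=\{(\cE/C,\O_\cE|_C):C\in\C_\cE\}\subseteq\sl(\{(\cE,\O_\cE)\})$. Since $\C_\cE$ generates $\cE$, the canonical map $\coprod_C C\to 1_\cE$ is an effective epimorphism (the argument of Lemma \ref{lem:lalalucas}), so $(\cE/C\to\cE)$ is an étale covering family and $\cE\in\cosl(\sL)$; using that $\overline{\{(\cE,\O_\cE)\}}$ is closed under slicing and étale closure (Corollaries \ref{cor:stabslic} and \ref{cor:etclscls}) one gets $\overline{\sL}=\overline{\{(\cE,\O_\cE)\}}=\K$. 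Smallness of $\sL$ is the only delicate point and follows since $\C_\cE$ is small and $\Str_\G^{\et}$ is locally small (Proposition \ref{prop:etlocsmall}). As an illuminating shortcut, the equivalence $a)\Leftrightarrow b)$ encodes the slogan that any $\overline{\sL}$ ($\sL$ small) equals $\overline{\{\Uni\}}$ for its universal étendue $\Uni=\overline{\sL}^{\et}$, because the slice equivalence $\overline{\sL}^{\et}/\Uni\simeq\overline{\sL}^{\et}$ realizes every étendue as a slice of $\Uni$.
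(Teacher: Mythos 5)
Your handling of the easy arcs matches the paper in substance: $a)\Rightarrow d)$ via Theorem \ref{thm:trex2}, $d)\Rightarrow e)$ via $\kappa$-compact generators and Proposition \ref{prop:5.2.9.4}, and $e)\Rightarrow c)$ via Lemma \ref{lem:termexists} are all fine. But the crux $c)\Rightarrow b)$, which you yourself flag as the genuine difficulty, is not proved, and the plan you sketch for it cannot work as stated. In a cycle $a)\Rightarrow d)\Rightarrow e)\Rightarrow c)\Rightarrow b)\Rightarrow a)$, the implication $c)\Rightarrow b)$ may use only the hypothesis $c)$; your proposal to exploit ``the cocompleteness and small strongly generating family carried along the cycle from $e)$'' is circular, since $e)$ is the hypothesis of a different implication and is not available when proving $c)\Rightarrow b)$. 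The circularity is moreover unnecessary: terminality of $(\cE,\O_\cE)$ in $\K^{\et}$ identifies the inclusion $\K^{\et}\hookrightarrow\Str^{\et}_\G$ with the slice projection $\Str^{\et}_\G/\left(\cE,\O_\cE\right)\to\Str^{\et}_\G$, so that $\K^{\et}\simeq\cE$ by Corollary \ref{cor:235}; hence cocompleteness of $\K^{\et}$ is already a consequence of $c)$ alone (this is how the paper gets $c)\Rightarrow d)$), and this projection preserves (and reflects) colimits by Proposition \ref{prop:etcolims} together with Proposition \ref{prop:htt1.2.13.8} and Proposition 6.3.5.14 of \cite{htt}. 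The decisive ingredient your argument never touches is Theorem \ref{thm:cosluniv}: $\cosl\left(\sD\right)^{\et}$ is the \emph{smallest} subcategory of $\Str^{\et}_\G$ containing $\sl\left(\sD\right)$ whose inclusion preserves and reflects colimits. Applied to $\sD=\{\left(\cE,\O_\cE\right)\}$, this yields $\overline{\{\left(\cE,\O_\cE\right)\}}\subseteq\K$ --- the inclusion you leave open even after ``granting'' $\K=\sl\left(\{\left(\cE,\O_\cE\right)\}\right)$, and which does not follow from Corollary \ref{cor:etclscls} alone, since $\overline{\{\left(\cE,\O_\cE\right)\}}=\cosl\left(\sl\left(\{\left(\cE,\O_\cE\right)\}\right)\right)$ a priori contains objects glued from slices, not merely slices.

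Two smaller points. First, $b)\Rightarrow a)$ is immediate, since the singleton $\{\left(\cE,\O_\cE\right)\}$ is itself a small subcategory; your construction of $\sL$ out of slices is superfluous, and its smallness justification is in any case flawed: Proposition \ref{prop:etlocsmall} gives local smallness of $\Str^{\et}_\G$, not of $\Str_\G$, and mapping spaces in $\Str_\G$ between \'etale slices of a fixed object can fail to be essentially small --- this is exactly the failure of strong local smallness illustrated by the counterexample following Proposition \ref{prop:etlocsmall}. Second, in your favor: you are right that deducing $\sl\left(\{\left(\cE,\O_\cE\right)\}\right)\subseteq\K$, i.e.\ essential surjectivity of $\K^{\et}\hookrightarrow\cE$, from bare terminality is delicate; the paper asserts $\K=\sl\left(\{\left(\cE,\O_\cE\right)\}\right)$ at this point without further argument, in effect reading $\K$ as closed under \'etale slicing. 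But identifying a subtlety is not the same as resolving it, and your proposal resolves neither this step nor the $\cosl$-absorption; as it stands, the equivalence of $c)$ with the remaining conditions is unproven.
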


\begin{proof}
Clearly one has that $b) \Rightarrow a).$ Moreover, $a) \Rightarrow c)$ follows from Corollary \ref{cor:trex1}. $d) \Rightarrow c)$ is immediate. Suppose that $\K^{\et}$ has a terminal object $\left(\cE,\O_\cE\right).$ Then the projection functor $$\K^{\et}/\left(\cE,\O_\cE\right) \to \K^{\et}$$ is an equivalence, which implies that one has $\K=\sl\left(\{\left(\cE,\O_\cE\right)\}\right).$ It then follows from Corollary \ref{cor:2352} that $\K^{\et} \simeq \cE,$ and hence $\K^{\et}$ is an $\i$-topos. $d) \Rightarrow e)$ is immediate and $e) \Rightarrow c)$ follows from Lemma \ref{lem:termexists}. It now suffices to show that $c) \Rightarrow b).$ Given $c)$ it follows that $\K^{\et}$ is an $\i$-topos, and that $\K= \sl\left(\{\left(\cE,\O_\cE\right)\}\right),$ where $\left(\cE,\O_\cE\right)$ is its terminal object. It follows immediately that $$\K \subseteq \overline{\{\left(\cE,\O_\cE\right)\}}.$$ Notice that $\K^{\et}$ is cocomplete and the inclusion $$\K^{\et} \hookrightarrow \Str^{\et}_\G$$ can be identified with the projection $$\Str^{\et}_\G/\left(\cE,\O_\cE\right) \to \Str^{\et}_\G.$$ The latter preserves colimits by Proposition \ref{prop:etcolims} and Proposition 6.3.5.14 of \cite{htt}. By Theorem \ref{thm:cosluniv}, it follows that $$\overline{\{\left(\cE,\O_\cE\right)\}} \subseteq \K,$$ hence $c) \Rightarrow b).$
\end{proof}

\begin{thm}
For $\left(\cE,\O_\cE\right)$ a $\G$-structured $\i$-topos, the following conditions are equivalent:
\begin{itemize}
\item[1)] The functor
\begin{eqnarray*}
\cE &\to& \sl\left(\{\left(\cE,\O_\cE\right)\}\right)^{\et}\\
E &\mapsto& \left(\cE/E,\O_\cE|_E\right)
\end{eqnarray*}
is an equivalence.
\item[2)] The functor
\begin{eqnarray*}
\cE &\to& \overline{\{\left(\cE,\O_\cE\right)\}}^{\et}\\
E &\mapsto& \left(\cE/E,\O_\cE|_E\right)
\end{eqnarray*}
is an equivalence.
\item[3)] The projection $\Str^{\et}_\G/\left(\cE,\O_\cE\right) \to \Str^{\et}_\G$ is full and faithful.
\item[4)] $\left(\cE,\O_\cE\right)$ is a terminal object $\K^{\et}$ for a full subcategory $\K$ of $\Str_\G$ satisfying the equivalent conditions of Theorem \ref{thm:whatisK}.
\item[5)] $\left(\cE,\O_\cE\right)$ is a subterminal object of $\Str^{\et}_\G.$
\end{itemize}
\end{thm}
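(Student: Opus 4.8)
Set $X := (\cE,\O_\cE)$. The plan is to reduce all five conditions to two statements: that $X$ is terminal in $\sl(\{X\})^{\et}$, and the general categorical fact that an object is subterminal exactly when its slice projection is fully faithful. First I would reformulate conditions $1)$ and $2)$ as terminality statements. By Corollary~\ref{cor:2352} the forgetful projection exhibits $\sl(\{X\})^{\et}/X$ as the underlying $\i$-category of $\cE$, and under this equivalence the assignment $E \mapsto (\cE/E,\O_\cE|_E)$ of $1)$ becomes the composite of $\cE \stackrel{\sim}{\longrightarrow} \sl(\{X\})^{\et}/X$ with the projection $\sl(\{X\})^{\et}/X \to \sl(\{X\})^{\et}$. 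Since a slice projection $\mathscr{C}/X \to \mathscr{C}$ is an equivalence precisely when $X$ is terminal in $\mathscr{C}$ (an equivalence must send the terminal object $id_X$ of the slice to a terminal $X$), condition $1)$ is equivalent to terminality of $X$ in $\sl(\{X\})^{\et}$. Replacing Corollary~\ref{cor:2352} by Remark~\ref{rmk:2352}, the identical argument shows $2)$ is equivalent to terminality of $X$ in $\overline{\{X\}}^{\et}$.

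Next I would record the general fact that for any $\i$-category $\mathscr{C}$ and object $X$, the projection $\mathscr{C}/X \to \mathscr{C}$ is full and faithful if and only if $\Hom_{\mathscr{C}}(Y,X)$ is $(-1)$-truncated for all $Y$. This is a short computation: $\Hom_{\mathscr{C}/X}(f,g)$ is the fiber of $g_* : \Hom_{\mathscr{C}}(A,B) \to \Hom_{\mathscr{C}}(A,X)$ over $f$, its image under the projection is the fiber inclusion into $\Hom_{\mathscr{C}}(A,B)$, and this is an equivalence for all $f,g$ exactly when every nonempty $\Hom_{\mathscr{C}}(A,X)$ is contractible (necessity by taking $g=id_X$). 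Applied to $\mathscr{C}=\Str^{\et}_\G$ this is precisely $3)\Leftrightarrow 5)$.

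I would then prove $1)\Leftrightarrow 5)$ directly. Because every morphism of $\Str^{\et}_\G$ is by definition étale and its structure-sheaf component is an equivalence, an object $Y$ admits a map to $X$ in $\Str^{\et}_\G$ if and only if $Y$ lies in the full subcategory $\sl(\{X\})^{\et}$. Hence if $1)$ holds every nonempty $\Hom(Y,X)$ is contractible, so $X$ is subterminal; and if $5)$ holds then for $Y \in \sl(\{X\})^{\et}$ the space $\Hom(Y,X)$ is nonempty and $(-1)$-truncated, hence contractible, giving terminality of $X$ in $\sl(\{X\})^{\et}$.

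Finally I would close the loop $1)\Leftrightarrow 2)\Leftrightarrow 4)$ using Theorem~\ref{thm:whatisK}. Assuming $1)$, the category $\K := \sl(\{X\})$ has $\K^{\et}$ with terminal object $X$, so $\K$ satisfies condition $c)$ of that theorem; this immediately yields $4)$, and via the implication $c)\Rightarrow b)$ gives $\K = \overline{\{X\}}$, so that $\sl(\{X\})^{\et} = \overline{\{X\}}^{\et}$ and $2)$ follows from the first paragraph. Conversely $2)\Rightarrow 1)$ since terminality in $\overline{\{X\}}^{\et}$ restricts to the full subcategory $\sl(\{X\})^{\et}$ containing $X$, and $4)\Rightarrow 1)$ because the proof of Theorem~\ref{thm:whatisK} shows any $\K$ as in $4)$ with $X$ terminal in $\K^{\et}$ satisfies $\K = \sl(\{X\})$. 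The main obstacle is the careful identification in the first paragraph of the functors of $1)$ and $2)$ with slice projections, together with the $\i$-categorical fiber computation establishing $3)\Leftrightarrow 5)$; once these are in place the remaining implications are bookkeeping over Corollary~\ref{cor:2352} and Theorem~\ref{thm:whatisK}.
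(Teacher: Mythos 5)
Your proof is correct, and although it runs on the same three structural inputs as the paper's argument --- the slice equivalences of Corollary \ref{cor:2352} and Remark \ref{rmk:2352}, and Theorem \ref{thm:whatisK} --- it organizes the logic genuinely differently. The paper cycles through $1) \Rightarrow 3) \Rightarrow 4) \Rightarrow 5) \Rightarrow 4) \Rightarrow 1)$ (with $4) \Rightarrow 2) \Rightarrow 3)$ closing the loop): it handles condition $3)$ by exhibiting an explicit factorization of the projection $\Str^{\et}_\G/\left(\cE,\O_\cE\right) \to \Str^{\et}_\G$ through $\sl\left(\{\left(\cE,\O_\cE\right)\}\right)^{\et}$ and then passing to essential images, and it verifies subterminality in $4) \Rightarrow 5)$ by a direct $\Hom$ identification. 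You instead reduce $1)$ and $2)$ up front to terminality statements (via the observation that a slice projection $\mathscr{C}/X \to \mathscr{C}$ is an equivalence exactly when $X$ is terminal), and you isolate the general $\i$-categorical lemma that this projection is full and faithful if and only if every mapping space into $X$ is $\left(-1\right)$-truncated, proved by the fiber computation with $g = id_X$; the paper never states this lemma, though it is implicitly the content of its $4) \Leftrightarrow 5)$ steps. Your framing buys reusability and a clean separation of categorical generalities from the geometry, at the cost of leaning on Theorem \ref{thm:whatisK} for the $1) \Leftrightarrow 2) \Leftrightarrow 4)$ bookkeeping, where the paper argues more self-containedly via essential images. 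One point you should make explicit in the last paragraph: when you cite the proof of Theorem \ref{thm:whatisK} for $\K = \sl\left(\{X\}\right)$ in $4) \Rightarrow 1)$, the inclusion $\K \subseteq \sl\left(\{X\}\right)$ does follow from terminality of $X$ in $\K^{\et}$, but the reverse inclusion requires that $\K$ be closed under \'etale slicing, which is Corollary \ref{cor:stabslic} applied to $\K = \overline{\sL}$; the paper's own proof of $c) \Rightarrow b)$ elides this as well, so it is worth a sentence.
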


\begin{proof}
To see that $1) \Rightarrow 3)$ notice that given $1),$ one has a factorization
$$\xymatrix{\cE \ar[r]^-{\sim} \ar[d] _-{\rotatebox[origin=C]{90}{$\sim$}} & \Str_\G^{\et}/\left(\cE,\O_\cE\right) \ar[r] & \Str_\G^{\et}\\
\sl\left(\{\left(\cE,\O_\cE\right)\}\right)^{\et}/\left(\cE,\O_\cE\right) \ar[r] & \sl\left(\{\left(\cE,\O_\cE\right)\}\right)^{\et}. \ar@{^{(}->}[ur] &}$$ Suppose now that $3)$ holds. Let $\K^{\et}$ be the essential image of $$\Str^{\et}_\G/\left(\cE,\O_\cE\right) \to \Str^{\et}_\G.$$ On one hand, it follows that $\K^{\et}=\sl\left(\{\left(\cE,\O_\cE\right)\}\right)^{\et},$ so that $\K^{\et}/\left(\cE,\O_\cE\right)\simeq \cE$ is an $\i$-topos, so $\K$ satisfies the conditions of Theorem \ref{thm:whatisK}. On the other hand, the projection $\K^{\et}/\left(\cE,\O_\cE\right) \to \K^{\et}$ is an equivalence, from whence it follows that $\left(\cE,\O_\cE\right)$ is a terminal object of $\K^{\et}.$ Hence, $3) \Rightarrow 4).$ To see $4) \Rightarrow 5),$ suppose that $\left(\cE,\O_\cE\right)$ is terminal in $\K^{\et}$ for $\K=\lbar$ for a small subcategory $\sL$ of $\Str_\G.$ Notice that if there exists an \'etale map $$\left(\cF,\O_\cF\right) \to \left(\cE,\O_\cE\right),$$ it follows that $\cF \in \sl\left(\K^{\et}\right)=\K^{\et},$ in which case $$\Hom_{\Str^{\et}_\G}\left(\left(\cF,\O_\cF\right),\left(\cE,\O_\cE\right)\right) = \Hom_{\K^{\et}}\left(\left(\cF,\O_\cF\right),\left(\cE,\O_\cE\right)\right) \simeq *.$$ Suppose now that $5)$ holds, then let $\K^{\et}$ be the full subcategory of $\Str^{\et}_\G$ on those $\left(\cF,\O_\cF\right)$ for which $\Hom_{\Str^{\et}_\G}\left(\left(\cF,\O_\cF\right),\left(\cE,\O_\cE\right)\right)$ is non-empty. It follows immediately that $\left(\cE,\O_\cE\right)$ is terminal in $\K^{\et},$ and moreover $\K^{\et}$ can be identified with $\sl\left(\{\left(\cE,\O_\cE\right)\}\right)^{\et},$ and hence from Corollary \ref{cor:2352} it follows that $\K^{\et}$ is an $\i$-topos, so $4)$ holds. To see that $4) \Rightarrow 1),$ observe that $\sl\left(\{\left(\cE,\O_\cE\right)\}\right)^{\et}$ is the essential image of $$\Str^{\et}_\G/\left(\cE,\O_\cE\right) \to \Str^{\et}_\G$$ which is the same as the essential image of $$\K^{\et}/\left(\cE,\O_\cE\right) \to \K^{\et},$$ which is simply $\K^{\et},$ and is also equivalent to $\cE.$ Notice that $4) \Rightarrow 2)$ is immediate, whereas $2) \Rightarrow  3)$ follows by essentially the same proof as $1) \Rightarrow 3).$
\end{proof}

\chapter{Examples}\label{chap:examples}

\section{Higher Differentiable Orbifolds and \'Etale Stacks}\label{sec:orbifolds}
In this subsection, we will lay out the basic definitions for higher differentiable orbifolds and \'etale stacks, and interpret the theorems of Chapter \ref{chap:etendues} in this context. We will not develop the full theory here, as this would be deserving of another paper.

We will work in the ambient $\i$-category $\Str_\G$ with $\G=\G\left(\g_{Zar}\left(\RR\right)\right)$ the geometric structure associated to the Zariski geometry over the reals, as defined in Section 2.5 of \cite{dag}. In other words, $\Str_\G$ will be the $\i$-category of $\i$-topoi locally ringed in commutative $\RR$-algebras.

\begin{rmk}
It would be more elegant to work with $\mathbf{C}^\i$-rings rather than $\RR$-algebras, however it will make no difference in the end, so we will not dwell on this here.
\end{rmk}

Notice that if $M$ is a smooth manifold, then viewing it as a locally ringed space gives rise to an induced structure of a locally ringed $\i$-topos $\left(\Shi\left(M\right),\mathbf{C}_M^\i\right).$ Since this structured $\i$-topos is simply an incarnation of $M$ itself (with its structure sheaf), when no confusion will arise, we reserve the right to denote $\left(\Shi\left(M\right),\mathbf{C}_M^\i\right)$ simply by $M.$ We will let $\sL_{\cinf}$ denote the full subcategory of $\Str_\G$ on those objects of the form $\left(\Shi\left(\RR^n\right),\mathbf{C}_{\RR^n}^\i\right),$ for $n \ge 0$. For each such $n,$ denote the full subcategory spanned by only $\left(\Shi\left(\RR^n\right),\mathbf{C}_{\RR^n}^\i\right)$ by $\sL^n_{\cinf}.$

\begin{dfn}\label{dfn:smoothetenduesi}
$\lbarcinf$ is the $\i$-category of \textbf{smooth $\i$-\'etendues}, and will be denoted by $\Etd$. An \'etale morphism in this $\icat$ will simply be called a \textbf{local diffeomorphism.} For each $n \ge 0,$ $\overline{\sL^n_{\cinf}}$, which is a full subcategory of $\Etd,$ will be denoted by $n\mbox{-}\Etd$. If an object $\cE$ of $\Etd$ is in $n\mbox{-}\Etd,$ it will be called \textbf{$n$-dimensional}.
\end{dfn}

If $\cE$ is an $n$-dimensional smooth $\i$-\'etendue, by definition, it has an \'etale covering family $\left(\varphi_\alpha:\cE_\alpha \to \cE\right)$ with each $\cE_\alpha \simeq \RR^n.$ This covering family by local diffeomorphisms plays the role of a smooth atlas. In other words, $\cE$ is locally diffeomorphic to $\RR^n.$ In particular, if $M$ is a smooth $n$-manifold, then its atlas gives it the unique structure of a smooth $n$-dimensional \'etendue.

\begin{dfn}
A smooth $\i$-\'etendue $\cE$ will be called \textbf{proper} if the underlying geometric morphism of its diagonal $$\cE \to \cE \times \cE$$ is a proper geometric morphism of $\i$-topoi, in the sense of Definition 7.3.1.4 of \cite{htt}.
\end{dfn}

Definition \ref{dfn:smoothetenduesi} is quite similar to the following existing definition of a smooth \'etendue (from Expos\'e iv, Exercice 9.8.2 of \cite{sga4}):

\begin{dfn}\label{dfn:smoothetendues}
A smooth \'etendue is a locally ringed topos $\left(\cE,\O_\cE\right)$ for which there exists an object $E$ of $\cE$ such that $$E \to 1$$ is an epimorphism (a so-called \textbf{well supported} object), and such that $\left(\cE/E,\O_\cE|_E\right)$ is equivalent to a (possibly non-Hausdorff or $2^{nd}$-countable) smooth manifold. Denote the corresponding $\left(2,1\right)$-category by $\Etdd$
\end{dfn}

\begin{prop}\label{prop:etenduesthesame}
There is a canonical equivalence of $\left(2,1\right)$-categories between the subcategory of $\Etd$ spanned by its $1$-localic objects (which are necessarily $1$-truncated), and the $\left(2,1\right)$-category $\Etdd$ of smooth \'etendues as in Definition \ref{dfn:smoothetendues}.
\end{prop}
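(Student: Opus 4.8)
The plan is to exhibit a functor between the two $(2,1)$-categories and show it is an equivalence by checking it is essentially surjective and fully faithful. First I would fix the comparison setup: an object of $\Etdd$ is a locally ringed topos $\left(\cE,\O_\cE\right)$ (in commutative $\RR$-algebras) admitting a well-supported object $E$ with $\left(\cE/E,\O_\cE|_E\right)$ a smooth manifold, while an object on the other side is a $1$-localic, $1$-truncated object of $\Etd=\lbarcinf$. The key observation is that a $1$-localic $\i$-topos is, up to equivalence, of the form $\Shi\left(\cE_1\right)$ for an ordinary topos $\cE_1$, via the full and faithful embedding $\nu_1:\Tn[1]\hookrightarrow\T$ of Definition \ref{dfn:nloc}, and that the $1$-truncated objects of such an $\i$-topos recover the ordinary topos $\cE_1$. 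So the passage between a structured $1$-topos and a $1$-localic structured $\i$-topos is controlled entirely by $\nu_1$ and its adjoint $\tau_0$, which is full and faithful on $1$-localic objects.

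Next I would verify essential surjectivity in both directions. Given a $1$-localic, $1$-truncated smooth $\i$-\'etendue $\cE$, Definition \ref{dfn:letendue} supplies an \'etale covering family $\left(\cE_\alpha\to\cE\right)$ with each $\cE_\alpha$ a generalized open subset of some $\RR^n$, hence (being $1$-localic and $0$-truncated by Remark \ref{rmk:noetale} applied in the $1$-localic setting) an honest open submanifold; taking coproducts and using that $\coprod_\alpha E_\alpha\to 1_\cE$ is an effective epimorphism produces a well-supported object $E$ with $\cE/E$ a manifold, so its $1$-localic reflection lies in $\Etdd$. Conversely, a smooth \'etendue $\left(\cE,\O_\cE\right)$ with witness $E$ gives a $\G$-structured $\i$-topos $\left(\Shi\left(\cE\right),\O\right)$ whose slice over $E$ is a manifold, hence in $\sl\left(\sL_\cinf\right)$, and the well-supportedness of $E$ exhibits $\Shi\left(\cE\right)$ as lying in $\cosl\left(\sl\left(\sL_\cinf\right)\right)=\Etd$; it is $1$-localic by construction and $1$-truncated by Lemma \ref{lem:ntruncatedlocalic} (since the base $\cB$ for the Zariski geometry over $\RR$ is $1$-localic). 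Here I must be careful that the \'etale maps on both sides agree: an \'etale morphism of $1$-localic structured $\i$-topoi corresponds under $\nu_1$ to a local homeomorphism of the underlying $1$-topoi respecting structure sheaves, which is exactly the notion of morphism used to build $\Etdd$.

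For full faithfulness I would compare mapping groupoids. Both sides are $(2,1)$-categories, so I need that for $1$-localic, $1$-truncated objects $X,Y$ the map $\Hom_{\Etd}\left(X,Y\right)\to\Hom_{\Etdd}\left(\tau_0 X,\tau_0 Y\right)$ is an equivalence of groupoids. This reduces to two facts: that $\nu_1$ is full and faithful (Definition \ref{dfn:nloc}), which handles the underlying geometric morphisms, and that the structure-sheaf data match, i.e. that morphisms of $\G$-structures between $1$-localic objects over the $1$-localic base $\cB$ are computed in the underlying $2$-category of locally ringed topoi. The latter follows from the $\Str\left(\G\right)$ mapping-space pullback (the proposition preceding Corollary \ref{cor:2.3.20}) together with Lemma \ref{lem:ntruncatedlocalic}, which guarantees the relevant fibers are $1$-truncated, so no higher homotopy is lost.

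The main obstacle I expect is bookkeeping the structure sheaves through the localic truncation carefully enough to see that the $\G$-structured morphisms in $\Etd$ between $1$-localic objects coincide with maps of locally ringed topoi in the classical sense, and in particular that the ``local'' (conservative-on-stalks) condition encoded by $\G_R^\cE$ matches the definition of morphism in $\Etdd$. Once that identification is pinned down via Remark \ref{rmk:locmap} and the functoriality of the localization/conservative factorization system on internal $\RR$-algebras, the equivalence follows formally from the fully faithfulness of $\nu_1$ and the $n$-truncatedness results of Lemma \ref{lem:ntruncatedlocalic}.
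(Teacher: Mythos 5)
Your overall framing (comparison via the full and faithful functor $\nu^\i$ from locally ringed topoi to locally ringed $\i$-topoi, with full faithfulness handled by the mapping-space pullback square and Lemma \ref{lem:ntruncatedlocalic}) matches the paper, but your essential-surjectivity argument in the direction from $1$-localic smooth $\i$-\'etendues to classical \'etendues has a genuine gap. You claim the objects $E_\alpha$ classifying the covering family are ``$0$-truncated by Remark \ref{rmk:noetale} applied in the $1$-localic setting.'' That remark yields only $1$-truncatedness: objects of a $2$-topos are $1$-truncated, and indeed by Proposition \ref{prop:2.3.16} the objects $E_\alpha$ with $\cE/E_\alpha \simeq \Shi\left(\RR^{n_\alpha}\right)$ are merely $1$-truncated, since $\cE$ is $1$-localic and the slices are $0$-localic; nothing forces them to be $0$-truncated. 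Consequently $\coprod_\alpha E_\alpha$ is only $1$-truncated and does not define an object of the underlying $1$-topos at all, whereas Definition \ref{dfn:smoothetendues} demands a well-supported object \emph{of the $1$-topos}. This is precisely where the paper's proof does its real work: for each $1$-truncated $E_\alpha$ it chooses an epimorphism $p_\alpha: H_\alpha \to E_\alpha$ from a $0$-truncated object, exhibiting $E_\alpha$ as the stack of torsors for the groupoid $H_\alpha \times_{E_\alpha} H_\alpha \rightrightarrows H_\alpha$; identifying $p_\alpha$ with a sheaf of sets $P_\alpha$ on $\RR^{n_\alpha}$, it then shows $\cE/H_\alpha \simeq \Shi\left(\RR^{n_\alpha}\right)/P_\alpha \simeq \Shi\left(L\left(P_\alpha\right)\right)$ is sheaves on the (possibly non-Hausdorff) \'etal\'e space of $P_\alpha$, a manifold, so that $H = \coprod_\alpha H_\alpha$ is the required $0$-truncated well-supported witness. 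Without this descent from $1$-truncated to $0$-truncated atlas objects, your argument does not produce an object of $\Etdd$.

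A smaller, fixable slip occurs in the other direction: from a classical \'etendue with witness $E$ and $\cE/E \simeq \Sh\left(M\right)$ you cannot conclude that the slice lies in $\sl\left(\sL_{\cinf}\right)$, since a general manifold (e.g.\ $S^n$) admits no local diffeomorphism to $\RR^n$, so $\Shi\left(M\right)$ is not a slice of $\Shi\left(\RR^n\right)$. One must refine by a chart atlas $\left(U_\alpha \hookrightarrow M\right)$ with $U_\alpha \cong \RR^{n_\alpha}$ and use the composite \'etale covering family $\Shi\left(U_\alpha\right) \to \Shi\left(M\right) \to \nu^\i\left(\cE\right)$, which is exactly what the paper does; this repair is routine, unlike the gap above.
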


\begin{proof}
Notice that the functor $$\nu^\i:\mathfrak{Top} \hookrightarrow \T$$ induces a full and faithful functor from the $\left(2,1\right)$-category of locally ringed topoi, to the $\icat$ of locally ringed $\i$-topoi. We will show that this functor restricts to the desired equivalence. Suppose that $\cE$ is a smooth \'etendue, and let $E$ be a well supported object such that $$\cE/E \simeq \Sh\left(M\right)$$ for some (possibly non-Hausdorff) manifold $M$ (we are suppressing the structure sheaf from the notation). For simplicity, assume that $M$ has dimension $n.$ Let $$\left(\varphi_\alpha:U_\alpha \hookrightarrow M\right)$$ be a smooth atlas for $M$, with each $U_\alpha\cong \RR^n.$ Then $$\left(\Shi\left(U_\alpha\right) \to \Shi\left(M\right) \to \nu^\i\left(\cE\right)\right)$$ is an \'etale covering family, so one concludes that $\nu^\i\left(\cE\right)$ is a $1$-localic smooth $\i$-\'etendue. Conversely, suppose that $\cF$ is a $1$-localic smooth $\i$-\'etendue. Let $\left(E_\alpha\right)_\alpha$ be a collection of objects such that $$\coprod\limits_\alpha E_\alpha \to 1$$ is an epimorphism, and such that each $\cE/E_\alpha$ is equivalent to $\RR^{n_\alpha}$ for some $n_\alpha$. By Proposition \ref{prop:2.3.16}, each $E_\alpha$ must be $1$-truncated. It follows that for each $\alpha$ there exists an epimorphism $$p_\alpha:H_\alpha \to E_\alpha$$ from a $0$-truncated object such that $E_\alpha$ may be identified with the stack of torsors for the groupoid object $$H_\alpha \times_{E_\alpha} H_\alpha \rightrightarrows H_\alpha.$$ Each morphism $p_\alpha$ may be identified with a $0$-truncated object in $$\cE/E_\alpha \simeq \Shi\left(\RR^{n_\alpha}\right),$$ i.e. a sheaf $P_\alpha$ on $\RR^{n_\alpha}.$ It follows that $$\cE/H_\alpha \simeq \Shi\left(\RR^{n_\alpha}\right)/P_\alpha,$$ and the later is equivalent to $\Shi\left(L\left(P_\alpha\right)\right),$ where $$L\left(P_\alpha\right) \to \RR^{n_\alpha}$$ is the \'etal\'e space of the sheaf $P_\alpha.$ In particular, each $\cE/H_\alpha$ is equivalent to a (possibly non-Hausdorff) smooth manifold. The object $H:=\coprod\limits_\alpha H_\alpha$ is $0$-truncated, and hence may be identified with a well-supported object of the $1$-topos associated to $\cE.$ It follows that this $1$-topos (with its structure sheaf) is a smooth \'etendue.
\end{proof}

Let $\sD$ denote the full subcategory of $\Etd$ spanned by open submanifolds of $\RR^n,$ for all $n \ge 0.$ It follows immediately from Proposition \ref{prop:existbloss} that $\sD$ is a strong \'etale blossom for $\Etd.$ Denote by $\Mfd$ the category of all smooth manifolds. For convenience, let us assume that each manifold $M$ in $\Mfd$ is $2^{nd}$ countable and Hausdorff so that $\Mfd$ is essentially small by Whitney's embedding theorem. It is a proper full subcategory of $\Etd$ containing $\sD$, hence also a small and locally small strong \'etale blossom.

\begin{lem}
An object $\cE$ of $\Etd$ is $1$-truncated if and only if it is $1$-localic. The same statement also holds in $\Etd^{\et}.$
\end{lem}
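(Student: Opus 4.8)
The plan is to transport the statement to the universal \'etendue, where it becomes an instance of Proposition \ref{prop:2.3.16}. First I would fix a small, locally small strong \'etale blossom $\sD$ for $\Etd=\lbarcinf$; the full subcategory of open submanifolds of the $\RR^n$ does the job (it is a strong \'etale blossom by Proposition \ref{prop:existbloss}, and it is small). By Theorem \ref{thm:yonequil} together with Proposition \ref{prop:theta} there is an equivalence $\Etd^{\et}\simeq \Uni:=\Shi\left(\sD^{\et}\right)$ under which an object $\cE$ corresponds to an $\i$-sheaf $F$ on $\sD^{\et}$ whose associated \'etendue has underlying $\i$-topos $\Uni/F$. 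Since an equivalence preserves truncatedness of objects, $\cE$ is $1$-truncated in $\Etd^{\et}$ if and only if $F$ is $1$-truncated in $\Uni$, while $\cE$ is $1$-localic if and only if $\Uni/F$ is $1$-localic.

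The crux is that $\Uni$ is itself $1$-localic. The generic bound of Corollary \ref{cor:ntruncatedlocalic} only yields $2$-localic, so I would sharpen it: because $\sD^{\et}$ is a full subcategory of $\Mfd^{\et}$, which is an ordinary $1$-category (its morphisms, local diffeomorphisms of locally $\RR$-algebra-ringed manifolds, form sets---the geometric-morphism part is a set of local diffeomorphisms between $0$-localic topoi, and the structure-sheaf comparison is a set of sheaf isomorphisms), the site $\sD^{\et}$ is a $\left(1,1\right)$-category, whence $\Uni=\Shi\left(\sD^{\et}\right)$ is $1$-localic. With $\Uni$ now $1$-localic I can apply Proposition \ref{prop:2.3.16} with $n=1$: the slice $\Uni/F$ is $1$-localic if and only if $F$ is $1$-truncated. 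Combined with the identifications of the previous paragraph this gives the equivalence ``$\cE$ is $1$-localic $\iff$ $\cE$ is $1$-truncated in $\Etd^{\et}$,'' which is the $\Etd^{\et}$ version of the lemma.

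Finally I would deduce the $\Etd$ version by closing a cycle of implications. If $\cE$ is $1$-localic then, since the base $\cB$ of $\G=\G\left(\g_{Zar}\left(\RR\right)\right)$ (the classifying topos for local $\RR$-algebras) is $1$-localic, Lemma \ref{lem:ntruncatedlocalic} with $n=1$ shows $\cE$ is $1$-truncated as an object of $\Str\left(\G\right)$, hence of $\Etd$. If $\cE$ is $1$-truncated in $\Etd$ then, because $\Str^{\et}\left(\G\right)\hookrightarrow \Str\left(\G\right)$ is faithful (so each map $\Hom_{\Etd^{\et}}\left(\cF,\cE\right)\to \Hom_{\Etd}\left(\cF,\cE\right)$ is a monomorphism and a subspace of a $1$-truncated space is $1$-truncated), $\cE$ is $1$-truncated in $\Etd^{\et}$; and by the $\Etd^{\et}$ version this forces $\cE$ to be $1$-localic. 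These three implications give ``$1$-localic $\Rightarrow$ $1$-truncated in $\Etd$ $\Rightarrow$ $1$-truncated in $\Etd^{\et}$ $\Rightarrow$ $1$-localic,'' so all three conditions are equivalent and both assertions of the lemma follow. The only step demanding genuine care is the sharpening that $\Uni$ is $1$-localic (equivalently, that $\sD^{\et}$ is an ordinary category); once that is in hand the rest is a direct application of results already established.
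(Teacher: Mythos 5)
Your proof is correct, and its skeleton coincides with the paper's: both arguments turn on the two facts that the universal \'etendue $\Uni \simeq \Shi\left(\Mfd^{\et}\right)$ is $1$-localic (the paper simply asserts this after invoking Theorem \ref{thm:yonequil}, whereas you supply the justification that the site is an ordinary category, sharpening the generic $2$-localic bound of Corollary \ref{cor:ntruncatedlocalic}) and that Proposition \ref{prop:2.3.16} then matches $1$-localic slices $\Uni/F$ with $1$-truncated objects $F$; likewise both use Lemma \ref{lem:ntruncatedlocalic} for the implication from $1$-localic to $1$-truncated, and your treatment of the $\Etd^{\et}$ assertion via Theorem \ref{thm:yonequil} and Proposition \ref{prop:theta} is the same as the paper's, which phrases it through $\Etd^{\et} \simeq \Etd^{\et}/\Uni$ and Proposition \ref{prop:trivkan}. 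Where you genuinely diverge is the implication that $1$-truncatedness in $\Etd$ forces $1$-localicness: the paper extracts from the proof of Theorem \ref{thm:smalltruncatedversion} a $1$-truncated sheaf $\X$ on $\Mfd^{\et}$ with $\cE \simeq \Uni/\X$ and applies Proposition \ref{prop:2.3.16} directly, while you instead close a cycle of implications, passing from $1$-truncatedness in $\Etd$ to $1$-truncatedness in $\Etd^{\et}$ via the faithfulness of $\Str^{\et}\left(\G\right) \hookrightarrow \Str\left(\G\right)$ and then invoking the already-established \'etale case. That step is sound: the map $\Hom_{\Etd^{\et}}\left(\cF,\cE\right) \to \Hom_{\Etd}\left(\cF,\cE\right)$ is indeed a monomorphism (an inclusion of path components, since being \'etale and being invertible are invariant under equivalence of morphisms), and a subspace of a $1$-truncated space is $1$-truncated. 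Your route buys a more elementary argument that bypasses the $j_!$-classification machinery behind Theorem \ref{thm:smalltruncatedversion}; the paper's route buys an explicit identification of $\cE$ as $\Uni/\X$ for the $1$-truncated sheaf $\X$ representing it. The one point both proofs leave implicit is that the paper's sheaf condition on $\Mfd^{\et}$ agrees with the open-cover Grothendieck topology (the paper records this only for $\Mfd$), which is what licenses the inference that $\i$-sheaves on a $1$-site form a $1$-localic $\i$-topos; this is routine, and with it your argument is complete.
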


\begin{proof}
If $\cE$ is $1$-localic, it is $1$-truncated in $\Etd$ (and hence also in $\Etd^{\et}$) by Lemma \ref{lem:ntruncatedlocalic}. Conversely, by the proof of Theorem \ref{thm:smalltruncatedversion}, $\cE$ is $1$-truncated in $\Etd$ if and only if $i^*\overline{y}\left(\cE\right) \simeq j_!\left(\X\right),$ for $\X$ a $1$-truncated object of $\Shi\left(\Mfd^{\et}\right).$ By Proposition \ref{prop:theta}, it follows that $\cE \simeq \Uni/j_!\left(\X\right),$ and hence by Proposition \ref{prop:2.3.16}, $\cE$ is $1$-localic. If instead, $\cE$ is $1$-truncated in $\Etd^{\et},$ then since $$\Etd^{\et}\simeq \Etd^{\et}/\Uni,$$ the essentially unique \'etale map $\cE \to \Uni$ is $1$-truncated. By Proposition \ref{prop:trivkan}, this corresponds to a $1$-truncated object $E$ of $\Uni,$ such that $\Uni/E \simeq \cE.$ By Theorem \ref{thm:yonequil}, $$\Uni \simeq \Shi\left(\Mfd^{\et}\right),$$ and hence is $1$-localic. The result now follows from Proposition \ref{prop:2.3.16}.
\end{proof}

\begin{cor}\label{cor:1trunete}
The $\left(2,1\right)$-category $\Etdd$ is canonically equivalent to the full subcategory of $\Etd$ spanned by the $1$-truncated objects. Similarly, the $\left(2,1\right)$-category $\Etdd^{\et}$ is equivalent to the full subcategory of $\Etd^{\et}$ spanned by $1$-truncated objects.
\end{cor}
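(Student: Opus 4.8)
The plan is to deduce both statements directly from Proposition \ref{prop:etenduesthesame} and the Lemma immediately preceding this corollary, so that the only genuine work lies in the second (étale) statement.

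For the first statement, I would proceed formally. By Proposition \ref{prop:etenduesthesame}, the fully faithful functor $\nu^\i:\mathfrak{Top} \hookrightarrow \T$ (on locally ringed objects) restricts to an equivalence of $\left(2,1\right)$-categories identifying $\Etdd$ with the full subcategory of $\Etd$ spanned by its $1$-localic objects. By the preceding Lemma, an object of $\Etd$ is $1$-localic if and only if it is $1$-truncated. Composing these two identifications yields the desired canonical equivalence between $\Etdd$ and the full subcategory of $\Etd$ on the $1$-truncated objects. Since the full subcategory of $\Etd$ on $1$-truncated objects has $1$-truncated mapping spaces, it is itself a $\left(2,1\right)$-category, so this is an equivalence of $\left(2,1\right)$-categories.

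For the second statement, the extra input I would supply is that $\nu^\i$ matches up the two notions of étale morphism. Concretely, for $1$-localic objects $\cE,\cF$ of $\Etd$, I would show that a structure-preserving geometric morphism $\cF \to \cE$ lies in $\Etd^{\et}$ (i.e. is étale as a morphism of $\i$-topoi) precisely when the corresponding morphism of smooth étendues is a local homeomorphism in the sense of \cite{etalspme}. One direction uses Proposition \ref{prop:2.3.16}: an étale map $\cF \to \cE$ between $1$-localic $\i$-topoi corresponds to an object $E \in \cE$ with $\cF \simeq \cE/E$, and $1$-localicity of $\cF$ forces $E$ to be $1$-truncated, so the map is $1$-étale in the sense of Definition \ref{dfn:metale}. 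By Definition \ref{dfn:lochomeo}, together with the remark following it identifying (for localic or topological étendues) local homeomorphisms with those of \cite{etalspme}, these $1$-étale morphisms are exactly the local homeomorphisms of the underlying étendues. Hence the equivalence of Proposition \ref{prop:etenduesthesame} restricts to an equivalence between $\Etdd^{\et}$ and the full subcategory of $\Etd^{\et}$ on $1$-localic objects, and the Lemma (in its $\Etd^{\et}$ form) again replaces ``$1$-localic'' by ``$1$-truncated.''

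The main obstacle will be the bookkeeping in this second step: carefully confirming that the $\i$-categorical notion of an étale morphism between $1$-localic structured $\i$-topoi coincides with the classical $1$-categorical notion of a local homeomorphism of smooth étendues, and that the structure-sheaf condition is respected in both directions. This requires invoking the precise relationship, developed in Chapter \ref{chap:etale}, between $m$-étale morphisms of $n$-topoi and étale morphisms into the associated $\i$-topos $\nu^\i_n\left(\cE\right)$. Once this compatibility is established, both equivalences follow from the formal combination described above.
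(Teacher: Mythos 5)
Your proposal is correct and takes essentially the same approach as the paper: the corollary is stated there without a written proof, precisely because it is the immediate combination of Proposition \ref{prop:etenduesthesame} with the preceding lemma (in both its $\Etd$ and $\Etd^{\et}$ forms), exactly as in your argument. Your second paragraph---using Proposition \ref{prop:2.3.16} to see that \'etale morphisms of $\Etd^{\et}$ between $1$-localic objects are $1$-\'etale and hence correspond to local homeomorphisms of the underlying \'etendues in the sense of Definition \ref{dfn:lochomeo}, compatibly with structure sheaves---is a correct fleshing-out of the bookkeeping the paper leaves implicit, not a different route.
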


The $1$-category $\Mfd$ carries a canonical Grothendieck topology generated by open covers. An $\i$-sheaf $F$ on $\Mfd$ in the sense of Definition \ref{dfn:sheaveD} is a functor $$\Mfd^{op} \to \iGpd$$ which restricted to each manifold $M$ in $\Mfd$ is an $\i$-sheaf over the topological space $M.$ This definition of $\i$-sheaf is easily seen to be equivalent to an $\i$-sheaf for the open cover topology on $\Mfd.$

%\begin{rmk}
%Denote by $\Mfd$ the category of all smooth manifolds. It also carries a Grothendieck topology generated by open covers. By the comparison lemma \cite{sga4}, expos\'e iii.4, it follows that $\Sh\left(\Mfd\right) \simeq \Sh\left(\sD\right).$ Since every smooth manifold is of finite covering dimension, it follows that both $\i$-topoi $\Shi\left(\Mfd\right)$ and $\Shi\left(\sD\right)$ are hypercomplete, and hence are equivalent to the homotopy coherence nerve of their respective simplicial model categories of simplicial sheaves, introduced by Joyal \cite{jardine}. By the comparison lemma, both of these model categories are isomorphic, hence it follows that $\Shi\left(\Mfd\right) \simeq \Shi\left(\sD\right).$
%\end{rmk}

\begin{rmk}\label{rmk:hypermanifold}
Notice that every smooth manifold $M$ is of finite covering dimension, hence $\Shi\left(M\right)$ is hypercomplete. Since hypercompleteness is a local condition, it follows that if $\left(\cE,\cinf\left(\cE\right)\right)$ is a smooth $\i$-\'etendue, $\cE$ is hypercomplete.
\end{rmk}

Denote by $$i:\Mfd \hookrightarrow \Etd$$ the full and faithful inclusion. It follows from Theorem \ref{thm:fullyfaithful} that the induced functor $$\Etd \stackrel{\overline{y}}{\longhookrightarrow} \Shi\left(\Etd\right) \stackrel{i^*}{\longrightarrow} \Shi\left(\Mfd\right)$$ is full and faithful.

\begin{dfn}
An $\i$-sheaf $\X$ on $\Mfd$ is called an \textbf{\'etale differentiable $\i$-stack} if it is in the essential image of $i^* \circ \overline{y},$ i.e. if it is the functor of points of a smooth $\i$-\'etendue. Denote the corresponding $\icat$ by $\Etds.$ Similarly, $$\X \simeq i^* \overline{y}\left(\cE\right)$$ is said to be \textbf{$n$-dimensional} if $\cE$ is. Denote the corresponding $\icat$ by $\Etdsn.$
\end{dfn}

\begin{rmk}\label{rmk:etenduesandstacks}
It follows immediately that there are canonical equivalences of $\i$-categories
$$\Etd \simeq \Etds$$
and
$$n\mbox{-}\Etd \simeq \Etdsn.$$
\end{rmk}

There is a potential danger of terminology since there is already a well-established definition of an \'etale differentiable stack, namely:

\begin{dfn}\label{dfn:etalstackmfd}
An \textbf{\'etale Lie groupoid} is a groupoid object $\G$ in the category $\widetilde{\Mfd}^{\et}$ of possibly non-Hausdorff or $2^{nd}$-countable smooth manifolds and local diffeomorphisms between them. Such a groupoid object induces a presheaf of groupoids on $\Mfd,$ by assigning each manifold $M$ the groupoid $$\Hom\left(M,\G_1\right) \rightrightarrows \Hom\left(M,\G_1\right).$$ The stackification of this presheaf to a stack of groupoids is denoted by $\left[\G\right].$ An \textbf{\'etale differentiable stack} is a stack $\X$ on $\Mfd$ which is equivalent to a stack of the form $\left[\G\right]$ for $\G$ an \'etale Lie groupoid. Denote the corresponding $\left(2,1\right)$-category by $\Etdsd.$
\end{dfn}

\begin{rmk}
For $\G$ an \'etale Lie groupoid, the stack $\left[\G\right]$ may also be described as the stack of principal $\G$-bundles.
\end{rmk}

\begin{rmk}
An equivalent formulation of Definition \ref{dfn:etalstackmfd} is that $\X$ is an \'etale differentiable stack if and only if there exists a representable local diffeomorphism $$p:X \to \X$$ from a manifold $X,$ such that $p$ is also an epimorphism. Being a representable local diffeomorphism means that for any map from a manifold $M \to \X,$ (viewing $M$ as a representable sheaf), the pullback $M\times_\X X$ is equivalent to a (possibly non-Hausdorff or $2^{nd}$-countable) manifold, and the induced map $$M\times_\X X \to M$$ is a local diffeomorphism. Such a map $p$ is called an \textbf{\'etale atlas} of $\X.$ Indeed, given such an atlas, $$X \times_\X X \rightrightarrows X$$ is an \'etale Lie groupoid whose associated stack is equivalent to $\X,$ and conversely, given an \'etale Lie groupoid $\G,$ the canonical map $$\G_0 \to \left[\G_0\right]$$ is easily seen to be an \'etale atlas.
\end{rmk}

\begin{thm}\label{thm:etalstthesame}
An $\i$-sheaf $\X$ on $\Mfd$ is an \'etale differentiable stack if and only if it is an \'etale differentiable $\i$-stack, and is $1$-truncated (i.e. a stack of groupoids).
\end{thm}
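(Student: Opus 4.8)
The plan is to prove the two implications separately, leaning on the fully faithful functor-of-points embedding of Theorem \ref{thm:fullyfaithful} and the identification of $1$-localic smooth $\i$-\'etendues with classical smooth \'etendues furnished by Proposition \ref{prop:etenduesthesame} and Corollary \ref{cor:1trunete}. Throughout I regard $\Mfd$ as the small and locally small strong \'etale blossom for $\Etd$ noted in the text, so that $\overline{\Mfd}=\Etd$ and the results of the previous section apply with $\sD=\Mfd$.

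For the forward direction, suppose $\X$ is a classical \'etale differentiable stack. By Definition \ref{dfn:etalstackmfd} it is $1$-truncated, so it remains only to realize it as the functor of points of a smooth $\i$-\'etendue. First I would choose an \'etale atlas $p:X \to \X$, a representable local diffeomorphism which is an effective epimorphism, and form its \v{C}ech nerve, which is exactly the nerve $\G_\bullet:\Delta^{op} \to \widetilde{\Mfd}^{\et}$ of the \'etale Lie groupoid $\G=\left(X \times_\X X \rightrightarrows X\right)$. Since every face and degeneracy map of this \'etale simplicial manifold is a local diffeomorphism, $\G_\bullet$ is a diagram in $\Etd^{\et}$; here one must allow the $\G_n$ to be non-Hausdorff manifolds, which are nonetheless smooth $\i$-\'etendues. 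Using that $\Etd^{\et}$ is cocomplete (the corollary to Lemma \ref{lem:holmol}), I would set $\cE:=\colim \G_\bullet$, a smooth $\i$-\'etendue.

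It then remains to identify $i^*\overline{y}\left(\cE\right)$ with $\X$. By Proposition \ref{prop:bothfuncolims} the composite $\Etd^{\et} \stackrel{j}{\to} \Etd \stackrel{i^*\overline{y}}{\to} \Shi\left(\Mfd\right)$ preserves small colimits, and since $i^*\overline{y}$ carries each (possibly non-Hausdorff) manifold $\G_n$ to its functor of points $y\left(\G_n\right)$ on $\Mfd$, one obtains $i^*\overline{y}\left(\cE\right) \simeq \colim y\left(\G_\bullet\right)$. Because $\G_\bullet$ is the \v{C}ech nerve of the effective epimorphism $p$ in the $\i$-topos $\Shi\left(\Mfd\right)$, this colimit is $\X$ itself, exhibiting $\X$ as an \'etale differentiable $\i$-stack. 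For the converse, suppose $\X$ is a $1$-truncated \'etale differentiable $\i$-stack, so $\X \simeq i^*\overline{y}\left(\cE\right)$ for a smooth $\i$-\'etendue $\cE$. Since $i^*\overline{y}$ is fully faithful (Theorem \ref{thm:fullyfaithful}), the natural equivalence $\Hom_{\Etd}\left(\cF,\cE\right) \simeq \Hom\left(i^*\overline{y}\left(\cF\right),\X\right)$ shows $\cE$ is $1$-truncated in $\Etd$, hence $1$-localic by the lemma preceding Corollary \ref{cor:1trunete}. By Proposition \ref{prop:etenduesthesame} (equivalently Corollary \ref{cor:1trunete}) $\cE$ then corresponds to a classical smooth \'etendue $\left(\cE',\O_{\cE'}\right)$; choosing a well-supported object $E$ with $\cE'/E \simeq \Sh\left(M\right)$ a manifold produces a local diffeomorphism $M \to \cE$ whose functor of points $y\left(M\right) \to \X$ is an epimorphism, and stability of \'etale maps under pullback (Corollary \ref{cor:pullstabet}) makes it a representable local diffeomorphism, i.e. an \'etale atlas. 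Thus $\X$ is a classical \'etale differentiable stack.

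The main obstacle I anticipate is the colimit computation in the forward direction: one must check carefully that the nerve of the \'etale Lie groupoid genuinely lands in $\Etd^{\et}$ (tolerating non-Hausdorff $\G_n$), that the colimit formed there agrees with the colimit of representables in $\Shi\left(\Mfd\right)$ through Proposition \ref{prop:bothfuncolims}, and that the \v{C}ech nerve of the atlas reconstructs $\X$. By contrast, the truncation bookkeeping in the converse is comparatively routine, resting only on the fully faithfulness of Theorem \ref{thm:fullyfaithful} together with the equivalence of Corollary \ref{cor:1trunete}.
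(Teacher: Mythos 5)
Your proposal is correct, but it takes a genuinely different route from the paper. The paper's proof is essentially a citation: it invokes Pronk's Theorem 41 (\cite{Dorette}), which constructs an equivalence $S:\Etdd \to \Etdsd$ between classical smooth \'etendues and \'etale differentiable stacks, observes that $S$ is the restriction of $i^* \circ \overline{y}$ to $\Etdd$ along $\nu^\i$, and then concludes via Corollary \ref{cor:1trunete} that the $1$-truncated objects match up on both sides. You instead re-prove the needed comparison internally: your forward direction is in effect the easy half of the paper's subsequent proposition on \'etale simplicial manifolds, specialized to the nerve of the \v{C}ech groupoid of an atlas, with Proposition \ref{prop:bothfuncolims} doing the work of commuting the colimit past the functor of points; your converse recovers an atlas from a well-supported object after passing through the $1$-truncated $\Leftrightarrow$ $1$-localic lemma and Proposition \ref{prop:etenduesthesame}. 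What the paper's route buys is brevity, at the price of outsourcing the hardest comparison (that Pronk's torsor construction is an equivalence) to the literature and asserting the identification of $S$ with the functor of points; what your route buys is self-containedness within the paper's machinery. The price you pay is a handful of glossed verifications that should be made explicit: that the \v{C}ech nerve of the atlas in $\Shi\left(\Mfd\right)$ agrees with the $1$-categorical groupoid nerve (this uses $1$-truncatedness of $\X$, which makes the \v{C}ech nerve $2$-coskeletal); that in the converse the map $y\left(M\right) \to \X$ is an effective epimorphism (lift the \v{C}ech nerve of $\cE/E \to \cE$ into $\Etd^{\et}$ as in the proof of Proposition \ref{prop:topsubcan} and apply Proposition \ref{prop:bothfuncolims}); and that representability of the pullbacks by manifolds uses not just Corollary \ref{cor:pullstabet} but the fact that the well-supported object $E$ is $0$-truncated, so $f^*E$ is a sheaf of sets whose \'etal\'e space is a (possibly non-Hausdorff) manifold. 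All of these are fillable with lemmas already in the paper, so the argument stands.
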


\begin{proof}
In Theorem 41 of \cite{Dorette}, Pronk constructs a functor $$S:\Etdd \to \Etdsd$$ which she shows to be an equivalence. This functor is easily seen to be nothing but the restriction of functor $$i^* \circ \overline{y}:\Etd \to \Shi\left(\Mfd\right)$$ to $\Etdd$ along $\nu^\i.$ The above functor induces an equivalence between $1$-truncated objects, hence it follows from Corollary \ref{cor:1trunete} that $\Etdsd$ is equivalent to the full subcategory of $\Etds$ on its $1$-truncated objects.
\end{proof}

\begin{rmk}\label{rmk:etalespet}\textbf{An \'Etal\'e Space Construction for $\i$-Sheaves}:\\
Suppose that $M$ is a manifold and $F$ is an $\i$-sheaf on $M.$ Then since $$\sl\left(\Etd\right)=\Etd,$$ $\Shi\left(M\right)/F$ has the canonical structure of a smooth $\i$-\'etendue, and it comes equipped with a local diffeomorphism (\'etale map) $$\pi_F:\Shi\left(M\right)/F \to \Shi\left(M\right),$$ which should be regarded as a local diffeomorphism with codomain $M.$ Let $U$ be an open subset of $M.$ Consider the induced \'etale morphism $$i_U:\Shi\left(U\right) \to \Shi\left(M\right)$$ corresponding to the open inclusion $U \hookrightarrow M.$ The $\i$-groupoid of sections of $\pi_F$ is equivalent to the $\i$-groupoid $$\Hom_{\Etd^{\et}/M}\left(i_U,\pi_F\right).$$ Under the equivalence $$\Etd^{\et}/M \simeq \Shi\left(M\right),$$ this $\i$-groupoid is equivalent to $$\Hom_{\Shi\left(M\right)}\left(y\left(U\right),F\right) \simeq F\left(U\right).$$ In summary, $$\pi_F:\Shi\left(M\right)/F \to \Shi\left(M\right)$$ should be regarded as the ``\'etal\'e space'' of the higher sheaf $F.$ By Remark \ref{rmk:etenduesandstacks}, $\pi_F$ may be regarded as a local diffeomorphism into $M$ with domain an \'etale differentiable $\i$-stack. When $F$ is $1$-truncated, i.e. when $F$ is a stack of groupoids, it follows from Theorem \ref{thm:etalstthesame} that $\pi_F$ must agree with the \emph{\'etale realization} of $F$ in the sense of \cite{etalspme}.

Suppose that $\cE$ is an arbitrary smooth $\i$-\'etendue. The $\i$-topos $\cE$, by Remark \ref{rmk:immm}, is canonically equivalent to the $\i$-topos of sheaves over $\Mfd^{\et}/\cE.$ The $\i$-category $\Mfd^{\et}/\cE$ has as its objects local diffeomorphisms $M \to \cE,$ with $M$ a manifold, and its morphisms are (homotopy coherent) commutative triangles. (If $\cE$ is $1$-localic (so we may identify it with an \'etale differentiable stack), then $\Mfd^{\et}/\cE$ recovers the site described in the last Remark of Section 3 of \cite{etalspme}.) There is a canonical structure of a Grothendieck site on $\Mfd^{\et}/\cE,$ where the covering sieves are generated by open covers of manifolds, and $$\cE \simeq \Shi\left(\Mfd^{\et}/\cE\right).$$ By an \textbf{$\i$-sheaf $F$ on $\cE$}, we mean an $\i$-sheaf over $\Mfd^{\et}/\cE.$ If $F$ is such an $\i$-sheaf, it can be identified canonically with an object $E$ of $\cE.$ Denote the associated local diffeomorphism by $$\pi_F:\cE/E \to \cE.$$ By an analogous argument as in the manifold case, if $f:M \to \cE$ is a local diffeomorphism, the $\i$-groupoid of sections of $\pi_F$ over $f$ is canonically equivalent to the $\i$-groupoid $F\left(f\right).$ Hence $\pi_F$ plays the role of the \'etal\'e space of the $\i$-sheaf $F.$ When $F$ is $1$-truncated, it follows from Theorem \ref{thm:etalstthesame} that $\pi_F$ must agree with the \emph{\'etale realization} of $F$ in the sense of \cite{etalspme}.
\end{rmk}

\begin{dfn}
An $\i$-sheaf $\X$ on $\Mfd$ is called a \textbf{differentiable $\i$-orbifold} if $\X$ is the functor of points of a proper smooth $\i$-\'etendue $\cE.$
\end{dfn}

\begin{rmk}
If $\X$ is a differentiable $\i$-orbifold, and $\X$ is $1$-truncated, it follows immediately from Corollary \ref{cor:1trunete} and Theorem 4.1 of \cite{sheavesonorbifolds} that $\X$ is an orbifold.
\end{rmk}

\begin{rmk}
A general \'etale differentiable $\i$-stack (or smooth $\i$-\'etendue) should be regarded as a generalized higher orbifold. The principal difference is that for any geometric point $p:* \to \X$ in an $\i$-orbifold, its $\i$-group of automorphisms is finite, whereas for an arbitrary \'etale stack, a point's automorphism $\i$-groups are constrained only to be discrete. This is in contrast to a general (Artin-type) differentiable $\i$-stack, where an arbitrary point's automorphisms can form a Lie $\i$-group of positive dimension.
\end{rmk}

Denote by $\widetilde{\Mfd}$ the subcategory of $\Etd$ spanned by all smooth manifolds, where we no longer assume a smooth manifold must be Hausdorff or $2^{nd}$-countable.  Notice that $\widetilde{\Mfd}^{\et}$ is canonically equivalent to the category of such smooth manifolds and their local diffeomorphisms.

\begin{dfn}
An \textbf{\'etale simplicial manifold} is a functor $$\G_\bullet:\Delta^{op} \to \widetilde{\Mfd}^{\et}.$$
\end{dfn}
Notice that since any manifold $M$ in $\widetilde{\Mfd}$ is faithfully represented by its induced sheaf on $\Mfd,$ an \'etale simplicial manifold $\G_\bullet$ yields a simplicial $\i$-sheaf
$$\tilde \G_\bullet:\Delta^{op} \to \Shi\left(\Mfd\right).$$ Denote by $$\left[\G_\bullet\right]:=\colim \tilde \G_\bullet.$$

\begin{rmk}
If $\G_\bullet$ is the nerve of an \'etale Lie groupoid $\G$, then $\left[\G_\bullet\right]$ is the stack (with values in groupoids) of principal $\G$-bundles- i.e. the \'etale differentiable stack associated to $\G.$
\end{rmk}

\begin{rmk}
One may replace $\widetilde{\Mfd}$ with the category $\Mfd'$ consisting of manifolds which can be obtained by taking arbitrary coproducts of manifolds in $\Mfd$ without changing anything essential. In other words, an arbitrary manifold $M$ in $\Mfd'$ is of the form $$M \cong \coprod\limits_\alpha M_\alpha,$$ with each $M_\alpha$ in $\Mfd.$ This will of course exclude the enriched-nerve of a non-Hausdorff \'etale Lie groupoid $N\left(\G\right)_\bullet$ as an example of an \'etale simplicial manifold, however, any such nerve can be refined to a \'etale simplicial manifold $\G'_\bullet$ such that $$\left[\G\right] \simeq \left[ \G'_\bullet\right].$$ In fact, $N\left(\G\right)_\bullet$ may even be arranged to be a Lie $2$-groupoid.
\end{rmk}

\begin{prop}
An $\i$-sheaf $\X$ on $\Mfd$ is an \'etale differentiable $\i$-stack if and only if it is equivalent to one of the form $\left[\G_\bullet\right]$ with $\G_\bullet$ an \'etale simplicial manifold.
\end{prop}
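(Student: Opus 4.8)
The statement asserts that, as $\G_\bullet$ ranges over all \'etale simplicial manifolds, $[\G_\bullet]$ ranges exactly over the essential image of $i^{\ast}\circ\overline{y}$, i.e.\ over the \'etale differentiable $\i$-stacks. The plan is to prove the two inclusions separately, dispatching the ``if'' direction first since it is essentially formal. Given an \'etale simplicial manifold $\G_\bullet:\Delta^{op}\to\widetilde{\Mfd}^{\et}$, I view it through the inclusion $\widetilde{\Mfd}^{\et}\hookrightarrow\Etd^{\et}$ as a simplicial diagram in $\Etd^{\et}$. Since $\Etd^{\et}$ is cocomplete (the corollary following Lemma \ref{lem:holmol}), I may form $\cE:=\colim_{\Etd^{\et}}\G_\bullet$, a smooth $\i$-\'etendue. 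By Proposition \ref{prop:bothfuncolims} the composite $\Etd^{\et}\xrightarrow{j}\Etd\xrightarrow{i^{\ast}\overline{y}}\Shi\left(\Mfd\right)$ preserves small colimits, and on objects of $\Mfd$ it restricts to the representable-sheaf assignment $M\mapsto\tilde M$; hence $i^{\ast}\overline{y}\left(\cE\right)\simeq\colim\,\tilde\G_\bullet=[\G_\bullet]$. Thus $[\G_\bullet]$ is the functor of points of $\cE$, so it is an \'etale differentiable $\i$-stack.

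For the ``only if'' direction, suppose $\X\simeq i^{\ast}\overline{y}\left(\cE\right)$ for a smooth $\i$-\'etendue $\cE$; by Remark \ref{rmk:etenduesandstacks} it suffices to produce an \'etale simplicial manifold $\G_\bullet$ with $\colim_{\Etd^{\et}}\G_\bullet\simeq\cE$, and then conclude as above using Proposition \ref{prop:bothfuncolims}. I would work inside the $\i$-topos $\cE$ via the equivalence $\Etd^{\et}/\cE\simeq\cE$ (Remark \ref{rmk:2352}), so that an \'etale simplicial manifold over $\cE$ is the same as a simplicial object $U_\bullet$ in $\cE$ each of whose terms $U_n$ has manifold slice $\cE/U_n$. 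Because $\sl\left(\Etd\right)=\Etd$ (Corollary \ref{cor:stabslic}), every slice $\cE/E$ is again a smooth $\i$-\'etendue, and is therefore covered, by an effective epimorphism, by objects whose slices are open subsets of $\RR^n$ (refine the defining \'etale cover of $\cE/E$ by objects of $\sl\left(\sL_{\cinf}\right)$, then cover those \'etale-over-$\RR^n$ objects by coordinate charts). This furnishes, for any object of $\cE$, a covering by manifold-objects. I would then build inductively a hypercover $U_\bullet\to 1_\cE$ in $\cE$ whose term $U_n$ is a coproduct of such manifold-objects, at each stage covering the relevant matching object by manifold charts. Passing back to \'etale maps, $\G_n:=\cE/U_n$ is an object of $\widetilde{\Mfd}$, all face and degeneracy maps are local diffeomorphisms (a map of objects \'etale over $\cE$ is \'etale, Proposition \ref{prop:6.3.5.9}), and $\G_\bullet:\Delta^{op}\to\widetilde{\Mfd}^{\et}$ is an \'etale simplicial manifold.

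To see $\colim_{\Etd^{\et}}\G_\bullet\simeq\cE$, the key input is that $\cE$ is hypercomplete (Remark \ref{rmk:hypermanifold}): in a hypercomplete $\i$-topos a hypercover is colimiting, so $\colim_{\cE}U_\bullet\simeq 1_\cE$ (see \cite{htt}, \S6.5.3). Under $\Etd^{\et}/\cE\simeq\cE$ this says the slice diagram has colimit $\mathrm{id}_\cE$, and since the projection $\Etd^{\et}/\cE\to\Etd^{\et}$ preserves colimits (Proposition \ref{prop:htt1.2.13.8}, as in Lemma \ref{lem:holmol}) and sends $\mathrm{id}_\cE\mapsto\cE$, I obtain $\colim_{\Etd^{\et}}\G_\bullet\simeq\cE$. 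Applying the colimit-preserving functor $i^{\ast}\overline{y}\circ j$ then yields $\X\simeq i^{\ast}\overline{y}\left(\cE\right)\simeq[\G_\bullet]$, completing the argument.

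I expect the construction of the simplicial resolution with \emph{every} level a manifold to be the main obstacle. The naive Čech nerve of a single chart atlas $M\to\cE$ does not suffice: the fiber products $M\times_\cE M$ are \'etale over the manifold $M$ but inherit the higher homotopy of the stalks of $\cE$, so they need not be $0$-truncated and hence need not be manifolds. The same defect afflicts the canonical presentation of Proposition \ref{prop:gens}, whose terms $\coprod\cE/\left(E_{\alpha_1}\times\cdots\times E_{\alpha_n}\right)$ are precisely such fiber products. Upgrading the Čech nerve to a genuine hypercover---re-covering each matching object by manifold charts at every simplicial degree---is exactly what repairs this, and the hypercompleteness of smooth $\i$-\'etendues is precisely the hypothesis guaranteeing that the resulting hypercover still computes $\cE$.
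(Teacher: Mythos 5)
Your proposal is correct and follows essentially the same route as the paper: the ``if'' direction is the identical colimit argument via cocompleteness of $\Etd^{\et}$ and Proposition \ref{prop:bothfuncolims}, and the ``only if'' direction constructs, just as the paper does, a hypercover of $1_\cE$ by manifold-objects (using that $\Mfd$ is a strong \'etale blossom, so $\cosl\left(\Mfd\right)=\Etd$) and invokes hypercompleteness of smooth $\i$-\'etendues (Remark \ref{rmk:hypermanifold}) to see the hypercover converges before applying the colimit-preserving functor of points. Your closing observation about why the \v{C}ech nerve of an atlas fails---its fiber products need not be $0$-truncated---correctly identifies the reason the paper works with a genuine hypercover, so there is nothing to add.
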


\begin{proof}
Suppose that $\G_\bullet$ is an \'etale simplicial manifold. Then, since $\Etd^{\et}$ is cocomplete (Lemma \ref{lem:holmol}), the colimit $$\cE:= \colim \left(\Delta^{op} \stackrel{\G_\bullet}{\longlongrightarrow} \widetilde{\Mfd}^{\et} \stackrel{i'^{\et}}{\longlongrightarrow} \Etd^{\et}\right)$$ exists, where $i'$ is the canonical inclusion. By Proposition \ref{prop:bothfuncolims}, it follows that $i^*\overline{y}\left(\cE\right) \simeq \left[\G_\bullet\right].$ Conversely, suppose that $\cE$ is a smooth $\i$-\'etendue. Since $\Mfd$ is a strong \'etale blossom, $\cosl\left(\Mfd\right)=\Etd.$ It follows from the definition of $\cosl\left(\Mfd\right)$ that one can construct a hypercover $$F_\bullet:\Delta^{op} \to \Etd^{\et}/\cE\simeq \cE$$ such that each $F\left(n\right):\G\left(n\right) \to \cE,$ has $\G\left(n\right)$ in $\widetilde{\Mfd}.$ By Remark \ref{rmk:hypermanifold}, $\cE$ is hypercomplete, so this hypercover converges. Hence if $\G_\bullet:\Delta^{op} \to \widetilde{\Mfd}^{\et}$ is the \'etale simplicial manifold induced by $F_\bullet,$ one has that $\cE \simeq \colim \left(\G_0 \leftleftarrows \G_1 \lllarrows \G_2 \ldots\right)$ (with the colimit taken in $\Etd$). It follows from Proposition \ref{prop:bothfuncolims} that $i^* \overline{y}\left(\cE\right)$ is equivalent to $\left[\G_\bullet\right].$
\end{proof}

\begin{rmk}
Morally, an \'etale simplicial manifold should be thought of as an \'etale Lie $\i$-groupoid. For this to really be the case, one would want to impose a Kan condition on the simplicial manifold (see e.g. \cite{zhu} and \cite{jesse}). We expect that every \'etale simplicial manifold can be rectified to one satisfying a Kan condition which induces the same functor of points, but we will not pursue this here.
\end{rmk}

The following theorem is a special case of Theorem \ref{thm:yonequil} and Proposition \ref{prop:theta}:

\begin{thm}\label{thm:fields}
There is an adjoint equivalence
$$\xymatrix@C=2.5cm{\Etd^{\et} \ar@<-0.5ex>[r]_-{\left(i^{\et}\right)^* \circ y^{\et}} & \Shi\left(\Mfd^{\et}\right) \ar@<-0.5ex>[l]_-{\Theta}}$$
between the $\i$-category of smooth $\i$-\'etendues and local diffeomorphisms and the $\i$-category of $\i$-sheaves on the site of smooth manifolds and local diffeomorphisms. Similarly, there is an adjoint equivalence
$$\xymatrix@C=2.5cm{n\mbox{-}\Etd^{\et} \ar@<-0.5ex>[r]_-{\left(i_n^{\et}\right)^* \circ y_n^{\et}} & \Shi\left(n\mbox{-}\Mfd^{\et}\right) \ar@<-0.5ex>[l]_-{\Theta_n}}$$ between the $\i$-category of smooth $n$-dimensional $\i$-\'etendues and local diffeomorphisms and the $\i$-category of $\i$-sheaves on the site of smooth $n$-manifolds and local diffeomorphisms.
\end{thm}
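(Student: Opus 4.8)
The plan is to exhibit $\Mfd$ as a small and locally small strong \'etale blossom for $\Etd$ and then invoke Theorem \ref{thm:yonequil} and Proposition \ref{prop:theta} essentially verbatim. As already recorded in the discussion preceding the statement, the full subcategory $\sD$ of $\Etd$ spanned by the open submanifolds of $\RR^n$, $n \ge 0$, is a strong \'etale blossom for $\Etd$ by Proposition \ref{prop:existbloss}, and every such open submanifold lies in $\Mfd$. First I would note that $\Mfd$ is essentially small (by Whitney's embedding theorem, having restricted to second countable Hausdorff manifolds) and locally small, and that it sits in a chain of fully faithful inclusions of locally small subcategories $\sD \hookrightarrow \Mfd \hookrightarrow \Etd = \overline{\sD}$. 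Remark \ref{rmk:supblossom} then immediately upgrades $\Mfd$ to a strong \'etale blossom for $\Etd$; in particular $\overline{\Mfd} = \Etd$, so $\overline{\Mfd}^{\et} = \Etd^{\et}$.

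With $\Mfd$ now playing the role of the blossom $\sD$ in the general theory of Chapter \ref{chap:etendues}, Theorem \ref{thm:yonequil} applies directly and shows that the functor $\left(i^{\et}\right)^* \circ y^{\et} : \Etd^{\et} \to \Shi\left(\Mfd^{\et}\right)$ is an equivalence of $\i$-categories. Proposition \ref{prop:theta} then supplies an explicit inverse, realized as the right adjoint $\Theta$ sending an $\i$-sheaf $F$ on $\Mfd^{\et}$ to the sliced smooth $\i$-\'etendue $\left(\Uni/F, \O_\Uni|_F\right)$, where $\left(\Uni, \O_\Uni\right)$ is the universal smooth $\i$-\'etendue, i.e. the terminal object of $\Etd^{\et}$ furnished by Theorem \ref{thm:trex2}. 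Together these give the first adjoint equivalence, with $\left(i^{\et}\right)^* \circ y^{\et} \dashv \Theta$.

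For the $n$-dimensional statement I would run the identical argument with $n\mbox{-}\Etd = \overline{\sL^n_{\cinf}}$ in place of $\Etd$. The open submanifolds of $\RR^n$ form a strong \'etale blossom for $n\mbox{-}\Etd$ (again by Proposition \ref{prop:existbloss}, since each such object is $n$-localic); they are contained in the essentially small, locally small category $n\mbox{-}\Mfd$ of smooth $n$-manifolds, which in turn is contained in $n\mbox{-}\Etd$ because every $n$-manifold admits an atlas by local diffeomorphisms from copies of $\RR^n$. Remark \ref{rmk:supblossom} makes $n\mbox{-}\Mfd$ a strong \'etale blossom for $n\mbox{-}\Etd$, and applying Theorem \ref{thm:yonequil} and Proposition \ref{prop:theta} with $\sD = n\mbox{-}\Mfd$ yields the adjoint equivalence $\left(i_n^{\et}\right)^* \circ y_n^{\et} \dashv \Theta_n$.

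The argument is thus almost entirely a matter of checking hypotheses, so there is no deep obstacle. The one point demanding genuine care is the verification that $\Mfd$ (and $n\mbox{-}\Mfd$) really satisfies all the strong \'etale blossom axioms of Definition \ref{dfn:strongblossom} — in particular, that enlarging the canonical open-submanifold blossom to the full manifold category preserves the property of being a strong \'etale blossom. This is exactly the content handled by Remark \ref{rmk:supblossom}, so the step reduces to confirming that the relevant inclusions are fully faithful between locally small subcategories and that smallness survives the restriction to second countable Hausdorff manifolds; once that is in place, the two cited results apply with no further work.
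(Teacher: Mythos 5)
Your proposal is correct and follows exactly the paper's own route: the paper establishes that the open submanifolds of $\RR^n$ form a strong \'etale blossom via Proposition \ref{prop:existbloss}, upgrades $\Mfd$ (essentially small by the Whitney embedding convention) to a small and locally small strong \'etale blossom via the inclusion chain $\sD \hookrightarrow \Mfd \hookrightarrow \Etd$ as in Remark \ref{rmk:supblossom}, and then cites Theorem \ref{thm:yonequil} and Proposition \ref{prop:theta} for the adjoint equivalence, with the identical argument implicitly applied to $n\mbox{-}\Mfd$ and $n\mbox{-}\Etd$. Your only extraneous remark is the parenthetical that the open submanifolds of $\RR^n$ are $n$-localic, which is true but not needed for Proposition \ref{prop:existbloss} to apply.
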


\begin{rmk}
In particular, since there is a canonical equivalence of $\i$-categories between smooth $\i$-\'etendues and \'etale differentiable $\i$-stacks, one deduces there is an equivalence between the $\i$-category of ($n$-dimensional) \'etale differentiable $\i$-stacks and local diffeomorphisms and the $\i$-category of $\i$-sheaves on the site of smooth ($n$-)manifolds and local diffeomorphisms.
\end{rmk}

\begin{rmk}
The $\i$-category $\Shi\left(n\mbox{-}\Mfd^{\et}\right)$ is the same as the $\i$-category of \emph{classical fields for an $n$-dimensional field theory} as defined by Freed and Teleman in \cite{field}. Therefore, one may interpret Theorem \ref{thm:fields} as saying that $n$-dimensional smooth $\i$-\'etendues (generalized higher orbifolds) are geometric incarnations of classical fields for an $n$-dimensional field theory. In particular, this gives a way of assigning to a classical field a weak homotopy type. We work out a rather simple formula for this homotopy type in \cite{homtme}.
\end{rmk}

\begin{rmk}
Let $F$ be any $\i$-sheaf on the site of manifolds and local diffeomorphisms. It follows from Remark \ref{rmk:theta} that the smooth $\i$-\'etendue $\Theta\left(F\right)$ satisfies the universal property that for any smooth manifold $M,$ the $\i$-groupoid $F\left(M\right)$ is canonically equivalent to the $\i$-groupoid of local diffeomorphisms $M \to \Theta\left(F\right).$
\end{rmk}

Denote by $j:\Mfd^{\et} \to \Mfd$ the canonical functor from the category of smooth manifolds and local diffeomorphisms to the category of smooth manifolds and all smooth maps. As in Definition \ref{dfn:prolongation}, denote the left adjoint to the restriction functor $$j^*:\Shi\left(\Mfd\right) \to \Shi\left(\Mfd^{\et}\right)$$ by $j_!$ and call it the \'etale prolongation functor.

The following theorem is a special case of Theorem \ref{thm:classificationsmall}:

\begin{thm}
An $\i$-sheaf $\X$ on $\Mfd$ is an \'etale differentiable $\i$-stack if and only if it is in the essential image of the \'etale prolongation functor $j_!$.
\end{thm}

Notice that the classifying $\i$-topos for local rings is $1$-localic. By abuse of notation, consider the induced restriction functor $$j^*:\St\left(\Mfd\right) \to \St\left(\Mfd^{\et}\right)$$ between stacks of groupoids, which has a left adjoint, which we will also denote by $j_!.$ The following is a special case of Theorem \ref{thm:smalltruncatedversion}:

\begin{thm}\label{thm:prolst}
A stack in $\St\left(\Mfd\right)$ is an \'etale differentiable stack if and only if it is in the essential image of $j_!.$
\end{thm}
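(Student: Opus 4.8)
The plan is to deduce Theorem~\ref{thm:prolst} as the $n=1$ specialization of Theorem~\ref{thm:smalltruncatedversion}, so the bulk of the work consists in checking that the present differentiable setting satisfies the hypotheses of that theorem and then matching up the terminology. The geometric structure here is $\G=\G\left(\g_{Zar}\left(\RR\right)\right)$, whose base $\i$-topos $\cB$ is the classifying $\i$-topos for local rings; as noted just above, this is $1$-localic, so we are in the situation $n=1$. First I would take as local models $\sL=\sL_{\cinf}$, so that $\lbar=\Etd$, and take the strong \'etale blossom of Theorem~\ref{thm:smalltruncatedversion} to be $\Mfd$. Writing $\sD_0$ for the full subcategory of open submanifolds of the $\RR^n$, which is a strong \'etale blossom for $\Etd$, we have $\sD_0\subseteq\Mfd\subseteq\Etd=\overline{\sD_0}$, so Remark~\ref{rmk:supblossom} gives $\overline{\Mfd}=\Etd$ and shows $\Mfd$ is itself a strong \'etale blossom; smallness comes from Whitney's embedding theorem and local smallness from Proposition~\ref{prop:locallsmall}. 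Strong local smallness of $\sL_{\cinf}$ amounts to $\sl\left(\sL_{\cinf}\right)$ being locally small, which holds since $\sl\left(\sL_{\cinf}\right)\subseteq\Etd=\overline{\Mfd}$ is locally small by the same proposition. Crucially, $\Mfd$ is a $1$-category, hence equivalent to an $n$-category with $n=1$, as required.

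With these hypotheses in place, Theorem~\ref{thm:smalltruncatedversion} applied with $n=1$ states that a $1$-stack $\X$ in $\Sh_{1}\left(\Mfd\right)=\St\left(\Mfd\right)$ is Deligne-Mumford if and only if it lies in the essential image of the $1$-truncated \'etale prolongation functor $j^1_!:\St\left(\Mfd^{\et}\right)\to\St\left(\Mfd\right)$. I would then identify $j^1_!$ with the functor $j_!$ of the present statement: both are left adjoint to the restriction functor $j^*:\St\left(\Mfd\right)\to\St\left(\Mfd^{\et}\right)$ (the functor $j^1_!$ is constructed as a left Kan extension and, in the proof of Theorem~\ref{thm:smalltruncatedversion}, shown to be the restriction of the $\i$-prolongation to $1$-truncated objects, hence left adjoint to $j^*$), so they coincide by uniqueness of adjoints.

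It then remains to translate ``Deligne-Mumford $1$-stack'' into ``\'etale differentiable stack.'' By Remark~\ref{rmk:etenduesandstacks} the \'etale differentiable $\i$-stacks are exactly the Deligne-Mumford stacks for $\sD=\Mfd$, that is, the essential image of $i^*\circ\overline{y}$; restricting to $1$-truncated objects and invoking Theorem~\ref{thm:etalstthesame}, which identifies the $1$-truncated \'etale differentiable $\i$-stacks with the classical \'etale differentiable stacks, shows that the Deligne-Mumford $1$-stacks on $\Mfd$ are precisely the \'etale differentiable stacks. Combining this with the identification $j^1_!=j_!$ yields the theorem.

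The main obstacle I anticipate is not a single hard estimate but the careful bookkeeping needed to confirm that all the hypotheses of Theorem~\ref{thm:smalltruncatedversion} genuinely hold for $\sL_{\cinf}$ and $\Mfd$ — above all that $\cB$ is $1$-localic, so that the value $n=1$ is legitimate, and that $\Mfd$ qualifies as a small, locally small strong \'etale blossom equivalent to a $1$-category — together with the clean identification of $j^1_!$ with the $j_!$ named in the statement, so that the two essential images literally agree.
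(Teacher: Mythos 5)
Your proposal is correct and follows exactly the route the paper takes: the paper states Theorem \ref{thm:prolst} as an immediate special case of Theorem \ref{thm:smalltruncatedversion}, noting only that the classifying $\i$-topos for local rings is $1$-localic and relying on Theorem \ref{thm:etalstthesame} (via Corollary \ref{cor:1trunete}) to identify $1$-truncated Deligne-Mumford stacks on $\Mfd$ with classical \'etale differentiable stacks. Your write-up merely makes explicit the hypothesis-checking (smallness of $\Mfd$ via Whitney, $\Mfd$ being a strong \'etale blossom via Remark \ref{rmk:supblossom}, and the identification $j^1_!\simeq j_!$ by uniqueness of left adjoints to $j^*$) that the paper leaves implicit.
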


\begin{rmk}
Theorem \ref{thm:prolst} appears in \cite{prol} as Corollary 3.4.
\end{rmk}

\begin{dfn}\label{dfn:haf}\cite{Haefliger}
Let $M$ be a smooth manifold. Consider the presheaf $$\mathit{Emb}:\mathcal{O}\left(X\right)^{op} \to \Set,$$ which assigns an open subset $U$ the set of smooth open embeddings of $U$ into $M$. Denote the \'etal\'e space of the associated sheaf by $$s:\Ha\left(M\right)_1 \to M.$$ The space $\Ha\left(M\right)_1$ has an induced structure of a (non-Hausdorff) smooth manifold unique with the property that it makes the map $s$ a local diffeomorphism. The fiber of $s$ over a point $x$ is the same as the stalk at $x$ of the sheafification of $\mathit{Emb};$ concretely it is the set of germs at $x$ of locally defined diffeomorphisms of $M.$ If $germ_x\left(f\right)$ is one such germ, the element $f\left(x\right) \in M$ is well-defined. This assignment produces a smooth map $$t:\Ha\left(M\right)_1 \to M$$ which is also a local diffeomorphism. Composition of such germs induces the natural structure of an \'etale Lie groupoid $\Ha\left(M\right)$ with objects $M$, called the \textbf{Haefliger groupoid} of $M$. The Haefliger groupoid of $\RR^n,$ $\Ha\left(\RR^n\right),$ will also be called the \textbf{$n^{th}$ Haefliger groupoid}. The \textbf{$n^{th}$ Haefliger stack} is the \'etale differentiable stack $\HA^n:=\left[\Ha\left(\RR^n\right)\right],$ and the \textbf{Haefliger stack} is the disjoint union $$\coprod\limits_{n \ge 0} \HA^n.$$
\end{dfn}

\begin{rmk}
By Corollary 3.1 of \cite{prol}, if $M$ is any $n$-manifold, one has an equivalence of \'etale differentiable stacks $$\left[\Ha\left(M\right)\right] \simeq \HA^n.$$
\end{rmk}

%The following theorem is a special case of Theorem 3.3 of \cite{prol}:

\begin{thm}
The $n^{th}$ Haefliger stack is the terminal object in $\Etdsn^{\et},$ and the Haefliger stack is the terminal object in $\Etds^{\et}.$
\end{thm}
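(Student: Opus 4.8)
The plan is to prove the $n$-dimensional statement first and then deduce the global one by a dimension-decomposition argument. Observe at the outset that $\HA^n=\left[\Ha\left(\RR^n\right)\right]$ is an \'etale differentiable stack covered by the atlas $\Ha\left(\RR^n\right)_0=\RR^n$, hence a $1$-truncated object of $\Etdsn^{\et}$; and that $\Etdsn^{\et}$ already possesses a terminal object $\Uni$ (the universal $\sL^n_{\cinf}$-\'etendue) by Corollary \ref{cor:trex1}, which is moreover $1$-truncated by Corollary \ref{cor:ntruncatedlocalic}, since $\RR^n$ is $0$-localic. Thus $\HA^n$ and $\Uni$ lie in the same full subcategory of $1$-truncated objects, and by uniqueness of terminal objects it suffices to show that $\HA^n$ itself has the universal property of the terminal object of $\Etdsn^{\et}$.

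First I would reduce terminality to a condition checkable on manifolds. By Theorem \ref{thm:fields} the \'etale-points functor $\left(i_n^{\et}\right)^* \circ y_n^{\et}$ is an equivalence $\Etdsn^{\et} \simeq \Shi\left(n\mbox{-}\Mfd^{\et}\right)$, and any equivalence carries terminal objects to terminal objects. The terminal object of the $\i$-topos $\Shi\left(n\mbox{-}\Mfd^{\et}\right)$ is the terminal $\i$-sheaf, whose value on each $n$-manifold is a contractible $\i$-groupoid, and a sheaf is terminal precisely when all of its values are contractible. By the Yoneda computation underlying the proof of Theorem \ref{thm:fullyfaithful}, the image of $\HA^n$ under the \'etale-points functor is the sheaf $M \mapsto \Hom_{\Etdsn^{\et}}\left(M,\HA^n\right)$, the $\i$-groupoid of local diffeomorphisms from $M$ to $\HA^n$. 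Hence $\HA^n$ is terminal in $\Etdsn^{\et}$ if and only if, for every $n$-manifold $M$, the $\i$-groupoid of local diffeomorphisms $M \to \HA^n$ is contractible.

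The heart of the argument, and the step I expect to be the main obstacle, is establishing this contractibility. Non-emptiness is the easy half: by the equivalence $\left[\Ha\left(M\right)\right] \simeq \HA^n$ recorded after Definition \ref{dfn:haf} (\cite{prol}, Corollary 3.1), the canonical atlas $M=\Ha\left(M\right)_0 \to \left[\Ha\left(M\right)\right] \simeq \HA^n$ is a local diffeomorphism and provides a point. For the genuine content---connectedness of this groupoid and triviality of its automorphism groups---I would appeal to the defining feature of the Haefliger groupoid: $\Ha\left(\RR^n\right)_1$ is exactly the space of germs of local diffeomorphisms of $\RR^n$, so any \'etale chart $f:U \to \RR^n$ of $M$ determines the restriction of a map to $\HA^n$, and on overlaps the comparison germs $\mathit{germ}\left(f_j \circ f_i^{-1}\right)$ are forced and automatically satisfy the cocycle condition. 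This makes the resulting local diffeomorphism independent of the chosen \'etale atlas up to canonical isomorphism (any two atlases are compared by germs in $\Ha\left(\RR^n\right)$), giving connectedness; and an automorphism of such a map is given by germs fixing every point and every chart germ, hence is the identity, giving triviality of automorphisms. The careful germ-theoretic bookkeeping showing this $1$-groupoid is contractible is the classical analysis of \cite{Haefliger}, carried out for \'etale stacks in \cite{prol} and \cite{etalspme}; I would cite it and reduce the present claim to it. Combined with the previous paragraph this yields that $\HA^n$ is terminal in $\Etdsn^{\et}$ (and in particular $\HA^n \simeq \Uni$).

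Finally I would deduce the global statement. Every smooth $\i$-\'etendue decomposes canonically as $\cE \simeq \coprod_{n \ge 0} \cE_n$ into its dimension-$n$ parts, because the local model dimension is locally constant and local diffeomorphisms preserve it. Since $\Hom$ out of a coproduct is a product, and since a local diffeomorphism from the pure $n$-dimensional \'etendue $\cE_n$ into $\coprod_{m} \HA^m$ must factor through the summand $\HA^n$ of matching dimension (the preimage of any $\HA^m$ with $m \ne n$ would be an open-closed $n$-dimensional piece mapping \'etale to dimension $m$, hence empty), one computes
\[
\Hom_{\Etds^{\et}}\!\left(\cE,\ \coprod_{m \ge 0} \HA^m\right) \simeq \prod_{n \ge 0} \Hom_{\Etdsn^{\et}}\left(\cE_n,\HA^n\right) \simeq \prod_{n \ge 0} \ast \simeq \ast .
\]
Thus the Haefliger stack $\coprod_{n \ge 0} \HA^n$ is terminal in $\Etds^{\et}$, completing the proof.
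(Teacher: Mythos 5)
Your proof is correct, but it takes a genuinely different route from the paper's. The paper's proof is a short formal upgrade of the $2$-categorical statement: it cites Theorem 3.3 of \cite{prol}, which asserts that $\HA^n$ and $\HA$ are already terminal in the $\left(2,1\right)$-categories $n\mbox{-}\Etdsd^{\et}$ and $\Etdsd^{\et}$, identifies these with the full subcategories of $1$-truncated objects of $\Etdsn^{\et}$ and $\Etds^{\et}$ (Corollary \ref{cor:1trunete}, Remark \ref{rmk:etenduesandstacks}, Theorem \ref{thm:etalstthesame}), and then concludes by pure category theory: these $\i$-categories are $\i$-topoi (Theorem \ref{thm:fields}), hence presentable, so the $1$-truncated objects form a reflective subcategory (\cite{htt}, Proposition 5.5.6.18), and the inclusion of a reflective subcategory, being a right adjoint, preserves terminal objects. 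You instead push the question through representables: under Theorem \ref{thm:fields} terminality means contractible sheaf values, so the only external input you need is the manifold-level statement that $\Hom\left(M,\HA^n\right)$ is contractible for each $n$-manifold $M$ --- a strictly weaker import from \cite{prol}/\cite{Haefliger}/\cite{etalspme} than full $2$-categorical terminality --- at the cost of handling the mixed-dimension case yourself via the decomposition $\cE \simeq \coprod_n \cE_n$, which the paper gets for free since Theorem 3.3 of \cite{prol} already covers $\HA$ itself. Each approach buys something: yours makes the mechanism visible and isolates exactly which germ-theoretic content is used; the paper's is shorter and avoids any dimension bookkeeping. Two points in yours deserve tightening. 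First, your citation of Corollary \ref{cor:ntruncatedlocalic} is off: the base $\i$-topos here (classifying local $\RR$-algebras) is $1$-localic, so that corollary only gives that $\Uni$ is $2$-localic; $1$-truncatedness of $\Uni$ instead follows from $\Uni \simeq \Shi\left(n\mbox{-}\Mfd^{\et}\right)$ being $1$-localic (the site is a $1$-category) together with the lemma of Section \ref{sec:orbifolds} that $1$-localic objects of $\Etd^{\et}$ are $1$-truncated --- though this opening observation is in any case unused in your argument. Second, the dimension decomposition and the identification of the coproduct $\coprod_m \HA^m$ formed in $\Etds^{\et}$ with the Haefliger stack as defined in $\Shi\left(\Mfd\right)$ each need a sentence: the former by gluing the locally constant dimension function over an \'etale atlas, using disjointness and universality of coproducts in the $\i$-topos $\Etds^{\et}$; the latter because the functor of points preserves small colimits (Proposition \ref{prop:bothfuncolims}). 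Neither is a gap, only an omission of routine justification.
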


\begin{proof}
By Theorem 3.3 of \cite{prol}, $\HA^n$ and $\HA$ are terminal in $n\mbox{-}\Etdsd^{\et}$ and $\Etdsd^{\et}$ respectively. However, these $\left(2,1\right)$-categories are equivalent to the full subcategories of $1$-truncated objects of $n\mbox{-}\Etds^{\et}$ $\Etds^{\et}$ respectively by Corollary \ref{cor:1trunete}, Remark \ref{rmk:etenduesandstacks}, and Theorem \ref{thm:etalstthesame}. Notice that $n\mbox{-}\Etds^{\et}$ $\Etds^{\et}$ are $\i$-topoi by Theorem \ref{thm:fields} and Remark \ref{rmk:etenduesandstacks}, hence in particular, presentable $\i$-categories. The result now follows since the full subcategory of $1$-truncated objects in a presentable $\i$-category is a reflective subcategory by Proposition 5.5.6.18 of \cite{htt}.
\end{proof}

If $\G$ is an \'etale Lie groupoid, there is a canonical homomorphism Lie groupoids $$i_\G:\G \to \Ha\left(\G_0\right).$$ On objects, $i_\G$ is the identity. Given an arrow $g,$ one can find a neighborhood $U$ of $g$ in $\G_1$ over which the source and target maps $s$ and $t$ of the groupoid restrict to smooth open embeddings. The germ at $s\left(g\right)$ of the local diffeomorphism $$t\circ s^{-1}|_U$$ with source $s\left(U\right)$ does not depend on the choice of the neighborhood $U,$ and this assignment assembles into smooth map $$\left(i_\G\right)_1:\G_1 \to \Ha\left(\G_0\right)_1$$ (which is even a local diffeomorphism). It is easy to check that this is indeed a homomorphism of Lie groupoids. The image of $i_\G$ is an open subgroupoid $\mathbf{Eff}\left(\G\right)$ of $\Ha\left(\G_0\right).$

\begin{dfn}
An \'etale Lie groupoid $\G$ is \textbf{effective} if the induced map $$\G \to \mathbf{Eff}\left(\G\right)$$ is an isomorphism. An \'etale differentiable stack $\X$ is \textbf{effective} if $\X \simeq \left[\G\right]$ for $\G$ an effective \'etale Lie groupoid. Similarly, a smooth \'etendue is called \textbf{effective} if its associated \'etale differentiable stack is. (See Section 2 of \cite{prol} for a more in depth discussion about effectivity.)
\end{dfn}

The following proposition is an immediate consequence of Theorem 4.1 of \cite{prol}:

\begin{prop}\label{prop:eff}
An \'etale differentiable stack $\X$ is effective if and only if the unique map $\X \to \HA$ is $0$-truncated. Equivalently, $\overline{y}^{\et}\left(\Theta\left(\X\right)\right)$ is a sheaf.
\end{prop}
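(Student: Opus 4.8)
The plan is to deduce the result from Theorem 4.1 of \cite{prol} by transporting its content across the equivalence of Theorem \ref{thm:fields}. First I would fix the setup: since $\X$ is an \'etale differentiable stack, by Theorem \ref{thm:etalstthesame} it is a $1$-truncated \'etale differentiable $\i$-stack, and the Haefliger stack $\HA$ is terminal in $\Etds^{\et}$ and is itself $1$-truncated. Thus the unique map $\X \to \HA$ is a morphism of $1$-truncated objects of $\Etds^{\et}$, and under the identification of $1$-truncated objects provided by Corollary \ref{cor:1trunete}, Remark \ref{rmk:etenduesandstacks}, and Theorem \ref{thm:etalstthesame}, it coincides with the canonical map to the Haefliger stack in the $(2,1)$-category $\Etdsd^{\et}$ of \cite{prol}.

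Next I would invoke \cite{prol} Theorem 4.1, which asserts that an \'etale differentiable stack is effective exactly when its canonical map to $\HA$ is faithful. For a morphism between $1$-truncated objects, faithfulness is the same as being $0$-truncated: the homotopy fiber of a map of stacks of groupoids is $0$-truncated precisely when the induced maps on automorphism groups are injective. Since the inclusion of $1$-truncated objects into $\Etds^{\et}$ is full and faithful and detects truncatedness of morphisms, the notion of $0$-truncatedness computed in $\Etds^{\et}$ agrees with the $2$-categorical notion of faithfulness. This establishes the first equivalence: $\X$ is effective if and only if $\X \to \HA$ is $0$-truncated.

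For the second equivalence I would use the equivalence $\left(i^{\et}\right)^* \circ y^{\et}:\Etd^{\et} \stackrel{\sim}{\longrightarrow} \Shi\left(\Mfd^{\et}\right)$ of Theorem \ref{thm:fields}, together with the identification $\Etds^{\et} \simeq \Etd^{\et}$ of Remark \ref{rmk:etenduesandstacks}. Being an equivalence of $\i$-categories, it preserves and reflects $n$-truncatedness of both objects and morphisms (and by Lemma \ref{lem:sheavesthesame} it does not matter whether one regards $\overline{y}^{\et}\left(\Theta\left(\X\right)\right)$ in $\Shi\left(\Etd^{\et}\right)$ or in $\Shi\left(\Mfd^{\et}\right)$). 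Under this equivalence $\X$ is sent to its \'etale-points $\i$-sheaf $\overline{y}^{\et}\left(\Theta\left(\X\right)\right)$, while the terminal object $\HA$ is sent to the terminal sheaf $1$, since $\Uni$ is terminal in $\Etd^{\et}$ and corresponds to $1$. Hence $\X \to \HA$ is $0$-truncated if and only if $\overline{y}^{\et}\left(\Theta\left(\X\right)\right) \to 1$ is, and a morphism into a terminal object is $0$-truncated precisely when its domain is a $0$-truncated object (see \cite{htt}, Section 5.5.6). As a $0$-truncated $\i$-sheaf on $\Mfd^{\et}$ is exactly a sheaf of sets, this last condition says that $\overline{y}^{\et}\left(\Theta\left(\X\right)\right)$ is a sheaf, completing the chain of equivalences.

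I expect the only genuine subtlety to lie in the careful matching of the $2$-categorical vocabulary of \cite{prol} (faithful, equivalently representable, maps to $\HA$) with the $\i$-categorical notion of a $0$-truncated morphism, and in verifying that the successive equivalences $\Etds \simeq \Etd$, $\Etd^{\et} \simeq \Shi\left(\Mfd^{\et}\right)$, and the passage to $1$-truncated objects are mutually compatible, so that the single canonical map $\X \to \HA$ really corresponds on both sides to the expected maps and to the prolongation-theoretic Haefliger map. Once this bookkeeping is in place, the argument is a formal transport of Theorem 4.1 of \cite{prol}, as the statement already advertises.
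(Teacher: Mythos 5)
Your argument is correct and follows the same route as the paper, which simply records Proposition \ref{prop:eff} as an immediate consequence of Theorem 4.1 of \cite{prol}; your proposal merely makes explicit the bookkeeping the paper leaves tacit (matching $2$-categorical faithfulness with $0$-truncatedness of a map of $1$-truncated objects, identifying $\HA$ with the terminal object of $\Etds^{\et} \simeq \Shi\left(\Mfd^{\et}\right)$, and noting that a map to the terminal object is $0$-truncated exactly when its source is a $0$-truncated object, i.e.\ a sheaf of sets). All of these verifications are sound, so the proposal is a faithful, slightly more detailed rendering of the paper's intended proof.
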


\begin{dfn}\label{dfn:gerbe}
Let $\cE$ be an $\i$-topos. An object $\g$ in $\cE$ is an \textbf{$\i$-gerbe} if $\pi_0\left(\g\right)$ is terminal, where $\pi_0$ is the left adjoint to the inclusion of the full subcategory of $0$-truncated objects.
\end{dfn}

\begin{rmk}
By Proposition 4.1 of \cite{prol}, if $E$ is a $1$-truncated object of $\cE$, then $E$ is an $\i$-gerbe if and only if $E$ is a gerbe in the classical sense of Giraud in \cite{Giraud} (in the $1$-localic reflection of $\cE$). By a similar proof, it can be shown that if $E$ is $2$-truncated, then it is an $\i$-gerbe if and only if it is a $2$-gerbe in the sense of Breen in \cite{breen}. It should be noted that what Lurie calls an $n$-gerbe in Definition 7.2.2.20 is a particular case of an $\i$-gerbe, but it is much more restrictive; a more descriptive name for what Lurie calls an $n$-gerbe would be an Eilenberg-Maclane object, as $\X$ is an $n$-gerbe if and only if $\pi_n\left(\X\right)$ is its only non-trivial homotopy sheaf. For example, a $2$-gerbe in the sense of Lurie is a stack $\X$ of $2$-groupoids such that $\pi_0\left(\X\right)$ and $\pi_1\left(\X\right)$ are trivial, whereas a $2$-gerbe in the sense of Breen need only have $\pi_0\left(\X\right)$ trivial.
\end{rmk}

\begin{dfn}
Let $\left(\cE,\O_\cE\right)$ be a smooth $\i$-\'etendue. An \textbf{$\i$-gerbe on $\cE$} is an $\i$-gerbe in the underlying $\i$-topos $\cE.$ Given $\g$ an $\i$-gerbe on $\cE,$ the \textbf{\'etal\'e space of $\g$} is the smooth $\i$-\'etendue $\cE/\g.$
\end{dfn}

\begin{rmk}
By Remark \ref{rmk:etalespet}, an $\i$-gerbe on $\cE$ may be identified with an $\i$-gerbe on the site $\Mfd^{\et}/\cE.$
\end{rmk}

\begin{thm}
If $\cE$ is a smooth $\i$-\'etendue, $\cE$ is equivalent to the \'etal\'e space of an $\i$-gerbe on an effective smooth \'etendue.
\end{thm}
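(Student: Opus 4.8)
The plan is to realize $\cE$ through the universal \'etendue and then peel off its connected part by a single $0$-truncation. By Theorem~\ref{thm:trex2} the universal smooth $\i$-\'etendue $\left(\Uni,\O_\Uni\right)$ is terminal in $\Etd^{\et}$, so Remark~\ref{rmk:2352} yields an equivalence $\Etd^{\et}\simeq\Etd^{\et}/\Uni\simeq\Uni$ under which $\cE$ corresponds to an object $E$ of the underlying $\i$-topos $\Uni$ with $\cE\simeq\left(\Uni/E,\O_\Uni|_E\right)$. Thus it suffices to produce a $0$-truncated object $F\in\Uni$ and a morphism $E\to F$ in $\Uni$ exhibiting $\left(\Uni/F,\O_\Uni|_F\right)$ as an effective smooth \'etendue and exhibiting $\left(E\to F\right)$, viewed as an object of $\Uni/F$, as an $\i$-gerbe. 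Indeed, Lemma~\ref{lem:slicek} then identifies the \'etal\'e space $\left(\Uni/F\right)/\left(E\to F\right)$ with $\Uni/E\simeq\cE$, and the structure sheaves match because $\left(\O_\Uni|_F\right)|_{\left(E\to F\right)}\simeq\O_\Uni|_E$.

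The candidate is the $0$-truncation itself: I would set $F:=\pi_0\left(E\right)=\tau_{\le 0}\left(E\right)$, with $\pi_0$ the reflection onto $0$-truncated objects as in Definition~\ref{dfn:gerbe}, and put $\cF:=\left(\Uni/F,\O_\Uni|_F\right)$. Since $F$ is $0$-truncated, $\cF$ is a $1$-truncated object of $\Etd^{\et}$ (and $\cF\in\Etd$ because $\Etd$ is closed under slicing by Corollary~\ref{cor:stabslic}), so by Corollary~\ref{cor:1trunete} it is an honest smooth \'etendue in the sense of Definition~\ref{dfn:smoothetendues}. To see it is \emph{effective}, I would use Proposition~\ref{prop:theta}, which gives $\tilde y^{\et}\left(\Uni/F\right)\simeq F$: the associated \'etale differentiable stack of $\cF$ is then $\overline{y}^{\et}\left(\cF\right)\simeq F$, which is a sheaf because $F$ is $0$-truncated, and Proposition~\ref{prop:eff} certifies that $\cF$ is effective.

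It remains to verify that $\g:=\left(E\to F\right)\in\Uni/F$ is an $\i$-gerbe, i.e. that $\pi_0^{\Uni/F}\left(\g\right)$ is terminal in $\Uni/F$, and this is the crux of the argument. The unit $E\to\tau_{\le 0}\left(E\right)=F$ of the $0$-truncation is a $0$-connected morphism of $\Uni$, and relative $0$-truncation in the slice topos $\Uni/F$ is computed fiberwise, so $\pi_0^{\Uni/F}\left(E\to F\right)\simeq\mathrm{id}_F$; hence $\g$ is $0$-connected over $F$ and therefore an $\i$-gerbe. The main obstacle is precisely this compatibility: one must confirm that the absolute $0$-truncation $E\to\pi_0\left(E\right)$ formed in $\Uni$ coincides with the fiberwise truncation that controls gerbes in the slice $\Uni/F$, equivalently that $E\to\pi_0 E$ is a $0$-connected morphism and that $0$-connected morphisms into $F$ are exactly the $\i$-gerbes of $\Uni/F$. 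This is the one genuinely non-formal point; the remaining steps merely assemble the equivalences of Theorem~\ref{thm:fields}, Proposition~\ref{prop:theta} and Lemma~\ref{lem:slicek}. Combining the three identifications then gives $\cF/\g\simeq\cE$ with $\cF$ an effective smooth \'etendue and $\g$ an $\i$-gerbe on it, as required.
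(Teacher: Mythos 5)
Your proof is correct and is essentially the paper's own argument: the paper likewise passes to $E=\left(i^{\et}\right)^*\overline{y}^{\et}\left(\cE\right)$ in $\Uni\simeq\Shi\left(\Mfd^{\et}\right)$, takes $\cF=\Theta\left(\pi_0\left(E\right)\right)\simeq\Uni/\pi_0\left(E\right)$, certifies effectivity via Proposition \ref{prop:eff} because $\pi_0\left(E\right)$ is a sheaf, and observes that the $1$-connective unit $E\to\pi_0\left(E\right)$ gives an $\i$-gerbe in the slice. The one step you flag as genuinely non-formal is in fact formal here: since $F=\pi_0\left(E\right)$ is $0$-truncated, Lemma 5.5.6.14 of \cite{htt} identifies the $0$-truncated objects of $\Uni/F$ with the morphisms into $F$ whose domain is $0$-truncated, so $\tau^{\Uni/F}_{\le 0}\left(E\to F\right)\simeq \mathrm{id}_F$ immediately.
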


\begin{proof}
Let $\cE$ be a smooth $\i$-\'etendue. Let $\cF:=\Theta\left(\pi_0\left(\left(i^{\et}\right)^*\overline{y}^{\et}\left(\cE\right)\right)\right),$ with the notation as in Theorem \ref{thm:fields}. Then $\cF$ is $1$-localic and effective by Proposition \ref{prop:eff}. The canonical map $$\left(i^{\et}\right)^*\overline{y}^{\et}\left(\cE\right) \to \pi_0\left(\left(i^{\et}\right)^*\overline{y}^{\et}\left(\cE\right)\right)$$ is $1$-connective. It follows that the associated object $F$ in $$\Shi\left(\Mfd^{\et}\right)/\pi_0\left(\left(i^{\et}\right)^*\overline{y}^{\et}\left(\cE\right)\right)$$ is an $\i$-gerbe. The result now follows.
\end{proof}

\begin{rmk}
One could equally easily work with $\mathbf{C}^k$-manifolds for any $k,$ including $k=\omega.$ Also, one can just as easily work with complex manifolds or even supermanifolds by passing to supercommutative $\RR$-algebras. All of the definitions and results in this section easily extend to these settings.
\end{rmk}

\begin{rmk}
As explained in Section 10.1 of \cite{spivak} and Section 4.4 of \cite{dag}, the $\i$-category of (quasi-smooth) derived manifolds as defined in loc. cit should embed into an appropriate category of structured topoi of the form $\Str_\G.$ It should have a small strong \'etale blossom consisting of all derived manifolds which can be obtained as the derived zero set of a smooth map $$f:\RR^n \to \RR^k$$ for some $n$ and $k,$ and hence the definitions and results in this section should easily extend to derived manifolds.
\end{rmk}

\begin{rmk}
One can analogously define topological $\i$-\'etendues and \'etale topological $\i$-stacks, by working with the trivial geometric structure with base $\i$-topos $\iGpd,$ and letting the role of smooth manifolds be played by arbitrary topological spaces. Some, but not all of the results of this section extend to this setting. In particular, there exists topological spaces whose $\i$-topoi of sheaves are not hypercomplete, so in principal, there could exist \'etale topological $\i$-stacks which do not arise from \'etale simplicial spaces. Moreover, there certainly does not exist a small strong \'etale blossom in this setting so there is no analogue to Theorem \ref{thm:fields}, and one can only appeal to Theorem \ref{thm:classificationlarge}. The rest of the results easily generalize. We leave the details to the reader as an easy exercise.
\end{rmk}

\section{Deligne-Mumford Stacks for a Geometry}\label{sec:dmgeom}
In this section, we will interpret the results of Chapter \ref{chap:etendues} in the context of classical Deligne-Mumford stacks, and their higher, derived, and spectral variants as developed in \cite{dag} and \cite{spectral}. In particular, we will arrive at new categorical characterizations of such Deligne-Mumford stacks.

Let $\g$ be a geometry in the sense of \cite{dag}, Definition 1.2.5, and let $\G=\G\left(\g\right)$ be its associated geometric structure (see Example 1.4.4 of op. cit.). To more closely follow the notation introduced in op. cit., we will denote the underlying $\i$-category of the idempotent complete essentially algebraic $\i$-theory (in the sense of Definition \ref{dfn:essalg}) also by $\g.$ Recall from Section \ref{sec:geometry}, there exists a full and faithful spectrum functor $$\mathbf{Spec}_\g:\mathbf{Pro}\left(\g\right) \hookrightarrow \Str_\G,$$ which is left adjoint to the global sections functor
$$\Gamma_\g:\Str_{\G} \to \mathbf{Pro}\left(\g\right).$$ (See Section 2 of \cite{dag} for more detail). Structured $\i$-topoi in the essential image of $\Specg$ are called affine $\g$-schemes. Let $\sL=\Affg$ be the full subcategory of $\Str_\G$ on the affine $\g$-schemes. By Example \ref{ex:affines}, an $\sL$-\'etendue in the sense of Definition \ref{dfn:letendue} is what Lurie calls a $\g$-scheme in Definition 2.3.9 of \cite{dag}. Denote the corresponding $\i$-category $\lbar$ of $\g$-schemes by $\gsch$. By Example \ref{ex:affineblossom}, $\Affg$ is a strong \'etale blossom, so we may represent $\g$-schemes as $\i$-sheaves over $\Affg.$ Denote by $$i:\Affg \hookrightarrow \gsch$$ the full and faithful inclusion, and denote by $$\overline{y}:\gsch \hookrightarrow \Shi\left(\gsch\right)$$ the Yoneda embedding into $\i$-sheaves (in the sense of Definition \ref{dfn:sheaveD}) for $\g$-schemes. By Lemma \ref{lem:sheavesthesame}, the induced functor $$i^*:\Shi\left(\gsch\right) \to \Shi\left(\Affg\right)$$ is an equivalence.

\begin{dfn}\label{dfn:gDM}
A \textbf{Deligne-Mumford} $\g$-stack is an $\i$-sheaf $\X$ on $\Affg$ in the essential image of $i^* \circ \overline{y}$. Denote the corresponding $\i$-category by $\Dmg$
\end{dfn}

\begin{rmk}
By Remark \ref{dfn:sheaveD}, one can equivalently define Deligne-Mumford $\g$-stacks as $\i$-sheaves over $\Pro\left(\g\right).$
\end{rmk}

\begin{rmk}
It follows immediately that there is a canonical equivalence $$\gsch \simeq \Dmg.$$
\end{rmk}

\begin{ex}\textbf{Higher Zariski \'Etale Stacks}:\\
Let $k$ be a commutative ring and let $\g=\g_{Zar}\left(k\right)$ be the Zariski geometry over $k$ as defined in Section 2.5 of \cite{dag}. The underlying $\i$-category is the category of finitely presented commutative $k$-algebras, and $\Pro\left(\g_{Zar}\left(k\right)\right)$ can be identified with the category of affine $k$-schemes. If $\G$ is the associated geometric structure, then $\Str_\G$ is the $\i$-category of $\i$-topoi locally ringed in commutative $k$-algebras. The $\i$-category $\sch{\g_{Zar}\left(k\right)}$ contains the category of classical schemes over $k,$ (viewed as locally ringed spaces, regarded as locally ringed $\i$-topoi), but it also contains much more. The $\i$-category of $\g_{Zar}\left(k\right)$-schemes (or equivalently the $\i$-category of Deligne-Mumford $\g_{Zar}\left(k\right)$-stacks) is equivalent to the $\i$-category of higher categorical versions of what we called Zariski \'etale stacks in \cite{prol}. The classical category of schemes over $k$ can be recovered as the full subcategory of $\sch{\g_{Zar}\left(k\right)}$ on the $0$-localic objects, by Theorem 2.5.16 of \cite{dag}. We will call the $\i$-category of Deligne-Mumford $\g_{Zar}\left(k\right)$-stacks \textbf{Zariski \'etale $\i$-stacks}.
\end{ex}

\begin{ex}\textbf{Higher Deligne-Mumford Stacks}:\\
Let $k$ be a commutative ring and let $\g=\g_{\acute{e}t}\left(k\right)$ be the \'etale geometry over $k$ as defined in Section 2.6 of \cite{dag}. As an $\i$-category, it is the same as $\g_{Zar}\left(k\right).$ If $\G$ is the associated geometric structure, then $\Str_\G$ is the $\i$-category of $\i$-topoi with a structure sheaf of strictly Henselian $k$-algebras. Notice that $\Pro\left(\g_{\acute{e}t}\left(k\right)\right)$ is equivalent to the opposite of the category of commutative $k$-algebras, $\mathbf{Com}_k$. The \'etale spectrum functor $$\mathbf{Spec}_{\acute{e}t}:\mathbf{Com}^{op}_k \hookrightarrow \Str_\G$$ sends a commutative $k$-algebra $A$ to $\Shi\left(A_{\acute{e}t}\right),$ -the incarnation of its small \'etale topos as an $\i$-topos- together with an appropriate structure sheaf. The $\i$-category of Deligne-Mumford stacks for $\g_{\acute{e}t}\left(k\right)$ contains the $\left(2,1\right)$-category of classical Deligne-Mumford stacks over $k$ as a full subcategory. More precisely, by Theorem 2.6.18 of \cite{dag}, it follows that a Deligne-Mumford $\g_{\acute{e}t}\left(k\right)$-stack $\X$ in the sense of Definition \ref{dfn:gDM} is a classical Deligne-Mumford stack over $k$ if and only if $\X \simeq i^* \overline{y}\left(\cE\right),$ for $\cE$ a $1$-localic $\g_{\acute{e}t}\left(k\right)$-scheme.
\end{ex}

\begin{rmk}
The definition of Deligne-Mumford stack used in Theorem 2.6.18 of \cite{dag} is slightly non-standard since it imposes no separation conditions on the diagonals of the stacks in question. The relationship to classical Deligne-Mumford stacks which have separable and quasi-compact diagonals is analogous to the relationship between \'etale differentiable stacks and smooth orbifolds.
\end{rmk}

\begin{thm}\label{thm:DM1trun}
Let $k$ be a commutative ring. An $\i$-sheaf $\X$ on affine $k$-schemes, with respect to the \'etale topology, is a Deligne-Mumford stack if and only if it is a Deligne-Mumford $\g_{\acute{e}t}\left(k\right)$-stack $\X$ in the sense of Definition \ref{dfn:gDM} and $\X$ is $1$-truncated, i.e. it is a stack of groupoids.
\end{thm}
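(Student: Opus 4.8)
The plan is to establish the theorem as an essentially immediate corollary of the general truncation criterion already proven in Theorem \ref{thm:smalltruncatedversion}, once I verify that the \'etale geometry $\g_{\acute{e}t}\left(k\right)$ satisfies the running hypotheses of that theorem. First I would recall that the classifying $\i$-topos $\cB$ for $\g_{\acute{e}t}\left(k\right)$, namely the classifying $\i$-topos for strictly Henselian local rings, is $1$-localic; this is the crucial numerical input that sets $n=1$. Next I would invoke Example \ref{ex:affineblossom} to see that $\sL=\Affg$ is a strong \'etale blossom, and Proposition \ref{prop:stronglocsmallblossom} together with the strong local smallness from Proposition 2.3.12 of \cite{dag} to produce a small and locally small strong \'etale blossom $\sD$ for $\lbar=\gsch$. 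The one genuinely substantive verification is that this $\sD$ may be taken to be (equivalent to) a $1$-category, which I would obtain from the $1$-localic hypothesis on $\cB$ via Lemma \ref{lem:ntruncatedlocalic} applied to the $1$-localic objects $\Spec_{\acute{e}t}\left(A\right)$, exactly as in the construction in the proof of Proposition \ref{prop:existbloss} where the sites $\C_\cL$ are arranged to be $\left(n,1\right)$-categories.

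With these hypotheses in place, I would apply Theorem \ref{thm:smalltruncatedversion} directly: it states that for $\cB$ $n$-localic and $\sD$ a small, locally small strong \'etale blossom equivalent to an $n$-category, an $n$-stack in $\Sh_n\left(\sD\right)$ is Deligne-Mumford if and only if it lies in the essential image of the $n$-truncated \'etale prolongation functor. Taking $n=1$, this says precisely that the $1$-truncated Deligne-Mumford $\g_{\acute{e}t}\left(k\right)$-stacks in the sense of Definition \ref{dfn:gDM} are exactly the stacks of groupoids that are Deligne-Mumford. Here I should be careful to reconcile the index conventions: the condition ``$\X$ is $1$-truncated'' in the statement matches the ``$n$-stack'' language of Theorem \ref{thm:smalltruncatedversion} at $n=1$, and the equivalence $\Shi\left(\Affg\right)\simeq\Shi\left(\Pro\left(\g\right)\right)$ from Remark \ref{rmk:affshvs} lets me pass freely between $\i$-sheaves on affine $k$-schemes and $\i$-sheaves on $\Affg$.

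The final step is to identify the abstractly-characterized objects with \emph{classical} Deligne-Mumford stacks over $k$. For this I would cite Theorem 2.6.18 of \cite{dag}, which asserts that a Deligne-Mumford $\g_{\acute{e}t}\left(k\right)$-stack $\X\simeq i^*\overline{y}\left(\cE\right)$ is a classical Deligne-Mumford stack if and only if $\cE$ is a $1$-localic $\g_{\acute{e}t}\left(k\right)$-scheme. Combining this with the equivalence between $1$-truncatedness and $1$-localicity of the representing \'etendue---which follows from Proposition \ref{prop:2.3.16} and Lemma \ref{lem:ntruncatedlocalic} in the same manner as the corresponding lemma proven for $\Etd$ in the differentiable setting---closes the argument, since $\X$ being $1$-truncated in $\Shi\left(\Affg\right)$ corresponds under $\Theta$ (Proposition \ref{prop:theta}) to $\cE=\Uni/\X$ being $1$-localic.

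The main obstacle I anticipate is not any single deep step but rather the bookkeeping of the two different notions of ``$1$-truncated'' that are in play: truncation as an object of the sheaf $\i$-category $\Shi\left(\Affg\right)$ versus localicity of the underlying $\i$-topos of the representing \'etendue $\cE$. Establishing that these coincide is exactly the content one needs, and it rests on Proposition \ref{prop:2.3.16} (relating $n$-localicity of $\cE/E$ to $n$-truncatedness of $E$) and Lemma \ref{lem:ntruncatedlocalic} (relating $n$-localicity to $n$-truncatedness in $\Str\left(\G\right)$ when $\cB$ is $n$-localic). I would therefore state and prove a short lemma---parallel to the one established just before Corollary \ref{cor:1trunete} in the differentiable case---asserting that an object $\cE$ of $\gsch$ is $1$-truncated if and only if it is $1$-localic, and then the theorem follows formally.
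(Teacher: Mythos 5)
Your endgame---reducing the statement to the equivalence between $\X$ being $1$-truncated and the representing \'etendue $\cE$ being $1$-localic, and then invoking Theorem 2.6.18 of \cite{dag}---is exactly the paper's strategy. But there is a genuine gap at the first step: you claim that Proposition \ref{prop:stronglocsmallblossom} furnishes a \emph{small} (and locally small) strong \'etale blossom $\sD$ for $\lbar=\gsch$. That proposition only produces a small blossom when $\sL$ itself is small, and $\sL=\Affg$ for $\g=\g_{\acute{e}t}\left(k\right)$ is not essentially small: there are affine $k$-schemes $\Spec_{\acute{e}t}\left(A\right)$ with $A$ of unbounded cardinality, and since any \'etale $A$-algebra has cardinality bounded by that of $A$, no small collection of affines can \'etale-cover all of them---so $\gsch$ admits \emph{no} small strong \'etale blossom whatsoever. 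Consequently Theorem \ref{thm:smalltruncatedversion}, Theorem \ref{thm:yonequil}, and Proposition \ref{prop:theta}, each of which presupposes a small blossom and hence a universal \'etendue $\Uni$, cannot be applied globally the way you do; in particular the object $\cE\simeq \Uni/\X$ invoked in your final paragraph does not exist. (A secondary slip: Lemma \ref{lem:ntruncatedlocalic} with $n=1$ only makes the mapping spaces $1$-truncated, i.e.\ gives a $\left(2,1\right)$-category; that $\Affg$ is a $1$-category comes instead from full faithfulness of $\Spec_{\acute{e}t}$ on the $1$-category $\mathbf{Com}_k^{op}$.)

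The repair is to localize the smallness per \'etendue, which is precisely what the paper's proof does: given a $1$-truncated Deligne-Mumford $\g_{\acute{e}t}\left(k\right)$-stack $\X\simeq i^*\overline{y}\left(\cE\right)$, one passes to the essentially small admissibly closed blossom $\mathbf{Aff}^\cE_{\g_{\acute{e}t}\left(k\right)}$ of Example \ref{ex:affineblossom} (small by Remark \ref{rmk:esssmallad}), whose \'etale closure contains $\cE$. Its universal \'etendue $\Uni^\cE$ exists by Corollary \ref{cor:trex1} and is $1$-localic by Theorem \ref{thm:yonequil}, since the relevant \'etale site is a $1$-category; the essentially unique \'etale map $\cE \to \Uni^\cE$ is $1$-truncated because $\X$ is, hence corresponds to a $1$-truncated object $V$ of $\Uni^\cE$ with $\Uni^\cE/V\simeq\cE$, so Proposition \ref{prop:2.3.16} gives that $\cE$ is $1$-localic, and Theorem 2.6.18 of \cite{dag} closes both directions (the forward direction is immediate from it). Alternatively you could route through Theorem \ref{thm:largetruncatedversion}, whose relative prolongation functors were designed exactly for non-small $\sL$---this is how the paper later proves Theorem \ref{thm:classificationetale}---but for the present statement the detour through the truncated prolongation machinery is unnecessary once the local-blossom argument is in hand.
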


\begin{proof}
Suppose that $\X$ is a classical Deligne-Mumford stack. Then $\X$ is clearly $1$-truncated, and by Theorem 2.6.18 of \cite{dag}, $\X$ is a Deligne-Mumford $\g_{\acute{e}t}\left(k\right)$-stack. Conversely, suppose that $\X$ is a $1$-truncated Deligne-Mumford $\g_{\acute{e}t}\left(k\right)$-stack. Then $\X \simeq i^* \overline{y}\left(\cE\right),$ for $\cE$ some $\g_{\acute{e}t}\left(k\right)$-scheme. By Example \ref{ex:affineblossom}, $\cE$ is in $\overline{\mathbf{Aff}^\cE_{\g_{\acute{e}t}\left(k\right)}}.$ Denote by $\Uni^\cE$ the universal \'etendue associated with the strong \'etale blossom $\mathbf{Aff}^\cE_{\g_{\acute{e}t}\left(k\right)},$ i.e. the terminal object in $\overline{\mathbf{Aff}^\cE}^{\et}_{\g_{\acute{e}t}\left(k\right)},$ which exists by Remark \ref{rmk:esssmallad} and Corollary \ref{cor:trex1}. By Theorem \ref{thm:yonequil}, $$\Uni^\cE \simeq \Shi\left({\mathbf{Aff}^\cE_{\g_{\acute{e}t}\left(k\right)}}^{\et}\right),$$ so $\Uni^\cE$ is $1$-localic. Since $\X$ is $1$-truncated, the essentially unique \'etale map $$\cE \to \Uni^\cE$$ is $1$-truncated, and corresponds to a unique $1$-truncated object $V$ in $\Uni^\cE$ such that $\Uni^\cE/V \simeq \cE.$ Hence $\cE$ is $1$-localic by Proposition \ref{prop:2.3.16}, so we are done by Theorem 2.6.18 of \cite{dag}.
\end{proof}

In light of the above theorem, we make the following definition:

\begin{dfn}
A \textbf{Deligne-Mumford $\i$-stack} over $k$ is a Deligne-Mumford $\g_{\acute{e}t}\left(k\right)$-stack.
\end{dfn}

\begin{ex}\textbf{Derived Zariski \'Etale Stacks}:\\
Let $k$ be a commutative ring. Let $\g=\g^{der}_{Zar}\left(k\right)$ be the derived Zariski geometry as defined in Section 4.2 of \cite{dag}. As an $\i$-category it is the $\i$-category of compact objects in the $\i$-category of simplicial commutative $k$-algebras, $\mathbf{SCR}_k$. In particular, one has a canonical identification $$\mathbf{Ind}\left(\g^{der}_{Zar}\left(k\right)\right)\simeq \mathbf{SCR}_k.$$
Let $\G$ denote the associated geometric structure. The derived Zariski spectrum functor $$\Spec^{der}:\Pro\left(\g^{der}_{Zar}\left(k\right)\right) \hookrightarrow \Str_\G,$$ under the identification $$\mathbf{SCR}_k \simeq \mathbf{Ind}\left(\g^{der}_{Zar}\left(k\right)\right) \simeq \Pro\left(\g^{der}_{Zar}\left(k\right)\right)^{op},$$ sends a simplicial commutative $k$-algebra $\mathbf{A}$ to the classical spectrum of the commutative $k$-algebra $\pi_0\left(\mathbf{A}\right)$ (regarded as an $\i$-topos) but with an appropriately simplicially enhanced structure sheaf. The $\i$-category of $\g^{der}_{Zar}\left(k\right)$-schemes consists of derived versions of Zariski \'etale $\i$-stacks. In particular, the full subcategory on the $0$-localic objects is equivalent to the $\i$-category of \textbf{derived schemes over $k$}.
\end{ex}

\begin{ex}\label{ex:ddms}\textbf{Derived Deligne-Mumford Stacks}:\\
Let $k$ be a commutative ring. Let $\g=\g^{der}_{\acute{e}t}\left(k\right)$ be the derived \'etale geometry over $k$ as in Definition 4.3.13 of \cite{dag}. As an $\i$-category, $\g^{der}_{\acute{e}t}\left(k\right)$ is the same as $\g^{der}_{Zar}\left(k\right)$. 
Recall that a morphism $\varphi:\mathbf{A} \to \mathbf{B}$ of simplicial commutative $k$-algebras is \textbf{\'etale} if
\begin{itemize}
\item [i)] the induced morphism $\pi_0\left(\mathbf{A}\right) \to \pi_0\left(\mathbf{B}\right)$ is \'etale and
\item[ii)] for all $i>0,$ the induced morphism of abelian groups $$\pi_i\left(\mathbf{A}\right)\otimes_{\pi_0\left(\mathbf{A}\right)} \pi_0\left(\mathbf{B}\right) \to \pi_i\left(\mathbf{B}\right)$$ is an isomorphism.
\end{itemize}

In the geometry $\g=\g^{der}_{Zar}\left(k\right),$ the admissible morphisms are precisely the \'etale ones. Let $\G$ denote the associated geometric structure. The \'etale spectrum functor
$$\Spec_{\acute{e}t}:\mathbf{SCR}^{op}_k \hookrightarrow \Str_\G$$ sends a simplicial commutative $k$-algebra $\mathbf{A},$ to $$\Shi\left(\pi_0\left(\mathbf{A}\right)_{\acute{e}t}\right),$$ with an appropriate structure sheaf of strictly Henselian simplicial $k$-algebras. The $\i$-category of $\g^{der}_{\acute{e}t}\left(k\right)$-schemes is what Lurie defines to be the $\i$-category of \emph{derived Deligne-Mumford stacks over $k$}. In particular, the $\i$-category of Derived Deligne-Mumford stacks over $k$ is canonically equivalent to the $\i$-category of Deligne-Mumford $\g^{der}_{\acute{e}t}\left(k\right)$-stacks in the sense of Definition \ref{dfn:gDM}.
\begin{itemize}
\item In order to ease terminology, we will refer to $\g^{der}_{\acute{e}t}\left(k\right)$-schemes as \textbf{\'etale derived schemes}, and reserve the terminology \textbf{derived Deligne-Mumford stacks} for their functors of points. Both $\i$-categories of course are canonically equivalent.
\end{itemize}
\end{ex}

\begin{ex}\textbf{Spectral Deligne Mumford Stacks}:\\
Let $\bE$ be an $\Ei$-ring spectrum. Let $\g=\g^{nSp}_{\acute{e}t}\left(\bE\right)$ be the (non-connective) \'etale spectral geometry over $\bE$ as in Definition 8.11 of \cite{spectral}. As an $\i$-category, $\g^{nSp}_{\acute{e}t}\left(\bE\right)$ is the $\i$-category of compact $\bE$-algebras. The admissible morphisms are precisely the \'etale morphisms of $\Ei$-rings, which are defined in the same way as \'etale maps of simplicial rings as in Example \ref{ex:ddms}. Let $\G$ denote the associated geometric structure. The \'etale spectrum functor
$$\Spec_{\acute{e}t}:\bE\mbox{-}\mathbf{Alg}^{op} \hookrightarrow \Str_\G$$ sends an $\bE$-algebra $\mathbf{F},$ to $$\Shi\left(\pi_0\left(\mathbf{F}\right)_{\acute{e}t}\right),$$ with an appropriate structure sheaf of strictly Henselian $\bE$-algebras. The $\i$-category of $\g^{nSp}_{\acute{e}t}\left(\bE\right)$-schemes is what Lurie defines to be the $\i$-category of \emph{non-connective spectral Deligne-Mumford stacks over $\bE$}. In particular, the $\i$-category of non-connective spectral Deligne-Mumford stacks over $\bE$ is canonically equivalent to the $\i$-category of Deligne-Mumford $\g^{nSp}_{\acute{e}t}\left(\bE\right)$-stacks in the sense of Definition \ref{dfn:gDM}.

\begin{itemize}
\item In order to ease terminology, we will refer to $\g^{nSp}_{\acute{e}t}\left(\bE\right)$-schemes as \textbf{non-connective \'etale spectral schemes}, and reserve the terminology \textbf{non-connective spectral Deligne-Mumford stacks} for their functors of points. Both $\i$-categories of course are canonically equivalent.
\end{itemize}

If $\bE$ is connective, one can also consider the connective \'etale spectral geometry over $\bE$, $\g^{Sp}_{\acute{e}t}\left(\bE\right),$ see Definition 8.14 of \cite{spectral}. As an $\i$-category, it is the $\i$-category of compact connective $\bE$-algebras. The affine schemes are the full subcategory of the affine schemes for $\g^{nSp}_{\acute{e}t}\left(\bE\right)$ on those which are the \'etale spectrum of a connective $\bE$-algebra. The $\i$-category of $\g^{Sp}_{\acute{e}t}\left(\bE\right)$-schemes is what Lurie calls the $\i$-category of \emph{spectral Deligne-Mumford stacks over $\bE$}.

\begin{itemize}
\item We will refer to $\g^{Sp}_{\acute{e}t}\left(\bE\right)$-schemes as \textbf{\'etale spectral schemes}, and reserve the terminology \textbf{spectral Deligne-Mumford stacks} for their functors of points. Both $\i$-categories of course are canonically equivalent.
\end{itemize}
In particular, the $\i$-category of spectral Deligne-Mumford stacks over $\bE$ is canonically equivalent to the $\i$-category of Deligne-Mumford $\g^{Sp}_{\acute{e}t}\left(\bE\right)$-stacks.
\end{ex}

\begin{rmk}
These examples are not a complete list. In particular, there are also connective and non-connective Zariski spectral geometries. See Definition 2.10 of \cite{spectral}.
\end{rmk}

\begin{dfn}\label{dfn:admisclosed}
Let $\C$ be a full subcategory of $\Pro\left(\g\right).$ $\C$ is \textbf{admissibly closed} if for every object $C$ in $\C,$ if $f:D \to C$ is an admissible morphism, then $D$ is in $\C.$ Let $\sL$ be a full subcategory of $\Affg.$ $\sL$ is \textbf{admissibly closed} if it is the image of an admissibly closed subcategory of $\Pro\left(\g\right)$ under $\Spec_\g.$
\end{dfn}

\begin{prop}\label{prop:admicls}
If $\sL$ is an admissibly closed subcategory of $\Affg,$ then $\sL$ is a strong \'etale blossom.
\end{prop}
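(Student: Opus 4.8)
The plan is to deduce the statement directly from Example \ref{ex:affineblossom}, which already establishes that the full category $\Affg$ of affine $\g$-schemes is a strong \'etale blossom; the only new ingredient is the admissible closure hypothesis, whose sole purpose is to guarantee that the canonical site attached to each object of $\sL$ already lies inside $\sL$ itself. I would first record local smallness: since $\sL$ is a full subcategory of $\Affg$, and $\Affg$ is strongly locally small (Proposition 2.3.12 of \cite{dag}), $\sL$ is locally small, as demanded by Definition \ref{dfn:strongblossom}.

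Next I would fix an object $\cL$ of $\sL$ and write $\cL \simeq \Specg\left(X\right)$ for an object $X$ of the admissibly closed subcategory $\C$ of $\Pro\left(\g\right)$ defining $\sL$ (Definition \ref{dfn:admisclosed}). As in Example \ref{ex:affineblossom}, the underlying $\i$-topos of $\cL$ is $\Shi\left(\Pro\left(\g\right)^{ad}/X\right)$, and the site $\C_\cL := \Pro\left(\g\right)^{ad}/X$ comes equipped with a canonical full and faithful functor into $\cL$ sending an admissible morphism $f:Y \to X$ to the object $E_f$ characterized by $\cL/E_f \simeq \Specg\left(Y\right)$; moreover, by the proof of Proposition \ref{prop:existbloss}, this functor exhibits $\cL$ as an accessible left exact localization of $\Pshi\left(\C_\cL\right)$, i.e.\ it provides exactly the localization data required by Definition \ref{dfn:strongblossom}.

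The key step is then to check that this functor factors through $\sL^{\et}/\cL$. Indeed, for each admissible $f:Y \to X$, admissible closure of $\C$ forces $Y \in \C$, whence $\Specg\left(Y\right) \in \sL$; thus $\cL/E_f \simeq \Specg\left(Y\right)$ lies in $\sL$, so $E_f$ is an object of the full subcategory $\sL^{\et}/\cL \subseteq \Str^{\et}\left(\G\right)/\cL \simeq \cL$ (recalling via Proposition \ref{prop:6.3.5.9} that any morphism between \'etale maps over $\cL$ is automatically \'etale, so that $\sL^{\et}/\cL$ is indeed a full subcategory of $\cL$). Under the identification $\overline{\sL}^{\et}/\cL \simeq \cL$ of Remark \ref{rmk:2352}, the induced inclusion $r_\cL:\C_\cL \hookrightarrow \sL^{\et}/\cL$ composed with $\widetilde{i/\cL}$ recovers the canonical functor $\C_\cL \to \cL$ above, so the localization supplied by Example \ref{ex:affineblossom} is precisely the one demanded by Definition \ref{dfn:strongblossom} for $\sL$. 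Since $\cL$ was arbitrary, $\sL$ is a strong \'etale blossom.

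The only genuine obstacle is the bookkeeping in this last step: verifying that the site $\C_\cL$ chosen for $\cL$ as an object of $\Affg$ really lands in the slice $\sL^{\et}/\cL$, and that the associated adjunction coincides with the one singled out in Definition \ref{dfn:strongblossom}. Everything else is inherited verbatim from Example \ref{ex:affineblossom}, and the admissible closure hypothesis is exactly what makes the required factorization possible.
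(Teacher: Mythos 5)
Your proof is correct and follows essentially the same route as the paper: the paper's (much terser) argument likewise takes $\C_\cL = \Pro\left(\g\right)^{ad}/X$ as the canonical site, observes that $\Spec_\g$ identifies it with a full subcategory of $\sL^{\et}/\cL$ precisely because admissible closure keeps the sources of admissible morphisms inside $\sL$, and then cites the localization data from Example \ref{ex:affineblossom} and the proof of Proposition \ref{prop:existbloss}. Your additional bookkeeping (local smallness, the factorization through $\sL^{\et}/\cL$, the identification of the adjunctions) just makes explicit what the paper leaves implicit.
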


\begin{proof}
If $\Spec_\g\left(A\right)$ is any object of $\sL,$ $\Pro\left(\g\right)^{ad}/A$ is a site for its underlying $\i$-topos. Notice that the spectrum functor $\Spec_\g$ identifies $\Pro\left(\g\right)^{ad}/A$ with a full subcategory of $\sL^{\et}/\Spec_\g\left(A\right).$ The result now follows.
\end{proof}

\begin{dfn}
Let $\C$ be a full subcategory of $\Pro\left(\g\right).$ The \textbf{admissible closure} of $\C,$ denoted $\widetilde{\C},$ is the smallest admissibly closed subcategory of $\Pro\left(\g\right)$ containing $\C.$ Similarly, if $\sL$ is any subcategory of $\Affg,$ the \textbf{admissible closure} of $\sL,$ denoted $\widetilde{\sL},$ is the smallest admissibly closed subcategory of $\Affg$ containing $\sL.$
\end{dfn}

\begin{rmk}
If $\sL$ is a full subcategory of $\Affg,$ then $\widetilde{\sL} \subset \sl\left(\sL\right),$ so it follows from Remark \ref{rmk:slclsd} that $$\overline{\sL}=\overline{\widetilde{\sL}\mspace{10mu}}=\cosl\left(\widetilde{\sL}\mspace{10mu}\right).$$
\end{rmk}

\begin{rmk}\label{rmk:esssmallad}
By Remark 2.2.7 of \cite{dag}, for all $A$ in $\Pro\left(\g\right)$, the $\i$-category $\Pro\left(\g\right)^{ad}/A$ is essentially small. It follows that if $\C$ is an essentially small subcategory of $\Pro\left(\g\right),$ its admissible closure $\widetilde{\C}$ is essentially small. Similarly, if $\sL$ is an essentially small subcategory of $\Affg,$ its admissible closure $\widetilde{\sL}$ is essentially small.
\end{rmk}

\begin{ex}
If $\cE$ is a $\g$-scheme, then $\mathbf{Aff}^\cE_\g,$ as defined in Example \ref{ex:affines}, is an essentially small admissibly closed subcategory of $\Affg.$
\end{ex}

\begin{ex}
Let $\C$ be a full subcategory of the category of commutative $k$-algebras. The category $\C$ may be canonically identified with (the opposite of) a full subcategory of $\Pro\left(\g_{\acute{e}t}\left(k\right)\right),$ and by the \'etale spectrum functor $\Spec_{\acute{e}t},$ it may also be identified with a full subcategory $\sL_\C$ of $\Aff_{\g_{\acute{e}t}\left(k\right)}.$ (Recall that the latter category is equivalent to affine $k$-schemes). The admissible closure of $\sL_\C$ can be identified with the full subcategory of affine $k$-schemes on all those of the form $\Spec\left(B\right),$ where $B$ is a commutative $k$-algebra such that there exists an \'etale map of $k$-algebras $A \to B,$ with $A$ in $\C.$ An analogous statement holds for derived commutative $k$-algebras with respect to the \'etale geometry.

$\C$ may also be identified with (the opposite of) a full subcategory of $\Pro\left(\g_{Zar}\left(k\right)\right),$ and by the Zariksi spectrum functor $\Spec$ it may also be identified with a full subcategory $\sL_\C$ of $\Aff_{\g_{Zar}\left(k\right)}$ (and the latter is again equivalent to the category affine $k$-schemes). The admissible closure of $\sL_\C$ can be identified with the full subcategory of affine $k$-schemes on all those of the form $\Spec\left(A\left[\frac{1}{a}\right]\right),$ with $A$ a commutative $k$-algebras in $\C,$ and $a \in A.$ A slight modification of the Zariski geometry is possible, which does not change the theory in any essential way in that it has the same geometric structure and the same $\i$-category of schemes, in which the admissible morphisms are all open immersions (see Remark 2.5.11 and Remark 4.2.1 of \cite{dag}), so that the admissible closure of $\sL_\C$ can then be identified with the full subcategory of affine schemes which admit an open immersion into some $\Spec\left(A\right),$ with $A$ in $\C.$ An analogous statement holds for derived commutative $k$-algebras with respect to the Zariski topology.
\end{ex}

\begin{ex}
Let $\C$ be a full subcategory of $\bE$-algebras, with $\bE$ an $\Ei$-ring spectrum. The $\i$-category $\C$ can canonically be identified with (the opposite of) a full subcategory of $\Pro\left(\g^{nSp}_{\acute{e}t}\left(\bE\right)\right),$ and by the spectral \'etale spectrum functor $\Spec_{\acute{e}t},$ it may also be identified with a full subcategory $\sL_\C$ of the $\i$-category of affine $\bE$-schemes, $\Aff_{\g^{nSp}_{\acute{e}t}\left(\bE\right)}.$ The admissible closure of $\sL_\C$ can be identified with the full subcategory of $\Aff_{\g^{nSp}_{\acute{e}t}\left(\bE\right)}$ spanned by the image under the spectral \'etale spectrum functor of those $\bE$-algebras $\mathbf{F}$ which admit an \'etale morphism $\mathbf{A} \to \mathbf{F},$ with $\mathbf{A}$ in $\C.$ An analogous statement holds when $\bE$ is connective for the connective geometry $\g^{Sp}_{\acute{e}t}\left(\bE\right).$
\end{ex}

\begin{ex}\label{ex:LFP}
Let $\g$ be of the form $\g_{\acute{e}t}\left(k\right),$ $\g^{der}_{\acute{e}t}\left(k\right),$ $\g^{nSp}_{\acute{e}t}\left(\bE\right),$ or $\g^{Sp}_{\acute{e}t}\left(\bE\right).$ Denote by $\mathbf{LFP}_\g$ the image of $\g$ under Yoneda embedding $$\g \hookrightarrow \Pro\left(\g\right)$$ composed with $\Spec_\g.$ Because \'etale maps are locally of finite presentation, $\mathbf{LFP}_\g$ is admissibly closed, and hence a strong \'etale blossom by Proposition \ref{prop:admicls}. Moreover, since $\g$ is essentially small, $\mathbf{LFP}_\g$ is also essentially small.
\end{ex}

\begin{dfn}
Let $\C$ be a subcategory of $\Pro\left(\g\right).$ A \textbf{$\C$-scheme} is an $\sL_\C$-\'etendue, where $\sL_\C$ is the image of $\C$ in $\Str_\G$ under $\Spec_\g$. Denote the associated $\i$-category by $\sch{\C}.$ A \textbf{$\C$-Deligne-Mumford stack} is an $\i$-sheaf on $\widetilde{\sL_\C}=:\Aff_\C$ which is the functor of points of a $\C$-scheme. (Objects in the $\i$-category $\Aff_\C$ will be called \textbf{affine $\C$-schemes}.) Denote the associated $\i$-category by $\Dm{\C}.$
\end{dfn}

\begin{rmk}
There is a canonical equivalence $\sch{\C} \simeq \Dm{\C}.$
\end{rmk}

\begin{rmk}
One may also define a $\C$-Deligne-Mumford stack as a stack on $\widetilde{\C}$ with the induced Grothendieck topology from $\Pro\left(\g\right).$ Both $\i$-topoi are canonically equivalent.
\end{rmk}

\begin{dfn}
Let $\mathbf{LFP}_\g$ be as in Example \ref{ex:LFP}. The $\i$-category $\overline{\mathbf{LFP}_\g}$ of $\mathbf{LFP}_\g$-\'etendues is a full subcategory of $\g$-schemes; it is what Lurie calls the $\i$-category of $\g$-schemes which are \textbf{locally of finite presentation.} The $\i$-category of Deligne-Mumford $\mathbf{LFP}_\g$-stacks will be called (derived, spectral..) Deligne-Mumford stacks of locally of finite presentation (a.k.a locally of finite type).
\end{dfn}

The following theorem is a special case of Theorem \ref{thm:yonequil} and Proposition \ref{prop:theta}:

\begin{thm}
Let $\C$ be an essentially small subcategory of $\Pro\left(\g\right),$ for a geometry $\g.$ Then there is an adjoint equivalence
$$\xymatrix@C=2.5cm{\sch{\C}^{\et} \ar@<-0.5ex>[r]_-{\left(i^{\et}\right)^* \circ y^{\et}} & \Shi\left(\Aff^{\et}_\C\right) \ar@<-0.5ex>[l]_-{\Theta}}$$
between the $\i$-category of $\C$-schemes and \'etale maps and the $\i$-category of $\i$-sheaves on the site of affine $\C$-schemes and \'etale maps.
\end{thm}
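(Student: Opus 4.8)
The plan is to recognize $\Aff_\C = \widetilde{\sL_\C}$ as a small, locally small strong \'etale blossom for $\sch{\C}$, and then to invoke Theorem~\ref{thm:yonequil} and Proposition~\ref{prop:theta} verbatim. First I would check the three defining properties. By construction the admissible closure $\widetilde{\sL_\C}$ is admissibly closed, so Proposition~\ref{prop:admicls} shows it is a strong \'etale blossom. Since $\C$ is essentially small, Remark~\ref{rmk:esssmallad} gives that its admissible closure is essentially small, so $\Aff_\C = \widetilde{\sL_\C}$ is small up to equivalence. Finally, because $\Aff_\C$ is a full subcategory of $\Affg$, and affine $\g$-schemes are strongly locally small (the example following Proposition~\ref{prop:etlocsmall}, via Proposition~2.3.12 of \cite{dag}), strong local smallness is inherited: $\sl\left(\Aff_\C\right)$ is a full subcategory of the locally small $\i$-category $\sl\left(\Affg\right)$, hence locally small, and Proposition~\ref{prop:locallsmall} then yields that $\overline{\Aff_\C}$ is locally small.

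Next I would identify the two $\i$-categories in the statement with the ones to which the cited results apply. Setting $\sD := \Aff_\C$, the observation recorded just before Remark~\ref{rmk:esssmallad}, namely that $\widetilde{\sL_\C} \subseteq \sl\left(\sL_\C\right)$ and hence (by Remark~\ref{rmk:slclsd}) $\overline{\sL_\C} = \overline{\widetilde{\sL_\C}}$, gives
$$\sch{\C} = \overline{\sL_\C} = \overline{\Aff_\C} = \dbar.$$
Passing to \'etale morphisms yields $\sch{\C}^{\et} = \dbare$, while the codomain $\Shi\left(\Aff_\C^{\et}\right)$ is by definition $\Shi\left(\sD^{\et}\right)$.

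With these identifications in place, Theorem~\ref{thm:yonequil} applied to the small strong \'etale blossom $\sD = \Aff_\C$ asserts that $\left(i^{\et}\right)^* \circ y^{\et} : \dbare \to \Shi\left(\sD^{\et}\right)$ is an equivalence, and Proposition~\ref{prop:theta} exhibits $\Theta : F \mapsto \left(\Uni/F, \O_\Uni|_F\right)$ as a right adjoint to it which is itself an equivalence. Translating back through the identifications of the previous paragraph, this is precisely the claimed adjoint equivalence between $\sch{\C}^{\et}$ and $\Shi\left(\Aff_\C^{\et}\right)$.

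The only genuine work lies in the first paragraph, and the main obstacle there is confirming that $\Aff_\C$ meets the smallness and local-smallness hypotheses at the same time: ``small'' comes from essential smallness of admissible closures of essentially small subcategories (Remark~\ref{rmk:esssmallad}), whereas strong local smallness must be inherited from $\Affg$ through the passage to full subcategories. Once both are secured, everything else is formal, since the stability of the \'etale closure under admissible closure and the equivalence/adjunction statements are already available from the cited results.
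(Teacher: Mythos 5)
Your proposal is correct and matches the paper's own route exactly: the paper presents this theorem as a special case of Theorem~\ref{thm:yonequil} and Proposition~\ref{prop:theta}, with $\Aff_\C=\widetilde{\sL_\C}$ serving as the small strong \'etale blossom (small by Remark~\ref{rmk:esssmallad}, a strong \'etale blossom by Proposition~\ref{prop:admicls}, and $\overline{\sL_\C}=\overline{\widetilde{\sL_\C}}$ by the remark preceding it). Your first paragraph merely spells out the hypothesis checks the paper leaves implicit, so there is nothing to correct.
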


\begin{dfn}
A morphism $\varphi:\X \to \Y$ between Deligne-Mumford $\C$-stacks will be called \textbf{\'etale} if the associated map in $\sch{\C}$ is.
\end{dfn}

\begin{ex}
If $\C$ is a full subcategory of $\Pro\left(\g_{Zar}\left(k\right)\right),$ an \'etale map between Deligne-Mumford $\C$-stacks  $\varphi:\X \to \Y$ will be called an \textbf{open immersion}. We will make similar conventions for the derived Zariski geometry.
\end{ex}

\begin{ex}
If $\C$ is a full subcategory of $\Pro\left(\g_{\acute{e}t}\left(k\right)\right),$ an \'etale map between Deligne-Mumford $\C$-stacks $\varphi:\X \to \Y$ will be called \textbf{\'etale}. That is to say, this is the correct definition of an \'etale morphism in the algebraic-geometric sense. In particular, if $\X$ and $\Y$ are both $1$-truncated, so they can be regarded as classical Deligne-Mumford stacks by Theorem \ref{thm:DM1trun}, this recovers the classical definition of an \'etale morphism of Deligne-Mumford stacks. We will make similar conventions for the derived and spectral \'etale geometries.
\end{ex}

\begin{cor}
For $\C$ essentially small, there is a canonical equivalence
$$\Dm{\C}^{\et} \simeq \Shi\left(\Aff^{\et}_\C\right)$$
between the $\i$-category of Deligne Mumford $\C$-stacks and \'etale maps and the $\i$-category of $\i$-sheaves on the site of affine $\C$-schemes and \'etale maps.
\end{cor}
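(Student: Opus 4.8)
The plan is to deduce this corollary directly from the theorem that immediately precedes it, namely the adjoint equivalence
\[
\xymatrix@C=2.5cm{\sch{\C}^{\et} \ar@<-0.5ex>[r]_-{\left(i^{\et}\right)^* \circ y^{\et}} & \Shi\left(\Aff^{\et}_\C\right) \ar@<-0.5ex>[l]_-{\Theta}}
\]
established for an essentially small subcategory $\C$ of $\Pro\left(\g\right).$ The only gap between that statement and the corollary is terminological: the corollary replaces $\sch{\C}^{\et}$ with $\Dm{\C}^{\et}.$ So the first thing I would do is invoke the canonical equivalence $\sch{\C} \simeq \Dm{\C}$ recorded in the remark just after the definition of $\C$-Deligne-Mumford stacks, which itself comes from Theorem \ref{thm:fullyfaithful} applied to the strong \'etale blossom $\widetilde{\sL_\C}=\Aff_\C$ (an essentially small strong \'etale blossom by Proposition \ref{prop:admicls} and Remark \ref{rmk:esssmallad}).

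The substantive point I must check is that this equivalence $\sch{\C} \simeq \Dm{\C}$ is \emph{compatible with \'etale morphisms}, so that it restricts to an equivalence $\sch{\C}^{\et} \simeq \Dm{\C}^{\et}$ of the subcategories spanned by \'etale maps. By definition, a morphism in $\Dm{\C}$ is declared \'etale precisely when its preimage under $\sch{\C} \xrightarrow{\sim} \Dm{\C}$ is \'etale in $\sch{\C}$; this is exactly the content of the definition of \'etale morphism of Deligne-Mumford $\C$-stacks. Hence the equivalence carries \'etale morphisms to \'etale morphisms by fiat, and restricting to these subcategories yields
\[
\Dm{\C}^{\et} \simeq \sch{\C}^{\et}.
\]
Composing with the preceding theorem's equivalence $\sch{\C}^{\et} \simeq \Shi\left(\Aff^{\et}_\C\right)$ then gives the desired canonical equivalence.

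Assembling the two equivalences is the whole proof, so concretely I would write: first, $\widetilde{\sL_\C}=\Aff_\C$ is an essentially small, locally small strong \'etale blossom with $\overline{\Aff_\C}=\sch{\C},$ so by Theorem \ref{thm:fullyfaithful} the functor $\left(i^{\et}\right)^* \circ y^{\et}$ is fully faithful, and the definition of $\Dm{\C}$ as its essential image gives $\sch{\C}^{\et}\simeq\Dm{\C}^{\et}$; second, chain this with the immediately preceding theorem. The only mild obstacle is bookkeeping around which blossom is in play: one must confirm that the \'etale morphisms in $\Dm{\C}$ (defined via the functor of points) match the \'etale morphisms of $\sch{\C}$ under the equivalence, but since the definition of \'etale morphism of $\C$-stacks is \emph{stipulated} through precisely this identification, there is genuinely nothing to prove beyond unwinding definitions. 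I therefore expect the entire argument to be a two-line citation of the preceding theorem together with the definitional equivalence $\sch{\C}\simeq\Dm{\C},$ and I would present it as such rather than reproving any fully-faithfulness or sheafification statement.
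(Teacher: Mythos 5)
Your proposal is correct and matches the paper's (implicit) argument exactly: the paper states this corollary without proof precisely because, as you observe, \'etale morphisms of Deligne-Mumford $\C$-stacks are \emph{defined} by transport along the canonical equivalence $\sch{\C}\simeq\Dm{\C},$ so the preceding theorem's equivalence $\sch{\C}^{\et}\simeq\Shi\left(\Aff^{\et}_\C\right)$ yields the statement immediately. Your bookkeeping that $\Aff_\C=\widetilde{\sL_\C}$ is an essentially small strong \'etale blossom (Proposition \ref{prop:admicls} and Remark \ref{rmk:esssmallad}) is exactly the justification the paper relies on for the preceding theorem, so nothing is missing.
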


\begin{cor}\label{cor:yoneqiDM}
Let $k$ be a commutative ring. There is a canonical equivalence $$\Dm{\mathbf{LFP}_k}^{\et}_\i \simeq \Shi\left(\Aff^{\mathit{lfp},\mspace{2mu}{\et}}_k\right)$$
between the $\i$-category of Deligne-Mumford $\i$-stacks locally of finite presentation over $k$ and their \'etale morphisms and the $\i$-category of $\i$-sheaves on the site of affine $k$-schemes locally of finite presentation and their \'etale morphisms (with respect to the \'etale topology).
\end{cor}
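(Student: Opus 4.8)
The plan is to derive this statement as the specialization of the preceding corollary (the equivalence $\Dm{\C}^{\et} \simeq \Shi(\Aff^{\et}_\C)$ for essentially small $\C$) to the geometry $\g = \g_{\et}(k)$ and the subcategory $\C = \mathbf{LFP}_k := \mathbf{LFP}_{\g_{\et}(k)}$ of Example \ref{ex:LFP}. First I would recall that $\mathbf{LFP}_k$ is the image of $\g$ under the composite $\g \hookrightarrow \Pro(\g) \stackrel{\Spec_\g}{\hookrightarrow} \Str_\G$; as $\g$ is essentially small so is $\mathbf{LFP}_k$, and since étale morphisms are locally of finite presentation it is admissibly closed, hence a strong étale blossom by Proposition \ref{prop:admicls}. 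This places us squarely in the hypotheses of the preceding corollary (itself a consequence of Theorem \ref{thm:yonequil} and Proposition \ref{prop:theta}), which upon taking $\C = \mathbf{LFP}_k$ yields a canonical equivalence $\Dm{\mathbf{LFP}_k}^{\et}_\i \simeq \Shi(\Aff^{\et}_{\mathbf{LFP}_k})$.

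It then remains only to match this with the named objects in the statement. For the site on the right, I would identify $\Aff_{\mathbf{LFP}_k} = \widetilde{\sL_{\mathbf{LFP}_k}}$, the admissible closure, with the category $\Aff^{\mathit{lfp}}_k$ of affine $k$-schemes locally of finite presentation. By definition the admissible closure consists of those $\Spec_{\et}(B)$ admitting an étale morphism of $k$-algebras $A \to B$ with $A$ finitely presented; since a composite of a finitely presented morphism with an étale (hence finitely presented) one is again finitely presented, and conversely every finitely presented $k$-algebra is its own witness, these are exactly the affine $k$-schemes locally of finite presentation. The distinguished morphisms on both sides are the étale ones, so the induced sites agree, giving $\Shi(\Aff^{\et}_{\mathbf{LFP}_k}) \simeq \Shi(\Aff^{\mathit{lfp},\mspace{2mu}{\et}}_k)$. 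For the left side, $\Dm{\mathbf{LFP}_k}_\i$ is by definition the $\i$-category of Deligne-Mumford $\mathbf{LFP}_k$-stacks, equivalently (via $\sch{\C} \simeq \Dm{\C}$) the $\i$-category $\overline{\mathbf{LFP}_k}$ of $\g$-schemes locally of finite presentation, and restricting to étale morphisms produces $\Dm{\mathbf{LFP}_k}^{\et}_\i$, the Deligne-Mumford $\i$-stacks locally of finite presentation over $k$ and their étale maps.

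I do not anticipate any genuine obstacle, since the result is a direct specialization of machinery already established; the only step requiring care is the purely algebraic bookkeeping identification $\Aff_{\mathbf{LFP}_k} = \Aff^{\mathit{lfp}}_k$, which rests entirely on the stability of finite presentation under composition and under the étale morphisms that generate the admissible closure. The same remarks apply verbatim to the derived and spectral variants alluded to in the surrounding discussion, replacing $\g_{\et}(k)$ by $\g^{der}_{\et}(k)$, $\g^{Sp}_{\et}(\bE)$, or $\g^{nSp}_{\et}(\bE)$ and finite presentation of ordinary $k$-algebras by the corresponding finiteness condition on simplicial or $\Ei$-algebras.
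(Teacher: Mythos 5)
Your proposal is correct and follows essentially the same route as the paper: Corollary \ref{cor:yoneqiDM} is there an immediate specialization of the preceding corollary (itself a special case of Theorem \ref{thm:yonequil} and Proposition \ref{prop:theta}) to $\C=\mathbf{LFP}_{\g_{\acute{e}t}\left(k\right)}$, with the inputs you cite---that $\mathbf{LFP}_k$ is essentially small, admissibly closed because \'etale maps are of finite presentation, hence a strong \'etale blossom and equal to its own admissible closure $\Aff^{\mathit{lfp}}_k$---supplied exactly by Example \ref{ex:LFP} and Proposition \ref{prop:admicls}. Your bookkeeping identification of $\Aff_{\mathbf{LFP}_k}$ with affine $k$-schemes locally of finite presentation is the same argument the paper makes implicitly, so there is nothing to add.
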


\begin{cor}
Let $k$ be a commutative ring. There is a canonical equivalence $$\Dm{\mathbf{LFP}_k}^{\et} \simeq \St\left(\Aff^{\mathit{lfp},\mspace{2mu}{\et}}_k\right)$$
between the $\left(2,1\right)$-category of classical Deligne-Mumford stacks locally of finite presentation over $k$ and their \'etale morphisms and the $\left(2,1\right)$-category of stacks of groupoids on the site of affine $k$-schemes locally of finite presentation and their \'etale morphisms (with respect to the \'etale topology).
\end{cor}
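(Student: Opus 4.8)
The plan is to deduce this corollary from the immediately preceding one, which gives a canonical equivalence
$$\Dm{\mathbf{LFP}_k}^{\et}_\i \simeq \Shi\left(\Aff^{\mathit{lfp},\mspace{2mu}{\et}}_k\right)$$
between the $\i$-category of Deligne-Mumford $\i$-stacks locally of finite presentation over $k$ and $\i$-sheaves on the site $\Aff^{\mathit{lfp},\mspace{2mu}{\et}}_k.$ Since both sides of the desired $\left(2,1\right)$-categorical equivalence should arise as the $1$-truncated objects of the two sides of this $\i$-categorical equivalence, the strategy is to restrict the equivalence to $1$-truncated objects and identify the resulting subcategories with the stated classical objects.

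First I would observe that an equivalence of $\icat$s preserves and reflects truncatedness, so the equivalence above restricts to an equivalence between the full subcategories of $1$-truncated objects on each side. On the right, the $1$-truncated objects of $\Shi\left(\Aff^{\mathit{lfp},\mspace{2mu}{\et}}_k\right)$ are exactly the $1$-sheaves, i.e. the stacks of groupoids $\St\left(\Aff^{\mathit{lfp},\mspace{2mu}{\et}}_k\right)$ by the remark following the definition of $\i$-sheaf (a $1$-sheaf is a stack of groupoids). On the left, I would invoke Theorem \ref{thm:DM1trun}: a $1$-truncated Deligne-Mumford $\g_{\acute{e}t}\left(k\right)$-stack is precisely a classical Deligne-Mumford stack (in Lurie's sense, without separation conditions), and restricting further along the locally-of-finite-presentation condition identifies the $1$-truncated objects of $\Dm{\mathbf{LFP}_k}^{\et}_\i$ with the $\left(2,1\right)$-category of classical Deligne-Mumford stacks locally of finite presentation over $k.$ Because the equivalence is given by $\left(i^{\et}\right)^* \circ y^{\et},$ which is induced by the Yoneda embedding and is natural, it automatically respects the \'etale morphisms on both sides, so the restriction is an equivalence of $\left(2,1\right)$-categories compatible with the $\et$-decoration.

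The main obstacle I anticipate is the careful matching of the truncation level with the localic level on the geometric side: one must confirm that a $1$-truncated functor of points corresponds to a $1$-localic \'etale spectral scheme, so that Theorem 2.6.18 of \cite{dag} applies to identify it as a classical Deligne-Mumford stack. This is precisely the content of the argument in Theorem \ref{thm:DM1trun}, using Proposition \ref{prop:2.3.16} (that $\cE/E$ is $n$-localic iff $E$ is $n$-truncated) together with Theorem \ref{thm:yonequil} (that the universal \'etendue $\Uni^\cE \simeq \Shi\left({\mathbf{Aff}^\cE}^{\et}\right)$ is $1$-localic). Once this correspondence is in hand, the remaining verification—that the subcategory of $1$-truncated objects of $\Dm{\mathbf{LFP}_k}^{\et}_\i$ is the classical $\left(2,1\right)$-category and that the equivalence is suitably compatible with \'etale morphisms—is routine, and the corollary follows directly from Corollary \ref{cor:yoneqiDM} by passing to $1$-truncations.
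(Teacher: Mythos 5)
Your proposal is correct and matches the paper's own argument, which deduces the corollary in one line from Corollary \ref{cor:yoneqiDM} by passing to $1$-truncated objects and invoking Theorem \ref{thm:DM1trun}. The additional details you supply—that $1$-truncated $\i$-sheaves are stacks of groupoids, and that the matching of truncation level with localic level goes through Proposition \ref{prop:2.3.16} and Theorem \ref{thm:yonequil}—are exactly the content already packaged into Theorem \ref{thm:DM1trun}, so your elaboration is a faithful unwinding of the paper's proof rather than a different route.
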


\begin{proof}
This follows immediately from Corollary \ref{cor:yoneqiDM} by passing to $1$-truncated objects and invoking Theorem \ref{thm:DM1trun}.
\end{proof}

\begin{cor}\label{cor:yoneqDMd}
Let $k$ be a commutative ring. There is a canonical equivalence $$\dm^{der}\left(\mathbf{LFP}_k\right)^{\et} \simeq \Shi\left(\mathbf{DAff}^{\mathit{lfp},\mspace{2mu}{\et}}_k\right)$$
between the $\i$-category of derived Deligne-Mumford stacks locally of finite presentation over $k$ and their \'etale morphisms and the $\i$-category of $\i$-sheaves on the site of derived affine $k$-schemes locally of finite presentation and their \'etale morphisms (with respect to the \'etale topology).
\end{cor}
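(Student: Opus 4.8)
The plan is to obtain this corollary as the specialization of the general $\C$-equivalence $\Dm{\C}^{\et}\simeq\Shi\left(\Aff^{\et}_\C\right)$ established above (for any essentially small subcategory $\C$ of $\Pro\left(\g\right)$) to the derived étale geometry $\g=\g^{der}_{\et}\left(k\right)$ of Example \ref{ex:ddms}, taking $\C=\mathbf{LFP}_\g$ as in Example \ref{ex:LFP}. That general equivalence is itself a formal consequence of Theorem \ref{thm:yonequil} together with Proposition \ref{prop:theta}, applied to the strong étale blossom $\Aff_\C$; hence the only remaining work is to verify the hypotheses for this particular $\g$ and $\C$, and to identify the two $\i$-categories appearing in the general statement with the ones named here.

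First I would verify the hypotheses. By Example \ref{ex:LFP}, for $\g=\g^{der}_{\et}\left(k\right)$ the subcategory $\mathbf{LFP}_\g\subseteq\Affg$ (the image of $\g$ under $\g\hookrightarrow\Pro\left(\g\right)$ followed by $\Specg$) is admissibly closed, hence a strong étale blossom by Proposition \ref{prop:admicls}, and is essentially small since $\g$ is. Because it is already admissibly closed we have $\widetilde{\mathbf{LFP}_\g}=\mathbf{LFP}_\g$, so $\Aff_{\mathbf{LFP}_\g}=\mathbf{LFP}_\g$ is an essentially small strong étale blossom, and being (essentially) small it is automatically locally small; thus the machinery of Section \ref{sec:functorofpoints} applies directly to it.

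Next I would identify the two sides with their named counterparts. Under the derived étale spectrum $\Spec_{\et}:\mathbf{SCR}^{op}_k\hookrightarrow\Str_\G$ of Example \ref{ex:ddms}, the objects of $\mathbf{LFP}_\g$ are exactly the étale spectra of simplicial commutative $k$-algebras locally of finite presentation; that is, $\Aff^{\et}_{\mathbf{LFP}_\g}$ is precisely the site $\mathbf{DAff}^{\mathit{lfp},\mspace{2mu}{\et}}_k$ of derived affine $k$-schemes locally of finite presentation and their étale morphisms, with the induced étale topology. On the other side, by Example \ref{ex:ddms} the $\g^{der}_{\et}\left(k\right)$-schemes are the étale derived schemes and their functors of points are the derived Deligne--Mumford stacks; restricting to the strong étale blossom $\mathbf{LFP}_\g$ gives exactly the derived Deligne--Mumford stacks locally of finite presentation, so $\Dm{\mathbf{LFP}_\g}=\dm^{der}\left(\mathbf{LFP}_k\right)$, compatibly with étale morphisms on both sides. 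Feeding these identifications into the general $\C$-equivalence then yields $\dm^{der}\left(\mathbf{LFP}_k\right)^{\et}\simeq\Shi\left(\mathbf{DAff}^{\mathit{lfp},\mspace{2mu}{\et}}_k\right)$.

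The step I expect to be the only real obstacle is the bookkeeping of the previous paragraph: matching the paper's abstract definitions ($\g$-schemes, $\mathbf{LFP}_\g$, $\Aff_\C$, and the induced étale topology on $\Aff^{\et}_\C$) with Lurie's concrete descriptions of derived Deligne--Mumford stacks and the derived affine site, and confirming that ``locally of finite presentation'' in the sense of $\mathbf{LFP}_\g$ agrees with the standard meaning for $\mathbf{DAff}^{lfp}_k$. Once these identifications are in place no new argument is needed; in contrast to the classical corollary just above, there is no truncation to carry out here, since the derived statement is precisely the untruncated $\i$-categorical equivalence delivered by Theorem \ref{thm:yonequil} and Proposition \ref{prop:theta}.
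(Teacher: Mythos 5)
Your proposal is correct and follows exactly the route the paper intends: the corollary is stated as an immediate specialization of the general equivalence $\Dm{\C}^{\et}\simeq\Shi\left(\Aff^{\et}_\C\right)$ (itself a consequence of Theorem \ref{thm:yonequil} and Proposition \ref{prop:theta}) to $\g=\g^{der}_{\acute{e}t}\left(k\right)$ with $\C=\mathbf{LFP}_\g$, using Example \ref{ex:LFP} and Proposition \ref{prop:admicls} to see that $\mathbf{LFP}_\g$ is an essentially small, admissibly closed (hence strong) \'etale blossom, and Example \ref{ex:ddms} together with the paper's definition of Deligne--Mumford $\mathbf{LFP}_\g$-stacks for the identification of the two sides. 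Your added verification of the hypotheses and site identifications is precisely the bookkeeping the paper leaves implicit, and you correctly note that, unlike the classical corollary, no truncation step is needed here.
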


\begin{ex}
Suppose that $k$ is a field of characteristic zero so that we may identify the $\i$-category of simplicial commutative $k$-algebras with the $\i$-category of non-positively graded differential $k$-algebras. Fix a non-positive integer $n$. We will explain the existence of a derived Deligne-Mumford moduli stack of $n$-shifted symplectic structures, in the sense of Definition 1.18 of \cite{shifted}. This construction works for $n>0,$ however as there are no non-trivial $n$-shifted symplectic structures on derived Deligne-Mumford stacks, the resulting stack is empty in this case. Moreover, shifted symplectic structures are only defined in the case of a perfect cotangent complex, so it is convenient to restrict to the case of affine derived $k$-schemes locally of finite type, as such schemes have a perfect cotangent complex. The assignment of $n$-shifted symplectic structures is not functorial with respect to all maps of derived schemes, since in general, non-degeneracy is not preserved under pullback. However, one can pull back $n$-shifted symplectic structures along \emph{\'etale} maps. It follows that for all integers $n,$ there exists a functor $$\Symp[n]_k:\left(\mathbf{DAff}^{\mathit{lfp},\mspace{2mu}{\et}}_k\right)^{op} \to \iGpd$$ which assigns a derived affine $k$-scheme its (possibly empty) space of $n$-shifted symplectic structures. This functor is easily seen to have \'etale descent, by using Proposition 1.11 of op. cit., combined with the observation that non-degeneracy of $2$-forms can be checked \'etale-locally.
Denote by $j$ the canonical functor $$\mathbf{DAff}^{\mathit{lfp},\mspace{2mu}{\et}}_k \to \mathbf{DAff}.$$ Then $$j!\Symp[n]_k=:\Symp[n]_{k}^{DM}$$ is a derived Deligne-Mumford stack locally of finite type. By the Yoneda Lemma, the identity map of $\Symp[n]_{k}^{DM}$ classifies an $n$-shifted symplectic structure on $\Symp[n]_{k}^{DM}$ which is universal in the sense that for all derived $k$-schemes $X$ locally of finite type, every $n$-shifted symplectic structure on $X$ arises from pulling back the one on $\Symp[n]_{k}^{DM}$ by an essentially unique \'etale morphism. In fact, the universality holds for all derived Deligne-Mumford stacks locally of finite type over $k$. One should be able to similarly construct a universal $n$-shifted Poisson Deligne-Mumford stack in the sense of \cite{shiftedpoisson}, for any integer $n$. %Moreover, the (large) filtered colimit $$\Symp[n]_{k,DM}:=\underset{\sU}\colim \Symp[n]_{k,}^{\sU_{DM}}$$ which is indexed over all small \'etale closed subcategories $\sU$ of $\mathbf{DAff}_k,$ behaves very much like a derived Deligne-Mumford stack, and in particular has a good notion of \'etale maps into it. It carries an $n$-shifted symplectic structure which is universal with respect to all derived affine schemes. 
\end{ex}

\begin{cor}\label{cor:yoneqDMsp}
Let $\bE$ be an $\Ei$-ring spectrum. There is a canonical equivalence $$\dm^{nSp}\left(\mathbf{LFP}_\bE\right)^{\et} \simeq \Shi\left(\mathbf{nSpAff}^{\mathit{lfp},\mspace{2mu}{\et}}_\bE\right)$$
between the $\i$-category of non-connective spectral Deligne-Mumford stacks locally of finite presentation over $\bE$ and their \'etale morphisms and the $\i$-category of $\i$-sheaves on the site of non-connective spectral affine $\bE$-schemes locally of finite presentation and their \'etale morphisms (with respect to the \'etale topology).\\ \newline
If $\bE$ is connective, there is also a canonical equivalence $$\dm^{Sp}\left(\mathbf{LFP}_\bE\right)^{\et} \simeq \Shi\left(\mathbf{SpAff}^{\mathit{lfp},\mspace{2mu}{\et}}_\bE\right)$$
between the $\i$-category of spectral Deligne-Mumford stacks locally of finite presentation over $\bE$ and their \'etale morphisms and the $\i$-category of $\i$-sheaves on the site of spectral affine $\bE$-schemes locally of finite presentation and their \'etale morphisms (with respect to the \'etale topology).
\end{cor}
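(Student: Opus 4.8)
The plan is to derive this statement as the spectral instance of the general equivalence $\Dm{\C}^{\et} \simeq \Shi\left(\Aff^{\et}_\C\right)$ established above for an essentially small subcategory $\C$ of $\Pro\left(\g\right)$, which itself rests on Theorem \ref{thm:yonequil} and Proposition \ref{prop:theta}. For the non-connective assertion I would take $\g=\g^{nSp}_{\acute{e}t}\left(\bE\right)$, the non-connective \'etale spectral geometry over $\bE$, and set $\C=\mathbf{LFP}_\g$ as in Example \ref{ex:LFP}, i.e. the image of the Yoneda embedding $\g \hookrightarrow \Pro\left(\g\right)$ composed with $\Spec_\g$.

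First I would verify the two hypotheses needed to invoke Theorem \ref{thm:yonequil}. By Example \ref{ex:LFP}, $\mathbf{LFP}_\g$ is admissibly closed and essentially small: the former because \'etale maps of $\Ei$-rings are locally of finite presentation, the latter because $\g$ is essentially small. Admissible closedness then gives, via Proposition \ref{prop:admicls}, that $\mathbf{LFP}_\g$ is a strong \'etale blossom, which (being essentially small) is exactly the input that makes the equivalence $\overline{\mathbf{LFP}_\g}^{\et} \simeq \Shi\left(\mathbf{LFP}_\g^{\et}\right)$ hold.

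Next I would match the terminology. By the description of $\g^{nSp}_{\acute{e}t}\left(\bE\right)$, its affine schemes locally of finite presentation are precisely the objects of $\mathbf{nSpAff}^{\mathit{lfp}}_\bE$, so that $\Aff^{\et}_{\mathbf{LFP}_\g} \simeq \mathbf{nSpAff}^{\mathit{lfp},\mspace{2mu}{\et}}_\bE$ as Grothendieck sites, the \'etale topology being the one restricted from the admissibility structure of $\g$. On the other side, $\overline{\mathbf{LFP}_\g}$ is by definition the $\i$-category of $\g^{nSp}_{\acute{e}t}\left(\bE\right)$-schemes locally of finite presentation, which is exactly $\dm^{nSp}\left(\mathbf{LFP}_\bE\right)$ under the canonical equivalence $\sch{\C}\simeq \Dm{\C}$. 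Feeding these identifications into $\Dm{\mathbf{LFP}_\g}^{\et}\simeq \Shi\left(\Aff^{\et}_{\mathbf{LFP}_\g}\right)$ yields the first displayed equivalence. For the connective statement I would run the identical argument with $\g=\g^{Sp}_{\acute{e}t}\left(\bE\right)$ in place of $\g^{nSp}_{\acute{e}t}\left(\bE\right)$, which is legitimate precisely because $\bE$ is assumed connective; the affine schemes are then the \'etale spectra of compact connective $\bE$-algebras, producing $\mathbf{SpAff}^{\mathit{lfp},\mspace{2mu}{\et}}_\bE$.

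The only real work here is bookkeeping rather than analysis: one must confirm that passing to $\mathbf{LFP}_\g$ does not alter the resulting $\i$-category of \'etendues, and that the \'etale coverings on $\mathbf{nSpAff}^{\mathit{lfp}}_\bE$ agree with those induced by the admissibility structure of $\g$. Both points are immediate from Proposition \ref{prop:admicls} and the definition of the \'etale spectral geometry, so I do not anticipate any genuine obstacle; all the substantive content has already been absorbed into Theorem \ref{thm:yonequil}, and this corollary is a transcription of that theorem into the spectral setting, exactly parallel to Corollary \ref{cor:yoneqDMd}.
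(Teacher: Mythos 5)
Your proposal is correct and follows exactly the paper's own route: the corollary is presented there precisely as the instance of the general equivalence $\Dm{\C}^{\et}\simeq \Shi\left(\Aff^{\et}_\C\right)$ (which rests on Theorem \ref{thm:yonequil} and Proposition \ref{prop:theta}) for $\C=\mathbf{LFP}_\g$ with $\g=\g^{nSp}_{\acute{e}t}\left(\bE\right)$, respectively $\g=\g^{Sp}_{\acute{e}t}\left(\bE\right)$ in the connective case. Your verification via Example \ref{ex:LFP} and Proposition \ref{prop:admicls} that $\mathbf{LFP}_\g$ is an essentially small, admissibly closed strong \'etale blossom (so that the admissible closure does not change the site or the resulting \'etendues) is exactly the bookkeeping the paper implicitly relies on.
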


\begin{ex}\textbf{Universal Zariski \'Etendues}:\\
Let $\C$ be a small admissibly closed subcategory of commutative $k$-algebras, with respect to the Zariski geometry. Let $A$ be an algebra in $\C$. Consider the Haefliger groupoid $\Ha\left(\Spec\left(A\right)\right),$ of the underling topological space of the spectrum of $A,$ defined analogously to Definition \ref{dfn:haf}, with the role of smooth open embeddings replaced with continuous ones. Since the structure maps of this topological groupoid are local homeomorphisms, one can pull back the structure sheaf on $\Spec\left(A\right)$ to give the arrow space the structure of a (highly non-separated) scheme. Let $\X\left(A\right)$ denote the Zariski \'etale stack associated to this groupoid. As a special case of Theorem 3.3 of \cite{prol}, one has that the Zariski \'etale stack $$\coprod\limits_{A_\alpha \in \C} \X\left(A_\alpha\right)$$
is the functor of points of the universal $\C$-\'etendue.
\end{ex}

\begin{ex}\textbf{Universal \'Etale \'Etendues}:\\
We will now describe how to construct universal \'etendues for $\g=\g_{\acute{e}t}\left(k\right).$ Let $\C$ be a small admissibly closed subcategory of commutative $k$-algebras, with respect to the \'etale geometry. Let $X$ denote the following scheme $$X:=\coprod\limits_{A_\alpha \in \C} \Spec\left(A_\alpha\right).$$ Notice that the $\i$-category of affine $\C$-schemes is canonically equivalent to $\C^{op},$ and by construction, an $\i$-sheaf over $\Aff_\C$ is the same as an $\i$-sheaf over $\C^{op}$ with the induced Grothendieck topology from the \'etale topology on $\Pro\left(\g_{\acute{e}t}\left(k\right)\right)=\Aff_k.$ In particular, one has that the underlying $\i$-topos of the universal $\C$-\'etendue is equivalent to $\Shi\left(\left(\C^{\et}\right)^{op},\acute{e}t\right),$ and so is $1$-localic. Let $\cE=\Sh\left(\left(\C^{\et}\right)^{op},\acute{e}t\right)$ be the associated $1$-topos. Let $$r:\Aff_\C^{\et} \hookrightarrow \mathbf{Sch}^{\et}_k,$$ be the inclusion of the subcategory of affine $\C$-schemes spanned by \'etale morphisms into the analogously defined subcategory of all schemes over $k$. Denote by $$y^{\et}:\left(\C^{\et}\right)^{op} \hookrightarrow \Sh\left(\left(\C^{\et}\right)^{op},\acute{e}t\right)$$ the Yoneda embedding, and denote by $\tilde y^{\et}$ the composite
$$\mathbf{Sch}^{\et}_k \hookrightarrow \Sh\left(\left(\mathbf{Sch}^{\et}_k\right)^{op},\acute{e}t\right) \stackrel{r^*}{\longrightarrow} \Sh\left(\left(\C^{\et}\right)^{op},\acute{e}t\right).$$ Notice that for every object $A_\alpha$ of $\C,$ there exists a canonical map $$y^{\et}\left(A_\alpha\right) \to \tilde y^{\et}\left(X\right).$$ It follows that the canonical map $$\tilde y^{\et}\left(X\right) \to 1$$ is an epimorphism. Notice that $X$ is a $\C$-scheme. It follows that $$\Sh\left(X_{\acute{e}t}\right) \simeq \Sh\left(\left(\C^{\et}\right)^{op},\acute{e}t\right)/\tilde y^{\et}\left(X\right),$$ and that moreover, the associated \'etale epimorphism $$\pi_X: \Sh\left(X_{\acute{e}t}\right) \to \Sh\left(\left(\C^{\et}\right)^{op},\acute{e}t\right)$$ enjoys that property that $\pi_X^*\O_\Uni \cong \O_X,$ that is, the pullback of the structure sheaf of the universal $\C$-\'etendue along $\pi_X$ is isomorphic to the structure sheaf of the small \'etale spectrum of $X.$ Let
$$\xymatrix{\cF \ar[r]^{s} \ar[d]_{t} & \Sh\left(X_{\acute{e}t}\right) \ar[d]^-{\pi_X}\\
\Sh\left(X_{\acute{e}t}\right) \ar[r]^-{\pi_X} & \Sh\left(\left(\C^{\et}\right)^{op}\right)}$$ be a $\left(2,1\right)$-pullback diagram of topoi. Since $\pi_X$ is \'etale, one has a canonical equivalence $$\cF\simeq \Sh\left(X_{\acute{e}t}\right)/\pi_X^*\left(y^{\et}\left(X\right)\right).$$ Since $\pi_X^*\left(y^{\et}\left(X\right)\right)$ is an \'etale sheaf over $X,$ it is represented by an algebraic space $P$ (with no separation conditions) \'etale over $X,$ that is there is an \'etale map $s:P \to X$ and an equivalence $$\Sh\left(X_{\acute{e}t}\right)/\pi_X^*\left(y^{\et}\left(X\right)\right)\simeq \Sh\left(P_{\acute{e}t}\right)$$ under which the morphism $$s:\Sh\left(X_{\acute{e}t}\right)/\pi_X^*\left(y^{\et}\left(X\right)\right) \to \Sh\left(X_{\acute{e}t}\right)$$ corresponds to $s:P \to X.$ Notice that $\pi_X^*\left(y^{\et}\left(X\right)\right)$ assigns to each \'etale map $$\Spec\left(B\right) \to X$$ the set $\Hom^{\et}\left(\Spec\left(B\right),X\right).$ The map $t$ then corresponds to another \'etale map $$t:P \to X$$ and one gets an induced groupoid structure $P \rightrightarrows X$; it is the algebraic analogue of a Haefliger groupoid for the \'etale topology (see Definition \ref{dfn:haf}).  Since $\pi_X$ is an epimorphism it follows that the Deligne-Mumford stack associated to this groupoid is the functor of points of the universal $\C$-\'etendue.
\end{ex}

For each small admissibly closed subcategory $\sU$ of $\Affg,$ denote by $j_\sU$ the composite
$$\sU^{\et} \hookrightarrow \Aff^{\et}_\g \to \Affg.$$ By Proposition \ref{prop:relprolexist}, the restriction functor $$j^*_\sU:\Shi\left(\Affg\right) \to \Shi\left(\sU^{\et}\right)$$ has a left adjoint, called the \textbf{relative \'etale prolongation functor with respect to $\sU.$}

The following theorem is a special case of Theorem \ref{thm:classificationlarge}:

\begin{thm}\label{thm:classificationg}
Let $\g$ be a geometry. An $\i$-sheaf in $\Shi\left(\Affg\right)$ is a Deligne-Mumford $\g$-stack if and only if $\X$ is in the essential image of a relative prolongation functor $j^\sU_!$ for $\sU$ a small admissibly closed subcategory of $\Affg.$
\end{thm}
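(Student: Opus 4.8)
The plan is to obtain this statement as the specialization of Theorem \ref{thm:classificationlarge} to the strong \'etale blossom $\sD = \Affg$, the only genuine work being to reconcile the phrase ``small strong \'etale blossom subcategory'' used there with the phrase ``small admissibly closed subcategory'' used here. First I would record that $\Affg$ satisfies the standing hypothesis of Theorem \ref{thm:classificationlarge}: by Example \ref{ex:affineblossom} it is a strong \'etale blossom for $\gsch = \overline{\Affg}$, and it is locally small because it is strongly locally small by Proposition 2.3.12 of \cite{dag} (so that $\sl\left(\Affg\right)$, and hence its full subcategory $\Affg$, is locally small). Under this identification a Deligne-Mumford $\g$-stack in the sense of Definition \ref{dfn:gDM} is exactly a Deligne-Mumford stack in the sense of Theorem \ref{thm:classificationlarge}, since both are the essential image of $i^{\ast} \circ \overline{y}$ on $\Affg$. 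Thus it remains only to match the two classes of indexing subcategories.

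For the ``if'' direction, suppose $\X$ lies in the essential image of $j^\sU_!$ for some small admissibly closed subcategory $\sU$ of $\Affg$. By Proposition \ref{prop:admicls} such a $\sU$ is a strong \'etale blossom, and as a subcategory of the locally small $\Affg$ it is small and locally small; hence $j^\sU_!$ is precisely the relative prolongation functor furnished by Proposition \ref{prop:relprolexist} (with $\sD = \Affg$), and Theorem \ref{thm:classificationlarge} immediately yields that $\X$ is a Deligne-Mumford stack, i.e. a Deligne-Mumford $\g$-stack.

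For the ``only if'' direction, let $\X \simeq i^{\ast}\overline{y}\left(\cE\right)$ be a Deligne-Mumford $\g$-stack, with $\cE$ a $\g$-scheme. Rather than appealing to Proposition \ref{prop:locblossom} as in the proof of Theorem \ref{thm:classificationlarge} (which would only produce an arbitrary small strong \'etale blossom), I would use the explicit blossom $\Affg^\cE$ of Example \ref{ex:affineblossom}: it is a small strong \'etale blossom with $\cE \in \overline{\Affg^\cE}$, and by the example following Remark \ref{rmk:esssmallad} it is moreover admissibly closed. Setting $\sU = \Affg^\cE$, the argument of Theorem \ref{thm:classificationlarge} then applies verbatim: the square (\ref{eq:relprol}) commutes up to homotopy, and since $\cE \in \overline{\sU}$ one obtains $\X \simeq j^\sU_!\left(\left(i^{\et}_\sU\right)^{\ast} y^{\et}\left(\cE\right)\right)$, exhibiting $\X$ in the essential image of $j^\sU_!$ for this small admissibly closed $\sU$.

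The only point requiring care, and hence the main obstacle, is precisely this last substitution: Theorem \ref{thm:classificationlarge} as proved selects its witnessing blossom through Proposition \ref{prop:locblossom}, which gives no control over admissibility, so it cannot be quoted directly. The content of the present theorem is therefore the observation that in the geometric setting one may always choose the witnessing blossom to be the admissibly closed $\Affg^\cE$, together with the converse fact (Proposition \ref{prop:admicls}) that small admissibly closed subcategories are themselves strong \'etale blossoms. Consequently the relative prolongation functors indexed by small admissibly closed subcategories suffice to detect all Deligne-Mumford $\g$-stacks, which is exactly the asserted equivalence.
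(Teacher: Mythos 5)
Your proposal is correct and takes essentially the same route as the paper: the paper gives no separate proof, asserting the theorem as a special case of Theorem \ref{thm:classificationlarge} applied to the locally small strong \'etale blossom $\Affg$, and the details you supply are exactly the ones it relies on implicitly. In particular, the one genuine gap you identify and close---that Proposition \ref{prop:locblossom} gives no admissibility control, so the witnessing subcategory must instead be taken to be $\Affg^{\cE}$, which is small (Remark \ref{rmk:esssmallad}), admissibly closed, a strong \'etale blossom with $\cE \in \overline{\Affg^{\cE}}$ (Example \ref{ex:affineblossom}), with Proposition \ref{prop:admicls} handling the converse---is precisely the machinery the paper set up for this purpose and deploys explicitly in the proof of Theorem \ref{thm:DM1trun}.
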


\begin{rmk}
Here is how this theorem can be used in practice:\\

Let $\sU$ be some small admissibly closed subcategory of affine $\g$-schemes, and let $F$ be any $\i$-sheaf on the $\i$-category of such affine schemes and their \'etale morphisms. Then $F$ gives rise to a Deligne-Mumford $\g$-stack $j^\sU_!\left(F\right)$ with the universal property that for any affine scheme $\Specg\left(A\right)$ in $\sU,$ there is a natural equivalence of $\i$-groupoids
$$\Hom_{\et}\left(\Specg\left(A\right),j^\sU_!\left(F\right)\right) \simeq F\left(U\right),$$ where $$\Hom_{\et}\left(\Specg\left(A\right),j^\sU_!\left(F\right)\right),$$ is the full subcategory of $\Hom\left(\Specg\left(A\right),j^\sU_!\left(F\right)\right)$ on the \'etale morphisms. Moreover, $j^\sU_!\left(F\right)$ can be explicitly constructed from the universal $\sU$-\'etendue as follows: Let $\left(\Uni^\sU,\O_\Uni^\sU\right)$ denote the terminal object in $\sch{\sU}^{\et}.$ Then $j^\sU_!\left(F\right)$ is the functor of points of $\left(\Uni^\sU/F,\O_\Uni^\sU|_F\right).$ Theorem \ref{thm:classificationg} states that any Deligne-Mumford $\g$-stack arises by such a construction. In particular, given any $\i$-sheaf $G$ on the $\i$-category $\Affg$ of affine $\g$-schemes, and any small subcategory $\C$ of $\Affg,$ there exists a Deligne-Mumford $\g$-stack $\X_G$ universal with the property that for any affine $\g$-scheme $\Specg\left(A\right),$ with $A$ in $\C,$ there is a natural equivalence
$$\Hom_{\et}\left(\Specg\left(A\right),\X_G\right) \simeq G\left(U\right),$$ namely $$\X_G:=j^{\widetilde{\C}}_! j_{\widetilde{\C}}^* G,$$ where $\widetilde{\C}$ is the admissible closure of $\C$.
\end{rmk}

\textbf{Convention}: If $\g$ is of the form $\g_{Zar}\left(k\right),$ or $\g^{der}_{Zar}\left(k\right),$ an
admissibly closed subcategory of $\Affg$ will be called \textbf{Zariski closed}. If $\g$ is of the form $\g_{\acute{e}t}\left(k\right),$ $\g^{der}_{\acute{e}t}\left(k\right),$ $\g^{nSp}_{\acute{e}t}\left(\bE\right),$ or $\g^{Sp}_{\acute{e}t}\left(\bE\right),$ an admissibly closed subcategory of $\Affg$ will be called \textbf{\'etale closed}.

The following string of theorems are all special cases of Theorem \ref{thm:classificationg}:

\begin{thm}\label{thm:classificationzariksi}
Let $k$ be a commutative ring. An $\i$-sheaf in $\Shi\left(\Aff_k,Zar\right)$ on affine $k$-schemes with respect to the Zariski topology is an Zariski \'etale $\i$-stack if and only if $\X$ is in the essential image of a relative prolongation functor $j^\sU_!$ for $\sU$ a small Zariski closed subcategory of $\Aff_k.$
\end{thm}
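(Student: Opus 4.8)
The plan is to deduce this as the verbatim specialization of Theorem \ref{thm:classificationg} to the geometry $\g=\g_{Zar}\left(k\right)$, the only real work being the translation of the abstract framework into classical algebro-geometric language. First I would recall, from the discussion of the Zariski geometry in this section, that $\Pro\left(\g_{Zar}\left(k\right)\right)$ is canonically identified with the category of affine $k$-schemes, so that the full and faithful spectrum functor $\Spec_\g$ identifies $\Affg$ with $\Aff_k$. Under this identification the admissible morphisms of $\g_{Zar}\left(k\right)$ are precisely the Zariski localizations $A \to A\left[a^{-1}\right]$ (equivalently, after the harmless modification of the geometry recorded earlier, the open immersions). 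Consequently the \'etale morphisms of structured $\i$-topoi among affine $\g$-schemes correspond exactly to open immersions of affine $k$-schemes, and the \'etale topology on $\Affg$ becomes the Zariski topology on $\Aff_k$; combined with Remark \ref{rmk:affshvs} this yields a canonical equivalence $\Shi\left(\Aff_k,Zar\right) \simeq \Shi\left(\Affg\right)$ compatible with the prolongation functors.

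Next I would match the remaining vocabulary. By the terminology fixed earlier in this section, the $\i$-category of Zariski \'etale $\i$-stacks is by definition the $\i$-category $\Dmg$ of Deligne-Mumford $\g_{Zar}\left(k\right)$-stacks, so an $\i$-sheaf $\X$ in $\Shi\left(\Aff_k,Zar\right) \simeq \Shi\left(\Affg\right)$ is a Zariski \'etale $\i$-stack if and only if it is a Deligne-Mumford $\g$-stack in the sense of Definition \ref{dfn:gDM}. Moreover, by the \textbf{Convention} stated immediately before the theorem together with Definition \ref{dfn:admisclosed}, a small admissibly closed subcategory $\sU$ of $\Affg$ is exactly what is here called a small Zariski closed subcategory of $\Aff_k$, and the functor $j^\sU_!$ in the statement is precisely the relative \'etale prolongation functor of Proposition \ref{prop:relprolexist} associated to such a $\sU$.

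With these identifications in hand, the assertion is literally the content of Theorem \ref{thm:classificationg} for $\g=\g_{Zar}\left(k\right)$: an $\i$-sheaf on $\Affg$ is a Deligne-Mumford $\g$-stack if and only if it lies in the essential image of some $j^\sU_!$ with $\sU$ small and admissibly closed. I expect the only genuine obstacle to be the bookkeeping of the first paragraph, namely confirming that the abstract \'etale structure of $\G\left(\g_{Zar}\left(k\right)\right)$ reduces to the classical Zariski data (open immersions as the admissible, hence \'etale, maps, Zariski covers as the \'etale covering families, and the resulting site equivalence respecting $j^\sU_!$). Once this reduction of sites and topologies is verified by unwinding the definition of the geometric structure, the conclusion is immediate and requires no further argument.
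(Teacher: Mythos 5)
Your proposal is correct and coincides with the paper's own treatment: the paper states Theorem \ref{thm:classificationzariksi} as one of a string of theorems that are ``all special cases of Theorem \ref{thm:classificationg},'' with exactly the dictionary you spell out ($\Pro\left(\g_{Zar}\left(k\right)\right) \simeq \Aff_k$ via $\Spec_\g$, Zariski \'etale $\i$-stacks $=$ Deligne-Mumford $\g_{Zar}\left(k\right)$-stacks by definition, and ``Zariski closed'' $=$ ``admissibly closed'' by the stated Convention together with Definition \ref{dfn:admisclosed}). Your translation paragraph makes explicit the bookkeeping the paper leaves implicit, but the route is identical.
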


\begin{thm}\label{thm:classificationetalei}
Let $k$ be a commutative ring. An $\i$-sheaf in $\Shi\left(\Aff_k,\acute{e}t\right)$ on affine $k$-schemes with respect to the \'etale topology is a Deligne-Mumford  $\i$-stack if and only if $\X$ is in the essential image of a relative prolongation functor $j^\sU_!$ for $\sU$ a small \'etale closed subcategory of $\Aff_k.$
\end{thm}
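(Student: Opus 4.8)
The plan is to recognize Theorem \ref{thm:classificationetalei} as a direct specialization of the general representability result Theorem \ref{thm:classificationg}, so the proof is essentially a matter of verifying that the hypotheses of the general theorem are met in the present setting and that the terminology matches. First I would set $\g=\g_{\acute{e}t}\left(k\right)$, the \'etale geometry over $k$, and recall from the worked examples that $\Pro\left(\g_{\acute{e}t}\left(k\right)\right)$ is equivalent to the opposite of the category $\mathbf{Com}_k$ of commutative $k$-algebras, so that $\Affg$ is equivalent to the $\i$-category of affine $k$-schemes and $\Shi\left(\Affg,\acute{e}t\right)\simeq\Shi\left(\Aff_k,\acute{e}t\right)$. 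Under this identification, by Definition \ref{dfn:gDM}, a Deligne-Mumford $\g_{\acute{e}t}\left(k\right)$-stack is precisely what the theorem calls a Deligne-Mumford $\i$-stack. The convention immediately preceding the theorem fixes that an admissibly closed subcategory of $\Affg$ is here called \emph{\'etale closed}, so there is nothing further to match on the level of definitions.

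Next I would invoke Theorem \ref{thm:classificationg} verbatim: an $\i$-sheaf $\X$ in $\Shi\left(\Affg\right)$ is a Deligne-Mumford $\g$-stack if and only if it is in the essential image of a relative prolongation functor $j^\sU_!$ for some small admissibly closed subcategory $\sU$ of $\Affg$. Translating both sides of this biconditional through the equivalence $\Affg\simeq\Aff_k$ gives exactly the statement: membership in the essential image of $j^\sU_!$ for $\sU$ a small \'etale closed subcategory of $\Aff_k$. The relative prolongation functor $j^\sU_!$ referenced in the statement is the one produced by Proposition \ref{prop:relprolexist}, applied to the small admissibly closed (hence strong \'etale blossom, by Proposition \ref{prop:admicls}) subcategory $\sU$ sitting inside the locally small strong \'etale blossom $\Affg$; the latter is a strong \'etale blossom by Example \ref{ex:affineblossom}, and is strongly locally small by Proposition 2.3.12 of \cite{dag} as noted in the examples.

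The one genuine verification required is that the smallness and local smallness hypotheses of Theorem \ref{thm:classificationlarge} (the general statement of which Theorem \ref{thm:classificationg} is the $\g$-scheme instance) are satisfied, namely that $\Affg$ is a locally small strong \'etale blossom and that any given Deligne-Mumford stack is captured by a \emph{small} such $\sU$. This is exactly the content already packaged into Theorem \ref{thm:classificationg}, whose proof for geometries relies on Proposition \ref{prop:locblossom} (any object of $\dbar$ lies in $\overline{\sD_\cE}$ for a small strong \'etale blossom $\sD_\cE$) together with Remark \ref{rmk:esssmallad}, which guarantees that the admissible closure of an essentially small subcategory remains essentially small. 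Concretely, given $\X\simeq i^*\overline{y}\left(\cE\right)$ for a $\g_{\acute{e}t}\left(k\right)$-scheme $\cE$, Example \ref{ex:affineblossom} provides the small admissibly closed blossom $\mathbf{Aff}^\cE_{\g_{\acute{e}t}\left(k\right)}$ containing enough affines to present $\cE$, and this is the required $\sU$.

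The main obstacle, such as it is, is purely bookkeeping: one must be careful that the Grothendieck topology implicit in $\Shi\left(\Aff_k,\acute{e}t\right)$ coincides with the \'etale topology built into the admissibility structure of $\g_{\acute{e}t}\left(k\right)$, so that the sheaf conditions on the two sides agree and the equivalence $\Shi\left(\Affg\right)\simeq\Shi\left(\Aff_k,\acute{e}t\right)$ of Lemma \ref{lem:sheavesthesame} (via Remark \ref{rmk:affshvs}) respects the notion of Deligne-Mumford stack. Once this compatibility is recorded, the theorem follows with no further computation, being simply Theorem \ref{thm:classificationg} read through the dictionary between $\g_{\acute{e}t}\left(k\right)$-schemes and affine $k$-schemes.
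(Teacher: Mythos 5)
Your proof is correct and follows exactly the paper's route: the paper states Theorem \ref{thm:classificationetalei} with no separate argument, simply as a special case of Theorem \ref{thm:classificationg} applied to $\g=\g_{\acute{e}t}\left(k\right)$, with the dictionary \'etale closed $=$ admissibly closed and Deligne-Mumford $\i$-stack $=$ Deligne-Mumford $\g_{\acute{e}t}\left(k\right)$-stack. Your additional verifications (Example \ref{ex:affineblossom}, Proposition \ref{prop:admicls}, Remark \ref{rmk:esssmallad}) are exactly the ingredients the paper's general theorem already packages, so nothing is missing.
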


\begin{thm}\label{thm:classificationzariksid}
Let $k$ be a commutative ring. An $\i$-sheaf in $\Shi\left(\mathbf{DAff}_k,Zar\right)$ on affine derived $k$-schemes with respect to the Zariski topology is a derived Zariski \'etale $\i$-stack if and only if $\X$ is in the essential image of a relative prolongation functor $j^\sU_!$ for $\sU$ a small Zariski closed subcategory of $\mathbf{DAff}_k.$
\end{thm}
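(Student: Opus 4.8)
The plan is to obtain this theorem as the instance of Theorem \ref{thm:classificationg} for the derived Zariski geometry $\g=\g^{der}_{Zar}\left(k\right)$ of Section 4.2 of \cite{dag}. First I would record the dictionary between the general $\g$-language and the derived-Zariski language. Under the identification $\Pro\left(\g^{der}_{Zar}\left(k\right)\right)\simeq \mathbf{SCR}_k^{op}$ and the derived Zariski spectrum functor $\Spec^{der}$, the $\i$-category $\Affg$ of affine $\g$-schemes is exactly $\mathbf{DAff}_k$, the affine derived $k$-schemes; and, as recorded in the example on derived Zariski \'etale stacks above, a Deligne-Mumford $\g^{der}_{Zar}\left(k\right)$-stack in the sense of Definition \ref{dfn:gDM} is precisely what we are calling a derived Zariski \'etale $\i$-stack. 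By Example \ref{ex:affineblossom}, $\Affg$ is a strong \'etale blossom, and it is strongly locally small (Proposition 2.3.12 of \cite{dag}), so the hypotheses underlying Theorem \ref{thm:classificationg} are in force.

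Second, I would match the two sheaf $\i$-categories. For $\g=\g^{der}_{Zar}\left(k\right)$ the admissible morphisms are the Zariski localizations (open immersions), so the Grothendieck topology that the strong \'etale blossom structure induces on $\Affg$ via Definition \ref{dfn:sheaveD} is the Zariski topology, and the induced \'etale maps in $\Affg^{\et}$ are the (derived) open immersions. Consequently $\Shi\left(\Affg\right)$, as used in Theorem \ref{thm:classificationg}, is canonically identified with the $\i$-category $\Shi\left(\mathbf{DAff}_k,Zar\right)$ appearing in the statement (cf. Remark \ref{rmk:affshvs}). Under the Convention fixed above, the admissibly closed subcategories $\sU$ of $\Affg$ are exactly the Zariski closed subcategories of $\mathbf{DAff}_k$, and the relative prolongation functor $j^\sU_!$ of Theorem \ref{thm:classificationg} is literally the functor named in the statement.

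With these identifications in place, the theorem is a verbatim translation of Theorem \ref{thm:classificationg}: an $\i$-sheaf $\X\in\Shi\left(\mathbf{DAff}_k,Zar\right)$ is Deligne-Mumford for $\g^{der}_{Zar}\left(k\right)$ --- that is, a derived Zariski \'etale $\i$-stack --- if and only if it lies in the essential image of $j^\sU_!$ for some small Zariski closed $\sU\subseteq\mathbf{DAff}_k$. The one point requiring genuine (if light) care, and the only place anything could go wrong, is the identification of the intrinsic sheaf notion of Definition \ref{dfn:sheaveD} with sheaves for the Zariski topology, together with the identification of admissible-closure with Zariski-closure; everything else is a pure relabelling of Theorem \ref{thm:classificationg}.
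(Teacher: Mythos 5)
Your proposal is correct and is exactly the paper's argument: the paper proves this theorem simply by declaring it, along with its Zariski/\'etale/derived/spectral siblings, a special case of Theorem \ref{thm:classificationg} applied to the geometry $\g^{der}_{Zar}\left(k\right)$, with the Convention identifying admissibly closed subcategories with Zariski closed ones. Your only elaboration beyond the paper is spelling out the dictionary (affine $\g$-schemes $=\mathbf{DAff}_k$, Deligne-Mumford $\g$-stacks $=$ derived Zariski \'etale $\i$-stacks, intrinsic sheaves $=$ Zariski sheaves via Remark \ref{rmk:affshvs}), which the paper leaves implicit.
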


\begin{thm}\label{thm:classificationetaled}
Let $k$ be a commutative ring. An $\i$-sheaf in $\Shi\left(\mathbf{DAff}_k,\acute{e}t\right)$ on affine derived $k$-schemes with respect to the \'etale topology is a derived Deligne-Mumford stack if and only if $\X$ is in the essential image of a relative prolongation functor $j^\sU_!$ for $\sU$ a small \'etale closed subcategory of $\mathbf{DAff}_k.$
\end{thm}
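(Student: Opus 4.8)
The plan is to recognize this theorem (Theorem \ref{thm:classificationetaled}) as an immediate specialization of the general representability criterion Theorem \ref{thm:classificationlarge} to the geometry $\g=\g^{der}_{\acute{e}t}\left(k\right)$ of derived \'etale schemes over $k$. The entire content is to verify that the hypotheses of the general theorem are met and that the terminology translates correctly. First I would set $\sL=\Aff_\g$ to be the $\i$-category of affine derived $\g$-schemes, so that by Example \ref{ex:affines} the $\sL$-\'etendues are exactly the $\g$-schemes, which by Example \ref{ex:ddms} are precisely the derived Deligne-Mumford stacks over $k$, and their functors of points are the Deligne-Mumford $\g$-stacks of Definition \ref{dfn:gDM}. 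By Example \ref{ex:affineblossom}, $\Affg$ is a strong \'etale blossom; the remaining question is whether it is \emph{locally small}, which is needed to apply Theorem \ref{thm:classificationlarge}.

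Next I would invoke Proposition \ref{prop:locallsmall} together with the strong local smallness of $\Affg$ (which, as noted in the excerpt, follows from Proposition 2.3.12 of \cite{dag}) to conclude that $\gsch=\lbar$ is locally small, so that $\Affg$ is a locally small strong \'etale blossom. This is exactly the standing hypothesis of Theorem \ref{thm:classificationlarge}. Applying that theorem verbatim, an $\i$-sheaf $\X$ in $\Shi\left(\Affg\right)$ is a Deligne-Mumford $\g$-stack if and only if $\X$ is in the essential image of a relative prolongation functor $j^\sU_!$ for some small strong \'etale blossom $\sU\subseteq\Affg$. This is the statement of the already-established Theorem \ref{thm:classificationg}, so the derived \'etale case reduces to matching up notation.

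The only genuine translation step is to replace the abstract condition ``$\sU$ a small strong \'etale blossom'' with the condition ``$\sU$ a small \'etale closed subcategory of $\mathbf{DAff}_k$'' appearing in the statement. Here I would use the identification $\Aff_{\g^{der}_{\acute{e}t}\left(k\right)}\simeq \mathbf{DAff}_k$ under the \'etale spectrum functor (Example \ref{ex:ddms}), under which, by the convention established just before the theorem, admissibly closed subcategories are precisely the \'etale closed ones. By Proposition \ref{prop:admicls}, every admissibly closed subcategory of $\Affg$ is a strong \'etale blossom, and by Remark \ref{rmk:esssmallad} the admissible closure of any essentially small subcategory is essentially small; conversely any small strong \'etale blossom $\sU$ appearing in Theorem \ref{thm:classificationlarge} may, by Proposition \ref{prop:locblossom} and the construction in Proposition \ref{prop:existbloss}, be enlarged to or replaced by a small \'etale closed subcategory without altering the essential image of $j^\sU_!$ (since $\overline{\sU}$ is unchanged under passage to the admissible closure). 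Thus the class of essential images of $j^\sU_!$ over small \'etale closed $\sU$ coincides with that over small strong \'etale blossoms.

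The main obstacle, such as it is, is the bookkeeping in this last translation: ensuring that the ``small \'etale closed subcategory'' quantifier in the statement ranges over exactly the subcategories to which Theorem \ref{thm:classificationlarge} applies, and that enlarging a small strong \'etale blossom to its admissible closure neither breaks smallness (handled by Remark \ref{rmk:esssmallad}) nor changes which Deligne-Mumford stacks arise (handled by the invariance $\overline{\sL}=\overline{\widetilde{\sL}\mspace{10mu}}$ from the earlier remark and by Lemma \ref{lem:sheavesthesame}). Since all of these inputs are already in place, the proof is essentially a citation of Theorem \ref{thm:classificationg} specialized to $\g=\g^{der}_{\acute{e}t}\left(k\right)$, and I would write it as a single short paragraph deferring to that theorem and the \'etale-closed convention.
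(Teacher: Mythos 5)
Your proposal is correct and takes essentially the same route as the paper, which simply records this theorem as a special case of Theorem \ref{thm:classificationg} (itself a specialization of Theorem \ref{thm:classificationlarge}) applied to $\g=\g^{der}_{\acute{e}t}\left(k\right)$, with ``\'etale closed'' being the stated convention for ``admissibly closed'' in this geometry. Your bookkeeping via admissible closures (Proposition \ref{prop:admicls}, Remark \ref{rmk:esssmallad}, and the invariance $\overline{\sL}=\overline{\widetilde{\sL}\mspace{10mu}}$) is precisely the implicit content of that citation.
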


\begin{thm}\label{thm:classificationetalensp}
Let $\bE$ be an $\Ei$-ring spectrum. An $\i$-sheaf in $\Shi\left(\mathbf{nSpAff}_\bE,\acute{e}t\right)$ on affine non-connective spectral $\bE$-schemes with respect to the \'etale topology is a non-connective spectral Deligne-Mumford stack if and only if $\X$ is in the essential image of a relative prolongation functor $j^\sU_!$ for $\sU$ a small \'etale closed subcategory of $\mathbf{nSpAff}_\bE.$
\end{thm}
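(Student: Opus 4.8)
The plan is to obtain this statement as the specialization of Theorem \ref{thm:classificationg} to the non-connective \'etale spectral geometry $\g=\g^{nSp}_{\acute{e}t}\left(\bE\right)$ over $\bE$. First I would record the dictionary already set up in the preceding examples: by definition $\mathbf{nSpAff}_\bE = \Affg$ is the $\i$-category of affine $\g$-schemes, realized as the small \'etale spectra $\Spec_{\acute{e}t}\left(\mathbf{F}\right)$ of $\bE$-algebras $\mathbf{F}$ together with their structure sheaves of strictly Henselian $\bE$-algebras, and the $\i$-category of non-connective spectral Deligne-Mumford stacks over $\bE$ is canonically the $\i$-category $\Dmg$ of Deligne-Mumford $\g$-stacks in the sense of Definition \ref{dfn:gDM}. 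Consequently an $\i$-sheaf $\X$ on $\left(\mathbf{nSpAff}_\bE,\acute{e}t\right)$ is a non-connective spectral Deligne-Mumford stack if and only if it is a Deligne-Mumford $\g$-stack, so it suffices to characterize the latter.

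Next I would check that the hypotheses of Theorem \ref{thm:classificationg} are met for this $\g$. By Example \ref{ex:affineblossom}, $\Affg$ is a strong \'etale blossom, and by Proposition 2.3.12 of \cite{dag} it is strongly locally small; since $\Affg$ is a full subcategory of $\sl\left(\Affg\right)$, strong local smallness gives in particular that $\Affg$ is locally small. Hence $\Affg$ is a locally small strong \'etale blossom, which is precisely the input required by Theorem \ref{thm:classificationg} (itself the specialization of Theorem \ref{thm:classificationlarge} to the case $\sD=\Affg$).

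The only remaining point is to match terminology. For $\g=\g^{nSp}_{\acute{e}t}\left(\bE\right)$ the admissible morphisms are exactly the \'etale morphisms of $\Ei$-rings, so by the Convention preceding the theorem an admissibly closed subcategory of $\Affg$ in the sense of Definition \ref{dfn:admisclosed} is what is called an \'etale closed subcategory of $\mathbf{nSpAff}_\bE$; by Proposition \ref{prop:admicls} each such subcategory is a strong \'etale blossom, and by Remark \ref{rmk:esssmallad} a small one is essentially small. Under these identifications Theorem \ref{thm:classificationg} states verbatim that $\X$ is a Deligne-Mumford $\g$-stack if and only if it lies in the essential image of $j^\sU_!$ for some small \'etale closed subcategory $\sU$ of $\mathbf{nSpAff}_\bE$, which is the desired claim. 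I expect no genuine obstacle beyond this bookkeeping: the entire content is the translation of the general representability criterion of Chapter \ref{chap:etendues} into the language of spectral algebraic geometry, the substantive work being already carried out in Theorem \ref{thm:classificationlarge} and Theorem \ref{thm:classificationg}.
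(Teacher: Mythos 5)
Your proposal is correct and coincides with the paper's own argument: the paper simply declares Theorem \ref{thm:classificationetalensp} a special case of Theorem \ref{thm:classificationg}, obtained exactly as you describe by taking $\g=\g^{nSp}_{\acute{e}t}\left(\bE\right)$, identifying $\mathbf{nSpAff}_\bE$ with $\Affg$ and non-connective spectral Deligne-Mumford stacks with Deligne-Mumford $\g$-stacks, and translating ``admissibly closed'' into ``\'etale closed'' via the stated Convention. Your explicit verification of the hypotheses (Example \ref{ex:affineblossom}, Proposition 2.3.12 of \cite{dag}, Proposition \ref{prop:admicls}, Remark \ref{rmk:esssmallad}) is the bookkeeping the paper leaves implicit, and it is all accurate.
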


\begin{thm}\label{thm:classificationetalesp}
Let $\bE$ be a connective $\Ei$-ring spectrum. An $\i$-sheaf in \\$\Shi\left(\mathbf{SpAff}_\bE,\acute{e}t\right)$ on affine spectral $\bE$-schemes with respect to the \'etale topology is a spectral Deligne-Mumford stack if and only if $\X$ is in the essential image of a relative prolongation functor $j^\sU_!$ for $\sU$ a small \'etale closed subcategory of $\mathbf{SpAff}_\bE.$
\end{thm}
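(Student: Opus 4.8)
The plan is to obtain this statement as the specialization of Theorem \ref{thm:classificationg} to the connective \'etale spectral geometry $\g = \g^{Sp}_{\et}(\bE)$, so the first task is to set up the dictionary between the two sides. For this geometry, the \'etale spectrum functor $\Spec_{\et}$ identifies the $\i$-category $\Affg$ of affine $\g$-schemes with $\mathbf{SpAff}_\bE$, the connective spectral affine $\bE$-schemes, via the compact connective $\bE$-algebras; and the $\i$-category of Deligne-Mumford $\g$-stacks in the sense of Definition \ref{dfn:gDM} is, by construction in the discussion of spectral Deligne-Mumford stacks, exactly the $\i$-category of spectral Deligne-Mumford stacks over $\bE$. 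Thus the predicate ``$\X$ is a spectral Deligne-Mumford stack'' appearing in the statement is literally ``$\X$ is a Deligne-Mumford $\g$-stack.'' By Example \ref{ex:affineblossom} and Proposition \ref{prop:admicls}, $\Affg$ is a (strongly locally small) strong \'etale blossom, so the hypotheses of Theorem \ref{thm:classificationg} are met.

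Next I would reconcile the two notions of $\i$-sheaf. The category $\Shi(\mathbf{SpAff}_\bE,\et)$ in the statement denotes sheaves for the \'etale Grothendieck topology, whereas Theorem \ref{thm:classificationg} is phrased in terms of $\Shi(\Affg)$ in the sense of Definition \ref{dfn:sheaveD}. As in Remark \ref{rmk:affshvs}, unwinding the definitions shows these agree: the topology that Definition \ref{dfn:sheaveD} imposes on the blossom $\Affg$ (restriction along $j$ followed by the pointwise sheaf condition on each slice $\Affg^{\et}/\cD$) is precisely the \'etale topology, because the admissible morphisms of $\g^{Sp}_{\et}(\bE)$ are exactly the \'etale maps of $\Ei$-rings. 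Concretely this follows from the equivalence $\Shi(\Affg) \simeq \Shi(\Pro(\g))$ of Remark \ref{rmk:affshvs} together with the fact that the \'etale topology on $\Pro(\g)$ is the one generated by the admissibility structure of $\g$. This identification is the single point that genuinely needs verification, and I expect it to be the main obstacle; everything else is translation.

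With both the ambient sheaf categories and the property in question matched, the remaining step is terminological. By the Convention preceding this string of theorems, for $\g$ of the form $\g^{Sp}_{\et}(\bE)$ an admissibly closed subcategory $\sU$ of $\Affg$ is exactly an \'etale closed subcategory of $\mathbf{SpAff}_\bE$, and under the equivalence $\Affg \simeq \mathbf{SpAff}_\bE$ the relative \'etale prolongation functor $j^\sU_!$ of Theorem \ref{thm:classificationg} is carried to the functor $j^\sU_!$ named in the statement. Substituting these identifications, the biconditional to be proved becomes verbatim the biconditional of Theorem \ref{thm:classificationg} applied to $\sD = \Affg$, which completes the argument.
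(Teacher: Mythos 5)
Your proposal is correct and matches the paper exactly: the paper gives no separate argument for this theorem, stating it (together with Theorems \ref{thm:classificationzariksi}--\ref{thm:classificationetalensp}) as a direct special case of Theorem \ref{thm:classificationg}, with the Convention identifying admissibly closed subcategories of $\Aff_{\g^{Sp}_{\acute{e}t}\left(\bE\right)}$ with \'etale closed subcategories of $\mathbf{SpAff}_\bE$. Your added verifications---that spectral Deligne-Mumford stacks coincide with Deligne-Mumford $\g^{Sp}_{\acute{e}t}\left(\bE\right)$-stacks, that the sheaf conditions agree via Remark \ref{rmk:affshvs}, and that $\Affg$ is a strong \'etale blossom by Example \ref{ex:affineblossom} and Proposition \ref{prop:admicls}---are exactly the identifications the paper leaves implicit.
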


Fix a commutative ring $k$. By abuse of notation, for an \'etale closed subcategory $\sU$ of $\Aff_k,$ denote again by $j^\sU_!$ the left adjoint to the restriction functor between stacks of groupoids $$\St\left(\Aff_k,\acute{e}t\right) \to \St\left(\sU^{\et}\right),$$ and call it again the relative \'etale prolongation functor with respect to $\sU.$

\begin{thm}\label{thm:classificationetale}
Let $k$ be a commutative ring. A stack of groupoids $\St\left(\Aff_k,\acute{e}t\right)$ on affine $k$-schemes with respect to the \'etale topology is a Deligne-Mumford stack if and only if $\X$ is in the essential image of a relative prolongation functor $j^\sU_!$ for $\sU$ a small \'etale closed subcategory of $\Aff_k.$
\end{thm}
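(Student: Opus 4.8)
The plan is to obtain this statement as the $1$-truncated shadow of the $\i$-categorical result Theorem \ref{thm:classificationetalei}, in exactly the way Theorem \ref{thm:largetruncatedversion} is deduced from Theorem \ref{thm:classificationlarge}, combined with the identification of classical Deligne-Mumford stacks provided by Theorem \ref{thm:DM1trun}. Throughout I work with $\g=\g_{\acute{e}t}(k)$ and $\G=\G(\g)$; the base $\i$-topos $\cB$ is the classifying topos for strictly Henselian local $k$-algebras, which is $1$-localic, so we are in the case $n=1$. I take the strong \'etale blossom to be $\sD=\Aff_k=\Affg$, which is strongly locally small by Example \ref{ex:affineblossom}. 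Since the spectrum functor $\Spec_{\acute{e}t}$ is fully faithful out of the $1$-category $\Pro(\g)$, all mapping spaces in $\Aff_k$ are sets, so $\Aff_k$, every small \'etale closed $\sU\subseteq\Aff_k$ (a small strong \'etale blossom by Proposition \ref{prop:admicls}), and each $\sU^{\et}$ are equivalent to $1$-categories. Consequently the universal $\sU$-\'etendue $\Uni^\sU\simeq\Shi(\sU^{\et})$ (which exists by Corollary \ref{cor:trex1} and Theorem \ref{thm:yonequil}) is $1$-localic, hence $1$-truncated by Lemma \ref{lem:ntruncatedlocalic}.

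The heart of the argument is to identify the groupoid-valued relative prolongation functor $j^\sU_!\colon\St(\sU^{\et})\to\St(\Aff_k,\acute{e}t)$ with the restriction of the $\i$-categorical one $j^\sU_!\colon\Shi(\sU^{\et})\to\Shi(\Aff_k)$ to $1$-truncated objects. Following the proof of Theorem \ref{thm:largetruncatedversion}, running the argument of Proposition \ref{prop:connected} in the relative setting produces a left exact, colimit-preserving functor $\Lambda^*_\sU\colon\Shi(\sU^{\et})\to\Shi(\Aff_k)/j^\sU_!(1)$ whose composite with the forgetful functor is $j^\sU_!$, where $j^\sU_!(1)$ is the functor of points of $\Uni^\sU$ and is therefore $1$-truncated. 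Left exactness of $\Lambda^*_\sU$ gives that it preserves $1$-truncated objects (Proposition 5.5.6.16 of \cite{htt}), and since $j^\sU_!(1)$ is $1$-truncated, an object over it is $1$-truncated iff its domain is (Lemma 5.5.6.14 of \cite{htt}); hence $\Lambda^*_\sU$ restricts to a functor $\St(\sU^{\et})\to\Sh_{1}(\Aff_k)/j^\sU_!(1)$. To see this restriction preserves colimits and realizes the groupoid-valued $j^\sU_!$, I would check it fiberwise over the small strong \'etale blossoms $\mathscr{V}$ with $\sU\subseteq\mathscr{V}\subseteq\Aff_k$ (using Proposition \ref{prop:locblossom} and Corollary \ref{cor:colimps}), where the relevant slices are genuine $\i$-topoi, so that the induced connected geometric morphism descends to $2$-localic reflections via the full faithfulness of $\nu_{2}$ (Definition \ref{dfn:nloc}), exactly as in Theorem \ref{thm:largetruncatedversion}.

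Granting this compatibility, both implications become routine. If a stack of groupoids $\X$ is a classical Deligne-Mumford stack, then by Theorem \ref{thm:DM1trun} it is a $1$-truncated Deligne-Mumford $\i$-stack, so Theorem \ref{thm:classificationetalei} yields a small \'etale closed $\sU$ and $G\in\Shi(\sU^{\et})$ with $\X\simeq j^\sU_!(G)$. By Proposition \ref{prop:explicit} applied to $\sU$, $\X$ is the functor of points of $\Uni^\sU/G$, and the $1$-truncatedness of $\X$ forces $\Uni^\sU/G$ to be $1$-truncated (via the full faithfulness of Theorem \ref{thm:fullyfaithful}), which in turn forces $G$ to be $1$-truncated through Theorem \ref{thm:yonequil}; the compatibility then places $\X$ in the essential image of the groupoid-valued $j^\sU_!$. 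Conversely, if $\X\simeq j^\sU_!(G)$ with $G$ a stack of groupoids, the compatibility exhibits $\X$ as a $1$-truncated object in the essential image of the $\i$-categorical $j^\sU_!$, hence a Deligne-Mumford $\i$-stack by Theorem \ref{thm:classificationetalei}; being $1$-truncated, it is a classical Deligne-Mumford stack by Theorem \ref{thm:DM1trun}.

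I expect the main obstacle to be the compatibility step of the second paragraph. Because $\Shi(\Aff_k)$ is not an $\i$-topos (only the small $\Shi(\sU^{\et})$ and the slices over $j^\sU_!(1)$ are), the truncation functor cannot be applied globally, and the identification of the two prolongation functors must be organized over the small intermediate blossoms $\mathscr{V}$, with care that the connected geometric morphism $\Lambda_\sU$ and its passage to localic reflections behave compatibly as $\mathscr{V}$ varies. Once this descent is secured, the remaining bookkeeping is immediate from Theorems \ref{thm:classificationetalei}, \ref{thm:DM1trun}, \ref{thm:yonequil}, and \ref{thm:fullyfaithful}.
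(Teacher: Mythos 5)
Your proposal is correct and takes essentially the same route as the paper: the paper's own proof is a one-liner citing Theorem \ref{thm:largetruncatedversion} (with $n=1$, $\sD=\Aff_k$, $\cB$ the $1$-localic classifying $\i$-topos for strictly Henselian $k$-algebras) together with Theorem \ref{thm:DM1trun}, and your argument simply inlines the proof of Theorem \ref{thm:largetruncatedversion} in this special case --- the construction of $\Lambda^*_\sU$, its restriction to $1$-truncated objects, and the colimit check over intermediate small blossoms $\mathscr{V}$ --- rather than invoking it as a black box. The only point worth noting is that your treatment of $\Aff_k$ as merely locally small (with smallness imposed only on $\sU$ and $\mathscr{V}$) is in fact the correct reading of the hypotheses of Theorem \ref{thm:largetruncatedversion}, whose stated requirement that $\sD$ be small appears to be a misprint, since its proof never uses it and the application here requires $\sD=\Aff_k$, which is not small.
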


\begin{proof}
This follows immediately from Theorem \ref{thm:largetruncatedversion} and Theorem \ref{thm:DM1trun}.
\end{proof}

\bibliography{etale}
\bibliographystyle{hplain}

\end{document}